\documentclass{amsart}
\usepackage{comment}
 \usepackage{float}
\usepackage[colorlinks=true,linkcolor=black]{hyperref}

\numberwithin{equation}{section}

\newcommand{\Z}{\mathbb{Z}}

\newcommand{\cA}{{\mathscr A}}

\newcommand{\GG}{\mathbb{G}}

\newcommand{\cH}{{\mathcal H}}

\newcommand{\cC}{{\mathscr C}}

\newcommand{\cF}{{\mathscr F}}

\newcommand{\cI}{{\mathscr I}}
\newcommand{\cJ}{{\mathscr J}}
\newcommand{\cK}{{\mathcal K}}

\newcommand{\cont}{\mathsf{c}}

\newcommand{\cO}{{\mathscr O}}

\newcommand{\cU}{{\mathscr U}}

\newcommand{\cV}{{\mathcal V}}

\newcommand{\direct}{\mathsf{dir}}

\newcommand{\tw}{{\mathsf{tw}}}
\newcommand{\sgn}{{\mathsf{sgn}}}

\newcommand{\sEnd}{\underline{\mathsf{End}}}

\newcommand{\res}{{\mathsf{res}}}
\newcommand{\hres}{{\mathsf{hres}}}

\newcommand{\bbC}{\mathbb{C}}
\newcommand{\bbZ}{\mathbb{Z}}

\newcommand{\bbL}{\mathbb{L}}

\newcommand{\bone}{{\mathbf 1}}
\newcommand{\triv}{{\mathsf{triv}}}

\newcommand{\Muk}{{\sf Muk}}
\newcommand{\fr}{{\sf fr}}
\newcommand{\hS}{{\sf hS}}
\newcommand{\comb}{{\sf comb}}

\newcommand{\hg}{\widehat{\mathfrak g}}
\newcommand{\hn}{\widehat{\mathfrak{n}}}
\newcommand{\hh}{\widehat{\mathfrak h}}
\newcommand{\hcV}{\widehat{\mathcal V}}
\newcommand{\lbA}{\overline{\beta}^{A,s}}
\newcommand{\hbA}{\widehat{\beta}^A}

\newcommand{\sym}{{\sf Sym}}
\newcommand{\mc}{{\sf mc}}
\newcommand{\ad}{{\sf ad}}
\newcommand{\db}{\overline{\partial}}
\newcommand{\hbcA}{\widehat{\beta}^\cA}

\newcommand{\ra}{\rightarrow}

\newcommand{\lra}{\longrightarrow}

\DeclareMathOperator{\Aut}{Aut}

\DeclareMathOperator{\Giv}{Giv}

\DeclareMathOperator{\wt}{wt}

\DeclareMathOperator{\Hom}{Hom}
\DeclareMathOperator{\End}{End}

\DeclareMathOperator{\id}{id}

\DeclareMathOperator{\Ext}{Ext}

\DeclareMathOperator{\KS}{KS}

\renewcommand{\phi}{\varphi}

\newsavebox{\sembox}
\newlength{\semwidth}
\newlength{\boxwidth}

\newsavebox{\lbox}
\newlength{\lwidth}
\newlength{\lboxwidth}

\newtheoremstyle{bigskipstyle}
  {\bigskipamount} % Space above
  {\bigskipamount} % Space below
  {\itshape}       % Body font (e.g., italics)
  {}               % Indent amount
  {\bfseries}      % Theorem head font (e.g., bold)
  {.}              % Punctuation after theorem head
  {.5em}           % Space after theorem head
  {}               % Theorem head spec

\theoremstyle{bigskipstyle}

\newtheorem{Theorem}{Theorem}[section]
\newtheorem{Lemma}{Lemma}[section]

\newtheorem{Proposition}{Proposition}[section]
\newtheorem{Corollary}[Theorem]{Corollary}
\theoremstyle{definition}
\newtheorem{Definition}{Definition}[section]
\theoremstyle{remark}
\newtheorem{remark}[Theorem]{Remark}
\newtheorem{example}[Theorem]{Example}%[section]
\makeatletter
\let\c@Lemma\c@Theorem
\let\c@Conjecture\c@Theorem
\let\c@Proposition\c@Theorem
\let\c@Definition\c@Theorem
\let\c@Remark\c@Theorem
\makeatother

\usepackage{tikz-cd} 
\usepackage{float}
\usepackage{pdfpages}
\usepackage{amscd}
\usepackage{amsmath, amsthm, amssymb, mathrsfs}%, eufrak}
\usepackage{graphicx}
\usepackage{tikz, dsfont}
\usepackage{marginnote}
\usepackage[colorinlistoftodos]{todonotes}
\usepackage{mathtools}

\usepackage[margin=3cm]{geometry}

\usepackage{multicol}
\usetikzlibrary{arrows,shapes,positioning}
\usetikzlibrary{decorations.markings}
\tikzstyle arrowstyle=[scale=1]
\tikzset{->-/.style={decoration={
  markings,
  mark=at position .5 with {\arrow{>}}},postaction={decorate}}}
\tikzset{cross/.style={cross out, draw=black, minimum size=2*(#1-\pgflinewidth), inner sep=0pt, outer sep=0pt},
default radius will be 1pt. 
cross/.default={1pt}}
\tikzstyle directed=[postaction={decorate,decoration={markings,
    mark=at position .65 with {\arrow[arrowstyle]{stealth}}}}]
\tikzstyle reverse directed=[postaction={decorate,decoration={markings,
    mark=at position .65 with {\arrowreversed[arrowstyle]{stealth};}}}]
    \tikzset{cross/.style={cross out, draw=black, minimum size=2*(#1-\pgflinewidth), inner sep=0pt, outer sep=0pt},
cross/.default={1pt}}
\begin{document}

\title[B-model Categorical Enumerative Invariants and HAE]
{%On the 
B-model Categorical Enumerative Invariants\\and holomorphic anomaly equations
}

\author{% 
Yefeng Shen\; and \;Junwu Tu}

\address{Yefeng Shen, University of Oregon, Eugene, OR 97403, USA.}
\address{Junwu Tu, Institute of Mathematical Sciences, ShanghaiTech University,
Shanghai, 201210, China.}

\begin{abstract} 
In this paper, we study the B-model categorical enumerative invariants (CEI) associated with derived categories of coherent sheaves on smooth projective Calabi-Yau 3-folds. 
We first prove the analogs of the dilaton, string, and divisor equations of CEI in a general context. 
Then we use these equations and the Givental quantization formula to prove that the B-model CEI for any miniversal family of smooth projective Calabi-Yau $3$-folds satisfies the holomorphic anomaly equations introduced by Bershadsky-Cecotti-Ooguri-Vafa. This provides strong evidence that CEI may be taken as a rigorous mathematical definition of the B-model topological string partition function.
\end{abstract}

\maketitle

\setcounter{tocdepth}{1}
\tableofcontents

\section{Introduction}
Categorical Enumerative Invariants (CEI) were introduced by Costello~\cite{Cos2} and in a more explicit form by C\u ad\u araru-Tu~\cite{CalTu1} and Costello-C\u ad\u araru-Tu~\cite{CCT}. When applied to the derived category of coherent sheaves on a smooth projective Calabi-Yau threefold, Costello~\cite[Section 13.1]{Cos2} proposed that these categorically defined invariants could be taken as a mathematical definition of the B-model partition function in topological string theory. In their groundbreaking paper~\cite{BCOV}, Bershadsky-Cecotti-Ooguri-Vafa derived a system of differential equations satisfied by the B-model partition function, known as the {\sl holomorphic anomaly equations}. The purpose of this paper is to prove that categorical enumerative invariants indeed satisfy Bershadsky-Cecotti-Ooguri-Vafa's holomorphic anomaly equation, for any miniversal family of smooth projective Calabi-Yau threefolds. This provides a first evidence to Costello's proposal. In the following, after a brief review of the construction of CEI, we shall present our main result in a more precise form. Throughout the paper, we shall work over the field $\bbC$ unless otherwise stated.

\subsection{B-model CEI of Calabi-Yau threefolds}\label{para:intro-1.1} 

Recall from~\cite{Cos2,CalTu1} that CEI are certain invariants associated with a triple $(\cC,\Omega,s)$ consisting of
\begin{itemize}
\item a smooth and proper $A_\infty$-category $\cC$ over $\mathbb{C}$, 
\item a Calabi-Yau structure $\Omega\in HC^-_d(\cC)$ in the negative cyclic homology of $\cC$ of degree $d$,
\item a choice of splitting of the nc-Hodge filtration of $\cC$ (see Definition~\ref{def:nc-splitting}) denoted by $s$.
\end{itemize}
Then these invariants take the following form:
\[\langle \alpha_1 \psi^{k_1},\ldots, \alpha_n \psi^{k_n}\rangle_{g, n}^{\cC,\Omega,s} \in \mathbb{C},\]
with $\alpha_1,\ldots,\alpha_n$ elements in the shifted Hochschild homology group $HH_\bullet(\cC)[d]$, 
 $k_1,\ldots,k_n\in \mathbb{N}$, and the pair of integers $(g,n)$ be stable, i.e. $2g-2+n>0$. The variable $\psi$ is a formal symbol, and is chosen to match the $\psi$-classes in Gromov-Witten theory. Due to their apparent similarity with Gromov-Witten invariants, these invariants were called categorical Gromov-Witten invariants by Costello~\cite{Cos2}. Following~\cite{CalTu1} we prefer to use the term Categorical Enumerative Invariants (CEI) due to their Morita invariance proved in~\cite{AmoTu2}. Another reason is that CEI are only expected to match with Gromov-Witten invariants when applied to Fukaya categories for a particular choice of the splitting data. In other contexts, for example, when $\cC$ is the derived category of coherent sheaves on a smooth and projective Calabi-Yau variety, the natural choice splitting is the complex-conjugate splitting. In this case, the resulting CEI yields genuine new invariants. 

Strictly speaking, the CEI defined in~\cite{CalTu1}  requires $n>0$. However, by forcing the dilaton equation (see Theorem~\ref{thm:main2}), we may extend the definition of CEI to also include the cases when $g\geq 2$ and $n=0$. More precisely, let us denote by $[\Omega]\in HH_d(\cC)[d]$ the Hochschild homology class of $\Omega$ under the natural projection map $HC^-_d(\cC)[d]\to HH_d(\cC)[d]$, and define 
\begin{equation}\label{eq:cei-no-ins}
 F_g^{\cC,\Omega,s} :=
\frac{1}{2g-2} \langle\Omega\psi\rangle_{g, 1}^{\cC,\Omega,s}.
\end{equation}

In this paper, we shall be interested in the following setup. Let $X$ be a smooth projective Calabi-Yau $3$-fold. %By a Calabi-Yau $3$-fold, 
We shall also assume that $\pi_1(X)=0$. To obtain CEI, we use the following triplet.
\begin{itemize}
    \item Take $\cC$ to be the category $\cC_X:=D_{dg}^b({\sf Coh}(X))$, a dg-enhancement of the derived category of coherent sheaves on $X$. By Lunts-Orlov~\cite{LunOrl}, such a dg-enhancement is unique. 
    \item Fix a Calabi-Yau structure $\Omega\in HC^-_3(\cC_X)\cong H^0(X,\omega_X)$. 
    \item There exists an intrinsic splitting to the dg category $\cC_X$ which we denote by $s^{\sf BT}$ (named after Blanc's work~\cite{Bla} which is based on To\"en's proposal~\cite{Toe}). Geometrically, under the comparison isomorphism in Theorem~\ref{conj:comparison}, the Blanc-To\"en splitting corresponds to the complex conjugate splitting of the classical Hodge filtration on $X$. 
\end{itemize}
We refer to the CEI associated with the triplet $(\cC_X,\Omega,s^{\sf BT})$ as the {\sl B-model categorical enumerative invariants}. In particular, by setting $n=0$, we obtain a complex number $F_g^{\cC_X,\Omega,s^{\sf BT}}$ for each $g\geq 2$. Furthermore, by the rescaling property~\cite[Section 9.1]{CalTu1} of CEI, we have
\[F_g^{\cC_X,\lambda \Omega,s^{\sf BT}} = \lambda^{2-2g} F_g^{\cC_X,\Omega,s^{\sf BT}}.\]
This implies that the following vector
\[ F_g^X:= F_g^{\cC_X,\Omega,s^{\sf BT}} \Omega^{2g-2}
\in H^0(X,\omega_X)^{\otimes (2g-2)}\] 
is independent of the choice of $\Omega$. Furthermore, by the Morita invariance of CEI~\cite{AmoTu2}, the vectors $F_g^X (g\geq 2)$ are derived invariants of $X$!

To formulate Bershadsky-Cecotti-Ooguri-Vafa's holomorphic anomaly equation, we consider the family version of the constructions above. Let $p:\mathfrak{X}\ra M$ be a smooth and projective family of Calabi-Yau $3$-folds over a smooth base $M$. In this case, one can show that the B-model CEI may be defined for families (see Section~\ref{subsec:cei-family}), which yields a smooth section
\[F_g^{\mathfrak{X}/M} \in C^\infty(\mathbb{L}^{2g-2})\]
in the tensor product of the so-called {\em vacuum line bundle}: the push-forward of the relative canonical bundle $\mathbb{L}:=p_*\omega_{\mathfrak{X}/M}$. The line bundle $\mathbb{L}$ is a holomorphic line bundle over $M$ by the Calabi-Yau assumption. However, the section $F_g^{\mathfrak{X}/M}$ is only a $C^\infty$-section because the intrinsic splitting $s^{\sf BT}$ is only a $C^\infty$-splitting relative to $M$. Geometrically, this is a well-known fact: the complex-conjugate splitting is only a $C^\infty$-splitting of the Hodge filtration over $M$. More precisely, the splitting data $s^{\sf BT}$ is given by an isomorphism of $C^\infty$ vector bundles over $M$:
\begin{equation}\label{eq:hodge-decomposition}
\mathcal{H}^3 \cong \mathcal{H}^{3,0}\oplus \mathcal{H}^{2,1}\oplus \mathcal{H}^{1,2}\oplus \mathcal{H}^{0,3},
\end{equation}
where $\mathcal{H}^3$ denotes the $C^\infty$ middle cohomology bundle of the family $\mathfrak{X}\to M$, and each $\mathcal{H}^{p,q}$ denotes the $C^\infty$ sub-bundle of $\mathcal{H}^3$ from the classical Hodge decomposition theorem. 

\subsection{Bershadsky-Cecotti-Ooguri-Vafa's holomorphic anomaly equation}\label{sec-intro-1.2}

Consider the Gauss-Manin connection $\nabla^{\sf GM}$ on the bundle of middle cohomology $\mathcal{H}^3$. We may write it in the Hodge decomposition~\eqref{eq:hodge-decomposition} as
\begin{equation}
\label{eq:gm-components} 
\nabla^{\sf GM}= D+\overline{D}+C+\overline{C}.
\end{equation}
As in~\cite[Section 2.2]{KZ}, $D$ and $\overline{D}$ are the diagonal components in the $(1,0)$ and $(0,1)$ directions (over the base space $M$) respectively. The operators $C$ and $\overline{C}$ are non-diagonal components in the $(1,0)$ and $(0,1)$ directions, respectively,  as illustrated in the following picture.
\[\begin{tikzpicture}
\node (A) at (0,0) {$\mathcal{H}^{3,0}$};
\node (B) at (3,0) {$\mathcal{H}^{2,1}$};
\node (C) at (6,0) {$\mathcal{H}^{1,2}$};
\node (D) at (9,0) {$\mathcal{H}^{0,3}$.};
\draw[->,thick] (A) to[out=20,in=160] node[fill=white,inner sep=2pt,midway] {$C$} (B);
\draw[<-,thick] (A) to[out=-20,in=-160] node[fill=white,inner sep=2pt,midway] {$\overline{C}$} (B);
\draw[->,thick] (B) to[out=20,in=160] node[fill=white,inner sep=2pt,midway] {$C$} (C);
\draw[<-,thick] (B) to[out=-20,in=-160] node[fill=white,inner sep=2pt,midway] {$\overline{C}$} (C);
\draw[->,thick] (C) to[out=20,in=160] node[fill=white,inner sep=2pt,midway] {$C$} (D);
\draw[<-,thick] (C) to[out=-20,in=-160] node[fill=white,inner sep=2pt,midway] {$\overline{C}$} (D);
\end{tikzpicture}\]
The fact that $C$ and $\overline{C}$ are of this particular form is due to Griffiths' transversality~\cite{Gri}.

Now, let us also assume that the family  $p:\mathfrak{X}\ra M$ is miniversal, i.e., the Kodaira-Spencer map 
\[\KS: T_M \stackrel{\cong}{\longrightarrow} R^1p_*T_{\mathfrak{X}/M}\]
is an isomorphism. Under this assumption, we shall use the components in the decomposition~\eqref{eq:gm-components}  to write down the holomorphic anomaly equation. Our exposition closely follows the coordinate-free treatment in~\cite{Liu}. We need to introduce some notation. Since $\mathbb{L}= \mathbb{H}^{3,0}$, we may use the connection $D$ to differentiate the section $F_g^{\mathfrak{X}/M}$. For each $g\geq 2$, this yields a section $$DF_g^{\mathfrak{X}/M}\in C^\infty\big( \Omega_M\otimes \mathbb{L}^{2g-2}\big).$$ 
Observe that by the miniversality assumption, the contraction of vector fields against volume forms induces an isomorphism 
$$R^1p_*T_{\mathfrak{X}/M}\otimes \mathbb{H}^{3,0}\cong \mathbb{H}^{2,1}.$$ 
Combined with the Kodaira-Spencer isomorphism, we obtain an isomorphism 
\begin{equation}\label{eq:intro-iso}
(\mathbb{H}^{2,1})^\vee\otimes \mathbb{H}^{3,0} \cong \Omega_M.
\end{equation}
Since the left hand side has a $(1,0)$-type connection $D$, the cotangent bundle $\Omega_M$ has an induced connection which we still denote by $D$. Using this connection, we obtain the second order covariant derivative $DDF_g^{\mathfrak{X}/M}$ as a smooth section of $\Omega_M^{\otimes 2}\otimes \mathbb{L}^{2g-2}$. 

For $g=1$, due to the stability condition $2g-2+n>0$, $F_1^{\mathfrak{X}/M}$ is not defined. However, we shall slightly abuse the notation $DF_1^{\mathfrak{X}/M}$ by defining it as
\[ DF_1^{\mathfrak{X}/M}:= F_{1,1}^{\mathfrak{X}/M}.\]
By construction, $F_{1,1}^{\mathfrak{X}/M}$ is a smooth section of $\Omega_M$ (using the isomorphism~\eqref{eq:intro-iso} above). With this definition, we arrive at the conclusion that $DF_g^{\mathfrak{X}/M}\in C^\infty\big(\Omega_M\otimes \mathbb{L}^{2g-2}\big)$ for all $g\geq 1$.

\begin{Theorem}
\label{thm:main}
Assume that the family $p:\mathfrak{X}\ra M$ is miniversal. 
For each $g\geq 2$, the B-model CEI $F_g^{\mathfrak{X}/M}\in C^\infty(\mathbb{L}^{2g-2})$ satisfies Bershadsky-Cecotti-Ooguri-Vafa's holomorphic anomaly equation:
\begin{equation}\label{eq:intro-hae-identity} \overline{\partial} F_g^{\mathfrak{X}/M} = \frac{1}{2} \overline{C} \diamond  \left( \sum_{r=1}^{g-1} D F_r^{\mathfrak{X}/M} \otimes DF_{g-r}^{\mathfrak{X}/M} + DD F_{g-1}^{\mathfrak{X}/M} \right).\end{equation}
The notations used here are standard in the literature~\cite[Section 4.4]{Liu}, see also Equation~~\eqref{eq:tensors}.
%and Equation~\eqref{eq:C-tensors}.
\end{Theorem}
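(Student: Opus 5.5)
We plan to deduce the anomaly equation from the way CEI depend on the splitting, packaged by the Givental-type quantization formula. The idea is to view the family $\mathfrak{X}\to M$ as a single triple $(\cC_X,\Omega,s)$ whose splitting moves with the base point. The Blanc–To\"en splitting $s^{\sf BT}$ corresponds, by Theorem~\ref{conj:comparison}, to the complex-conjugate splitting~\eqref{eq:hodge-decomposition}. The Gauss–Manin connection preserves the Hodge filtration $F^\bullet$ in the holomorphic directions (Griffiths transversality), but it does not preserve the complementary filtration in the $(0,1)$ directions, and $\overline{C}$ measures exactly this failure. So $\overline{\partial}$ of the splitting, at a point $t\in M$, is an infinitesimal change of splitting. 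Via the symplectic pairing on $\mathcal{H}^3((u))$, it is encoded by an element $R$ of the upper-triangular symplectic group with infinitesimal generator $r(u)=\overline{C}\,u$ (only the $u^1$ term survives for a threefold). The generator is symmetric with respect to the Mukai/Hodge–Riemann pairing.

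The first step is to establish this identification rigorously. We will compute $\overline{\partial}s^{\sf BT}$ in terms of $\overline{C}$, using the explicit description of $s^{\sf BT}$ and the compatibility of the categorical Gauss–Manin connection with the geometric one. We will then invoke the result stated earlier comparing CEI for two splittings, namely the Givental formula $Z^{s'}=\widehat{R}\,Z^{s}$. Its infinitesimal version says that $\overline{\partial}$ of the total descendant potential equals $\widehat{r}\,Z$. The quantized operator $\widehat{r}$ of a symmetric $u^1$ term is the second-order operator $\tfrac{\hbar}{2}\sum \overline{C}^{ij}\partial_{q_0^i}\partial_{q_0^j}$, plus possibly a first-order dilaton-shift correction.

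The second step passes from the descendant potential to $F_g^{\mathfrak{X}/M}$. Taking the logarithm of $\overline{\partial}Z=\widehat{r}Z$ and extracting the coefficient of $\hbar^{g-1}$ gives the quadratic term $\sum_r \partial F_r\,\partial F_{g-r}$ together with the term $\partial\partial F_{g-1}$, both contracted with $\overline{C}$. These are descendant correlators $\langle \alpha,\beta\rangle_{g,2}$, $\langle\alpha\rangle_{r,1}\langle\beta\rangle_{g-r,1}$ with $\alpha,\beta\in\mathcal{H}^{2,1}$, taken at the dilaton-shifted point $\Omega\psi$. Here we use the dilaton equation (Theorem~\ref{thm:main2}) and the string and divisor equations proved earlier. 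Concretely, we must identify $\langle\alpha\rangle_{g,1}$ with $DF_g$ and $\langle\alpha,\beta\rangle_{g,2}$ with $DDF_g$, plus connection terms. Under the Kodaira–Spencer identification~\eqref{eq:intro-iso}, $\alpha\in\mathcal{H}^{2,1}$ is a tangent direction, and differentiating $F_g$ along $M$ inserts that class. This is a deformation statement: the CEI of the deformed category equal the CEI with an inserted insertion, once the Calabi–Yau class is moved holomorphically via $D$. The genus-one convention $DF_1:=F_{1,1}$ is exactly what appears for $r=1$ or $g-r=1$.

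I expect the main obstacle to be the bookkeeping in the second step. The derivative $DDF_{g-1}$ involves the connection $D$ on $\Omega_M$ induced from~\eqref{eq:intro-iso}, while the raw two-point correlator has extra contributions from the holomorphic variation of $\Omega$ and from $C$ (the Yukawa coupling). The flat-frame second derivative also differs from $DD$ by Christoffel-type terms. To handle this, I would work at a point in a frame adapted to the Hodge decomposition. There I would check, using the string and dilaton equations, that the $\mathcal{H}^{3,0}$- and $\mathcal{H}^{0,3}$-components produce exactly the $(2-2g)$-weighting and the connection terms, so that all non-covariant pieces cancel. After that, the contraction $\overline{C}\diamond$ of~\eqref{eq:tensors} gives~\eqref{eq:intro-hae-identity}, with the factor $\tfrac12$ coming from the quantization.
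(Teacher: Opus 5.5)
Your route is the paper's: compare $s^{\sf BT}$ with a local holomorphic splitting via $s^{\sf hol}=s^{\sf BT}\mathfrak{A}$, use $\overline{\partial}\mathfrak{A}\cdot\mathfrak{A}^{-1}=-\overline{C}$ and Givental's formula to get $\overline{\partial}\mathfrak{D}=\widehat{-\overline{C}}\,\mathfrak{D}$, read off the $(g,0)$ component, and convert correlators into covariant derivatives. The concrete problem is in your second step, where you assert that the resulting correlators only involve $\alpha,\beta\in\mathcal{H}^{2,1}$. That is false.

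Raising the indices of $-\overline{C}_{\overline{i}}$ with the pairing gives the edge tensor $\overline{C}^{j}_{\overline{i},\overline{l}}\,g^{k\overline{l}}f_j\otimes f_k+g^{0\overline{0}}\big(\Omega\otimes\overline{f_i}+\overline{f_i}\otimes\Omega\big)$. The blocks $\overline{C}\colon\mathcal{H}^{2,1}\to\mathcal{H}^{3,0}$ and $\overline{C}\colon\mathcal{H}^{0,3}\to\mathcal{H}^{1,2}$ contribute the mixed terms. These produce $g^{0\overline{0}}\big(\langle\Omega,\overline{f_i}\rangle_{g-1,2}+\sum_{r=1}^{g-1}\langle\Omega\rangle_{r,1}\langle\overline{f_i}\rangle_{g-r,1}\big)$, which has no place in BCOV's equation and must be shown to vanish. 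The first term vanishes by the string equation, since the remaining insertion carries no $\psi$ to lower. The second vanishes because $\langle\overline{f_i}\rangle_{g-r,1}=0$ for degree reasons. That is the actual job of the string equation here. In your sketch it is only invoked vaguely so that ``non-covariant pieces cancel''; in fact nothing cancels against these terms, they vanish on their own.

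Conversely, the covariance bookkeeping you expect to be the main obstacle involves no cancellation at all. Apply the divisor equation in the non-flat BT frame with $\nabla^{s^{\sf BT},+}_i(\Omega\psi)=-f_i+a_i\Omega\psi$, together with the dilaton equation. This gives $D_iF_g^{\mathfrak{X}/M}=\langle f_i\rangle_{g,1}\Omega^{2g-2}$ and $D_jD_iF_g^{\mathfrak{X}/M}=\langle f_i,f_j\rangle_{g,2}\Omega^{2g-2}$ exactly. The $a_i$-terms supply the weight $2g-2$ of $\mathbb{L}^{2g-2}$. The Christoffel symbols come from the diagonal part $D_jf_i$ of $\nabla^{s,+}_jf_i$. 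The Yukawa part $C_jf_i$ is a pole term removed by the $+$-truncation.

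Finally, Givental compatibility is only known for $n\geq 1$, so the $n=0$ coefficient of $\hbar^{g-1}$ you extract needs its own argument. Write $F_g=\frac{1}{2g-2}\langle\Omega\psi\rangle_{g,1}$ and apply the $(g,1)$ formula. Use that $\mathcal{R}$ fixes $\Omega$ because $\overline{C}\Omega=0$; this same fact kills your hedged first-order dilaton-shift correction. Use also that the vertex carrying the $\Omega\psi$ leg is never of type $(0,3)$. Then apply the dilaton equation at every vertex, using $\sum_v(2g(v)-2+n(v))=2g-2$.
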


\begin{remark}
We also prove a holomorphic anomaly equation in genus one; see Theorem~\ref{thm:genus-one-hae}. 
In this case, it is interesting to compare the B-model CEI with the BCOV torsion~\cite{BCOV,FLY, EFM}. From the B-model perspective, Theorem~\ref{thm:main} suggests that CEI may be taken as a mathematical definition of the B-model non-perturbative topological string partition function. The ``holomorphic limit" of these invariants is conjecturally mirror to higher genus Gromov-Witten invariants. We should also remark that there is an alternative differential geometric approach to the B-model topological string partition function by Costello and Li in ~\cite{CosLi,Li}. Our approach is purely algebraic and categorical. 
\end{remark}

\begin{remark}
On the A-model side, the holomorphic anomaly equation plays an important role in understanding and computing higher genus Gromov-Witten invariants of Calabi-Yau $3$-folds. This was evident in the work of physicists~\cite{BCOV,HKQ,YY,ABK}, as well as for some local Calabi-Yau $3$-folds in~\cite{CoaIri,LP}, and the recent mathematical breakthroughs for compact Calabi-Yau $3$-folds in~\cite{CGL,CGL2,CGLL,CLLL,CJR,CJR2,GJR}. 
%Formal structures of the holomorphic anomaly equation and its applications to Gromov-Witten theory were also studied for local Calabi-Yau's in~\cite{ABK,CoaIri,LP}. 
\end{remark}

\subsection{Dilaton, string and divisor equations} 

To prove Theorem~\ref{thm:main}, the idea is to compare $F_g^{\mathfrak{X}/M}$ with $F_g^{\mathfrak{X}/M,s^{\sf hol}}$ with $s^{\sf hol}$ a local holomorphic splitting of the nc-Hodge filtration. Then, by the compatibility between CEI and Givental's group action~\cite{CalTu1}, one can almost deduce the following equation
\begin{equation}\label{eq:pre-hae}
 \overline{\partial} F_g^{\mathfrak{X}/M} = \frac{1}{2} \overline{C}\diamond \left( \sum_{r=1}^{g-1} F_{r,1}^{\mathfrak{X}/M} \otimes F_{g-r,1}^{\mathfrak{X}/M} + F_{g-1,2}^{\mathfrak{X}/M} \right)
 \end{equation}
where $F_{g,n}^{\mathfrak{X}/M}$ denotes the generating function of CEI with $n$ insertions. This equation is already close to the holomorphic anomaly equation. However, we need to deal with two issues:
\begin{itemize}
\item[(a.)] In~\cite{CalTu1}, the compatibility with Givental's group action is only proved for CEI $F_{g,n}$'s with $n$ strictly positive, while we need to use it in the case $n=0$. 
\item[(b.)] By the Givental's action formula, there are {\sl a priori} extra terms on the right hand side of the above equation involving ``unit" insertions.
\end{itemize}
Furthermore, to deduce Theorem~\ref{thm:main} from Equation~\eqref{eq:pre-hae},  we also need to
\begin{itemize}
\item[(c.)] express the CEI $F_{r,1}^{\mathfrak{X}/M}$ and $F_{g-1,2}^{\mathfrak{X}/M}$ as covariant derivatives of $F^{\mathfrak{X}/M}$. 
\end{itemize}

It turns out that the issues listed in $(a.)$, $(b.)$, and $(c.)$ above can be solved by proving the CEI analogs of dilaton, string and divisor equations (also called translation equation) in Gromov-Witten theory. Indeed, the majority of this paper is devoted to prove the following theorem.
\begin{Theorem}\label{thm:main2}
The B-model CEI of $\mathfrak{X}/M$ satisfies the following equations. 
\begin{itemize}
\item%[(a.)] 
{\bf Dilaton equation [Theorem~\ref{thm:dilaton}]}
\begin{equation*}
\langle [\Omega]\psi,\alpha_1\psi^{k_1},\ldots,\alpha_n\psi^{k_n}\rangle_{g,n+1}^{\mathfrak{X}/M,\Omega,s^{\sf BT}} = (2g-2+n) \langle \alpha_1 \psi^{k_1},\ldots,\alpha_n \psi^{k_n}\rangle_{g,n}^{\mathfrak{X}/M,\Omega,s^{\sf BT}}.
\end{equation*}
\item%[(b.)] 
{\bf String equation [Theorem~\ref{thm:string}]}
\begin{equation*}
\langle [\Omega],\alpha_1 \psi^{k_1},\ldots,\alpha_n \psi^{k_n}\rangle_{g,n+1}^{\mathfrak{X}/M,\Omega,s^{\sf BT}}
 =  \sum_{j=1}^{n} \langle \alpha_1 \psi^{k_1},\ldots,\alpha_j \psi^{k_j-1},\ldots,\alpha_n \psi^{k_n}\rangle_{g,n}^{\mathfrak{X}/M,\Omega,s^{\sf BT}}.
 \end{equation*}
\item%[(c.)] 
{\bf Divisor equation [Theorem~\ref{thm:divisor}].}
In the setup of Theorem~\ref{thm:main}, we have
\begin{equation*}
F_{r,1}^{\mathfrak{X}/M} = D F_{r}^{\mathfrak{X}/M},\quad\quad F_{g-1,2}^{\mathfrak{X}/M} = D D F_{g-1}^{\mathfrak{X}/M}.
\end{equation*}
\end{itemize}
\end{Theorem}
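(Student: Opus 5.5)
Theorem~\ref{thm:main2} bundles three statements, and I would prove them one at a time, following the architecture of CEI in~\cite{CalTu1}. There the invariants are obtained in three steps. First, the Calabi-Yau structure $\Omega$ and the splitting $s$ produce a cyclic $A_\infty$/TCFT structure, via a Hochschild chain model $C_\bullet(\cC)[u]$ together with a homotopy between it and its cohomology. Second, a solution $\hat\beta$ of the quantum master equation in the Fock space of $C_\bullet(\cC)((u))$ is constructed by the Costello--Sen combinatorial string vertices. Third, $\hat\beta$ is pushed through a Feynman-graph (Givental-type) transfer to the cohomology $HH_\bullet(\cC)((u))$, with the splitting $s$ determining the polarization.

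The dilaton and string equations I would derive at chain level, by identifying the operators that insert $[\Omega]\psi$ and $[\Omega]$, i.e.\ $u\Omega$ and $\Omega$. Insertion of the unit $\Omega$ in Hochschild chains corresponds to forgetting a marked point: the string vertices are compatible with the forgetful map of moduli spaces. Under this correspondence, insertion of $u\Omega$ (the dilaton shift) acts on the combinatorial string vertex of type $(g,n)$ by multiplication by the Euler characteristic $2g-2+n$, up to a QME-exact term. Concretely, I would show the following.
\begin{itemize}
\item[(1)] The operator $\hat\iota_{u\Omega}$ commutes with the BV differential up to an explicitly controlled homotopy.
\item[(2)] Its effect on $\hat\beta$ is the grading operator $\sum(2g-2+n)\hbar^g\partial$ plus an exact term.
\item[(3)] Both statements survive the Feynman transfer.
\end{itemize}
Step (3) holds provided the splitting $s$ is \emph{good}, meaning compatible with the grading operator and preserving $\Omega$ in the appropriate sense. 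I would then verify that $s^{\sf BT}$, which via Theorem~\ref{conj:comparison} is the complex-conjugate Hodge splitting, has these properties: $[\Omega]$ spans $\mathcal H^{3,0}$, the splitting respects the weight grading, and it is orthogonal with respect to the Mukai pairing.

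The string equation is handled the same way with $\Omega$ in place of $u\Omega$. The expected shift operator $\psi^{k}\mapsto\psi^{k-1}$ arises from the $u^{-1}$ factor in the propagator when the insertion hits a tail. The propagator's contribution from the unit vanishes because, for the complex-conjugate splitting, $\Omega$ lies in the pure $u^0$ part, so $\Omega$ is flat for the relevant connection.

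For the divisor equation I would work in families. Let $\nabla$ be the Gauss--Manin connection on $HC^-$. Differentiating the family CEI along a holomorphic direction $v\in T_M$ equals the invariant with an extra insertion of $\KS(v)\cdot\Omega\in\mathcal H^{2,1}$. The reason is that the holomorphic variation of the category is realized by the Hochschild cocycle $\KS(v)$, which acts on $\hat\beta$ by the Getzler--Gauss--Manin operator $u^{-1}\iota_{\KS(v)}$ plus Cartan homotopy corrections. The splitting enters through the difference between $\nabla$ and its $D$-component: the $C$-part is exactly the insertion, and the $D$-part is the covariant derivative. This gives $F_{r,1}=DF_r$ for $r\ge2$; for $r=1$ it is the definition. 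Applying the same argument again with one insertion already present gives $F_{g-1,2}=DDF_{g-1}$, using the induced connection on $\Omega_M$ through the isomorphism~\eqref{eq:intro-iso}. Here the extra correction terms cancel by the dilaton and string equations: a derivative hitting $\Omega$ produces $[\Omega]$ or $[\Omega]\psi$ insertions, which those equations reduce.

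The main obstacle I expect is the chain-level dilaton statement (2). It requires understanding how the combinatorial string vertices in~\cite{CalTu1} behave under forgetting a point, which is not a priori compatible with their inductive construction. I would first try to choose the vertices inductively so that this compatibility holds strictly, and otherwise construct the homotopy from the uniqueness of solutions to the QME up to homotopy.
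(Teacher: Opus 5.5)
Your dilaton argument is essentially the paper's. The paper's forgetful map with a $u$-shift at the forgotten output, $\cF_P^u$, commutes strictly with $\eth,\iota,\Delta$ and the bracket. The compatible Maurer--Cartan element is then chosen inductively, on an auxiliary module $\hn$ with a labelled vertex $P$ to avoid symmetrization (Proposition~\ref{prop:forgetful}), and uniqueness gives the gauge equivalence. The transfer through $\cK$ is formal, because the Euler characteristic is additive over the vertices of each Feynman graph. The only property of $s$ used is $\Omega\in{\rm Im}(s)$.

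The gap is the claim that the string and divisor equations are ``handled the same way''. Inserting $\Omega$ without $\psi$ does not act on the string vertices by a grading operator. It acts by the shift $M_{u^{-1}}$ on \emph{all} markings of each vertex, together with an unstable $(0,1,1)$ term $\gamma_{0,1,1}$, and $M_{u^{-1}}$ is not a symmetry of the structure: $[M_{u^{-1}},b+uB]=\{\gamma_{0,1,1},-\}_\hbar$ by~\eqref{eq:pair-string}. So the inductive-plus-uniqueness argument must produce a gauge equivalence that includes exceptional terms. This is Theorem~\ref{thm:mc-recursion}: $\mc_1$ versus $\mc_2$, built from $\mathcal T$, $\mathcal S$, $f(\hcV_{0,1,2})$ and $f(\hcV_{0,2,1})$ via the forget-framing map and Getzler graphs. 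Your proposal has no counterpart to it, and the $\psi^k\mapsto\psi^{k-1}$ shift does not come from a $u^{-1}$ in a propagator hitting a tail.

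The transfer step is also not formal here. For any splitting, Lemma~\ref{lem:string-leaf-cont} gives
$[M_{u^{-1}},S]=F\gamma_{0,1,1}S$, $[M_{u^{-1}},R]=R\gamma_{0,1,1}F$, $[M_{u^{-1}},F]=F\gamma_{0,1,1}F$ and $[M_{u^{-1}},H]=H(\gamma_{0,1,1}F(-),-)$.
Pushing $M_{u^{-1}}$ through a graph therefore creates $\gamma_{0,1,1}$ binary vertices on legs and $F$-edges. These match the $\epsilon$-part of $\cK_*\rho^{A,\tw,\omega_A}_*\mc_2$ exactly only in genus zero. For graphs with loops the discrepancy is nonzero at chain level and only $b$-exact, already for $(g,n)=(1,1)$. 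The paper handles this by factoring $\cK=\mathfrak S\circ K$ and building a closed element $f(t)+g(t)\,dt$ in the Chevalley--Eilenberg complex of the pseudo-isotopy family defining $K$. That element carries corrections $\ad^F(\gamma_{0,1,1})$, $\Delta^{H\gamma_{0,1,1}F}$, $\{-,-\}^{F\gamma_{0,1,1}F}_\hbar(t)$ and $\iota^{\delta,\gamma_{0,1,1}}$, and the paper then applies Lemma~\ref{lem:algebraic-setup}. Saying that the unit's propagator vanishes because $\Omega$ is flat does not deal with these terms. Since the string equation holds for any unital splitting, no special feature of $s^{\sf BT}$ can be what makes it work.

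The divisor equation needs the same two ingredients, with $M_{u^{-1}}$ replaced by the symmetric Getzler connection and $\gamma_{0,1,1}$ by $\eta_{0,1,1}+\eta_{0,2,0}$. It also needs the identification of the inserted functional: the cocycle $\zeta$ built from $dm/dt$ equals $\langle-,\nabla^{s,+}(\Omega u)\rangle_\Muk$ (Lemma~\ref{lem:functional-zeta}). That uses flatness of the higher residue pairing and $t$-independence of the cyclic pairing; ``Cartan homotopy corrections'' names this but does not prove it. Once the general divisor equation~\eqref{eq:divisor} is available, your final bookkeeping matches Lemma~\ref{lem:covariant-derivatives}: the $C$-part gives $f_i$, the $D$-part the covariant derivative, and the dilaton equation absorbs $a_i\Omega\psi$. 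So the main obstacle is not the dilaton step, which is the easy one, but these two missing pieces.
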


%\vspace{.5cm}
\begin{remark}
%We refer to Theorem~\ref{thm:dilaton}, Theorem~\ref{thm:string} and Theorem~\ref{thm:divisor} for the details.
These equations are proved in a slightly more general setting, namely for CEI of $\Z/2\Z$-graded cyclic $A_\infty$-algebras satisfying certain conditions. Thus, the above theorem is also applicable to various Fukaya categories that are only $\Z/2\Z$-graded. 
\end{remark}

\subsection{Partial recursion of combinatorial string vertices} 

Let us briefly describe the strategy to prove Theorem~\ref{thm:main2}. Recall the construction of CEI~\cite{Cos2, CalTu1} relies fundamentally on the notion of string vertices introduced by Sen-Zwiebach~\cite{SenZwi}. Geometrically, string vertices are a collection of chains $\{\mathcal{V}_{g,n}\}$ with rational coefficients in the coarse moduli space $M_{g,n}/S_n$ of genus $g$ smooth Riemann surfaces with $n$ marked points. String vertices are essentially defined by the Maurer-Cartan equation in the Sen-Zwiebach DGLA (see Section~\ref{subsec:cei-form}). Algebraically, their importance lies at the fact that they provide a canonical and universal solution to the so-called {\em quantum master equation} in any $2d$ Topological Conformal Field Theory (TCFT), see~\cite{Cos2,CalTu1}. Following~\cite{CalTu1}, in this paper we work exclusively with the combinatorial version of the Sen-Zwiebach DGLA. This combinatorial model makes heavy use of the so-called {\em black-and-white graphs} and its PROP-structure studied in~\cite{KonSoi,Cos1,WahWes}. In this model,  the string vertices are given by a collection of chains
$\{ \hcV_{g,k,l} \}$ which are linear combinations (with appropriate circle parameters) of black-and-white graphs of genus $g$, with $k\geq 1$ cycles and $l\geq 0$ white vertices. Again, combinatorial string vertices are essentially defined by the Maurer-Cartan equation in the combinatorial Sen-Zwiebach DGLA. We refer to Section~\ref{subsec:cei-form} for more details.

Turning back to the proof of Theorem~\ref{thm:main2}, the key is to show that the combinatorial string vertices can be chosen to be compatible with certain combinatorial forgetful maps. Geometrically, this compatibility may be thought of as describing the fiber of the string vertex $\cV_{g,n+1}$ over $\cV_{g,n}$ under the forgetful map from $M_{g,n+1}/S_{n+1} \ra M_{g,n}/S_n$. Combinatorially, we obtain a partial recursion formula producing the string vertices $\{\mathcal{V}_{g,k',l'}\}_{k'+l'=n+1}$ from $\{\mathcal{V}_{g,k,l}\}_{k+l=n}$. We refer to Theorem~\ref{thm:mc-recursion} for a precise statement of the partial recursion formula. Remarkably, via the graph calculus developed in~\cite{KonSoi,Cos1,WahWes}, our recursion formula is essentially obtained by sewing with the black-and-white graphs used in  Getzler's explicit formula of the Gauss-Manin connection on periodic cyclic homology~\cite{Get}.  Using the recursion property, we may deduce Theorem~\ref{thm:main2} from the explicit formula of CEI obtained in~\cite{CalTu1}.

To our point of view, it is also extremely interesting to explore other types of recursions such as topological recursion, Virasoro recursion in the context of string vertices. It seems the uniqueness of string vertices could be a really useful tool to understand recursion properties of Gromov-Witten type invariants. Recently, a progress in this direction is obtained in~\cite{FVM} in connection with Mirzakhani's recursion relation~\cite{Mir}.

\subsection*{Organization of the paper} 

We have made efforts to make the paper self-contained. In particular, we start with Section~\ref{sec:cei} where all necessary ingredients in the construction of CEI are recalled, following~\cite{CalTu1}. %Section~\ref{sec:family} discusses the construction of CEI over a base space. 
In Section~\ref{sec:dilaton} we prove the dilaton equation using string vertices that are compatible with forgetful maps. In Section~\ref{sec:recursion}, we prove a partial recursion of string vertices under the framing forgetful maps. This section is at the technical heart of the paper. Building on this recursion property, we proceed to prove the genus zero string equation in Section~\ref{sec:string}, the genus zero divisor equation in Section~\ref{sec:divisor}, and the higher genus version of these equations in Section~\ref{app:proof-string}. Using these equations and Givental's quantization formula, we prove the holomorphic anomaly equations in Section~\ref{sec:hae}.

\subsection*{Acknowledgments} 
We would like to warmly thank Andrei C\u ald\u araru and Alex Menzia for reading an earlier draft of the paper, for pointing out a mistake there concerning the recursion identity in Theorem~\ref{thm:mc-recursion}, and for their generous help in fixing the signs in Equation~\eqref{construction-T}. We also thank Yongbin Ruan for his continuous encouragement to explore the B-model CEI, as well as for proposing several research problems in this direction. The second named author is also grateful to Maxim Kontsevich for valuable discussions on various aspects of CEI during his visit to IHES in 2023. 
We both thank IHES for providing excellent research conditions at different stages of this project. 
%He also thanks the institute for providing excellent research conditions. 
YS is partially supported by Simons Grant MPS-TSM-00007929.
JT was partially supported by the National Key Research and Development Program of China No. 2023YFA1009803.

\section{Recollections of CEI}\label{sec:cei}

In this section, we briefly recall the construction of CEI, following~\cite{CalTu1}. Throughout the section, we shall let $A$ be a cyclic, unital $A_\infty$-algebra that is smooth and finite dimensional over $\mathbb{C}$. This is related to the setup of Section~\ref{para:intro-1.1} as follows. Given a Calabi-Yau $A_\infty$-category $(\cC, \Omega)$, assume furthermore that it is compactly generated. Then we can choose a compact generator $E$ and obtain (if needed using homological perturbation) a minimal $A_\infty$-algebra $A:=\End_\cC(E)$.  Since $\cC$ is proper, the $A_\infty$-algebra is finite dimensional. Then we apply a formal Darboux theorem (following Kontsevich-Soibelman~\cite{KonSoi,Cho}, and also~\cite{AmoTu2} for its unital version) to obtain a cyclic structure on $A$. As shown in~\cite{AmoTu2}, the CEI (defined using $A$) does not depend on the choice of $E$ and the cyclic model.

\subsection{Black-and-white graphs}\label{subsec:graphs} 

For an integer $k\geq 1$, $l\geq 0$, let $M_{g,k,l}^\fr$ denotes the moduli space of Riemann surfaces with $k$ framed incoming marked points and $l$ framed outgoing marked points.  Here a framing of a marked point is given by a choice of coordinate centered at the marked point. Such a framed Riemann surface may be denoted as
\[(\Sigma, p_1,\ldots,p_k,q_1,\ldots,q_l,\phi_1,\ldots,\phi_k,\psi_1,\ldots,\psi_l),\]
where $\Sigma$ is a smooth Riemann surface of genus $g$, the $p$'s and $q$'s are incoming marked points and outgoing marked points respectively, and the $\phi$'s and $\psi$'s are a choice of coordinates centered at the corresponding marked points. 
We also require the domains of the framings be disjoint. Using the framings at marked points, we can define a ``composition" structure called the sewing operations
\[ M_{g,k,l}^\fr\times (M_{g_1,k_1,l_1}^\fr \times \cdots \times M_{g_r,k_r,l_r}^\fr)  \ra M_{g+g_1+\cdots+g_r+k-r,k_1+\cdots+k_r,l}^\fr,\]
with $l_1+\cdots+l_r=k$. Algebraically, this is usually referred to as a ``PROP" structure, although here we have the additional requirement that the number of incomings is strictly positive.

These topological spaces are actually complex manifolds. Furthermore, observe that the map $M_{g,k,l}^\fr \ra M_{g,k,l}$ which forgets all the framings at marked points has fibers homotopy equivalent to $k+l$ copies of $S^1$. In~\cite{KonSoi,Cos1,WahWes}, an explicit chain complex denoted by $C^{\sf comb}_\bullet(M_{g,k,l}^\fr)$ (with $\mathbb{Q}$ coefficients) is defined using the so-called black-and-white ribbon graphs which would computes the rational homology of $M_{g,k,l}^\fr$. Furthermore, the geometric ``PROP" structure described above also admits an explicit combinatorial description:
\begin{equation}\label{eq:prop} 
 C^{\sf comb}_\bullet(M_{g,k,l}^\fr) \otimes C^{\sf comb}_\bullet(M_{g_1,k_1,l_1}^\fr) \otimes \cdots \otimes C^{\sf comb}_\bullet(M_{g_r,k_r,l_r}^\fr)  \ra C^{\sf comb}_\bullet(M_{g+g_1+\cdots+g_r+k-r,k_1+\cdots+k_r,l}^\fr).
\end{equation}
In the following, we shall recall the notion of black-and-white graphs and the combinatorial sewing operations in more detail. Our exposition follows closely the excellent treatment by Wahl-Westerland~\cite{WahWes}. 

\subsubsection{Definition of black-and-white graphs}

By a graph we mean a tuple $G=(V,H,s,i)$ where $V$ is the set of vertices, $H$ the set of half-edges, $s: H\ra V$ the source map, and $i: H \ra H$ is an involution. Fixed elements of the involution are called leaves. A non-trivial orbit of $i$, i.e., a pair of elements $\{ h,i(h)\}$ with $i(h)\neq h$ is called an edge. Denote by $E$ the set of edges of $G$. The valence of a vertex $v\in V$ is defined by ${\sf val}(v)=|s^{-1}(v)|$. 

A ribbon graph is a graph $G=(V,H,s,i)$ together with a cyclic ordering of each of the set $s^{-1}(v)$ for $v\in V$. These cyclic ordering may be viewed as a map $\tau: H \ra H$ which sends a half-edge to its next half-edge in the cyclic ordering. Then we define cycles of a ribbon graph by the orbits of the map $i\circ \tau: H \ra H$.

A black-and-white graph is a ribbon graph whose vertices is decomposed as $V=V_b \coprod V_w$, with $V_b$ denote the set of black vertices and $V_w$ the set of white vertices. The black vertices must be at least of valence three, while white vertices have valence at least one.  Furthermore, we require that every cycle has a leaf as its starting half-edge, and every white vertex has a starting half-edge, i.e., a choice of an element in $s^{-1}(v)$ for $v\in V_w$. These starting half-edges play the role of framing in the geometric setting. We also require that there are no other types of leaves. This last condition is not in the definition of black-and-white graphs in~\cite{WahWes} since they also deal with open sewing operations, while in this paper we only consider closed theory.

Given a black-and-white graph $G$, we may define its combinatorial type by setting
\begin{align*}
\begin{dcases}
    k &:= \mbox{number of cycles in $G$};\\
    l &:= \mbox{number of white vertices in $G$};\\
    g & := \frac{2+|E|-k-|V|}{2}.
\end{dcases}
\end{align*}
Observe that since every graph must have at least one cycle, we always have $k\geq 1$. For later usage, we label the white vertices by the set $\{1, 2, \ldots,l\}$ and the black vertices by the set  $\{1, 2, \ldots, k\}$. As the notation already suggests, the cycles in a graph $G$ correspond to incoming marked points, while the white vertices correspond to outgoing marked points.

In the following, we give a few examples of basic black-and-white graphs. 
\begin{enumerate}
\item The so-called {\em Mukai graph} denoted by $M$ is depicted as 
\begin{equation}\label{graph:mukai}
M:=\begin{tikzpicture}[baseline={([yshift=-.4ex]current bounding box.center)},scale=.35]
	\draw (2,0) node[cross=2pt,label=above:{}] {};
	\draw (6.2,0) node[cross=2pt,label=above:{}] {};
	\draw (2,-1) node {$1$};
	\draw (5.2,0) node {$2$};
	\draw [thick] (6.2,0) to (7.4,0);
	\draw [thick] (2,0) to (3.4,0);
	\draw [thick] (5.2,0) + (-85:2) arc(-90:270:2);
	\end{tikzpicture}\end{equation}
The cyclic ordering at each vertex is by the counterclockwise orientation of the plane. It has two cycles. The two leaves with a cross tale are the starting half-edges of the cycles. One can easily compute its combinatorial type as $k=2$, $l=0$ and $g=0$.
\item The following graph which we shall refer to as the T-graph
\begin{equation}\label{graph:T}
	T:=\begin{tikzpicture}[baseline={([yshift=-1.2ex]current bounding
      box.center)},scale=0.35] 
\draw [thick] (0,0) to (0,2);
\draw [thick] (-0.2, 1.8) to (0.2, 2.2);
\draw [thick] (0.2, 1.8) to (-0.2, 2.2);
\draw [thick] (0,0) to (-2,0);
\draw [thick] (0,0) to (2,0);
\draw [thick] (-2.2,0) circle [radius=0.2];
\draw [thick] (2.2,0) circle [radius=0.2];
\end{tikzpicture}\end{equation}
has one black vertex, two white vertices and one cycle. Thus we obtain $k=1$, $l=2$ and $g=0$. The unique leaf with a cross tale is the starting half-edge of the cycle. And each white vertex is of valence one, which implies a unique choice of its starting half-edge. 
\item Finally, we give an example of positive genus. Let us consider the following graph:
\[ \begin{tikzpicture}[baseline={([yshift=-.4ex]current bounding box.center)},scale=0.35]
\draw [thick] (0,2) circle [radius=2];
\draw [thick] (0,0) to (0,1.4);
\draw [thick] (-0.2, 1.2) to (0.2, 1.6);
\draw [thick] (-0.2, 1.6) to (0.2, 1.2);
\draw [thick] (0,0) to [out=80, in=180] (0.5, 1);
\draw [thick] (0.5,1) to [out=0, in=100] (0.9, 0.4);
\draw [thick] (0,0) to [out=-80, in=180] (0.5, -1);
\draw [thick] (0.5, -1) to [out=0, in=-100] (0.9, 0);
\end{tikzpicture}\]
One checks that it has a unique cycle, and the leaf with a cross tale is its starting half-edge. So we have $k=1$ and $l=0$. It has two loop edges, one black vertex, and $g=1$.
\end{enumerate}

An {{\em orientation}} of a graph $G=(V,H,s,i)$ is a unit vector in ${\sf det}\big(\mathbb{R}(V\coprod H)\big)$. With this in mind we set $C^{\sf comb}_\bullet(M_{g,k,l}^\fr)$ to be the vector space over $\mathbb{Q}$ generated by isomorphism classes of oriented black-and-white graphs of combinatorial type $(g,k,l)$ together with the relation that $(G,-\sigma)=-(G,\sigma)$ if $\sigma$ is an orientation of $G$. In~\cite{KonSoi,Cos1,WahWes}, a chain complex structure is constructed on $C^{\sf comb}_\bullet(M_{g,k,l}^\fr)$. Indeed, the degree of a black-and-white graph is 
\[ \deg(G):= \sum_{v\in V_b} ({\sf val}(v)-3) + \sum_{v\in V_w} ({\sf val}(v)-1).\]
For a black-and-white graph $G$, a blow-up of $G$ is a black-and-white graph $\widetilde{G}$ obtained from $G$ by performing a local operation at a vertex $v$ of $G$ according to the following rules.
\begin{enumerate}
	\item If $v$ is a black vertex, we may expand it into two black vertices connected by an edge and still requiring  both vertices have valence at least $3$. For example, if ${\sf val}(v)=4$, there are possibly two such expansions:
	\begin{align*}
		&\begin{tikzpicture}[baseline={([yshift=-1.2ex]current bounding
			box.center)},scale=0.5] 
		\draw [thick] (0,0) to (1,1);
		\draw [thick] (0,0) to (-1,-1);
		\draw [thick] (0,0) to (-1,1);
		\draw [thick] (0,0) to (1,-1);
	\end{tikzpicture} \;\;\stackrel{\;}{\longrightarrow}\;\; 
	\begin{tikzpicture}[baseline={([yshift=-1.2ex]current bounding
			box.center)},scale=0.5] 
		\draw [thick] (0,0) to (-1,-1);
		\draw [thick] (0,0) to (-1,1);
		\draw [thick] (0,0) to (1,0);
		\draw [thick] (1,0) to (2,1);
		\draw [thick] (1,0) to (2,-1);
	\end{tikzpicture} \mbox{\;\; \quad or \quad \;\;}	 
 \begin{tikzpicture}[baseline={([yshift=-1.2ex]current bounding
			box.center)},scale=0.5] 
		\draw [thick] (0,0) to (1,1);
		\draw [thick] (0,0) to (-1,-1);
		\draw [thick] (0,0) to (-1,1);
		\draw [thick] (0,0) to (1,-1);
	\end{tikzpicture} \;\;\stackrel{\;}{\longrightarrow}\;\; 
	\begin{tikzpicture}[baseline={([yshift=-1.2ex]current bounding
			box.center)},scale=0.5] 
		\draw [thick] (0,1) to (-1,2);
		\draw [thick] (0,1) to (1,2);
		\draw [thick] (0,1) to (0,-1);
		\draw [thick] (0,-1) to (-1,-2);
		\draw [thick] (0,-1) to (1,-2);
	\end{tikzpicture}
	\end{align*}
	\item If $v$ is a white vertex, we may expand it into one black vertex (with valence at least $3$) and one white vertex connected by an edge. If the starting half-edge at $v$ after expansion is still at the white vertex, then it is the starting half-edge of the white vertex. Otherwise, the starting half-edge marking at the white vertex is by the new half-edge attached to the white vertex. The latter situation is illustrated in the following example:
	\begin{align*}
		\begin{tikzpicture}[baseline={([yshift=-1.2ex]current bounding
				box.center)},scale=0.35]
		\draw [thick] (9,0) circle [radius=0.2];
		\draw [thick] (6.8,2) to (8.8,0);
		\draw [thick] (6.8,-2) to (8.8,0);
		\draw [thick] (11.2,2) to (9.2,0);
		\draw [thick] (11.2,-2) to (9.2,0);
		\draw [line width=2.4pt] (9.2,0) to (10.2,1);
		\end{tikzpicture} \;\;\stackrel{\;}{\longrightarrow}\;\; 
			\begin{tikzpicture}[baseline={([yshift=-1.2ex]current bounding
				box.center)},scale=0.35]
			\draw [thick] (9,0) circle [radius=0.2];
			\draw [thick] (6.8,2) to (8.8,0);
			\draw [thick] (6.8,-2) to (8.8,0);
			\draw [thick] (9.2,0) to (11.2,0);
			\draw [line width=2.4pt] (9.2,0) to (10.2,0);
			\draw [thick] (11.2,0) to (13.2,2);
			\draw [thick] (11.2,0) to (13.2,-2);
		\end{tikzpicture}
	\end{align*}
	Note that in these pictures, the thickened half-edge or leaf at a white vertex represents its starting half-edge or leaf. For a white vertex $v\in V_w$ with $|{\sf val}(v)|=1$, since its unique half edge must be its starting half-edge, we omit the thickening in this case. 
\end{enumerate}
Fix a vertex $v\in V$ of $G$, denote the set of black-and-white graphs obtained from expanding $G$ at $v$ by $(G,v)$. Then the boundary map in the chain complex  $C^{\sf comb}_\bullet(M_{g,k,l}^\fr)$ is defined by 
\[ \partial G:= \sum_{\substack{v\in V,\\ \widetilde{G}\in (G,v)}}  \widetilde{G},\]
i.e., the boundary is summing over all possible ways to expand a vertex in $G$. We refer the details (including how orientations are defined in the boundary map) to Wahl-Westerland's careful treatment~\cite[Section 2.5]{WahWes}. The following pictures illustrate a few cases of the boundary map. 
\[\begin{tikzpicture}[baseline={([yshift=-1.2ex]current bounding
      box.center)},scale=0.5] 
\draw [thick] (0,0) to (1,1);
\draw [thick] (0,0) to (-1,-1);
\draw [thick] (0,0) to (-1,1);
\draw [thick] (0,0) to (1,-1);
\end{tikzpicture} \;\;\stackrel{\partial}{\longrightarrow}\;\; 
\begin{tikzpicture}[baseline={([yshift=-1.2ex]current bounding
      box.center)},scale=0.5] 
\draw [thick] (0,0) to (-1,-1);
\draw [thick] (0,0) to (-1,1);
\draw [thick] (0,0) to (1,0);
\draw [thick] (1,0) to (2,1);
\draw [thick] (1,0) to (2,-1);
\end{tikzpicture} +
\begin{tikzpicture}[baseline={([yshift=-1.2ex]current bounding
      box.center)},scale=0.5] 
\draw [thick] (0,1) to (-1,2);
\draw [thick] (0,1) to (1,2);
\draw [thick] (0,1) to (0,-1);
\draw [thick] (0,-1) to (-1,-2);
\draw [thick] (0,-1) to (1,-2);
\end{tikzpicture}
\]
 \[\begin{tikzpicture}[baseline={([yshift=-1.2ex]current bounding
      box.center)},scale=0.5] 
\draw [thick] (0,0) to (-2,0);
\draw [thick] (-2.2,0) circle [radius=0.2];
\draw [line width=2pt] (-3.4,0) to (-2.4,0);
\draw [thick] (-4.4,0) to (-2.4,0);
\end{tikzpicture} \;\;\stackrel{\partial}{\longrightarrow}\;\; 
\begin{tikzpicture}[baseline={([yshift=-1.2ex]current bounding
      box.center)},scale=0.3] 
\draw [thick] (2,0) to (-2,0);
\draw [thick] (0,2.2) circle [radius=0.2];
\draw [line width=2pt] (0,1) to (0,2);
\draw [thick] (0,0) to (0,2);
\end{tikzpicture} + \begin{tikzpicture}[baseline={([yshift=-1.2ex]current bounding
      box.center)},scale=0.3] 
\draw [thick] (2,0) to (-2,0);
\draw [thick] (0,-2.2) circle [radius=0.2];
\draw [line width=2pt] (0,-1) to (0,-2);
\draw [thick] (0,0) to (0,-2);
\end{tikzpicture}
\]
 \[\begin{tikzpicture}[baseline={([yshift=-1.2ex]current bounding
      box.center)},scale=0.5] 
\draw [thick] (0,0) to (-2,0);
\draw [thick] (-2.2,0) circle [radius=0.2];
\draw [line width=2pt] (-2.2,0.2) to (-2.2,1);
\draw [thick] (-4.4,0) to (-2.4,0);
\end{tikzpicture} \;\;\stackrel{\partial}{\longrightarrow}\;\; 
\begin{tikzpicture}[baseline={([yshift=-1.2ex]current bounding
      box.center)},scale=0.3] 
\draw [thick] (2,0) to (-2,0);
\draw [thick] (0,1.2) circle [radius=0.2];
\draw [line width=2pt] (0,2.4) to (0,1.4);
\draw [thick] (0,0) to (0,1);
\end{tikzpicture} + \begin{tikzpicture}[baseline={([yshift=-1.2ex]current bounding
      box.center)},scale=0.3] 
\draw [thick] (0,0) to (-2,0);
\draw [thick] (-2.2,0) circle [radius=0.2];
\draw [line width=2pt] (-3.4,0) to (-2.4,0);
\draw [thick] (-4.4,0) to (-2.4,0);
\end{tikzpicture} + \begin{tikzpicture}[baseline={([yshift=-1.2ex]current bounding
      box.center)},scale=0.3] 
\draw [thick] (0,0) to (-2,0);
\draw [thick] (-2.2,0) circle [radius=0.2];
\draw [line width=2pt] (-1,0) to (-2,0);
\draw [thick] (-4.4,0) to (-2.4,0);
\end{tikzpicture}
\]

\subsubsection{Sewing of black-and-white graphs}

To define the sewing operations combinatorially, let us consider the following simplified case when $r=1$ in Equation~\eqref{eq:prop}. The case with a general $r$ in Equation~\eqref{eq:prop} is defined in a similar way. We will define a map of the form
\[ C^{\sf comb}_\bullet(M_{g,k,l}^\fr) \otimes C^{\sf comb}_\bullet(M_{g_1,k_1,k}^\fr)  \ra C^{\sf comb}_\bullet(M_{g+g_1+k-1,k_1,l}^\fr).\]
Let us take $G_1\in C^{\sf comb}_\bullet(M_{g_1,k_1,k}^\fr)$ and $G\in C^{\sf comb}_\bullet(M_{g,k,l}^\fr)$. Note that both white vertices of $G_1$ and cycles of $G$ are labeled by $1,2,\ldots, k$. We define the result of sewing $G\circ G_1$ by the sum over all possible black-and-white graphs that can be obtained from $G_1$ and $G$ by
\begin{itemize}
    \item[(a)] removing the $k$ white vertices of $G_1$,
    \item[(b1)] if the $i$-th white vertex $v_i$ of $G_1$ is a starting half-edge, we identify it with the $i$-th starting leaf $\lambda_i$ of the $i$-th cycle in $G$,
    \item[(b2)] if the $i$-th white vertex $v_i$ has a starting leaf, and the starting leaf $\lambda_i$ of the $i$-th cycle is attached to a trivalent vertex (otherwise the result $G\circ G_1$ is defined to be zero), we remove both starting leaves,
    \item[(c)] attaching the remaining half-edges in $s^{-1}(v_i)$ to vertices of the $i$-th cycle of $G$, respecting the cyclic ordering of the half-edges.
\end{itemize}
We refer to~\cite{WahWes} for more details and how orientations work out in the sewing constructions described above. It was also proved in {\sl Loc. Cit.} that the gluing map $\circ$ is a chain map and is an associative operation~\cite[Lemma 2.5]{WahWes}.

More generally, there are gluing maps defined between inputs and outputs. More precisely, fixing a subset $I\subset\{1,\ldots,l''\}$, $J\subset \{1,\ldots,k'\}$ such that $|I|=|J|=r\geq 1$, there is a gluing map defined by sewing the outputs in $I$ with the inputs in $J$ in all possible $r!$ ways :
\begin{equation}\label{eq:gluing-maps}
	\prescript{}{J}\circ_I : C_\bullet^\comb(M_{g',k',l'}^\fr)\otimes C_\bullet^\comb(M_{g'',k'',l''}^\fr) \ra C_\bullet^\comb(M_{g,k,l}^\fr),\end{equation}
where $(g',k',l',g'',k'',l'')$ satisfies
%$g=g'+g''+r-1$, $k=k'+k''-r$ and $l=l'+l''-r$. 
\begin{equation}
\label{indices-mc-equation}
\begin{dcases}
g'+g'' =g-r+1,\\
k'+k''= k+r,\\
l'+l''= l+r.
\end{dcases}
\end{equation}
Furthermore, when the subset $I=\{1,\ldots,l''\}$ or $J=\{1,\ldots,k'\}$, we shall drop the corresponding subcript when writing the composition $\prescript{}{J}\circ_I$.

In the following, we give a few examples of sewing operations.

\begin{enumerate}
\item There is a special black-and-white graph of combinatorial type $(g=0,k=1,l=1)$ that plays the role of identity in the graph composition defined above. It is depicted as 
\[\begin{tikzpicture}[baseline={([yshift=-1.2ex]current bounding
      box.center)},scale=0.5] 
\draw [thick] (0,0) to (-2,0);
\draw [thick] (-0.2, -0.2) to (0.2, 0.2);
\draw [thick] (0.2, -0.2) to (-0.2, 0.2);
\draw [thick] (-2.2,0) circle [radius=0.2];
\end{tikzpicture}\]
Using the above sewing algorithm, one may verify sewing a graph $G$ with the above graph at a chosen cycle or a chosen white vertex yields the same graph $G$. We can also perform sewing at a fixed index:
\[ \begin{tikzpicture}[baseline={([yshift=-.4ex]current bounding box.center)},scale=.3]
\draw (2,0) node[cross=2pt,label=above:{}] {};
\draw (6.2,0) node[cross=2pt,label=above:{}] {};
\draw (2,-1) node {$1$};
\draw (5.2,0) node {$2$};
\draw [thick] (6.2,0) to (7.4,0);
\draw [thick] (2,0) to (3.4,0);
\draw [thick] (5.2,0) + (-85:2) arc(-90:270:2);
\end{tikzpicture} \;\;{\;_1\circ_1}\;\;\;  \begin{tikzpicture}[baseline={([yshift=-1.2ex]current bounding
      box.center)},scale=0.3] 
\draw [thick] (0,0) to (0,2);
\draw [thick] (-0.2, 1.8) to (0.2, 2.2);
\draw [thick] (0.2, 1.8) to (-0.2, 2.2);
\draw [thick] (0,0) to (-2,0);
\draw [thick] (0,0) to (2,0);
\draw [thick] (-2.2,0) circle [radius=0.2];
\draw (-2,-1) node {$2$};
\draw [thick] (2.2,0) circle [radius=0.2];
\draw (2,-1) node {$1$};
\end{tikzpicture} = \begin{tikzpicture}[baseline={([yshift=-.4ex]current bounding box.center)},scale=.3]
\draw [thick] (0,0) to (0,2);
\draw [thick] (-0.2, 1.8) to (0.2, 2.2);
\draw [thick] (0.2, 1.8) to (-0.2, 2.2);
\draw [thick] (0,0) to (-2,0);
\draw [thick] (0,0) to (2,0);
\draw [thick] (-2.2,0) circle [radius=0.2];
\draw (6.2,0) node[cross=2pt,label=above:{}] {};
\draw (1,2) node {$1$};
\draw (5.2,0) node {$2$};
\draw [thick] (6.2,0) to (7.4,0);
\draw [thick] (2,0) to (3.4,0);
\draw [thick] (5.2,0) + (-85:2) arc(-90:270:2);
\end{tikzpicture}\]
where the subscript in $\;_1\circ_1$ is to indicate we perform the sewing operation at the white vertex labeled by $1$ in the first graph with the cycle labeled by $1$ in the Mukai graph.

\item The result of sewing both white vertices with the two cycles in the previous example yields a genus one black-and-white graph:
\[ \begin{tikzpicture}[baseline={([yshift=-.4ex]current bounding box.center)},scale=.3]
\draw (2,0) node[cross=2pt,label=above:{}] {};
\draw (6.2,0) node[cross=2pt,label=above:{}] {};
\draw (2,-1) node {$1$};
\draw (5.2,0) node {$2$};
\draw [thick] (6.2,0) to (7.4,0);
\draw [thick] (2,0) to (3.4,0);
\draw [thick] (5.2,0) + (-85:2) arc(-90:270:2);
\end{tikzpicture} \;\;{\circ}\;\;\; \begin{tikzpicture}[baseline={([yshift=-1.2ex]current bounding
      box.center)},scale=0.3] 
\draw [thick] (0,0) to (0,2);
\draw [thick] (-0.2, 1.8) to (0.2, 2.2);
\draw [thick] (0.2, 1.8) to (-0.2, 2.2);
\draw [thick] (0,0) to (-2,0);
\draw [thick] (0,0) to (2,0);
\draw [thick] (-2.2,0) circle [radius=0.2];
\draw (-2,-1) node {$2$};
\draw [thick] (2.2,0) circle [radius=0.2];
\draw (2,-1) node {$1$};
\end{tikzpicture}  = \begin{tikzpicture}[baseline={([yshift=-.4ex]current bounding box.center)},scale=.3]
\draw [thick] (0,0) to (0,2);
\draw [thick] (-0.2, 1.8) to (0.2, 2.2);
\draw [thick] (0.2, 1.8) to (-0.2, 2.2);
\draw [thick] (0,0) to (-2,0);
\draw [thick] (0,0) to (2,0);
\draw [thick] (5,-1.6) to (7.4,0);
\draw [thick] (-2,0) to [out=180, in=220] (4.3, -2);
\draw [thick] (2,0) to (3.4,0);
\draw [thick] (5.2,0) + (-85:2) arc(-90:270:2);
\end{tikzpicture}\]
\item There are two other black-and-white graphs that are important for us. The first one is of type $(g=0,k=1,l=1)$, given by
\begin{equation}\label{graph:circle}
B:= \begin{tikzpicture}[baseline={([yshift=.2ex]current bounding
      box.center)},scale=0.5] 
\draw [thick] (0,0) to (-2,0);
\draw [thick] (-0.2, -0.2) to (0.2, 0.2);
\draw [thick] (0.2, -0.2) to (-0.2, 0.2);
\draw [thick] (-2.2,0) circle [radius=0.2];
\draw (-1,-.5) node {$h$};
\draw (-2.2,-.5) node {$w$};
\draw (-3.3,-.5) node {$l$};
\draw [line width=2pt] (-4.4,0) to (-2.4,0);
\end{tikzpicture} 
\end{equation}

We endow it with the orientation $w\wedge l\wedge h$. This is a closed degree one element in $C^{\sf comb}_\bullet(M_{0,1,1}^\fr) $, geometrically representing the fundamental class of the moduli space of framed annuli with one input and one output (which is homotopy equivalent to $S^1$). Note that the thickened leaf $l$ is the starting half-edge of the white vertex $w$, while the leaf $h$ with a cross tale is the starting half-edge of the cycle. An example of sewing with this graph is the following
\[\begin{tikzpicture}[baseline={([yshift=-1.2ex]current bounding
      box.center)},scale=0.3] 
\draw [thick] (0,0) to (-2,0);
\draw [thick] (-0.2, -0.2) to (0.2, 0.2);
\draw [thick] (0.2, -0.2) to (-0.2, 0.2);
\draw [thick] (-2.2,0) circle [radius=0.2];
\draw [line width=2pt] (-4.4,0) to (-2.4,0);
\end{tikzpicture} \;\;{\circ_1}\;\;\; \begin{tikzpicture}[baseline={([yshift=-1.2ex]current bounding
      box.center)},scale=0.3] 
\draw [thick] (0,0) to (0,2);
\draw [thick] (-0.2, 1.8) to (0.2, 2.2);
\draw [thick] (0.2, 1.8) to (-0.2, 2.2);
\draw [thick] (0,0) to (-2,0);
\draw [thick] (0,0) to (2,0);
\draw [thick] (-2.2,0) circle [radius=0.2];
\draw (-2,-1) node {$2$};
\draw [thick] (2.2,0) circle [radius=0.2];
\draw (2,-1) node {$1$};
\end{tikzpicture}  = \begin{tikzpicture}[baseline={([yshift=-1.2ex]current bounding
      box.center)},scale=0.3] 
\draw [thick] (0,0) to (0,2);
\draw [thick] (-0.2, 1.8) to (0.2, 2.2);
\draw [thick] (0.2, 1.8) to (-0.2, 2.2);
\draw [thick] (0,0) to (-2,0);
\draw [thick] (0,0) to (2,0);
\draw [thick] (-2.2,0) circle [radius=0.2];
\draw (-2,-1) node {$2$};
\draw [thick] (2.2,0) circle [radius=0.2];
\draw [line width=2pt] (4.4,0) to (2.4,0);
\draw (2,-1) node {$1$};
\end{tikzpicture}\]
The second one is of type $(g=0,k=2, l=0)$, and is called the {\em thickened Mukai graph}, explicitly given by
\begin{equation}\label{graph:thick-mukai}
\mathbb{M}:= \begin{tikzpicture}[baseline={([yshift=-.4ex]current bounding box.center)},scale=0.3]
\draw (2,0) node[cross=2pt,label=above:{}] {};
\draw (4.4,0) node[cross=2pt,label=above:{}] {};
\draw [ thick] (4.3,0) to (3.2,0);
\draw [ thick] (2.1,0) to (3.2,0);
\draw [ thick] (5.2,0) + (-85:2) arc(-90:270:2);
\draw (2,-1) node {$1$};
\draw (5.2,0) node {$2$};
\end{tikzpicture}
\end{equation}
Geometrically, this represents the fundamental class of the moduli space of framed annuli with two inputs and no output. Via the sewing operation, this graph is obtained from sewing the Mukai graph graph with the white vertex of the first special graph, i.e., we have
\[\begin{tikzpicture}[baseline={([yshift=-.4ex]current bounding box.center)},scale=.3]
\draw (2,0) node[cross=2pt,label=above:{}] {};
\draw (6.2,0) node[cross=2pt,label=above:{}] {};
\draw (2,-1) node {$1$};
\draw (5.2,0) node {$2$};
\draw [ thick] (6.2,0) to (7.4,0);
\draw [ thick] (2,0) to (3.4,0);
\draw [ thick] (5.2,0) + (-85:2) arc(-90:270:2);
\end{tikzpicture}  \;\;\prescript{}{2}\circ \;\; \begin{tikzpicture}[baseline={([yshift=-1.2ex]current bounding
      box.center)},scale=0.3] 
\draw [thick] (0,0) to (-2,0);
\draw [thick] (-0.2, -0.2) to (0.2, 0.2);
\draw [thick] (0.2, -0.2) to (-0.2, 0.2);
\draw [thick] (-2.2,0) circle [radius=0.2];
\draw [line width=2pt] (-4.4,0) to (-2.4,0);
\end{tikzpicture}= \begin{tikzpicture}[baseline={([yshift=-.4ex]current bounding box.center)},scale=0.3]
\draw (2,0) node[cross=2pt,label=above:{}] {};
\draw (4.4,0) node[cross=2pt,label=above:{}] {};
\draw [ thick] (4.3,0) to (3.2,0);
\draw [ thick] (2.1,0) to (3.2,0);
\draw [ thick] (5.2,0) + (-85:2) arc(-90:270:2);
\draw (2,-1) node {$1$};
\draw (5.2,0) node {$2$};
\end{tikzpicture}
\]

\end{enumerate}

\subsection{Sen-Zwiebach's DGLA and combinatorial string vertices}
\label{subsec:cei-form}  
There are two main ingredients involved in the construction of CEI: the first being combinatorial Sen-Zwiebach string vertices~\cite{SenZwi} and the second being a $2$-dimensional topological conformal field theory (TCFT) structure on the shifted  Hochschild chain complex. In this subsection, we deal with the notion of combinatorial string vertices.

Let us continue to work with the chain complex $C^{\sf comb}_\bullet(M^\fr_{g,k,l})$ of black-and-white graphs of combinatorial type $(g,k,l)$. Let $\underline{\sgn_k}[-k]$ be the sign representation of the symmetric group $\Sigma_k$, shifted by $[-k]$ in homological degree. For simplicity, we often abbreviate this as $\underline{\sgn}$ when the integer $k$ is clear from the context. Denote by $C^{\sf comb}_\bullet(M^\fr_{g,k,l},\underline{\sgn})$ the $\underline{\sgn}$-twisted chain complex. Observe there are several group actions on $C^{\sf comb}_\bullet(M^\fr_{g,k,l},\underline{\sgn})$:
\begin{enumerate}
\item The symmetric group $\Sigma_k$ acts on it by permuting the labeling of cycles, with a sign twisted by the local system $\underline{\sgn}$.
\item The symmetric group $\Sigma_l$ acts on it by permuting the labeling of white vertices. 
\item We also have $(k+l)$ homological circle actions on it defined by sewing with the circle graph $B$ in Equation~\eqref{graph:circle}
%\[B=\begin{tikzpicture}[baseline={([yshift=-1.2ex]current bounding      box.center)},scale=0.5] 
%\draw [thick] (0,0) to (-2,0);
%\draw [thick] (-0.2, -0.2) to (0.2, 0.2);
%\draw [thick] (0.2, -0.2) to (-0.2, 0.2);
%\draw [thick] (-2.2,0) circle [radius=0.2];
%\draw [line width=2pt] (-4.4,0) to (-2.4,0);
%\end{tikzpicture} 
%\]
at cycles or at white vertices. More precisely, we set
\[
\begin{dcases}
B^c_i(G) := (-1)^{\deg(G)}G \prescript{}{i}\circ B, & \text{for} \quad 1\leq i\leq k;\\
B^w_j(G) := B \circ_j G, & \text{for} \quad 1\leq j\leq l.
\end{dcases}
\]

\end{enumerate}
We may take its homotopy quotient by the $(k+l)$-circle actions, which yields a chain complex
\begin{equation}\label{eq:equiv-chain}
	 \bigg( C^{\sf comb}_\bullet(M^\fr_{g,k,l},\underline{\sgn})[w_1^{-1},\ldots,w_k^{-1},u_1^{-1},\ldots,u_l^{-1}], \quad \partial +
     \sum_{i=1}^{k}w_i B^{c}_i+\sum_{j=1}^{l}w_j B^w_j
     %+w_1\cdot B^c_1+\cdots + w_k\cdot B^c_k + u_1\cdot B^w_1+\cdots +u_l\cdot B^w_l
     \bigg),\end{equation}
where $w_i$'s and $u_j$'s are the corresponding circle parameters, each of homological degree $-2$. The group $\Sigma_k\times \Sigma_l$ still acts on this chain complex. We denote its further quotient complex by $C^{\sf comb}_\bullet(M_{g,k,l}^{\sf fr}, \underline{\sgn})_{\sf hS}$. 

Putting various $g,k,l$'s together we obtain a graded vector space denoted by
\[\widehat{\mathfrak{g}}:= \bigoplus_{\substack{g\geq 0, k\geq 1, l\geq 0\\ 2g-2+k+l>0}} C^{\sf comb}_\bullet(M_{g,k,l}^{\sf fr}, \underline{\sgn})_{\sf hS}[2][[\hbar,\lambda]],\]
where the subscript ${\sf hS}$ denotes the homotopy quotient by the the $(k+l)$ circle actions followed by the quotient of the symmetric group $\Sigma_k\times \Sigma_l$. The formal variables $\hbar$ and $\lambda$ are both of homological degree $-2$. The variable $\hbar$ keeps track of the genus, while $\lambda$ keeps track of the Euler characteristic. In the following, using the gluing maps in Equation~\ref{eq:gluing-maps}, we shall write down a DGLA structure in the combinatorial model $\widehat{\mathfrak{g}}_c^\comb$ explicitly.
In~\cite{CalTu1,CCT}, a DGLA structure was constructed on $\widehat{\mathfrak{g}}$. Its differential is of the form $\eth+\iota+\hbar \Delta$. These operators are defined as follows.
\begin{enumerate}
    \item We denote the equivariant boundary map of the chain complex $C^{\sf comb}_\bullet(M_{g,k,l}^{\sf fr}, \underline{\sgn})_{\sf hS}$ by 
    \begin{equation}\label{eth-operator}
    \eth:=\partial +
    \sum_{i=1}^{k}w_i B^{c}_i+\sum_{j=1}^{l}w_j B^w_j
    \end{equation}
    \item Let $G$ be a black-and-white graph. For each $1\leq j\leq l$ we set:
    \begin{align*}
    	\iota_j(G):= (-1)^{\deg(G)}M \prescript{}{1}\circ_j G,
    \end{align*}
 We then extend $\iota_j$ linearly in the $u_j$-variable, i.e., $u_j\mapsto w_0$, and linearly in all other circle parameters. Here $w_0$ is the circle parameter of the cycle in $M \prescript{}{1}\circ_j G$ that was from the cycle in $M$ that is not used in sewing with $G$. The operator  $\iota$  is defined by 
    $$\iota:=\sum_{j=1}^{l} \iota_j: C^{\sf comb}_\bullet(M_{g,k,l}^{\sf fr},\underline{\sgn})_{\sf hS} \to C^{\sf comb}_\bullet(M_{g,k+1,l-1}^{\sf fr}, \underline{\sgn})_{\sf hS}$$ 

\item 
Let $\pi_{ij}$ be the map by setting the circle parameters $u_i^{-1}=u^{-1}_j=0$. 
By sewing with the thickened Mukai graph $\mathbb{M}$ in Equation~\eqref{graph:thick-mukai}, we define the {\em twisted self-sewing} operator 
\begin{equation}\label{eq:twisted-self-sewing}
\Delta:=\sum_{1\leq i<j\leq l} \mathbb{M} \circ_{\{i,j\}} \pi_{ij}: C^{\sf comb}_\bullet(M_{g,k,l+2}^{\sf fr}, \underline{\sgn})_{\sf hS} \ra C^{\sf comb}_\bullet(M_{g+1,k,l}^{\sf fr}, \underline{\sgn})_{\sf hS}
\end{equation}
\end{enumerate}

Next, we define the Lie bracket in $\hg^\comb$. 
Let $(g',g'',k',k'',l',l'')$ be a tuple that satisfies condition~\eqref{indices-mc-equation}.
For 
%For %two elements 
$$\alpha\in C^{\sf comb}_\bullet(M_{g',k',l'}^{\sf fr}, \mathbb{Q}^\epsilon)_{\sf hS}, \quad \beta \in C^{\sf comb}_\bullet(M_{g'',k'',l''}^{\sf fr}, \mathbb{Q}^\epsilon)_{\sf hS},$$ 
and each $r\geq 1$, 
we define
\begin{equation}\label{eq:def-r-bracket}
	\alpha \underset{[r]}{\circ} \beta :=  \frac{(-1)^{|\beta|(k'-r)}}{r!}
    %\sum_{\substack{I\subset \{1,\ldots,l''\}\\J\subset\{1,\ldots,k'\}\\|I|=|J|=r}} 
    \sum_{\substack{I\subset \{1,\ldots,l''\}\\|I|=r}} 
    \sum_{\substack{J\subset\{1,\ldots,k'\}\\|J|=r}}
    \pi_J(\alpha)\prescript{}{J}\circ B^{\otimes r} \circ_I \;\pi_I(\beta) \in  C^{\sf comb}_\bullet(M_{g,k,l}^{\sf fr}, \mathbb{Q}^\epsilon)_{\sf hS}.
\end{equation}
%let us define a map \begin{align*}\underset{[r]}{\circ}: & C^{\sf comb}_\bullet(M_{g',k',l'}^{\sf fr}, \mathbb{Q}^\epsilon)_{\sf hS} \otimes C^{\sf comb}_\bullet(M_{g'',k'',l''}^{\sf fr}, \mathbb{Q}^\epsilon)_{\sf hS} \ra C^{\sf comb}_\bullet(M_{g,k,l}^{\sf fr}, \mathbb{Q}^\epsilon)_{\sf hS}\end{align*}
Here $B$ is the circle graph~\eqref{graph:circle}. Then, we define the Lie bracket map $\{-,-\}_\hbar$ by
\begin{equation}\label{eq:def-bracket-hbar}
\{\alpha,\beta\}_\hbar := \sum_{r\geq 1} \{\alpha,\beta\}_r \hbar^{r-1}
:=  \sum_{r\geq 1} (-1)^{|\alpha|}\big( \alpha\underset{[r]}{\circ}\beta - (-1)^{|\alpha||\beta|}\beta\underset{[r]}{\circ}\alpha\big) \hbar^{r-1}.
%\begin{split}
%\{\alpha,\beta\}_\hbar &:= \sum_{r\geq 1} \{\alpha,\beta\}_r \hbar^{r-1},\\
%\{\alpha,\beta\}_r &:=  \sum_{r\geq 1} (-1)^{|\alpha|}\big( \alpha\underset{[r]}{\circ}\beta - (-1)^{|\alpha||\beta|}\beta\underset{[r]}{\circ}\alpha\big).
%\end{split}
\end{equation}
The sign $(-1)^{|\alpha|}$ in the front is due to that the map $\{-,-\}_\hbar$ is defined on the shited DGLA $\hg^\comb[1]$. We refer the details and the proof that these operations indeed form a DGLA structure on $\hg^\comb$ to~\cite{CalTu1}.

\begin{Theorem}\label{thm:comb-string-vertex}\cite[Theorem 5.9]{CalTu1}
There exists a unique, up to gauge equivalence, Maurer-Cartan element of $\hg$ of the form \begin{equation}
\label{string-vertex}
\hcV:= \sum_{g,k,l} \widehat{\mathcal{V}}_{g,k,l} \hbar^g \lambda^{2g-2+k+l}
\end{equation}
such that
\[\widehat{\mathcal{V}}_{0,1,2}=\begin{tikzpicture}[baseline={([yshift=-1.2ex]current bounding
      box.center)},scale=0.3] 
\draw [thick] (0,0) to (0,2);
\draw [thick] (-0.2, 1.8) to (0.2, 2.2);
\draw [thick] (0.2, 1.8) to (-0.2, 2.2);
\draw [thick] (0,0) to (-2,0);
\draw (-2,0) node[label=left:{$\frac{1}{2}\;$}] {};
\draw [thick] (0,0) to (2,0);
\draw [thick] (-2.2,0) circle [radius=0.2];
\draw [thick] (2.2,0) circle [radius=0.2];
\end{tikzpicture}\]
The factor $\frac{1}{2}$ is due to the orbifold structure of the moduli space $M_{0,1,2}$. 
\end{Theorem}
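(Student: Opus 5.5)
The plan is the standard obstruction-theoretic one: build $\hcV$ by induction on a filtration and show that the obstructions and the ambiguities are governed by the homology of the equivariant chain complexes $C^{\sf comb}_\bullet(M^{\sf fr}_{g,k,l},\underline{\sgn})_{\sf hS}$. Filter $\hg$ by the weight $N = 2g-2+k+l$, which the $\lambda$-grading records. The Maurer--Cartan equation
\[
(\eth+\iota+\hbar\Delta)\hcV+\tfrac12\{\hcV,\hcV\}_\hbar=0
\]
has a convenient shape in this filtration. Its component in type $(g,k,l)$ reads
\[
\eth\,\hcV_{g,k,l} = -\,\iota\hcV_{g,k-1,l+1}-\Delta\hcV_{g-1,k,l+2}-\tfrac12\sum \{\hcV_{g',k',l'},\hcV_{g'',k'',l''}\}_r ,
\]
where the indices on the right are constrained by \eqref{indices-mc-equation}. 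Every term on the right has strictly smaller weight, or the same weight but smaller $k$ (the $\iota$-term). So we order the types lexicographically by $(N,k)$ and solve for $\hcV_{g,k,l}$ inductively. The base case is the prescribed $\hcV_{0,1,2}$. One checks that it is $\eth$-closed, or at least that the MC equation holds in weight $1$; this reduces to $\partial T=0$ up to $B$-terms.

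For the inductive step, let $O_{g,k,l}$ denote the right-hand side. Using the DGLA identities of \cite{CalTu1} (namely $\eth^2=0$, that $\iota,\Delta$ are derivations of the bracket compatible with $\eth$, and the Jacobi identity), together with the MC equation in all lower types, one shows that $\eth O_{g,k,l}=0$. The obstruction is therefore the class $[O_{g,k,l}]$ in the homology of the homotopy quotient. The key input is the computation of that homology. The combinatorial complex computes $H_\bullet(M^{\sf fr}_{g,k,l})$ \cite{KonSoi,Cos1,WahWes}, and the $(k+l)$ circle actions are free on the framings. The homotopy quotient therefore computes the rational homology of $M_{g,k+l}$ with $\underline{\sgn}$-twisted coefficients, coinvariant under $\Sigma_k\times\Sigma_l$, suitably shifted. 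Since $M_{g,n}$ is a smooth orbifold of real dimension $6g-6+2n$ with rational homology vanishing above degree $6g-6+2n$, a degree count finishes the argument. The obstruction sits in degree $6g-6+2(k+l)+1$ after the shifts ($\hcV_{g,k,l}$ has degree $6g-6+2k+2l$, matching the fundamental class), so it vanishes and a solution $\hcV_{g,k,l}$ exists. Verifying this degree bookkeeping, including the $[2]$ shift, the $\underline{\sgn}[-k]$ shift, and the degree $-2$ of $\hbar,\lambda$, is the first thing to pin down carefully.

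For uniqueness, take two MC elements with the same $\hcV_{0,1,2}$ and proceed again by induction on $(N,k)$. Suppose they agree, after gauge transformations, up to some type. Then their difference in the next type is $\eth$-closed of top degree $6g-6+2(k+l)$. By the same homology computation it is a multiple of the fundamental class, but in the twisted coinvariant quotient one uses degree $d+1$ for the gauge parameter. Concretely, an element $\xi$ of degree one higher exists only if the difference is exact. Hence I would instead argue as follows: the top-degree homology is one-dimensional, spanned by the fundamental class (or zero, when the sign twist kills it). The difference is then exact plus a multiple of the fundamental class, and that multiple is forced to vanish by a normalization. The normalization comes from the MC equation itself: it determines $\eth\hcV$ on the boundary strata, and for a top-degree chain this fixes the fundamental class coefficient. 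Where the homology class is forced to vanish, the difference is exact, $=\eth\xi$, and we gauge by $\exp(\xi)$. This changes only higher types, and the induction continues; convergence holds since the filtration is complete.

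The main obstacle is the homology identification: showing that the combinatorial homotopy quotient with $\underline{\sgn}$ twist computes the orbifold homology of the uniframed-forgotten moduli space, in the right degrees, so that both the obstruction and the ambiguity groups vanish. I would first try to cite Costello's argument \cite{Cos2} transplanted through the quasi-isomorphism with the geometric chains, rather than compute combinatorially.
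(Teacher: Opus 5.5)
Your outline (induction over types, with obstructions and ambiguities controlled by the rational homology of $M_{g,k+l}$ through the homotopy quotient, and gauge-fixing for uniqueness) is the standard argument behind the cited result and behind the paper's Lemmas~\ref{lem:semi-unique-gn} and~\ref{lem:semi-unique}. However, both homological inputs you invoke are wrong, so neither existence nor uniqueness is established.

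\textbf{Existence.} The degree count runs backwards. $\eth$ has degree $-1$, since blowing up a vertex lowers the degree by one. So $O_{g,k,l}=-\eth\hcV_{g,k,l}$ sits one degree \emph{below} $\hcV_{g,k,l}$, not above it.

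Moreover, $\iota$ preserves the graph degree while raising $k$. Hence the graph degree of $\hcV_{g,k,l}$ is $6g-6+2n+(k-1)$ with $n=k+l$; for example, $\hcV_{0,2,1}$ has degree $1$, as the paper notes. The $\underline{\sgn}[-k]$ shift is what makes this uniform in total degree.

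Consequently, for $k=1$ the obstruction lies in degree $6g-7+2n$, one below $\dim_{\mathbb{R}}M_{g,n}$. For $k=2$ it lies in the top degree. Only for $k\ge 3$ does a dimension count suffice.

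For $k=1$ you need $H_{6g-7+2n}(M_{g,n};\mathbb{Q})_{\Sigma_l}=0$, which is a real theorem. It follows from Harer's bound on the virtual cohomological dimension of $\Gamma_{g,n}$, namely $4g-4+n$ for $g\ge1$ and $n-3$ for $g=0$. This bound lies strictly below $6g-7+2n$ except in two borderline cases (the case $(0,3)$ is trivial, being degree $-1$):
\begin{itemize}
\item $(1,1)$: here $H_1(M_{1,1};\mathbb{Q})=0$.
\item $(0,4)$: here $H_1(M_{0,4};\mathbb{Q})$ is the standard representation of $\Sigma_3$, which has no coinvariants.
\end{itemize}
For $k=2$ you need top-degree vanishing, i.e.\ non-compactness of $M_{g,n}$. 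The exception is $(0,2,1)$: there $M_{0,3}$ is a point, and it is the sign twist on the two inputs that kills $H_0$. This is exactly why the paper insists on the twist when constructing $\hcV_{0,2,1}$.

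\textbf{Uniqueness.} Your premise that the top homology is one-dimensional, spanned by a fundamental class, is false. Every $M_{g,n}$ that occurs, other than $M_{0,3}$, is a non-compact orbifold, so $H_{6g-6+2n}=0$. That vanishing (automatic for $k\ge 2$, where the ambiguity sits above the top degree) is what makes the closed difference $\eth$-exact.

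There is no ``normalization coming from the MC equation''. Two solutions that agree in all lower types differ by a cycle, and the MC equation says nothing about its class. Were that homology group nonzero, uniqueness would simply fail. The only type with nonzero relevant homology is $(0,1,2)$, where $H_0(M_{0,3})=\mathbb{Q}$; this is precisely why $\hcV_{0,1,2}$ must be prescribed.

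\textbf{Ordering.} Ordering types by $(N,k)$ does not dominate the $\Delta$-term: $\Delta\hcV_{g-1,k,l+2}$ has the same $N$ and the same $k$. Use the lexicographic order on $(N,g,k)$, as in the proof of Proposition~\ref{prop:forgetful}.
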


%\noindent 
String vertices are defined as a collection of chains $\{\hcV_{g,k,l}\}_{(g,k,l)}$ such that the element in~\eqref{string-vertex} is the unique (up to gauge equivalence) solution in the theorem above. 
When writing down in components, the Maurer-Cartan equation 
\[ (\eth+\iota+\hbar\Delta)\hcV+\frac{1}{2}\{\hcV,\hcV\}_\hbar =0\]
is equivalent to the following system of equations:
\begin{equation}\label{eq:mc}
\eth\widehat{\mathcal{V}}_{g,k,l}+\iota\widehat{\mathcal{V}}_{g,k-1,l+1}+\Delta\widehat{\mathcal{V}}_{g-1,k,l+2}+\frac{1}{2}\sum\{\widehat{\mathcal{V}}_{g',k',l'},\widehat{\mathcal{V}}_{g'',k'',l''}\}_r=0.
 \end{equation}
Here, the summation $\Sigma$ is over all $r\geq 1$ and all $(g', g'', k', k'', l',
l'')$ that satisfy~\eqref{indices-mc-equation}.

\subsection{Topological conformal field theories}\label{subsec:tcft} 
In this subsection, we recall the second ingredient in the construction of CEI. Let $A$ be a finite dimensional, strictly unital $A_\infty$-algebra, over the field $\bbC$. Denote the (shifted) $A_\infty$ structure maps by 
\[ m_k: A[1]^{\otimes k} \to A[1].\]
Recall that strict unitality means that there exists an element $\bone_A\in A$ such that
\begin{align}\label{eq:unitality}
	\begin{dcases}
	m_2(\bone_A,a)  =(-1)^{|a|}m_2(a,\bone_A)=a, & \\
	m_k(\ldots,\bone_A,\ldots)=0, & \mbox{for each $k\geq 3$.}
	\end{dcases}
\end{align}
We use the notations $|-|$ and $|-|'$ to denote the degree in $A$ and its shifted degree in $A[1]$ respectively, i.e., $|-|'=|-|+1 \pmod{2}$. Recall the construction of Hochschild invariants associated with $A$. Denote by $\overline{A}:= A/(\bbC\cdot \bone_A)$ the quotient of $A$ by the one dimensional subspace generated by the unit element. Then we may define the reduced Hochschild chain complex
\begin{equation}\label{eq:hoch-chain}
C_\bullet(A):=\bigoplus_{r \geq 0} A \otimes \overline{A}^{\otimes r},
\end{equation}
with the grading $|a_0\otimes a_1\otimes \ldots\otimes a_r|:= |a_0|+ |a_1|' +  \ldots |a_r|'$. 
We denote by 
$a_0|a_1 \ldots a_r:=a_0\otimes a_1 \otimes\ldots\otimes a_r.$
The Hochschild differential is given by 
\begin{align*} 
b(a_0|a_1 \ldots a_r) =&  \sum_{j=1}^{r-i+1}\sum_{i=1}^{r} (-1)^{\star}a_0| a_1 \ldots m_j(a_i,\ldots, a_{i+j-1}) \ldots a_r\\
 & + \sum_{0\leq i\leq j\leq r} (-1)^@ m_{r-j+i+1}(a_{j+1}, \ldots, a_r, a_0, a_1, \ldots, a_i)|a_{i+1}\ldots a_j,
\end{align*}
where $\star=|a_{0,i-1}|':=|a_0|'+\ldots |a_{i-1}|'$ and $@=|a_{0,j}|'|a_{j+1,r}|'$ is the Koszul sign.

A {\em cyclic structure} on $A$ of dimension $d$ is a non-degenerate pairing $\langle -,-\rangle:A^{\otimes 2}\to \bbC[-d]$ that satisfies 
\begin{equation}\label{eq:cyclic-structure}
\begin{dcases}
\langle a_1, a_2\rangle= (-1)^{|a_1|'|a_2|'}\langle a_2, a_1 \rangle, \\
\langle m_k(a_0, \ldots, a_{k-1}), a_k\rangle=(-1)^{|a_0|'(|a_1|'+\ldots+|a_k|')}\langle m_k(a_1, \ldots, a_{k}), a_0\rangle.
\end{dcases}
\end{equation}
%where $\heartsuit=|a_0|'(|a_1|'+\ldots+|a_k|')$. 

The following is a result of Costello~\cite{Cos1}, Kontsevich-Soibelman~\cite{KonSoi} and Wahl-Westerland~\cite{WahWes}.
\begin{Theorem}\label{thm:tcft}
	 Let $A$ be a finite dimensional, cyclic, and strictly unital $A_\infty$-algebra of dimension $d$, then its shifted Hochschild chain complex $L^A:=C_\bullet(A)[d]$ carries a $2$-dimensional Topological Conformal Field Theory (TCFT) structure, i.e., there are action maps compatible with sewing operations:
	\[ \rho^A_{g,k,l}: C^{\sf comb}_\bullet(M_{g,k,l}^{\sf fr}) \ra {\sf Hom} \big( (L^A)^{\otimes k}, (L^A)^{\otimes l}\big)\]
	with $g\geq 0$, $k\geq 1$, $l\geq 0$, and $2g-2+k+l\geq 0$.
\end{Theorem}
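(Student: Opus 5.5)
The plan is to construct $\rho^A$ explicitly by Kontsevich--Soibelman style graph sums. To each black-and-white graph $G$ of type $(g,k,l)$ and inputs $x_i=a^i_0|a^i_1\ldots a^i_{r_i}\in C_\bullet(A)$, $i=1,\ldots,k$, we assign the following operation.

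First, we place the string $a^i_0,a^i_1,\ldots,a^i_{r_i}$ along the $i$-th cycle of $G$. The element $a^i_0$ goes at the starting leaf, and the $a^i_j$ are distributed in cyclic order among the corners of the cycle, summed over all distributions.

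Second, at each black vertex of valence $n+1$ we insert the cyclic tensor $\langle m_n(-,\ldots,-),-\rangle$, using the cyclic symmetry~\eqref{eq:cyclic-structure} to make this well defined up to the cyclic ordering.

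Third, at each edge we contract with the inverse of the pairing $\langle-,-\rangle$, which is possible since $A$ is finite dimensional and the pairing is non-degenerate.

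Fourth, at each white vertex of valence $n$ we output a Hochschild chain $e_0|e_1\ldots e_{n-1}$. Here $e_0$ sits on the starting half-edge, and the remaining half-edges receive the dual basis elements, read in the cyclic order.

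Orientation signs come from the determinant line ${\sf det}(\mathbb{R}(V\coprod H))$, combined with Koszul signs. The shift $[d]$ on $L^A$ absorbs the degree $-d$ of the pairing, so that $\rho_{g,k,l}(G)$ has the degree of $G$ (plus the standard dimension shift).

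The key steps, in order, are as follows.

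(i) Well-definedness. We must check that $\rho(G)$ depends only on the isomorphism class of $G$ and transforms correctly under change of orientation. We must also check that units are handled compatibly with the reduced complex~\eqref{eq:hoch-chain}. Strict unitality~\eqref{eq:unitality} ensures that $\bone_A$ inserted at a corner of valence $\geq 3$ vanishes except through $m_2$.

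(ii) Chain map property. We need $\rho(\partial G)=[b,\rho(G)]$, where $b$ is the Hochschild differential on the inputs and outputs. The contribution of $b$ on an input cycle, which applies $m_j$ to consecutive $a$'s on that cycle, matches the blow-ups of black vertices that create a new black vertex on the cycle carrying those inputs. The $A_\infty$ relations $\sum m\circ m=0$, together with cyclicity, cancel all the remaining terms, namely the expansions of black vertices into two black vertices. The expansions of a white vertex into a black and a white vertex match the output part of $b$. In that matching the two cases for the starting half-edge correspond to the two types of terms in $b$: the $a_0|\ldots m(\ldots)\ldots$ terms and the wrap-around $m(\ldots,a_0,\ldots)|\ldots$ terms.

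(iii) Compatibility with sewing. Sewing contracts an output white vertex of $G_1$ against an input cycle of $G$. On the algebra side this is the composition of the outputs of $\rho(G_1)$ with the inputs of $\rho(G)$. Distributing the tensor factors $e_1,\ldots$ among the corners of the cycle reproduces exactly rule (c) of the sewing construction. Rules (b1) and (b2) correspond respectively to placing $e_0$ at the starting leaf, and to the unit contraction at a trivalent vertex; the latter uses $m_2(\bone,-)=\id$.

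I expect the main obstacle to be the sign bookkeeping in (ii) and (iii). The work lies in matching the orientation conventions of~\cite[Section 2.5]{WahWes} with Koszul signs, and in checking the unit cases of (b2). Here I would follow Wahl--Westerland's approach: first treat the ``open'' version, with boundary-labelled graphs acting on tensor powers of $A$. That version is a direct consequence of the cyclic $A_\infty$ structure. One then deduces the closed Hochschild version by their general construction, which is functorial in the sewing. This reduces the proof to citing~\cite{Cos1,KonSoi,WahWes}, with unitality handled as in~\cite{AmoTu2}.
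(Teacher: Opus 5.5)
Your route is the same as the paper's: the paper gives no argument of its own for this theorem but quotes it from Costello, Kontsevich--Soibelman and Wahl--Westerland, and your explicit graph-sum construction followed by a citation of \cite{Cos1,KonSoi,WahWes} is exactly that. One correction to your sketch of (ii), which does not affect the cited argument: the expansions of black vertices into two black vertices do not cancel. In the $A_\infty$ relation at a black vertex, the splittings in which the inner $m$ takes only corner inputs give $b$ on the inputs; the splittings in which both sides contain at least two half-edges of $G$ are the blow-ups and give $\rho(\partial G)$; and the splittings in which one side contains exactly one half-edge cancel in pairs across edges, except at starting leaves and white vertices, where they supply the wrap-around terms of $b$ and the remaining terms of $b$ on the outputs.
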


%\vspace{.5cm}
Observe that in a TCFT structure, the number of inputs $k$ should be strictly positive. A TCFT structure carries a rather rich amount of data.  In the following, we consider a few examples. When the context is clear, we shall drop the subscripts $(g,k,l)$ from $\rho^A_{g,k,l}$. 

\begin{example}
We give some examples where $2g-2+k+l=0$. There are two cases: $(g,k,l)=(0,1,1)$ or $(g,k,l)=(0,2,0)$.  We will use the symbol $@$ for the obtained sign, following the Koszul convention for the shifted degrees, by rotating the inputs of the expression from their original order. 

In the first case where $(g,k,l)=(0,1,1)$, let us consider the graph in Equation~\eqref{graph:circle}.  Applying $\rho^A$ to it yields a map denoted by $B: L^A \ra L^A$. This operator yields the well-known Connes' operator~\cite[Section 11]{KonSoi} on the reduced Hochschild chain complex:
\[ B(a_0|a_1 \ldots  a_r)= \sum (-1)^{@} \bone_A|a_i\ldots,a_ra_0a_1\ldots a_{i-1}.\]
Since $\rho^A$ is a chain map, we have $bB+Bb=0$ where $b$ is the Hochschild differential on $L^A$. Furthermore, since $\rho^A$ is also compatible with sewing operations, we also have $B^2=0$.  Such an operator on a chain complex is usually referred to as a circle operator. Let $u$ be a formal variable of homological degree $-2$, one may define
\begin{enumerate}
    \item periodic cyclic chain complex $L^A((u))$ endowed with differential $b+uB$,
    \item negative cyclic chain complex $L^A_+:= L^A[[u]]$, a subcomplex of $L^A((u))$,
    \item cyclic chain complex $L^A_-:= L^A[u^{-1}]$ by the quotient complex $L^A((u))/u\cdot L^A_+$.
\end{enumerate}

In the case where $(g,k,l)=(0,2,0)$, applying $\rho^A$ to the Mukai graph~\eqref{graph:mukai}, we get a chain-level pairing (known as the Mukai pairing) given by:
\begin{equation}\label{eq:mukai-pairing}
\langle-,-\rangle_{\sf Muk}:=\rho^A(M):L^A\otimes L^A \ra \mathbb{C}.\end{equation}
For later usage, we may also extend the Mukai pairingto a pairing between  $L^A_-$ and $L^A_+$ by setting 
\begin{equation}\label{eq:generalized-Mukai-chain-level}
\langle 
x u^{i}, 
y u^{-j}
\rangle_{\Muk}:= 
\delta_i^j(-1)^j\langle x,y\rangle_\Muk.
\end{equation}
As another example, when $(g,k,l)=(0,1,2)$, applying $\rho^A$ to the T-graph~\eqref{graph:T} we obtain a map 
\[ \rho^A(T) : L^A \ra L^A\otimes L^A.\]
This map is the linear dual (using the cyclic pairing on $A$) to the cup product on the Hochschild cochain complex of $A$~\cite[Proposition 6.9]{WahWes}.
\end{example}

We may twist the map $\rho^A_{g,k,l}$ by the local system $\underline{{\sf sgn}}$ to obtain a map
\[ \rho^{A,\tw}_{g,k,l}: C^{\sf comb}_\bullet(M_{g,k,l}^{\sf fr},\underline{{\sf sgn}}) \ra {\sf Hom} \big( (L^A[1])^{\otimes k}, (L^A)^{\otimes l}\big).\]
Explicitly, this map is defined by setting
\begin{align*} 
\rho_{g,k,l}^A \big( G, p_1\wedge\cdots\wedge p_k \big) &:= \rho_{g,k,l}^A(G) \circ s^{\otimes k}
\end{align*}
where $G$ is a black-and-white graph of type $(g,k,l)$, $p_1\wedge\cdots\wedge p_k$ denotes a trivialization of the local system $\underline{{\sf sgn}}$ given by the the input cycles of $G$, and $s: L^A[1] \ra L^A$ is the shift map. Then, we take homotopy quotients on both sides with respect to the $(S_k\times S_l)\ltimes (S^1)^{k+l}$-action to obtain a map still denoted by
\[ \rho_{g,k,l}^{A,{\sf tw}}: C^{\sf comb}_*(M_{g,k,l}^{\sf fr},\underline{\sgn})_\hS \ra {\sf Hom}^\cont \big( \sym^k(L^A_+[1]), \sym^l L^A_-\big).\]
Here, note that taking the $S^1$ quotient of the complex $L^A$ normally yields $L^A_-$. However, since the complex $L^A[1]$ appears in the input place, taking the $S^1$ quotient yields $L^A_+[1]$. And the notation ${\sf Hom}^\cont$ stands for continuous maps in the $u$-adic topology, i.e. linear maps which vanish on elements with large enough $u$-power.
More explicitly, in the case of $k=1$, this action is given by
\begin{equation}\label{eq:action-map}
\rho_{g,1,l}^{A,{\sf tw}}\big( \alpha \cdot u_1^{-a_1}\cdots u_l^{-a_l} w^{-b}\big) (u^{c} x) 
:= %\begin{dcases}
\delta_b^c\ 
p\Big( (-1)^{b} \rho_{g,1,l}^{A,{\sf tw}} (\alpha) (x) \cdot u_1^{-a_1}\cdots u_l^{-a_l}\Big), 
%& \mbox{\; if \;} b=c;\\0, & \mbox{\; otherwise. \;}
%\end{dcases}
\end{equation}
where $p: L_-^{\otimes l} \ra \sym^l L_-$ is the canonical projection map and $\delta_{b}^{c}$ is the Kronecker Delta. For general $k\geq 1$, the map is defined similarly with symmetrization at the inputs.

Putting all these maps together, we obtain a map denoted by
\begin{equation}\label{eq:rho-tw}
 \rho^{A,\tw}: \widehat{\mathfrak{g}} \ra \widehat{\mathfrak{h}}_A:= \bigoplus_{k\geq 1,l\geq 0} {\sf Hom}^\cont \big( \sym^k (L^A_+[1]) , \sym^l (L^A_-) \big)[[\hbar,\lambda]].
 \end{equation}
 The upshot is that by the same construction as in Section~\ref{subsec:cei-form}, we may define a DGLA structure on the right hand side $\widehat{\mathfrak{h}}_A$ so that $\rho^{A,\tw}$ becomes a morphism of DGLA's, see~\cite{CCT} for the details. Indeed, the differential of $\widehat{\mathfrak{h}}_A$ is also of the form $\eth+\iota+\hbar \Delta$, with each operator defined as follows.
\begin{enumerate}
	\item The operator $\eth:=b+uB$ denotes the extension of the equivariant boundary map $b+uB$ on $L^A_-$ or $L^A_+[1]$ to their symmetric powers by Leibniz rule, and on ${\sf Hom} \big( \sym^k (L^A_+[1]) , \sym^l (L^A_-) \big)$ by commutator. 
	\item The operator $\iota: {\sf Hom}^\cont \big( \sym^k (L^A_+[1]) , \sym^l (L^A_-) \big) \ra {\sf Hom}^\cont \big( \sym^{k+1} (L^A_+[1]) , \sym^{l-1} (L^A_-) \big)$ is essentially defined by sewing with the Mukai paring. More precisely, for an element $y\in L^A_+[1]$, we define a construction operator $C_{y}: \sym^l (L^A_-)\to \sym^{l-1}(L^A_-)$ by 
    \[C_{y}(x_1\odot\cdots\odot x_l):= \sum_{i=1}^l(-1)^{|x_i|(|x_1|+\cdots+|x_{i-1}|)}\langle y,x_i\rangle_\Muk\cdot  x_1\odot\cdots\widehat{x_i}\cdots\odot x_l. \]
    Then, for $\alpha \in {\sf Hom}^\cont \big( \sym^k (L^A_+[1]), \sym^l (L^A_-) \big)$, we set
	\begin{equation}
    \label{eq-iota-tcft}
    \iota(\alpha)(y_1\odot\cdots\odot y_{k+1}):= \sum_{j=1}^{k+1}(-1)^{|y_j|(|y_1|+\cdots+|y_{j-1}|+|\alpha|)}C_{y_j}\left(\alpha(y_1\odot\cdots\widehat{y_j}\cdots\odot y_{k+1})\right).
    \end{equation}
	\item The operator $\Delta: {\sf Hom}^\cont \big( \sym^k (L^A_+[1]) , \sym^{l+2}(L^A_-) \big) \ra {\sf Hom}^\cont \big( \sym^{k} (L^A_+[1]) , \sym^{l} (L^A_-) \big)$ is defined by sewing with $\rho^A$ applied to the thickened Mukai graph $\mathbb{M}$ in Equation~\eqref{graph:thick-mukai}, i.e. 
    \begin{equation}\label{eq-delta-tcft}
	\Delta(\alpha):= \sum_{1\leq i<j\leq l+2} \rho^A(\mathbb{M}) \circ_{\{i,j\}}  \pi_{ij}(\alpha),\end{equation}
	where the map $\pi_{ij}$ sets the circle parameters $u_i^{-1}=u^{-1}_j=0$.  Again, composition $ \circ_{\{i,j\}}$ means taking the $i$-th and $j$-th outputs of $\alpha$ as the inputs of $\rho^A(\mathbb{M})$. Algebraically, the operator $\Delta$ is defined by contracting the following symmetric bilinear form on $L^A_-$:
    \[\Delta(x\odot y):= \langle B x_0,y_0\rangle_\Muk, \]
    if $x=x_0+x_{-1}u^{-1}+\cdots$, and $y=y_0+y_{-1}u^{-1}+\cdots$.
	\item Furthermore, the Lie bracket of $\widehat{\mathfrak{h}}_A$ is defined in a way similar to that in Equation~\eqref{eq:def-r-bracket}. Indeed, for each $r\geq 1$, let us define a map $\underset{[r]}{\circ}$ of the form
	\begin{align*}
	 & {\sf Hom}^\cont \big( \sym^{k'} (L^A_+[1]) , \sym^{l'} (L^A_-) \big) \otimes {\sf Hom}^\cont \big( \sym^{k''} (L^A_+[1]) , \sym^{l''} (L^A_-) \big) \to  {\sf Hom}^\cont \big( \sym^k (L^A_+[1]) , \sym^l (L^A_-) \big)
	\end{align*}
	(with $k=k'+k''-r$ and $l=l'+l''-r$) by setting
	\begin{equation}\label{eq:def-r-bracket-algebraic}
		\alpha \underset{[r]}{\circ} \beta :=  \frac{(-1)^{|\beta|(k'-r)}}{r!}
        %\sum_{\substack{I\subset \{1,\ldots,l''\}\\J\subset\{1,\ldots,k'\}\\|I|=|J|=r}} 
        \sum_{\substack{I\subset \{1,\ldots,l''\}\\|I|=r}} 
    \sum_{\substack{J\subset\{1,\ldots,k'\}\\|J|=r}}
    \pi_J(\alpha)\prescript{}{J}\circ B^{\otimes r}\circ_I \;\pi_I(\beta)
	\end{equation}
Then, we define the Lie bracket map $\{-,-\}_\hbar$ by
\begin{equation}\label{eq:def-bracket-hbar-algebraic}
\{\alpha,\beta\}_\hbar := \sum_{r\geq 1} \{\alpha,\beta\}_r \hbar^{r-1}
:=  \sum_{r\geq 1} (-1)^{|\alpha|}\big( \alpha\underset{[r]}{\circ}\beta - (-1)^{|\alpha||\beta|}\beta\underset{[r]}{\circ}\alpha\big) \hbar^{r-1}.
%\begin{split}
%\{\alpha,\beta\}_\hbar &:= \sum_{r\geq 1} \{\alpha,\beta\}_r \hbar^{r-1},\\
%\{\alpha,\beta\}_r &:=  (-1)^{|\alpha|}\big( \alpha\underset{[r]}{\circ}\beta - (-1)^{|\alpha||\beta|}\beta\underset{[r]}{\circ}\alpha\big).
%\end{split}
\end{equation}
\end{enumerate}

Since the DGLA structures of $\hg$ and $\hh_A$ are constructed in a complete parallel manner, it follows that the map $\rho^{A,\tw}$ is indeed a DGLA morphism. The image of the combinatorial string vertices under the action map $\rho^{A,\tw}$ in Equation~\eqref{eq:rho-tw} yields a collection of multi-linear maps
\begin{equation}\label{eq:image-vertex} 
\widehat{\beta}^{A}_{g,k,l}:= \rho^{A,\tw}_{g,k,l}(\widehat{\mathcal{V}}_{g,k,l})\quad\in\quad {\sf Hom}^\cont \big( \sym^k (L^A_+[1]) , \sym^l (L^A_-) \big).\end{equation}
These multilinear maps will serve as building blocks of CEI. They are packaged to a Maurer-Cartan element \begin{equation}\label{hat-beta-A}
\widehat{\beta}^{A}= \sum_{g,k,l} \widehat{\beta}^{A}_{g,k,l} \hbar^g\lambda^{2g-2+k+l}\quad 
\in \quad \widehat{\mathfrak{h}}_A.
\end{equation}

\subsection{Explicit formula of CEI}\label{para:trivialization} 

To summarize the previous discussion, we have obtained
\begin{itemize}
    \item a canonical (up to gauge equivalences) Maurer-Cartan element $\hcV\in \hg$ called {\em combinatorial string vertices}, 
    \item a morphism $\rho^{A,\tw}: \widehat{\mathfrak{g}} \ra \hh_A$ of DGLA's~\eqref{eq:rho-tw}.
\end{itemize}
Now, let us recall the definition of CEI from \cite{CalTu1}.

\subsubsection{Splittings of the non-commutative Hodge filtration.} 
To obtain CEI we also need to choose a splitting of the nc-Hodge filtration of $A$, see~\cite{Cos2,CLT,CalTu2}. Recall from the previous subsection that $L^A=C_\bullet(A)[d]$ is the shifted Hochschild chain complex of $A$, and $L^A_+=L^A[[u]]$ is the shifted negative cyclic chain complex of $A$. The latter is endowed with the equivariant differential $b+uB$. Observe that the natural projection map $L^A_+ \ra L^A$ is a map of complexes. The Mukai pairing induces, by sesquilinear extension, the chain-level higher residue pairing 
$\langle-,-\rangle_{\sf hres}  : L^A_+ \otimes L^A_+ \to \mathbb{C}[[u]]$
with 
\begin{equation}
%	\langle-,-\rangle_{\sf hres} & : L^A_+ \otimes L^A_+ \to \mathbb{C}[[u]],\\
\label{higher-residue-pairing}
\langle u^k x, u^l y\rangle_{\sf hres}:= (-1)^k\langle x, y\rangle_{\sf Muk}\cdot u^{k+l}.
\end{equation}
This defines a chain map (see \cite[Section 5]{She}) that yields the higher residue pairing still denoted by
$$\langle-,-\rangle_\hres : H_\bullet(L^A_+)\otimes H_\bullet(L^A_+) \to \mathbb{C}[[u]].$$

\begin{Definition}
\label{def:nc-splitting}
A splitting of the non-commutative Hodge filtration of $A$ is given by
a map of $\Z$-graded vector spaces
$ s:H_\bullet(L^A)\ra H_\bullet(L^A_+) $
satisfying the following two conditions:
\begin{itemize}
\item[S1.] {\em (Splitting condition.)} The map $s$ splits the canonical projection
  $H_\bullet(L^A_+) \ra H_\bullet(L^A)$.
\item[S2.] {\em (Lagrangian condition.)} For any $x,y\in H_\bullet(L^A)$, we have 
  $\langle s(x),\, s(y)\rangle_{\sf hres} = \langle
  x,\,y\rangle_\Muk.$
\end{itemize}
\end{Definition}

We remark that, under our assumptions on $A$, one can show that such a splitting always exists. This essentially (plus a bit of linear algebra to deal with the Lagrangian condition) follows from Kaledin's resolution~\cite{Kal} of the Kontsevich-Soibelman's Hodge-to-de-rham degeneration conjecture in the $\mathbb{Z}$-graded case.

\subsubsection{Explicit combinatorial formula of CEI}
\label{sec-explicit-cei}
We shall make use of the splitting data $s$ to ``trivialize" the DGLA structure on $\hh_A$.  
Let us denote by $\hh_A^{\sf TRIV}$ the DGLA that has the same underlying graded vector space as $\hh_A$, but is endowed with differential $b+\iota$ and the zero Lie bracket. In other words, the operators $B$, $\Delta$ and $\{-,-\}_\hbar$ defined using the Connes operator are all removed from $\hh_A$. Given a splitting $s$ of the nc-Hodge filtration of $A$, in~\cite{CalTu1} a ``trivialization" $L_\infty$ isomorphism
\begin{equation}\label{eq:Triv-map} 
\cK: \hh_A \ra \hh_A^{\sf TRIV}
\end{equation}
is constructed using an explicit stable graph sum; see Equation~\eqref{eq:trivialization-map-K} below. Thus, we may consider the composition of $L_\infty$ morphisms:
\[ \hg \stackrel{\rho^{A,\tw}}{\longrightarrow} \hh_A \stackrel{\cK}{\longrightarrow} \hh_A^{\sf TRIV}.\]
The explicit formula of CEI obtained in~\cite{CalTu1} is simply given by the push-forward formula %Maurer-Cartan element denoted by
\[\lbA:= \cK_* \rho^{A,\tw}_* \hcV = \cK_*\hbA.\] 
Note that since $\hcV$ is unique up to gauge equivalences, the cohomology class $[\lbA]\in H^*\big(\hh_A^{\sf TRIV}\big) $ depends only on the cyclic $A_\infty$-algebra and the choice of splitting data $s$. Let us write down the element $\lbA$ in components. By construction, it is of the following form: 
\begin{equation}\label{eq:lbA}
%\begin{split}
    \lbA  = \sum_{g,k,l} \lbA_{g,k,l}\hbar^g\lambda^{2g-2+k+l}.
    %\\\lbA_{g,k,l} &:= \lbA_{g,k,l}.
%\end{split}
\end{equation} 
Thus, by fixing a stable pair of integers $(g,n)$, the following element 
\[\sum_{k=1}^n \lbA_{g,k,n-k}\quad \in \quad \bigoplus_{k=1}^n {\sf Hom}^\cont \big( \sym^k (L^A_+[1]) , \sym^{n-k} L^A_- \big)\]
is a $(b+\iota)$-closed element in the chain complex. 
In particular, its first component $\lbA_{g,1,n-1}$
%$$\lbA_{g,1,n-1}\in {\sf Hom}^\cont \big( L^A_+[1] , \sym^{n-1} L^A_- \big)$$
is $b$-closed. 

Let $S_{n-1}$ be the symmetric group of $n-1$ elements.
We extend the Mukai pairing~\eqref{eq:mukai-pairing} to a pairing between  $\sym^{n-1}\big(H_\bullet(L^A)[u^{-1}]\big)$ and $\sym^{n-1}\big(H_\bullet(L^A)[u]\big)$ by setting 
\begin{equation}\label{eq:generalized-Mukai}
\bigg\langle 
%\beta_1u^{-l_1}\cdots\beta_{n-1}u^{-l_{n-1}},
\bigodot\limits_{i=1}^{n-1}\beta_i u^{-l_i}, 
%\alpha_1(-u)^{k_1}\cdots\alpha_{n-1}(-u)^{k_{n-1}}
\bigodot\limits_{i=1}^{n-1}\alpha_i(-u)^{k_i}
\bigg\rangle_{\Muk}:= \sum_{\sigma\in S_{n-1}} \prod_{j=1}^{n-1}
\delta_{l_j}^{k_{\sigma(j)}}\langle \beta_j,\alpha_{\sigma(j)}\rangle_\Muk,
\end{equation}
where $\bigodot$ denotes the symmetric product. The main result proved in~\cite[Theorem 1.3]{CalTu1} is the following

\begin{Theorem}
{\bf (1.)} The push-forward Maurer-Cartan element $\lbA_{g,1,n-1}$ has a combinatorial formula:
    \begin{equation}\label{eq:cei-formula-main}
 \lbA_{g,1,n-1} = \sum_{\GG\in
    \Gamma((g,1,n-1))} \frac{\wt(\GG)}{|\Aut(\GG)|}\rho^{A,\tw}_\GG \in {\sf Hom}^\cont \big( L^A_+[1] , \sym^{n-1} L^A_- \big).
  \end{equation}
{\bf (2.)} Let $\alpha_i\in H_\bullet(L^A)$ and $k_i\geq 0$ for $1\leq i\leq n$, then the CEI may be computed as
\begin{align}
\langle\alpha_1\psi^{k_1},\ldots,\alpha_n\psi^{k_n}\rangle_{g,n}^{A,s}
=&\bigg\langle 
\lbA_{g,1,n-1}
\big(\alpha_n (-u)^{k_n}\big),%\alpha_1(-u)^{k_1}\cdots\alpha_{n-1}(-u)^{k_{n-1}}
\bigodot\limits_{i=1}^{n-1}\alpha_i(-u)^{k_i}\bigg\rangle_{\Muk}
\nonumber\\
=&\sum_{\GG\in
    \Gamma((g,1,n-1))} \frac{\wt(\GG)}{|\Aut(\GG)|}
    \bigg\langle 
\rho^{A,\tw}_\GG
\big(\alpha_n (-u)^{k_n}\big),%\alpha_1(-u)^{k_1}\cdots\alpha_{n-1}(-u)^{k_{n-1}}
\bigodot\limits_{i=1}^{n-1}\alpha_i(-u)^{k_i}\bigg\rangle_{\Muk}.
\label{eq:explicit-evaluation}
\end{align} 
Note that the formal notation $\psi$ (chosen to match with notation used in Gromov-Witten theory) is replaced by $(-u)$ in the negative cyclic chain complex $L^A_+[1]$ on the right hand side.
\end{Theorem}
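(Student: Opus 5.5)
The plan is to unwind the definition $\lbA=\cK_*\hbA$ using the explicit form of the trivialization $\cK$ from \cite{CalTu1}, and then read off the component of type $(g,1,n-1)$. Recall that $\cK$ is built from the splitting $s$ as follows. One chooses a chain-level lift of $s$ and forms from it a ``propagator" kernel $P\in \sym^2(L^A_-)$. This kernel measures the failure of $L^A_+\to L^A$ to be split compatibly with $B$; it is defined via the higher residue pairing~\eqref{higher-residue-pairing}, and the Lagrangian condition S2 is exactly what makes $P$ symmetric. One also forms a ``tail" operator that converts $L^A_+[1]$-inputs into $L^A_-$ using $s$. The Taylor components $\cK_m$ are then given by sums over connected graphs whose $m$ vertices are labelled by the arguments, whose internal edges carry $P$, and whose tails carry the tail operator.

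Since the target $\hh_A^{\sf TRIV}$ has zero bracket, the push-forward of a Maurer--Cartan element is simply
\[
\cK_*\hbA=\sum_{m\ge1}\tfrac{1}{m!}\,\cK_m(\hbA,\ldots,\hbA).
\]
The first step is to insert $\hbA=\sum \hbA_{g,k,l}\hbar^g\lambda^{2g-2+k+l}$ into this sum and expand. Each term is then a graph whose vertices are decorated by the $\hbA_{g_v,k_v,l_v}=\rho^{A,\tw}(\hcV_{g_v,k_v,l_v})$ and whose edges are decorated by either $P$ or a Mukai/$B$-sewing.

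The second step is to collect the terms of total type $(g,1,n-1)$, that is, terms with exactly one input leg. Using the $\hbar,\lambda$ bookkeeping, the genus is $\sum g_v+b_1(\GG)$ and the Euler characteristic is additive. Hence these graphs are exactly the stable graphs in $\Gamma((g,1,n-1))$.

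The third step is to regroup the labelled sum into a sum over isomorphism classes. The factor $1/m!$, together with the orderings of edges and legs, produces $1/|\Aut(\GG)|$. The remaining scalar factors coming from the definition of $\cK_m$ (for instance the $1/r!$ in \eqref{eq:def-r-bracket-algebraic}) and from the half-edge $u$-powers are absorbed into $\wt(\GG)$. With $\rho^{A,\tw}_\GG$ defined as the corresponding contraction of vertex maps, this yields \eqref{eq:cei-formula-main}.

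For part (2), the definition of $\langle\alpha_1\psi^{k_1},\ldots,\alpha_n\psi^{k_n}\rangle^{A,s}_{g,n}$ in \cite{CalTu1} pairs the class $[\lbA]$ with the inputs. The $(b+\iota)$-closed element $\sum_k\lbA_{g,k,n-k}$ is determined up to homotopy by its first component $\lbA_{g,1,n-1}$. I would justify this by filtering by $k$ and observing that $\iota$ is contraction with the nondegenerate Mukai pairing, so the higher components are fixed by $\lbA_{g,1,n-1}$ modulo $b$-exact terms. Since $\lbA_{g,1,n-1}$ is $b$-closed, it induces a map $H_\bullet(L^A)[[u]]\to\sym^{n-1}(H_\bullet(L^A)[u^{-1}])$. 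Evaluating it on $\alpha_n(-u)^{k_n}$ and pairing via \eqref{eq:generalized-Mukai} gives the first line of \eqref{eq:explicit-evaluation}. Substituting part (1) then gives the second line. The signs $(-u)^{k_i}$ match the convention $(-1)^j$ in \eqref{eq:generalized-Mukai-chain-level}.

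I expect the main obstacle to be the precise identification of the combinatorial factors and signs. Specifically, one must check that the symmetrization conventions in $\sym^k(L^A_+[1])$, the twisting by $\underline{\sgn}$, and the $1/r!$ factors in the brackets combine to exactly $\wt(\GG)/|\Aut(\GG)|$. One must also check that no contributions with unstable vertices survive. Here I would first verify the formula by hand for $(g,n)=(0,3)$ and $(1,1)$, and then argue inductively on the number of edges by contracting an edge.
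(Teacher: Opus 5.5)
\textbf{There is a genuine gap in part (1).} The trivialization you expand is not the $\cK$ of the paper, and the mechanism that produces the weights is missing.

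The statement is quoted from \cite[Theorem 1.3]{CalTu1}. There, as recalled in Section~\ref{sec-K-construction}, $\cK=\mathfrak{S}\circ K(1)$. The map $\mathfrak{S}$ places $S$ on input legs and $R$ on output legs. The map $K(1)$ is obtained by integrating the isotopy \eqref{eq:family-dgla} via \eqref{eq-initial-value-K}.

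The resulting graphs are \emph{partially directed} graphs with a chosen spanning tree. Their edges carry four different operators:
\begin{itemize}
\item $F$ of \eqref{eq:homotopy-F} on directed tree edges;
\item $\Theta$ of \eqref{eq:Theta} on the remaining directed edges;
\item $\delta$ of \eqref{eq:homotopy-delta} on undirected tree edges;
\item $H^{\sym}$ on the remaining undirected edges.
\end{itemize}
A single symmetric kernel $P$ plus a ``tail operator'' $L^A_+[1]\to L^A_-$ cannot reproduce this. Your model has no edges carrying $F$, yet in genus zero every internal edge is an $F$-edge (see the second graph in \eqref{eq:lbA-013}).

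Moreover, S2 does not make the chain-level kernel symmetric. The operator $H$ in \eqref{eq:homotopy-H} is in general not symmetric; S2 only makes $H-H^{\sym}$ exact, which is exactly why $\delta$ is needed. So your claim that the graphs you obtain are ``exactly'' $\Gamma((g,1,n-1))$ has no basis. The directions and the spanning tree are part of the data, and they determine the edge operators.

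More seriously, $\wt(\GG)$ is a specific rational number defined by the contraction recursion \eqref{eq:wt-def}. Saying that $1/m!$, the $1/r!$ of the brackets and the $u$-powers are ``absorbed into $\wt(\GG)$'' assumes the content of the theorem. In \cite{CalTu1} the weights come from the $t$-integration. The $dt$-part of \eqref{eq:family-dgla} carries the monomials $t^{r-1}$, and the coefficient of each graph in $K(1)$ is an iterated integral of such monomials. This integration is what produces \eqref{eq:wt-def}. For instance, the three two-vertex graphs with a double edge in the formula for $\lbA_{1,1,1}$ have trivial automorphism group, and their coefficient $\tfrac12$ is $\wt(\GG)$, not a symmetry factor. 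Hand checks of $(0,3)$ and $(1,1)$ followed by ``induction by contracting an edge'' do not replace this computation.

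\textbf{How to close the gap.} Once the graph-sum formula \eqref{eq:trivialization-map-K} for $\cK_m$ is in hand, part (1) is immediate:
\begin{itemize}
\item write $\lbA=\sum_m\frac{1}{m!}\cK_m(\hbA,\ldots,\hbA)$;
\item note that only markings compatible with the vertex types contribute;
\item apply orbit--stabilizer, summing over isomorphism classes of markings $\tau$, to get $\frac{1}{m!}\sum_{\tau}|\Aut(\GG,\tau)|^{-1}=|\Aut(\GG)|^{-1}$.
\end{itemize}
This is the same manipulation the paper uses in the proof of Proposition~\ref{prop:lhs-dilaton}.

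Your argument for part (2) is fine: filter by $k$, and use that on $b$-homology, where the Mukai pairing is nondegenerate, $\iota$ acts as a Koszul-type differential. The $\iota$-closedness of $[\lbA_{g,1,n-1}]$ is also what makes the right-hand side of \eqref{eq:explicit-evaluation} symmetric between $\alpha_n$ and the other insertions.
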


%In the first part of this theorem, 
Here $\Gamma((g,1,n-1))$ is the set of isomorphism classes of certain {\em partially direct graphs} (Definition~\ref{defi:dir-graph}), %the rational number 
$\wt(\GG)\in \mathbb{Q}$ is a weight function defined in Equation~\eqref{eq:wt-def}, $\Aut(\GG)$ is the automorphism group of a partially directed graph $\GG$, and $\rho^{A,\tw}_\GG\in {\sf Hom}^\cont \big( L^A_+[1] , \sym^{n-1} L^A_- \big)$
is a certain linear map constructed by composing along the graph $\GG$. 
We now explain the details of the notations below.
%The precise meaning of these notions will occupy the remaining part of this section.

\subsubsection{Partial directed graphs}

Now, we proceed to give a more formal account of the discussion above, beginning with the notion of partially directed graphs. We warn the reader that this notion should not to be confused with the notion of black-and-white graphs. In fact the graph sum in Equation~\eqref{eq:cei-formula-main} may be viewed as a sum of partially directed graphs (to be defined below) whose vertices are decorated by string vertices which are linear combinations of black-and-white graphs.

Following~\cite[Section 2]{GetKap}, by a labeled
graph we shall mean a graph $G$ (possibly with leaves) endowed with a
genus labeling function $g: V_G\ra \bbZ_{\geq 0}$ on the set of its
vertices $V_G$. The genus of a labeled graph is defined to be
\[ g(G) = \sum_{v\in V_G} g(v)+{\sf rank\,} H_1(G). \]
We will use the following notation for a labeled graph $G$: $L_G$ denotes the set of leaves of $G$; $n(v)$ denotes the valence of a vertex $v\in V_G$. A labeled graph $G$ is called stable if $2g(v)-2+n(v)>0$ for 
every vertex $v$.

\begin{Definition}
\label{defi:dir-graph}
A partially directed graph of type $(g,k,l)$ is given by a quadruple
$$\GG = \left(G,L_G^{\sf in}\coprod L_G^{\sf out},E^\direct,T\right)$$ 
consisting of the
following data:
\begin{itemize}
\item A labeled graph $G$ of type $(g,k+l)$.
\item A decomposition
$L_G = L_G^{\sf in}\coprod L_G^{\sf out}$
of the set of leaves
$L_G$ such that $|L_G^{\sf in}|=k$ and $|L_G^{\sf out}|=l$. Leaves in
$L_G^{\sf in}$ will be called incoming, while leaves in $L_G^{\sf
  out}$ will be called outgoing.
\item A subset $E^{\direct}\subset E_G$ of edges of $G$ whose
elements are called directed edges, and a direction is chosen on
them. Edges in $E_G-E^{\direct}$ are called undirected.
\item A spanning tree $T\subset E_G$ of the graph $G$.
\end{itemize}

We require the following properties to hold:
\begin{itemize}
\item There is no directed loop.
\item Each vertex has at least one incoming half-edge.
\end{itemize}
  
A partially directed graph is called stable if the underlying
labeled graph is.
\end{Definition}

%\vspace{.5cm}
In the examples preceding the above definition, either we have an obvious spanning tree in the graph (in particular the genus zero case) or edges in the spanning tree is drawn thicker. The reader may verify that indeed, all graphs in the two examples above are partially directed graphs.

%\vspace{.5cm}
Secondly, let us define the weight function $\wt$. Let $\GG=(G,L_G^{\sf in}\coprod L_G^{\sf out},E^\direct,T)$ be a partially directed graph.
Let $e\in T$ be an edge from the spanning tree $T$. We call it
contractible if the graph obtained from $\GG$ by contracting all the
directed edges connecting the two end points of $e$ (if $e$ itself is not directed, we also
contract $e$) is again a valid
partially directed graph. Denote the resulting partially directed
graph by $\GG/e$. Denote by $T^{\sf contr}\subset T$ the set of
contractible edges, and by $E_G^{\sf non-loop}$ the set of non-loop
edges of $G$.

We inductively define $\wt(\GG)$ as follows:
\begin{enumerate}
\item If $|T|=\emptyset$ (equivalently, if $\GG$ only has one vertex)
  then $\wt(\GG)=1$. 
\item 
%In genus zero, the weights are given by $\wt(\mathbb{G})=1$, see~\cite[Section 8.3]{CalTu1}.
In general we set
\begin{equation}\label{eq:wt-def} 
\wt(\GG):= \frac{1}{|E_G^{\sf non-loop}|}\sum_{e\in T^{\sf contr}} \wt(\GG/e).\end{equation}
\end{enumerate}

%In genus zero, the weights are given by $\wt(\mathbb{G})=1$, see~\cite[Section 8.3]{CalTu1}.

In genus zero, the weights of all partially directed graphs are equal to $1$, see~\cite[Section 8.3]{CalTu1}.

\begin{example}
We give some examples of Equation~\eqref{eq:cei-formula-main}.
In the case $g=0$, $n=4$, %all the weights are given by $\wt(\mathbb{G})=1$, 
we have
\begin{equation}\label{eq:lbA-013}
    \lbA_{0,1,3}= \frac{1}{3!} \begin{tikzpicture}[baseline={(current bounding
    		box.center)},scale=0.5] \draw [thick,directed] (3.4,4) to (3.4,2);
    	\node at (3.4,2) {$\bullet$};
    	\draw[thick,directed] (3.4,2) to (1.4,0);
    	\draw[thick,directed] (3.4,2) to (3.4,0);
    	\draw[thick,directed] (3.4,2) to (5.4,0);
    \end{tikzpicture} +\frac{1}{2!} \begin{tikzpicture}[baseline={(current bounding
    	box.center)},scale=0.5] \draw [thick,directed] (3.4,4) to (3.4,2);
    \node at (3.4,2) {$\bullet$};
    \draw[thick,directed] (3.4,2) to (1.4,0);
        \draw[thick,directed] (3.4,2) to (4.4,1);
    \draw[thick,directed] (4.4,1) to (5.4,0);
        \draw[thick,directed](4.4,1)  to (3.4,0);
         \node at (4.4,1) {$\bullet$};
    \end{tikzpicture}
\end{equation}

In the case $g=1$, $n=1$, we have
\begin{equation}\label{eq:lbA-110}
\lbA_{1,1,0}= \begin{tikzpicture}[baseline={(current bounding
box.center)},scale=0.6] \draw [thick,directed] (3.4,4) to (3.4,2);
\node at (3.4,2) {$\bullet$}; \node at (2.4,2) {\tiny $g=1$};\end{tikzpicture}
+\frac{1}{2}\begin{tikzpicture}[baseline={(current bounding
box.center)},scale=0.6] \draw [thick,directed] (3.4,4) to (3.4,2);
\node at (3.4,2) {$\bullet$}; \draw [thick] (3.4,1.5) circle
[radius=.5]; 
\end{tikzpicture}
\end{equation}

%\item 
In the case $g=1$, $n=2$, we express $\lbA_{1,1,1}$ as the following graph sum:
\begin{align*} 
\begin{tikzpicture}[baseline={(current bounding
box.center)},scale=0.6] \draw [thick,directed] (3.4,4) to (3.4,2);
\node at (3.4,2) {$\bullet$}; \node at (2.4,2) {\tiny $g=1$}; \draw
[thick,directed] (3.4,2) to (3.4,0);
\end{tikzpicture}
+\frac{1}{2}\begin{tikzpicture}[baseline={(current bounding
box.center)},scale=0.6] \draw [thick,directed] (3.4,4) to (3.4,2);
\node at (3.4,2) {$\bullet$}; \draw [thick] (3.9,2) circle
[radius=.5]; \draw [thick,directed] (3.4,2) to (3.4, 0);
\end{tikzpicture}
+ \begin{tikzpicture}[baseline={(current bounding
box.center)},scale=0.6] \draw [thick,directed] (3.4,4) to (3.4,2);
\node at (3.4,2) {$\bullet$}; \draw [thick,directed] (3.4,2) to
(2.4,0); \draw [thick,directed] (3.4,2) to (4.4,0); \node at (4.4,0)
{$\bullet$}; \node at (4.4,-.5) {\tiny $g=1$};
\end{tikzpicture}
+
\frac{1}{2} \begin{tikzpicture}[baseline={(current bounding
box.center)},scale=0.5] \draw [thick,directed] (3.4,4) to (3.4,2);
\node at (3.4,2) {$\bullet$}; \node at (3.4,-2) {$\bullet$}; \draw
[ultra thick,directed] (3.4,2) to [out=240, in=120] (3.4,-2); \draw
[thick,directed] (3.4,2) to [out=300, in=60] (3.4,-2); \draw
[thick,directed] (3.4,-2) to (3.4,-4);
\end{tikzpicture}
+\frac{1}{2}\begin{tikzpicture}[baseline={(current
bounding box.center)},scale=0.5] \draw [thick,directed] (3.4,4) to
(3.4,2); \node at (3.4,2) {$\bullet$}; \node at (3.4,-2) {$\bullet$};
\draw [ultra thick,directed] (3.4,2) to [out=240, in=120] (3.4,-2); \draw
[thick] (3.4,2) to [out=300, in=60] (3.4,-2); \draw [thick,directed]
(3.4,-2) to (3.4,-4);
\end{tikzpicture}
+\frac{1}{2}\begin{tikzpicture}[baseline={(current
bounding box.center)},scale=0.5] \draw [thick,directed] (3.4,4) to
(3.4,2); \node at (3.4,2) {$\bullet$}; \node at (3.4,-2) {$\bullet$};
\draw [thick,directed] (3.4,2) to [out=240, in=120] (3.4,-2); \draw
[ultra thick] (3.4,2) to [out=300, in=60] (3.4,-2); \draw [thick,directed]
(3.4,-2) to (3.4,-4);
\end{tikzpicture}
+\frac{1}{2}\begin{tikzpicture}[baseline={(current
bounding box.center)},scale=0.6] \draw [thick,directed] (3.4,4) to
(3.4,2); \node at (3.4,2) {$\bullet$}; \draw [thick,directed] (3.4,2)
to (3.4,0); \draw [thick,directed] (3.4,2) to (5.4,2); \draw [thick]
(5.9,2) circle [radius=.5];\node at (5.4,2) {$\bullet$};
\end{tikzpicture}
\end{align*}

Observe that in this case, only some edges of the graphs are directed, and the thick edges will have different contributions compared with ordinary edges.

%\end{itemize}

\end{example}

\subsubsection{Definition of $\rho^{A,\tw}_\GG$} 

Finally, we explain the map 
$\rho^{A,\tw}_\GG: L^A_+[1]\to \sym^{n-1} L^A_-$
in Equation~\eqref{eq:cei-formula-main}. Given a partially directed graph $\GG=(G,L_G^{\sf in}\coprod L_G^{\sf out},E^\direct,T)$, the map $\rho^{A,\tw}_\GG$ is defined by composing along $G$ with  the various contributions from vertices, edges, and leaves defined as follows. 

Given a splitting map $s: H_\bullet(L^A)\ra H_\bullet(L^A_+)$ in the sense of Definition~\ref{def:nc-splitting}, we may choose a chain-level lift of it, i.e. a map of complexes 
\begin{equation}\label{eq:S-operator}
    S=\id+\sum_{k=1}^{\infty}S_k u^k: L^A \ra L^A_+
\end{equation} 
%of the form $S= \id +S_1 u+ S_2 u^2 +\cdots $ 
with $S_k \in \End(L^A), \; \forall k\geq 1$, such that the induced map in homology is the given splitting $s.$
%$s: H_\bullet(L^A) \ra H_\bullet(L^A_+)$. 
We may consider $S$ as an element in the ring ${\sf End}(L^A)[[u]]$ consisting of operator valued power series in the $u$-variable. 
By construction, the power series $S$ begins with $\id$, hence it is invertible and we denote its inverse by
\begin{equation}\label{eq:R-operator}
R:= S^{-1}=\id+\sum_{k=1}^{\infty} R_k u^k. %\id+R_1u+R_2u^2+\cdots.
\end{equation} 

Recall that 
$$L^A_-:= L^A[u^{-1}]\cong L^A((u))/u\cdot L^A_+.$$ 
The construction in~\eqref{eq:S-operator} and~\eqref{eq:R-operator} induces the operators $S: L^A_+[1]\to L^A_+[1]$ and $R:L^A_-\to L^A_-$. Then, the contributions at vertices and leaves are given by:
\begin{itemize}
\item At each vertex $v$ in a partially directed graph, we assign the multi-linear map $\hbA_{g(v),k(v),l(v)}$ from Equation~\eqref{eq:image-vertex}. This gives the contribution ${\sf Cont}(v)$ at a vertex.
\item At incoming (outgoing) leaves we assign the operator $S$ ($R$ respectively) defined above. This defines the contribution at leaves of a partially directed graph.
\end{itemize}

The edge contributions are more involved as there are different types of edges in a partially direct graph. Indeed, using maps $R$ and $S$, we may define an operator $H: L^A_-\otimes L^A_- \ra \mathbb{C}$ by 
\begin{equation}\label{eq:homotopy-H}
H(x\cdot u^{-i}, y\cdot u^{-j}) =  (-1)^j \sum_{l=0}^j 
\left\langle\, S_l
  R_{i+j+1-l} \,x,\, y\,\right\rangle_\Muk.
  \end{equation}
Denote by $H^\sym:\sym^2 L^A_- \ra \mathbb{C}$ its symmetrization.
Let us also define $F: L^A_- \ra L^A_+[1]$ by
\begin{equation}\label{eq:homotopy-F}
 F(x\cdot u^{-i}) := -\sum_{j=0}^\infty u^{j}\sum_{l=0}^j S_lR_{i+j+1-l}\, x.
 \end{equation}
Lastly, we choose any $\delta: L^A_-\otimes L^A_- \ra \mathbb{C}$ is such that 
\begin{equation}\label{eq:homotopy-delta}
[b+uB, \delta] = H- H^\sym,
\end{equation}
i.e., it bounds the failure of the operator $H$ being symmetric. This $\delta$ always exists by the Lagrangian property of the splitting map $s$ (see Definition~\ref{def:nc-splitting}). Then the edge contributions in the graph sum~\eqref{eq:cei-formula-main} are given by
\begin{enumerate}
\item for directed edges in the spanning tree $T$ we assign the
homotopy operator $F: L^A_- \ra L^A_+[1]$; 
\item for other directed edges, we assign the operator $\Theta: L^A_- \ra L^A_+[1]$ given by the circle action, i.e., 
\begin{equation}\label{eq:Theta}
	\Theta(x\cdot u^{-i}):= \delta_0^i Bx.
%    \begin{cases}	Bx, \;\;\mbox{if $k=0$}\\0,\;\;\mbox{otherwise;}\end{cases}
\end{equation}
The map $\Theta$ is often called twisted sewing, see~\cite[Section 4.2]{CalTu1}.
\item for undirected edges in the spanning tree $T$ we assign the
homotopy operator $\delta: L^A_-\otimes L^A_- \ra \mathbb{C}$; 
\item for other undirected edges, we assign the homotopy operator $H^{\sym}: \sym^2 L^A_- \ra \mathbb{C}$.
\end{enumerate} 

The following result can be directly checked from definitions.
\begin{Lemma}
\cite[Proposition 7.5, Proposition 8.1]{CalTu1}
There are commutator relations
\begin{align}
\label{eq:delta-appear}
[b+uB,H]&=\Delta.\\
\label{eq:theta-appear}
[b+uB,F]&=-(b+uB)F+F(b+uB)=\Theta.
\end{align}
Here, the sign $(-1)$ is due to the shift of $L_{+}^{A}[1].$
For any $x\cdot u^{-i}, y\cdot u^{-j}\in L^A_-,$ we also have
\begin{equation}\label{eq:iota-F-H}
    \iota(x\cdot u^{-i})\big(F(y\cdot u^{-j})\big)=H(y\cdot u^{-j}, x\cdot u^{-i}).
\end{equation}
\end{Lemma}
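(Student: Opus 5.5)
The plan is a direct computation from the definitions \eqref{eq:homotopy-H}, \eqref{eq:homotopy-F} and \eqref{eq:Theta}, organized around the chain-map identities satisfied by $S$ and $R$. Since $S=\id+\sum_k S_ku^k:L^A\to L^A_+$ is a map of complexes, $(b+uB)S=Sb$. Comparing coefficients of $u^k$ gives
\[ bS_k - S_k b = -BS_{k-1}\qquad (k\geq 1,\ S_0=\id). \]
Inverting $S$ in $\End(L^A)[[u]]$ gives $R(b+uB)=bR$, that is,
\[ bR_m - R_m b = R_{m-1}B \qquad (m\geq 1,\ R_0=\id). \]
I will also use that $b$ and $B$ are (graded) skew-adjoint for the Mukai pairing $\langle-,-\rangle_\Muk$. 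This holds because $\rho^A$ is a chain map compatible with sewing, so $\rho^A(\partial M)=0$ and $\rho^A(M\circ B)$ is symmetric in the appropriate sense. Finally, $b+uB$ acts on $L^A_-=L^A((u))/uL^A_+$ by $x u^{-i}\mapsto (bx)u^{-i}+(Bx)u^{-i+1}$, where the second term vanishes when $i=0$.

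For \eqref{eq:delta-appear}, I will evaluate $[b+uB,H]$ on $xu^{-i}\otimes yu^{-j}$. The result has three kinds of terms. The first comes from $b$ acting on $x$ and on $y$; by skew-adjointness it becomes $\langle [b,\,\sum_{l\le j}S_lR_{m-l}]\,x,y\rangle_\Muk$ with $m=i+j+1$. The second comes from $B$ acting on $x$, which shifts $i\mapsto i-1$. The third comes from $B$ acting on $y$, which shifts $j\mapsto j-1$ and, by skew-adjointness, moves $B$ onto the left argument. The key step is to expand $[b,S_lR_{m-l}]=[b,S_l]R_{m-l}+S_l[b,R_{m-l}]=-BS_{l-1}R_{m-l}+S_lR_{m-l-1}B$, which makes the sum over $0\le l\le j$ telescope against the two $B$-shift terms. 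Because the sum is truncated at $l\le j$ rather than being the full coefficient of $SR=\id$, one boundary term survives at the edge of the range. I expect this boundary term to be nonzero only for $i=j=0$, where it equals $\pm\langle Bx_0,y_0\rangle_\Muk$, which is exactly the contraction $\Delta$. The identity \eqref{eq:theta-appear} is the operator-valued version of the same computation. Writing $F(xu^{-i})=-\sum_j u^j\sum_{l\le j}S_lR_{i+j+1-l}x$, the same telescoping in each $u^j$-coefficient leaves only the term $Bx$ when $i=0$, which is $\Theta$. The sign $-(b+uB)F+F(b+uB)$ comes from the shift in $L^A_+[1]$.

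For \eqref{eq:iota-F-H}, I will simply unwind the definitions. By \eqref{eq-iota-tcft} with $\alpha=F(yu^{-j})$ regarded as a $k=0$ element, $\iota(xu^{-i})$ pairs $F(yu^{-j})\in L^A_+[1]$ against $xu^{-i}$ via the extended Mukai pairing \eqref{eq:generalized-Mukai-chain-level}, which keeps only the $u^i$-coefficient of $F(yu^{-j})$ and contributes the factor $(-1)^i$. This yields $\mp(-1)^i\sum_{l\le i}\langle S_lR_{i+j+1-l}y,x\rangle_\Muk$, which is precisely $H(yu^{-j},xu^{-i})$ once the minus sign in $F$ and the shift sign are matched.

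I expect the main obstacle to be the sign and index bookkeeping in the telescoping step: correctly tracking the Koszul signs from the shifts and from the $(-1)^j$ factors, and confirming that every boundary term cancels except the one at $i=j=0$. I would first verify this in low order, with $i=j=0$ and with $i+j=1$, to fix the conventions before doing the general induction on $m=i+j+1$.
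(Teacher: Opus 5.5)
Your proposal is correct and follows the same route as the paper, which only asserts that these identities are a direct check from the definitions of $H$, $F$, $\Theta$ together with $(b+uB)S=Sb$ and $R(b+uB)=bR$. Your telescoping does leave exactly $\delta_{i0}Bx$ for $[b+uB,F]$ and a multiple of $\delta_{i0}\delta_{j0}\langle Bx,y\rangle_\Muk$ for $[b+uB,H]$, and \eqref{eq:iota-F-H} is the $u^i$-coefficient match you describe.

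One sign correction for your bookkeeping: with the paper's conventions $b$ is graded skew-adjoint for the Mukai pairing, but $B$ is graded self-adjoint, $\langle Bx,y\rangle_\Muk=(-1)^{|x|}\langle x,By\rangle_\Muk$. This is what makes the higher residue pairing with its factor $(-1)^k$ a chain map, and it is what the factor $(-1)^j$ in $H$ compensates. If you treat $B$ as skew-adjoint like $b$, the term with $B$ acting on $y$ doubles instead of cancelling.
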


The map $\rho^{A,\tw}_\GG$ in Equation~\eqref{eq:cei-formula-main} is then obtained by the composition along the graph $\GG$ (which is always valid since $\GG$ contains no directed loops) with the contributions from vertices, edges, and leaves as described above. We give an example here.
For the first graph $\GG$ in Equation~\eqref{eq:lbA-013}, 
the map $\rho^{A,\tw}_\GG$ is given by
\[
x\cdot u^k\mapsto R^{\odot 3}\left(\hbA_{0,1,3}(S(x\cdot u^k))\right).
\]
For the second graph $\GG$ in Equation~\eqref{eq:lbA-013},
%the map $\rho^{A,\tw}_\GG: L^A_+[1] \to \sym^3 L^A_-$ 
the map $\rho^{A,\tw}_\GG$ is depicted as

\[\begin{tikzpicture}[baseline={(current bounding
   	box.center)},scale=0.6] \draw [thick,directed] (3.4,4) to (3.4,2);
    \node at (3.4,2) {$\bullet$};
             \node at (3.7,3) {$S$};
             \node at (2.5,2.2) {$\hbA_{0,1,2}$};
    \draw[thick,directed] (3.4,2) to (1.4,0);
                 \node at (1.8,1) {$R$};
        \draw[thick,directed] (3.4,2) to (4.4,1);
                     \node at (4.3,1.8) {$F$};
                     \node at (5.3,1.3) {$\hbA_{0,1,2}$};
    \draw[thick,directed] (4.4,1) to (5.4,0);
        \draw[thick,directed](4.4,1)  to (3.4,0);
         \node at (4.4,1) {$\bullet$};
                          \node at (3.3,.5) {$R$};
                 \node at (5.4,.5) {$R$};
    \end{tikzpicture}\]
    
More explicitly, let $\hbA_{0,1,2}(S(x\cdot u^k))=Y\odot Z$. Then we have
\[\rho^{A,\tw}_\GG(x\cdot u^k)= R(Y)\odot R^{\odot 2}\left(\hbA_{0,1,2}(F(Z))\right)\in \sym^3 L^A_-.
\]

\subsubsection{The construction of $\cK$.}
Using partially directed graphs and the weight function, we may also write down explicitly the $L_\infty$ morphism $\cK: \hh_A \ra \hh_A^{\sf TRIV}$ in~\eqref{eq:trivialization-map-K}. Indeed, for each $m\geq 1$ we need to construct a map
\[\cK_m: \sym^m (\hh_A[1]) \ra
  \hh_A^{\sf TRIV}[1].\]
It will be defined as a sum over partially directed graphs:
\begin{equation}\label{eq:trivialization-map-K} \cK_m (\gamma_1\cdot \hbar^{g_1},\ldots,\gamma_m\cdot
\hbar^{g_m})= \sum_{(\GG,\tau)} \frac{\wt(\GG)}{\Aut(\GG,\tau)}
\cK_{(\GG,\tau)} (\gamma_1\cdot
\hbar^{g_1},\ldots,\gamma_m\cdot\hbar^{g_m}).\end{equation}
The summation above is over all marked partially directed graphs with
$m$ vertices for various $(g,k,l)$.  Here, a marking means a bijection
$\tau:\{1,\ldots,m\}\ra V_\GG$.  Let $(\GG,\tau)\in 
\widetilde{\Gamma(g,k,l)}_m$ be a marked partially directed
graph with $m$ vertices. We proceed to define the map $\cK_{(\GG,\tau)}$.
This map is only non-zero when the combinatorial types match at each
vertex, i.e., for each $1\leq i \leq m$, we have $k_i= k_{\tau(i)}$,
$l_i = l_{\tau(i)}$, and $g_i=g_{\tau(i)}$. When this is the case, the
result
$ \cK_{(\GG,\tau)}(\gamma_1\cdot
\hbar^{g_1},\ldots,\gamma_m\cdot\hbar^{g_m})$ is defined as follows:
\begin{itemize}
\item The tensor $\widetilde{\gamma_i}$ is assigned to the vertex $\tau(i)$. Here,
for a map $\gamma_i\in {\sf Hom} \big( \sym^{k_i} (L^A_+[1]) , \sym^{l_i}L^A_- \big)$, its de-symmetrization $\widetilde{\gamma_i}$
%\[ \widetilde{\gamma_i} \in \Hom \left((L^A_+[1])^{\otimes k_i}, (L^A_-)^{\otimes l_i}\right) \]
is the composition
\[ (L^A_+[1])^{\otimes k_i} \ra \sym^{k_i}(L^A_+[1])
\stackrel{\gamma}{\lra} \sym^{l_i}(L^A_-) \ra (L^A_-)^{\otimes l_i}. \]
\item The assignments at various types of edges and leaves are the same as in the previous paragraph.
\item The result $\cK_{(\GG,\tau)}(\gamma_1\cdot
  \hbar^{g_1},\ldots,\gamma_m\cdot\hbar^{g_m})$ is obtained by composition along $\GG$. 
\end{itemize}
We refer the details of the signs involved to the original paper~\cite{CalTu1}.
See also Section~\ref{sec-K-construction}.

\section{The dilaton equation}\label{sec:dilaton}
This section is devoted to prove the dilaton equation in the context of CEI. The idea is to show that string vertices can be chosen to be compatible with certain forgetful maps. Throughout the section, we continue to work with notation introduced in Section~\ref{subsec:cei-form}.

\subsection{The combinatorial forgetful maps}\label{subsec:f1}  
%The map $\cF$ is defined in the following way.
Let $\Gamma$ be an oriented black-and-white graph, and 
%\[ \alpha = \Gamma w_1^{-a_1}\cdots w_k^{-a_k} u_1^{-b_1}\cdots u_{l+1}^{-b_{l+1}}\] in $C^{\sf comb}_\bullet(M_{g,k,l+1}^{\sf fr}, \underline{\sgn})_{\sf hS}$, 
\begin{equation}\label{alpha-chain} 
\alpha = \Gamma \cdot \prod_{i=1}^{k}w_i^{-a_i}\prod_{j=1}^{l+1}u_j^{-b_j} \in C^{\sf comb}_\bullet(M_{g,k,l+1}^{\sf fr}, \underline{\sgn})_{\sf hS}
\end{equation}
be an equivariant chain in~\eqref{eq:equiv-chain}. Let $v_j$ be the $j$-th white vertex in $\Gamma$ such that ${\sf val}(v_j)=1$. Denote by $e$ the unique edge attached to $v_{j}$ and $v$ the other vertex of $e$. 
We have the following two special cases.

\begin{enumerate}
\item If $v$ is a black vertex and ${\sf val}(V)=3$, we define a new black-and-white graph $\Gamma/e$ as illustrated in the following picture. 
Note that in $\Gamma/e$, the black vertex $v$ is no longer present. 

\begin{equation}
\label{graph-remove-edge}
\begin{tikzpicture}[baseline={([yshift=-0.5ex]current bounding
		box.center)},scale=0.4] 
	\draw [thick] (-2.2,2) to (-2.2,0);
	\draw [thick] (-2.2, -2) to (-2.2, 0);
	\draw (-2.2,0) node[label=left:{$v$}] {};
	\draw (1,0) node[label=below:{$v_{j}$}] {};
	\draw (-3,0) node[label=left:{$\Gamma=$}] {};
    \node at (-.6,.8) {$e$};
	\draw (9,0) node[label=left:{$\Gamma/e=$}] {};
	\draw [thick] (-2.2,0) to (1,0);
	\draw [thick] (1.2,0) circle [radius=0.2];
	\draw [thick] [|->] (4,0) to (5.5,0);
	\draw [thick] (9.8,2) to (9.8,0);
	\draw [thick] (9.8, -2) to (9.8, 0);
\end{tikzpicture}
\end{equation}

%\medskip

\item 
If $v$ is a white vertex and the starting half-edge of $v$ is contained in $e$, we define a black-and-white graph $\Gamma\backslash v_{j}$ by removing the half-edge at $v_{j}$, while the remaining half-edge becomes the starting leaf of the white vertex $v$. For example, we have 

\begin{equation}
\label{graph-remove-white-vertex}
\begin{tikzpicture}[baseline={([yshift=-0.5ex]current bounding
		box.center)},scale=0.4] 
	\draw [thick] (-3,0) circle [radius=0.2];
	\draw (-3,0) node[label=below:{$v$}] {};
	\draw [thick] (-.3,0) circle [radius=0.2];
	\draw (-.3,0) node[label=below:{$v_{j}$}] {};
	\draw (-6,0) node[label=left:{$\Gamma=$}] {};
    \node at (-1,.8) {$e$};
	\draw [thick] (-6,-0) to (-3.2,0);
	\draw [thick] (-4,2) to (-3.2,0);
	\draw [thick] (-4,-2) to (-3.2,0);
	\draw [thick] (-1,2) to (-2.8,0);
	\draw [thick] (-1,-2) to (-2.8,0);
	\draw [thick] (-2.8,0) to (-.5,0);
	\draw [line width=2.4pt] (-1.5,0) to (-2.8,0);
	\draw [thick] [|->] (1.5,0) to (3,0);
	\draw (8,0) node[label=left:{$\Gamma\backslash v_j=$}] {};
	\draw [thick] (11,0) circle [radius=0.2];
	\draw (11,0) node[label=below:{$v$}] {};
	\draw [thick] (8,-0) to (10.8,0);
	\draw [thick] (10,2) to (10.8,0);
	\draw [thick] (10,-2) to (10.8,0);
	\draw [thick] (13,2) to (11.2,0);
	\draw [thick] (13,-2) to (11.2,0);
	\draw [line width=2.4pt] (11.2,0) to (12.7,0);
\end{tikzpicture}
\end{equation}
\end{enumerate}

Using the constructions above, for each $1\leq j\leq l+1$ we define 
\begin{equation}\label{eq:def-forgetful1}
\cF_j(\alpha):= 
\begin{cases}
\Gamma/e 
\cdot \delta_0^{b_j}
\prod\limits_{i=1}^{k} w_i^{-a_i}\prod\limits_{i=1}^{l+1} u_i^{-b_i}, &\;\;
\mbox{in cases of $\eqref{graph-remove-edge}$,}\\
%\mbox{if $b_j=0$, ${\sf val}(v_j)=1,$ $v\in V_b$, and ${\sf val}(v)=3$},\\
\Gamma\backslash v_j 
\cdot \delta_0^{b_j} \prod\limits_{i=1}^{k} w_i^{-a_i}\prod\limits_{i=1}^{l+1} u_i^{-b_i}, &\;\;
\mbox{in cases of $\eqref{graph-remove-white-vertex}$,}\\
%\mbox{if $b_j=0, {\sf val}(v_j)=1, v\in V_w,$ and the starting half-edge of $v$ is in $e$},\\
0, &\;\;\mbox{otherwise.}
\end{cases}
\end{equation}

The sum of all $\cF_j$'s will be denoted by
\begin{equation}
\label{map-forget-white-vertex}
\cF^\sym:=\sum_{j=1}^{l+1} \cF_j: C^{\sf comb}_\bullet(M_{g,k,l+1}^{\sf fr}, \underline{\sgn})_{\sf hS} \ra C^{\sf comb}_\bullet(M_{g,k,l}^{\sf fr}, \underline{\sgn})_{\sf hS}.
\end{equation}

\subsubsection{Algebraic meaning of the combinatorial forgetful map.} Let $A$ be a finite dimensional strictly unital cyclic $A_\infty$ algebra of dimension $d$. Using the strict unit $\bone_A$, we define a linear functional on $L_-^A=C_\bullet(A)[d][u^{-1}]$ by setting
\begin{equation}
\label{eq:cy-form} 
\omega_A (a_0|a_1|\cdots|a_n u^{-k}) := \delta_n^0\delta^k_0 \langle \bone_A, a_0\rangle.
%\begin{cases}
%\langle \bone_A, a_0\rangle \;\;\mbox{if}\;\; n=k=0,\\0, \;\; \mbox{otherwise}.
%\end{cases}
\end{equation}
Observe that $\omega_A: L_-^A \ra \mathbb{C}$ is a chain map with respect to the differential $\eth=b+uB$. This linear functional is usually referred to as a {\em compact Calabi-Yau structure} on $A$. Contraction with this linear functional  yields a map denoted by $C_{\omega_A}: \sym^{l+1} L^A_- \to \sym^{l} L^A_-$ for each $l\geq 0$. By post-composition, it further induces a map still denoted by 
\[ C_{\omega_A}: {\sf Hom}^c\big( \sym^k (L^A_+[1]), \sym^{l+1} L^A_-\big) \to {\sf Hom}^c\big( \sym^k (L^A_+[1]), \sym^{l} L^A_-\big). \]

\begin{Proposition}
\label{prop:cd}
Then the following diagram is commutative:
\[\begin{CD}
C^{\sf comb}_\bullet(M_{g,k,l+1}^{\sf fr}, \underline{\sgn})_{\sf hS}  @>{\rho^{A,\tw}}>> {\sf Hom}^c\big( \sym^k (L^A_+[1]), \sym^{l+1} L^A_-\big)\\
@V \cF^\sym VV        @VV C_{\omega_A} V\\
C^{\sf comb}_\bullet(M_{g,k,l}^{\sf fr}, \underline{\sgn})_{\sf hS}  @>{\rho^{A,\tw}}>> {\sf Hom}^c\big( \sym^k (L^A_+[1]), \sym^l L^A_-\big)
\end{CD}\]
\end{Proposition}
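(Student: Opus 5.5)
We reduce the statement to a vertex-by-vertex check on a single black-and-white graph, using the explicit description of the TCFT action $\rho^A$ from Theorem~\ref{thm:tcft} in \cite{KonSoi,Cos1,WahWes}. Both $\cF^\sym$ and $C_{\omega_A}$ are sums over the outputs $j$. The map $\rho^{A,\tw}$ is built from $\rho^A$ by twisting with $\underline{\sgn}$, symmetrizing and reading off circle parameters. It therefore suffices to fix an equivariant chain $\alpha=\Gamma\cdot\prod w_i^{-a_i}\prod u_i^{-b_i}$ as in \eqref{alpha-chain} and an index $j$, and to prove
\[
C_{\omega_A}^{(j)}\circ\rho^{A,\tw}(\alpha)=\rho^{A,\tw}(\cF_j(\alpha)),
\]
where $C_{\omega_A}^{(j)}$ contracts the $j$-th output with $\omega_A$.

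The $u_j$-dependence is handled first. By \eqref{eq:cy-form}, $\omega_A$ kills every term with a positive power of $u^{-1}$. Hence the left side vanishes unless $b_j=0$, which matches the factor $\delta_0^{b_j}$ in \eqref{eq:def-forgetful1}. From now on we may work non-equivariantly with $\rho^A(\Gamma)$ and just carry the other circle parameters along.

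Next we recall how $\rho^A(\Gamma)$ computes the $j$-th output. The white vertex $v_j$ outputs a Hochschild chain $a_0|a_1\cdots a_r$, where $a_0$ sits on the starting half-edge and the other half-edges of $v_j$ carry the bar letters. The black vertices carry $m_k$ dualized by the cyclic pairing, and the edges carry the inverse of that pairing. Since $\omega_A$ picks only the length-zero component and pairs it with $\bone_A$, contracting output $j$ with $\omega_A$ is the same as inserting $\bone_A$ through the pairing along the edge $e$ at $v_j$. Moreover, $\omega_A$ is zero on every component of length $r\ge1$. So if ${\sf val}(v_j)\ge2$, all contributions vanish, matching the "otherwise $0$" clause. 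When ${\sf val}(v_j)=1$, the unit $\bone_A$ is fed into the vertex $v$ at the other end of $e$, and we split into cases.

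\textbf{Case 1: $v$ black.} The vertex $v$ carries $\langle m_{{\sf val}(v)-1}(\ldots,\bone_A,\ldots),-\rangle$. By strict unitality \eqref{eq:unitality}, this vanishes if ${\sf val}(v)\ge4$. If ${\sf val}(v)=3$, it reduces to $m_2(\bone_A,a)=\pm a$. This means the two remaining half-edges of $v$ are joined by the identity, i.e.\ the pairing is propagated straight through, which is precisely $\rho^A(\Gamma/e)$ from \eqref{graph-remove-edge}. The degrees agree, since $\Gamma$ and $\Gamma/e$ lose one trivalent vertex and one univalent white vertex, each contributing degree $0$.

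\textbf{Case 2: $v$ white.} The element $\bone_A$ lands in one slot of the Hochschild chain output by $v$. If that slot is a bar letter (not the starting half-edge), the output vanishes in the reduced complex, since we work modulo $\bone_A$ in $\overline{A}$. If $e$ contains the starting half-edge of $v$, then $\bone_A$ occupies the $a_0$ slot. The graph reading then yields exactly $\rho^A(\Gamma\backslash v_j)$, where the next half-edge becomes the new starting leaf, as in \eqref{graph-remove-white-vertex}. We must check that the output is the same chain $\bone_A|\cdots$ and not something rotated. I expect to verify this by writing out the Wahl–Westerland formula for the output at a white vertex whose starting half-edge is a leaf versus an edge.

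\textbf{Main obstacle: signs.} The orientation conventions of \cite[Section 2.5]{WahWes} and the $\underline{\sgn}$ twist must produce the same sign on both sides. Removing the pair $(v_j,e)$, or $(v_j,\text{half-edge})$, from ${\sf det}(\mathbb{R}(V\sqcup H))$ has to match the Koszul sign from moving $\bone_A$, which is even in $A$ and odd in $A[1]$, and the sign $\pm$ in $m_2(\bone_A,a)$. I would fix orientations by putting $v_j\wedge h_{v_j}\wedge h_v\wedge v$ first, and check the two cases directly. The symmetrization over outputs then commutes, which gives the full statement.
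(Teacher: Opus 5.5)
Your argument is the paper's argument. You identify $\cF_j$ with contracting the $j$-th output by $\omega_A$. The black-vertex rule then comes from strict unitality \eqref{eq:unitality}, and the white-vertex rule from working in the reduced Hochschild complex. You fill in more of the case analysis than the paper does: the factor $\delta_0^{b_j}$, the vanishing for ${\sf val}(v_j)\geq 2$, and the vanishing for ${\sf val}(v)\geq 4$. The signs you flag are left implicit in the paper too.

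One point in your description of $\rho^A(\Gamma)$ needs repair. The letters of the input Hochschild chains are also attached at corners of the vertices along each cycle. So the output at a white vertex is not determined by its own half-edges alone. Two examples show this:
\begin{itemize}
\item $\rho^A(T)$ is dual to the cup product, so the univalent white vertices of $T$ output chains of arbitrary length.
\item The bivalent white vertex of $B$ produces Connes' operator, whose output is longer than the valence.
\end{itemize}

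Hence when ${\sf val}(v_j)=1$ there are also terms with input letters landing at $v_j$. In Case 1 there are also terms with input letters landing at $v$. Both kinds of extra terms are killed by the two facts you already use: $\omega_A$ vanishes in positive length, and $m_k(\ldots,\bone_A,\ldots)=0$ for $k\geq 3$. The surviving terms match $\rho^A(\Gamma/e)$, resp.\ $\rho^A(\Gamma\backslash v_j)$. This is because the attachment slots that disappear in passing to the smaller graph are exactly those at $v_j$, together with those at $v$ in Case 1.

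Also, in $\Gamma\backslash v_j$ it is the half-edge of $e$ at $v$ itself, not the next half-edge, that becomes the starting leaf. So $\bone_A$ stays in the $a_0$ slot and the rotation you planned to rule out never arises.
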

\begin{proof}
This follows from the strict unital property of $\bone_A$. It suffices to observe that the definition~\eqref{eq:def-forgetful1} of $\cF_j$, under the action map $\rho^{A,\tw}$, corresponds to applying the linear function $\omega_A$ at the $j$-th output. Indeed, the rules in defining $\cF_j$ at black vertices correspond to the unitality condition~\eqref{eq:unitality}; while the rules at white vertices are due to the fact that we are using the reduced Hochschild chain complex (which quotients out the unit except at the starting position of a Hochschild chain).
\end{proof}

\subsection{A differential graded Lie module and semi-direct product DGLA} 

Due to the symmetric group action, we can not deduce a direct compatibility between combinatorial string vertices and the forgetful maps in Equation~\eqref{eq:def-forgetful1}. This is circumvented by using a differential graded Lie module $\widehat{\mathfrak{n}}$ over the Sen-Zwiebach's DGLA $\hg$ and considering its associated semi-direct product DGLA $\hg\ltimes \widehat{\mathfrak{n}}$.

Recall from Section~\ref{subsec:cei-form} the chain complex
$C^{\sf comb}_\bullet(M_{g,k,l}^{\sf fr}, \underline{\sgn})_{\sf hS}$
is obtained from the combinatorial chain complex $C^\comb_\bullet(M^\fr_{g,k,l})$ quotiented by both the circle and symmetric group actions. In order to ``know" where to apply the forgetful map, we introduce a slight modification of the moduli space $M^\fr_{g,k,l+1}$ which will be denoted by $M^{\fr}_{g,k,l+P}$. The modification is that there is a distinguished white vertex labeled by $P$. Hence this space has an action by the group 
$(S^1)^{k+l+1} \ltimes (\Sigma_k\times \Sigma_l)$ (not $\Sigma_{l+1}$). Denote its homotopy quotient chain complex by
$C^{\sf comb}_\bullet(M_{g,k,l+P}^{\sf fr}, \underline{\sgn})_{\sf hS}.$ We have a natural labeling map
\[ \mathfrak{l}: C^{\sf comb}_\bullet(M_{g,k,l+1}^{\sf fr}, \underline{\sgn})_{\sf hS}\to C^{\sf comb}_\bullet(M_{g,k,l+P}^{\sf fr}, \underline{\sgn})_{\sf hS},\]
defined by $\mathfrak{l}(\alpha):=\sum_{j=1}^{l+1}\mathfrak{l}_j(\alpha)$ where the chain $\mathfrak{l}_j(\alpha)$ is simply $\alpha$ with its $j-th$ white vertex labeled by $P$. Putting these chain complexes together, let us define
\begin{equation} \label{fraknhat}
\hn:= \bigoplus_{\substack{g\geq 0, k\geq 1, l\geq 0\\ 2g-2+k+l\geq 0}} C^{\sf comb}_\bullet(M_{g,k,l+P}^{\sf fr}, \underline{\sgn})_{\sf hS}[2][[\hbar,\lambda]].
\end{equation}
Observe that $\widehat{\mathfrak{n}}$ 
%in \eqref{frakmhat} 
is a differential graded Lie module over $\widehat{\mathfrak{g}}$ with the action map $\widehat{\mathfrak{g}}\otimes \widehat{\mathfrak{n}} \ra \widehat{\mathfrak{n}}$ given by twisted sewing between inputs and outputs that are not labeled by $P$. We shall slight abuse the notation $\{-,-\}_\hbar$ for both the Lie bracket in $\hg$ as well as the Lie module structure map. Using this dg Lie module structure, we may form a semi-direct product DGLA denoted by $\widehat{\mathfrak{g}} \ltimes \widehat{\mathfrak{n}}$. 

\begin{Lemma}\label{lem:semi-unique-gn}
The DGLA $\widehat{\mathfrak{g}} \ltimes \widehat{\mathfrak{n}}$ has a Maurer-Cartan element (unique up to gauge equivalences) %of the following form
\[ \widehat{\mathcal{V}} + \widehat{\mathcal{U}}=\sum_{g,k\geq 1,l\geq0} \widehat{\mathcal{V}}_{g,k,l} \hbar^g \lambda^{2g-2+k+l} + \sum_{g,k\geq 1,l\geq 1} \widehat{\mathcal{U}}_{g,k,l} \hbar^g \lambda^{2g-2+k+l}\]
with $\hcV$ a combinatorial string vertex, %the component 
$\widehat{\mathcal{U}}_{g,k,l}\in C^{\sf comb}_\bullet(M_{g,k,(l-1)+P}^{\sf fr}, \underline{\sgn})_{\sf hS}$, and $\widehat{\mathcal{U}}_{0,1,2}=\mathfrak{l}\big(\widehat{\mathcal{V}}_{0,1,2}\big)$.
\end{Lemma}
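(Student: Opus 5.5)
The plan is to reduce the statement to an obstruction-theoretic argument in the module $\hn$, mirroring the proof of Theorem~\ref{thm:comb-string-vertex} in~\cite{CalTu1}. Since the bracket on $\hg\ltimes\hn$ satisfies $\{\hn,\hn\}_\hbar=0$, the Maurer-Cartan equation for $\hcV+\widehat{\mathcal{U}}$ splits into two parts. The first is the Maurer-Cartan equation for $\hcV$ in $\hg$, which holds by Theorem~\ref{thm:comb-string-vertex}. The second is the linear equation
\[(\eth+\iota+\hbar\Delta)\widehat{\mathcal{U}}+\{\hcV,\widehat{\mathcal{U}}\}_\hbar=0,\]
which says that $\widehat{\mathcal{U}}$ is closed for the twisted differential $d_{\hcV}:=\eth+\iota+\hbar\Delta+\{\hcV,-\}_\hbar$ on $\hn$. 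Because $\hcV$ is Maurer-Cartan, $d_{\hcV}^2=0$, so $(\hn,d_{\hcV})$ is a complex. Existence therefore amounts to producing a $d_{\hcV}$-cocycle with prescribed lowest term $\widehat{\mathcal{U}}_{0,1,2}=\mathfrak{l}(\hcV_{0,1,2})$.

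For existence, I first observe that $\mathfrak{l}$ commutes with $\eth$, $\iota$ and $\Delta$ and intertwines brackets. More precisely, applying $\mathfrak{l}$ to a bracket $\{\alpha,\beta\}_\hbar$ labels one white vertex, which comes from either $\alpha$ or $\beta$. This gives
\[\mathfrak{l}\{\alpha,\beta\}_\hbar=\{\alpha,\mathfrak{l}\beta\}_\hbar\pm\{\beta,\mathfrak{l}\alpha\}_\hbar.\]
Hence $\mathfrak{l}$ is a derivation from $\hg$ to the module $\hn$, and $\widehat{\mathcal{U}}:=\mathfrak{l}(\hcV)$ satisfies the linear equation: apply $\mathfrak{l}$ to the Maurer-Cartan equation~\eqref{eq:mc}, and the factor $\tfrac12$ cancels against the two symmetric terms. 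This candidate has the right form. Its $(g,k,l)$ component lies in $C^{\sf comb}_\bullet(M_{g,k,(l-1)+P}^{\sf fr},\underline{\sgn})_{\sf hS}$ exactly when $l\ge1$, and its $(0,1,2)$ component is $\mathfrak{l}(\hcV_{0,1,2})$. So existence is essentially formal. The main things to check carefully are the signs from $\underline{\sgn}$, and the fact that labeling a white vertex commutes with the circle operators $B^w_j$ and with $\pi_{ij}$.

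Uniqueness up to gauge is where the real work lies, and I would argue it as in~\cite[Theorem 5.9]{CalTu1}. Suppose two solutions $\hcV+\widehat{\mathcal{U}}$ and $\hcV'+\widehat{\mathcal{U}}'$ are given. I first gauge $\hcV'$ to $\hcV$ using Theorem~\ref{thm:comb-string-vertex}; gauge transformations by elements of $\hg$ act compatibly on the $\hn$ part. It then remains to handle two solutions $\widehat{\mathcal{U}}$ and $\widehat{\mathcal{U}}'$ that agree in $(0,1,2)$. I would induct on the filtration by $2g-2+k+l$, that is, the $\lambda$-degree, refined by $g$. At the first index where the two differ, the difference is closed for the associated graded differential $\eth+\iota$. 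It must therefore be made exact by a gauge element $\xi\in\hn$, and the induction proceeds by gauge transforming with $\xi$.

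The key input, and the step I expect to be the main obstacle, is this vanishing of cohomology. Geometrically, $(\hn,\eth+\iota)$ computes, in each type, the equivariant homology of the moduli space with one distinguished output $P$. Following Costello, I would identify this with the homology of $M_{g,k+l}$ modulo a $\Sigma_k\times\Sigma_{l-1}$-quotient, so that the map is no longer symmetric in all outputs. The needed fact is that such homology vanishes in the degree of the obstruction, which exceeds the real dimension $6g-6+2(k+l)$ of $M_{g,k+l}$. I would try first to check that this dimension argument from~\cite{CalTu1,Cos2} goes through verbatim, because the homotopy quotient by fewer symmetric group factors does not change the rational homology dimension bound. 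Iterating the induction then gives the gauge equivalence.
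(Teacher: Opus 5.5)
Your proposal follows the paper's proof. Existence comes from taking $\widehat{\mathcal U}=\mathfrak{l}(\hcV)$: your observation that $\mathfrak{l}$ is a derivation into the module and commutes with $\eth$, $\iota$, $\Delta$ is exactly what makes this work. Uniqueness comes from the same filtration and vanishing-of-homology argument used for string vertices, which the paper simply cites from Costello and C\u{a}ld\u{a}raru--Tu, noting that labeling one white vertex by $P$ does not affect the vanishing.

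One correction: the difference class you must kill has degree equal to the real dimension $6g-6+2(k+l)$, not greater, since string vertices are fundamental chains. Its vanishing comes from the non-compactness of $M_{g,k+l}$ (or from Harer's bound on its homological dimension), not from a dimension count. This property survives passing rationally to the quotient by $\Sigma_k\times\Sigma_{l-1}$.
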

\begin{proof}
The existence can be achieved by setting
$\widehat{\mathcal{U}}_{g,k,l}=\mathfrak{l}\big(\widehat{\mathcal{V}}_{g,k,l}\big).$
The uniqueness can be deduced in the same way as that of the string vertex $\widehat{\mathcal{V}}$, using the vanishing of relevant homology groups. Note that adding a labeling by $P$ at a white vertex does not change the fact that the relevant homology groups vanish. For more details, see~\cite{Cos2} and~\cite[Section 5]{CalTu1}.
\end{proof}

\subsection{Maurer-Cartan elements compatible with forgetful maps} Forgetting the white vertex labeled by $P$ using Equation~\eqref{eq:def-forgetful1} yields a map $\cF_P: C^{\sf comb}_\bullet(M_{g,k,l+P}^{\sf fr}, \underline{\sgn})_{\sf hS} \ra C^{\sf comb}_\bullet(M_{g,k,l}^{\sf fr}, \underline{\sgn})_{\sf hS}$. Using this map we introduce the following map
\[\cF_P^{u}\;:=\cF_P \circ M_{u_P}: C^{\sf comb}_\bullet(M_{g,k,l+P}^{\sf fr}, \underline{\sgn})_{\sf hS} \ra C^{\sf comb}_\bullet(M_{g,k,l}^{\sf fr}, \underline{\sgn})_{\sf hS}.\]
Here the operator $M_{u_P}$ is the multiplication by $u_P$ operator where $u_P$ is the circle parameter at the distinguished white vertex $P$. Explicitly, it acts on an equivariant chain in  $C^{\sf comb}_\bullet(M_{g,k,l+P}^{\sf fr}, \underline{\sgn})_{\sf hS}$ by
\begin{equation}
\label{multiply-u_j}
M_{u_P} (\Gamma \cdot \prod_{i=1}^{k}w_i^{-a_i} u_P^{-b} \prod_{j=1}^{l}u_j^{-b_j}):= 
\begin{cases}
	\Gamma \cdot \prod_{i=1}^{k}w_i^{-a_i} u_P^{1-b} \prod_{j=1}^{l}u_j^{-b_j}, & \mbox{if $b\geq 1$;}\\
	0, & \mbox{if $b=0$.}
\end{cases}
\end{equation}

We give an example of these maps defined above. The explicit formula (see~\cite[Section 5.4]{CalTu1}) of $\hcV_{0,1,3}$ is given by
\[\hcV_{0,1,3} =\frac{1}{2}\;
  \begin{tikzpicture}[baseline={([yshift=-1.2ex]current bounding
      box.center)},scale=0.35] 
\draw (0,1.5) node[cross=2pt,label=above:{}] {};
\draw [thick] (0,0.2) to (0,1.5);
\draw [thick] (-0.2,0) to (-2,0);
\draw [thick] (0.2,0) to (2,0);
\draw [thick] (-2.2,0) circle [radius=0.2];
\draw [thick] (2.2,0) circle [radius=0.2];
\draw [thick] (0,0) circle [radius=0.2];
\draw [ultra thick] (.2,0) to (1,0);
\end{tikzpicture}+\frac{1}{2}\;
  \begin{tikzpicture}[baseline={([yshift=-1.2ex]current bounding
      box.center)},scale=0.35] 
\draw (1,1) node[cross=2pt,label=above:{}] {};
\draw [thick] (1,0) to (1,1);
\draw [thick] (-0.2,0) to (-2,0);
\draw [thick] (0.2,0) to (2,0);
\draw [thick] (-2.2,0) circle [radius=0.2];
\draw [thick] (2.2,0) circle [radius=0.2];
\draw [thick] (0,0) circle [radius=0.2];
\draw [ultra thick] (0,-.2) to (0,-1);\end{tikzpicture}
+\frac{1}{2}\;
  \begin{tikzpicture}[baseline={([yshift=-.4ex]current bounding
      box.center)},scale=0.35] 
\draw (1,1) node[cross=2pt,label=above:{}] {};
\draw [thick] (1,0) to (1,1);
\draw [thick] (0,0) to (-2,0);
\draw [thick] (0,0) to (2,0);
\draw [thick] (-2.2,0) circle [radius=0.2];
\draw [thick] (2.2,0) circle [radius=0.2];
\draw [thick] (0,-1) circle [radius=0.2];
\draw [thick] (0,0) to (0,-.8);
\node at (1.2,-1) {$\scriptstyle{u^{-1}}$};
\end{tikzpicture}+\frac{1}{6}\;
  \begin{tikzpicture}[baseline={([yshift=-2ex]current bounding
      box.center)},scale=0.35] 
    \draw (-1,1) node[cross=2pt,label=above:{}] {}; \draw [thick]
    (-1,0) to (-1,1); \draw [thick] (0,0) to (-2,0); \draw [thick]
    (0,0) to (2,0); \draw [thick] (-2.2,0) circle [radius=0.2]; \draw
    [thick] (2.2,0) circle [radius=0.2]; \draw [thick] (1,1) circle
    [radius=0.2]; \draw [thick] (1,0) to (1,.8); \node at (-1,1.5)
    {$\scriptstyle{w^{-1}}$};\end{tikzpicture}\]
We may apply the labeling map to $\hcV_{0,1,3}$ to obtain a chain $\mathfrak{l}(\hcV_{0,1,3})\in C^{\sf comb}_\bullet(M_{0,1,2+P}^{\sf fr}, \underline{\sgn})_{\sf hS}$. Let us compute $\cF_P^u\big( \mathfrak{l}(\hcV_{0,1,3})\big) \in C^{\sf comb}_\bullet(M_{0,1,2}^{\sf fr}, \underline{\sgn})_{\sf hS}$. By~\eqref{multiply-u_j}, the nontrivial contribution of computing $\cF^u\big( \mathfrak{l}(\hcV_{0,1,3})\big)$ comes only from the third graph. Computing using Equation~\eqref{eq:def-forgetful1} verifies the following identity:
\begin{equation}
\label{string-vertex-initial-recursion}
\cF_P^u\big( \mathfrak{l}(\hcV_{0,1,3})\big)= \hcV_{0,1,2}.
\end{equation}

\begin{Lemma}\label{lem:commutator-Fu}
	The map $\cF_P^u\;$ satisfies the following commutator identities
    \begin{equation}
        \label{commutator-FP}
        [\eth, \cF_P^u]=[\Delta, \cF_P^u]=[\iota, \cF_P^u]=0.
    \end{equation}
%	\begin{align*}
%		\eth \cF_P^u\; & = \cF_P^u\; \eth,\\
%		\Delta \cF_P^u\; & = \cF_P^u\; \Delta,\\
%		\iota \cF_P^u\; &= \cF_P^u\;\iota.
%	\end{align*}
	Furthermore, for $\alpha\in \hg$ and $\beta\in \hn$, we have
    \[ \cF_P^u\{\alpha,\beta\}_\hbar = \{\alpha,\cF_P^u\beta\}_\hbar. \]
    \end{Lemma}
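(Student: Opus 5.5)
The plan is to reduce each identity to a local statement about graphs near the distinguished white vertex $P$. We work on a single equivariant chain $\alpha=\Gamma\cdot\prod_i w_i^{-a_i}\,u_P^{-b}\prod_j u_j^{-b_j}$. The key observation is that $\cF_P^u$ only touches the half-edge at $P$, its neighbouring vertex, and the exponent of $u_P$. Every operator in \eqref{commutator-FP} other than $\eth$ acts away from $P$: $\iota$ and $\Delta$ sew only at white vertices not labelled $P$, and the module bracket uses inputs and outputs not labelled $P$. So for $\iota$, $\Delta$ and the bracket, the operations are supported on disjoint parts of the graph. Commutation there follows once we check three things. First, $\cF_P$ commutes with sewing at other vertices, using the associativity and locality of Wahl--Westerland sewing (\cite[Lemma 2.5]{WahWes}). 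Second, $M_{u_P}$ commutes with the maps $\pi_{ij}$, $\pi_I$, $\pi_J$, which only set other circle parameters to zero. Third, the orientation signs agree. For the sign check, note that $\cF_P$ removes one white vertex and one half-edge (together with a trivalent black vertex and its edge in case \eqref{graph-remove-edge}), so it has even total effect on $\det\mathbb{R}(V\coprod H)$ up to a fixed convention. One degenerate case needs a separate look: sewing could create or destroy the local configuration at $P$, namely $P$ univalent next to a trivalent black vertex or next to a starting half-edge of a white vertex. Since sewing never changes the valence or starting half-edge of vertices adjacent to $P$ unless those vertices are white vertices being glued, the only issue is when the neighbour of $P$ is itself a glued white vertex $v$. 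I will check case \eqref{graph-remove-white-vertex} by hand: after the gluing in step (b1)/(b2), forgetting $P$ yields the same graph as forgetting first and then gluing.

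The main step is $[\eth,\cF_P^u]=0$, with $\eth=\partial+\sum w_iB^c_i+\sum u_jB^w_j+u_PB^w_P$. The terms $B^c_i$ and $B^w_j$ for $j\ne P$ commute with $\cF_P^u$ by the locality argument above. The remaining pieces combine as
\[
\cF_P\bigl(\partial+u_PB^w_P\bigr)M_{u_P}\quad\text{versus}\quad \partial\,\cF_P M_{u_P}.
\]
On the chain level, $M_{u_P}$ commutes with $\partial$. Its failure to commute with $u_PB^w_P$ is exactly the term with $b=0$: the truncation $u_P^{0}\mapsto 0$ gives $\cF_P M_{u_P}(u_PB^w_P\alpha)=\cF_P(B^w_P\alpha)|_{b=0}$ when $b=0$. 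So two facts remain to be shown.

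(i) $\cF_P\partial-\partial\cF_P=\pm\,\cF_P B^w_P$ on chains whose $P$-exponent is $0$.

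(ii) On chains whose $P$-exponent is positive, the commutator vanishes outright.

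For (i), I would enumerate the expansions of $\Gamma$. The expansions at vertices other than $P$ and its neighbour match, under $\cF_P$, the expansions of $\cF_P\Gamma$. The expansions at $P$ itself are the ones that matter: they produce a trivalent black vertex with $P$ univalent on it, plus the expansions of the neighbour $v$ that keep $P$ attached to a trivalent vertex. These extra terms are exactly the configurations produced by $B\circ_P\Gamma$, namely sewing the circle graph $B$ of \eqref{graph:circle} at $P$ and then forgetting. This is the combinatorial analogue of the unit identity $B(\bone_A)=0$ combined with the fact that $\omega_A$ is $(b+uB)$-closed. As a cross-check, Proposition~\ref{prop:cd} implies the identity after applying $\rho^{A,\tw}$ for every $A$, since $\omega_A\circ(b+uB)=0$.

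I expect (i), the bookkeeping of the boundary terms near $P$ with their orientations, to be the main obstacle. I would attack it by a direct case analysis on the valence of $P$ and the type of its neighbour, using the explicit boundary pictures drawn above for white vertices.
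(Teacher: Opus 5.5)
Your treatment of $\iota$, $\Delta$ and the module bracket matches the paper's argument: $P$ is never used by these operations, and you spell this out more carefully than the paper does. The gap is in the $\eth$ part, and it starts with the bookkeeping of $u_PB^w_P$.

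In the equivariant complex only non-positive powers of $u_P$ survive, so $u_P\cdot u_P^{0}=0$. Both $M_{u_P}\circ u_PB^w_P$ and $u_PB^w_P\circ M_{u_P}$ send $\Gamma u_P^{-b}$ to $B^w_P\Gamma\, u_P^{2-b}$ for $b\ge 2$ and to $0$ otherwise. In particular $\cF_PM_{u_P}(u_PB^w_P\alpha)=0$ when $b=0$, not $\cF_P(B^w_P\alpha)$. Hence $M_{u_P}$ commutes with $\eth$ (the paper's first step) and $[\eth,\cF_P^u]=[\eth,\cF_P]\,M_{u_P}$. What must be proved is therefore:
(a) $[\partial,\cF_P]=0$, for which it suffices to check chains of $u_P$-exponent $0$, together with the commutation with the other circle operators;
(b) $\cF_P\circ B^w_P=0$, which is exactly what the chains $\Gamma u_P^{-2}$ require.
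Your (ii) is where (b) is needed, but you give no reason for it. The reason is one line: sewing $B$ at $P$ gives $P$ a new starting leaf, so either the result is zero or $P$ has valence $\ge 2$, and $\cF_P$ kills it. Algebraically, $\omega_A\circ B=0$ because $B$ only produces chains $\bone_A|\cdots$ of positive length.

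Fact (b) also shows that the mechanism you plan for (i) cannot work. Its right-hand side $\pm\cF_PB^w_P$ vanishes identically, so the boundary terms near $P$ are not cancelled by ``sewing $B$ at $P$ and then forgetting''. There are also no expansions at a univalent $P$, since the new black vertex would have valence $\le 2$. Configurations with $P$ univalent on a new trivalent black vertex come from expanding a bivalent $P$, on which $\cF_P$ was already zero. After forgetting $P$ these cancel in pairs by orientation, the combinatorial shadow of $\omega_A\circ b=0$. The same happens for the two expansions of a $4$-valent black neighbour that put $P$ on a trivalent vertex.

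The one genuine matching, which is the whole content of the paper's proof, occurs when the neighbour $v$ of $P$ is white with starting half-edge along $e$. The two terms of $\partial\cF_P(\beta)$ in which the new starting leaf of $v$ is absorbed into an adjacent half-edge coincide with $\cF_P$ applied to the two expansions of $\beta$ at $v$ that put $e$ and that adjacent half-edge on a new trivalent black vertex. So your (i) holds only as $0=0$, and your case analysis should aim at (a), not at a matching with $B$. Your cross-check through Proposition~\ref{prop:cd} only tests the identity after applying $\rho^{A,\tw}$, which is not known to be faithful, so it cannot replace this analysis.
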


\begin{proof}
Recall that the equivariant boundary map $\eth$ is given in~\eqref{eth-operator}.
Multiplication by each $u$ clearly commutes with $\eth$. Thus, to check $\cF_P^u\;$ commutes with $\eth$, it suffices to show that 
$$[\eth, \cF_P]= \eth \cF_P-\cF_P\eth=0.$$
Let us first verify $[\partial, \cF_P] = 0$. Indeed, the only possibly non-vanishing terms in this commutator are two graphs from computing $\partial \cF_P(\beta)$ (with $\beta\in \hn$) when the white vertex $P$ and its unique adjacent vertex $v$ are both white vertices. These two graphs are given by
	\[\begin{tikzpicture}[baseline={([yshift=-0.5ex]current bounding
			box.center)},scale=0.4] 
		\draw [thick] (-3,0) circle [radius=0.2];
		\draw (-3,0) node[label=below:{$v$}] {};
		\draw [thick] (-6,-0) to (-3.2,0);
		\draw [thick] (-4,2) to (-3.2,0);
		\draw [thick] (-4,-2) to (-3.2,0);
		\draw [thick] (-1,2) to (-2.8,0);
		\draw [thick] (-1,-2) to (-2.8,0);
		\draw [line width=2.4pt] (-2.8,0) to (-2.1,0.75);
		\node at (3,0) {\mbox{and}};
		\draw [thick] (9,0) circle [radius=0.2];
		\draw (9,0) node[label=below:{$v$}] {};
		\draw [thick] (6,-0) to (8.8,0);
		\draw [thick] (8,2) to (8.8,0);
		\draw [thick] (8,-2) to (8.8,0);
		\draw [thick] (11,2) to (9.2,0);
		\draw [thick] (11,-2) to (9.2,0);
		\draw [line width=2.4pt] (9.2,0) to (9.9,-0.75);
	\end{tikzpicture}\]

However, these two graphs also appear from applying $\cF_P$ to the following two terms in $\partial \beta$.
\[\begin{tikzpicture}[baseline={([yshift=-0.5ex]current bounding
		box.center)},scale=0.4] 
	\draw [thick] (-3,0) circle [radius=0.2];
	\draw (-3,0) node[label=below:{$v$}] {};
	\draw [thick] (-.3,0) circle [radius=0.2];
	\draw (-.3,0) node[label=below:{$P$}] {};
	\draw [thick] (-6,-0) to (-3.2,0);
	\draw [thick] (-4,2) to (-3.2,0);
	\draw [thick] (-4,-2) to (-3.2,0);
	\draw [thick] (-1,2) to (-2.8,0);
	\draw [thick] (-1,-2) to (-2.8,0);
	\draw [thick] (-1.75,1.1) to (-.5,0);
		\draw [line width=2.4pt] (-2.8,0) to (-2.3,0.5);
	\node at (3,0) {$\mbox{and}$};
	\draw [thick] (9,0) circle [radius=0.2];
	\draw (9,0) node[label=below:{$v$}] {};
		\draw [thick] (11.7,0) circle [radius=0.2];
			\draw (11.7,0) node[label=below:{$P$}] {};
	\draw [thick] (6,-0) to (8.8,0);
	\draw [thick] (8,2) to (8.8,0);
	\draw [thick] (8,-2) to (8.8,0);
	\draw [thick] (11,2) to (9.2,0);
	\draw [thick] (11,-2) to (9.2,0);
	\draw [thick]  (10.3,-1.3) to (11.5,0);
	\draw [line width=2.4pt] (9.2,0) to (10,-0.8);
\end{tikzpicture}\]
Other commutator identities  of $\cF_P^u\;$ with $\Delta$, $\iota$ and $\{-,-\}_\hbar$ all follow from the fact that the distinguished vertex $P$ is not used in these operators.
\end{proof}

\begin{Proposition}
\label{prop:forgetful}
There exists a Maurer-Cartan element $\hcV+\widehat{\mathcal{U}}$ as in Lemma~\ref{lem:semi-unique-gn} such that %the following equation holds: 
\begin{equation}\label{eq:dilaton-mc}
\widehat{\mathcal{V}}_{g,k,l} =\frac{1}{2g-2+k+l} 
\cF_P^u\;\widehat{\mathcal{U}}_{g,k,l+1}.
\end{equation} 
\end{Proposition}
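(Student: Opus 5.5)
Start from an arbitrary Maurer-Cartan element $\hcV'+\widehat{\mathcal{U}}'$ of $\hg\ltimes\hn$ as in Lemma~\ref{lem:semi-unique-gn}. Its $\hn$-part satisfies the linear Maurer-Cartan equation
\[(\eth+\iota+\hbar\Delta)\widehat{\mathcal{U}}'+\{\hcV',\widehat{\mathcal{U}}'\}_\hbar=0 .\]
Define a new element of $\hg$ by
\[\widehat{\mathcal{W}}:=\sum_{g,k,l}\frac{1}{2g-2+k+l}\,\cF_P^u\widehat{\mathcal{U}}'_{g,k,l+1}\,\hbar^g\lambda^{2g-2+k+l}.\]
Equivalently, $\widehat{\mathcal{W}}=\mathcal{E}^{-1}\cF_P^u\widehat{\mathcal{U}}'$, where $\mathcal{E}=\lambda\partial_\lambda$ is the Euler operator in $\lambda$ and $\cF_P^u$ lowers the $\lambda$-exponent by one, up to a shift. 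I will show that $\widehat{\mathcal{W}}$ is a Maurer-Cartan element of $\hg$ with $\widehat{\mathcal{W}}_{0,1,2}=\hcV_{0,1,2}$. Theorem~\ref{thm:comb-string-vertex} then makes $\widehat{\mathcal{W}}$ a string vertex. Finally, I will lift it to a Maurer-Cartan element $\widehat{\mathcal{W}}+\widehat{\mathcal{U}}$ of the semidirect product for which~\eqref{eq:dilaton-mc} holds identically.

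\textbf{Step 1: the initial term.} In the lowest case $(g,k,l)=(0,1,2)$ we have $2g-2+k+l=1$, and the computation~\eqref{string-vertex-initial-recursion} shows $\cF_P^u\mathfrak{l}(\hcV_{0,1,3})=\hcV_{0,1,2}$. The relevant component $\widehat{\mathcal{U}}'_{0,1,3}$ need not equal $\mathfrak{l}(\hcV_{0,1,3})$ on the nose, so I will argue degree by degree. I expect this to work because the two differ by exact terms and $\cF_P^u$ kills the relevant lower-order ambiguity. If that fails, I will fix the choice $\widehat{\mathcal{U}}'=\mathfrak{l}(\hcV')$ from the existence part of Lemma~\ref{lem:semi-unique-gn} and use~\eqref{string-vertex-initial-recursion} directly.

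\textbf{Step 2: the Maurer-Cartan equation for $\widehat{\mathcal{W}}$.} Apply $\cF_P^u$ to the linear equation for $\widehat{\mathcal{U}}'$. By Lemma~\ref{lem:commutator-Fu} this gives
\[(\eth+\iota+\hbar\Delta)(\cF_P^u\widehat{\mathcal{U}}')+\{\hcV',\cF_P^u\widehat{\mathcal{U}}'\}_\hbar=0 .\]
Thus $X:=\cF_P^u\widehat{\mathcal{U}}'$ is a tangent vector at $\hcV'$, a cocycle for the twisted differential $d_{\hcV'}$. The Euler operator $\mathcal{E}$ in $\lambda$ is a derivation of the bracket modulo $\hbar$-weighting, since $\lambda$-degrees are additive under $\circ_{[r]}$ once the $\hbar^{r-1}$ is accounted for by the Euler characteristic. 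Hence $\mathcal{E}\hcV'$ is also $d_{\hcV'}$-closed: it is the derivative of the rescaling family $\lambda\mapsto t\lambda$ of Maurer-Cartan elements. The key claim is that $X$ and $\mathcal{E}\hcV'$ are cohomologous. Both are closed, and they agree in the lowest component by Step 1. By the same vanishing of homology that gives uniqueness in Theorem~\ref{thm:comb-string-vertex}, a closed element whose $(0,1,2)$-component vanishes is exact, so $X-\mathcal{E}\hcV'=d_{\hcV'}Y$.

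\textbf{Step 3: correcting by a gauge, and the main obstacle.} Gauge transforming $\hcV'$ along $Y$ (in the $\hn$-direction, by changing $\widehat{\mathcal{U}}'$ by $d_{\hcV'}$ of a preimage under $\cF_P^u$) should make $X=\mathcal{E}\hcV'$ exactly; this is~\eqref{eq:dilaton-mc}. The main obstacle is exactly this correction. I need $Y$ to lift through $\cF_P^u$, meaning $\cF_P^u$ must be surjective up to homotopy in a way compatible with the differential. For this I will use the right inverse built from inserting a white univalent vertex labeled $P$ with $u_P^{-1}$, which is the mechanism behind~\eqref{string-vertex-initial-recursion}, and check that it commutes with $\eth$ up to the terms handled in the proof of Lemma~\ref{lem:commutator-Fu}. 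I will do the construction inductively on $2g-2+k+l$, solving component by component exactly as in the existence proof of string vertices.
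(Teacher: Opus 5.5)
Your Steps 2--3 are the paper's argument in global form. The paper's induction on $(2g-2+k+l,\,g,\,k)$ is precisely the filtration argument showing that a $d_{\hcV'}$-cocycle with $\eth$-exact $(0,1,2)$-component is exact, and its Euler-characteristic additivity computation is your observation that $\lambda\partial_\lambda$ is a derivation. Like you, it corrects $\widehat{\mathcal U}$ by an exact term built from an \emph{arbitrary} preimage under $\cF_P^u$. So plain linear surjectivity of $\cF_P^u$ suffices (it already commutes with $d_{\hcV'}$), and your Step 3 ``obstacle'' is not one.

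Do drop the opening claim that $\mathcal{E}^{-1}\cF_P^u\widehat{\mathcal U}'$ is Maurer--Cartan for an arbitrary $\widehat{\mathcal U}'$: it is false in general, and Steps 2--3 do not prove it. What you actually establish is that after replacing $\widehat{\mathcal U}'$ by $\widehat{\mathcal U}'-d_{\hcV'}\widetilde Y$, with $\cF_P^u\widetilde Y=Y$, one gets $\cF_P^u\widehat{\mathcal U}=\mathcal{E}\hcV'$ exactly, with $\hcV'$ unchanged. Also, in Step 1 you only need agreement with $\hcV_{0,1,2}$ up to an $\eth$-exact term, not vanishing of the $(0,1,2)$-component.
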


\begin{proof}
We construct the desired element $\widehat{\mathcal{U}}$
%$=\sum\widehat{\mathcal{U}}_{g,k,l} \hbar^g\lambda^{2g-2+k+l}$ 
as follows. By definition, we have $\widehat{\mathcal{U}}_{0,1,2} = \mathfrak{l} \hcV_{0,1,2}$. Using Equation~\eqref{string-vertex-initial-recursion}, we may set $\widehat{\mathcal{U}}_{0,1,3}=\mathfrak{l}(\hcV_{0,1,3})$ to make Equation~\eqref{eq:dilaton-mc} hold for the initial case $(g,k,l)=(0,1,2)$. 
Now, we inductively construct the desired element $\widehat{\mathcal{U}}$ as follows. Let $(g,k,l+1)$ be a stable triple with $k\geq 1$ and $l\geq 0$. Let $-\chi=2g-2+k+l>0$ be the negative of Euler characteristic. Assume $\widehat{\mathcal{U}}_{g',k',l'+1}$ has been constructed for $(g',k',l'+1)$ satisfying one of the following conditions:
\begin{itemize}
    \item $2g'-2+k'+l'<-\chi$,
    \item $2g'-2+k'+l'=-\chi$, and $g'<g$,
    \item $2g'-2+k'+l'=-\chi$, $g'=g$, and $k'<k$.
\end{itemize}
We proceed to construct the desired $\widehat{\mathcal{U}}_{g,k,l+1}$, we first choose any $\widehat{\mathcal{U}}'_{g,k,l+1}$ that satisfies the Maurer-Cartan equation. Using the commutators in Lemma~\ref{lem:commutator-Fu} and the induction, we obtain
\begin{align*}
\eth(\cF_P^u\;\widehat{\mathcal{U}}'_{g,k,l+1})
=&  \cF_P^u\; \eth \widehat{\mathcal{U}}'_{g,k,l+1}\\
=& \cF_P^u\;\big( -\Delta \widehat{\mathcal{U}}_{g-1,k,l+3} - \iota \widehat{\mathcal{U}}_{g,k-1,l+2} - \sum \{ \hcV_{g_1,k_1,l_1},\widehat{\mathcal{U}}_{g_2,k_2,l_2+1}\}_r \big)\\
=& -\Delta (\cF_P^u\; \widehat{\mathcal{U}}_{g-1,k,l+3}) - \iota ( \cF_P^u\;\widehat{\mathcal{U}}_{g,k-1,l+2}) -\sum\cF_P^u \{\hcV_{g_1,k_1,l_1},\widehat{\mathcal{U}}_{g_2,k_2,l_2+1}\}_r\\
=&-(2g-2+k+l)\left(\Delta (\hcV_{g-1,k,l+2})+\iota(\hcV_{g,k-1,l+1})\right) -\sum \{\hcV_{g_1,k_1,l_1},\cF_P^u\widehat{\mathcal{U}}_{g_2,k_2,l_2+1}\}_r
\end{align*}
According to~\eqref{indices-mc-equation}, the summation of $(g_1,k_1,l_1)$ and $(g_2,k_2,l_2)$ in the second equality satisfies
\begin{equation}\label{indices-induction}
%\begin{dcases}
g_1+g_2 =g-r+1,\quad
k_1+k_2= k+r,\quad
l_1+l_2= l+r.
%\end{dcases}
\end{equation}
Thus, we use induction to deduce
\begin{align*}
&\sum\{ \hcV_{g_1,k_1,l_1},\cF_P^u\widehat{\mathcal{U}}_{g_2,k_2,l_2+1}\}_r\\
=&\sum (2g_2-2+k_2+l_2)\{ \hcV_{g_1,k_1,l_1},\hcV_{g_2,k_2,l_2}\}_r\\
=&\frac{1}{2}\big(\sum (2g_1-2+k_1+l_1)\{ \hcV_{g_1,k_1,l_1},\hcV_{g_2,k_2,l_2}\}_r+ \sum (2g_2-2+k_2+l_2)\{ \hcV_{g_1,k_1,l_1},\hcV_{g_2,k_2,l_2}\}_r\big)\\
=&(2g-2+k+l)\frac{1}{2}\sum\{ \hcV_{g_1,k_1,l_1},\hcV_{g_2,k_2,l_2}\}_r
\end{align*}
In the second last equation, we used that  
\[(2g_1-2+k_1+l_1)+(2g_2-2+k_2+l_2)=2(g-r+1)-4+(k+r)+(l+r)=2g-2+k+l.\]
This implies
\[\eth(\cF_P^u\;\widehat{\mathcal{U}}'_{g,k,l+1})
=-(2g-2+k+l)\left(\Delta (\hcV_{g-1,k,l+2})+\iota(\hcV_{g,k-1,l+1})+\frac{1}{2}\sum\{ \hcV_{g_1,k_1,l_1},\hcV_{g_2,k_2,l_2}\}_r\right).\]
Thus, $\frac{1}{2g-2+k+l} \cF_P^u\;\widehat{\mathcal{U}}'_{g,k,l+1}$ also satisfies the Maurer-Cartan equation~\eqref{eq:mc}. 
By the uniqueness of string vertices, there exists $\gamma$ such that
\[ \frac{1}{2g-2+k+l} \cF_P^u\;\widehat{\mathcal{U}}'_{g,k,l+1} = \hcV_{g,k,l} +\eth \gamma.\]
Since the map $\cF_P^u$ is a surjective chain map, we choose a lift $\widetilde{\gamma}$ such that $\cF_P^u\;(\widetilde{\gamma})=\gamma$. Finally, we set %the desired 
\[\widehat{\mathcal{U}}_{g,k,l+1}:= \widehat{\mathcal{U}}'_{g,k,l+1} - (2g-2+k+l)\eth\widetilde{\gamma}.\]
One easily verifies that we have
\[ \frac{1}{2g-2+k+l} \cF_P^u\;\widehat{\mathcal{U}}_{g,k,l+1} = \frac{1}{2g-2+k+l} \cF_P^u\;\widehat{\mathcal{U}}'_{g,k,l+1} -\eth{\gamma}= \hcV_{g,k,l}.\]
This completes the proof.
\end{proof}

\subsection{The dilaton equation}

In this section, we prove the CEI analogue of the dilaton equation. We begin with a discussion of noncommutative Calabi-Yau structures, which enables us to identify the ``unit" element in CEI. 

%\subsubsection{Calabi-Yau structures versus cyclic structures.} 
Let $A$ be an $A_\infty$ algebra, and let $\Omega\in HC_\bullet^-(A)$ be a Calabi-Yau structure on $A$. As before, we always assume $A$ is smooth and proper. Using homological transferring of $A_\infty$ structures, without loss of generality, we may assume that $A$ is finite dimensional. In the construction of CEI, the first step is applying a formal Darboux theorem (following Kontsevich-Soibelman~\cite{KonSoi,Cho}, and also~\cite{AmoTu2} for its unital version) to obtain a cyclic $A_\infty$ structure on $A$. As shown in~\cite{AmoTu2}, the CEI (defined using $A$) does not depend on the choice of this cyclic model.

Now, the cyclic structure is related to the original Calabi-Yau structure as follows. Let ${\sf Hom}^c\big( L_-^A, \mathbb{C} \big)$ be the space of $u$-adic continuous linear functionals on $L_-^A$. There is a natural map
\[D: L_+^A \ra {\sf Hom}^\cont\big( L_-^A, \mathbb{C} \big)\]
defined using the Mukai pairing by setting
\[ D(x\cdot u^i) (y\cdot u^{-j})=\langle y\cdot u^{-j}, x\cdot u^i\rangle_\Muk.\]
Shklyarov~\cite{Shk} proves that in homology, this map induces an isomorphism which we still denote by $D: HC_\bullet^-(A)[d]\to {\sf Hom}^\cont\big( HC_\bullet(A)[d], \mathbb{C} \big)$. Then the Calabi-Yau structure is related to the cyclic structure by the following equation:
\begin{equation}
\label{sm-CY-structure}
D\Omega = \omega_A,
\end{equation}
where the linear functional $\omega_A$ was defined in Equation~\eqref{eq:cy-form}. For this reason, the element $\Omega$  plays the role of ``unit" in categorical enumerative invariants. 
With this in mind, we now state the following 
\begin{Theorem}
\label{thm:dilaton}
Assume that the splitting map $s$ is unital in the sense that $\Omega\in {\sf Im}(s)$. Then the CEI of $(A,\Omega, s)$ satisfies the dilaton equation, i.e., for insertions $\alpha_1,\ldots,\alpha_n \in HH_\bullet(A)[d]$, we have
\begin{equation}\label{eq:dilaton}
\langle\Omega\psi, \alpha_1\psi^{k_1},\ldots,\alpha_n\psi^{k_n}\rangle_{g,n+1}^{A,\Omega,s} = (2g-2+n)\cdot \langle \alpha_1\psi^{k_1},\ldots,\alpha_n\psi^{k_n}\rangle_{g,n}^{A,\Omega,s}.\end{equation}
Here $[\Omega]\in H_\bullet(L^A)$ is image of $\Omega$ under the natural projection map $H_\bullet(L_+^A)\to H_\bullet(L^A)$.
%with insertions $\alpha_1,\ldots,\alpha_n \in HH_\bullet(A)[d]$, and $k_1,\ldots,k_n\geq 0$. 
\end{Theorem}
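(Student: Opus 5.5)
The plan is to use the forgetful-compatible string vertices of Proposition~\ref{prop:forgetful} and push the identity \eqref{eq:dilaton-mc} through the action map and the trivialization $\cK$. The goal is an identity at the level of the pushed-forward Maurer--Cartan element $\lbA$:
\[
C_{\omega_A}\circ M_{u}\,\lbA_{g,1,n} \;=\; (2g-2+n)\,\lbA_{g,1,n-1}
\]
(with the appropriate symmetrization over which output is contracted). Evaluating against $\alpha_i(-u)^{k_i}$ via \eqref{eq:explicit-evaluation} should then give \eqref{eq:dilaton}. The insertion $\Omega\psi=\Omega(-u)$ pairs, via \eqref{eq:generalized-Mukai}, with the $u^{-1}$-component of an output. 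By \eqref{sm-CY-structure}, pairing with $\Omega$ is the functional $\omega_A$ composed with $D$. So the operation ``insert $\Omega\psi$'' on outputs is exactly $C_{\omega_A}\circ M_u$, up to the sign $(-1)$ from $(-u)$ and the convention \eqref{eq:generalized-Mukai-chain-level}; these signs have to be tracked.

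First, since the Maurer--Cartan element $\hcV+\widehat{\mathcal U}$ of Lemma~\ref{lem:semi-unique-gn} is unique up to gauge, the CEI is unchanged if we use the $\hcV$ of Proposition~\ref{prop:forgetful}. By Proposition~\ref{prop:cd}, applied with the distinguished output $P$, we get $\rho^{A,\tw}\cF^u_P=C_{\omega_A}^{(P)}M_{u_P}\rho^{A,\tw}$. Applying this to \eqref{eq:dilaton-mc} gives, at each vertex,
\[
\hbA_{g,k,l}=\tfrac{1}{2g-2+k+l}\,C^{(P)}_{\omega_A}M_{u_P}\,\widehat{\beta}^{A,P}_{g,k,l+1},
\]
where summing $\widehat{\beta}^{A,P}$ over the choice of $P$ recovers $\hbA_{g,k,l+1}$. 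Second, I move this through the graph sum \eqref{eq:cei-formula-main}. At an outgoing leaf carrying $R$, the unital hypothesis $\Omega\in{\sf Im}(s)$ gives $R$ fixing the class of $\Omega$ in the relevant sense. Dually, $\omega_A\circ R$ restricted to the $u^{-1}$-part reduces to $\omega_A$ applied to the $u^{-1}$-term. Concretely, I expect $\langle R(y),\Omega u\rangle=\langle y,S^*(\Omega u)\rangle$, and this equals the $u^{-1}$-coefficient pairing plus a $u^0$ pairing with $S_1$-corrections that vanish because $S(\Omega)=\Omega$. So contracting an external output of $\lbA_{g,1,n}$ by $\Omega\psi$ equals contracting the corresponding vertex output by $C_{\omega_A}M_u$. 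For a fixed partially directed graph $\GG$ of type $(g,1,n)$, choose the outgoing leaf that carries $\Omega\psi$; it sits at some vertex $v$. Removing that leaf gives a graph $\GG'$ of type $(g,1,n-1)$, and the vertex relation replaces the contribution of $v$ by $(2g(v)-2+n(v)-1)$ times its contribution in $\GG'$. Here $n(v)$ counts all half-edges at $v$ in $\GG$.

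Third, I sum over $\GG$ and over all vertices $v$ of $\GG'$ at which the leaf can be reattached. Each $\GG'$ receives $\sum_v(2g(v)-2+n'(v))=2g-2+n$ times its own contribution, by additivity of Euler characteristic over the stable graph. This is the same bookkeeping as in the proof of Proposition~\ref{prop:forgetful}.

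The main obstacle is checking that weights and automorphism factors match: $\wt(\GG)=\wt(\GG')$ and $|\Aut|$ compatible, and that the edge operators $F,H,\delta$ are unaffected. The spanning tree and edge sets are unchanged by adding a leaf, so I expect the inductive definition \eqref{eq:wt-def} to give equal weights directly. I would verify this by induction on $|T|$, and I would handle the $1/|\Aut|$ versus labeled-leaf counting by working with labeled outputs throughout.
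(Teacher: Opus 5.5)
\textbf{The gap: the per-vertex relation you use does not hold for $\hcV$.} Your two later steps are essentially the two halves of the paper's argument. These are absorbing the $\Omega\psi$-leaf into a vertex via $\omega^{[1]}_A$, and the count $\sum_v(2g(v)-2+n(v))=2g-2+n$. The bridge you build between them does not hold.

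Relation \eqref{eq:dilaton-mc} is proved for the $\widehat{\mathcal{U}}$ constructed in Proposition~\ref{prop:forgetful}, not for $\mathfrak{l}\hcV$. That construction sets $\widehat{\mathcal{U}}_{g,k,l+1}=\widehat{\mathcal{U}}'_{g,k,l+1}-(2g-2+k+l)\eth\widetilde{\gamma}$ with an arbitrary lift $\widetilde{\gamma}$. Nothing guarantees that symmetrizing $\widehat{\mathcal{U}}_{g,k,l+1}$ over the position of $P$ gives back $\hcV_{g,k,l+1}$.

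So your claim that ``summing $\widehat{\beta}^{A,P}$ over the choice of $P$ recovers $\hbA_{g,k,l+1}$'' is unjustified. It is exactly what you need in order to rewrite the vertex factor $\hbA_{g(v),k(v),l(v)}$ in the graph sum for $\lbA_{g,1,n}$, at the vertex carrying the $\Omega\psi$-leaf, as $\widehat{\beta}^{A,P}$ and then apply the vertex relation.

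A direct compatibility of $\hcV$ itself with the forgetful maps is not available, because the symmetric group action obstructs the inductive lifting. That is the reason $\hn$ is introduced. Lemma~\ref{lem:semi-unique-gn} gives only that $(\hcV,\mathfrak{l}\hcV)$ and $(\hcV,\widehat{\mathcal{U}})$ are gauge equivalent in $\hg\ltimes\hn$, and this yields no chain-level identity. Accordingly, your target $C_{\omega_A}M_u\lbA_{g,1,n}=(2g-2+n)\lbA_{g,1,n-1}$ should not be expected on the chain level. Compare the genus-one discussion of the string equation, where the analogous identity holds only in homology.

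\textbf{The missing step: transport the gauge equivalence.} Push both Maurer--Cartan elements forward along $\cK\circ\widetilde{\rho^{A,\tw}}$, where $\widetilde{\rho^{A,\tw}}(\alpha,\beta)=\rho^{A,\tw}(\alpha)+\epsilon\,\rho^{A,\tw}(\cF^u_P\beta)$. This is a DGLA morphism by Lemma~\ref{lem:commutator-Fu}.
\begin{itemize}
\item For $(\hcV,\mathfrak{l}\hcV)$, Lemma~\ref{lem:Fu-cd} makes the $\epsilon$-part $C_{\omega^{[1]}_A}\hbA$ at one vertex. Your leaf argument, including your use of $\Omega\in{\sf Im}(s)$ to remove $R$ at the extra leaf, identifies the result with the left side of \eqref{eq:dilaton}.
\item For $(\hcV,\widehat{\mathcal{U}})$, \eqref{eq:dilaton-mc} makes the $\epsilon$-part $\lambda\partial_\lambda\hbA$ at one vertex. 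Your Euler-characteristic count then gives $(2g-2+n)$ times the right side.
\end{itemize}
The target $\hh_A^{\sf TRIV}[\epsilon]$ has zero bracket, so gauge-equivalent Maurer--Cartan elements there differ by a $(b+\iota)$-exact term. Its $(g,1,n-1)$-component is $b$-exact, and since the invariants pair this component with homology classes, the dilaton equation follows. The weight and automorphism bookkeeping you single out as the main obstacle is routine, since $\wt$ does not see leaves; the real difficulty is the one above.
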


%\subsubsection{The strategy of the proof}

%\begin{proof} 

In order to prove the dilaton equation, we need the following construction.
Denote by $\mathbb{C}[\epsilon]$ the ring of dual numbers, with $\epsilon^2=0$. We may extend the DGLA map $\rho^{A,\tw}:\hg \ra \hh_A$ in~\eqref{eq:rho-tw} to a map 
$\widetilde{\rho^{A,\tw}}: \widehat{\mathfrak{g}} \ltimes \widehat{\mathfrak{n}} \ra \widehat{\mathfrak{h}}_A [\epsilon],$
by setting 
\[\widetilde{\rho^{A,\tw}}\big((\alpha,\beta)\big):= \rho^{A,\tw}(\alpha)+\epsilon \rho^{A,\tw}(\cF_P^u\beta).\]
Post-composing $\widetilde{\rho^{A,\tw}}$ with the trivialization $L_\infty$ morphism $\cK$ (extended $\epsilon$-linearly) in Equation~\eqref{eq:Triv-map}, we obtain an $L_\infty$ morphism
\[ \widehat{\mathfrak{g}} \ltimes \widehat{\mathfrak{n}} \stackrel{\widetilde{\rho^{A,\tw}}}{\longrightarrow} \hh_A[\epsilon] \stackrel{\cK}{\longrightarrow} \hh_A^{\sf TRIV}[\epsilon].
\]
Now, in the semi-direct product DGLA $\hg\ltimes \hn$, we have two Maurer-Cartan elements $(\hcV,\mathfrak{l}\hcV)$ and $(\hcV,\widehat{\mathcal{U}})$ (from Proposition~\ref{prop:forgetful}). Let us describe how each side of Equation~\eqref{eq:dilaton} relates to these two elements.

\iffalse
In order to prove Equation~\eqref{eq:dilaton}, we shall prove the following two identities:
\begin{enumerate}
    \item (Proved in Proposition~\ref{prop:lhs-dilaton}) \[\langle \Omega\psi, \alpha_1\psi^{k_1},\ldots,\alpha_n\psi^{k_n} \rangle_{g, n+1}^{A,\Omega,s} = \left\langle \big( \cK_* \widetilde{\rho^{A,\tw}}_*(\hcV,\mathfrak{l}\hcV)\big)^\epsilon_{g,1,n-1}(\alpha_n(-u)^{k_n}),
    \bigodot\limits_{i=1}^{n-1}\alpha_i(-u)^{k_i}
    \right\rangle_{\Muk},\]
    \item (Proved in Proposition~\ref{prop:rhs-dilaton}) \[(2g-2+n)\cdot \langle \alpha_1\psi^{k_1},\ldots,\alpha_n\psi^{k_n}\rangle_{g,n}^{A,\Omega,s} = \left\langle \big( \cK_* \widetilde{\rho^{A,\tw}}_*(\hcV,\widehat{\mathcal{U}})\big)^\epsilon_{g,1,n-1}(\alpha_n(-u)^{k_n}),
    \bigodot\limits_{i=1}^{n-1}\alpha_i(-u)^{k_i}
    \right\rangle_{\Muk}. \]
\end{enumerate}
Then Equation~\eqref{eq:dilaton} follows from the uniqueness in Lemma~\ref{lem:semi-unique-gn} proving the two Maurer-Cartan elements $(\hcV,\mathfrak{l}\hcV)$ and $(\hcV,\widehat{\mathcal{U}})$ are gauge equivalent.
%\end{proof}

We proceed to prove the two propositions used in the proof above. 
\fi

Analogously to Equation~\eqref{eq:cy-form}, we define another linear functional $\omega_A^{[1]}$ on $L_-^A=C_\bullet(A)[d][u^{-1}]$ by 
\begin{equation}
\label{eq:cy-form-mu} 
\omega_A^{[1]} (a_0|a_1|\cdots|a_n u^{-k}) := \delta_n^0\delta^k_1 \langle \bone_A, a_0\rangle.
\end{equation}
Observe that $\omega_A^{[1]}=\omega_A\circ M_u$ where $M_u: L^A_-\to L^A_-$ is the multiplication by $u$ map. This implies that $\omega^{[1]}_A: L_-^A \ra \mathbb{C}$ is also a chain map with respect to differential $\eth=b+uB$. It also implies, using Equation~\eqref{sm-CY-structure}, that $\omega^{[1]}_A$ is represented by $\Omega(-u)$, i.e.,
\begin{equation}\label{eq:cy-form-mu-dual} \omega_A^{[1]}(\alpha)= \langle \alpha, \Omega(-u)\rangle_\Muk.\end{equation}
Contraction by the linear functional yields a map denoted by $C_{\omega^{[1]}_A}: \sym^{l+1} L^A_- \to \sym^{l} L^A_-$ for each $l\geq 0$. By post-composition, it further induces a map still denoted by 
\[ C_{\omega^{[1]}_A}: {\sf Hom}^c\big( \sym^k (L^A_+[1]), \sym^{l+1} L^A_-\big) \to {\sf Hom}^c\big( \sym^k (L^A_+[1]), \sym^{l} L^A_-\big). \]

\begin{Lemma}\label{lem:Fu-cd}
The following diagram is commutative:
\[\begin{CD}
C^{\sf comb}_\bullet(M_{g,k,l+1}^{\sf fr}, \underline{\sgn})_{\sf hS}  @>{\rho^{A,\tw}}>> {\sf Hom}^c\big( \sym^k (L^A_+[1]), \sym^{l+1} L^A_-\big)\\
@V \cF_P^u\circ \mathfrak{l} VV        @VV C_{\omega^{[1]}_A} V\\
C^{\sf comb}_\bullet(M_{g,k,l}^{\sf fr}, \underline{\sgn})_{\sf hS}  @>{\rho^{A,\tw}}>> {\sf Hom}^c\big( \sym^k (L^A_+[1]), \sym^l L^A_-\big)
\end{CD}\]
\end{Lemma}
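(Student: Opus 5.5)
The proof reduces to Proposition~\ref{prop:cd}, once we understand how $M_{u_P}$ interacts with the action map. Since $\mathfrak{l}=\sum_j \mathfrak{l}_j$, and $\rho^{A,\tw}$ is symmetric in the outputs, it suffices to treat one chain $\alpha=\Gamma\cdot\prod_i w_i^{-a_i}\prod_j u_j^{-b_j}$ and one label $j$. Using the symmetric group action, we may assume the distinguished vertex $P$ is the last output. We then compare $\rho^{A,\tw}(\cF_P M_{u_P}\mathfrak{l}_j\alpha)$ with the contraction of $\rho^{A,\tw}(\alpha)$ against $\omega_A^{[1]}$ in the $j$-th output slot. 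Summing over $j$ then gives exactly $C_{\omega^{[1]}_A}$, since contraction with a linear functional on $\sym^{l+1}L^A_-$ is the sum over all output slots.

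First, I would record how the action map~\eqref{eq:action-map} treats circle parameters at outputs. The factor $u_j^{-b_j}$ on a chain is sent to the $u^{-b_j}$-component of the $j$-th output in $L^A_-$. Consequently, multiplying the chain by $u_P$ (truncating when $b=0$) corresponds, under $\rho^{A,\tw}$, to applying $M_u:L^A_-\to L^A_-$ in the $P$-th output. This map also kills the $u^0$-part, matching the rule in~\eqref{multiply-u_j}. So we have
\[\rho^{A,\tw}\circ M_{u_P}=(M_u)_P\circ\rho^{A,\tw}\]
on the relevant chains. I would double check this against the homotopy-quotient identifications, in particular that no sign arises because $u_P$ has even degree.

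Second, by Proposition~\ref{prop:cd} applied to the single output $P$ (its proof is local to one output: $\cF_j$ corresponds to applying $\omega_A$ at the $j$-th output), we get
\[\rho^{A,\tw}\circ\cF_P=C^{(P)}_{\omega_A}\circ\rho^{A,\tw}.\]
Composing the two steps gives
\[\rho^{A,\tw}(\cF_P^u\,\mathfrak{l}_j\alpha)=C^{(j)}_{\omega_A\circ M_u}\rho^{A,\tw}(\alpha)=C^{(j)}_{\omega_A^{[1]}}\rho^{A,\tw}(\alpha),\]
using the identity $\omega_A^{[1]}=\omega_A\circ M_u$ noted after~\eqref{eq:cy-form-mu}. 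Summing over $j$ yields the commutativity of the diagram.

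The main obstacle is the first step: making precise that the equivariant parameter $u_P$ on the graph side matches multiplication by $u$ on $L^A_-$. This means checking that the truncation conventions, where $u_P^{0}\mapsto 0$, agree with the quotient $L^A((u))/uL^A_+$, and that the Koszul signs from moving the $P$-slot to the end cancel against those in the definition of the contraction. I would handle this by unwinding~\eqref{eq:action-map} for generators and comparing term by term.
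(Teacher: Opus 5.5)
Your proposal is correct and is essentially the paper's proof. The paper likewise writes $\cF_P^u\circ\mathfrak{l}=\sum_{j=1}^{l+1}\cF_j\circ M_{u_j}$, uses the argument of Proposition~\ref{prop:cd} to identify $\cF_j$ with applying $\omega_A$ at the $j$-th output, and concludes from $\omega_A^{[1]}=\omega_A\circ M_u$. Your added step, matching $M_{u_P}$ with $M_u$ on the $P$-th output via \eqref{eq:action-map}, just makes explicit what the paper leaves implicit.
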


\begin{proof}
By definition, we have $\cF_P^u\circ \mathfrak{l}= \sum_{j=1}^{l+1} \cF_j\circ M_{u_j}$. As in the proof of Proposition~\ref{prop:cd}, the map $\cF_j$ corresponds to applying the linear functional $\omega_A$ at the $j$-th output. Now, it suffices to observe that $\omega_A^{[1]}=\omega_A\circ M_u$ where $M_u: L^A_-\to L^A_-$ is the multiplication by $u$ map.
\end{proof}

\begin{Proposition}\label{prop:lhs-dilaton}
    The left hand side of Equation~\eqref{eq:dilaton} satisfies the following identity:
    \[\langle [\Omega]\psi, \alpha_1\psi^{k_1},\ldots,\alpha_n\psi^{k_n} \rangle_{g, n+1}^{A,\Omega,s} = \bigg\langle \big( \cK_* \widetilde{\rho^{A,\tw}}_*(\hcV,\mathfrak{l}\hcV)\big)^\epsilon_{g,1,n-1}(\alpha_n(-u)^{k_n}),
    \bigodot\limits_{i=1}^{n-1}\alpha_i(-u)^{k_i}
    \bigg\rangle_{\Muk}. \]
\end{Proposition}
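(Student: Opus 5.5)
The plan is to unwind the $\epsilon$-component of $\cK_*\widetilde{\rho^{A,\tw}}_*(\hcV,\mathfrak{l}\hcV)$ and compare it term by term with the graph-sum formula \eqref{eq:explicit-evaluation} for the $(g,n+1)$-point invariant whose extra insertion is $[\Omega](-u)$.

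First, by Lemma~\ref{lem:Fu-cd}, $\widetilde{\rho^{A,\tw}}(\hcV,\mathfrak{l}\hcV)=\hbA+\epsilon\, C_{\omega^{[1]}_A}\hbA$. Here $C_{\omega^{[1]}_A}$ acts on each component $\hbA_{g,k,l+1}$ by contracting one output, summed over outputs. Since $\cK$ is extended $\epsilon$-linearly, the $\epsilon$-part of the push-forward $\cK_*(\hbA+\epsilon\beta')$ equals the linearization
\[\sum_m \frac{1}{(m-1)!}\cK_m(\beta',\hbA,\ldots,\hbA).\]
By the graph sum \eqref{eq:trivialization-map-K}, this is a sum over partially directed graphs $\GG$ of type $(g,1,n-1)$ with one distinguished vertex decorated by $C_{\omega^{[1]}_A}\hbA_{g(v),k(v),l(v)+1}$. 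Equivalently, it is a sum over graphs of type $(g,1,n)$ with one vertex carrying an extra outgoing half-edge contracted with $\omega^{[1]}_A$.

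Next, using \eqref{eq:cy-form-mu-dual}, I would write $\omega^{[1]}_A(x)=\langle x,\Omega(-u)\rangle_\Muk$. On the other side, the $(g,n+1)$-invariant with insertion $[\Omega]\psi$ is computed from $\lbA_{g,1,n}$: the corresponding outgoing leaf carries $R$ and is then paired with $\alpha_{n+1}(-u)=s([\Omega])\cdot(-u)$. The key point is that the unitality assumption $\Omega\in{\sf Im}(s)$ lets me choose the chain-level lift $S$ so that $S$ applied to a representative of $[\Omega]$ is $\Omega$ itself. Using adjointness of $R$ and $S$ with respect to the pairing, as in the Lagrangian condition and in \cite{CalTu1}, the leaf contribution $\langle R(x),[\Omega](-u)\rangle_\Muk$ then becomes $\langle x, S[\Omega](-u)\rangle_\Muk=\langle x,\Omega(-u)\rangle_\Muk=\omega^{[1]}_A(x)$. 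So a leaf decorated by $R$ and paired with $[\Omega]\psi$ matches an output contracted with $\omega^{[1]}_A$ at the vertex.

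Finally, I would match the combinatorics. Graphs in $\Gamma((g,1,n))$ with a marked outgoing leaf at vertex $v$ correspond bijectively to graphs in $\Gamma((g,1,n-1))$ with a distinguished vertex. The $\epsilon$-linearization produces exactly these decorated graphs, and the weights $\wt$ and the automorphism factors agree. Weights depend only on the underlying labeled graph, edges and spanning tree, and leaves do not affect contractibility. The factor $1/|\Aut|$ with the leaf labeled matches $1/|\Aut(\GG)|$ times the number of ways to place the contracted output. The symmetrization sign in \eqref{eq:generalized-Mukai} accounts for the ordering of insertions. One subtlety is that the incoming leaf must not be the one carrying $\Omega\psi$; by symmetry of CEI in its insertions we may keep $\alpha_n$ as the input.

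I expect the main obstacle to be the leaf identity $\langle R x,[\Omega](-u)\rangle_\Muk=\omega_A^{[1]}(x)$ at the chain level, up to homotopy. This needs care with the $u$-powers: the pairing in \eqref{eq:generalized-Mukai} only picks out the $u^{-1}$-coefficient, and $R_k$ shifts $u$-degree. It also requires verifying that replacing $S$ by a lift with $S[\Omega]=\Omega$ changes nothing in homology. My first attempt would be to note that $R(u^{-1}y)$ paired with $\Omega(-u)$ involves $\langle R_1 y_0,\Omega\rangle_\Muk$-type terms, and to kill these using $S^*\Omega=\Omega$ together with $R=S^{-1}$.
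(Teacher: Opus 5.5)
Your proposal is correct and follows the paper's proof essentially step by step. You expand the $\epsilon$-component of $\cK_*(\hbA+\epsilon\,C_{\omega^{[1]}_A}\hbA)$ as a sum over partially directed graphs with one distinguished vertex, reinterpret the contracted output as an extra outgoing leaf of a graph in $\Gamma((g,1,n))$ with the same weight, and trade $\omega^{[1]}_A$ for pairing with $[\Omega](-u)$ via \eqref{eq:cy-form-mu-dual} before invoking \eqref{eq:explicit-evaluation}. Your one addition is the leaf identity $\langle R(x),[\Omega](-u)\rangle_\Muk=\omega^{[1]}_A(x)$, which the paper passes over by ignoring the operator $R$ on the new leaf of $\GG'$. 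It holds exactly for a chain-level Lagrangian lift with $S[\Omega]=\Omega$, and otherwise up to a homotopy at the contracted output, which changes the $\epsilon$-component only by a $(b+\iota)$-exact term; it is exactly where the hypothesis $\Omega\in{\sf Im}(s)$ of Theorem~\ref{thm:dilaton} is used.
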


\begin{proof}
Using the previous Lemma~\ref{lem:Fu-cd}, we may compute the Maurer-Cartan element
\[ \widetilde{\rho^{A,\tw}}\big((\hcV,\mathfrak{l}\hcV)\big)= \rho^{A,\tw}(\hcV)+\rho^{A,\tw}\big((\cF_P^u\circ\mathfrak{l})\hcV\big)\epsilon= \hbA+C_{\omega^{[1]}_A}\hbA \epsilon.\]
Then, using Equation~\eqref{eq:trivialization-map-K} we obtain
\begin{align*}
    \Big(\cK_* \widetilde{\rho^{A,\tw}}_*(\hcV,\mathfrak{l}\hcV)\Big)_{g,1,n-1} & = \sum_{m\geq 1} \frac{1}{m!}\sum_{\GG\in \Gamma((g,1,n-1))_m}\sum_{\tau}\frac{\wt(\GG)}{|\Aut(\GG,\tau)|}\cK_{(\GG,\sigma)}\big((\hbA+C_{\omega^{[1]}_A}\hbA \epsilon)^{\otimes m}\big)
\end{align*}
where recall $\tau$ is a bijection $\tau:\{1,\ldots,m\}\ra V_\GG$. Since $\epsilon^2=0$, the $\epsilon$-component of the above sum is %given by
\[ \Big(\cK_* \widetilde{\rho^{A,\tw}}_*(\hcV,\mathfrak{l}\hcV)\Big)_{g,1,n-1}^\epsilon  = \sum_{\GG\in \Gamma((g,1,n-1))} \sum_{v\in V_\GG} \frac{\wt(\GG)}{|\Aut(\GG,v)|}\rho^{A,\tw}_{(\GG,v)}.\]
Here, the construction of $\rho^{A,\tw}_{(\GG,v)}: L^A_+[1] \to \sym^{n-1}L^A_-$ is the same as the construction of $\rho^{A,\tw}_\GG$ in Equation~\eqref{eq:cei-formula-main} except at the distinguished vertex $v$ we assign the operator $C_{\omega^{[1]}_A}\hbA $ from the coefficient of $\epsilon$. Consider the graph $\GG'$ obtained from $\GG$ by adding an outgoing leaf denoted by $l_P$ at the distinguished vertex $v$. The map $\rho^{A,\tw}_{(\GG,v)}$ is then obtained from $\rho^{A,\tw}_{\GG'}$ by applying the linear functional $\omega_A^{[1]}$ at the added outgoing leaf $l_P$. Using this observation, we may rewrite the above summation as 
\[ \Big(\cK_* \widetilde{\rho^{A,\tw}}_*(\hcV,\mathfrak{l}\hcV)\Big)_{g,1,n-1}^\epsilon = \sum_{\GG'\in \Gamma((g,1,n))} \frac{\wt(\GG')}{|\Aut(\GG')|}C_{\omega^{[1]}_A}\left(\rho^{A,\tw}_{\GG'}\right).\]
Here we have used the fact that $\wt(\GG)=\wt(\GG')$ since by definition (see Equation~\eqref{eq:wt-def}) the weight of a partially direct graph is independent of leaves. 

Thus, we have
\begin{align*}
 &  \bigg\langle \big( \cK_* \widetilde{\rho^{A,\tw}}_*(\hcV,\mathfrak{l}\hcV)\big)^\epsilon_{g,1,n-1}(\alpha_n(-u)^{k_n}),
    \bigodot\limits_{i=1}^{n-1}\alpha_i(-u)^{k_i}
    \bigg\rangle_{\Muk} \\
    = & \sum_{\GG'\in \Gamma((g,1,n))} \frac{\wt(\GG')}{|\Aut(\GG')|}\bigg\langle C_{\omega^{[1]}_A}\rho^{A,\tw}_{\GG'}(\alpha_n(-u)^{k_n}),
    \bigodot\limits_{i=1}^{n-1}\alpha_i(-u)^{k_i}
    \bigg\rangle_{\Muk}\\
    = &  \sum_{\GG'\in \Gamma((g,1,n))} \frac{\wt(\GG')}{|\Aut(\GG')|}\bigg\langle\rho^{A,\tw}_{\GG'}(\alpha_n(-u)^{k_n}),
    [\Omega](-u)\bigodot\limits_{i=1}^{n-1}\alpha_i(-u)^{k_i}
    \bigg\rangle_{\Muk}\;\;\;\mbox{(by Equation~\eqref{eq:cy-form-mu-dual})}
    %\\= &  \langle \Omega\psi, \alpha_1\psi^{k_1},\ldots,\alpha_n\psi^{k_n} \rangle_{g, n+1}^{A,\Omega,s} \;\;\; \mbox{(by Equation~\eqref{eq:cei-formula-main})}. 
\end{align*}
By Equation~\eqref{eq:explicit-evaluation}, this is equal to the left hand side of the dilaton equation~\eqref{eq:dilaton}.
\end{proof}
 
\begin{Proposition}\label{prop:rhs-dilaton}
    The right hand side of Equation~\eqref{eq:dilaton} satisfies the following identity:
    \[(2g-2+n)\cdot \langle \alpha_1\psi^{k_1},\ldots,\alpha_n\psi^{k_n}\rangle_{g,n}^{A,\Omega,s} = \bigg\langle \big( \cK_* \widetilde{\rho^{A,\tw}}_*(\hcV,\widehat{\mathcal{U}})\big)^\epsilon_{g,1,n-1}(\alpha_n(-u)^{k_n}),
    \bigodot\limits_{i=1}^{n-1}\alpha_i(-u)^{k_i}
    \bigg\rangle_{\Muk}. \]
\end{Proposition}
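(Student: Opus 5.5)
The plan mirrors Proposition~\ref{prop:lhs-dilaton}, with $\mathfrak{l}\hcV$ replaced by the element $\widehat{\mathcal{U}}$ of Proposition~\ref{prop:forgetful}. First I compute the pushforward Maurer--Cartan element
\[ \widetilde{\rho^{A,\tw}}\big((\hcV,\widehat{\mathcal{U}})\big)=\rho^{A,\tw}(\hcV)+\epsilon\,\rho^{A,\tw}(\cF_P^u\widehat{\mathcal{U}}). \]
By Equation~\eqref{eq:dilaton-mc}, $\cF_P^u\widehat{\mathcal{U}}_{g',k',l'+1}=(2g'-2+k'+l')\hcV_{g',k',l'}$. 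Therefore the $\epsilon$-component is $E\hbA$, where $E$ is the Euler operator that multiplies the $(g',k',l')$-component by $2g'-2+k'+l'$. Equivalently, it is $\lambda\partial_\lambda\hbA$, since $\lambda$ records exactly $2g'-2+k'+l'$.

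Next I apply $\cK_*$ using the graph sum~\eqref{eq:trivialization-map-K}. Because $\epsilon^2=0$, the $\epsilon$-coefficient of the $(g,1,n-1)$ component is
\[ \sum_{\GG\in\Gamma((g,1,n-1))}\sum_{v\in V_\GG}\frac{\wt(\GG)}{|\Aut(\GG)|}\,(2g(v)-2+n(v))\,\rho^{A,\tw}_\GG. \]
This uses the same bookkeeping of markings and automorphisms as in the proof of Proposition~\ref{prop:lhs-dilaton}: summing over the distinguished vertex $v$ with $\Aut(\GG,v)$ and then summing over orbits gives the factor $1/|\Aut(\GG)|$. The key step is a combinatorial identity for a stable graph $G$ of genus $g$ with $n$ leaves:
\[ \sum_{v}(2g(v)-2+n(v))=2\sum_v g(v)-2|V|+2|E|+n=2g-2+n, \]
because $\sum_v n(v)=2|E|+n$ and $\sum_v g(v)+|E|-|V|+1=g$. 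The weight of a vertex here must be its Euler-characteristic count $2g(v)-2+k(v)+l(v)$, with edges counted on both endpoints. This is consistent, since $k(v)+l(v)=n(v)$ for directed, undirected and leaf half-edges alike. The $\epsilon$-component therefore equals $(2g-2+n)\lbA_{g,1,n-1}$.

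Finally I pair with $\bigodot_{i=1}^{n-1}\alpha_i(-u)^{k_i}$ and apply Equation~\eqref{eq:explicit-evaluation}, which gives the right-hand side $(2g-2+n)\langle\alpha_1\psi^{k_1},\ldots,\alpha_n\psi^{k_n}\rangle_{g,n}^{A,\Omega,s}$.

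The main obstacle I expect is the automorphism/marking bookkeeping when passing from the $L_\infty$ formula with $m$ marked vertices to an unmarked sum with a single distinguished vertex. I also need the weight $2g(v)-2+n(v)$ to be attached consistently to vertices of $\GG$ rather than to the original string-vertex labels. Since $\wt(\GG)$ and $\rho_\GG$ do not depend on the $\epsilon$-decoration, I will handle this exactly as in Proposition~\ref{prop:lhs-dilaton}, treating it as a symmetric first-order variation of $\hbA$.
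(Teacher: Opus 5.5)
Your proposal is correct and follows the paper's proof step for step. Proposition~\ref{prop:forgetful} identifies the $\epsilon$-component of $\widetilde{\rho^{A,\tw}}(\hcV,\widehat{\mathcal{U}})$ as $\lambda\partial_\lambda\hbA$. The $\epsilon$-linear part of the graph sum for $\cK$ then replaces one vertex at a time, and the identity $\sum_v(2g(v)-2+k(v)+l(v))=2g-2+n$ produces the factor before pairing via Equation~\eqref{eq:explicit-evaluation}. Your orbit--stabilizer remark, which rewrites the sum over distinguished vertices weighted by $|\Aut(\GG,v)|$ as $\frac{1}{|\Aut(\GG)|}\sum_{v\in V_\GG}$, is a slightly cleaner version of the automorphism bookkeeping the paper writes down.
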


\begin{proof}
Using Proposition~\ref{prop:forgetful}, we may compute the Maurer-Cartan element
\[ \widetilde{\rho^{A,\tw}}\big((\hcV,\widehat{\mathcal{U}})\big)= \rho^{A,\tw}(\hcV)+\rho^{A,\tw}\big(\cF_P^u\;\widehat{\mathcal{U}}\big)\epsilon= \hbA+\lambda\partial_\lambda(\hbA) \epsilon.\]
Recall the formula of $\hbA$ in~\eqref{hat-beta-A}, 
%$= \sum_{g,k,l} \widehat{\beta}^{A}_{g,k,l} \hbar^g\lambda^{2g-2+k+l}$, hence 
we have
$$\lambda\partial_\lambda\hbA=\sum_{g,k,l} (2g-2+k+l)\widehat{\beta}^{A}_{g,k,l} \hbar^g\lambda^{2g-2+k+l}.$$
Then, using Equation~\eqref{eq:trivialization-map-K} we obtain
\begin{align*}
    \Big(\cK_* \widetilde{\rho^{A,\tw}}_*(\hcV,\widehat{\mathcal{U}})\Big)_{g,1,n-1} & = \sum_{m\geq 1} \frac{1}{m!}\sum_{\GG\in \Gamma((g,1,n-1))_m}\sum_{\tau}\frac{\wt(\GG)}{|\Aut(\GG,\tau)|}\cK_{(\GG,\sigma)}\big((\hbA+\lambda\partial_\lambda\,\hbA \epsilon)^{\otimes m}\big)
\end{align*}
Since $\epsilon^2=0$, the $\epsilon$-component of the above sum is given by
\[ \Big(\cK_* \widetilde{\rho^{A,\tw}}_*(\hcV,\widehat{\mathcal{U}})\Big)_{g,1,n-1}^\epsilon  = \sum_{\GG\in \Gamma((g,1,n-1))} \sum_{v\in V_\GG} \frac{\wt(\GG)}{|\Aut(\GG,v)|}\rho^{A,\tw}_{(\GG,v)}.\]
Here, the construction of $\rho^{A,\tw}_{(\GG,v)}: L^A_+[1] \to \sym^{n-1}L^A_-$ is the same as the construction of $\rho^{A,\tw}_\GG$ in Equation~\eqref{eq:cei-formula-main} except at the distinguished vertex $v$ we assign the operator $\lambda\partial_\lambda\,\hbA$ from the coefficient of $\epsilon$. Since the operator $\lambda\partial_\lambda$ only scales the vertex contribution at $v$ by $\big(2g(v)-2+k(v)+l(v)\big)$, and 
$$\sum_{v\in \GG} \big(2g(v)-2+k(v)+l(v)\big) = 2g-2+n.$$ The above summation is equal to
\[ \Big(\cK_* \widetilde{\rho^{A,\tw}}_*(\hcV,\widehat{\mathcal{U}})\Big)_{g,1,n-1}^\epsilon =\big(2g-2+n\big) \sum_{\GG\in \Gamma((g,1,n-1))} \frac{\wt(\GG)}{|\Aut(\GG)|}\rho^{A,\tw}_{\GG}.\]
Using this identity, we may compute
\begin{align*}
 &   \bigg\langle \big( \cK_* \widetilde{\rho^{A,\tw}}_*(\hcV,\widehat{\mathcal{U}})\big)^\epsilon_{g,1,n-1}(\alpha_n(-u)^{k_n}),
    \bigodot\limits_{i=1}^{n-1}\alpha_i(-u)^{k_i}
    \bigg\rangle_{\Muk} \\
    = & \big(2g-2+n\big)  \sum_{\GG\in \Gamma((g,1,n-1))} \frac{\wt(\GG)}{|\Aut(\GG)|}\bigg\langle\rho^{A,\tw}_{\GG}(\alpha_n(-u)^{k_n}),
    \bigodot\limits_{i=1}^{n-1}\alpha_i(-u)^{k_i}
    \bigg\rangle_{\Muk}.
    %\\= & \big(2g-2+n\big)  \langle \alpha_1\psi^{k_1},\ldots,\alpha_n\psi^{k_n} \rangle_{g, n+1}^{A,\Omega,s} \;\;\; \mbox{(by Equation~\eqref{eq:cei-formula-main})}. 
\end{align*}
By Equation~\eqref{eq:explicit-evaluation}, this is equal to the right hand side of the dilaton equation~\eqref{eq:dilaton}.
\end{proof}

Now, Theorem~\ref{thm:dilaton} follows from Proposition~\ref{prop:lhs-dilaton}, Proposition~\ref{prop:rhs-dilaton}, and the uniqueness in Lemma~\ref{lem:semi-unique-gn} proving the two Maurer-Cartan elements $(\hcV,\mathfrak{l}\hcV)$ and $(\hcV,\widehat{\mathcal{U}})$ are gauge equivalent.

%\newpage 

\section{A partial recursion of string vertices}\label{sec:recursion}

In Section~\ref{sec:dilaton}, we used the compatibility of string vertices with the forgetful map defined in~\eqref{eq:def-forgetful1} to prove the dilaton equation.  For the string  and the divisor equations, it turns out we need to consider a different type of forgetful map. Namely, in this section we consider another forgetful map which only forgets the framing data at a white vertex. Then we prove a more involved compatibility between string vertices and this second type forgetful map. This compatibility (Theorem~\ref{thm:mc-recursion}) will be used later in Sections~\ref{sec:string} and~\ref{sec:divisor} to prove the string and the divisor equations.

\subsection{The forget-framing map} 
\label{sec-forget-framing}
Recall from Section~\ref{subsec:cei-form} the notation
$C^{\sf comb}_\bullet(M_{g,k,l}^{\sf fr}, \underline{\sgn})_{\sf hS}$
for the combinatorial chain model of $M^\fr_{g,k,l}$ quotiented by both the circle and symmetric group actions. In this section, we introduce a slight modification of the moduli space $M^\fr_{g,k,l+1}$ which will be denoted by $M^{\fr}_{g,k,l+Q}$. The modification is that there is a distinguished white vertex labeled by $Q$ that is {\em not} framed. Hence this space has an action by the group 
$(S^1)^{k+l} \ltimes (\Sigma_k\times \Sigma_l).$
Denote its homotopy quotient chain complex by
$C^{\sf comb}_\bullet(M_{g,k,l+Q}^{\sf fr}, \underline{\sgn})_{\sf hS}.$

\subsubsection{The definition}
We shall define a chain map 
\[ f: C^{\sf comb}_\bullet(M_{g,k,l+1}^{\sf fr}, \underline{\sgn})_{\sf hS} \ra C^{\sf comb}_\bullet(M_{g,k,l+Q}^{\sf fr}, \underline{\sgn})_{\sf hS}\]
which ``forgets the framing" at a white vertex and label it by $Q$. Again, let $\alpha \in C^{\sf comb}_\bullet(M_{g,k,l+1}^{\sf fr}, \underline{\sgn})_{\sf hS}$
be an equivariant chain as in~\eqref{eq:equiv-chain}, with $\Gamma$ an oriented black-and-white graph, we have
\begin{equation}
\label{def-forget-framing}
f(\alpha)= \sum_{j=1}^{l+1} f_j(\alpha).
\end{equation}
For each $j$, if $b_j=0$ and the starting half-edge at the $j$-th white vertex of $\Gamma$ is not a leaf, we define $f_j(\alpha)$ by forgetting the starting half-edge labeling at the $j$-th white vertex and label this white vertex $Q$. In all other cases, we set $f_j(\alpha)=0$.

The definition of $f_j$ is illustrated in the following figures. 
\[
\begin{tikzpicture}[baseline={([yshift=-0.5ex]current bounding
      box.center)},scale=0.4] 
       \draw [thick] (-3,0) circle [radius=0.2];
       \draw (-3,0) node[label=right:{j}] {};
\draw [thick] (-6,-0) to (-3.2,0);
\draw [line width=2.5pt] (-4.5,0) to (-3.2,0);
\draw [thick] (-4,2) to (-3.2,0);
\draw [thick] (-4,-2) to (-3.2,0);
\draw [thick] (-1,2) to (-2.8,0);
\draw [thick] (-1,-2) to (-2.8,0);
\draw [thick] [|->] (1,0) to (3,0);
\node at (2,.8) {$f_j$};
 \draw [thick] (7,0) circle [radius=0.2];
\draw [thick] (4,-0) to (6.8,0);
\draw [thick] (6,2) to (6.8,0);
\draw [thick] (6,-2) to (6.8,0);
\draw [thick] (9,2) to (7.2,0);
\draw [thick] (9,-2) to (7.2,0);
\draw (7.2,0) node[label=right:{Q}] {};
\end{tikzpicture}\]
\[  \begin{tikzpicture}[baseline={([yshift=-0.5ex]current bounding
      box.center)},scale=0.4] 
       \draw [thick] (-3,0) circle [radius=0.2];
       \draw (-3,0) node[label=right:{j}] {};
\draw [thick] (-6,-0) to (-3.2,0);
\draw [line width=2.5pt] (-3,1.5) to (-3,0.2);
\draw [thick] (-4,2) to (-3.2,0);
\draw [thick] (-4,-2) to (-3.2,0);
\draw [thick] (-1,2) to (-2.8,0);
\draw [thick] (-1,-2) to (-2.8,0);
\draw [thick] [|->] (1,0) to (3,0);
\node at (2,.8) {$f_j$};
\draw (4,0) node[label=right:{$0$}] {};
\end{tikzpicture}\]

\subsubsection{Examples}

Using the definition of the forget-framing morphism $f$, 
we consider the following graphs
\begin{align}
	%\begin{split}
	{\sf Get}_0^\sym  := &\frac{1}{2} \left(\begin{tikzpicture}[baseline={([yshift=-0.5ex]current bounding
			box.center)},scale=0.3] 
		\draw [thick] (-4,0) to (-2.4,0);
		\draw [thick] (-4.2, -.2) to (-3.8, .2);
		\draw [thick] (-4.2, .2) to (-3.8, -.2);
		\draw [thick] (0,0) to (-2,0);
		\draw [line width=2.5pt] (-2.2,0.2) to (-2.2,1);
		\draw (1,0) node[label=right:{$Q$}] {};
		\draw [thick] (0,0) to (1,0);
		\draw [thick] (-2.2,0) circle [radius=0.2];
		\draw [thick] (1.2,0) circle [radius=0.2];
	\end{tikzpicture} +\begin{tikzpicture}[baseline={([yshift=-0.5ex]current bounding
			box.center)},scale=0.3] 
		\draw [thick] (-4,0) to (-2.4,0);
		\draw [thick] (-4.2, -.2) to (-3.8, .2);
		\draw [thick] (-4.2, .2) to (-3.8, -.2);
		\draw [thick] (0,0) to (-2,0);
		\draw [line width=2.5pt] (-2.2,-0.2) to (-2.2,-1);
		\draw (1,0) node[label=right:{$Q$}] {};
		\draw [thick] (0,0) to (1,0);
		\draw [thick] (-2.2,0) circle [radius=0.2];
		\draw [thick] (1.2,0) circle [radius=0.2];
	\end{tikzpicture} \right)
    = f\left(\frac{1}{2} \begin{tikzpicture}[baseline={([yshift=-1ex]current bounding
  		box.center)},scale=0.3] 
  	\draw [thick] (-4,0) to (-2.4,0);
  	\draw [thick] (-4.2, -.2) to (-3.8, .2);
  	\draw [thick] (-4.2, .2) to (-3.8, -.2);
  	\draw [thick] (0,0) to (-2,0);
  	\draw [line width=2.5pt] (-2.2,0.2) to (-2.2,1);
  	\draw [thick] (0,0) to (1,0);
  	\draw [thick] (-2.2,0) circle [radius=0.2];
  	\draw [thick] (1.2,0) circle [radius=0.2];
  \end{tikzpicture}
  +{\frac{1}{2}}\begin{tikzpicture}[baseline={([yshift=-0.5ex]current bounding
			box.center)},scale=0.3] 
		\draw [thick] (-4,0) to (-2.4,0);
		\draw [thick] (-4.2, -.2) to (-3.8, .2);
		\draw [thick] (-4.2, .2) to (-3.8, -.2);
		\draw [thick] (0,0) to (-2,0);
		\draw [line width=2.5pt] (-2.2,-0.2) to (-2.2,-1);
		%\draw (1,0) node[label=right:{$Q$}] {};
		\draw [thick] (0,0) to (1,0);
		\draw [thick] (-2.2,0) circle [radius=0.2];
		\draw [thick] (1.2,0) circle [radius=0.2];
	\end{tikzpicture}
  \right)\label{graph:getzler1}
  \\
  {\sf Get}_1^\sym := &\frac{1}{2} 
	\left( \begin{tikzpicture}[baseline={([yshift=-1.2ex]current bounding
			box.center)},scale=0.3] 
		\draw [thick] (0,0) to (0,2);
		\draw [thick] (-0.2, 1.8) to (0.2, 2.2);
		\draw [thick] (0.2, 1.8) to (-0.2, 2.2);
		\draw [thick] (0,0) to (-2,0);
		\draw (-2,0) node[label=left:{$Q$}] {};
		\draw [thick] (0,0) to (2,0);
		\draw [thick] (-2.2,0) circle [radius=0.2];
		\draw [thick] (2.2,0) circle [radius=0.2];
	\end{tikzpicture}+ \begin{tikzpicture}[baseline={([yshift=-1.2ex]current bounding
			box.center)},scale=0.3] 
		\draw [thick] (0,0) to (0,2);
		\draw [thick] (-0.2, 1.8) to (0.2, 2.2);
		\draw [thick] (0.2, 1.8) to (-0.2, 2.2);
		\draw [thick] (0,0) to (-2,0);
		\draw (5,0) node[label=left:{$Q$}] {};
		\draw [thick] (0,0) to (2,0);
		\draw [thick] (-2.2,0) circle [radius=0.2];
		\draw [thick] (2.2,0) circle [radius=0.2];
	\end{tikzpicture}\right)
    =  f\left(\frac{1}{2}\begin{tikzpicture}[baseline={([yshift=-1ex]current bounding
  		box.center)},scale=0.3] 
  	\draw [thick] (0,0) to (0,2);
  	\draw [thick] (-0.2, 1.8) to (0.2, 2.2);
  	\draw [thick] (0.2, 1.8) to (-0.2, 2.2);
  	\draw [thick] (0,0) to (-2,0);
  	\draw [thick] (0,0) to (2,0);
  	\draw [thick] (-2.2,0) circle [radius=0.2];
  	\draw [thick] (2.2,0) circle [radius=0.2];
  \end{tikzpicture}\right)=f(\hcV_{0,1,2})
  \label{graph:getzler2}
%\end{split}
\end{align} 

\iffalse
\begin{align*}
  {\sf Get}_0^\sym &= f\left(\frac{1}{2} \begin{tikzpicture}[baseline={([yshift=-1ex]current bounding
  		box.center)},scale=0.3] 
  	\draw [thick] (-4,0) to (-2.4,0);
  	\draw [thick] (-4.2, -.2) to (-3.8, .2);
  	\draw [thick] (-4.2, .2) to (-3.8, -.2);
  	\draw [thick] (0,0) to (-2,0);
  	\draw [line width=2.5pt] (-2.2,0.2) to (-2.2,1);
  	\draw [thick] (0,0) to (1,0);
  	\draw [thick] (-2.2,0) circle [radius=0.2];
  	\draw [thick] (1.2,0) circle [radius=0.2];
  \end{tikzpicture}\right),\\
  & \\
  {\sf Get}_1^\sym &=  f\left(\frac{1}{2}\begin{tikzpicture}[baseline={([yshift=-1ex]current bounding
  		box.center)},scale=0.3] 
  	\draw [thick] (0,0) to (0,2);
  	\draw [thick] (-0.2, 1.8) to (0.2, 2.2);
  	\draw [thick] (0.2, 1.8) to (-0.2, 2.2);
  	\draw [thick] (0,0) to (-2,0);
  	\draw [thick] (0,0) to (2,0);
  	\draw [thick] (-2.2,0) circle [radius=0.2];
  	\draw [thick] (2.2,0) circle [radius=0.2];
  \end{tikzpicture}\right)=f(\hcV_{0,1,2}).
\end{align*}
\fi

We refer to these graphs as {\em Getzler graphs} due to their appearance in Getzler's explicit formula of a connection operator on periodic cyclic homology~\cite{Get}. For our purposes, we need a symmetric version of this connection operator which leads to the graphs above. This is explained in more detail in Section~\ref{app:getzler}.   

%For later convenience, the graph ${\sf Get}_1^\sym$ will also be denoted by
%\begin{equation}
%\label{graph-W012}
%\widehat{\mathcal{W}}_{0,1,2}:=f(\hcV_{0,1,2})= {\sf Get}_1^\sym.
%\end{equation}
%so as to state Theorem~\ref{thm:mc-recursion} in a more compact way. 

Recall that $\partial$ is the boundary operator defined in Section~\ref{sec:cei} and $B$ is the graph defined in~\eqref{graph:circle}. 
The following properties of Getzler graphs can be verified directly.
\begin{Lemma}
Let ${\sf Get}_1^\sym$ and ${\sf Get}_0^\sym$ both be endowed with the canonical orientation. Then we have
\begin{align}
\partial \big({\sf Get}_1^{\sf sym}\big)  &= 0, \label{getzler1-boundary}\\
\partial \big({\sf Get}_0^{\sf sym}\big)  &= T_1+T_2+T_3, \label{getzler0-boundary}
\end{align}
where
\begin{align*}
T_1&:= \begin{tikzpicture}[baseline={([yshift=-0.5ex]current bounding
      box.center)},scale=0.4] 
\draw [thick] (-4,0) to (-2.4,0);
\draw [thick] (-4.2, -.2) to (-3.8, .2);
\draw [thick] (-4.2, .2) to (-3.8, -.2);
\draw [thick] (0,0) to (-2,0);
\draw [line width=2.5pt] (-3.2,0) to (-2.4,0);
\draw (1,0) node[label=right:{Q}] {};
\draw [thick] (0,0) to (1,0);
\draw [thick] (-2.2,0) circle [radius=0.2];
\draw [thick] (1.2,0) circle [radius=0.2];
\end{tikzpicture}\\
T_2&:= \begin{tikzpicture}[baseline={([yshift=-0.5ex]current bounding
      box.center)},scale=0.4] 
\draw [thick] (-4,0) to (-2.4,0);
\draw [thick] (-4.2, -.2) to (-3.8, .2);
\draw [thick] (-4.2, .2) to (-3.8, -.2);
\draw [thick] (0,0) to (-2,0);
\draw [line width=2.5pt] (-2,0) to (-1,0);
\draw (1,0) node[label=right:{Q}] {};
\draw [thick] (0,0) to (1,0);
\draw [thick] (-2.2,0) circle [radius=0.2];
\draw [thick] (1.2,0) circle [radius=0.2];
\end{tikzpicture}\\
T_3&:=\frac{1}{2} \begin{tikzpicture}[baseline={([yshift=-1.5ex]current bounding
      box.center)},scale=0.4] 
\draw [thick] (-4,0) to (-2,0);
\draw [thick] (-4.2, -.2) to (-3.8, .2);
\draw [thick] (-4.2, .2) to (-3.8, -.2);
\draw [thick] (0,0) to (-2,0);
\draw [thick] (-2.2,0) to (-2.2,.8);
\draw [line width=2.5pt] (-2.2,1.2) to (-2.2,2);
\draw (1,0) node[label=right:{Q}] {};
\draw [thick] (0,0) to (1,0);
\draw [thick] (-2.2,1) circle [radius=0.2];
\draw [thick] (1.2,0) circle [radius=0.2];
\end{tikzpicture} +\frac{1}{2}\begin{tikzpicture}[baseline={([yshift=0.5ex]current bounding
      box.center)},scale=0.4] 
\draw [thick] (-4,0) to (-2,0);
\draw [thick] (-4.2, -.2) to (-3.8, .2);
\draw [thick] (-4.2, .2) to (-3.8, -.2);
\draw [thick] (0,0) to (-2,0);
\draw [thick] (-2.2,0) to (-2.2,-.8);
\draw [line width=2.5pt] (-2.2,-1.2) to (-2.2,-2);
\draw (1,0) node[label=right:{Q}] {};
\draw [thick] (0,0) to (1,0);
\draw [thick] (-2.2,-1) circle [radius=0.2];
\draw [thick] (1.2,0) circle [radius=0.2];
\end{tikzpicture}
\end{align*}
The graphs $T_1$, $T_2$, and $T_3$ are endowed with an induced orientation being in the boundary of ${\sf Get}_0^\sym$.
\end{Lemma}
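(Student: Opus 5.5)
The plan is a direct computation with the blow-up differential $\partial$ of Section~\ref{subsec:graphs}, applied to each of the two graphs in ${\sf Get}_1^\sym$ and ${\sf Get}_0^\sym$. The one new ingredient is that the white vertex $Q$ carries no framing, so no starting half-edge has to be tracked there. A white vertex $Q$ of valence one admits no nontrivial expansion: expanding it would create a black vertex of valence at least $3$ from only two half-edges. Hence only the other vertices of each graph can contribute to $\partial$.

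\textbf{Step 1: ${\sf Get}_1^\sym$.} Both summands are T-graphs. Each has a single trivalent black vertex, which cannot be expanded further. The other white vertex is univalent and framed by its unique half-edge, so it cannot be expanded either. Hence each summand is a $\partial$-cycle, and \eqref{getzler1-boundary} follows at once. Alternatively, ${\sf Get}_1^\sym=f(\hcV_{0,1,2})$, and $\partial\hcV_{0,1,2}=0$ because $\hcV_{0,1,2}$ is a trivalent graph of degree $0$. One would still have to check that $f$ commutes with $\partial$ on this chain, but the direct check is simpler.

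\textbf{Step 2: ${\sf Get}_0^\sym$.} Each summand has one bivalent framed white vertex $w$, adjacent to the cycle leaf and to $Q$. The starting half-edge of $w$ is a leaf pointing up in the first summand and down in the second. So $w$ has valence $3$, and we must list its expansions into a black trivalent vertex joined by an edge to a white vertex. This is exactly the third example of $\partial$ drawn in Section~\ref{subsec:graphs}. There are two kinds of expansion.

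In the first kind, the framing leaf is pulled off together with the new edge. This gives a black vertex carrying the cycle leaf and the edge to $Q$, with a univalent white vertex attached whose framing is its only half-edge. These are the two pictures in $T_3$: the up-version comes from the first summand and the down-version from the second. Each carries the coefficient $\tfrac12$ inherited from ${\sf Get}_0^\sym$, and no cancellation between them is expected.

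In the second kind, the framing leaf stays at the white vertex and one of the two edges (toward the cross or toward $Q$) is pulled off with the black vertex. The white vertex is then bivalent, and its starting half-edge is the new edge, pointing toward the cross or toward $Q$ respectively. These give $T_1$ and $T_2$. Each of them arises once from the up-summand and once from the down-summand. For the two occurrences to add up to coefficient $1$, the induced orientations must agree. Since the cyclic orders at $w$ in the two summands are mirror images, this agreement is the point that needs checking.

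\textbf{Main obstacle: signs.} The difficulty is the orientation bookkeeping. I would follow Wahl--Westerland \cite[Section 2.5]{WahWes}. First, fix the canonical orientation of each summand (vertices and half-edges wedged in a standard order, analogous to $w\wedge l\wedge h$ for $B$). Next, compute the induced boundary orientation for each expansion. Finally, compare the up- and down-contributions to $T_1$ and $T_2$ via the isomorphism of blown-up graphs. The claim to verify is that they coincide rather than cancel. The terms in $T_3$ are not identified with each other, so they simply receive the induced orientation, which is how the statement is phrased. If a sign mismatch appeared, I would recheck the canonical orientation of the second summand, which is chosen precisely so that the ${\sf sym}$ combination yields $T_1+T_2$ and not their cancellation.
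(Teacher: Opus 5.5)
Your strategy (enumerate the blow-ups vertex by vertex) is the ``direct verification'' the paper appeals to, and the paper gives no further detail. Step~1 is correct. Your final tally is also correct: the two pictures of $T_3$ each carry weight $\tfrac12$, and $T_1$, $T_2$ each arise once from each summand, so they get coefficient $1$ exactly when the two induced orientations agree. However, your account of how the expansions of $w$ produce these graphs has the role of the start leaf reversed, and as written it does not yield them. Write $h$ for the cycle leaf, $l$ for the start leaf and $e$ for the edge to $Q$.
\begin{itemize}
\item The $T_3$ terms are the expansions in which the black vertex takes $h$ and $e$. The new white vertex keeps $l$ as its start and is \emph{bivalent} (new edge plus $l$), as drawn. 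It is not univalent: that graph would have degree $0$, whereas $\partial$ lowers the degree of ${\sf Get}_0^\sym$ from $2$ to $1$.
\item In the $T_1$, $T_2$ terms the start leaf does \emph{not} stay. Instead, $l$ goes to the black vertex together with $h$ (resp.\ $e$), and the new half-edge becomes the start of the white vertex.
\item Your ``second kind'' (framing leaf stays, a single edge is pulled off into the black vertex) would create a bivalent black vertex. That is not a legal blow-up.
\end{itemize}

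The missing idea is what happens to that black vertex. It is trivalent and carries the leaf $l$, which is neither a cycle start nor a white-vertex start, so it is not an allowed black-and-white graph in the closed model. It is erased, so that $h$ (resp.\ $e$) itself becomes the start of the bivalent white vertex. This is the combinatorial form of strict unitality, $m_2(\bone_A,a)=a$. It is exactly what the second and third pictures of the third boundary example in Section~\ref{subsec:graphs} display, and it is why $T_1$ and $T_2$ contain no black vertex at all. The same convention discards a fourth expansion of the right degree, which you never mention: the one in which the black vertex takes all of $l,h,e$ (a unit inside $m_3$).

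Finally, the sign comparison you single out is left undone. Your fallback of re-choosing the orientation of the second summand is not available, because the statement fixes the canonical orientation. Since the two canonical orientations are built from mirror-image cyclic orders at $w$, the agreement of the two copies of $T_1$ (and of $T_2$) has to come out of the blow-up orientation rule together with this unit collapse.
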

%It is useful to compute the boundary of the Getzler graph ${\sf Get}_0^{\sf sym}$ defined in Equation~\eqref{graph:getzler}. 

\begin{Lemma}\label{lem:boundary-orientation}
Let ${\sf Get}_1^\sym$ and ${\sf Get}_0^\sym$ be both endowed with the canonical orientation. Then the following identities hold.
	\begin{align}
	B \circ {\sf Get}_1^\sym &= T_3. \label{twist-getzler1}\\
	{\sf Get}_1^\sym \circ B  &= -T_2 + T_4. \label{getzler1-twist}
	\end{align}
	where \begin{align*}
	T_4 &:=  \begin{tikzpicture}[baseline={([yshift=0.5ex]current bounding
			box.center)},scale=0.4] 
		\draw [thick] (-4,0) to (-2.4,0);
		\draw [thick] (-4.2, -.2) to (-3.8, .2);
		\draw [thick] (-4.2, .2) to (-3.8, -.2);
		\draw [thick] (0,0) to (-2,0);
		\draw (-2.5,.2) node[label=below:{Q}] {};
		\draw [thick] (0,0) to (1,0);
		\draw [thick] (-2.2,0) circle [radius=0.2];
		\draw [thick] (1.2,0) circle [radius=0.2];
	\end{tikzpicture}, 
\end{align*}
and the orientation of $T_4$ is determined by \eqref{getzler1-twist}.	
\end{Lemma}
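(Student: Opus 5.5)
The plan is a direct computation with the sewing rules (a)–(c) of Section~\ref{subsec:graphs}, applied to each of the two graphs in ${\sf Get}_1^\sym$. The input data are:
\begin{itemize}
\item ${\sf Get}_1^\sym=\frac12(\text{T-graph with left white vertex }Q)+\frac12(\text{T-graph with right white vertex }Q)$;
\item $B$ has orientation $w\wedge l\wedge h$, with white vertex $w$ whose thickened starting leaf is $l$, and cycle starting leaf $h$.
\end{itemize}
Since $Q$ is unframed, $B$ can only be sewn at the framed white vertex, or at the unique cycle. The only bookkeeping that requires care is orientations and the sign $(-1)^{\deg}$ conventions.

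For \eqref{twist-getzler1}, we sew $B$ onto the framed (non-$Q$) white vertex of each summand. That white vertex has valence one, so its unique half-edge is its starting half-edge. Rule (b1) therefore identifies this half-edge with the starting leaf $h$ of the cycle of $B$, and the white vertex is replaced by the white vertex $w$ of $B$ together with its thickened leaf $l$. The result is exactly a graph in $T_3$: the white vertex now sits on the edge between the black vertex and the thickened leaf. Which of the two pictures in $T_3$ we get is dictated by the cyclic order at the black vertex, i.e.\ by whether the framed vertex was on the left or on the right. The factors $\frac12$ match, and the orientation of each $T_3$ term is by definition induced from $\partial{\sf Get}_0^\sym$. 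It remains to check that the orientation transported from $w\wedge l\wedge h$ and the T-graph orientation agrees with that induced orientation. I would verify this by comparing both with the orientation obtained by contracting the new edge.

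For \eqref{getzler1-twist}, we sew the cycle of ${\sf Get}_1^\sym$ to the white vertex $w$ of $B$. The starting half-edge of $w$ is the leaf $l$, so we are in case (b2) of the sewing rules. Since the starting leaf of the T-graph's cycle is attached to the trivalent black vertex, the result is nonzero. Both starting leaves are removed, and the remaining half-edge at $w$, the edge towards $h$, is attached to the vertices of the cycle of the T-graph in every way compatible with the cyclic ordering. The cycle of the T-graph passes through the black vertex and both white vertices, so the possible attachment points are the corners along that cycle, which yields a sum of graphs. I then sort these by which vertex receives the new half-edge:
\begin{itemize}
\item Attachments at the black vertex, in the corner adjacent to the removed starting leaf, give two different four-valent configurations; these should cancel pairwise or vanish after the quotient.
\item Attachment at the framed white vertex produces a graph whose new half-edge is the starting half-edge of that vertex. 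Summing over the two summands of ${\sf Get}_1^\sym$ gives $T_2$ with the opposite orientation.
\item Attachment at the unframed vertex $Q$ produces $T_4$.
\end{itemize}
I would enumerate all positions explicitly and use the symmetry between the two halves of ${\sf Get}_1^\sym$ to pair them. In particular, $T_1$ should not appear: that configuration would require the starting leaf to sit at a framed white vertex with a thickened starting leaf, and the rules do not produce it here.

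The main obstacle is the sign bookkeeping. We need the relative sign $-1$ on $T_2$, and we need to show that the stray black-vertex attachments really cancel with the orientations of the wedge $V\sqcup H$. The orientation of $T_4$ is simply defined by \eqref{getzler1-twist}, so no sign needs to be checked for it. For the sign of $T_2$ I would follow Wahl–Westerland's convention~\cite[Section 2.5]{WahWes}: record the orientation as an ordered wedge, move the removed leaves to the front, and compare with the boundary orientation of $T_2$ computed in \eqref{getzler0-boundary}. As a consistency check, applying $\partial$ to \eqref{getzler1-twist} should agree with the facts that $B$ is closed and that $\partial{\sf Get}_1^\sym=0$ by \eqref{getzler1-boundary}.
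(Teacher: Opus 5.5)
Your treatment of \eqref{twist-getzler1} via rule (b1) is correct. The enumeration for \eqref{getzler1-twist}, however, goes wrong in two places.

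First, in case (b2) the starting leaf of the T-graph is removed from a \emph{trivalent} black vertex. That vertex becomes bivalent and is erased, so each summand of ${\sf Get}_1^\sym$ collapses to the same graph: a single edge joining the framed white vertex to $Q$. The only vertices on its cycle are these two white vertices. There are therefore no attachments at a black vertex, and there are no ``four-valent configurations'' to cancel. After removing $\lambda$ that vertex has valence two. Reattaching $h$ at the erased point would only reproduce the T-graphs, which have degree $0$, whereas ${\sf Get}_1^\sym\circ B$ has degree $0+1=1$. This collapse is also the actual reason the two halves of ${\sf Get}_1^\sym$ combine into coefficients $\tfrac12+\tfrac12=1$.

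Second, when $h$ is attached at the framed white vertex, that vertex keeps its original starting half-edge, namely the edge half-edge towards $Q$. The leaf $h$ only becomes the starting leaf of the new cycle, and the resulting graph is exactly $T_2$. You describe instead the new half-edge becoming the starting half-edge of the white vertex. That graph is $T_1$, and the description contradicts your own later claim that $T_1$ does not occur. The $T_1$/$T_2$ distinction is precisely what \eqref{getzler1-twist} is used for in Lemma~\ref{commutator-lemma-1}, so this step has to be stated correctly.

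On signs, the real content of the lemma has three parts. $T_3$ appears with $+1$ and $T_2$ with $-1$, relative to the orientations induced from $\partial{\sf Get}_0^\sym$. In addition, the two halves of ${\sf Get}_1^\sym$ must transport to the same orientation on $T_2$ and $T_4$, not opposite ones.

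The consistency check you propose cannot detect the sign on $T_2$. Both $\partial T_2$ and $\partial T_4$ are signed sums of the same two T-graphs (the expansions of the bivalent white vertex, with $Q$ on either side). Hence $\partial({\sf Get}_1^\sym\circ B)=0$ holds for either sign on $T_2$ once the orientation of $T_4$ is chosen, and that orientation is free by definition. Likewise, $\partial(B\circ{\sf Get}_1^\sym)=0$ says nothing about the sign of $T_3$.

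Both signs must therefore come from the explicit Wahl--Westerland orientation comparison, which you only outline. That computation is still to be carried out.
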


\iffalse
We also define another graph:
\begin{align*}
	T_4 &:=  \begin{tikzpicture}[baseline={([yshift=0.5ex]current bounding
			box.center)},scale=0.4] 
		\draw [thick] (-4,0) to (-2.4,0);
		\draw [thick] (-4.2, -.2) to (-3.8, .2);
		\draw [thick] (-4.2, .2) to (-3.8, -.2);
		\draw [thick] (0,0) to (-2,0);
		\draw (-2.5,.2) node[label=below:{Q}] {};
		\draw [thick] (0,0) to (1,0);
		\draw [thick] (-2.2,0) circle [radius=0.2];
		\draw [thick] (1.2,0) circle [radius=0.2];
	\end{tikzpicture}.
\end{align*}
The orientation of $T_4$ is determined by part $(2.)$ of the following Lemma.
\begin{itemize}
		\item[(1.)] $B \circ {\sf Get}_1^\sym = T_3$.
		\item[(2.)] ${\sf Get}_1^\sym \circ B  = -T_2 + T_4$.
\end{itemize}
\fi

We now consider another example $f(\hcV_{0,2,1})$.

\begin{Lemma}
We may take the combinatorial string vertex $\widehat{\mathcal{V}}_{0,2,1}$ to be the following linear combination of black-and-white graphs:
\begin{equation}
\label{eq:021}
  \frac{1}{4} \begin{tikzpicture}[baseline={([yshift=-.4ex]current bounding box.center)},scale=0.3]
\draw (2,0) node[cross=2pt,label=above:{}] {};
\draw (6.2,0) node[cross=2pt,label=above:{}] {};
\draw [thick] (6.2,0) to (7.2,0);
\draw [thick] (2.1,0) to (3.2,0);
\draw [thick] (5.2,0) + (-85:2) arc(-85:265:2);
\draw [line width=2.4pt] (5.2,0) + (235:2) arc(235:265:2);
\draw [thick] (5.2,-2) circle (.2);
\end{tikzpicture}+\frac{1}{4} \begin{tikzpicture}[baseline={([yshift=-.4ex]current bounding box.center)},scale=0.3]
\draw (2,0) node[cross=2pt,label=above:{}] {};
\draw (6.2,0) node[cross=2pt,label=above:{}] {};
\draw [thick] (6.2,0) to (7.2,0);
\draw [thick] (2.1,0) to (3.2,0);
\draw [thick] (5.2,0) + (85:2) arc(85:-265:2);
\draw [line width=2.4pt] (5.2,0) + (95:2) arc(95:125:2);
\draw [thick] (5.2,2) circle (.2);
\end{tikzpicture}
+\frac{1}{2}\begin{tikzpicture}[baseline={([yshift=-.4ex]current bounding box.center)},scale=0.3]
\draw (2,0) node[cross=2pt,label=above:{}] {};
\draw (6.2,0) node[cross=2pt,label=above:{}] {};
\draw [thick] (6.2,0) to (7.2,0);
\draw [thick] (2.1,0) to (3.2,0);
\draw [thick] (5.2,0) circle [radius=2];
\draw (1.8,-1.6) circle (.2);
\draw [thick] (3.2,0) to (1.9,-1.4);
\end{tikzpicture}+\frac{1}{2}\begin{tikzpicture}[baseline={([yshift=-.4ex]current bounding box.center)},scale=0.3]
\draw (2,0) node[cross=2pt,label=above:{}] {};
\draw (6.2,0) node[cross=2pt,label=above:{}] {};
\draw [thick] (6.2,0) to (7.2,0);
\draw [thick] (2.1,0) to (3.2,0);
\draw [thick] (5.2,0) circle [radius=2];
\draw (1.8,1.6) circle (.2);
\draw [thick] (3.2,0) to (1.9,1.4);
\end{tikzpicture}
\end{equation}
We remark this particular choice of $\widehat{\mathcal{V}}_{0,2,1}$  is different (but homologous by uniqueness) from that in~\cite{CCT}.
\end{Lemma}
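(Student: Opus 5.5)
The plan is to check directly that the displayed chain, call it $\mathcal{W}$, satisfies the component of the Maurer-Cartan equation~\eqref{eq:mc} of type $(g,k,l)=(0,2,1)$. Then the uniqueness in Theorem~\ref{thm:comb-string-vertex} lets us replace $\widehat{\mathcal{V}}_{0,2,1}$ by $\mathcal{W}$. To do this I first list all terms in~\eqref{eq:mc} for $(0,2,1)$. Since $\Delta$ raises the genus, it contributes nothing. The $\iota$-term is $\iota\widehat{\mathcal{V}}_{0,1,2}$. The bracket terms involve only pairs with $g'=g''=0$ and $r=1$, and these require $(k',l'),(k'',l'')$ to be stable with $k'+k''=3$ and $l'+l''=2$. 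Every such pair contains an unstable type such as $(0,1,1)$ or $(0,2,0)$, which is excluded from $\hg$, so the bracket vanishes. Hence the equation to verify is
\[
\eth\,\mathcal{W} + \iota\,\widehat{\mathcal{V}}_{0,1,2} = 0 ,
\]
and the right-hand side must hold in $C^{\sf comb}_\bullet(M_{0,2,1}^{\sf fr},\underline{\sgn})_{\sf hS}$.

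Next I compute $\iota\widehat{\mathcal{V}}_{0,1,2}$. It equals $2\cdot\frac12 M\,{}_1\!\circ_j T$, summed over the two white vertices and then symmetrized. This is exactly the graph drawn in the earlier sewing example: the Mukai graph with one cycle sewn into a trivalent vertex carrying a single white leaf. It has degree zero and carries no circle parameter, because $\widehat{\mathcal{V}}_{0,1,2}$ has none.

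I then compute $\eth\mathcal{W}=\partial\mathcal{W}+\sum_i w_iB^c_i\mathcal{W}+uB^w\mathcal{W}$. All four graphs in $\mathcal{W}$ carry no circle parameters, and the $w$- and $u$-terms come with positive powers of the parameters. In the homotopy quotient $[w^{-1},u^{-1}]$ these terms vanish, because multiplication by $w$ kills terms of $w$-degree zero. So only $\partial\mathcal{W}$ remains. The third and fourth graphs have a black vertex of valence $4$ on the cycle, where the white vertex attaches. Blowing up that vertex produces two trivalent configurations: the white leaf sits either on the spoke side or on the cycle side. The first and second graphs have a white vertex of valence $2$ on the cycle with the framing on one side. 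Blowing up that white vertex pushes it onto a leaf attached at a new trivalent black vertex on the cycle, and produces two terms according to where the framing half-edge ends up.

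The main obstacle is matching these boundary terms with the correct signs and coefficients $\frac14,\frac12$. I expect the following outcome. The terms from the white-vertex graphs in which the white leaf lies on the cycle between the two spokes should cancel against the corresponding terms from the valence-$4$ graphs. The remaining terms should sum, after the $\Sigma_2$-symmetrization of the cycles with its $\underline{\sgn}$ twist, to $-M\,{}_1\!\circ T$. The top and bottom pictures are related by this symmetry, which accounts for the factors $\frac14$ and $\frac12$. For the orientations I would follow Wahl-Westerland's conventions, as in Lemma~\ref{lem:boundary-orientation}. If the signs come out with the wrong total, I would first recheck the relative orientation of the two framings in the white-vertex graphs, since that is where a sign flip between the two pictures can occur.
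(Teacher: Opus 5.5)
Your strategy is the paper's. The $(0,2,1)$-component of the Maurer--Cartan equation reduces to $\eth\hcV_{0,2,1}=-\iota\hcV_{0,1,2}$, and you verify this by computing boundaries. Your justification of the reduction is correct and more explicit than the paper's: there is no $\Delta$-term in genus zero; there is no bracket term, since two stable genus-zero types would need $k'+l'+k''+l''\geq 6>5$; and the circle-action terms vanish on chains without circle parameters. The gap is in the one substantive step, the boundary computation. You only predict its outcome, and the prediction is wrong in two places.

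First, $\iota\hcV_{0,1,2}=\frac12\iota_1T+\frac12\iota_2T$ consists of two \emph{non-isomorphic} graphs, not one graph with coefficient $1$. In both, a trivalent vertex $v$ carrying the cycle-start leaf $\lambda$ and a univalent white vertex $w$ is joined by an edge $e$ to the Mukai circle. The cyclic order at $v$ is $(\lambda,e,w)$ in one and $(\lambda,w,e)$ in the other. An isomorphism would have to fix $v$, $\lambda$, $e$ and $w$, so it cannot reverse this order. Nor are the top and bottom pictures related by the $\Sigma_2$-symmetry: the only nontrivial automorphism of the Mukai graph exchanges its two vertices (hence its two cycles) and maps each arc to itself.

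What actually happens is the following.
\begin{itemize}
\item Blowing up the $4$-valent vertex of each $\frac12$-graph gives, on the spoke side, exactly one of the two $\iota$-graphs with coefficient $-\frac12$. These match $-\iota\hcV_{0,1,2}$ term by term, with no symmetrization.
\item On the cycle side, the same blow-up gives $+\frac12X$, where $X$ is the Mukai graph plus a univalent white vertex on a new trivalent vertex of that arc.
\item Second error: \emph{both} boundary terms of the corresponding $\frac14$-graph are of type $X$. Blowing up the $2$-valent white vertex puts the new white leaf on one side of the arc or the other, i.e.\ into one boundary cycle or the other; the old framing is lost.
\item These two graphs are exchanged by the cycle-swapping automorphism. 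In the $\underline{\sgn}$-twisted coinvariants they therefore coincide and, because of the twist, add. This gives $\partial(\frac14G)=-\frac12X$, which cancels the $+\frac12X$.
\end{itemize}
This doubling, not a top/bottom symmetry, explains the coefficient $\frac14$. It is the idea your outline needs in order to close.
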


\begin{proof}
The desired string vertex is defined by the equation
\[ \eth \widehat{\mathcal{V}}_{0,2,1} = - \iota\widehat{\mathcal{V}}_{0,1,2}= \begin{tikzpicture}[baseline={([yshift=-1.2ex]current bounding
      box.center)},scale=0.3] 
\draw [thick] (0,0) to (0,2);
\draw [thick] (-0.2, 1.8) to (0.2, 2.2);
\draw [thick] (0.2, 1.8) to (-0.2, 2.2);
\draw [thick] (0,0) to (-2,0);
\draw [thick] (-4.7, -.2) to (-4.3, .2);
\draw [thick] (-4.7, .2) to (-4.3, -.2);
\draw [thick] (-6,0) to (-4.5,0);
\draw (-6,0) node[label=left:{$-\frac{1}{2}\;$}] {};
\draw [thick] (0,0) to (2,0);
\draw [thick] (-4,0) circle [radius=2];
\draw [thick] (2.2,0) circle [radius=0.2];
\end{tikzpicture}  
\begin{tikzpicture}[baseline={([yshift=-1.2ex]current bounding
      box.center)},scale=0.3] 
\draw [thick] (0,0) to (0,2);
\draw [thick] (-0.2, 1.8) to (0.2, 2.2);
\draw [thick] (0.2, 1.8) to (-0.2, 2.2);
\draw [thick] (0,0) to (-2,0);
\draw (-2,0) node[label=left:{$-\frac{1}{2}\;$}] {};
\draw [thick] (0,0) to (2,0);
\draw [thick] (-2.2,0) circle [radius=0.2];
\draw [thick] (4,0) circle [radius=2];
\draw [thick] (4.7, -.2) to (4.3, .2);
\draw [thick] (4.7, .2) to (4.3, -.2);
\draw [thick] (6,0) to (4.5,0);
\end{tikzpicture}\]

In this equation, the graphs are endowed with their canonical orientation since all vertices have odd valence. Let us consider the first graph in the above equation. We may choose appropriate orientation so that we have
\[ \eth\left( \frac{1}{2}\begin{tikzpicture}[baseline={([yshift=-.4ex]current bounding box.center)},scale=0.3]
\draw (2,0) node[cross=2pt,label=above:{}] {};
\draw (6.2,0) node[cross=2pt,label=above:{}] {};
\draw [thick] (6.2,0) to (7.2,0);
\draw [thick] (2.1,0) to (3.2,0);
\draw [thick] (5.2,0) circle [radius=2];
\draw (1.8,1.6) circle (.2);
\draw [thick] (3.2,0) to (1.9,1.4);
\end{tikzpicture} \right) = 
-\frac{1}{2} \ 
\begin{tikzpicture}[baseline={([yshift=-1.2ex]current bounding
      box.center)},scale=0.3] 
\draw [thick] (0,0) to (0,2);
\draw [thick] (-0.2, 1.8) to (0.2, 2.2);
\draw [thick] (0.2, 1.8) to (-0.2, 2.2);
\draw [thick] (0,0) to (-2,0);
\draw [thick] (-4.7, -.2) to (-4.3, .2);
\draw [thick] (-4.7, .2) to (-4.3, -.2);
\draw [thick] (-6,0) to (-4.5,0);
%\draw (-6,0) node[label=left:{$-\frac{1}{2}\;$}] {};
\draw [thick] (0,0) to (2,0);
\draw [thick] (-4,0) circle [radius=2];
\draw [thick] (2.2,0) circle [radius=0.2];
\end{tikzpicture} 
+ 
\frac{1}{2}\ 
\begin{tikzpicture}[baseline={([yshift=-.4ex]current bounding box.center)},scale=0.3]
\draw (2,0) node[cross=2pt,label=above:{}] {};
\draw (6.2,0) node[cross=2pt,label=above:{}] {};
\draw [thick] (6.2,0) to (7.2,0);
\draw [thick] (2.1,0) to (3.2,0);
\draw [thick] (5.2,0) circle [radius=2];
\draw (1.8,1.6) circle (.2);
\draw [thick] (3.4,.8) to (1.9,1.4);
\end{tikzpicture}\]
Then to cancel out the second term, we may choose an orientation of the  graph $\begin{tikzpicture}[baseline={([yshift=-.4ex]current bounding box.center)},scale=0.3]
\draw (2,0) node[cross=2pt,label=above:{}] {};
\draw (6.2,0) node[cross=2pt,label=above:{}] {};
\draw [thick] (6.2,0) to (7.2,0);
\draw [thick] (2.1,0) to (3.2,0);
\draw [thick] (5.2,0) + (85:2) arc(85:-265:2);
\draw [line width=2.4pt] (5.2,0) + (95:2) arc(95:125:2);
\draw [thick] (5.2,2) circle (.2);
\end{tikzpicture}$ so that 
\[ \eth \left( \frac{1}{4} \begin{tikzpicture}[baseline={([yshift=-.4ex]current bounding box.center)},scale=0.3]
\draw (2,0) node[cross=2pt,label=above:{}] {};
\draw (6.2,0) node[cross=2pt,label=above:{}] {};
\draw [thick] (6.2,0) to (7.2,0);
\draw [thick] (2.1,0) to (3.2,0);
\draw [thick] (5.2,0) + (85:2) arc(85:-265:2);
\draw [line width=2.4pt] (5.2,0) + (95:2) arc(95:125:2);
\draw [thick] (5.2,2) circle (.2);
\end{tikzpicture} \right) = - \frac{1}{2}\begin{tikzpicture}[baseline={([yshift=-.4ex]current bounding box.center)},scale=0.3]
\draw (2,0) node[cross=2pt,label=above:{}] {};
\draw (6.2,0) node[cross=2pt,label=above:{}] {};
\draw [thick] (6.2,0) to (7.2,0);
\draw [thick] (2.1,0) to (3.2,0);
\draw [thick] (5.2,0) circle [radius=2];
\draw (1.8,1.6) circle (.2);
\draw [thick] (3.4,.8) to (1.9,1.4);
\end{tikzpicture}\]
Here, it is essential that we have twisted the inputs by the sign representation ${\sf sgn}_2$. The discussion of the other two terms is analogous.
\end{proof}

Applying the framing forgetful map $f$ to the string vertex $\hcV_{0,2,1}$ yields 
 \begin{align}
 \label{eq:W-021}
 \begin{split}
  %\widehat{\mathcal{W}}_{0,2,1}:=
  f\left(\hcV_{0,2,1}\right)
   =  \frac{1}{4} \begin{tikzpicture}[baseline={([yshift=.5ex]current bounding box.center)},scale=0.3]
\draw (2,0) node[cross=2pt,label=above:{}] {};
\draw (6.2,0) node[cross=2pt,label=above:{}] {};
\draw [thick] (6.2,0) to (7.2,0);
\draw [thick] (2.1,0) to (3.2,0);
\draw [thick] (5.2,0) + (-85:2) arc(-85:265:2);
\draw [thick] (5.2,-2) circle (.2);
\draw (5.2,-2) node[label=above:{Q}] {};
\end{tikzpicture}+\frac{1}{4} \begin{tikzpicture}[baseline={([yshift=-.5ex]current bounding box.center)},scale=0.3]
\draw (2,0) node[cross=2pt,label=above:{}] {};
\draw (6.2,0) node[cross=2pt,label=above:{}] {};
\draw [thick] (6.2,0) to (7.2,0);
\draw [thick] (2.1,0) to (3.2,0);
\draw [thick] (5.2,0) + (85:2) arc(85:-265:2);
\draw [thick] (5.2,2) circle (.2);
\draw (5.2,2.2) node[label=below:{Q}] {};
\end{tikzpicture}
+\frac{1}{2}\begin{tikzpicture}[baseline={([yshift=.4ex]current bounding box.center)},scale=0.3]
\draw (2,0) node[cross=2pt,label=above:{}] {};
\draw (6.2,0) node[cross=2pt,label=above:{}] {};
\draw [thick] (6.2,0) to (7.2,0);
\draw [thick] (2.1,0) to (3.2,0);
\draw [thick] (5.2,0) circle [radius=2];
\draw (1.8,-1.6) circle (.2);
\draw [thick] (3.2,0) to (1.9,-1.4);
\draw (1.6,-1.6) node[label=right:{Q}] {};
\end{tikzpicture}+\frac{1}{2}\begin{tikzpicture}[baseline={([yshift=-.4ex]current bounding box.center)},scale=0.3]
\draw (2,0) node[cross=2pt,label=above:{}] {};
\draw (6.2,0) node[cross=2pt,label=above:{}] {};
\draw [thick] (6.2,0) to (7.2,0);
\draw [thick] (2.1,0) to (3.2,0);
\draw [thick] (5.2,0) circle [radius=2];
\draw (1.8,1.6) circle (.2);
\draw [thick] (3.2,0) to (1.9,1.4);
\draw (1.6,1.6) node[label=right:{Q}] {};
\end{tikzpicture}
\end{split}
\end{align}

\subsection{Two combinatorial constructions and their commutators}
We proceed to construct two degree-two maps:
\begin{align}
	\mathcal{T}&: C^{\sf comb}_\bullet(M_{g,k,l}^{\sf fr}, \underline{\sgn})_{\sf hS} \ra C^{\sf comb}_\bullet(M_{g,k,l+Q}^{\sf fr}, \underline{\sgn})_{\sf hS}. \label{outgoing-T}\\
	\mathcal{S}&: C^{\sf comb}_\bullet(M_{g,k-1,l+1}^{\sf fr}, \underline{\sgn})_{\sf hS} \ra C^{\sf comb}_\bullet(M_{g,k,l+Q}^{\sf fr}, \underline{\sgn})_{\sf hS}. \label{incoming-S}
\end{align}

Recall $\alpha$ in~\eqref{alpha-chain} 
%Throughtout the following, we shall let 
%\[ \alpha = \Gamma w_1^{-a_1}\cdots w_k^{-a_k} u_1^{-b_1}\cdots u_{l}^{-b_{l}}\in C^{\sf comb}_\bullet(M_{g,k,l}^{\sf fr}, \underline{\sgn})_{\sf hS}\]
is an equivariant chain in~\eqref{eq:equiv-chain}.
We define 
\begin{align}
\label{components-T}
\begin{dcases}
	H^{\sf in} (\alpha):= \sum_{i=1}^k \alpha \;_i \circ {\sf Get}_0^\sym, & 	H^{\sf out}(\alpha):=  \sum_{i=1}^l {\sf Get}_0^\sym \circ_i \alpha,\\
	G^{\sf in} (\alpha):=  \sum_{j=1}^k w_j^{-1} \alpha  \;_j \circ {\sf Get}_1^\sym, &
	G^{\sf out}(\alpha):= \sum_{j=1}^l u_j^{-1} {\sf Get}_1^\sym \circ_j \alpha,
\end{dcases}
\end{align}
where the superscript ``in" (respectively ``out") is to indicate that the operator involves incoming cycles (respectively outgoing white vertices).

%They shall provide us with an explicit formula to write down another Maurer-Cartan element of $\widehat{\mathfrak{g}} \ltimes \widehat{\mathfrak{m}}$ that is gauge equivalent to $\hcV+ f(\hcV)$.

%\subsubsection{Adding an outgoing vertex.}\label{subsec:T} 

%We shall proceed to define a map  \[\mathcal{T}: C^{\sf comb}_\bullet(M_{g,k,l}^{\sf fr}, \underline{\sgn})_{\sf hS} \ra C^{\sf comb}_\bullet(M_{g,k,l+Q}^{\sf fr}, \underline{\sgn})_{\sf hS}\]
% So we have $\mathcal{T}^{\sf in}= H^{\sf in}+G^{\sf in}$, and $\mathcal{T}^{\sf out}= H^{\sf out}+ G^{\sf out}$. 
%  Its definition is a bit involved and is split into three operators:
% \[ \mathcal{T}= \mathcal{T}^{\sf in} + \mathcal{T}^{\sf v} + \mathcal{T}^{\sf out},\]

The map $\mathcal{T}$ in \eqref{outgoing-T} adds a distinguished outgoing white vertex that is not framed. 
It is defined by
\begin{equation}
\label{construction-T}
\mathcal{T}=H^{\sf in} -H^{\sf out}+G^{\sf in}+G^{\sf out}+ \mathcal{T}^{\sf v},
\end{equation}
where the operator $\mathcal{T}^{\sf v}$ when applied to a black-and-white graph $\Gamma$, is defined by summing over all possible ways of choosing a black vertex in $\Gamma$ and just turn it into a white vertex and label it by $Q$. This is illustrated in the following picture. 

\[\begin{tikzpicture}[baseline={([yshift=0ex]current bounding box.center)},scale=0.4] 
	\draw [thick] (-6,-0) to (-3,0);
	\draw [thick] (-4,2) to (-3,0);
	\draw [thick] (-4,-2) to (-3,0);
	\draw [thick] (-1,2) to (-3,0);
	\draw [thick] (-1,-2) to (-3,0);
	\draw [thick] [|->] (1,0) to (3,0);
	\node at (2,.8) {$\mathcal{T}^{\sf v}$};
	\draw [thick] (7,0) circle [radius=0.2];
	\draw [thick] (4,-0) to (6.8,0);
	\draw [thick] (6,2) to (6.8,0);
	\draw [thick] (6,-2) to (6.8,0);
	\draw [thick] (9,2) to (7.2,0);
	\draw [thick] (9,-2) to (7.2,0);
	\draw (7.2,0) node[label=right:{Q}] {};
\end{tikzpicture}\]

\begin{remark}
\begin{enumerate}
\item
The operator $\mathcal{T}^{\sf in}:=H^{\sf in}+G^{\sf in}$ sews the unique white vertex that is not labeled in the Getzler graphs with all the cycles in a given black-and-white graph. In the case of ${\sf Get}_1^\sym$, we also multiply by the circle parameter $w_j^{-1}$ after sewing. 

\item
The operator $\mathcal{T}^{\sf out}:=H^{\sf out}+G^{\sf out}$ sews the unique cycle in the Getzler graphs with all the white vertices in a given black-and-white graph.
\end{enumerate}
\iffalse
The operator $\mathcal{T}^{\sf in}$ is defined by
\begin{equation}\label{eq:T-in} \mathcal{T}^{\sf in}(\Gamma):= \sum_{i=1}^k \Gamma \;_i \circ {\sf Get}_0^\sym + \sum_{j=1}^k w_j^{-1} \Gamma  \;_j \circ {\sf Get}_1^\sym\end{equation}
when acting on a black-and-white graph $\Gamma$ of type $(g,k,l)$ and extended to $C^{\sf comb}_\bullet(M_{g,k,l}^{\sf fr}, \underline{\sgn})_{\sf hS}$ linearly in the circle parameters $w^{-1}$'s and $u^{-1}$'s. In other words, the operator $\mathcal{T}^{\sf in}$ sews the unique white vertex that is not labeled in the Getzler graphs with all the cycles in a given black-and-white graph. In the case of ${\sf Get}_1^\sym$, we also multiply by the circle parameter $w_j^{-1}$ after sewing. 

The operator $\mathcal{T}^{\sf out}$ is defined in a similar way by
\begin{equation}\label{eq:T_out}
	 \mathcal{T}^{\sf out} (\Gamma) := \sum_{i=1}^l {\sf Get}_0^\sym \circ_i \Gamma + \sum_{j=1}^l u_j^{-1} {\sf Get}_1^\sym \circ_j \Gamma. \end{equation}
That is, the operator $\mathcal{T}^{\sf out}$ sews the unique cycle in the Getzler graphs with all the white vertices in a given black-and-white graph.
\fi
\end{remark}

%\subsubsection{Adding an incoming cycle.}\label{subsec:S} 

%Next, we define the another map  \[\mathcal{S} : C^{\sf comb}_\bullet(M_{g,k-1,l+1}^{\sf fr}, \underline{\sgn})_{\sf hS} \ra C^{\sf comb}_\bullet(M_{g,k,l+Q}^{\sf fr}, \underline{\sgn})_{\sf hS}.\]

We have seen that $f(\hcV_{0,1,2})={\sf Get}_1^\sym$ plays an important role in the construction of $\mathcal{T}$. 
Now the string vertex $\hcV_{0,2,1}$ (or rather $f(\hcV_{0,2,1})$) will play a similar role in the construction of $\mathcal{S}$. We define
\begin{equation}\label{construction-S} \mathcal{S}(\alpha):= \sum_{j=1}^{l+1} u_j^{-1}  %\widehat{\mathcal{W}}_{0,2,1}
f(\hcV_{0,2,1})\circ_j \alpha,
\end{equation}
where $\alpha$ is as in Equation~\eqref{alpha-chain}. In other words, the map $\mathcal{S}$ is defined by sewing at the inputs of $%\widehat{\mathcal{W}}_{0,2,1}
f(\hcV_{0,2,1})$ with all the white vertices of a black-and-white graph, followed with a multiplication by the circle parameter at the sewed white vertex. Again, we extend $\mathcal{S}$ linearly in all the circle parameter.  

\begin{remark}
\begin{enumerate}
\item Although the construction of $\mathcal{T}$ in the previous subsection is intuitively clear from the point of view of Getzler graphs, the construction of $\mathcal{S}$ is more of a homotopy nature. Part of the reason for the existence of $\mathcal{S}$ is that the operator $\mathcal{T}$ does not commute with $\iota$. 

\item 
We note that the use of the circle parameter $u^{-1}_j$ is slightly confusing since the $j$-th white vertex is no longer in $%\widehat{\mathcal{W}}_{0,2,1} 
f(\hcV_{0,2,1})\circ_j \alpha$. However, it should be clear what we mean by this: $u_j$ is now representing the circle parameter associated with the unique cycle of $%\widehat{\mathcal{W}}_{0,2,1}
f(\hcV_{0,2,1})
$ that is not sewed with $\alpha$. We also observe that in homological chain degree, this is a degree $3$ operation since $u_j^{-1}$ is of degree $2$ while the string vertex $\mathcal{V}_{0,2,1}$ is of degree $1$. Thus, after taking into account the shifting in the local system $\underline{{\sf sgn}}$, the map $\mathcal{S}$ is of homological degree $2$, as desired.
\end{enumerate}
\end{remark}

%Thus, before we define the operator $\mathcal{S}$, we need to have an explicit formula of this string vertex.
%With these preparations, we may define $\mathcal{S} : C^{\sf comb}_\bullet(M_{g,k-1,l+1}^{\sf fr}, \underline{\sgn})_{\sf hS} \ra C^{\sf comb}_\bullet(M_{g,k,l+Q}^{\sf fr}, \underline{\sgn})_{\sf hS}$ by setting 

\subsubsection{Commutators.}\label{para:commutators}   
Now we study the commutators between the new operators $\mathcal{T}, \mathcal{S}$ and the operators $\eth, \iota, \Delta.$ We begin with operators that commute.
\begin{Lemma}
The following two identities hold.
\begin{equation}
\label{commutator-iota-S}
[\iota,\mathcal{S}]=[\Delta,\mathcal{S}]=0.
\end{equation}
\end{Lemma}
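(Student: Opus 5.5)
The plan is to show that $\mathcal{S}$, $\iota$ and $\Delta$ act by sewing at disjoint sets of outputs. Once that is established, the two commutators vanish by the associativity of the sewing operations (Wahl--Westerland~\cite[Lemma 2.5]{WahWes}), in the same spirit as the last line of the proof of Lemma~\ref{lem:commutator-Fu}.

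Write $\mathcal{S}(\alpha)=\sum_j u_j^{-1} f(\hcV_{0,2,1})\circ_j\alpha$ as in Equation~\eqref{construction-S}. The operator $\mathcal{S}$ consumes one framed white vertex $j$ of $\alpha$. It replaces that vertex by the second input cycle of $f(\hcV_{0,2,1})$, carrying the circle parameter $u_j^{-1}$, and by the unframed output $Q$.

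\emph{The commutator with $\Delta$.} By Equation~\eqref{eq:twisted-self-sewing}, $\Delta$ sews a pair $\{i,i'\}$ of framed white vertices into $\mathbb{M}$ after applying $\pi_{ii'}$. Neither $Q$ nor the newly created input cycle of $\mathcal{S}(\alpha)$ is a framed white vertex. Hence every term of $\Delta\mathcal{S}(\alpha)$ is indexed by a pair $(j,\{i,i'\})$ with $j\notin\{i,i'\}$, and the same indexing describes the terms of $\mathcal{S}\Delta(\alpha)$. For each such triple, the two composites are the same iterated sewing of $\alpha$ with $f(\hcV_{0,2,1})$ at $j$ and with $\mathbb{M}$ at $\{i,i'\}$, so associativity identifies them. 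The projection $\pi_{ii'}$ and the factor $u_j^{-1}$ involve different circle parameters, so they commute.

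The remaining point is the sign. The operator $\Delta$ has even degree after the $[2]$ shift, while $\mathcal{S}$ has degree $2$. We must also check that the twisting by $\underline{\sgn}$, which adds an input cycle through $\mathcal{S}$, produces no relative sign. Here I would argue that the new cycle is appended at the same position in both orders of composition.

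\emph{The commutator with $\iota$.} This is handled the same way, but it is the step I expect to be the main obstacle. The operator $\iota=\sum_j\iota_j$ sews the Mukai graph $M$ into a framed white vertex and creates a new input cycle with parameter $u_j\mapsto w_0$. Since $Q$ is unframed, $\iota$ never acts on it, so $Q$ causes no extra terms. The difficulty lies in the terms where $\iota$ and $\mathcal{S}$ act on the same white vertex of the original chain, because these are not disjoint sewings.

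In $\iota\mathcal{S}$, the white vertex $j$ has already been consumed by $\mathcal{S}$, so these terms do not occur. In $\mathcal{S}\iota$, the vertex consumed by $\iota$ no longer exists when $\mathcal{S}$ acts, so these terms do not occur either. So the only thing to check is that the index sets match: $\{(i,j): i\neq j\}$ on both sides.

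It then remains to track the circle parameters. In one order $u_j^{-1}$ is attached before $u_i\mapsto w_0$, and in the other order after. Since they refer to different vertices, they commute.

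Finally, I would verify the signs, including the factor $(-1)^{\deg}$ in the definition of $\iota_j$ and the reordering of input cycles under $\underline{\sgn}$, by comparing the orientation conventions of~\cite[Section 2.5]{WahWes} for the two orders of the iterated gluing. This sign bookkeeping for $\iota$ is where I expect any subtlety to lie. The combinatorial matching itself is immediate.
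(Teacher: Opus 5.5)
Your argument is the same as the paper's, which simply observes that $\mathcal{S}$, $\iota$ and $\Delta$ all act at framed outputs and that $\mathcal{S}$ creates no new framed output (only the unframed $Q$ and an input cycle), so the sewings are disjoint and commute; your matching of index sets and of circle parameters is a more detailed version of that one-line proof. One small slip: $\Delta$ is odd, not even (it has degree $+1$, since $\hbar\Delta$ is part of the degree $-1$ differential), but this is harmless because $\mathcal{S}$ has even degree $2$, so the graded commutators are ordinary commutators either way.
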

\begin{proof}
    These two identities both follow from the fact that all three operators are applied to outputs and yield no new output. Hence they commute with each other.
\end{proof}    
%There are a few more commutator identities which vanishes by simple reasons. We gather them in the following lemma.
The following relations are obtained for a similar reason.
\begin{Lemma}
We have
\begin{align*}
    &[uB^w,H^{\sf in}]=[uB^w,w^{-1}G^{\sf in}]=[wB^c,H^{\sf in}]=0;\\
    &[wB^c, H^{\sf out}]=[wB^c, G^{\sf out}]=[uB^w,H^{\sf out}]=0;\\
    &[\partial, w^{-1}G^{\sf in}]=[\partial, u^{-1}G^{\sf out}]=0;\\
    &[wB^c, H^{\vee}]=[uB^w,H^{\vee}]=0.
\end{align*}
\end{Lemma}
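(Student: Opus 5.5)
These identities all follow from a locality principle: two operations commute whenever they act at disjoint sites of a black-and-white graph and create nothing the other operation acts on. I will verify each identity by examining its effect on an equivariant chain $\alpha=\Gamma\cdot\prod w_i^{-a_i}\prod u_j^{-b_j}$. Here I read $uB^w$ as $\sum_j u_jB^w_j$ and $wB^c$ as $\sum_i w_iB^c_i$. I also read $H^{\vee}$ as $\mathcal{T}^{\sf v}$. I will use throughout that sewing is associative \cite[Lemma 2.5]{WahWes}.

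First I list the sites at which each operator acts and what each creates. $H^{\sf in}$ and $G^{\sf in}$ sew a Getzler graph into an incoming cycle of $\Gamma$. They do not alter the white vertices of $\Gamma$, and the only new white vertex they add is the unframed vertex $Q$. The operator $uB^w$ acts only at framed white vertices, which excludes $Q$, since $Q$ carries no circle parameter. So by associativity, sewing $B$ at a white vertex and sewing a Getzler graph at a cycle are operations on disjoint parts of the PROP composition, and they commute. The same holds with the factor $w_j^{-1}$ inserted, since $uB^w$ does not touch the $w$ parameters. This gives $[uB^w,H^{\sf in}]=[uB^w,w^{-1}G^{\sf in}]=0$.

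For $[wB^c,H^{\sf in}]$, the cycles of $\alpha\,{}_i\circ{\sf Get}_0^\sym$ are exactly the cycle of ${\sf Get}_0^\sym$ together with the cycles of $\alpha$ other than the $i$-th. The term of $wB^c$ acting at a cycle $j\neq i$ commutes with the sewing by associativity. For the remaining term, with $wB^c$ acting at the new cycle coming from ${\sf Get}_0^\sym$, I plan to compare $B^c_i\alpha\,{}_i\circ{\sf Get}_0^\sym$ with $\alpha\,{}_i\circ(B^c{\sf Get}_0^\sym)$. Associativity makes them equal up to the sign $(-1)^{\deg}$ built into $B^c_i$, and that sign is what makes the commutator vanish. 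This sign bookkeeping, together with the circle-parameter bookkeeping ($w_i\mapsto$ the parameter of the new cycle), is the step I expect to need the most care. The same argument, with inputs and outputs swapped, gives $[wB^c,H^{\sf out}]=[wB^c,G^{\sf out}]=[uB^w,H^{\sf out}]=0$. In the last of these, the nontrivial term is the one where $B^w$ acts at the framed white vertex of ${\sf Get}_0^\sym$. I will match it with $({\sf Get}_0^\sym\circ B)\circ_i\alpha=({\sf Get}_0^\sym)\circ_i(B\circ\alpha)$, again via associativity.

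For the $\partial$ identities, $\partial$ is a derivation with respect to sewing, because sewing is a chain map. Multiplication by $w_j^{-1}$ or $u_j^{-1}$ commutes with $\partial$. By \eqref{getzler1-boundary}, $\partial{\sf Get}_1^\sym=0$, so
\[
[\partial,w^{-1}G^{\sf in}]=\sum_j w_j^{-1}\,\alpha\,{}_j\circ\partial({\sf Get}_1^\sym)=0,
\]
up to the Koszul signs that are absorbed into the definition. The same computation gives $[\partial,u^{-1}G^{\sf out}]=0$.

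Finally, $\mathcal{T}^{\sf v}$ only recolors a black vertex of $\Gamma$ as the unframed white vertex $Q$. It does not change the cycle structure, the starting leaves, or the other white vertices. Sewing $B$ at a cycle or at a framed white vertex only modifies the neighbourhood of that starting leaf or vertex, and the black vertices this introduces are those of $B$. A black vertex of $B\circ_j\Gamma$ comes either from $\Gamma$ or from the sewing, and the sewing only attaches half-edges of the cycle in $B$ to vertices of $\Gamma$. I will check that choosing a black vertex of $\Gamma$ to recolor, before or after sewing, gives the same terms on both sides. Since $Q$ carries no circle parameter, neither $wB^c$ nor $uB^w$ acts at it. This gives $[wB^c,\mathcal{T}^{\sf v}]=[uB^w,\mathcal{T}^{\sf v}]=0$.
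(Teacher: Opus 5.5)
Your locality argument is sound wherever the two operators really act at different slots. In $[uB^w,H^{\sf in}]$, $[uB^w,w^{-1}G^{\sf in}]$, $[wB^c,H^{\sf out}]$ and $[wB^c,G^{\sf out}]$, the Getzler graph creates no new slot of the kind the other operator acts on: the only new white vertex made by $H^{\sf in}$ and $G^{\sf in}$ is the unframed $Q$, and $H^{\sf out}$, $G^{\sf out}$ create no new cycle. Your derivation of the two $\partial$-identities from $\partial{\sf Get}_1^\sym=0$ is also exactly what is needed.

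The gap is in $[wB^c,H^{\sf in}]=0$ and $[uB^w,H^{\sf out}]=0$. In the first, the cycle of $\alpha\,{}_i\circ{\sf Get}_0^\sym$ that carries $w_i$ is the cycle of ${\sf Get}_0^\sym$, which has replaced the $i$-th cycle of $\alpha$. After the terms at the other cycles cancel, you are left, up to signs, with two terms. One is $w_i\,\alpha\,{}_i\circ({\sf Get}_0^\sym\circ B)$, from twisting the new cycle. The other is $w_i\,(\alpha\,{}_i\circ B)\,{}_i\circ{\sf Get}_0^\sym=w_i\,\alpha\,{}_i\circ(B\circ{\sf Get}_0^\sym)$, from applying $H^{\sf in}$ to $w_iB^c_i\alpha$. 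In the second commutator you are likewise left with $(B\circ{\sf Get}_0^\sym)\circ_i\alpha$ against $({\sf Get}_0^\sym\circ B)\circ_i\alpha$.

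Associativity only lets you move $B$ past $\alpha$. It does not compare the circle action on the output of the Getzler graph with the circle action on its input; that is a chain-level equivariance property of this particular graph. In fact your argument would apply verbatim to ${\sf Get}_1^\sym$ and give $[wB^c,G^{\sf in}]=0$ away from $a_i=0$, which is false. By \eqref{twist-getzler1} and \eqref{getzler1-twist}, $B\circ{\sf Get}_1^\sym=T_3$ while ${\sf Get}_1^\sym\circ B=-T_2+T_4$, and these are exactly the surviving terms (A1), (A2) in the proof of Lemma~\ref{commutator-lemma-1}. This is why the statement lists only the $H$-versions of these two commutators.

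The missing ingredient is that ${\sf Get}_0^\sym$ is annihilated by twisting on either side: $B\circ{\sf Get}_0^\sym=0$ and ${\sf Get}_0^\sym\circ B=0$. The paper uses this fact in the proof of Lemma~\ref{commutator-lemma-4}.

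The first identity holds because the framed white vertex of ${\sf Get}_0^\sym$ already has a leaf as its starting half-edge, while the starting leaf of the cycle of $B$ sits at a bivalent white vertex. So the sewing rule (b2) returns zero; this is the graph form of $B^2=0$.

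The second holds because the starting leaf of the cycle of ${\sf Get}_0^\sym$ is attached to its white vertex, not to a trivalent black vertex that could be smoothed away, so the gluing again vanishes. Its algebraic counterpart is $B\{\varphi\}\circ B=0$ on reduced Hochschild chains for a normalized cochain $\varphi$ such as $\frac{dm}{dt}$, which vanishes as soon as one argument is $\bone_A$.

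With both diagonal terms equal to zero, $[wB^c,H^{\sf in}]$ and $[uB^w,H^{\sf out}]$ vanish, and the rest of your proposal goes through.
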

%\begin{proof} The third line follows from the identity in~\eqref{getzler1-boundary}.  \end{proof}

Now let us describe the non-vanishing commutators.
%Now we compute the commutator $[\eth,\mathcal{T}]$. 
\begin{Lemma}\label{commutator-lemma-1}
We have
\begin{equation}
\label{commutator-eth-T}[\eth,\mathcal{T}] (\alpha) + \{ \alpha, %\widehat{\mathcal{W}}_{0,1,2} 
f(\hcV_{0,1,2})
\} =0.
\end{equation}
%Note that the Lie bracket is taken in the semi-direct product DGLA $\widehat{\mathfrak{g}}\ltimes \widehat{\mathfrak{m}}$.
\end{Lemma}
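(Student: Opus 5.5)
We will prove \eqref{commutator-eth-T} by expanding $\eth=\partial+\sum_i w_iB^c_i+\sum_j u_jB^w_j$ and $\mathcal{T}=H^{\sf in}-H^{\sf out}+G^{\sf in}+G^{\sf out}+\mathcal{T}^{\sf v}$. We then organize the commutator $[\eth,\mathcal{T}](\alpha)$ into groups of terms that either cancel pairwise or reassemble into $-\{\alpha,f(\hcV_{0,1,2})\}$. The vanishing commutators from the preceding lemma dispose of most cross terms. The remaining pieces are:
\[
[\partial,H^{\sf in}],\quad [\partial,H^{\sf out}],\quad [\partial,\mathcal{T}^{\sf v}],\quad [wB^c,w^{-1}G^{\sf in}],\quad [uB^w,u^{-1}G^{\sf out}],\quad [wB^c,G^{\sf out}]\ \text{vs.}\ [uB^w,G^{\sf in}].
\]

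First, we use that sewing is a chain map and is associative (Wahl--Westerland). This gives $[\partial,H^{\sf in}](\alpha)=\pm\sum_i\alpha\,{}_i\circ\partial{\sf Get}_0^\sym$, and similarly for $H^{\sf out}$. By \eqref{getzler0-boundary} these become sums of $\alpha$ sewn with $T_1,T_2,T_3$.

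Next, we treat the circle-action terms. Since $w_j\cdot w_j^{-1}$ acts as the identity (with truncation when $a_j=0$), we get $[w_jB^c_j,w_j^{-1}G^{\sf in}](\alpha)=\pm\,\alpha\,{}_j\circ(B\circ{\sf Get}_1^\sym)$ plus a correction term supported on $a_j=0$. That correction, which is $\alpha$ with $\pi_j$ applied and sewn to ${\sf Get}_1^\sym$, is precisely the in-part of the bracket $\{\alpha,f(\hcV_{0,1,2})\}_1$ from \eqref{eq:def-r-bracket}: $\pi_J(\alpha)\,{}_J\circ B\circ f(\hcV_{0,1,2})$. By \eqref{twist-getzler1} the main term equals $\alpha\,{}_j\circ T_3$, and it cancels the $T_3$ contribution from $[\partial,H^{\sf in}]$. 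Symmetrically, $[u_jB^w_j,u_j^{-1}G^{\sf out}]$ yields $({\sf Get}_1^\sym\circ B)\circ_j\alpha$ together with the $\pi_j$-truncated out-part of the bracket. By \eqref{getzler1-twist} the main term is $-T_2+T_4$ sewn at $j$, so $T_2$ cancels against $-[\partial,H^{\sf out}]$; the sign in front of $H^{\sf out}$ in \eqref{construction-T} is chosen for exactly this purpose.

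The $T_1$ terms and the $T_4$ terms do not cancel among the sewing operators. After sewing, $T_1$ and $T_4$ produce graphs in which $Q$ is a white vertex sitting in the interior of a cycle or at a former white vertex of $\alpha$. We will match these against $[\partial,\mathcal{T}^{\sf v}]$. Turning a black vertex into $Q$ does not commute with $\partial$: the expansions of a black vertex into two black vertices, followed by whitening one of them, give the same graphs as whitening and then expanding the white $Q$ into black plus white, except for the boundary strata where the white $Q$ is expanded next to a starting leaf or starting half-edge. Those strata are exactly the graphs $\alpha\,{}_i\circ T_1$ (at cycles) and $T_4\circ_j\alpha$ (at white vertices). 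Because $Q$ is unframed, no starting-half-edge choice appears in these strata.

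We expect the main obstacle to be this last identification together with the sign bookkeeping. This includes the orientations induced on $T_1,\dots,T_4$, the $\underline{\sgn}$ twist, and the sign $(-1)^{|\beta|(k'-r)}$ and the prefactor $(-1)^{|\alpha|}$ in the bracket. We will handle the signs by fixing the canonical orientations and checking a single representative stratum of each type. The final step is to check that only $r=1$ brackets appear, which holds because ${\sf Get}_1^\sym$ has one input and one non-$Q$ output. With that, the leftover truncation terms sum exactly to $-\{\alpha,f(\hcV_{0,1,2})\}$, proving \eqref{commutator-eth-T}.
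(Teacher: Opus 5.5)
Your overall route is the paper's: expand, discard the commutators killed by the preceding lemma, use \eqref{getzler0-boundary}, \eqref{twist-getzler1} and \eqref{getzler1-twist}, identify the $a_j=0$ and $b_j=0$ truncations with the two halves of $\{\alpha,f(\hcV_{0,1,2})\}_1$, and absorb the rest into $[\partial,\mathcal{T}^{\sf v}]$. The truncation/bracket identification is fine. The circle-action step, however, is miscomputed.

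In $[w_jB^c_j,w_j^{-1}G^{\sf in}]$ the two orderings do not produce the same graph. In $w_jB^c_j\circ G^{\sf in}$ the graph $B$ is sewn onto the cycle of ${\sf Get}_1^\sym$, giving $\alpha\,{}_j\circ({\sf Get}_1^\sym\circ B)$. In $G^{\sf in}\circ w_jB^c_j$ it sits between $\alpha$ and ${\sf Get}_1^\sym$, giving $\delta_{a_j\neq0}\,\alpha\,{}_j\circ(B\circ{\sf Get}_1^\sym)$. If the two graphs agreed, the commutator would consist of the truncation term alone, so ``one main term plus a correction'' is not consistent. In fact
\[
[w_jB^c_j,w_j^{-1}G^{\sf in}](\alpha)=(-1)^{\deg(\alpha)}\Big(\alpha\,{}_j\circ(-T_2+T_4)-\alpha\,{}_j\circ T_3+\delta_{a_j=0}\,\alpha\,{}_j\circ T_3\Big),
\]
and dually, up to overall sign, $[u_jB^w_j,u_j^{-1}G^{\sf out}](\alpha)=T_3\circ_j\alpha-\delta_{b_j\neq0}\,(-T_2+T_4)\circ_j\alpha$. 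So both \eqref{twist-getzler1} and \eqref{getzler1-twist} are needed on \emph{each} side: at inputs and at outputs alike, $T_3$ is cancelled and $T_2$ is traded for $T_4$. With your assignment ($T_3$ only at inputs, $-T_2+T_4$ only at outputs), $\alpha\,{}_i\circ T_2$ and $T_3\circ_j\alpha$ survive, and no $T_4$ appears at inputs. Hence the leftover $(-1)^{\deg(\alpha)}\sum_i\alpha\,{}_i\circ(T_1+T_4)-\sum_j(T_1+T_4)\circ_j\alpha$ that you go on to use does not follow from your own cancellations. (Also, $[wB^c,G^{\sf out}]$ and $[uB^w,G^{\sf in}]$ both vanish by the preceding lemma, so there is nothing to compare there.)

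The second gap is your description of $[\partial,\mathcal{T}^{\sf v}]$. Its failure to vanish has nothing to do with starting leaves ($Q$ is unframed). It comes from a valence mismatch, plus a family you omit.
\begin{itemize}
\item At a black vertex, $\mathcal{T}^{\sf v}\partial$ only creates a $Q$ of valence $\ge3$. Expanding the white $Q$ in $\partial\mathcal{T}^{\sf v}$ also allows a white piece of valence $1$ or $2$: a univalent $Q$ hung at a corner of a black vertex, or a bivalent $Q$ inserted on an edge or leaf there.
\item At a framed white vertex $v$ of $\alpha$, $\mathcal{T}^{\sf v}\partial$ also whitens the black vertex just created by expanding $v$. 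This produces $v$ joined to a $Q$ of valence $\ge3$, with the framing of $v$ kept or moved onto the new edge. These terms have no counterpart in $\partial\mathcal{T}^{\sf v}$.
\end{itemize}
The cancellation is then mixed, with all steps up to the signs $(-1)^{\deg(\alpha)}$. First, $-\sum_jT_1\circ_j\alpha$ and $-\sum_jT_4\circ_j\alpha$ split $v_j$ into a framed white vertex and $Q$ (framing kept, respectively moved). They cancel the white-vertex family except for their pieces with ${\sf val}(Q)=1,2$. Next, $\sum_i\alpha\,{}_i\circ T_1$ cancels the univalent-$Q$ terms at black vertices together with the ${\sf val}(Q)=1$ pieces. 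Finally, $\sum_i\alpha\,{}_i\circ T_4$ cancels the bivalent-$Q$ terms together with the ${\sf val}(Q)=2$ pieces. Your pairing uses only $\alpha\,{}_i\circ T_1$ and $T_4\circ_j\alpha$, leaving $\alpha\,{}_i\circ T_4$ and $T_1\circ_j\alpha$ unmatched, so the identity is not yet established.
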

\begin{proof}
We divide the proof into four parts. 

\noindent {\bf (A.)} Since $\mathcal{T}=  \mathcal{T}^{\sf in} + \mathcal{T}^{\sf v} + \mathcal{T}^{\sf out}$, let us start with the commutator $[\eth,\mathcal{T}^{\sf in}]$. We recall that the equivariant boundary map is $\eth=\partial+uB+wB$. Since $\mathcal{T}^{\sf in}$ only involves cycles, it follows that $[uB,\mathcal{T}^{\sf in}]=0$. It remains to compute $[\partial,\mathcal{T}^{\sf in}]$ and $[wB,\mathcal{T}^{\sf in}]$. From the definition of $\mathcal{T}^{\sf in}$ in Equation~\eqref{construction-T}, we have 
\[ [\partial,\mathcal{T}^{\sf in}] (\alpha) = (-1)^{\deg(\alpha)}\sum_{i=1}^k \alpha\;_i\circ \partial {\sf Get}_0^\sym   = (-1)^{\deg(\alpha)}\sum_{i=1}^k \alpha\;_i\circ (T_1+T_2+T_3).\] 
The commutator $[wB,\mathcal{T}^{\sf in}](\alpha)$ consists of two parts:
\begin{align*}
	(A1)&:= (-1)^{\deg(\alpha)} \sum_{i=1}^k (\alpha \;_i\circ {\sf Get}_1^\sym) \;_i \circ B=(-1)^{\deg(\alpha)}\sum_{i=1}^k \alpha \;_i\circ ({\sf Get}_1^\sym \circ B);\\
	(A2)&:=-(-1)^{\deg(\alpha)}\sum_{i=1}^k \delta_{a_i\neq 1}\;(\alpha \;_i\circ B) \;_i\circ {\sf Get}_1^\sym =-(-1)^{\deg(\alpha)}\sum_{i=1}^k \delta_{a_i\neq 0}\; \alpha \;_i\circ (B \circ {\sf Get}_1^\sym). 
\end{align*}
In the second equation, the ``delta" function $\delta_{a_i\neq 0}$ is equal to $1$ if $a_i\neq 0$, otherwise it is zero. And $B$ is the circle graph in Equation~\eqref{graph:circle}. Using Equation~\eqref{twist-getzler1}, we observe that the term $(A2)$ almost cancel with $(-1)^{\deg(\alpha)}\sum_{i=1}^k \alpha\;_i\circ T_3$ in $[\partial,\mathcal{T}^{\sf in}]$, with the exception when the power $a_i=0$, i.e.,
\[ (A2) + (-1)^{\deg(\alpha)}\sum_{i=1}^k \alpha\;_i\circ T_3 =(-1)^{\deg(\alpha)} \sum_{i=1}^k \delta_{a_i=0} \; \alpha \;_i\circ (B \circ {\sf Get}_1^\sym). \]
Observe that the right-hand side also appears in the bracket $\{\alpha, %\widehat{\mathcal{W}}_{0,1,2} 
f(\hcV_{0,1,2})\}=\{\alpha, {\sf Get}_1^\sym\}$ with an opposite sign. Indeed, by Equation~\eqref{eq:def-r-bracket}, the term $\sum_{i=1}^k \delta_{a_i=0} \; \alpha \;_i\circ (B \circ {\sf Get}_1^\sym)$ appears with a sign $(-1)^{|\alpha|}(-1)^{|{\sf Get}_1^\sym|(k_\alpha-1)}=-(-1)^{|\alpha|+k_\alpha}=-(-1)^{\deg(\alpha)}$, as desired.

On the other hand, using Equation~\eqref{getzler1-twist}, we have 
\[ (A1)+ (-1)^{\deg(\alpha)}\sum_{i=1}^k \alpha\;_i\circ T_2 = (-1)^{\deg(\alpha)}\sum_{i=1}^k \alpha\;_i\circ T_4.\]
Putting these equations together yields
\[ [\eth,\mathcal{T}^{\sf in}] (\alpha)= (-1)^{\deg(\alpha)}\sum_{i=1}^k \alpha\;_i\circ (T_1+T_4) +  (-1)^{\deg(\alpha)}\sum_{i=1}^k \delta_{a_i=0} \; \alpha \;_i\circ (B \circ %\widehat{\mathcal{W}}_{0,1,2}
f(\hcV_{0,1,2})).\]

\noindent {\bf (B.)} A similar analysis in the case of $[\eth, \mathcal{T}^{\sf out}]$ yields the equation
\[ [\eth,\mathcal{T}^{\sf out}](\alpha) = \sum_{j=1}^l -(T_1+T_4) \circ_j \alpha + \sum_{j=1}^l \delta_{b_j=0}\; (%\widehat{\mathcal{W}}_{0,1,2}
f(\hcV_{0,1,2}))\circ B)\circ_j \alpha.\]
The negative sign in $\sum_{j=1}^l -(T_1+T_4) \circ_j \alpha$ is due to the negative sign in $H^{\sf out}$ in the Equation~\eqref{construction-T}.

\noindent {\bf (C.)} In this part, let us consider the commutator $[\eth, \mathcal{T}^{\sf v}]$.  We split it into two types of terms depending on it acting at a white vertex or a black vertex. At a black vertex, the commutator $[\eth, \mathcal{T}^{\sf v}]$ is by splitting out a white vertex of valence $1$ or $2$:
\[  \begin{tikzpicture}[baseline={([yshift=-0.5ex]current bounding
		box.center)},scale=0.4] 
	\draw [thick] (-6,-0) to (-3,0);
	\draw [thick] (-4,2) to (-3,0);
	\draw [thick] (-4,-2) to (-3,0);
	\draw [thick] (-1,2) to (-3,0);
	\draw [thick] (-1,-2) to (-3,0);
	\draw [thick] [|->] (1,0) to (3,0);
	\node at (2,.8) {$(C1)$};
	\draw [thick] (9,0) circle [radius=0.2];
	\draw [thick] (4,-0) to (7,0);
	\draw [thick] (6,2) to (7,0);
	\draw [thick] (6,-2) to (7,0);
	\draw [thick] (9,2) to (7,0);
	\draw [thick] (9,-2) to (7,0);
	\draw [thick] (8.8,0) to (7,0);
	\draw (9,0) node[label=right:{Q}] {};
\end{tikzpicture}\]
\[  \begin{tikzpicture}[baseline={([yshift=-0.5ex]current bounding
		box.center)},scale=0.4] 
	\draw [thick] (-6,-0) to (-3,0);
	\draw [thick] (-4,2) to (-3,0);
	\draw [thick] (-4,-2) to (-3,0);
	\draw [thick] (-1,2) to (-3,0);
	\draw [thick] (-1,-2) to (-3,0);
	\draw [thick] [|->] (1,0) to (3,0);
	\node at (2,.8) {$(C2)$};
	\draw [thick] (8.5,1.5) circle [radius=0.2];
	\draw [thick] (4,-0) to (7,0);
	\draw [thick] (6,2) to (7,0);
	\draw [thick] (6,-2) to (7,0);
	\draw [thick] (10,3) to (8.7,1.7);
	\draw [thick] (9,-2) to (7,0);
	\draw [thick] (8.3,1.3) to (7,0);
	\draw (8.5,1) node[label=right:{Q}] {};
\end{tikzpicture}\]
At a white vertex, we have $[\eth, \mathcal{T}^{\sf v}]$ acts by
\[  \begin{tikzpicture}[baseline={([yshift=-0.5ex]current bounding
		box.center)},scale=0.4] 
	\draw [thick] (-3,0) circle [radius=0.2];
	\draw [thick] (-6,-0) to (-3.2,0);
	\draw [thick] (-4,2) to (-3.2,0);
	\draw [thick] (-4,-2) to (-3.2,0);
	\draw [thick] (-1,2) to (-2.8,0);
	\draw [thick] (-1,-2) to (-2.8,0);
	\draw [line width=2.4pt] (-4.2,0) to (-3.2,0);
	\draw [thick] [|->] (.5,0) to (2,0);
	\node at (1.5,.8) {$(C3)$};
	\draw [thick] (7,0) circle [radius=0.2];
	\draw [thick] (4,0) to (6.8,0);
	\draw [thick] (6,2) to (6.8,0);
	\draw [thick] (6,-2) to (6.8,0);
	\draw [line width=2.4pt] (5.8,0) to (6.8,0);
	\draw [thick] (7.2,0) to (8.8,0);
	\draw [thick] (9,0) circle [radius=0.2];
	\draw [thick] (9.2,0) to (11,2);
	\draw [thick] (9.2,0) to (10,-2);
	\draw (9,0) node[label=right:{$Q$}] {};
\end{tikzpicture} \mbox{\;\;\;\;with\;\;\;} {\sf val}(Q)\geq 3.\]
\[  \begin{tikzpicture}[baseline={([yshift=-0.5ex]current bounding
		box.center)},scale=0.4] 
	\draw [thick] (-3,0) circle [radius=0.2];
	\draw [thick] (-6,-0) to (-3.2,0);
	\draw [thick] (-4,2) to (-3.2,0);
	\draw [thick] (-4,-2) to (-3.2,0);
	\draw [thick] (-1,2) to (-2.8,0);
	\draw [thick] (-1,-2) to (-2.8,0);
	\draw [line width=2.4pt] (-4.2,0) to (-3.2,0);
	\draw [thick] [|->] (.5,0) to (2,0);
	\node at (1.5,.8) {$(C4)$};
	\draw [thick] (7,0) circle [radius=0.2];
	\draw [thick] (4,0) to (6.8,0);
	\draw [thick] (6,2) to (6.8,0);
	\draw [thick] (6,-2) to (6.8,0);
	\draw [line width=2.4pt] (9.5,0) to (10.8,0);
	\draw [thick] (7.2,0) to (10,0);
	\draw [thick] (11,0) circle [radius=0.2];
	\draw [thick] (11.2,0) to (13,2);
	\draw [thick] (11.2,0) to (12,-2);
	\draw (8,0) node[label=above:{$Q$}] {};
\end{tikzpicture} \mbox{\;\;\;\;with\;\;\;} {\sf val}(Q)\geq 3.\]

\noindent{\bf (D.)} Putting the commutators above together yields
\begin{align*}
	& [\eth,\mathcal{T}] (\alpha) + \{ \alpha, %\widehat{\mathcal{W}}_{0,1,2} 
    f(\hcV_{0,1,2})
    \} \\
	= & (-1)^{\deg(\alpha)}\sum_{i=1}^k \alpha\;_i\circ (T_1+T_4) - \sum_{j=1}^l (T_1+T_4) \circ_j \alpha +(C1)+(C2)+(C3)+(C4)\\
	= & 0.
\end{align*}
The cancellation in the last equality is mixed. We only sketch some of the details. First, we observe that both the sum $ -\sum_{j=1}^l T_1 \circ_j \alpha+ (C3)$ and $ -\sum_{j=1}^l T_4 \circ_j \alpha+ (C4)$ are almost zero except the terms with ${\sf val}(Q)=1,2$ are missing in $(C3)$ and $(C4)$. Denote these two missing terms by $(D1)$ and $(D2)$ (both are part of $ -\sum_{j=1}^l T_1 \circ_j \alpha$) which are depicted as in the following pictures.
\[  \begin{tikzpicture}[baseline={([yshift=-0.5ex]current bounding
		box.center)},scale=0.4] 
	\draw [thick] (-3,0) circle [radius=0.2];
	\draw [thick] (-6,-0) to (-3.2,0);
	\draw [thick] (-4,2) to (-3.2,0);
	\draw [thick] (-4,-2) to (-3.2,0);
	\draw [thick] (-1,2) to (-2.8,0);
	\draw [thick] (-1,-2) to (-2.8,0);
	\draw [line width=2.4pt] (-4.2,0) to (-3.2,0);
	\draw [thick] [|->] (.5,0) to (2,0);
	\node at (1.5,.8) {$(D1)$};
	\draw [thick] (7,0) circle [radius=0.2];
	\draw [thick] (4,0) to (6.8,0);
	\draw [thick] (6,2) to (6.8,0);
	\draw [thick] (6,-2) to (6.8,0);
	\draw [line width=2.4pt] (5.8,0) to (6.8,0);
	\draw [thick] (7.2,0) to (8.8,0);
	\draw [thick] (9,0) circle [radius=0.2];
	\draw [thick] (7.2,0) to (9,2);
	\draw [thick] (7.2,0) to (9,-2);
	\draw (9,0) node[label=right:{$Q$}] {};
\end{tikzpicture}\]
\[  \begin{tikzpicture}[baseline={([yshift=-0.5ex]current bounding
		box.center)},scale=0.4] 
	\draw [thick] (-3,0) circle [radius=0.2];
	\draw [thick] (-6,-0) to (-3.2,0);
	\draw [thick] (-4,2) to (-3.2,0);
	\draw [thick] (-4,-2) to (-3.2,0);
	\draw [thick] (-1,2) to (-2.8,0);
	\draw [thick] (-1,-2) to (-2.8,0);
	\draw [line width=2.4pt] (-4.2,0) to (-3.2,0);
	\draw [thick] [|->] (.5,0) to (2,0);
	\node at (1.5,.8) {$(D2)$};
	\draw [thick] (7,0) circle [radius=0.2];
	\draw [thick] (4,0) to (6.8,0);
	\draw [thick] (6,2) to (6.8,0);
	\draw [thick] (6,-2) to (6.8,0);
	\draw [line width=2.4pt] (5.8,0) to (6.8,0);
	\draw [thick] (8.6,1) circle [radius=0.2];
	\draw [thick] (7.2,0) to (8.5,.9);
	\draw [thick] (8.7,1.1) to (10,2);
	\draw [thick] (7.2,0) to (9,-2);
	\draw (9,1.5) node[label=left:{$Q$}] {};
\end{tikzpicture}+ \begin{tikzpicture}[baseline={([yshift=-0.5ex]current bounding
	box.center)},scale=0.4] 
\draw [thick] (-3,0) circle [radius=0.2];
\draw [thick] (-5,0) circle [radius=0.2];
	\draw (-5,0) node[label=above:{$Q$}] {};
\draw [thick] (-7,-0) to (-5.2,0);
\draw [thick] (-4,2) to (-3.2,0);
\draw [thick] (-4,-2) to (-3.2,0);
\draw [thick] (-1,2) to (-2.8,0);
\draw [thick] (-1,-2) to (-2.8,0);
\draw [line width=2.4pt] (-4.2,0) to (-3.2,0);
\draw [thick] (-4.8,0) to (-3.2,0);
\end{tikzpicture}\]
Then, we observe that $(-1)^{\deg(\alpha)}\sum_{i=1}^k \alpha\;_i\circ (T_1)$  cancels with $(C1)+(D1)$, since both are splitting out a uni-valent white vertex in all possible ways at all vertices of $\Gamma$. In the remaining terms,  we observe that $(-1)^{\deg(\alpha)}\sum_{i=1}^k \alpha\;_i\circ (T_4)$ cancels with $(C2)+(D2)$. 
\end{proof}

\begin{Lemma}\label{commutator-lemma-3}
We have
\begin{equation}
\label{commutator-eth-S}
[\iota, \mathcal{T}] (\alpha)+[\eth, \mathcal{S}](\alpha) +\{ \alpha,  %\widehat{\mathcal{W}}_{0,2,1}
f(\hcV_{0,2,1})
\}_1=0.
\end{equation}
\end{Lemma}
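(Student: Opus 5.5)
The plan is to argue exactly as in Lemma~\ref{commutator-lemma-1}: expand each of the three terms of Equation~\eqref{commutator-eth-S} as a sum of explicitly sewn black-and-white graphs, and then match the terms in cancelling pairs. The three operators involved are $\iota$ (sewing with the Mukai graph $M$), $\mathcal{S}$ (sewing with $f(\hcV_{0,2,1})$ and multiplying by $u_j^{-1}$), and $\eth$. The key input is the defining equation of the string vertex,
\[
\eth\hcV_{0,2,1}=-\iota\hcV_{0,1,2}.
\]
Applying the chain map $f$ to it gives $\eth f(\hcV_{0,2,1})=-\iota f(\hcV_{0,1,2})$ in the $Q$-labeled complex. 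I would first check that $f$ commutes with $\partial$ in the relevant range, as was done for $\cF_P$ in Lemma~\ref{lem:commutator-Fu}. With $f(\hcV_{0,1,2})={\sf Get}_1^\sym$, this becomes
\[
\partial f(\hcV_{0,2,1})+wB^c f(\hcV_{0,2,1})=-\iota\,{\sf Get}_1^\sym .
\]

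\textbf{Step 1: expand $[\iota,\mathcal{T}]$.} I split it along $\mathcal{T}=H^{\sf in}-H^{\sf out}+G^{\sf in}+G^{\sf out}+\mathcal{T}^{\sf v}$.
\begin{itemize}
\item $\iota$ sews $M$ at white vertices, $H^{\sf in},G^{\sf in}$ act at cycles, and $\mathcal{T}^{\sf v}$ acts at black vertices. So $[\iota,H^{\sf in}]$ and $[\iota,\mathcal{T}^{\sf v}]$ should vanish, and $[\iota,G^{\sf in}]$ should reduce to the terms in which the new cycle created by $\iota$ (circle parameter $w_0=u_j$) gets sewn with ${\sf Get}_1^\sym$.
\item In $[\iota,G^{\sf out}]$ and $[\iota,H^{\sf out}]$ the surviving terms are those where $M$ is sewn directly to the output of a Getzler graph. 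This produces $u_j^{-1}\,(M\circ{\sf Get}_1^\sym)\circ_j\alpha$ and $(M\circ{\sf Get}_0^\sym)\circ_j\alpha$.
\end{itemize}

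\textbf{Step 2: expand $[\eth,\mathcal{S}]$.} Since $\mathcal{S}$ sews only at white vertices and then multiplies by $u_j^{-1}$, its commutator with $\partial+wB^c$ is $\sum_j u_j^{-1}\,\big(\eth f(\hcV_{0,2,1})\big)\circ_j\alpha$. Substituting the identity above turns this into $-\sum_j u_j^{-1}(\iota{\sf Get}_1^\sym)\circ_j\alpha$, which should cancel the $G^{\sf out}$ term from Step 1.

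The $uB^w$ part is handled like part (A) of Lemma~\ref{commutator-lemma-1}. Commuting $u_jB^w_j$ past multiplication by $u_j^{-1}$ gives $(f(\hcV_{0,2,1})\circ B)\circ_j\alpha$, except when $b_j=0$. The $b_j=0$ boundary terms are precisely $-\{\alpha,f(\hcV_{0,2,1})\}_1$, by the sign bookkeeping in Equation~\eqref{eq:def-r-bracket}, as in part (A). The remaining terms with $b_j\ge 1$ are to be matched against the $H^{\sf out}$ contribution $(\iota{\sf Get}_0^\sym)\circ_j\alpha$ and against the $[\iota,G^{\sf in}]$ terms.

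\textbf{Step 3: the main obstacle.} The hard part is this last matching. One needs a graph-level identity relating three things:
\begin{itemize}
\item $f(\hcV_{0,2,1})\circ B$ (equivalently $B^c$ applied to $f(\hcV_{0,2,1})$ at its sewn cycle),
\item $M\circ{\sf Get}_0^\sym$,
\item $M\circ{\sf Get}_1^\sym$ with the new cycle's parameter.
\end{itemize}
The identity should be in the spirit of Lemma~\ref{lem:boundary-orientation}. I would prove it by drawing the four graphs of Equation~\eqref{eq:W-021} and sewing $B$ at each cycle. The sewing produces the thickened Mukai-type graphs with a $Q$-vertex on a cycle, and I would compare these with $M\circ T$ for the two ${\sf Get}_0^\sym$ graphs.

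The signs and orientations, including the $\underline{\sgn}_2$ twist that was essential in the proof of Equation~\eqref{eq:021}, will be the delicate point. The circle-parameter bookkeeping ($u_j^{-1}$ versus $w_0$) must also be checked carefully. I expect that verifying this identity, and confirming that no stray terms survive when $a_i=0$ or $b_j=0$, is where most of the work lies.
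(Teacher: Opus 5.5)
Your overall plan is the paper's. You use $\partial f(\hcV_{0,2,1})=-\iota f(\hcV_{0,1,2})=-\iota\,{\sf Get}_1^\sym$ for the terms carrying an extra circle parameter, and you obtain the bracket from the $b_j=0$ defect of $\mathcal{S}\,u_jB^w_j$. However, your accounting of which commutators survive, and of what cancels what, is wrong in three places, so the graph identity you aim for in Step~3 is not the one needed.

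(i) $[\iota,H^{\sf in}]$ and $[\iota,\mathcal{T}^{\sf v}]$ do not vanish. The operator $\iota_j$ creates a new cycle (the free cycle of $M$, with parameter $w_0=u_j$) and new black vertices (those of $M$). $H^{\sf in}$ sews ${\sf Get}_0^\sym$ into that new cycle, by exactly the mechanism you correctly invoke for $G^{\sf in}$. $\mathcal{T}^{\sf v}$ can turn a vertex of $M$ into $Q$, just as $\mathcal{T}^{\sf v}(\mathbb{M})$ appears in Lemma~\ref{commutator-lemma-2}.

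(ii) $[\partial,\mathcal{S}](\alpha)=-\sum_j u_j^{-1}(\iota\,{\sf Get}_1^\sym)\circ_j\alpha$ cancels \emph{both} $[\iota,G^{\sf out}]$ and $[\iota,G^{\sf in}]$, not only the former. The element $\iota\,{\sf Get}_1^\sym$ has two inputs. Sewing the cycle of ${\sf Get}_1^\sym$ into the $j$-th white vertex of $\alpha$ gives the $G^{\sf out}$-terms. Sewing the free cycle of $M$ gives, after relabeling the two cycles of $M$, the $G^{\sf in}$-terms. This is the paper's first cancellation, $[\iota,G^{\sf in}+G^{\sf out}]+[\partial,\mathcal{S}]=0$. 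So the $G^{\sf in}$-terms are not available for Step~3, and they could not cancel there anyway. Their free cycle carries $u_j^{-b_j-1}$ and their underlying graphs have degree $\deg\alpha$, whereas every term of $[uB^w+wB^c,\mathcal{S}](\alpha)$ carries $u_j^{-b_j}$ and has degree $\deg\alpha+2$.

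(iii) $wB^c$ does not commute with $\mathcal{S}$. It is true that $\eth f(\hcV_{0,2,1})=\partial f(\hcV_{0,2,1})$, since $f(\hcV_{0,2,1})$ carries no circle parameters. But after multiplying by $u_j^{-1}$, the unsewn cycle of $f(\hcV_{0,2,1})$ has positive power. Hence $wB^c\,\mathcal{S}(\alpha)$ contains $\pm\sum_j u_j^{-b_j}\big(B^c_{\mathrm{unsewn}}f(\hcV_{0,2,1})\big)\circ_j\alpha$ for every $b_j\ge 0$, and nothing in $\mathcal{S}\,wB^c(\alpha)$ cancels it (compare term (A1) in Lemma~\ref{commutator-lemma-1}). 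Your formula $[\partial+wB^c,\mathcal{S}](\alpha)=\sum_j u_j^{-1}(\eth f(\hcV_{0,2,1}))\circ_j\alpha$ drops this term.

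The grouping that actually closes, and the one the paper uses, is $[\iota,H^{\sf in}+H^{\sf out}+\mathcal{T}^{\sf v}]$ against $[uB^w+wB^c,\mathcal{S}]$. At graph level you must check the following, all at circle power $u_j^{-b_j}$. The twists of $f(\hcV_{0,2,1})$ at its sewn cycle (from $-\mathcal{S}\,u_jB^w_j$) and at its unsewn cycle (from $wB^c\,\mathcal{S}$) should cancel against two kinds of term: ${\sf Get}_0^\sym$ sewn into either cycle of $M$, and $M$ with one of its black vertices turned into $Q$. At $b_j=0$ the operator $\mathcal{S}\,u_jB^w_j$ vanishes, and that defect is what the term $\{\alpha,f(\hcV_{0,2,1})\}_1$ compensates. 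No ${\sf Get}_1^\sym$-term enters this identity, so the relation among $f(\hcV_{0,2,1})\circ B$, $M\circ{\sf Get}_0^\sym$ and $M\circ{\sf Get}_1^\sym$ that you propose in Step~3 is the wrong target. Without $[\iota,H^{\sf in}]$, $[\iota,\mathcal{T}^{\sf v}]$ and the unsewn-cycle twist, your term-by-term cancellation cannot close.
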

\begin{proof}
By the defining property of $%\widehat{\mathcal{W}}_{0,2,1}
f(\hcV_{0,2,1})
$ we have
\begin{equation}\label{eq:defining-w021} 
\partial %\widehat{\mathcal{W}}_{0,2,1} 
f(\hcV_{0,2,1})= - \iota %\widehat{\mathcal{W}}_{0,1,2}
f(\hcV_{0,1,2}).\end{equation}
This equation implies that
\[ [\iota, G^{\sf in} + G^{\sf out}]+ [\partial,\mathcal{S}]=0,\]
since we have ${\sf Get}_1^\sym
%= \widehat{\mathcal{W}}_{0,1,2} 
= f(\hcV_{0,1,2})$. The remaining terms, $[\iota, H^{\sf in}+H^{\sf out}+\mathcal{T}^{\sf v}]$ and $[uB+wB,\mathcal{S}]$ almost cancel each other, except in the case when the circle parameter has power that is equal to zero, which is precisely twisted sewing with 
%$\widehat{\mathcal{W}}_{0,2,1}$
$f(\hcV_{0,2,1})$.
\end{proof}

\begin{Lemma}\label{commutator-lemma-2}
Let $\{-,-\}_2$ be the second twisted sewing operation involving two cycles and two white outgoing vertices.
We have 
\begin{equation}
\label{commutator-T-Delta}
[\mathcal{T}, \Delta] (\alpha) =\{ \alpha, %\widehat{\mathcal{W}}_{0,2,1}
f(\hcV_{0,2,1})
\}_2 
\end{equation}
%Note that the Lie bracket is taken in the semi-direct product DGLA $\widehat{\mathfrak{g}}\ltimes \widehat{\mathfrak{m}}$.
\end{Lemma}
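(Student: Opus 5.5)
The plan is to expand $[\mathcal{T},\Delta]=\mathcal{T}\Delta-\Delta\mathcal{T}$ using the decomposition $\mathcal{T}=H^{\sf in}-H^{\sf out}+G^{\sf in}+G^{\sf out}+\mathcal{T}^{\sf v}$ of~\eqref{construction-T}, and to treat each summand separately. Recall that $\Delta=\sum_{i<j}\mathbb{M}\circ_{\{i,j\}}\pi_{ij}$ sews two white vertices of $\alpha$ into the two cycles of the thickened Mukai graph, after killing their circle parameters. The distinguished vertex $Q$ is never among the sewn vertices, since it is unframed and $\Delta$ only acts on the framed outputs.

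\textbf{Summands that commute with $\Delta$.} For $H^{\sf in}$, $G^{\sf in}$ and $\mathcal{T}^{\sf v}$, the operations act on disjoint parts of the graph: $\mathcal{T}^{\sf in}$ acts on cycles, $\Delta$ on white vertices, and $\mathcal{T}^{\sf v}$ on black vertices. One subtlety is that $\Delta$ creates new vertices, namely the black vertices of $\mathbb{M}$ once sewing is done, and these then become available to $\mathcal{T}^{\sf v}$. I would check that $\mathcal{T}^{\sf v}$ applied to the glued-in trivalent vertex of $\mathbb{M}$ reproduces exactly the terms that $\Delta\mathcal{T}$ does not produce. The expected outcome is that these contributions pair with part of the $\mathcal{T}^{\sf out}$ terms below rather than cancelling outright. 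The same caveat applies to cycles: $\Delta$ does not change the set of incoming cycles, so $[\mathcal{T}^{\sf in},\Delta]=0$ directly.

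\textbf{The $\mathcal{T}^{\sf out}$ summands.} The essential term is $[\mathcal{T}^{\sf out},\Delta]$. In $\mathcal{T}^{\sf out}\Delta(\alpha)$, the Getzler graphs are sewn only onto the white vertices surviving $\Delta$. In $\Delta\mathcal{T}^{\sf out}(\alpha)$, there is an additional family of terms, in which a Getzler graph is first sewn at a white vertex $j$ and then $\Delta$ pairs the resulting new framed white vertex of ${\sf Get}^\sym$ with another white vertex $i$ of $\alpha$. There are two cases.

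\emph{The ${\sf Get}_0^\sym$ contribution.} Here one uses $\pi_{ij}$ together with the fact that ${\sf Get}_0^\sym$'s framed white vertex is adjacent to its starting cycle leaf. These terms should combine, through the orientation conventions of Lemma~\ref{lem:boundary-orientation}, with the $\mathcal{T}^{\sf v}$ terms on $\mathbb{M}$ to give zero.

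\emph{The ${\sf Get}_1^\sym$ contribution.} Here the factor $u_j^{-1}$ is killed by $\pi$ unless one rewrites, so the surviving configuration is: two outputs $i,j$ of $\alpha$ sewn via $B^{\otimes 2}$ into a genus-zero graph with two cycles and one unframed vertex $Q$. I would identify this configuration, summand by summand, with $f(\hcV_{0,2,1})$ as listed in~\eqref{eq:W-021}. The $\frac14$ terms come from ${\sf Get}_1^\sym$ glued into $\mathbb{M}$ at its white vertices. The $\frac12$ terms come from $\mathcal{T}^{\sf v}$ at a black vertex of $\mathbb{M}$.

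Comparing with the definition~\eqref{eq:def-r-bracket} for $r=2$, $\alpha\underset{[2]}{\circ}f(\hcV_{0,2,1})$ carries the factor $1/2!$ and a sum over ordered pairs. This matches the sum over unordered pairs $i<j$ in $\Delta$ with both orderings accounted for, and the other half of the bracket vanishes because $f(\hcV_{0,2,1})$ has no framed outputs. This yields $\{\alpha,f(\hcV_{0,2,1})\}_2$.

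\textbf{Main obstacle.} The hard part is this bookkeeping: matching the coefficients $\frac14,\frac12$ and the orientations/signs, including the $\underline{\sgn}_2$ twist on the two new inputs and the prefactor $(-1)^{|\alpha|}(-1)^{|\beta|(k'-r)}$. My approach would be to first verify the identity on the single test case $\alpha=\hcV_{0,1,2}$ with all circle parameters zero, fix the signs there, and then argue that the general case is local near the sewn pair, so the same signs propagate.
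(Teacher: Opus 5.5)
Your decomposition of $\mathcal{T}$ is reasonable, but the bookkeeping of which terms survive is inverted, so the identification you propose cannot be carried out.

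\textbf{The cross terms vanish.} The terms you look for in $\Delta\mathcal{T}^{\sf out}(\alpha)$ are all zero.
\begin{itemize}
\item For $G^{\sf out}$, the new framed output carries $u_j^{-1}$, and $\pi_{ij}$ kills it outright; there is no rewriting that saves it.
\item For $H^{\sf out}$, the cross term sews the framed white vertex of ${\sf Get}_0^\sym$ into a cycle of $\mathbb{M}$. That white vertex has a starting \emph{leaf}, whereas both starting leaves of $\mathbb{M}$ sit at its single black vertex, which is $4$-valent rather than trivalent. By sewing rule (b2) the result is zero. Equivalently, ${\sf Get}_0^\sym$ is killed by twisting with $B$, as used in Lemma~\ref{commutator-lemma-4}.
\end{itemize}
So nothing is available to cancel the terms in which $\mathcal{T}^{\sf v}$ hits the black vertex of $\mathbb{M}$. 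Since all other terms of $\mathcal{T}\Delta$ and $\Delta\mathcal{T}$ match, these terms are the entire commutator:
\[ [\mathcal{T},\Delta](\alpha)=\sum_{1\le i<j\le l}\mathcal{T}^{\sf v}(\mathbb{M})\circ_{\{i,j\}}\pi_{ij}(\alpha). \]
Your attributions also cannot be realized. $\mathbb{M}$ has no white vertices into which ${\sf Get}_1^\sym$ could be glued. The $\mathcal{T}^{\sf v}$-terms produce $\mathcal{T}^{\sf v}(\mathbb{M})$, whose vertex $Q$ is $4$-valent, and this is not one of the graphs in~\eqref{eq:W-021}.

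\textbf{The missing idea is on the bracket side.} First, the surviving half of $\{\alpha,f(\hcV_{0,2,1})\}_2$ is $f(\hcV_{0,2,1})\underset{[2]}{\circ}\alpha$. By~\eqref{eq:def-r-bracket}, $\alpha\underset{[2]}{\circ}\beta$ sews outputs of $\beta$ into inputs of $\alpha$, so $\alpha\underset{[2]}{\circ}f(\hcV_{0,2,1})$ is the half that vanishes, for exactly the reason you give; you have the two halves swapped.

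Second, the surviving half inserts the circle graph $B$ at \emph{both} inputs of $f(\hcV_{0,2,1})$. So $\mathcal{T}^{\sf v}(\mathbb{M})$ must be compared with $B_1B_2\,f(\hcV_{0,2,1})$, not with the summands of $f(\hcV_{0,2,1})$ one by one. This is a short computation:
\begin{itemize}
\item In the two $\frac12$-graphs of~\eqref{eq:W-021}, the outer starting leaf sits at a $4$-valent black vertex, so twisting kills them.
\item In each $\frac14$-graph, twisting both cycles moves both starting leaves. The only nonvanishing configuration places both leaves on $Q$, which gives precisely the graph $\mathcal{T}^{\sf v}(\mathbb{M})$.
\end{itemize}
The identification $B_1B_2 f(\hcV_{0,2,1})=\mathcal{T}^{\sf v}(\mathbb{M})$ is the actual content of the proof. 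Fixing signs on the single test case $\alpha=\hcV_{0,1,2}$ would not have repaired the misattribution of terms.
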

\begin{proof}
The twisted self-sewing operator $\Delta$ defined in Equation~\eqref{eq:twisted-self-sewing} is by sewing with the thickened Mukai graph~\eqref{graph:thick-mukai}. Thus, form this we may observe that the commutator $[\mathcal{T},\Delta](\alpha)$ is from the composition $\mathcal{T}^{\sf v} \Delta (\alpha)$ when the operator $\mathcal{T}^{\sf v}$ is applied to the black vertex in the thickened Mukai graph. In other words,  we have 
\[ [\mathcal{T},\Delta](\alpha) = \sum_{1\leq i<j \leq l}  \mathcal{T}^{\sf v}(\mathbb{M})\circ_{\{i,j\}} \alpha,\]
where $ \mathcal{T}^{\sf v}(\mathbb{M})$ is given by the following graph.
\[\mathcal{T}^{\sf v}(\mathbb{M})=\begin{tikzpicture}[baseline={([yshift=-.4ex]current bounding box.center)},scale=0.3]
\draw (2,0) node[cross=2pt,label=above:{}] {};
\draw (4.4,0) node[cross=2pt,label=above:{}] {};
\draw [thick] (3.4,0) to (4.4,0);
\draw [thick] (2,0) to (3,0);
\draw [thick] (5.2,0) + (-175:2) arc(-175:175:2);
\draw [thick] (3.2,0) circle (.2);
\draw (3,0) node[label=above:{$Q$}] {};
\end{tikzpicture}\]
To finish the proof, note that in the twisted sewing operation $\{-,-\}_2$, we need to apply the two circle operators corresponding to the two input cycles before sewing with $\alpha$. That is, we compute $B_1B_2 (
%\widehat{\mathcal{W}}_{0,2,1}
f(\hcV_{0,2,1}))
$, with $
%\widehat{\mathcal{W}}_{0,2,1}
f(\hcV_{0,2,1})$ as in Equation~\eqref{eq:W-021}. Computing $B_1B_2 (
%\widehat{\mathcal{W}}_{0,2,1}
f(\hcV_{0,2,1}))$ yields precisely $\mathcal{T}^{\sf v}(\mathbb{M})$ in the picture above.
\end{proof}

\begin{Lemma}\label{commutator-lemma-4}
For each $r\geq 1$, both $\mathcal{T}$ and $\mathcal{S}$ are derivations of $\{-,-\}_r$ defined in Equation~\eqref{eq:def-r-bracket}.
\end{Lemma}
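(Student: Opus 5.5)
We want to show that for $\mathcal{O}\in\{\mathcal{T},\mathcal{S}\}$ and any $\alpha,\beta$,
\[\mathcal{O}\{\alpha,\beta\}_r=\{\mathcal{O}\alpha,\beta\}_r+(-1)^{|\alpha|}\{\alpha,\mathcal{O}\beta\}_r.\]
Here $\{\alpha,\mathcal{O}\beta\}_r$ means the module action of $\hg$ on the $Q$-labeled complex, i.e.\ sewing only along inputs and outputs not labeled by $Q$. The degree of $\mathcal{O}$ is two, so no extra sign appears from $\mathcal{O}$ itself. The plan is to reduce to the component operations $\alpha\underset{[r]}{\circ}\beta$ of Equation~\eqref{eq:def-r-bracket}, and to show that each summand of $\mathcal{O}$ is a derivation of $\underset{[r]}{\circ}$ for fixed $I,J$. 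The antisymmetrized bracket then inherits the property.

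\textbf{The operator $\mathcal{S}$.} For $\mathcal{S}$, note that $\alpha\underset{[r]}{\circ}\beta$ has outputs equal to the disjoint union of the unsewn outputs of $\alpha$ and of $\beta$. Since $\mathcal{S}$ is a sum over all outputs $j$ of sewing $f(\hcV_{0,2,1})$ at $j$, the sum splits according to whether $j$ comes from $\alpha$ or from $\beta$. Associativity of the PROP composition~\cite[Lemma 2.5]{WahWes} lets us move the sewing with $f(\hcV_{0,2,1})$ past the composition ${}_J\circ B^{\otimes r}\circ_I$. The circle parameter $u_j^{-1}$ rides along, because $\pi_I,\pi_J$ only affect the sewn legs. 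The new input created by $\mathcal{S}$ is never among the sewn ones, so it does not interfere with $I$ or $J$.

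\textbf{The operator $\mathcal{T}$: outputs, inputs and vertices.} For $\mathcal{T}$ the same argument handles $H^{\sf out},G^{\sf out}$ (sums over outputs) and $H^{\sf in},G^{\sf in}$ (sums over inputs). The operator $\mathcal{T}^{\sf v}$ is a sum over black vertices. Black vertices of the glued graph are exactly those of $\alpha$ and of $\beta$: sewing via $B$ only adds the edges coming from the circle graph, which has no black vertex.

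\textbf{Main obstacle: the sewn legs.} The real difficulty is the inputs of $\alpha$ in $J$ and the outputs of $\beta$ in $I$. Operations there appear in $\{\mathcal{O}\alpha,\beta\}$ and $\{\alpha,\mathcal{O}\beta\}$ but not in $\mathcal{O}\{\alpha,\beta\}$, so these terms must cancel pairwise. I would check the following.
\begin{itemize}
\item[(i)] $G$-terms. The factor $\pi_J$ (resp.\ $\pi_I$) sets the relevant $w_j^{-1}$ (resp.\ $u_j^{-1}$) to zero, so $G^{\sf in}$ on a leg in $J$ and $G^{\sf out}$ on a leg in $I$ are killed by the projections.
\item[(ii)] $H$-terms. $H^{\sf in}$ at a cycle $j\in J$ of $\alpha$ gives $\alpha\,{}_j\circ({\sf Get}_0^\sym)\circ B\circ\beta$. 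The term $-H^{\sf out}$ at the matching white vertex $i\in I$ of $\beta$ gives $\alpha\,{}_j\circ B\circ{\sf Get}_0^\sym\circ\beta$. I would verify the graph identity ${\sf Get}_0^\sym\circ B=B\circ{\sf Get}_0^\sym$ directly from the pictures~\eqref{graph:getzler1} and~\eqref{graph:circle}, in the same spirit as Lemma~\ref{lem:boundary-orientation}. Both sides should be the ``sum of the two framing positions'' graph with a $Q$-vertex on the connecting edge. The relative minus sign in $\mathcal{T}=H^{\sf in}-H^{\sf out}+\cdots$ is presumably designed exactly for this cancellation.
\end{itemize}

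Finally, I would confirm the Koszul signs $(-1)^{|\beta|(k'-r)}$ and $(-1)^{|\alpha|}$ in~\eqref{eq:def-bracket-hbar}. The leftover combinatorial prefactors and the $\underline{\sgn}$ twist are unchanged, since $\mathcal{T}$ adds no inputs and $\mathcal{S}$ adds an input that is not sewn. The expected outcome is that the degree-two operators commute with the odd sewing maps without extra sign, which is consistent with how these identities are used in the paper.
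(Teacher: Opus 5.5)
Your skeleton matches the paper's proof. You split $\mathcal{T}$ into $H^{\sf in},H^{\sf out},G^{\sf in},G^{\sf out},\mathcal{T}^{\sf v}$, let everything away from the sewn legs commute with $\underset{[r]}{\circ}$, and dispose of $G^{\sf in}$, $G^{\sf out}$ and the new input of $\mathcal{S}$ on sewn legs via $\pi_I,\pi_J$, because they carry a nonzero circle power.

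The problem is step (ii). You expect ${\sf Get}_0^\sym\circ B$ and $B\circ{\sf Get}_0^\sym$ to be the same nonzero graph, so that $H^{\sf in}$ on a cycle in $J$ cancels $-H^{\sf out}$ on the matching vertex in $I$. That is not the mechanism: both composites are \emph{zero}, which is exactly what the paper uses (``${\sf Get}_0^\sym$ is in the kernel of twisting'').
\begin{itemize}
\item[(a)] $B\circ{\sf Get}_0^\sym=0$. The output white vertex of ${\sf Get}_0^\sym$ is framed by a leaf, so its output chain starts with $\bone_A$ (compare $B\{\tfrac{dm}{dt}\}(a_0|\cdots)=\bone_A|\cdots$). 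The starting leaf of the cycle of $B$ sits on a bivalent white vertex, not a trivalent black one. So sewing gives zero by rule (b2), i.e.\ $B\circ B\{\tfrac{dm}{dt}\}=0$ on the reduced complex.
\item[(b)] ${\sf Get}_0^\sym\circ B=0$. Here $B$ feeds a chain $\bone_A|\cdots$ into the cycle of ${\sf Get}_0^\sym$, whose starting leaf occupies a non-initial slot of a white vertex. The unit lands in a non-initial position, so again the result is zero, i.e.\ $B\{\tfrac{dm}{dt}\}\circ B=0$.
\end{itemize}
This is the difference from Lemma~\ref{lem:boundary-orientation}. There the non-$Q$ output of ${\sf Get}_1^\sym$ is framed by an edge and its cycle starts at a trivalent black vertex, which is why $B\circ{\sf Get}_1^\sym$ and ${\sf Get}_1^\sym\circ B$ survive.

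So each sewn-leg $H$-term vanishes on its own. No pairing of an $\alpha$-term with a $\beta$-term and no sign comparison is needed. The minus sign in front of $H^{\sf out}$ plays no role here; it is needed in Lemma~\ref{commutator-lemma-1}, where $-\sum_j(T_1+T_4)\circ_j\alpha$ cancels against $(C1)$--$(C4)$. Your identity ${\sf Get}_0^\sym\circ B=B\circ{\sf Get}_0^\sym$ holds only as $0=0$. The argument survives once you actually do the check, but your predicted picture is wrong, and the real ingredient is missing: annihilation by twisting on both sides.

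There are two smaller inaccuracies.
\begin{itemize}
\item[(1)] The black vertices of $\pi_J(\alpha)\,{}_J\circ B^{\otimes r}\circ_I\pi_I(\beta)$ are not exactly those of $\alpha$ and $\beta$. Every output of $B$ begins with the unit, so rule (b2) erases a trivalent black vertex of $\alpha$ that carries the starting leaf of a sewn cycle. $\mathcal{T}^{\sf v}$ is still a derivation: the term of $\{\mathcal{T}^{\sf v}\alpha,\beta\}_r$ that turns such a vertex into $Q$ dies on sewing, since the starting leaf then sits on the white vertex $Q$. That is the actual argument, rather than ``the black vertices of the glued graph are exactly those of $\alpha$ and $\beta$''.
\item[(2)] For $\mathcal{S}\alpha$ in the first slot, the number of inputs goes from $k'$ to $k'+1$, so the prefactor $(-1)^{|\beta|(k'-r)}$ does change. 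It must be matched against the Koszul sign of the new $\underline{\sgn}$-twisted input rather than declared unchanged. The paper does not track signs here either.
\end{itemize}
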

\begin{proof}
    Let us first consider the operator $\mathcal{T}=\mathcal{T}^{\sf in}+\mathcal{T}^{\sf out}+\mathcal{T}^{\sf v}$. Recall the $r$-th bracket performs twisted sewing along $r$ cycles and white vertices only when both are with the circle power equal to zero.  Let us continue to use notation in the proof of Lemma~\ref{commutator-lemma-3} above. We see that both $H^{\sf in}$ and $H^{\sf out}$ are derivations of $\{-,-\}_r$ because the Getzler graph ${\sf Get}_0^\sym$ is in the kernel of twisting (i.e., pre-composing or post-composing with the circle graph $B$~\eqref{graph:circle}). On the other hand, both $G^{\sf in}$ and $G^{\sf out}$ are also derivations because they are both endowed with non-zero circle powers. And the operator $\mathcal{T}^{\sf v}$ is a derivation as by definition of $\{-,-\}_r$, the distinguished white vertex is never sewed with any cycles. Finally, the other operation $\mathcal{S}$ is a derivation again because it is endowed with non-zero circle power.
\end{proof}

\subsection{A partial recursion property of combinatorial string vertices}

Putting the moduli spaces in Section~\ref{sec-forget-framing} together yields
\begin{equation} \label{frakmhat}
\widehat{\mathfrak{m}}:= \bigoplus_{\substack{g\geq 0, k\geq 1, l\geq 0\\ 2g-2+k+l\geq 0}} C^{\sf comb}_\bullet(M_{g,k,l+Q}^{\sf fr}, \underline{\sgn})_{\sf hS}[2][[\hbar,\lambda]].
\end{equation}
Observe that $\widehat{\mathfrak{m}}$ 
%in \eqref{frakmhat} 
is a differential graded Lie module over $\widehat{\mathfrak{g}}$ with the action map $\widehat{\mathfrak{g}}\otimes \widehat{\mathfrak{m}} \ra \widehat{\mathfrak{m}}$ given by $\{-,-\}_\hbar$ with the twisted sewing performed between inputs and outputs except at $Q$. Here we have slightly abused the notation $\{-,-\}_\hbar$ since $Q$ is not involved in any of the twisted sewing operations. Using the dg-Lie module structure, we may form a semi-direct product DGLA denoted by $\widehat{\mathfrak{g}} \ltimes \widehat{\mathfrak{m}}$.

\begin{Lemma}\label{lem:semi-unique}
The DGLA $\widehat{\mathfrak{g}} \ltimes \widehat{\mathfrak{m}}$ has a Maurer-Cartan elememt, unique up to gauge equivalences, of the following form
\[ \widehat{\mathcal{V}} + \widehat{\mathcal{W}}=\sum_{g,k\geq 1,l\geq0} \widehat{\mathcal{V}}_{g,k,l} \hbar^g \lambda^{2g-2+k+l} + \sum_{g,k\geq 1,l\geq 1} \widehat{\mathcal{W}}_{g,k,l} \hbar^g \lambda^{2g-2+k+l}\]
such that $\widehat{\mathcal{V}}_{0,1,2}$ is as in Theorem~\ref{thm:comb-string-vertex} and $\widehat{\mathcal{W}}_{0,1,2}=f\big(\widehat{\mathcal{V}}_{0,1,2}\big)$.
\end{Lemma}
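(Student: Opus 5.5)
The plan follows the proof of Lemma~\ref{lem:semi-unique-gn}, with the distinguished framed vertex $P$ replaced by the unframed vertex $Q$.

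\textbf{Existence.} Since $\hcV$ already solves the Maurer-Cartan equation in $\hg$, the semi-direct product equation reduces to the linear equation
\[
(\eth+\iota+\hbar\Delta)\widehat{\mathcal{W}}+\{\hcV,\widehat{\mathcal{W}}\}_\hbar=0 .
\]
Here $\{\hcV,-\}_\hbar$ is the module action of $\hg$ on $\widehat{\mathfrak{m}}$. The natural candidate is $\widehat{\mathcal{W}}:=f(\hcV)$. Extend $f$ to the whole of $\hg$ by summing over all white vertices. I would then check that $f$ intertwines $\eth,\iota,\Delta$ and the bracket with the corresponding operations on $\widehat{\mathfrak{m}}$; this is the analogue of the commutation relations in Lemma~\ref{lem:commutator-Fu}.

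\emph{Commutation with $\partial$.} Expanding the unframed vertex $Q$ sums over all placements of the new edge relative to the cyclic order. Expanding a framed white vertex and then forgetting the framing produces the same sum, except for the terms in which the starting half-edge becomes a leaf, and $f_j$ kills exactly those terms.

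\emph{Commutation with the circle operators.} The term $u_jB^w_j$ at the vertex being forgotten maps to zero under $f_j$, because $f_j$ only sees $b_j=0$. The circle operators at all other vertices and cycles commute with $f$ because they act away from the forgotten vertex.

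\emph{Commutation with $\iota$, $\Delta$ and the bracket.} These operations use $u^{-1}$-truncations $\pi$ at the sewn vertices. They commute with $f$ because of the following dichotomy: a term with the forgotten vertex sewn is sent to zero on both sides, and any other term is unaffected by forgetting. On the $f$ side, the forgotten vertex is no longer an output; on the other side, sewing needs the framing that $f$ removes.

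Since $f(\hcV_{0,1,2})$ is the prescribed initial term, this settles existence. If sign subtleties break strict commutation, the fallback is to build $\widehat{\mathcal{W}}$ inductively by obstruction theory. The order would be by $-\chi=2g-2+k+l$, then by $g$, then by $k$, exactly as in the proof of Proposition~\ref{prop:forgetful}. At each step the obstruction is $\eth$-closed and lies in $H_\bullet(M_{g,k,l+Q}^\fr,\underline{\sgn})_{\sf hS}$, so it can be killed once the vanishing result below is known.

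\textbf{Uniqueness.} I would follow the argument for string vertices, namely \cite[Theorem 5.9]{CalTu1} and \cite{Cos2}. Let $\hcV+\widehat{\mathcal{W}}$ and $\hcV'+\widehat{\mathcal{W}}'$ be two solutions. First gauge $\hcV'$ to $\hcV$ using Theorem~\ref{thm:comb-string-vertex}. Then construct a gauge in $\widehat{\mathfrak{m}}$ order by order, using the same ordering as above. At each step the difference $\widehat{\mathcal{W}}_{g,k,l}-\widehat{\mathcal{W}}'_{g,k,l}$ modulo lower-order corrections is $\eth$-closed, and it must be shown to be $\eth$-exact. The needed input is the vanishing of the relevant equivariant homology in the required degree. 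Geometrically, the chain complex $C^{\sf comb}_\bullet(M_{g,k,l+Q}^\fr,\underline{\sgn})_{\sf hS}$ computes the equivariant homology of a moduli space with $k+l$ framed points and one unframed point, modulo $(S^1)^{k+l}\ltimes(\Sigma_k\times\Sigma_l)$. This is the homology of $M_{g,k+l+1}$ with suitable twisting. String vertices live in degree $6g-6+2(k+l+1)$, which exceeds the homological dimension of this open, non-compact orbifold of real dimension $6g-6+2(k+l+1)$. Hence the homology vanishes in that degree and in the degree one above, as in \cite{Cos2}.

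The \textbf{main obstacle} is making this degree count precise for the partially unframed space. Forgetting the framing at $Q$ removes one circle from the homotopy-quotient bookkeeping, which shifts the degrees by one relative to the framed case. I would check the degrees of $\widehat{\mathcal{W}}_{0,1,2}$ and of the operators $\mathcal{T}$ and $\mathcal{S}$ against this shift. For the vanishing itself, I would use that the combinatorial complex models the homotopy type of $M_{g,k+l+1}\times(S^1)^{k+l}$, via the black-and-white graph model of \cite{WahWes} with one white vertex left unframed.
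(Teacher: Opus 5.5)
Your proposal is correct and follows essentially the paper's proof. Existence comes from taking $\widehat{\mathcal{W}}=f(\hcV)$. Uniqueness comes from rerunning the string-vertex obstruction argument, using that the homotopy quotient of the partially unframed complex computes the same homology as the framed one. The degree shift you single out as the main obstacle does not actually occur. Forgetting the framing at $Q$ removes one circle from the space and one from the group in the homotopy quotient, and because that circle acts freely the two cancel; this is exactly the paper's one-line justification. Hence $f$ is degree-preserving, the $\widehat{\mathcal{W}}_{g,k,l}$ sit in the same degrees as the $\hcV_{g,k,l}$, and the vanishing needed for uniqueness is the one already available for string vertices.
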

\begin{proof}
The existence can be achieved by setting
$\widehat{\mathcal{W}}_{g,k,l}=f\big(\widehat{\mathcal{V}}_{g,k,l}\big).$
The uniqueness can be deduced in the same way as that of the string vertex $\widehat{\mathcal{V}}$, using the vanishing of relevant homology groups. Note that forgetting the framing does not change the homology groups of moduli spaces after taking homotopy quotient since the circle group acts freely on framings. For more details, see~\cite{Cos2} and~\cite[Section 5]{CalTu1}.
\end{proof}

%In the proof above, we see that $\widehat{\mathcal{V}} + f\big(\widehat{\mathcal{V}}\big)$ is a Maurer-Cartan element of the semi-direct product DGLA $\widehat{\mathfrak{g}} \ltimes \widehat{\mathfrak{m}}$. 

%The key to prove the string equation and the divisor equation for CEI replies on 
%With the constructions of $\mathcal{T}$ and $\mathcal{S}$ ready, 

The following result %on gauge equivalence of Maurer-Cartan elements 
is crucial in proving the string equation and the divisor equation for CEI.

%Now we can state the main result of the section.

\begin{Theorem}
\label{thm:mc-recursion}
	Let $\hcV$ be a combinatorial string vertex, i.e., a Maurer-Cartan element as in Theorem~\ref{thm:comb-string-vertex}. 
Then the following two Maurer-Cartan elements of the semi-direct product DGLA $\widehat{\mathfrak{g}} \ltimes \widehat{\mathfrak{m}}$
\begin{align}
    \mc_1 &:=\big( \widehat{\mathcal{V}},f(\widehat{\mathcal{V}})\big),\\
    \mc_2 &:=\Big( \hcV, \sum_{g,k,l\geq 1} \big( \mathcal{T}\widehat{\mathcal{V}}_{g,k,l-1}+ \mathcal{S} \widehat{\mathcal{V}}_{g,k-1,l}\big) \hbar^g \lambda^{2g-2+k+l}+f(\hcV_{0,1,2})+f(\hcV_{0,2,1})\Big)
\end{align}
are gauge equivalent.
We will refer to the two terms $f(\hcV_{0,1,2})$ and $f(\hcV_{0,2,1})$ as  ``exceptional terms".
\iffalse
\begin{equation}
\label{second-mc-element}
\mc_2:= \Big( \hcV, \sum_{g,k,l\geq 1} \big( \mathcal{T}\widehat{\mathcal{V}}_{g,k,l-1}+ \mathcal{S} \widehat{\mathcal{V}}_{g,k-1,l}\big) \hbar^g \lambda^{2g-2+k+l}+f(\hcV_{0,1,2})+f(\hcV_{0,2,1})\Big)\in \hg\ltimes \widehat{\mathfrak{m}}. 
\end{equation}
is a Maurer-Cartan element that is gauge equivalent to the Maurer-Cartan element 
$\mc_1:=\big( \widehat{\mathcal{V}},f(\widehat{\mathcal{V}})\big).$

Let us define $\widehat{\mathcal{W}}_{g,k,l}\in C^{\sf comb}_\bullet(M_{g,k,(l-1)+Q}^{\sf fr}, \underline{\sgn})_{\sf hS}$ by 
	\begin{align*}
		\widehat{\mathcal{W}}_{g,k,l} &:= \mathcal{T}\widehat{\mathcal{V}}_{g,k,l-1}+ \mathcal{S} \widehat{\mathcal{V}}_{g,k-1,l},\\
		\widehat{\mathcal{W}}_{0,1,2} &:= f(\hcV_{0,1,2}),\\
		\widehat{\mathcal{W}}_{0,2,1} &:= f(\hcV_{0,2,1})
	\end{align*}
	Then the element 
	\[ \mc_2:=\sum_{g,k\geq 1,l\geq0} \widehat{\mathcal{V}}_{g,k,l} \hbar^g \lambda^{2g-2+k+l} + \sum_{g,k\geq 1,l\geq 1} \widehat{\mathcal{W}}_{g,k,l} \hbar^g \lambda^{2g-2+k+l}\]
	is a Maurer-Cartan element of the semi-direct product DGLA $\widehat{\mathfrak{g}} \ltimes \widehat{\mathfrak{m}}$. Furthermore, it is gauge equivalent to the Maurer Cartan element $\mc_1:=\widehat{\mathcal{V}} + f\big(\widehat{\mathcal{V}}\big)$.
    \fi
\end{Theorem}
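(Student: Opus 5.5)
The plan is to reduce the gauge equivalence to the uniqueness statement of Lemma~\ref{lem:semi-unique}. $\mc_1$ is a Maurer-Cartan element by construction: $f$ is a chain map commuting with $\iota$, $\Delta$ and the module bracket, since $Q$ never participates in sewing. So the real content is to show that $\mc_2$ is also a Maurer-Cartan element of $\hg\ltimes\widehat{\mathfrak m}$ with the same $\hg$-component $\hcV$, and with initial term $f(\hcV_{0,1,2})={\sf Get}_1^\sym$. Lemma~\ref{lem:semi-unique} then gives the gauge equivalence.

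Write $\mc_2=(\hcV,\widehat{\mathcal W})$ with
\[
\widehat{\mathcal W}=\mathcal T\hcV+\mathcal S\hcV+f(\hcV_{0,1,2})+f(\hcV_{0,2,1}),
\]
with the $\hbar,\lambda$ weights matched: $\mathcal T$ raises $l$ by one, $\mathcal S$ raises $k$ by one, and both raise $2g-2+k+l$ by one. The $\widehat{\mathfrak m}$-component of the Maurer-Cartan equation in the semi-direct product reads
\[
(\eth+\iota+\hbar\Delta)\widehat{\mathcal W}+\{\hcV,\widehat{\mathcal W}\}_\hbar=0 .
\]
I would expand $(\eth+\iota+\hbar\Delta)(\mathcal T+\mathcal S)\hcV$ by moving each operator past $\mathcal T$ and $\mathcal S$:
\begin{itemize}
\item Lemma~\ref{commutator-lemma-1} gives $\eth\mathcal T=\mathcal T\eth-\{\cdot,f(\hcV_{0,1,2})\}$.
\item Lemma~\ref{commutator-lemma-3} gives $\iota\mathcal T+\eth\mathcal S=\mathcal T\iota+\mathcal S\eth-\{\cdot,f(\hcV_{0,2,1})\}_1$.
\item Lemma~\ref{commutator-lemma-2} gives $\hbar\Delta\mathcal T=\hbar\mathcal T\Delta-\hbar\{\cdot,f(\hcV_{0,2,1})\}_2$.
\item Equation~\eqref{commutator-iota-S} says that $\iota$ and $\Delta$ commute with $\mathcal S$.
\end{itemize}
Grouping terms, $(\mathcal T+\mathcal S)$ applied to $(\eth+\iota+\hbar\Delta)\hcV$ appears, together with terms of the form $-\{\hcV,\text{exceptional}\}$ (up to the sign convention relating $\{\alpha,\beta\}$ and $\{\beta,\alpha\}$). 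By the Maurer-Cartan equation for $\hcV$, the first group becomes $-\tfrac12(\mathcal T+\mathcal S)\{\hcV,\hcV\}_\hbar$. By Lemma~\ref{commutator-lemma-4} this equals $-\{\hcV,(\mathcal T+\mathcal S)\hcV\}_\hbar$, which cancels the bracket of $\hcV$ with the nonexceptional part of $\widehat{\mathcal W}$.

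The bracket terms produced by the commutator lemmas should cancel $\{\hcV,f(\hcV_{0,1,2})+f(\hcV_{0,2,1})\}_\hbar$. Here the $r=1$ bracket with $f(\hcV_{0,1,2})$ comes from Lemma~\ref{commutator-lemma-1}, and the $r=1,2$ brackets with $f(\hcV_{0,2,1})$ come from Lemmas~\ref{commutator-lemma-3} and~\ref{commutator-lemma-2}. It remains to check the low-order components, where the exceptional terms themselves must satisfy the equation:
\begin{itemize}
\item $\eth f(\hcV_{0,1,2})=0$, by \eqref{getzler1-boundary} together with the vanishing of $wB^c$ and $uB^w$ terms on it.
\item $\eth f(\hcV_{0,2,1})+\iota f(\hcV_{0,1,2})=0$, by \eqref{eq:defining-w021}.
\end{itemize}
I would also check that no term of type $(0,1,1)$ or $(0,2,0)$ is produced by $\mathcal T$ or $\mathcal S$ applied to unstable indices; this is why the sum is restricted.

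The main obstacle is bookkeeping. First, signs: the Koszul sign $(-1)^{|\alpha|}$ in the bracket, the $\underline{\sgn}$ twist, and the shift $[2]$ must match exactly the signs in Lemmas~\ref{commutator-lemma-1}--\ref{commutator-lemma-3}. Second, the factor $\tfrac12$ in $\tfrac12\{\hcV,\hcV\}$ must combine correctly with the derivation property. Third, and most delicate, is the boundary case where Lemma~\ref{commutator-lemma-1} is applied to $\hcV_{0,1,2}$ itself, so that $\mathcal T\hcV_{0,1,2}$ meets a bracket with $f(\hcV_{0,1,2})$. I would first check this case against the explicit formula for $\hcV_{0,1,3}$, verifying the $(0,1,3)$ component of the equation by hand, and then run the general degree-by-degree argument.
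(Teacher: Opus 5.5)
Your proposal is correct and follows essentially the same route as the paper. The paper likewise applies $\mathcal T$ and $\mathcal S$ to the component Maurer--Cartan equations of $\hcV$, and uses Lemmas~\ref{commutator-lemma-1}, \ref{commutator-lemma-3}, \ref{commutator-lemma-2}, \ref{commutator-lemma-4} together with \eqref{commutator-iota-S} to recognize the result as the $\widehat{\mathfrak m}$-component of the Maurer--Cartan equation for $\mc_2$; it then concludes by the uniqueness in Lemma~\ref{lem:semi-unique}, and your explicit check of the exceptional identities $\eth f(\hcV_{0,1,2})=0$ and $\eth f(\hcV_{0,2,1})+\iota f(\hcV_{0,1,2})=0$ just fills in a step the paper leaves implicit.
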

\begin{proof}
%We proceed to prove Theorem~\ref{thm:mc-recursion}. 
The idea of the proof is similar as the proof of Proposition~\ref{prop:forgetful}, but this time by making use of the uniqueness of solutions of the Maurer-Cartan equation in Lemma~\ref{lem:semi-unique}. In order to do this, we need the commutators of $\mathcal{T}$ and $\mathcal{S}$ with various operators $\eth$, $\Delta$, $\iota$ and $\{-,-\}_r (r\geq 1)$ in $\hg\ltimes \widehat{\mathfrak{m}}$.

We apply the operator $\mathcal{T}$ to the Maurer-Cartan equation~\eqref{eq:mc} and use the commutator relation~\eqref{commutator-T-Delta} to obtain 
\begin{align*}
\mathcal{T} \eth\widehat{\mathcal{V}}_{g,k,l} +\mathcal{T}\iota\widehat{\mathcal{V}}_{g,k-1,l+1}
&= -\mathcal{T}\Delta\widehat{\mathcal{V}}_{g-1,k,l+2}-\mathcal{T}\left(\frac{1}{2}\sum \{\widehat{\mathcal{V}}_{g_1,k_1,l_1},\widehat{\mathcal{V}}_{g_2,k_2,l_2}\}_r\right),\nonumber\\
&= -\Delta\mathcal{T}\widehat{\mathcal{V}}_{g-1,k,l+2}-\{ \widehat{\mathcal{V}}_{g-1,k,l+2}, %\widehat{\mathcal{W}}_{0,2,1}
f(\hcV_{0,2,1})
\}_2 -\mathcal{T}\left(\frac{1}{2}\sum \{\widehat{\mathcal{V}}_{g_1,k_1,l_1},\widehat{\mathcal{V}}_{g_2,k_2,l_2}\}_r\right).
%\label{T-eth-iota}
\end{align*}
Similarly, applying the operator $\mathcal{S}$ to~\eqref{eq:mc} and using~\eqref{commutator-iota-S}, we obtain 
\begin{align*}
%\label{S-eth}
\mathcal{S} \eth \widehat{\mathcal{V}}_{g,k-1,l+1} 
&=
-\mathcal{S}\iota\widehat{\mathcal{V}}_{g,k-2,l+2}-\mathcal{S}\Delta\widehat{\mathcal{V}}_{g-1,k-1,l+3}-\mathcal{S}\left(\frac{1}{2}\sum \{\widehat{\mathcal{V}}_{g_1,k'_1,l'_1},\widehat{\mathcal{V}}_{g_2,k'_2,l'_2}\}_r\right)\\
&=-\iota\mathcal{S}\widehat{\mathcal{V}}_{g,k-2,l+2}-\Delta\mathcal{S}\widehat{\mathcal{V}}_{g-1,k-1,l+3}-\mathcal{S}\left(\frac{1}{2}\sum \{\widehat{\mathcal{V}}_{g_1,k'_1,l'_1},\widehat{\mathcal{V}}_{g_2,k'_2,l'_2}\}_r\right).
\end{align*}
Finally, we use the relations~\eqref{commutator-eth-T}, ~\eqref{commutator-eth-S}, and then the two equations above together to obtain 
\begin{align*}
&\eth\big(\mathcal{T}\widehat{\mathcal{V}}_{g,k,l}+\mathcal{S}\widehat{\mathcal{V}}_{g,k-1,l+1}\big)\\
=&\mathcal{T}\eth\widehat{\mathcal{V}}_{g,k,l}-\{\widehat{\mathcal{V}}_{g,k,l},
%\widehat{\mathcal{W}}_{0,1,2}
f(\hcV_{0,1,2})
\}+\left([\mathcal{T},\iota]+\mathcal{S}\eth\right)\widehat{\mathcal{V}}_{g,k-1,l+1}-\{\widehat{\mathcal{V}}_{g,k-1,l+1},
%\widehat{\mathcal{W}}_{0,2,1}
f(\hcV_{0,2,1})
\}_1\\
=&-\Delta\mathcal{T}\widehat{\mathcal{V}}_{g-1,k,l+2}-\{\widehat{\mathcal{V}}_{g-1,k,l+2}, %\widehat{\mathcal{W}}_{0,2,1}
f(\hcV_{0,2,1})
\}_2 -\mathcal{T}\left(\frac{1}{2}\sum \{\widehat{\mathcal{V}}_{g_1,k_1,l_1},\widehat{\mathcal{V}}_{g_2,k_2,l_2}\}_r\right)\\
&-\{\widehat{\mathcal{V}}_{g,k,l},
%\widehat{\mathcal{W}}_{0,1,2}
f(\hcV_{0,1,2})
\}- \iota\mathcal{T}\widehat{\mathcal{V}}_{g,k-1,l+1}-\{\widehat{\mathcal{V}}_{g,k-1,l+1},%\widehat{\mathcal{W}}_{0,2,1}
f(\hcV_{0,2,1})
\}_1\\
&-\iota\mathcal{S}\widehat{\mathcal{V}}_{g,k-2,l+2}-\Delta\mathcal{S}\widehat{\mathcal{V}}_{g-1,k-1,l+3}-\mathcal{S}\left(\frac{1}{2}\sum \{\widehat{\mathcal{V}}_{g_1,k'_1,l'_1},\widehat{\mathcal{V}}_{g_2,k'_2,l'_2}\}_r\right).
%=&-\{\widehat{\mathcal{V}}_{g,k,l},\widehat{\mathcal{W}}_{0,1,2}\}-\{\widehat{\mathcal{V}}_{g,k-1,l+1},\widehat{\mathcal{W}}_{0,2,1}\} -\{\widehat{\mathcal{V}}_{g-1,k,l+2},\widehat{\mathcal{W}}_{0,2,1}\}_2 \\
%&- \iota\big(  \mathcal{T}\widehat{\mathcal{V}}_{g,k-1,l+1}+\iota\mathcal{S}\widehat{\mathcal{V}}_{g,k-2,l+2}\big)- \Delta \big( \mathcal{T}\widehat{\mathcal{V}}_{g-1,k,l+2}+\mathcal{S}\widehat{\mathcal{V}}_{g-1,k-1,l+3}\big)\\
%& -\sum \{\widehat{\mathcal{V}}_{g_1,k_1,l_1},\mathcal{T}\widehat{\mathcal{V}}_{g_2,k_2,l_2}+\mathcal{S}\widehat{\mathcal{V}}_{g_2,k_2-1,l_2+1}\}_r
\end{align*}
This is exactly the Maurer-Cartan equation in the semi-direct product DGLA. Now the theorem follows from Lemma~\ref{lem:semi-unique}.
\end{proof}

\section{The string equation}\label{sec:string}

In this section, we prove the following theorem known as the string equation in the context of CEI. Throughout the section, we shall continue to use the notation from Section~\ref{sec:cei} and Section~\ref{sec:recursion}.

\begin{Theorem}
\label{thm:string}
Under the same conditions as in Theorem~\ref{thm:dilaton}, the CEI of $(A,\Omega,s)$ satisfies the string equation, i.e., for insertions $\alpha_1,\ldots,\alpha_n \in HH_\bullet(A)[d]$, we have 
\begin{align}\label{eq:string-equation}
 \langle [\Omega],\alpha_1\psi^{k_1},\ldots,\alpha_n\psi^{k_n}\rangle_{g, n+1}^{A,\Omega,s}= %\sum_{1\leq j\leq n, k_j\geq 1} 
 \sum_{j=1}^{n}\langle \alpha_1\psi^{k_1},\ldots,\alpha_j\psi^{k_j-1},\ldots,\alpha_n\psi^{k_n}\rangle_{g, n}^{A,\Omega,s}.
  \end{align}
%with insertions $\alpha_1,\ldots,\alpha_n \in HH_\bullet(A)[d]$, and $k_1,\ldots,k_n\geq 0$. 
\end{Theorem}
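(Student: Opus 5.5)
The plan mirrors the proof of the dilaton equation (Theorem~\ref{thm:dilaton}), with the forgetful map $\cF_P^u$ replaced by the forget-framing map $f$ and with Proposition~\ref{prop:forgetful} replaced by the partial recursion of Theorem~\ref{thm:mc-recursion}.

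\textbf{Step 1: the distinguished unframed output.} First I give an algebraic meaning to an unframed distinguished output $Q$. Applying $\omega_A$ (rather than $\omega_A^{[1]}$) at $Q$ is insensitive to the framing, because $\omega_A$ is $B$-invariant: it vanishes on $\operatorname{Im}(B)$, which consists of chains $\bone_A|\cdots$ of positive length. So $\rho^{A,\tw}$ extends to $\hg\ltimes\widehat{\mathfrak m}\to\hh_A[\epsilon]$ by
\[
(\alpha,\beta)\mapsto \rho^{A,\tw}(\alpha)+\epsilon\, C_{\omega_A}\rho^{A,\tw}(\beta).
\]
This is a DGLA map, and it satisfies $C_{\omega_A}\rho^{A,\tw}\circ f=C_{\omega_A}\rho^{A,\tw}$, in analogy with Lemma~\ref{lem:Fu-cd}. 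Composing with $\cK$ and arguing exactly as in Proposition~\ref{prop:lhs-dilaton}, the $\epsilon$-part of $\cK_*\widetilde{\rho}_*(\mc_1)$, paired against the insertions, becomes a sum over graphs in $\Gamma((g,1,n))$ with the extra leaf evaluated by $\omega_A=D\Omega$. Since $s$ is unital, the leaf factor $R$ does not disturb this evaluation. The result is the left-hand side $\langle[\Omega],\alpha_1\psi^{k_1},\ldots\rangle_{g,n+1}$. Because $\mc_1$ and $\mc_2$ are gauge equivalent (Theorem~\ref{thm:mc-recursion}), their pushforwards agree in cohomology, so it remains to evaluate the $\epsilon$-part of $\mc_2$.

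\textbf{Step 2: the images of $\mathcal{T}$ and $\mathcal{S}$.} I compute $C_{\omega_A}\rho^{A,\tw}$ on each piece separately.
\begin{itemize}
\item $\mathcal{T}^{\sf v}$ turns a black vertex into $Q$; after applying $\omega_A$ it gives $m_k$ with the unit inserted, which vanishes except in low valence.
\item The Getzler graphs ${\sf Get}_0^\sym$ and ${\sf Get}_1^\sym$ with $\omega_A$ at $Q$ should give, respectively, $0$ (or a $b$-exact term) and the identity of $L^A$. The identity claim follows from unitality, since $m_2(\bone_A,-)=\id$.
\item Hence $G^{\sf in}$ becomes multiplication by $w_j^{-1}$ at the inputs and $G^{\sf out}$ becomes multiplication by $u_j^{-1}$ at the outputs. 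In other words, $C_{\omega_A}\rho^{A,\tw}(\mathcal{T}\hcV)$ is the operator $\hbA\mapsto \sum_j u^{-1}$ acting on the arguments of $\hbA$, plus $H$-terms.
\item $\mathcal{S}$ applies $f(\hcV_{0,2,1})$ with $\omega_A$ at $Q$; this yields a correction pairing through the Mukai form, likely contributing $\langle B-,-\rangle$ type terms that get absorbed.
\item The exceptional terms $f(\hcV_{0,1,2})$ and $f(\hcV_{0,2,1})$ give the unstable contributions: the identity and the Mukai pairing.
\end{itemize}

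\textbf{Step 3: pushing through $\cK$.} I then push the result through $\cK$. On each partially directed graph the $\epsilon$-marked vertex carries ``$u^{-1}$ at one of its half-edges.'' I would show that a $u^{-1}$ placed on an internal edge cancels telescopically between its two endpoints. This uses the relations among $R$, $S$, $F$, $H$, namely $RS=\id$, together with the Lagrangian/unital condition on $s$. What survives is $u^{-1}$ on the external leaves: through $S$ at the input leaf (where $S$ commutes with $u$) and through $R$ at the outputs. Pairing via $\langle x u^{-i}, y(-u)^{j}\rangle$ lowers one $\psi$-power $k_j$ by one, which gives the right-hand side $\sum_j\langle\ldots,\alpha_j\psi^{k_j-1},\ldots\rangle$. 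The exceptional terms account for the remaining unstable cases, for instance $(g,n)=(0,2)$ in the base of the recursion.

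\textbf{Main obstacle.} I expect the hard step to be the edge cancellation in the $\cK$ graph sum. Multiplication by $u^{-1}$ does not commute with $F$, $H$ and $\delta$ on the nose, so one must check that the commutators $[u^{-1},F]$ and $[u^{-1},H]$ produce exactly the $\mathcal{S}$- and $\Theta$-type terms, with the correct weights $\wt(\GG)$. My first attempt would be genus zero, where all weights equal $1$ and only directed tree edges carrying $F$ occur. There I would verify directly from the formula for $F$ that $F(u^{-1}x)=u^{-1}F(x)$ up to an $S_0R$-term that matches $\mathcal{S}$. The higher genus case would then be handled by the same local identity applied to each edge type.
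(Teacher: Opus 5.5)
Your architecture is the paper's: extend $\rho^{A,\tw}$ by $C^Q_{\omega_A}$ on $\widehat{\mathfrak{m}}$, use the gauge equivalence of $\mc_1$ and $\mc_2$ (Theorem~\ref{thm:mc-recursion}), read the left-hand side off $\mc_1$, and write the right-hand side as the pairing of $[M_{u^{-1}},\lbA_{g,1,n-1}]$ with the insertions. However, your evaluation of $\mc_2$ and the mechanism in Step~3 are wrong.

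By Lemma~\ref{lem:F_Q-cd}, you should compute with $\cF'$. This gives $\cF'({\sf Get}_0^\sym)=0$, and $\cF'\mathcal{T}^{\sf v}=0$ because black vertices are at least trivalent. It also gives $\cF'(f(\hcV_{0,2,1}))=0$: its first two graphs have ${\rm val}(Q)=2$, and the last two have a $4$-valent neighbour of $Q$. So $\mathcal{S}$ and the $(0,2,0)$ exceptional term contribute nothing, not a Mukai-pairing correction. The $\epsilon$-part of $\rho^{A,\tw,\omega_A}_*\mc_2$ is $\gamma_{g,k,l}=[M_{u^{-1}},\hbA_{g,k,l}]$ for stable $(g,k,l)$, plus the single unstable piece $\gamma_{0,1,1}(xu^i)=\delta_0^i x$.

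Moreover, $u^{-1}$ does not pass through $S$, $R$ or $F$. Since $M_{u^{-1}}$ kills the $u^0$-part on $L^A_+[1]$, Lemma~\ref{lem:string-leaf-cont} gives $[M_{u^{-1}},S]=F\gamma_{0,1,1}S$, $[M_{u^{-1}},R]=R\gamma_{0,1,1}F$ and $[M_{u^{-1}},F]=F\gamma_{0,1,1}F$. Nothing telescopes. The defect at each leaf and each internal edge is exactly the graph of $(\cK_*\rho^{A,\tw,\omega_A}_*\mc_2)^\epsilon$ in which $\gamma_{0,1,1}$ sits as a bivalent vertex on that leaf or edge. With this correction your genus-zero argument becomes Proposition~\ref{prop:string-genus-zero}.

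The genuine gap is higher genus. There the identity $[M_{u^{-1}},\lbA_{g,1,n-1}]=(\cK_*\rho^{A,\tw,\omega_A}_*\mc_2)^\epsilon_{g,1,n-1}$ fails at chain level. Already for $(g,n)=(1,1)$ the two sides differ by a nonzero combination of graphs that is only $b$-exact. On one side, $\gamma_{0,1,1}$ is inserted on two-edge graphs carrying $F,H^\sym$ or $\Theta,\delta$; on the other, $M_{u^{-1}}$ is applied at the ends of the self-loop. So applying ``the same local identity to each edge type'' with the recursively defined weights $\wt(\GG)$ cannot work. You need a homotopy that treats $H^\sym$-, $\delta$- and $\Theta$-edges and the weights uniformly, and you propose none.

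The paper gets this homotopy by abandoning the graph sum:
\begin{itemize}
\item The pair $(\gamma_{0,1,1},M_{u^{-1}})$ is closed in ${\sf CE}_*(\hh_A)$, using $[M_{u^{-1}},b+uB]=\{\gamma_{0,1,1},-\}_\hbar$ and $[M_{u^{-1}},\iota]=[M_{u^{-1}},\Delta]=0$. Hence $\exp((\gamma_{0,1,1},M_{u^{-1}})\epsilon)$ is an $L_\infty$ map.
\item One factors $\cK=\mathfrak{S}\circ K$, where $K$ integrates the isotopy~\eqref{eq:family-dgla}.
\item One inserts a middle map $\cJ$ built from $M_{u^{-1}}+\ad^F(\gamma_{0,1,1})+\iota^{\delta,\gamma_{0,1,1}}$.
\item The upper square commutes up to homotopy, via an explicit closed $f(t)+g(t)dt\in{\sf CE}_*(\hh_A\otimes\Omega_{[0,1]})$ and Lemma~\ref{lem:algebraic-setup}.
\item The lower square commutes in $b$-homology by the leaf identities above.
\end{itemize}
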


We remark that the term when $k_j=0$ on the right hand side vanishes automatically by definition.

\subsection{Strategy of the proof}
The proof of this theorem is much more involved than Theorem~\ref{thm:dilaton}. We need to use the partial recursion property proved in Theorem~\ref{thm:mc-recursion}. Let us briefly explain the idea of the proof before heading into the details.  Denote by $\mathbb{C}[\epsilon]$ the ring of dual numbers, with $\epsilon^2=0$. We extend the DGLA map $\rho^{A,\tw}:\hg \ra \hh_A$ in~\eqref{eq:rho-tw} to
$\rho^{A,\tw,\omega_A} : \widehat{\mathfrak{g}} \ltimes \widehat{\mathfrak{m}} \ra \widehat{\mathfrak{h}}_A [\epsilon]$ by setting  %such that for $\alpha\in \hg$ and $\beta \in \widehat{\mathfrak{m}}$, 
\begin{equation}
    \rho^{A,\tw,\omega_A} \big((\alpha,\beta)\big):= \rho^{A,\tw}(\alpha)+C_{\omega_A}^Q\big(\rho^{A,\tw}(\beta)\big)\epsilon, 
\end{equation}
for $\alpha\in \hg$ and $\beta \in \widehat{\mathfrak{m}}$,
where the operator $C_{\omega_A}^Q$ is defined by applying the functional $\omega_A$ in~\eqref{eq:cy-form} to the Hochschild chain at the distinguished white vertex $Q$ in $\beta\in \widehat{\mathfrak{m}}$.

Post-composing with the trivialization $L_\infty$ morphism $\cK$ (extended $\epsilon$-linearly) in Equation~\eqref{eq:Triv-map}, we obtain an $L_\infty$ morphism
\[ \widehat{\mathfrak{g}} \ltimes \widehat{\mathfrak{m}} \stackrel{\rho^{A,\tw,\omega_A}}{\longrightarrow} \hh_A[\epsilon] \stackrel{\cK}{\longrightarrow} \hh_A^{\sf TRIV}[\epsilon].
\]
Recall that $\mc_1$ and $\mc_2$ in Theorem~\ref{thm:mc-recursion} are two gauge equivalent Maurer-Cartan elements of $\widehat{\mathfrak{g}} \ltimes \widehat{\mathfrak{m}}$.
By Theorem~\ref{thm:mc-recursion}, the push-forward Maurer-Cartan elements $\cK_*\rho^{A,\tw,\omega_A}_*\mc_1$ and $\cK_*\rho^{A,\tw,\omega_A}_*\mc_2$ are gauge equivalent. Since the DGLA $\hh_A^{\sf TRIV}[\epsilon]$ has a trivial Lie bracket, this implies
\begin{equation}\label{eq:push-forward-equal}
[\cK_*\rho^{A,\tw,\omega_A}_*\mc_1]=[\cK_*\rho^{A,\tw,\omega_A}_*\mc_2]\in H_\bullet(\hh_A^{\sf TRIV}[\epsilon], b+\iota)
\end{equation}
%as homology classes in $\hh_A^{\sf TRIV}[\epsilon]$. 

Note that since the differential in $\hh_A^{\sf TRIV}[\epsilon]$ is just $b+\iota$, 
this implies that 
\[[\big(\cK_*\rho^{A,\tw,\omega_A}_*\mc_1\big)_{g,1,n-1}^\epsilon]=[\big(\cK_*\rho^{A,\tw,\omega_A}_*\mc_2\big)_{g,1,n-1}^\epsilon], 
\]
for each stable $(g,n)$ with $n\geq 1$, 
where $Z^\epsilon$ is the $\epsilon$-component of an element $Z\in  \hh_A^{\sf TRIV}[\epsilon]$.

The strategy for proving Theorem~\ref{thm:string} consists of two main steps:
\begin{enumerate}
    \item Express the left hand side of Equation~\eqref{eq:string-equation} using $[\big(\cK_*\rho^{A,\tw,\omega_A}_*\mc_1\big)_{g,1,n-1}^\epsilon]$.
    \item Express the right hand side of Equation~\eqref{eq:string-equation} using $[\big(\cK_*\rho^{A,\tw,\omega_A}_*\mc_2\big)_{g,1,n-1}^\epsilon]$.
\end{enumerate}
Theorem~\ref{thm:string} would then follow from the equality~\eqref{eq:push-forward-equal} above.

\begin{Lemma}\label{lem:string-lhs-mc}
The left hand side of the string equation~\eqref{eq:string-equation} is given by
\[
\bigg\langle [\big(\cK_*\rho^{A,\tw,\omega_A}_*\mc_1\big)_{g,1,n-1}^\epsilon]
\big(\alpha_n (-u)^{k_n}\big),\bigodot\limits_{i=1}^{n-1}\alpha_i(-u)^{k_i}    \bigg\rangle_{\Muk}
\]
\end{Lemma}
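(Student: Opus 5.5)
The argument runs parallel to Proposition~\ref{prop:lhs-dilaton}, with $\cF_P^u\circ\mathfrak{l}$ replaced by the forget-framing map $f$ followed by $C^Q_{\omega_A}$.

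\emph{Step 1.} Compute the image of $\mc_1=(\hcV,f(\hcV))$ under $\rho^{A,\tw,\omega_A}$. By definition this equals $\hbA+C^Q_{\omega_A}\big(\rho^{A,\tw}(f\hcV)\big)\epsilon$. I would prove the analogue of Proposition~\ref{prop:cd} and Lemma~\ref{lem:Fu-cd}:
\[ C^Q_{\omega_A}\circ\rho^{A,\tw}\circ f=C_{\omega_A}\circ\rho^{A,\tw}. \]
Recall that $f_j$ is nonzero only when $b_j=0$ and the starting half-edge at the $j$-th white vertex is not a leaf. The point is that at an unframed white vertex the TCFT output is the sum over all cyclic rotations of the Hochschild chain. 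The functional $\omega_A$ of~\eqref{eq:cy-form} only sees length-zero chains $a_0$ with no $u^{-1}$ powers, and it kills everything else. After applying $\omega_A$, the framed and unframed outputs therefore agree on the terms that survive.

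The delicate case is a framed white vertex whose starting half-edge is a leaf. There $f_j=0$, so I must show that $\omega_A$ of that output vanishes. I expect this to follow from reducedness: the leaf inserts $\bone_A$ at position $0$ and nontrivial entries in the bar positions, so the output is either a chain of positive length or, if it has length zero, a term killed by strict unitality. The $b_j=0$ condition matches the $\delta_0^k$ factor in $\omega_A$. Getting this vertex-level check right, including signs and the symmetrization over $j$, is the main obstacle. The safest route is to verify the identity on single white vertices using the explicit TCFT description of~\cite{WahWes}, exactly as Proposition~\ref{prop:cd} does.

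\emph{Step 2.} With Step 1, $\rho^{A,\tw,\omega_A}_*\mc_1=\hbA+C_{\omega_A}\hbA\,\epsilon$. Expanding $\cK_*$ through~\eqref{eq:trivialization-map-K} and using $\epsilon^2=0$, the $\epsilon$-component is
\[ \sum_{\GG}\sum_{v}\frac{\wt(\GG)}{|\Aut(\GG,v)|}\rho^{A,\tw}_{(\GG,v)}, \]
with $C_{\omega_A}\hbA$ placed at the marked vertex $v$. Now attach an extra outgoing leaf $l_P$ at $v$, giving a graph $\GG'\in\Gamma((g,1,n))$. Since weights are independent of leaves, $\wt(\GG')=\wt(\GG)$, and the automorphism counts match. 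The sum then becomes
\[ \sum_{\GG'}\frac{\wt(\GG')}{|\Aut(\GG')|}C_{\omega_A}\rho^{A,\tw}_{\GG'}, \]
except that the leaf $l_P$ now carries $R$. I need $\omega_A\circ R=\omega_A$ in homology, i.e.\ $\omega_A$ is represented by $\Omega$ paired against $S$. This is where the unitality $\Omega\in{\sf Im}(s)$ enters: since $\Omega=s[\Omega]$ and $R=S^{-1}$, and $D\Omega=\omega_A$ by~\eqref{sm-CY-structure}, the Lagrangian property makes $\omega_A\circ R$ and $\omega_A$ agree up to a $(b+uB)$-exact correction. This is harmless after pairing with closed classes.

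\emph{Step 3.} Pair with $\bigodot_{i=1}^{n-1}\alpha_i(-u)^{k_i}$ and apply $\alpha_n(-u)^{k_n}$. By~\eqref{sm-CY-structure} and~\eqref{eq:generalized-Mukai-chain-level}, the contraction $C_{\omega_A}$ inserts $[\Omega]$ with $\psi^0$ into the generalized Mukai pairing~\eqref{eq:generalized-Mukai}. Equation~\eqref{eq:explicit-evaluation} then identifies the result with $\langle[\Omega],\alpha_1\psi^{k_1},\ldots,\alpha_n\psi^{k_n}\rangle_{g,n+1}^{A,\Omega,s}$.

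Finally, the result depends only on the homology class: by~\eqref{eq:push-forward-equal} the $\epsilon$-component of the $(g,1,n-1)$ part is $b$-closed, and we pair it with homology classes.
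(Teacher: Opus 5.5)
Your proposal is correct and follows the paper's proof. You identify $\rho^{A,\tw,\omega_A}_*\mc_1=\hbA+C_{\omega_A}(\hbA)\epsilon$, expand $\cK_*$ to first order in $\epsilon$, reinterpret the contracted output at the marked vertex as an extra outgoing leaf $l_P$ (using that $\wt$ ignores leaves), and turn $C_{\omega_A}$ into an insertion of $[\Omega]$ via \eqref{sm-CY-structure} and \eqref{eq:explicit-evaluation}.

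Your Step 2 is more careful than the paper, which silently identifies $\omega_A$ applied directly to the vertex output with $\omega_A$ applied after the leaf operator $R$. As you note, this identification (equivalently $\omega_A=\langle R(-),[\Omega]\rangle_\Muk$ in homology) is exactly where the unitality $\Omega\in{\sf Im}(s)$ and the Lagrangian property of $s$ are used.
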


\begin{proof}
We may compute the Maurer-Cartan element
\[ \rho^{A,\tw,\omega_A}_*\mc_1= \rho^{A,\tw}(\hcV)+C_{\omega_A}^Q\big(\rho^{A,\tw}(f\hcV)\big)\epsilon= \hbA+C_{\omega_A}(\hbA) \epsilon.\]
Then, using Equation~\eqref{eq:trivialization-map-K} we obtain
\begin{align*}
    \Big(\cK_* \rho^{A,\tw,\omega_A}_*\mc_1\Big)_{g,1,n-1} & = \sum_{m\geq 1} \frac{1}{m!}\sum_{\GG\in \Gamma((g,1,n-1))_m}\sum_{\tau}\frac{\wt(\GG)}{|\Aut(\GG,\tau)|}\cK_{(\GG,\sigma)}\big((\hbA+C_{\omega_A}(\hbA) \epsilon)^{\otimes m}\big)
\end{align*}
where recall $\tau$ is a bijection $\tau:\{1,\ldots,m\}\ra V_\GG$. Since $\epsilon^2=0$, the $\epsilon$-component of the above sum is given by
\[ \Big(\cK_* \rho^{A,\tw,\omega_A}_*\mc_1\Big)_{g,1,n-1}^\epsilon  = \sum_{\GG\in \Gamma((g,1,n-1))} \sum_{v\in V_\GG} \frac{\wt(\GG)}{|\Aut(\GG,v)|}\rho^{A,\tw}_{(\GG,v)}.\]
Here in the equation above, the map $\rho^{A,\tw}_{(\GG,v)}: L^A_+[1] \to \sym^{n-1}L^A_-$ is the same as the construction of $\rho^{A,\tw}_\GG$ in Equation~\eqref{eq:cei-formula-main} except at the distinguished vertex $v$ we assign the operator $C_{\omega_A}(\hbA) $ from the coefficient of $\epsilon$. Consider the graph $\GG'$ obtained from $\GG$ by adding an outgoing leaf denoted by $l_P$ at the distinguished vertex $v$. The map $\rho^{A,\tw}_{(\GG,v)}$ is then obtained from $\rho^{A,\tw}_{\GG'}$ by applying the linear functional $\omega_A$ at the added outgoing leaf $l_P$. Using this observation, we may rewrite the above summation as 
\[ \left(\cK_* \rho^{A,\tw,\omega_A}_*\mc_1\right)_{g,1,n-1}^\epsilon = C_{\omega_A}\bigg(\sum_{\GG'\in \Gamma((g,1,n))} \frac{\wt(\GG')}{|\Aut(\GG')|}\rho^{A,\tw}_{\GG'}\bigg).\]
Here we have used the fact that $\wt(\GG)=\wt(\GG')$ since by definition (see Equation~\eqref{eq:wt-def}) the weight of a partially direct graph is independent of leaves. 
Thus, we have
\begin{align*}
 &  \bigg\langle \Big(\cK_* \rho^{A,\tw,\omega_A}_*\mc_1\Big)_{g,1,n-1}^\epsilon(\alpha_n(-u)^{k_n}),
    \bigodot\limits_{i=1}^{n-1}\alpha_i(-u)^{k_i}
    \bigg\rangle_{\Muk} \\
    = & \bigg\langle C_{\omega_A}\bigg(\sum_{\GG'\in \Gamma((g,1,n))} \frac{\wt(\GG')}{|\Aut(\GG')|}\rho^{A,\tw}_{\GG'}(\alpha_n(-u)^{k_n})\bigg),
    \bigodot\limits_{i=1}^{n-1}\alpha_i(-u)^{k_i}
    \bigg\rangle_{\Muk}\\
    = & \bigg\langle \sum_{\GG'\in \Gamma((g,1,n))} \frac{\wt(\GG')}{|\Aut(\GG')|}\rho^{A,\tw}_{\GG'}(\alpha_n(-u)^{k_n}),
    [\Omega]\odot\bigodot\limits_{i=1}^{n-1}\alpha_i(-u)^{k_i}
    \bigg\rangle_{\Muk}\;\;\;\mbox{(by Equation~\eqref{sm-CY-structure})}
%\\    = &  \langle \Omega, \alpha_1\psi^{k_1},\ldots,\alpha_n\psi^{k_n} \rangle_{g, n+1}^{A,\Omega,s} \;\;\; \mbox{(by Equation~\eqref{eq:cei-formula-main})}. 
\end{align*}
By Equation~\eqref{eq:explicit-evaluation}, this is equal to the left hand side of the string equation~\eqref{eq:string-equation}.
\end{proof}

Let us denote by $M_{u^{-1}}$ the multiplication by $u^{-1}$, either defined on $L^A_+[1]$ or on $L^A_-$. On $L^A_+[1]$, we have $M_{u^{-1}}(x u^{i=0})=0$.
This operator extends by derivation to %symmetric powers 
$\sym^k(L_+^A[1])$ and $\sym^l(L_-^A)$. 
It also extends to an operator on $\Hom^c\big( \sym^k(L_+^A[1]),\sym^l(L_-^A)\big)$ by commutator $[M_{u^{-1}},-]$, i.e., we set
\begin{equation}\label{commutator-mu} [M_{u^{-1}},\Phi]:= M_{u^{-1}}\Phi - \Phi M_{u^{-1}}.
\end{equation}

\begin{Lemma}
The right-hand side of the string equation~\eqref{eq:string-equation} is given by
$$
\bigg\langle [M_{u^{-1}}, \lbA_{g,1,n-1}]
(\alpha_n (-u)^{k_n}),
%\alpha_1(-u)^{k_1}\cdots\alpha_{n-1}(-u)^{k_{n-1}}
\bigodot\limits_{i=1}^{n-1}\alpha_i(-u)^{k_i}
\bigg\rangle_\Muk
$$
\end{Lemma}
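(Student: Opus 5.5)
We expand the commutator $[M_{u^{-1}},\lbA_{g,1,n-1}] = M_{u^{-1}}\circ\lbA_{g,1,n-1} - \lbA_{g,1,n-1}\circ M_{u^{-1}}$ term by term and pair each piece against $\bigodot_{i=1}^{n-1}\alpha_i(-u)^{k_i}$ using the extended Mukai pairing~\eqref{eq:generalized-Mukai}. The aim is to identify the resulting $n$ contributions with the $n$ summands on the right-hand side of~\eqref{eq:string-equation} via the evaluation formula~\eqref{eq:explicit-evaluation}.

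\textbf{Input term.} $M_{u^{-1}}$ sends $\alpha_n(-u)^{k_n}$ to $-\alpha_n(-u)^{k_n-1}$, and to zero when $k_n=0$ by the convention on $L^A_+[1]$. Hence
\[
-\big\langle \lbA_{g,1,n-1}(M_{u^{-1}}\alpha_n(-u)^{k_n}),\textstyle\bigodot_{i}\alpha_i(-u)^{k_i}\big\rangle_\Muk
\]
equals $\langle \lbA_{g,1,n-1}(\alpha_n(-u)^{k_n-1}),\bigodot_i\alpha_i(-u)^{k_i}\rangle_\Muk$. By~\eqref{eq:explicit-evaluation} this is the $j=n$ summand $\langle\alpha_1\psi^{k_1},\ldots,\alpha_n\psi^{k_n-1}\rangle_{g,n}^{A,s}$.

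\textbf{Output term.} $M_{u^{-1}}$ acts as a derivation on $\sym^{n-1}L^A_-$, multiplying one output factor $x u^{-l}$ at a time to $x u^{-l-1}$. We then compute its adjoint under~\eqref{eq:generalized-Mukai}. Since $\delta_{l}^{k}$ pairs $u^{-l}$ with $(-u)^{k}$, shifting an output from $u^{-l}$ to $u^{-l-1}$ is equivalent to shifting the matched input from $(-u)^{k_j}$ to $(-u)^{k_j-1}$, i.e. $\langle M_{u^{-1}}X, Y\rangle = \langle X, \widetilde M Y\rangle$, where $\widetilde M$ lowers one exponent $k_j$ by one (and kills the term if $k_j=0$). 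Summing over the factor on which the derivation acts, and using the symmetrization over $\sigma\in S_{n-1}$ in~\eqref{eq:generalized-Mukai}, gives $\sum_{j=1}^{n-1}\langle \lbA_{g,1,n-1}(\alpha_n(-u)^{k_n}),\ldots\alpha_j(-u)^{k_j-1}\ldots\rangle_\Muk$. Again by~\eqref{eq:explicit-evaluation}, these are the summands $j=1,\ldots,n-1$.

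\textbf{Signs.} The main care needed is the signs. The input map $M_{u^{-1}}$ on $(-u)^{k}$ produces a factor $-1$, and the commutator~\eqref{commutator-mu} supplies another, so the two cancel. For the output side, the $(-1)^j$ convention in~\eqref{eq:generalized-Mukai-chain-level} versus the $(-u)^{k}$ normalization in~\eqref{eq:generalized-Mukai} must be checked to give the same sign $+1$ after the shift. Since $M_{u^{-1}}$ has even degree, no Koszul signs arise from the derivation. The last step is to verify that the $k_j=0$ terms vanish on both sides, which is consistent with the remark after Theorem~\ref{thm:string}.
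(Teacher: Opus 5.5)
Your proposal is correct and follows essentially the same route as the paper. The paper likewise rewrites the $j=n$ summand of~\eqref{eq:string-equation} as $-\langle \lbA_{g,1,n-1}M_{u^{-1}}(\alpha_n(-u)^{k_n}),\ldots\rangle_\Muk$, using the sign from $(-u)^{k_n}$. It rewrites the $j\le n-1$ summands as $\langle M_{u^{-1}}\lbA_{g,1,n-1}(\alpha_n(-u)^{k_n}),\ldots\rangle_\Muk$ via the $\delta$-structure of~\eqref{eq:generalized-Mukai}, and then concludes by~\eqref{commutator-mu}; your version simply spells out the adjointness and sign checks that the paper leaves implicit.
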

\begin{proof}
By the formula of CEI%~\eqref{eq:cei-formula-main} and
~\eqref{eq:explicit-evaluation}, we have
\begin{align*}
   & \sum_{j=1}^{n} \langle \alpha_1\psi^{k_1},\ldots,\alpha_j\psi^{k_j-1},\ldots,\alpha_n\psi^{k_n}\rangle_{g,n}^{A,\Omega,s} \\
  % = & \sum_{j=1}^{n-1} \langle \alpha_1 \psi^{k_1},\ldots,\alpha_j\psi^{k_j-1},\ldots,\alpha_n\psi^{k_n}\rangle_{0,n}^{A,\Omega,s}+ \langle \alpha_1 \psi^{k_1},\ldots,\alpha_n\psi^{k_n-1}\rangle_{0,n}^{A,\Omega,s}\\
   = & \sum_{j=1}^{n-1} \bigg\langle %(\mathcal{K}_*\rho_*^{A,\tw}\hcV)_{0,1,n-1}
   \lbA_{g,1,n-1}
   (\alpha_n (-u)^{k_n}),
   %\alpha_1(-u)^{k_1}\cdots\alpha_j(-u)^{k_j-1}\cdots\alpha_{n-1}(-u)^{k_{n-1}}
   \bigodot\limits_{i=1}^{n-1}\alpha_i(-u)^{k_i-\delta_i^j}
   \bigg\rangle_\Muk 
   %\hspace{3cm} 
   +\bigg\langle %(\mathcal{K}_*\rho_*^{A,\tw}\hcV)_{0,1,n-1}
   \lbA_{g,1,n-1}
   (\alpha_n (-u)^{k_n-1}),
   %\alpha_1(-u)^{k_1}\cdots\alpha_{n-1}(-u)^{k_{n-1}}
   \bigodot\limits_{i=1}^{n-1}\alpha_i(-u)^{k_i}
   \bigg\rangle_\Muk\\
   = & \bigg\langle M_{u^{-1}} %(\mathcal{K}_*\rho_*^{A,\tw}\hcV)_{0,1,n-1}
   \lbA_{g,1,n-1}
   (\alpha_n (-u)^{k_n}), 
   %\alpha_1 (-u)^{k_1}\cdots\alpha_{n-1}(-u)^{k_{n-1}}
   \bigodot\limits_{i=1}^{n-1}\alpha_i(-u)^{k_i}
   \bigg\rangle_\Muk 
   %\hspace{3cm} 
      %\;\;
      -\bigg\langle %(\mathcal{K}_*\rho_*^{A,\tw}\hcV)_{0,1,n-1}
      \lbA_{g,1,n-1}
      M_{u^{-1}}(\alpha_n (-u)^{k_n}),
      %\alpha_1(-u)^{k_1}\cdots\alpha_{n-1}(-u)^{k_{n-1}}
      \bigodot\limits_{i=1}^{n-1}\alpha_i(-u)^{k_i}
      \bigg\rangle_\Muk
      %\\=& \left\langle [M_{u^{-1}}, \lbA_{g,1,n-1}](\alpha_n (-u)^{k_n}),
%\alpha_1(-u)^{k_1}\cdots\alpha_{n-1}(-u)^{k_{n-1}}\bigodot\limits_{i=1}^{n-1}\alpha_i(-u)^{k_i}\right\rangle_\Muk
%\\=&\sum_{\GG\in\Gamma((g=0,1,n-1))} \frac{1}{|\Aut(\GG)|} 
%\left\langle[M_{u^{-1}}, \rho^{A,\tw}_\GG](\alpha_n (-u)^{k_n}),\bigodot\limits_{i=1}^{n-1}\alpha_i(-u)^{k_i}\right\rangle_\Muk
\end{align*}
%Now the formula follows from Equation~\eqref{eq:cei-formula-main}. 
%the term $(\mathcal{K}_*\rho_*^{A,\tw}\hcV)_{0,1,n-1}$ is given by the following graph sum
% \begin{equation}\label{eq:string-rhs-genus-zero} \sum_{\GG\in\Gamma((g=0,1,n-1))} \frac{1}{|\Aut(\GG)|}  \rho^{A,\tw}_\GG.  \end{equation}
%see Section~\ref{para:trivialization}.
Now, the result follows from the definition in~\eqref{commutator-mu}.
\end{proof}

\iffalse
\begin{align*}
&\sum_{j=1}^{n} \langle \alpha_1\psi^{k_1},\ldots,\alpha_j\psi^{k_j-1},\ldots,\alpha_n\psi^{k_n}\rangle_{0,n}^{A,\Omega,s}\\
=&
\sum_{\GG\in\Gamma((g=0,1,n-1))} \frac{1}{|\Aut(\GG)|} 
\bigg\langle
[M_{u^{-1}}, \rho^{A,\tw}_\GG](\alpha_n (-u)^{k_n}),
%\alpha_1(-u)^{k_1}\cdots\alpha_{n-1}(-u)^{k_{n-1}}
\bigodot\limits_{i=1}^{n-1}\alpha_i(-u)^{k_i}
\bigg\rangle_\Muk
\end{align*}
%\[[M_{u^{-1}},(\mathcal{K}_*\rho_*^{A,\tw}\hcV)_{0,1,n-1}] = \sum_{\GG\in\Gamma((g=0,1,n-1))} \frac{1}{|\Aut(\GG)|} [M_{u^{-1}}, \rho^{A,\tw}_\GG], \]
Note that we have used the fact that all weights are equal to $1$ for genus zero graphs.
\fi

In order to prove the string equation~\eqref{eq:string-equation}, it suffices to prove
\begin{equation}
\label{string-main-identity}
[M_{u^{-1}},\lbA_{g,1,n-1}]=[(\cK_*\rho^{A,\tw,\omega_A}_*\mc_2)_{g,1,n-1}^\epsilon]
\in H_\bullet\left( {\sf Hom}^\cont \big( L^A_+[1] , \sym^{n-1} L^A_- \big), b\right).
\end{equation}

\subsection{Calculations of push-forward}
%The identification of the right hand side of Equation~\eqref{eq:string-equation} with the appropriate component of the second push-forward $\cK_*\rho^{A,\tw,\omega_A}_*\mc_2$ is much more involved. 

The proof of equation~\eqref{string-main-identity} is much more involved. 
Let us first compute the components of $\rho^{A,\tw,\omega_A}_*\mc_2$ more explicitly. 

Let $\Gamma$ be an oriented black-and-white graph and $\alpha_Q$ be an equivariant chain 
\begin{equation*}
\alpha_Q = \Gamma \cdot \prod_{i=1}^{k}w_i^{-a_i}\prod_{i=1}^{l}u_i^{-b_i} \in C^{\sf comb}_\bullet(M_{g,k,l+Q}^{\sf fr}, \underline{\sgn})_{\sf hS}.
\end{equation*}
The map
$\cF': C^{\sf comb}_\bullet(M_{g,k,l+Q}^{\sf fr}, \underline{\sgn})_{\sf hS} \to C^{\sf comb}_\bullet(M_{g,k,l}^{\sf fr}, \underline{\sgn})_{\sf hS}$ is defined by
\begin{equation}\label{eq:def-forgetful-prime}
\cF'(\alpha_Q)= 
\begin{cases}
(\Gamma/e)\cdot \prod\limits_{i=1}^{k} w_i^{-a_i}\prod\limits_{i=1}^{l} u_i^{-b_i}, &\mbox{if ${\sf val}(Q)=1,$ $v\in V_b$, and ${\sf val}(v)=3$},\\
(\Gamma\backslash Q) \cdot\prod\limits_{i=1}^{k} w_i^{-a_i}\prod\limits_{i=1}^{l} u_i^{-b_i}, &\mbox{if ${\sf val}(Q)=1, v\in V_w,$ and the starting half-edge of $v$ is along $e$},\\
0, &\mbox{otherwise.}
\end{cases}
\end{equation} 
The construction of graphs $\Gamma/e$ and $\Gamma\backslash Q$ here is the same as that used in Equation~\eqref{eq:def-forgetful1}. They are illustrated in the following pictures, respectively. 
\[\begin{tikzpicture}[baseline={([yshift=-0.5ex]current bounding
		box.center)},scale=0.4] 
	\draw [thick] (-6.2,2) to (-6.2,0);
	\draw [thick] (-6.2, -2) to (-6.2, 0);
	\draw (-5.8,0) node[label=left:{$v$}] {};
	\draw (-7,0) node[label=left:{$\Gamma=$}] {};
    \node at (-4.6,.8) {$e$};
	\draw (5,0) node[label=left:{$\Gamma/e=$}] {};
	\draw (-3,0) node[label=below:{$Q$}] {};
	\draw [thick] (-6.2,0) to (-3,0);
	\draw [thick] (-2.8,0) circle [radius=0.2];
	\draw [thick] [|->] (-0.5,0) to (1,0);
%	\node at (4.8,.8) {$\cF'$};
	\draw [thick] (5,2) to (5,0);
	\draw [thick] (5, -2) to (5, 0);
\end{tikzpicture}\]
\[\begin{tikzpicture}[baseline={([yshift=-0.5ex]current bounding
		box.center)},scale=0.4] 
	\draw (-6,0) node[label=left:{$\Gamma=$}] {};
	\draw [thick] (-3,0) circle [radius=0.2];
	\draw (-3,0) node[label=below:{$v$}] {};
	\draw [thick] (-.3,0) circle [radius=0.2];
    \node at (-1.3,.6) {$e$};
	\draw (-.3,0) node[label=below:{$Q$}] {};
	\draw [thick] (-6,-0) to (-3.2,0);
	\draw [thick] (-4,2) to (-3.2,0);
	\draw [thick] (-4,-2) to (-3.2,0);
	\draw [thick] (-1,2) to (-2.8,0);
	\draw [thick] (-1,-2) to (-2.8,0);
	\draw [thick] (-2.8,0) to (-.5,0);
	\draw [line width=2.4pt] (-1.5,0) to (-2.8,0);
	\draw [thick] [|->] (1.5,0) to (3,0);
%	\node at (3,.8) {$\cF'$};
	\draw (7,0) node[label=left:{$\Gamma\backslash Q=$}] {};
	\draw [thick] (10,0) circle [radius=0.2];
	\draw (10,0) node[label=below:{$v$}] {};
	\draw [thick] (7,-0) to (9.8,0);
	\draw [thick] (9,2) to (9.8,0);
	\draw [thick] (9,-2) to (9.8,0);
	\draw [thick] (12,2) to (10.2,0);
	\draw [thick] (12,-2) to (10.2,0);
	\draw [line width=2.4pt] (10.2,0) to (11.7,0);
\end{tikzpicture}\]

\begin{Lemma}\label{lem:F_Q-cd}
The following diagram is commutative:
\[\begin{CD}
C^{\sf comb}_\bullet(M_{g,k,l+Q}^{\sf fr}, \underline{\sgn})_{\sf hS}  @>{\rho^{A,\tw}}>> {\sf Hom}^c\big( \sym^k (L^A_+[1]), L^A_-\otimes \sym^{l} L^A_-\big)\\
@V \cF' VV        @VV C_{\omega_A}^Q V\\
C^{\sf comb}_\bullet(M_{g,k,l}^{\sf fr}, \underline{\sgn})_{\sf hS}  @>{\rho^{A,\tw}}>> {\sf Hom}^c\big( \sym^k (L^A_+[1]), \sym^l L^A_-\big)
\end{CD}\]
Here in the top line, the output element at the distinguished white vertex $Q$ lies in the first component in the tensor product $L^A_-\otimes \sym^{l} L^A_-$, and the contraction map $C_{\omega_A}^Q$ is applied to this component.
\end{Lemma}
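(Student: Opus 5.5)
The plan is to run the same argument as Proposition~\ref{prop:cd}, now with the output at $Q$ unframed. Since both vertical maps act only at the distinguished vertex $Q$, it suffices to check commutativity on a single generator $\alpha_Q=\Gamma\cdot\prod w_i^{-a_i}\prod u_i^{-b_i}$. We evaluate $\rho^{A,\tw}(\alpha_Q)$ on inputs and contract the $Q$-output with $\omega_A$. The circle parameters $w_i^{-a_i}$, $u_i^{-b_i}$ at the other inputs and outputs are carried along unchanged on both sides. There is no circle parameter at $Q$, so no $\delta_0^{b_Q}$ factor is needed, and the $Q$-component of the output automatically has $u$-power $0$. This is why $\omega_A$, rather than $\omega_A^{[1]}$, is the right functional. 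We are thus reduced to a statement about the non-equivariant action $\rho^A(\Gamma)$ followed by $\omega_A$ at the unframed white vertex $Q$.

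First recall how $\rho^A$ treats a white vertex. In the Kontsevich--Soibelman/Wahl--Westerland construction, a white vertex of valence $m$ outputs a Hochschild chain $a_0|a_1\cdots a_{m-1}$ whose entries are read off its half-edges in cyclic order starting at the starting half-edge. For the unframed vertex $Q$, the output is the sum over all choices of starting half-edge; this is the same as applying the forget-framing map $f$ and then $\rho^A$. The functional $\omega_A$ kills every Hochschild chain of length $>1$, so only $\mathrm{val}(Q)=1$ contributes. Then $\omega_A(a_0)=\langle \bone_A,a_0\rangle$, which by nondegeneracy and cyclicity amounts to inserting $\bone_A$ at the far end of the unique edge $e$ at $Q$.

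Next, case split on the other endpoint $v$ of $e$:
\begin{itemize}
\item[(i)] If $v$ is black, inserting $\bone_A$ into the corresponding $m_k$ gives zero unless $k=2$, by strict unitality~\eqref{eq:unitality}. When $k=2$, i.e.\ $\mathrm{val}(v)=3$, $m_2(\bone_A,-)$ is the identity, up to the sign in~\eqref{eq:unitality}. This is exactly $\rho^A(\Gamma/e)$.
\item[(ii)] If $v$ is white, $\bone_A$ lands in the Hochschild chain output at $v$. Because we use the reduced complex $A\otimes\overline{A}^{\otimes r}$, this vanishes unless $\bone_A$ sits in the $a_0$ slot, i.e.\ unless the starting half-edge of $v$ lies along $e$. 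In that case, removing $\bone_A$ from the front of the chain leaves exactly the chain output at $v$ in $\Gamma\backslash Q$, where the next half-edge becomes the starting one.
\end{itemize}
These two cases reproduce precisely the three cases of~\eqref{eq:def-forgetful-prime}.

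The main obstacle is bookkeeping: showing that the signs and orientations match in each case. This means checking that the orientation of $\Gamma/e$ and $\Gamma\backslash Q$ induced by deleting $v$, $e$ and $Q$ (or $Q$ and $e$) from $\det\mathbb{R}(V\sqcup H)$ agrees with the Koszul signs from~\eqref{eq:unitality} and the cyclic structure~\eqref{eq:cyclic-structure}. We also need the $\underline{\sgn}$ twist to be untouched, which holds since no input cycle is altered. This is the same verification as in Proposition~\ref{prop:cd}, with identical local pictures, so we will reduce to it: the local configurations around $Q$ coincide with those around the forgotten vertex $v_j$ in~\eqref{eq:def-forgetful1}, with the constraint $b_j=0$ now automatic.
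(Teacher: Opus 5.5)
Your overall route is the one the paper intends. The paper gives no separate proof of this lemma and relies on the mechanism of Proposition~\ref{prop:cd}: strict unitality at black vertices, and reducedness of the Hochschild complex at white vertices. Your reduction to ${\sf val}(Q)=1$ (because $\omega_A$ kills chains of length $>1$) is fine, and so is your case (i). The phrase ``applying $f$ and then $\rho^A$'' is garbled, since $f$ goes from framed to unframed chains. What you mean is $\rho^A(\Gamma_Q)=\sum_h\rho^A(\Gamma_h)$ over framed lifts, and this is harmless because only univalent $Q$ survives.

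Case (ii), however, misidentifies $\Gamma\backslash Q$, and hence the algebra it must match. In~\eqref{eq:def-forgetful-prime} (see the picture~\eqref{graph-remove-white-vertex}), only $Q$ and the half-edge of $e$ at $Q$ are deleted. The half-edge of $e$ at $v$ survives as a \emph{leaf} and remains the starting half-edge of $v$, so ${\sf val}(v)$ and $\deg$ are unchanged. A white vertex whose starting half-edge is a leaf outputs $\bone_A$ in the $a_0$ slot. This is exactly how the circle graph $B$ of~\eqref{graph:circle} realizes Connes' operator $B(a_0|a_1\cdots a_r)=\sum\pm\bone_A|a_i\cdots a_r a_0\cdots a_{i-1}$. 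So after contracting $\omega_A$ at $Q$ the unit simply stays in front: the output at $v$ is $\pm\bone_A|a_1\cdots a_m$, which is the output of $\rho^A(\Gamma\backslash Q)$ on the nose, and nothing is removed.

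Your version deletes $\bone_A$ and promotes the next half-edge to starting position. That describes a different graph, of degree one less. It also produces the chain $a_1|a_2\cdots a_m$, whose degree differs from that of $\bone_A|a_1\cdots a_m$ by one. So the matching you assert in (ii) fails, already for degree reasons. Once (ii) is corrected, the argument goes through. The correction is: the reduced complex kills $\bone_A$ away from the $a_0$ slot, and in the $a_0$ slot it is precisely the output of a starting leaf. Sign bookkeeping can be left at the same level of detail as in the paper.
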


%For simplicity, let us denote the maps in Lemma~\ref{lem:string-evaluation-exceptional} and Lemma~\ref{lem:string-evaluation} by
%\begin{align}\label{eq:cont-string-v_0}   
%\begin{split}
%\left\{ \begin{aligned} 
%\gamma_{g,k,l} & := \Big( \rho^{A,\tw,\omega_A}_*\mc_2\Big)_{g,k,l}^\epsilon= [M_{u^{-1}},\hbA_{g,k,l}],  \mbox{\;\;\;\;for stable $(g,k,l)$.}\\
%    \gamma_{0,1,1} & := \Big( \rho^{A,\tw,\omega_A}_*\mc_2\Big)_{0,1,1}^\epsilon.
%\end{aligned} \right.
%   \end{split}
%\end{align}

For simplicity, let us denote by
\begin{align}\label{eq:cont-string-v_0}   
  \gamma_{g,k,l}  := \Big( \rho^{A,\tw,\omega_A}_*\mc_2\Big)_{g,k,l}^\epsilon.
\end{align}

\begin{Lemma}
\label{lem:string-evaluation}
%For the exceptional terms, we have
We have
\begin{equation}
    \label{exceptional-constraction}
\begin{dcases}
%\Big( \rho^{A,\tw,\omega_A}_*\mc_2\Big)_{0,2,0}^\epsilon
\gamma_{0,2,0}=0; &\\
%\Big( \rho^{A,\tw,\omega_A}_*\mc_2\Big)_{0,1,1}^\epsilon
\gamma_{0,1,1}(x \cdot u^i)=\delta_0^i \cdot x, & \forall \quad x\cdot u^i \in L^A_+[1];\\
\gamma_{g,k,l}=[M_{u^{-1}},\hbA_{g,k,l}], & \text{for } (g,k,l) \text{ stable}. 
\end{dcases}
\end{equation}
\end{Lemma}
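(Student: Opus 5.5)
Since $\gamma_{g,k,l}$ is the $\epsilon$-component of $\rho^{A,\tw,\omega_A}_*\mc_2$, by definition it equals $C^Q_{\omega_A}\rho^{A,\tw}$ applied to the $\widehat{\mathfrak m}$-component of $\mc_2$. By Lemma~\ref{lem:F_Q-cd} this is $\rho^{A,\tw}\circ\cF'$ of that component. The plan is therefore to compute $\cF'$ on the exceptional terms and on $\mathcal T\hcV_{g,k,l-1}+\mathcal S\hcV_{g,k-1,l}$, and then compare with $[M_{u^{-1}},\hbA_{g,k,l}]$.

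\emph{Exceptional terms.} For $(0,2,0)$ I apply $\cF'$ to the four graphs of \eqref{eq:W-021}. In the first two, $Q$ sits on the circle with valence two, so $\cF'$ gives $0$. In the last two, $Q$ is univalent and attached to a trivalent black vertex; contracting gives the Mukai graph $M$ twice with opposite orientations (they are mirror images), and I expect these to cancel because of the $\sgn_2$ twist, so $\gamma_{0,2,0}=0$. For $(0,1,1)$, $\cF'$ of ${\sf Get}_1^\sym$ contracts the $Q$-leaf at the trivalent vertex. This yields $\tfrac12(\text{id graph}+\text{id graph})$, which is the identity graph of type $(0,1,1)$ with no circle parameter. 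By \eqref{eq:action-map}, only the $u^0$ component of the input survives, which gives $\delta^i_0x$.

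\emph{Stable terms.} I compute $\cF'$ piece by piece on $\mathcal T=H^{\sf in}-H^{\sf out}+G^{\sf in}+G^{\sf out}+\mathcal T^{\sf v}$ and on $\mathcal S$.
\begin{itemize}
\item In ${\sf Get}_0^\sym$, $Q$ is univalent at a trivalent black vertex, so $\cF'({\sf Get}_0^\sym)$ is the identity-type graph with a bent starting leaf. This is degenerate: the white vertex has valence two with a leaf, which should represent an identity sewn with something that the action map sends to zero (or that cancels between $H^{\sf in}$ and $H^{\sf out}$). I aim to show that $\rho^{A,\tw}\cF'(H^{\sf in}-H^{\sf out})=0$.
\item $\cF'\mathcal T^{\sf v}$ vanishes unless the chosen black vertex has valence one, which never happens, so it is $0$.
\item The terms $G^{\sf in}$ and $G^{\sf out}$ give, by the $(0,1,1)$ computation, the graph $\alpha$ itself multiplied by $w_j^{-1}$ or $u_j^{-1}$. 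Under \eqref{eq:action-map} these act as $-\hbA M_{u^{-1}}$ at the inputs and $+M_{u^{-1}}\hbA$ at the outputs, which assembles to $[M_{u^{-1}},\hbA_{g,k,l}]$.
\item $\cF'\mathcal S$ reduces, via the $(0,2,0)$ computation, to $0$.
\end{itemize}

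\emph{Main obstacle.} The delicate step is the precise sign and normalization bookkeeping. I must check that the factor $\tfrac12\cdot 2$ in ${\sf Get}_1^\sym$ gives coefficient $1$. I must check that the Koszul and $\sgn$ twists make the input term come with a minus sign; this should match the convention $[M_{u^{-1}},\Phi]=M_{u^{-1}}\Phi-\Phi M_{u^{-1}}$ together with the $(-1)^b$ in \eqref{eq:action-map}. I must also check that the truncation $M_{u^{-1}}(xu^0)=0$ matches the fact that $w_j^{-1}$ raises the $w$-power, so that inputs of $u$-degree $0$ are killed. Finally, the cancellation of the $H$-terms requires care with the leaf conventions of the reduced Hochschild complex. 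If it fails on the nose, I would settle for an equality after applying $\rho^{A,\tw}$, which is all the lemma requires.
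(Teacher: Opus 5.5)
\paragraph{Where the proposal fails.} Your architecture is the paper's, and your treatment of $\gamma_{0,1,1}$, of $\mathcal{T}^{\sf v}$ and of the $G$-terms matches it. You identify $\gamma_{g,k,l}$ with $\rho^{A,\tw}\circ\cF'$ of the $\widehat{\mathfrak m}$-component of $\mc_2$ via Lemma~\ref{lem:F_Q-cd}. You show $\cF'({\sf Get}_1^\sym)$ is the identity graph. The circle parameters from $G^{\sf in}$, $G^{\sf out}$ then become $[M_{u^{-1}},\hbA_{g,k,l}]$ under \eqref{eq:action-map}.

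The gap is the vanishing of the $H^{\sf in}$, $H^{\sf out}$ contributions. You leave it as an aim, and it rests on a misreading of ${\sf Get}_0^\sym$. In \eqref{graph:getzler1} the univalent vertex $Q$ is not attached to a black vertex. Its edge goes to the unlabeled \emph{white} vertex, which is trivalent: it carries the cycle's starting leaf, its own thickened starting leaf, and the edge to $Q$. This is why ${\sf Get}_0^\sym$ has degree $2$, as the degree-two map $\mathcal{T}$ requires. That white vertex's starting half-edge is the thick leaf, not the edge to $Q$. So the ``otherwise'' clause of \eqref{eq:def-forgetful-prime} gives $\cF'({\sf Get}_0^\sym)=0$ outright. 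Algebraically, the unit lands in a non-initial slot of a reduced Hochschild chain.

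Neither of your fallback mechanisms would work:
\begin{itemize}
\item Under your reading, the contraction would produce the circle graph $B$ of \eqref{graph:circle}. $\rho^{A,\tw}$ sends this to Connes' operator, not to zero.
\item $H^{\sf in}$ and $H^{\sf out}$ act on inputs and outputs respectively. They would contribute $B$ precomposed at inputs and $B$ postcomposed at outputs of $\hbA_{g,k,l}$, and these do not cancel each other.
\end{itemize}

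\paragraph{The same misreading affects $\gamma_{0,2,0}$.} In the last two graphs of \eqref{eq:W-021}, the black vertex adjacent to $Q$ carries two half-edges of the circle, the outer cycle's starting leaf, and the edge to $Q$. So it is $4$-valent, as it must be since $\hcV_{0,2,1}$ has degree $1$, not trivalent. Hence $\cF'$ kills these two graphs individually, just as it kills the first two, where ${\sf val}(Q)=2$. No Mukai graphs appear, and the $\sgn_2$ cancellation you hoped for (and did not establish) is not needed.

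Once $\cF'({\sf Get}_0^\sym)=0$ and $\cF'(f(\hcV_{0,2,1}))=0$ are established this way, the $H^{\sf in}$, $H^{\sf out}$ and $\mathcal{S}$ contributions to $\gamma_{g,k,l}$ vanish. This is because $C^Q_{\omega_A}$ acts only at $Q$, and $\rho^{A,\tw}$ is compatible with sewing along the remaining legs. With these two corrections your argument coincides with the paper's.
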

\begin{proof}
For $\gamma_{0,2,0}$, we notice that 
the first two graphs of $f\widehat{\mathcal{V}}_{0,2,1}$ in~\eqref{eq:W-021} have ${\rm val}(Q)=2$ and the last two graphs have ${\rm val}(v)=4$. By~\eqref{eq:def-forgetful-prime}, we have $\cF'(f\widehat{\mathcal{V}}_{0,2,1})=0$. Using Lemma~\ref{lem:F_Q-cd}, we have
$$\gamma_{0,2,0}:=\Big( \rho^{A,\tw,\omega_A}_*\mc_2\Big)_{0,2,0}^\epsilon
=C^Q_{\omega_A}\big(\rho^{A,\tw}(f\widehat{\mathcal{V}}_{0,2,1})\big)= \rho^{A,\tw}\big(\cF'(f\widehat{\mathcal{V}}_{0,2,1})\big)=0.$$

For the second exceptional term, we first compute
\begin{equation}
\label{eq:forgetting-getzler-1}
\cF'({\sf Get}_1^\sym)= \cF'\left( \frac{1}{2} 
 \begin{tikzpicture}[baseline={([yshift=-1.2ex]current bounding
		box.center)},scale=0.3] 
	\draw [thick] (0,0) to (0,2);
	\draw [thick] (-0.2, 1.8) to (0.2, 2.2);
	\draw [thick] (0.2, 1.8) to (-0.2, 2.2);
	\draw [thick] (0,0) to (-2,0);
	\draw (-2,0) node[label=left:{$Q$}] {};
	\draw [thick] (0,0) to (2,0);
	\draw [thick] (-2.2,0) circle [radius=0.2];
	\draw [thick] (2.2,0) circle [radius=0.2];
\end{tikzpicture}+\frac{1}{2} \begin{tikzpicture}[baseline={([yshift=-1.2ex]current bounding
		box.center)},scale=0.3] 
	\draw [thick] (0,0) to (0,2);
	\draw [thick] (-0.2, 1.8) to (0.2, 2.2);
	\draw [thick] (0.2, 1.8) to (-0.2, 2.2);
	\draw [thick] (0,0) to (-2,0);
	\draw (5,0) node[label=left:{$Q$}] {};
	\draw [thick] (0,0) to (2,0);
	\draw [thick] (-2.2,0) circle [radius=0.2];
	\draw [thick] (2.2,0) circle [radius=0.2];
\end{tikzpicture}\right)= \begin{tikzpicture}[baseline={([yshift=-1.2ex]current bounding
	box.center)},scale=0.3] 
\draw [thick] (0,-1) to (0,2);
\draw [thick] (-0.2, 1.8) to (0.2, 2.2);
\draw [thick] (0.2, 1.8) to (-0.2, 2.2);
\draw [thick] (0,-1.2) circle [radius=0.2];
\end{tikzpicture}
\end{equation}

Using Lemma~\ref{lem:F_Q-cd} we obtain
\[
\gamma_{0,1,1}:=\Big( \rho^{A,\tw,\omega_A}_*\mc_2\Big)_{0,1,1}^\epsilon = C_{\omega_A}^Q\big(\rho^{A,\tw}({\sf Get}_1^\sym)\big)=\rho^{A,\tw}(\cF'{\sf Get}_1^\sym)=\rho^{A,\tw}\left(\; \begin{tikzpicture}[baseline={([yshift=-1.2ex]current bounding
		box.center)},scale=0.3] 
	\draw [thick] (0,-1) to (0,2);
	\draw [thick] (-0.2, 1.8) to (0.2, 2.2);
	\draw [thick] (0.2, 1.8) to (-0.2, 2.2);
	\draw [thick] (0,-1.2) circle [radius=0.2];
\end{tikzpicture}\;\right). \]
This is exactly the map that sends $x\cdot u^{i}\in L^A_+[1]$ to $x$ if $i=0$ and to $0$ otherwise.
%to $\delta^{i}_{0}\cdot x\in L^A_-$. 

%\end{proof}

%\begin{Lemma}\label{lem:string-evaluation}
%For $(g,k,l)$ stable, i.e., $2g-2+k+l>0$, we have 
%$$\Big( \rho^{A,\tw,\omega_A}_*\mc_2\Big)_{g,k,l}^\epsilon =  [M_{u^{-1}},\hbA_{g,k,l}].$$
\iffalse
\[ \rho^{A,\tw,\omega_A}\big( \mathcal{T} \widehat{\mathcal{V}}_{g,k,l} + \mathcal{S} \widehat{\mathcal{V}}_{g,k-1,l+1} \big)=  [M_{u^{-1}},\hbA_{g,k,l}].\]
for stable $(g,k,l)$'s. And for the exceptional terms, we have
\begin{itemize}
  \item The map $\rho^{A,\tw,\omega_A}\big( f\widehat{\mathcal{V}}_{0,1,2} \big): L^A_+[1] \ra L^A_-$ is given by
  \[x \cdot u^i \mapsto \begin{cases}
      x, \;\;\;\mbox{if $i=0$,}\\
      0, \;\;\;\mbox{otherwise.}
  \end{cases}\]   
\item $ \rho^{A,\tw,\omega_A}\big( f\widehat{\mathcal{V}}_{0,2,1} \big)=0$. 
\end{itemize} 
\fi

%\end{Lemma}
%\begin{proof}
%Using $\cF'$ constructed above, we may compute as
%\begin{align*}
%\rho^{A,\tw,\omega_A}\big( \mathcal{T} \widehat{\mathcal{V}}_{g,k,l} + \mathcal{S} \widehat{\mathcal{V}}_{g,k-1,l+1} \big) & = \rho^{A,\tw}\Big( \cF' \big( \mathcal{T} \widehat{\mathcal{V}}_{g,k,l} + \mathcal{S} \widehat{\mathcal{V}}_{g,k-1,l+1} \big) \Big)
%\end{align*}

Now we consider the stable terms. 
Recall the construction of $\mathcal{T}=H^{\sf in} +H^{\sf out}+G^{\sf in}+G^{\sf out}+ \mathcal{T}^{\sf v}$ from Equation~\eqref{construction-T}. Using the explicit formulas of $\cF'$, we have $\cF'({\sf Get}_0^\sym)=0$. This implies 
\[ \cF' H^{\sf in}= \cF' H^{\sf out} = 0.\]
We also have $\cF' H^{\sf v}=0$ since a black vertex that is turned white by $H^{\sf v}$ would be at least trivalent. 

Similarly, from the construction of $\mathcal{S}$ in~\eqref{construction-S} and the equation $\cF'(f\widehat{\mathcal{V}}_{0,2,1})=0$, we also have
\[\cF'\mathcal{S} =0.\]
Now, let us compute $\cF'G^{\sf in}$ and $\cF'G^{\sf out}$.
The formula~\eqref{eq:forgetting-getzler-1} plays the role of the identity graph when computing compositions of black-and-white graphs, hence, we have
\begin{align*}
\cF'G^{\sf in}\widehat{\mathcal{V}}_{g,k,l} 
& = \cF'\left(\sum_{j=1}^k w_j^{-1} \widehat{\mathcal{V}}_{g,k,l} \prescript{}{j}\circ {\sf Get}_1^\sym \right) 
    = \sum_{j=1}^k w_j^{-1} 
    \left(
    \widehat{\mathcal{V}}_{g,k,l} \prescript{}{j}\circ \;\; 
    \begin{tikzpicture}[baseline={([yshift=-1.2ex]current bounding box.center)},scale=0.3] 
		\draw [thick] (0,-1) to (0,2);
		\draw [thick] (-0.2, 1.8) to (0.2, 2.2);
		\draw [thick] (0.2, 1.8) to (-0.2, 2.2);
		\draw [thick] (0,-1.2) circle [radius=0.2];
	\end{tikzpicture}
    \;
    \right) 
    =  \left(\sum_{j=1}^k w_j^{-1} \right)  \widehat{\mathcal{V}}_{g,k,l},\\
	\cF'G^{\sf out}\widehat{\mathcal{V}}_{g,k,l} & = \cF'\left( \sum_{i=1}^l u_i^{-1} {\sf Get}_1^\sym \circ_i \widehat{\mathcal{V}}_{g,k,l}\right) = \sum_{i=1}^l u_i^{-1} 
    \left(\; \begin{tikzpicture}[baseline={([yshift=-1.2ex]current bounding box.center)},scale=0.3] 
		\draw [thick] (0,-1) to (0,2);
		\draw [thick] (-0.2, 1.8) to (0.2, 2.2);
		\draw [thick] (0.2, 1.8) to (-0.2, 2.2);
		\draw [thick] (0,-1.2) circle [radius=0.2];
	\end{tikzpicture}
    \;\circ_i \widehat{\mathcal{V}}_{g,k,l}\right) 
    = \left(\sum_{i=1}^l u_i^{-1}\right)  \widehat{\mathcal{V}}_{g,k,l}.
\end{align*}
%Putting these vanishing results together, we obtain
%\[ \rho^{A,\tw,\omega_A}\big( \mathcal{T} \widehat{\mathcal{V}}_{g,k,l} + \mathcal{S} \widehat{\mathcal{V}}_{g,k-1,l+1} \big)  = \rho^{A,\tw}\Big( \cF' \big(G^{\sf in} + G^{\sf out}\big) \widehat{\mathcal{V}}_{g,k,l}\Big).\]
Putting all these formulas together, we have
\begin{align*}
 \rho^{A,\tw,\omega_A}\big( \mathcal{T} \widehat{\mathcal{V}}_{g,k,l} + \mathcal{S} \widehat{\mathcal{V}}_{g,k-1,l+1} \big) & = \rho^{A,\tw}\Big( \cF' \big( \mathcal{T} \widehat{\mathcal{V}}_{g,k,l} + \mathcal{S} \widehat{\mathcal{V}}_{g,k-1,l+1} \big) \Big)\\
 &  = \rho^{A,\tw}\Big( \cF' \big(G^{\sf in} + G^{\sf out}\big) \widehat{\mathcal{V}}_{g,k,l}\Big)\\
&=\rho^{A,\tw}\Big((u_1^{-1}+\cdots+u_l^{-1}+w_1^{-1}+\cdots+w_k^{-1})\cdot \widehat{\mathcal{V}}_{g,k,l}\Big)\\
 &= [M_{u^{-1}}, \hbA_{g,k,l}].
\end{align*}
Note that in the last step we used the fact that, by construction of $\rho^{A,\tw}$ (see Equation~\eqref{eq:action-map}), there is a sign depending on the power of the circle parameter $w$'s. This makes the final calculation a commutator with $M_{u^{-1}}$, as desired.
\end{proof}

\subsection{Some commutators relations}

We list some commutator relations here. 
\begin{Lemma}
We have
\begin{equation}
\label{eq:gamma-commutator}
[b+uB, \gamma_{0,1,1}]=0.
\end{equation}
\end{Lemma}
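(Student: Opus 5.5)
We must show that $\gamma_{0,1,1}$, the map $L^A_+[1]\to L^A_-$ sending $x\cdot u^i\mapsto \delta_0^i x$, commutes with the equivariant differentials. Here $b+uB$ acts on $L^A_+[1]$ (with the sign from the shift) and on $L^A_-=L^A[u^{-1}]$. We could try to deduce this from the fact that $\rho^{A,\tw}$ is a chain map and $\cF'$ behaves well with $\eth$. However, the direct computation is short and more transparent, so we will do that, using the explicit formula from Lemma~\ref{lem:string-evaluation}.

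Take $y=\sum_{i\ge0} x_i u^i\in L^A_+[1]$. First, $(b+uB)y=\sum_i (bx_i)u^i+\sum_i (Bx_i)u^{i+1}$. Its $u^0$-coefficient is $bx_0$, so $\gamma_{0,1,1}\big((b+uB)y\big)=bx_0$, up to the sign attached to the shift $[1]$. Second, $\gamma_{0,1,1}(y)=x_0\in L^A\subset L^A_-$. In $L^A_-\cong L^A((u))/u\cdot L^A_+$, the differential acts as $(b+uB)x_0=bx_0+uBx_0$, and the term $uBx_0$ vanishes in the quotient. So $(b+uB)\gamma_{0,1,1}(y)=bx_0$. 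The two sides agree, and hence the commutator vanishes.

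The only real care needed is the sign convention for the commutator in ${\sf Hom}^\cont(L^A_+[1],L^A_-)$, together with the sign on the differential of $L^A_+[1]$ coming from the shift. Since $\gamma_{0,1,1}=\rho^{A,\tw}$ of a degree-zero graph (the identity-like graph in Equation~\eqref{eq:forgetting-getzler-1}), it has the degree of the shift map $s$. The commutator is therefore graded so that the shift sign and the commutator sign cancel, exactly as in the convention of Equation~\eqref{eq:theta-appear}; I expect this bookkeeping to be the main point to check. As a consistency check, the identity also follows abstractly. Lemma~\ref{lem:F_Q-cd} gives $\gamma_{0,1,1}=\rho^{A,\tw}(\cF'{\sf Get}_1^\sym)$, and $\partial\,{\sf Get}_1^\sym=0$ by Equation~\eqref{getzler1-boundary}. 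The circle-action terms contribute nothing because $B$ lands in the $u^{1}$-part on the input side and in the $u$-multiple, which is killed on the output side. Combined with the fact that $\rho^{A,\tw}$ intertwines $\eth$ with $b+uB$, this recovers Equation~\eqref{eq:gamma-commutator}.
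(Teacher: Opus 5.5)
Your proposal is correct and is essentially the paper's own argument: the paper evaluates the graded commutator on $\sum_{i\geq 0}x_i u^i$, getting $b(x_0)$ from $(b+uB)\gamma_{0,1,1}$ (the term $uBx_0$ vanishes in $L^A_-$) and $-b(x_0)$ from the other term. One sign arises because both $\gamma_{0,1,1}$ and $b+uB$ are odd, and one from the shift on $L^A_+[1]$ --- exactly the bookkeeping you flagged. Your abstract cross-check is also sound but not needed: the identity graph $\cF'{\sf Get}_1^\sym$ is $\eth$-closed, since it has no vertex to blow up and positive circle powers vanish, and $\rho^{A,\tw}$ is a chain map.
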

\begin{proof}
Using Equation~\eqref{exceptional-constraction}, we can verify directly 
\begin{align*}
[b+uB,\gamma_{0,1,1}]\big(\sum_{i\geq 0} x_i\cdot u^{i}\big)
%&= (b+uB)(x_0)-(-1)^{|\gamma_{0,1,1}|\cdot |b+uB|}\gamma_{0,1,1} \Big(-(b+uB)\big(\sum_{i\geq 0} x_i\cdot u^{i}\big)\Big)\\
&= (b+uB)(x_0)-(-1)\gamma_{0,1,1} 
\Big((-1)(b+uB)\big(\sum_{i\geq 0} x_i\cdot u^{i}\big)\Big)\\
&=b(x_0)-b(x_0)\\
&=0.    
\end{align*}
The term $uB(x_0)$ in $(b+uB)(x_0)$ vanishes because here $b+uB\in \Hom(L_-^A, L_-^A).$ In the first equality, we have the first $(-1)$ because both $\gamma_{0,1,1}$ and $b+uB$ are odd, and we have the second $(-1)$ in front of $b+uB$ due to the shift in $L_{+}^A[1]$.
\end{proof}

Next, we consider the commutators of $M_{u^{-1}}$. 
\begin{Lemma}
\label{lemma-commutator-Mu}
We have 
\begin{align}
\label{eq:iota-u}[M_{u^{-1}}, \iota]&=0.
\\
\label{eq:Delta-Mu}
    [M_{u^{-1}}, \Delta]&=0.
\\
\label{eq:pair-string}
 [M_{u^{-1}}, b+uB] &= \{\gamma_{0,1,1},-\}_\hbar.
\end{align}
\end{Lemma}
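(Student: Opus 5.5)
All three identities are direct computations with the explicit formulas for $\iota$, $\Delta$, $b+uB$ and $\{-,-\}_\hbar$ on $\hh_A$ recalled in Section~\ref{subsec:tcft}. We treat $M_{u^{-1}}$ as a derivation on $\sym^k(L^A_+[1])$ and on $\sym^l(L^A_-)$, acting on $\Hom^\cont$ by the commutator~\eqref{commutator-mu}. The basic input is the sesquilinear extension of the Mukai pairing~\eqref{eq:generalized-Mukai-chain-level}. For $y=x u^{i}\in L^A_+[1]$ and $z=x'u^{-j}\in L^A_-$ it gives
\[\langle M_{u^{-1}}y, z\rangle_\Muk = \langle y, M_{u^{-1}}z\rangle_\Muk \quad\text{up to the truncation conventions.}\]
Indeed, both sides are nonzero only when $i-1=j$, resp.\ $i=j+1$, and the signs $(-1)^{j}$ must be matched. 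We check this carefully, including the boundary cases $i=0$ on $L^A_+[1]$ (where $M_{u^{-1}}$ kills the element) and $j=0$ on $L^A_-$ (where $M_{u^{-1}}$ simply lowers the power).

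For~\eqref{eq:iota-u}, we expand $\iota(\alpha)$ via~\eqref{eq-iota-tcft} as a sum of contractions $C_{y_j}$ applied after $\alpha$. Then $[M_{u^{-1}},\iota(\alpha)]-\iota([M_{u^{-1}},\alpha])$ reduces to terms of the form $C_{M_{u^{-1}}y_j}-C_{y_j}M_{u^{-1}}$, with $M_{u^{-1}}$ acting on the contracted output slot. These vanish by the adjointness identity above, possibly with a sign fix from $(-1)^j$ that should cancel.

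For~\eqref{eq:Delta-Mu}, we observe that $\Delta$ contracts only the $u^{0}$-components via $\langle Bx_0,y_0\rangle_\Muk$. We must compare $\Delta(M_{u^{-1}}(x\odot y))$, which equals $\Delta(M_{u^{-1}}x\odot y)+\Delta(x\odot M_{u^{-1}}y)$, with $M_{u^{-1}}\Delta(x\odot y)$. Since $M_{u^{-1}}$ never produces a $u^0$ term from a $u^0$ term, the relevant leading components are unaffected. We therefore verify that no nonzero $u^0$ contribution arises in either expression, and that $M_{u^{-1}}$ commutes with the remaining uncontracted factors by the derivation property.

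The main work is~\eqref{eq:pair-string}. First, $M_{u^{-1}}$ commutes with $b$. On $L^A_-$ we have $[M_{u^{-1}},uB]=0$ except on $u^0$-terms, where $uB$ kills the term in $L^A_-=L^A((u))/uL^A_+$ but $M_{u^{-1}}$ does not. On $L^A_+[1]$ the failure is at $u^1$: $M_{u^{-1}}(uBx)=Bx$, while $uB(M_{u^{-1}}(ux))=uBx$, and similarly for other boundary effects at $u^0$. We then compute the commutator on a single tensor factor. It is concentrated on the $u^0$-part of inputs, where it is given by $x u^0\mapsto \pm x$ composed with $B$-type corrections. We compare this with $\{\gamma_{0,1,1},-\}_\hbar$ from~\eqref{eq:def-bracket-hbar-algebraic}. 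Only $r=1$ contributes, since $\gamma_{0,1,1}$ has one input and one output, and that term is twisted sewing through $B$ of $\gamma_{0,1,1}$, which by Lemma~\ref{lem:string-evaluation} projects to the $u^0$-coefficient, with outputs and inputs of the other map restricted to circle power zero by $\pi_I,\pi_J$. Matching the two sides term by term, including the input side (precomposition) and the output side (postcomposition), and fixing the signs $(-1)^{|\alpha|}$ and $(-1)^{|\beta|(k'-r)}$, is the expected obstacle. We would first do the check for $\alpha\in\Hom(L^A_+[1],L^A_-)$, then extend to symmetric powers using that both sides are derivations.
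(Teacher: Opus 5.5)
Your route is the paper's: the paper only says that all three identities follow by direct inspection of the definitions. For \eqref{eq:Delta-Mu} it gives the reason you are reaching for, stated more sharply than you do: $M_{u^{-1}}$ maps $L^A_-$ into $u^{-1}L^A[u^{-1}]$, so its image has no $u^0$-component at all, while $\Delta(x\odot y)=\langle Bx_0,y_0\rangle_\Muk$ only sees $u^0$-components. Reducing \eqref{eq:pair-string} to a single input/output slot and extending by the Leibniz rule is also fine.

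The step that is wrong as written is the adjointness claim behind \eqref{eq:iota-u}. The commutator $[M_{u^{-1}},-]$ acts by $+M_{u^{-1}}$ on outputs and by $-M_{u^{-1}}$ on inputs. So when you expand $[M_{u^{-1}},\iota(\alpha)]-\iota([M_{u^{-1}},\alpha])$ using \eqref{eq-iota-tcft}, the surviving terms are $-\big(C_{M_{u^{-1}}y_j}+C_{y_j}\circ M_{u^{-1}}\big)$, with the second $M_{u^{-1}}$ acting on the contracted output. This is a sum, not a difference.

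These terms cancel because $M_{u^{-1}}$ is \emph{skew}-adjoint for \eqref{eq:generalized-Mukai-chain-level}. For $y=xu^{i}$ and $z=x'u^{-j}$ one has $\langle M_{u^{-1}}y,z\rangle_\Muk=-\langle y,M_{u^{-1}}z\rangle_\Muk$ exactly, with no truncation caveat (for $i=0$ both sides vanish). The minus sign is precisely the factor $(-1)^j$. With the self-adjointness you state, the two terms would add to twice a contraction. Your sketch reaches the right conclusion only through two compensating sign slips, whereas the $(-1)^j$ is the whole content of \eqref{eq:iota-u}.

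Similarly, in \eqref{eq:pair-string} the defect on $L^A_+$ sits at $u^0$, not at $u^1$. On $L^A_+$ one has $[M_{u^{-1}},b+uB](xu^i)=\delta_0^iBx$, since $M_{u^{-1}}(uBx)=Bx$ while $M_{u^{-1}}x=0$. On $L^A_-$ one has $[M_{u^{-1}},b+uB](xu^{-i})=-\delta_0^iBx$. These are $\Theta\circ\gamma_{0,1,1}$ and $-\gamma_{0,1,1}\circ\Theta$ respectively. Once the shift sign on $L^A_+[1]$ is included, they reproduce exactly the expression the paper identifies with $\{\gamma_{0,1,1},\alpha\}_1=\{\gamma_{0,1,1},\alpha\}_\hbar$ in the proof of \eqref{eq-string-adjoint}.
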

\begin{proof}
These equations can be checked directly from the definitions. In particular, the second equation holds because the operator $\Delta: L_-^A\otimes L_-^A\to \bbC$ vanishes on entries with strictly negative $u$-power.
\end{proof}

From the formula~\eqref{eq:cei-formula-main} of $\lbA_{g=0,1,n-1}$, we need to compute the commutators between $M_{u^{-1}}$ and the operators $S$, $R$ and $F$ defined in Equations~\eqref{eq:S-operator}, ~\eqref{eq:R-operator}~\eqref{eq:homotopy-F}.

%The notation $S$, $R$ and $F$ were defined as in Equations~\eqref{eq:S-operator}~\eqref{eq:R-operator}~\eqref{eq:homotopy-F}. Thus, in order to compute the commutator, we are led to compute the commutators between $M_{u^{-1}}$ and the operators $S$, $R$ and $F$. 

\begin{Lemma}\label{lem:string-leaf-cont}
We have the following identities.
%\begin{enumerate}
  %  \item As maps from $L_+^A[1] \to L_+^A[1]$, we have
    \begin{align}
        \label{commutator-S}
        [M_{u^{-1}}, S] &= F\circ \gamma_{0,1,1}\circ S\quad \in\quad{\sf End}(L_+^A[1]).\\
 %   \item As maps from $L_-^A \to L_-^A$, we have
        \label{commutator-R}
        [M_{u^{-1}}, R] &= R\circ \gamma_{0,1,1}\circ F
        \quad \in \quad {\sf End}(L_-^A).\\
  %  \item As maps from $L_-^A\to L_+^A[1]$, we have
        \label{commutator-F}
        [M_{u^{-1}},F] &= F\circ \gamma_{0,1,1}\circ F\quad \in \quad {\sf Hom}(L_-^A, L_+^A[1]).\\
[M_{u^{-1}},H]&=%H(M_{u^{-1}}(-),-)+H(-,M_{u^{-1}}(-))&=-H(\gamma_{0,1,1}F(-),-).
H(\gamma_{0,1,1}\circ F(-),-) \quad \in \quad \Hom(L_-^A\otimes L_-^A, \mathbb{C}).
\label{lem:edge-cancel-H}    
    \end{align}
%    \end{enumerate}
\end{Lemma}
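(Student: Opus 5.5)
These four identities are direct computations with the explicit power series $S=\id+\sum_{k\ge1}S_ku^k$, $R=S^{-1}$, and the formulas \eqref{eq:homotopy-F} for $F$ and \eqref{eq:homotopy-H} for $H$. The key input is \eqref{exceptional-constraction}: $\gamma_{0,1,1}$ keeps only the $u^0$-coefficient, so $\gamma_{0,1,1}(\sum_i x_iu^i)=x_0$ viewed as an element of $L^A_-$ in $u$-degree $0$. The whole argument is bookkeeping of how $M_{u^{-1}}$ shifts $u$-powers, together with the truncations built into $L^A_+[1]$ (where $M_{u^{-1}}$ kills $u^0$) and $L^A_-=L^A((u))/uL^A_+$ (where multiplication by $u$ kills $u^0$).

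I would first prove \eqref{commutator-S}. For $x\cdot u^i\in L^A_+[1]$, write $S(xu^i)=\sum_{k\ge0}S_kx\,u^{i+k}$. When $i\ge1$, applying $M_{u^{-1}}$ before or after $S$ gives the same result, so the commutator vanishes. When $i=0$, $SM_{u^{-1}}(x)=0$, while $M_{u^{-1}}S(x)=\sum_{k\ge1}S_kx\,u^{k-1}$. On the right-hand side, $\gamma_{0,1,1}S(x)=x$ in $u$-degree $0$. Then \eqref{eq:homotopy-F} gives $F(x)=-\sum_{j\ge0}u^j\sum_{l=0}^jS_lR_{j+1-l}x$. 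The identity $SR=\id$ says $\sum_{l=0}^{j+1}S_lR_{j+1-l}=0$, so $-\sum_{l=0}^{j}S_lR_{j+1-l}=S_{j+1}$, and hence $F(x)=\sum_{j\ge0}S_{j+1}x\,u^j$, which matches. I would be careful about the sign coming from the shift $[1]$; I expect it to be absorbed by the minus sign in the definition of $F$.

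The same argument proves \eqref{commutator-R}. On $L^A_-$, $R(yu^{-i})=\sum_kR_ky\,u^{k-i}$, truncated to nonpositive powers. The commutator $[M_{u^{-1}},R]$ picks up exactly the terms that cross from $u^1$ to $u^0$, and these equal $R_{i+1}y$ placed in degree $0$. The $u^0$-part of $F(yu^{-i})$ is $-R_{i+1}y$, because $S_0=\id$, so only $l=0$ contributes when $j=0$. Applying $\gamma_{0,1,1}$ and then $R$ gives $-R(R_{i+1}y)$. I would then check that the $R$ applied afterwards accounts for the remaining cross terms: every term of $R(yu^{-i})$ with exponent $k-i\le0$ is correct automatically, and only the $R_{i+1}$ coefficient is missing. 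Because $R$ acting on a degree-$0$ element in $L^A_-$ is the identity, the right-hand side is $\pm R_{i+1}y$, and the signs need matching.

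For \eqref{commutator-F}, I would write $F=\pi_+\circ(S\circ u^{-1}\text{-shift}\circ R)$, up to sign. In words, $F$ is the $u$-nonnegative part of $SR'$, where $R'$ collects the terms of $R$ discarded by the truncation. With this description, the Leibniz rule for $M_{u^{-1}}$ combined with the two previous identities should give $F\gamma_{0,1,1}F$. Concretely, I would expand both sides using the coefficient convolution $\sum_lS_lR_{m-l}$ and use $SR=RS=\id$ to telescope. Finally, \eqref{lem:edge-cancel-H} should follow from \eqref{eq:iota-F-H}, $\iota(x)(F(y))=H(y,x)$, because $M_{u^{-1}}$ commutes with $\iota$ by \eqref{eq:iota-u}. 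Alternatively, I would expand \eqref{eq:homotopy-H} directly. Either way, the case $j=0$ of the second slot is annihilated by $M_{u^{-1}}$, and it produces the boundary term $H(\gamma_{0,1,1}F(-),-)$.

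I expect the main obstacle to be \eqref{commutator-F}, both the Koszul signs and the double truncation, since $F$ goes from $L^A_-$ to $L^A_+[1]$ and $M_{u^{-1}}$ truncates differently on the source and the target. I would first verify it by brute-force coefficient comparison in the case $F(xu^{-i})$, using $\sum_{l=0}^{m}S_lR_{m-l}=\delta_{m,0}$ to rewrite $F(xu^{-i})=\sum_j u^j\sum_{l>j}S_lR_{i+j+1-l}x$. In this form, the correction term $F\gamma_{0,1,1}F$ is visibly the product of the $j=0$ piece of the inner $F$ with the outer $F$.
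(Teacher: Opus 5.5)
Your proposal is correct and follows the paper's route. All four identities are direct coefficient computations from the explicit series for $S$, $R$, $F$, $H$ and the formula for $\gamma_{0,1,1}$, using $SR=\id$; your treatment of \eqref{commutator-S} is exactly the one the paper writes out, and the sign you leave open in \eqref{commutator-R} does match, since both sides equal $-R_{i+1}y$ on $y\cdot u^{-i}$. Your alternative derivation of \eqref{lem:edge-cancel-H} from \eqref{eq:iota-F-H} also works, but it needs three things you do not state: \eqref{commutator-F} as an input, \eqref{eq:iota-u} read as the anti-self-adjointness of $M_{u^{-1}}$ for the pairing between $L^A_+$ and $L^A_-$, and $[M_{u^{-1}},H]$ read as $-H(M_{u^{-1}}(-),-)-H(-,M_{u^{-1}}(-))$.
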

\begin{proof}
The proof is a direct computation using the formula of $\gamma_{0,1,1}$ in~\eqref{exceptional-constraction} and   explicit formulas (Equations~\eqref{eq:S-operator}~\eqref{eq:R-operator}~\eqref{eq:homotopy-F}~\eqref{eq:homotopy-H}) of $S$, $R$, $F$ and $H$. 

Let us prove equation~\eqref{commutator-S}. 
For any element $x\cdot u^i\in L_+^A[1]$, if $i\geq 1$, we have
$$[M_{u^{-1}},S](x\cdot u^i)
%=-F\circ \gamma_{0,1,1}\circ S(\alpha u^k)
=M_{u^{-1}}(S(x\cdot u^i))-S(x\cdot u^{i-1})
=S(x)u^{i-1}-S(x)u^{i-1}
=0.$$ 
If $i=0$, we have
\begin{align*}
    [M_{u^{-1}},S](x)  =M_{u^{-1}}(S(x))-0
    =M_{u^{-1}} \left( x + \sum_{k=1}^{\infty} S_k(x)u^k\right)
    =\sum_{j=0}^{\infty} S_{j+1}(x) u^j.
\end{align*}
Using the formula of $\gamma_{0,1,1}$ in~\eqref{exceptional-constraction}, the formula of $F$, and the relation $R=S^{-1}$, we have
\begin{align*}
    F\circ \gamma_{0,1,1}\circ S(x\cdot u^i)
    =
   \delta_0^i F(x) = - \delta_0^i\sum_{j=0}^{\infty} \sum_{l=0}^{j} S_l R_{j+1-l}(x)u^{j}  = \delta_0^i\sum_{j=0}^{\infty}S_{j+1}(x)u^{j}.
\end{align*} 
Thus, we obtain equation~\eqref{commutator-S}.
The other two equations can be proved similarly.
%The equations in $(2)$ and $(3)$ are also such direct verifications.
\end{proof}

These formulas may be graphically represented as follows:
%at an incoming leaf as
%Similarly, the formula in~\eqref{commutator-R} may be depicted at an outgoing leaf as
%And the formula in~\eqref{commutator-F} is depicted at an edge as
\[
\text{Equation}~\eqref{commutator-S}: \quad 
\begin{tikzpicture}
[baseline={([yshift=.3ex]current bounding
      box.center)},scale=0.5] 
\draw[->-,thick] (0,4) to [] (0,0);
\draw[thick] (-.5,3.5) -- (.5,3.5);
\draw (1.9,3.5) node {$(-M_{u^{-1}})$};
\draw (.6,2) node {$S$};
\filldraw[black] (0,0) circle (4pt) node[anchor=north]{};
\end{tikzpicture}
=\begin{tikzpicture}
[baseline={([yshift=.3ex]current bounding
      box.center)},scale=0.5] 
\draw[->-,thick] (0,4) to [] (0,0);
\draw[thick] (-.5,.5) -- (.5,.5);
\draw (1.9,.5) node {$(-M_{u^{-1}})$};
\draw (.6,2) node {$S$};
\filldraw[black] (0,0) circle (4pt) node[anchor=north]{};
\end{tikzpicture} +\;\;\;\; 
\begin{tikzpicture}
[baseline={([yshift=.3ex]current bounding
      box.center)},scale=0.5] 
\draw[->-,thick] (0,4) to [] (0,2);
\draw[->-,thick] (0,2) to [] (0,0);
\draw (.6,3.5) node {$S$};
\draw (.6,1) node {$F$};
\filldraw[black] (0,0) circle (4pt) node[anchor=north]{};
\filldraw[black] (0,2) circle (4pt) node[anchor=west]{$\gamma_{0,1,1}$};
\end{tikzpicture}\]
\[
\text{Equation}~\eqref{commutator-R}: \quad 
\begin{tikzpicture}
[baseline={([yshift=.3ex]current bounding
     box.center)},scale=0.5] 
\draw[->-,thick] (0,4) to [] (0,0);
\draw[thick] (-.5,3.5) -- (.5,3.5);
\draw (1.9,3.5) node {$(-M_{u^{-1}})$};
\draw (.6,2) node {$R$};
\filldraw[black] (0,4) circle (4pt) node[anchor=north]{};
\end{tikzpicture}
=\begin{tikzpicture}[baseline={([yshift=.3ex]current bounding
      box.center)},scale=0.5] 
\draw[->-,thick] (0,4) to [] (0,0);
\draw[thick] (-.5,.5) -- (.5,.5);
\draw (1.9,.5) node {$(-M_{u^{-1}})$};
\draw (.6,2) node {$R$};
\filldraw[black] (0,4) circle (4pt) node[anchor=north]{};
\end{tikzpicture} +\;\;\;\; \begin{tikzpicture}[baseline={([yshift=.3ex]current bounding
      box.center)},scale=0.5] 
\draw[->-,thick] (0,4) to [] (0,2);
\draw[->-,thick] (0,2) to [] (0,0);
\draw (.6,3.2) node {$F$};
\draw (.6,1) node {$R$};
\filldraw[black] (0,4) circle (4pt) node[anchor=north]{};
\filldraw[black] (0,2) circle (4pt) node[anchor=west]{$\gamma_{0,1,1}$};
\end{tikzpicture}\]
%And the formula in~\eqref{commutator-F} is depicted at an edge as
%\medskip
\[
\text{Equation}~\eqref{commutator-F}: \quad 
\begin{tikzpicture}[baseline={([yshift=.3ex]current bounding
      box.center)},scale=0.5] 
      \filldraw[black] (0,4) circle (4pt) node[anchor=south]{};
\draw[->-,thick] (0,4) to [] (0,0);
\draw[thick] (-.5,3.5) -- (.5,3.5);
\draw (1.9,3.5) node {$(-M_{u^{-1}})$};
\draw (.6,2) node {$F$};
\filldraw[black] (0,0) circle (4pt) node[anchor=north]{};
\end{tikzpicture}=\begin{tikzpicture}[baseline={([yshift=.3ex]current bounding
      box.center)},scale=0.5] 
                \filldraw[black] (0,4) circle (4pt) node[anchor=south]{};
\draw[->-,thick] (0,4) to [] (0,0);
\draw[thick] (-.5,.5) -- (.5,.5);
\draw (1.9,.5) node {$(-M_{u^{-1}})$};
\draw (.6,2) node {$F$};
\filldraw[black] (0,0) circle (4pt) node[anchor=north]{};
\end{tikzpicture} +\;\;\;\; \begin{tikzpicture}[baseline={([yshift=.3ex]current bounding
      box.center)},scale=0.5] 
            \filldraw[black] (0,4) circle (4pt) node[anchor=south]{};
\draw[->-,thick] (0,4) to [] (0,2);
\draw[->-,thick] (0,2) to [] (0,0);
\draw (.6,3.5) node {$F$};
\draw (.6,1) node {$F$};
\filldraw[black] (0,0) circle (4pt) node[anchor=north]{};
\filldraw[black] (0,2) circle (4pt) node[anchor=west]{$\gamma_{0,1,1}$};
\end{tikzpicture}.
\]

%\begin{example}
We give an example of the graphical calculation when $n=4$. Consider the first graph of $\lbA_{0,1,3}$ in Equation~\eqref{eq:lbA-013}. We have 
\iffalse 
    &\frac{1}{3!} 
    \begin{tikzpicture}[baseline={(current bounding
    		box.center)},scale=0.5] \draw [thick,directed] (3.4,4) to (3.4,2);
    	\node at (3.4,2) {$\bullet$};
    	\draw[thick,directed] (3.4,2) to (1.4,0);
    	\draw[thick,directed] (3.4,2) to (3.4,0);
    	\draw[thick,directed] (3.4,2) to (5.4,0);
    \end{tikzpicture} \circ (-M_{u^{-1}})\\
    =
    \fi
\begin{align*}
    &  \frac{1}{3!} 
    \begin{tikzpicture}[baseline={(current bounding
    		box.center)},scale=0.5] \draw [thick,directed] (3.4,4) to (3.4,2);
    	\node at (3.4,2) {$\bullet$};
        \node at (5.5,3.5) {$(-M_{u^{-1}})$};
        \draw[thick] (3,3.5) -- (3.8,3.5);
    	\draw[thick,directed] (3.4,2) to (1.4,0);
    	\draw[thick,directed] (3.4,2) to (3.4,0);
    	\draw[thick,directed] (3.4,2) to (5.4,0);
    \end{tikzpicture} \\
     =&   \frac{1}{3!} 
    \begin{tikzpicture}[baseline={(current bounding
    		box.center)},scale=0.5] \draw [thick,directed] (3.4,4) to (3.4,2);
    	\node at (3.4,2) {$\bullet$};
        \node at (5.5,2.5) {$(-M_{u^{-1}})$};
        \draw[thick] (3,2.5) -- (3.8,2.5);
    	\draw[thick,directed] (3.4,2) to (1.4,0);
    	\draw[thick,directed] (3.4,2) to (3.4,0);
    	\draw[thick,directed] (3.4,2) to (5.4,0);
    \end{tikzpicture} +\frac{1}{3!} 
    \begin{tikzpicture}[baseline={(current bounding
    		box.center)},scale=0.5] 
            \draw [thick,directed] (3.4,4) to (3.4,3);
            \draw [thick,directed] (3.4,3) to (3.4,2);
    	\node at (3.4,3) {$\bullet$};
        \node at (4.5,3) {$\gamma_{0,1,1}$};
    	\node at (3.4,2) {$\bullet$};
    	\draw[thick,directed] (3.4,2) to (1.4,0);
    	\draw[thick,directed] (3.4,2) to (3.4,0);
    	\draw[thick,directed] (3.4,2) to (5.4,0);
    \end{tikzpicture}\\
    = & \frac{1}{2!} 
    \begin{tikzpicture}[baseline={(current bounding
    		box.center)},scale=0.5] \draw [thick,directed] (3.4,4) to (3.4,2);
    	\node at (3.4,2) {$\bullet$};
        \node at (1.2,1.8) {$(-M_{u^{-1}})$};
        \draw[thick] (2.7,1.8) -- (3.2,1.3);
    	\draw[thick,directed] (3.4,2) to (1.4,0);
    	\draw[thick,directed] (3.4,2) to (3.4,0);
    	\draw[thick,directed] (3.4,2) to (5.4,0);
    \end{tikzpicture}  +  \frac{1}{3!} 
    \begin{tikzpicture}[baseline={(current bounding
    		box.center)},scale=0.5] \draw [thick,directed] (3.4,4) to (3.4,2);
    	\node at (3.4,2) {$\bullet$};
        \node at (4.5,2) {$\gamma_{0,1,3}$};
    	\draw[thick,directed] (3.4,2) to (1.4,0);
    	\draw[thick,directed] (3.4,2) to (3.4,0);
    	\draw[thick,directed] (3.4,2) to (5.4,0);
    \end{tikzpicture} +\frac{1}{3!} 
    \begin{tikzpicture}[baseline={(current bounding
    		box.center)},scale=0.5] 
            \draw [thick,directed] (3.4,4) to (3.4,3);
            \draw [thick,directed] (3.4,3) to (3.4,2);
    	\node at (3.4,3) {$\bullet$};
        \node at (4.5,3) {$\gamma_{0,1,1}$};
    	\node at (3.4,2) {$\bullet$};
    	\draw[thick,directed] (3.4,2) to (1.4,0);
    	\draw[thick,directed] (3.4,2) to (3.4,0);
    	\draw[thick,directed] (3.4,2) to (5.4,0);
    \end{tikzpicture}.\\
    =& (-M_{u^{-1}})\circ  \frac{1}{3!} 
    \begin{tikzpicture}[baseline={(current bounding
    		box.center)},scale=0.5] \draw [thick,directed] (3.4,4) to (3.4,2);
    	\node at (3.4,2) {$\bullet$};
    	\draw[thick,directed] (3.4,2) to (1.4,0);
    	\draw[thick,directed] (3.4,2) to (3.4,0);
    	\draw[thick,directed] (3.4,2) to (5.4,0);
    \end{tikzpicture} + \frac{1}{2!} 
    \begin{tikzpicture}[baseline={(current bounding
    		box.center)},scale=0.5] \draw [thick,directed] (3.4,4) to (3.4,2);
    	\node at (3.4,2) {$\bullet$};
    	\draw[thick,directed] (3.4,2) to (2.4,1);
        \draw[thick,directed] (2.4,1) to (1.4,0);
        \node at (2.4,1) {$\bullet$};
        \node at (1.5,1.2) {$\gamma_{0,1,1}$};
    	\draw[thick,directed] (3.4,2) to (3.4,0);
    	\draw[thick,directed] (3.4,2) to (5.4,0);
    \end{tikzpicture}  +  \frac{1}{3!} 
    \begin{tikzpicture}[baseline={(current bounding
    		box.center)},scale=0.5] \draw [thick,directed] (3.4,4) to (3.4,2);
    	\node at (3.4,2) {$\bullet$};
        \node at (4.5,2) {$\gamma_{0,1,3}$};
    	\draw[thick,directed] (3.4,2) to (1.4,0);
    	\draw[thick,directed] (3.4,2) to (3.4,0);
    	\draw[thick,directed] (3.4,2) to (5.4,0);
    \end{tikzpicture} +\frac{1}{3!} 
    \begin{tikzpicture}[baseline={(current bounding
    		box.center)},scale=0.5] 
            \draw [thick,directed] (3.4,4) to (3.4,3);
            \draw [thick,directed] (3.4,3) to (3.4,2);
    	\node at (3.4,3) {$\bullet$};
        \node at (4.5,3) {$\gamma_{0,1,1}$};
    	\node at (3.4,2) {$\bullet$};
    	\draw[thick,directed] (3.4,2) to (1.4,0);
    	\draw[thick,directed] (3.4,2) to (3.4,0);
    	\draw[thick,directed] (3.4,2) to (5.4,0);
    \end{tikzpicture}.
\end{align*}

Thus the commutator $[M_{u^{-1}},-]$ is given by 
\begin{equation}\label{eq:example-string-013-i}
\left[\ M_{u^{-1}},\frac{1}{3!} 
    \begin{tikzpicture}[baseline={(current bounding
    		box.center)},scale=0.5] \draw [thick,directed] (3.4,4) to (3.4,2);
    	\node at (3.4,2) {$\bullet$};
    	\draw[thick,directed] (3.4,2) to (1.4,0);
    	\draw[thick,directed] (3.4,2) to (3.4,0);
    	\draw[thick,directed] (3.4,2) to (5.4,0);
    \end{tikzpicture}\ \right] =   \frac{1}{2!} 
    \begin{tikzpicture}[baseline={(current bounding
    		box.center)},scale=0.5] \draw [thick,directed] (3.4,4) to (3.4,2);
    	\node at (3.4,2) {$\bullet$};
    	\draw[thick,directed] (3.4,2) to (2.4,1);
        \draw[thick,directed] (2.4,1) to (1.4,0);
        \node at (2.4,1) {$\bullet$};
        \node at (1.5,1.2) {$\gamma_{0,1,1}$};
    	\draw[thick,directed] (3.4,2) to (3.4,0);
    	\draw[thick,directed] (3.4,2) to (5.4,0);
    \end{tikzpicture}  + \frac{1}{3!} 
    \begin{tikzpicture}[baseline={(current bounding
    		box.center)},scale=0.5] \draw [thick,directed] (3.4,4) to (3.4,2);
    	\node at (3.4,2) {$\bullet$};
        \node at (4.5,2) {$\gamma_{0,1,3}$};
    	\draw[thick,directed] (3.4,2) to (1.4,0);
    	\draw[thick,directed] (3.4,2) to (3.4,0);
    	\draw[thick,directed] (3.4,2) to (5.4,0);
    \end{tikzpicture} +\frac{1}{3!} 
    \begin{tikzpicture}[baseline={(current bounding
    		box.center)},scale=0.5] 
            \draw [thick,directed] (3.4,4) to (3.4,3);
            \draw [thick,directed] (3.4,3) to (3.4,2);
    	\node at (3.4,3) {$\bullet$};
        \node at (4.5,3) {$\gamma_{0,1,1}$};
    	\node at (3.4,2) {$\bullet$};
    	\draw[thick,directed] (3.4,2) to (1.4,0);
    	\draw[thick,directed] (3.4,2) to (3.4,0);
    	\draw[thick,directed] (3.4,2) to (5.4,0);
    \end{tikzpicture}.\end{equation}

We observe that the sum of graphs on the right hand side of Equation~\eqref{eq:example-string-013-i} is precisely the contribution of $(\cK_*\rho^{A,\tw,\omega_A}_*\mc_2)_{0,1,3}^\epsilon$ from the first term in~\eqref{eq:lbA-013}.    

\subsection{A proof of string equation.}
\subsubsection{A proof in genus zero.}
\label{para:string-genus-zero} 
%Now we give a proof of the string equation~\eqref{eq:string-equation} in genus zero.
%According to the previous discussions, we need to prove the following equation
%\[ [M_{u^{-1}},\overline{\iota}(F^{A,s}_{0,n})]=(\cK_*\rho^{A,\tw,\omega_A}_*\mc_2)_{0,1,n-1}^\epsilon.\] 
%This is equivalent to \[ -\lbA_{g,1,n-1}M_{u^{-1}}= - M_{u^{-1}} \lbA_{g,1,n-1} + (\cK_*\rho^{A,\tw,\omega_A}_*\mc_2)_{0,1,n-1}^\epsilon.\]
%The operators associated with vertices, leaves, and edges are as labeled in the graph. 
%Now we prove equation~\eqref{string-main-identity} when $g=0$.
We first prove Equation~\eqref{string-main-identity} when $g=0$ and obtain the following.
\begin{Proposition}\label{prop:string-genus-zero}
The string equation~\eqref{eq:string-equation} holds when $g=0$.
\end{Proposition}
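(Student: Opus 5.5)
By Lemma~\ref{lem:string-lhs-mc}, the preceding lemma identifying the right-hand side with $[M_{u^{-1}},\lbA_{0,1,n-1}]$, and the gauge equivalence~\eqref{eq:push-forward-equal} from Theorem~\ref{thm:mc-recursion}, it suffices to prove~\eqref{string-main-identity} for $g=0$. We will in fact prove it at the chain level:
\[
[M_{u^{-1}},\lbA_{0,1,n-1}]=(\cK_*\rho^{A,\tw,\omega_A}_*\mc_2)^\epsilon_{0,1,n-1}.
\]
In genus zero every weight equals $1$, every edge of a partially directed graph is a directed tree edge carrying $F$, and no $H$, $\delta$ or $\Theta$ contributions occur. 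This makes a term-by-term graphical comparison feasible.

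First I will expand the right-hand side as in the proofs of Lemma~\ref{lem:string-lhs-mc} and Proposition~\ref{prop:lhs-dilaton}. The $\epsilon$-component of $\cK_*$ applied to $\hbA+\epsilon\gamma$ is a sum over genus zero partially directed trees $\GG$ of type $(0,1,n-1)$ with one distinguished vertex $v$ decorated by $\gamma_{0,k(v),l(v)}$, while the other vertices carry $\hbA$. By Lemma~\ref{lem:string-evaluation}, $\gamma_{0,2,0}=0$, so the distinguished vertex is either a stable vertex carrying $[M_{u^{-1}},\hbA_{0,k,l}]$, or a bivalent vertex carrying $\gamma_{0,1,1}$. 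Such a bivalent vertex is inserted on an edge ($F\gamma_{0,1,1}F$), on the incoming leaf ($F\gamma_{0,1,1}S$), or on an outgoing leaf ($R\gamma_{0,1,1}F$). These graphs are not stable, but they are allowed in $\cK_*$ because the exceptional term $f(\hcV_{0,1,2})$ sits in $\mc_2$.

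Second I will expand the left-hand side. In $M_{u^{-1}}\rho_\GG-\rho_\GG M_{u^{-1}}$ I push $M_{u^{-1}}$ from the incoming leaf through $\GG$ toward the outgoing leaves using the Leibniz rule: $M_{u^{-1}}$ acts as a derivation on symmetric powers, and $\hbA$ is $k=1$ at the root. Each time it crosses a piece, it produces a commutator: $[M_{u^{-1}},S]=F\gamma_{0,1,1}S$ at the incoming leaf by~\eqref{commutator-S}, $[M_{u^{-1}},\hbA_{0,k,l}]=\gamma_{0,k,l}$ at each vertex, $[M_{u^{-1}},F]=F\gamma_{0,1,1}F$ on each edge by~\eqref{commutator-F}, and $[M_{u^{-1}},R]=R\gamma_{0,1,1}F$ at each outgoing leaf by~\eqref{commutator-R}. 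The $M_{u^{-1}}$ that reaches the outputs cancels against the corresponding term of the commutator, exactly as in the worked example~\eqref{eq:example-string-013-i}. Hence $[M_{u^{-1}},\rho_\GG]$ equals a sum over the vertices, edges and leaves of $\GG$, with the corresponding $\gamma$-insertion in each summand.

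Third I will match the combinatorial factors. A graph on the right with a bivalent $\gamma_{0,1,1}$ vertex on an edge or leaf $e$ of $\GG$ corresponds bijectively to the pair $(\GG,e)$. The key step is to check that $1/|\Aut|$ agrees on both sides. Inserting a vertex on one of several symmetric outgoing leaves breaks symmetry, and the orbit count restores the factor; the example shows $\frac{1}{3!}\cdot 3=\frac{1}{2!}$. The analogous orbit-stabilizer argument applies to edges and vertices. I expect this automorphism and orbit bookkeeping, together with the Koszul signs coming from the odd degree of $\gamma_{0,1,1}$ and the shift in $L^A_+[1]$, to be the main obstacle. I would handle it by marking the inserted location and comparing sums over marked graphs, where the automorphism factors are transparent.
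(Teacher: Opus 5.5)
Your proposal is correct and is essentially the paper's own argument: reduce to the chain-level form of \eqref{string-main-identity} at $g=0$, push $-M_{u^{-1}}$ down each genus-zero tree, and identify the resulting single-insertion terms with the $\epsilon$-component of $\cK_*\rho^{A,\tw,\omega_A}_*\mc_2$. The pushing step uses Lemma~\ref{lem:string-leaf-cont} at leaves and edges and $[M_{u^{-1}},\hbA_{0,1,l}]=\gamma_{0,1,l}$ at vertices. Your explicit orbit--stabilizer matching of the $1/|\Aut|$ factors supplies a step the paper leaves implicit.
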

\begin{proof}
We fix a partially directed graph $\GG\in \Gamma((g=0,1,n-1))$, and consider the term $- \rho^{A,\tw}_\GG M_{u^{-1}}$ on the left hand side. We may think of the operator $-M_{u^{-1}}$ as it flows along $\GG$ from top to bottom. As it passes the (unique) incoming leaf, using the identity $(1)$ in Lemma~\ref{lem:string-leaf-cont} we see that it yields a binary vertex at the incoming leaf labeled by the operator $\gamma_{0,1,1}$.

Continuing to pass the operator $-M_{u^{-1}}$ at a vertex, we obtain the commutator $[M_{u^{-1}}, \hbA_{g(v),k(v),l(v)}]$, which is equal to $\gamma_{g(v),k(v),l(v)}$ by definition. Then, we pass the operator $-M_{u^{-1}}$ along an internal edge. In this case, using the identity $(3)$ in Lemma~\ref{lem:string-leaf-cont}, we obtain a binary vertex at the edge again labeled by the operator $\gamma_{0,1,1}$. Continuing in this way, we keep passing the operator $-M_{u^{-1}}$ down until finally it is at one of the outgoing leaves of the tree $\GG$. Then, using the identity $(2)$ in Lemma~\ref{lem:string-leaf-cont} we obtain a binary vertex at the outgoing leaf labeled by the operator $\gamma_{0,1,1}$. In the end, this process yields the term $-M_{u^{-1}}\lbA_{g,1,n-1}$, while the intermediate commutator terms involving an operator from the $\gamma$'s give precisely the desired component $(\cK_*\rho^{A,\tw,\omega_A}_*\mc_2)_{0,1,n-1}^\epsilon$.
\end{proof}

\subsubsection{An example in genus one} 
In general, unlike the identity~\eqref{string-main-identity} as $b$-homology classes, the elements
$[M_{u^{-1}},\lbA_{g,1,n-1}]$ and 
%(\mathcal{K}_*\rho_*^{A,\tw}\hcV)_{g,1,n-1}
$(\cK_*\rho^{A,\tw,\omega_A}_*\mc_2)_{g,1,n-1}^\epsilon$
would not be equal in chain level. 
It would only hold as homology classes.

For example, when $(g,n)=(1,1)$, using the graph sum formula in Equation~\eqref{eq:lbA-110}, 
we have the commutator %$[M_{u^{-1}},\lbA_{1,1,0}]$ is given by
\begin{align*} 
[M_{u^{-1}},\lbA_{1,1,0}]
=\left(\begin{tikzpicture}[baseline={(current bounding
box.center)},scale=0.5] \draw [thick,directed] (3.4,4) to (3.4,2);
\node at (3.4,2) {$\bullet$}; \node at (3.4,1.4) {\small $g=1$}; 
\end{tikzpicture}
+\frac{1}{2}\;\;\begin{tikzpicture}[baseline={(current
bounding box.center)},scale=0.5] \draw [thick,directed] (3.4,4) to
(3.4,2); \node at (3.4,2) {$\bullet$}; \draw [thick]
(3.4,1) circle [radius=1];
\end{tikzpicture}\right) \circ (-M_{u^{-1}})
\end{align*}
Meanwhile, the component $(\cK_*\rho^{A,\tw,\omega_A}_*\mc_2)_{1,1,0}^\epsilon$ is given by
\begin{align*} 
\left( \begin{tikzpicture}[baseline={(current bounding
box.center)},scale=0.5] \draw [thick,directed] (3.4,4) to (3.4,2);
\node at (3.4,2) {$\bullet$}; \node at (4.3,2) {\small $\gamma_{1,1,0}$}; 
\node at (3.4,1.2) {\small $g=1$};
\end{tikzpicture}+\begin{tikzpicture}[baseline={(current
bounding box.center)},scale=0.5] \draw [thick,directed] (3.4,4) to
(3.4,2); \node at (3.4,2) {$\bullet$}; \node at (4.5,2) {\small $\gamma_{0,1,1}$};\draw [thick,directed] (3.4,2) to (3.4,0); \node at (3.4,0) {$\bullet$}; \node at (3.4,-.4) {\small $g=1$};
\end{tikzpicture}\right)+ \left( \frac{1}{2}\;\;\begin{tikzpicture}[baseline={(current
bounding box.center)},scale=0.5] \draw [thick,directed] (3.4,4) to
(3.4,2); \node at (3.4,2) {$\bullet$}; \node at (4.5,2.5) {\small $\gamma_{0,1,2}$};\draw [thick]
(3.4,1) circle [radius=1];
\end{tikzpicture} +\frac{1}{2}\begin{tikzpicture}[baseline={(current
bounding box.center)},scale=0.5] \draw [thick,directed] (3.4,4) to
(3.4,2); \node at (3.4,2) {$\bullet$}; \node at (4.5,2) {\small $\gamma_{0,1,1}$};\draw [thick,directed] (3.4,2) to (3.4,0); \node at (3.4,0) {$\bullet$};\draw [thick]
(3.4,-1) circle [radius=1];
\end{tikzpicture}\right)
+\frac{1}{2}
\begin{tikzpicture}[baseline={(current
bounding box.center)},scale=0.5] 
\draw [thick,directed] (3.4,4) to (3.4,2); 
\node at (3.4,2) {$\bullet$}; 
\node at (3.4,-2) {$\bullet$};
\draw [ultra thick,directed] (3.4,2) to [out=240, in=120] (3.4,-2); 
\draw [thick] (3.4,2) to [out=300, in=60] (3.4,-2); 
\node at (3.4,-2.5) {\small $\gamma_{0,1,1}$};
\end{tikzpicture}
+\frac{1}{2}
\begin{tikzpicture}[baseline={(current
bounding box.center)},scale=0.5] \draw [thick,directed] (3.4,4) to
(3.4,2); \node at (3.4,2) {$\bullet$}; \node at (3.4,-2) {$\bullet$};
\draw [thick,directed] (3.4,2) to [out=240, in=120] (3.4,-2); \draw
[ultra thick] (3.4,2) to [out=300, in=60] (3.4,-2); \node at (3.4,-2.5) {\small $\gamma_{0,1,1}$};
\end{tikzpicture}.
\end{align*}

Using Lemma~\ref{lem:string-evaluation} and Lemma~\ref{lem:string-leaf-cont}, we obtain the following
\begin{align*} 
(\cK_*\rho^{A,\tw,\omega_A}_*\mc_2)_{1,1,0}^\epsilon
-
[M_{u^{-1}},\lbA_{1,1,0}]= \frac{1}{2}\begin{tikzpicture}[baseline={(current
bounding box.center)},scale=0.5] \draw [thick,directed] (3.4,4) to
(3.4,2); \node at (3.4,2) {$\bullet$}; \node at (3.4,-2) {$\bullet$};
\draw [ultra thick,directed] (3.4,2) to [out=240, in=120] (3.4,-2); \draw
[thick] (3.4,2) to [out=300, in=60] (3.4,-2); \node at (3.4,-2.5) {\small $\gamma_{0,1,1}$};
\end{tikzpicture}+\frac{1}{2}\begin{tikzpicture}[baseline={(current
bounding box.center)},scale=0.5] \draw [thick,directed] (3.4,4) to
(3.4,2); \node at (3.4,2) {$\bullet$}; \node at (3.4,-2) {$\bullet$};
\draw [thick,directed] (3.4,2) to [out=240, in=120] (3.4,-2); \draw
[ultra thick] (3.4,2) to [out=300, in=60] (3.4,-2); \node at (3.4,-2.5) {\small $\gamma_{0,1,1}$};
\end{tikzpicture} -\frac{1}{2}\begin{tikzpicture}[baseline={(current
bounding box.center)},scale=0.5] \draw [thick,directed] (3.4,4) to
(3.4,2); \node at (3.4,2) {$\bullet$}; \draw [thick] (3,2.4) to (3.2, 1.6);\draw [thick]
(3.4,1) circle [radius=1];
\end{tikzpicture} - \frac{1}{2}\begin{tikzpicture}[baseline={(current
bounding box.center)},scale=0.5] \draw [thick,directed] (3.4,4) to
(3.4,2); \node at (3.4,2) {$\bullet$}; \draw [thick] (3.8,2.4) to (3.6, 1.6);\draw [thick]
(3.4,1) circle [radius=1];
\end{tikzpicture}.
\end{align*}

As in the last section, the short line segment is labeled by the operator $-M_{u^{-1}}$. This equality shows that, unlike the genus zero case, we do not get an exact cancellation.
%Equation~\eqref{string-main-identity} holds as homology classes.

However, the sum of the four terms above is equal to zero after taking the $b$-homology. 
Indeed, using Equation~\eqref{lem:edge-cancel-H}, one can show that the difference above is bounded by the following element
\[ \frac{1}{2}\begin{tikzpicture}[baseline={(current
bounding box.center)},scale=0.5] \draw [thick,directed] (3.4,4) to
(3.4,2); \node at (3.4,2) {$\bullet$}; \node at (3.4,-2) {$\bullet$};
\draw [ultra thick,directed] (3.4,2) to [out=240, in=120] (3.4,-2); \draw
[ultra thick] (3.4,2) to [out=300, in=60] (3.4,-2); \node at (3.4,-2.5) {\small $\gamma_{0,1,1}$};
\end{tikzpicture}.\]

\subsubsection{A proof in higher genus}
A complete proof of the string equation~\eqref{eq:string-equation} (for an arbitrary genus) will be given in Section~\ref{app:proof-string}. Instead of working with the explicit formula of $\mathcal{K}$, we will use a different strategy to work with its corresponding pseudo-isotopy families of DGLA's. This is not surprising since, already in~\cite{CalTu1}, the map $\mathcal{K}$ was constructed using pseudo-isotopies.

\section{The divisor equation}\label{sec:divisor}

In this section, we prove the divisor equation in the context of CEI associated with a family of cyclic $A_\infty$-algebras, assuming strict unitality, smoothness, and properness. 

\subsection{The symmetric Getzler connection}
\label{app:getzler}
In order to discuss the variational properties of CEI, it is necessary to use Getzler connection~\cite{Get} on the periodic cyclic homology, often called Getzler-Gauss-Manin connection. Let us briefly recall this construction. Let $A$ be a strictly unital $A_\infty$-algebra over $\bbC$. Analogous to the definition of Hochschild chain complex in~\eqref{eq:hoch-chain}, the reduced Hochschild cochain complex is defined by
\[ C^\bullet(A):= \prod_{n\geq 0} {\sf Hom} (\overline{A}[1]^{\otimes n} , A).\]
 The algebraic structure on the Hochschild pair $( C^\bullet(A), C_\bullet(A) )$ is extremely rich. Indeed, for a Hochschild cochain $\varphi$, Getzler~\cite{Get} defines operators 
\begin{align*}
b\{\varphi\} &: C_\bullet(A) \ra C_\bullet(A),\\
B\{\varphi\} &: C_\bullet(A) \ra C_\bullet(A).
\end{align*}
They are explicitly defined as follows. As in Subsection~\ref{subsec:tcft}, we use $@$ to denote the Koszul sign.
\begin{enumerate}
    \item We set\footnote{This operator $B\{\varphi\}$ is denoted by $B^{1,1}(\varphi,-)$ in~\cite[Section 3.6]{She}.}
    \begin{align}\label{eq:B11}
	B\{\varphi\}(a_0|a_1\ldots a_n):= & \sum (-1)^{\star_2} \bone_A | a_{j+1}\ldots \varphi(a_{i+1}\ldots)\ldots a_n a_0\ldots a_j
\end{align}
where $\star_2=|\varphi|'(|a_{j+1}'|+\ldots+|a_i|')+@$. 
\item 
Let $m:=\prod_{l\geq 1} m_l \in C^\bullet(A)$ be the $A_\infty$-structure map. We set
 \begin{align}\label{eq:b11}
   b\{\varphi\}(a_0|a_1\ldots a_n):= & \sum (-1)^{\star_3} m(a_{j+1} \ldots \varphi(a_{k+1}\ldots)\ldots a_0 \ldots a_i)|a_{i+1}\ldots a_j 
   \end{align}
	where $\star_3= |\varphi|'(|a_{j+1}|'+\ldots+|a_k|')+ @$.
    The action of $C^\bullet(A)$ on $C_\bullet(A)$ is compatible with the differential and induces the {\em cap product map} in cohomology denoted by
    $$[\varphi]\cap[\alpha]:= [ b\{\varphi\}(\alpha)]$$ 
    for $[\varphi]\in HH^\bullet(A)$ and $[\alpha]\in HH_\bullet(A)$, see~\cite[Theorem 1.9]{Get} or~\cite[Example 3.14]{She}.
\end{enumerate}
We remark that $b\{\}=b$ and $B\{\}=B$ in the case where no Hochschild cochain is present.
%Using these two types of operators, 
We define an operator %$\iota\{\varphi\}: C_\bullet(A)((u))\to C_\bullet(A)((u))$ 
on the periodic cyclic chain complex by 
\begin{equation}~\label{eq:iota-hochschild-calculus}
\iota\{\varphi\}:=b\{\varphi\}+uB\{\varphi\}: C_\bullet(A)((u))\to C_\bullet(A)((u)).
\end{equation}

Now, let us set $\cA:= A\otimes_\bbC \bbC[[t]]$. Consider an $A_\infty$-structure on $\cA$, linear over $\bbC[[t]]$. Denote its $A_\infty$-structure by 
\[\{m_k: \cA[1]^{\otimes k} \ra \cA[1]\}_{k\geq 0}.\]
We still assume $\cA$ to be strictly unital with a unit $\bone_\cA=\bone_A\otimes 1$. The derivative $\frac{dm}{dt}$ is a Hochschild cochain (in fact, a cocycle) in $C^\bullet(\cA)$. Its cohomology class is the so-called {\em Kodaira-Spencer class}, given by
\begin{equation}\label{eq-KS}
\KS(\partial_t):= \left[\frac{dm}{dt}\right] \in HH^*(\cA).
\end{equation}
Getzler~\cite[Proposition 3.1]{Get} defines a connection on the periodic cyclic homology of $\cA$. This connection is often called the {\em Getzler-Gauss-Manin connection} and is explicitly given by
\begin{equation}\label{GMM-connection}
\nabla_{\partial_t}^{\rm GGM} := \partial_t - u^{-1}\iota\Big\{\frac{dm}{dt}\Big\}= \partial_t - B\Big\{\frac{dm}{dt}\Big\} - u^{-1} b\Big\{\frac{dm}{dt}\Big\}.
\end{equation}
\begin{remark}
Note that there is a sign difference from this formula with Getzler's original definition. %This is also remarked by  Sheridan in
See also~\cite[Definition 3.29]{She}. 
\end{remark}
In this paper, we shall follow Sheridan's sign conventions in~\cite{She}. The following result is a consequence of the $2D$ TCFT structure in Theorem~\ref{thm:tcft}.
\begin{Corollary}
\label{cor-getzler-connection}
We have the following  graphical presentations for Getzler's operators: 
%\begin{enumerate}
%\item
%the operator $B\{\frac{dm}{dt}\}$ is represented by
\begin{align}
B\Big\{\frac{dm}{dt}\Big\}&={\sf Get}_0:=  \begin{tikzpicture}[baseline={([yshift=-0.5ex]current bounding
      box.center)},scale=0.4] 
\draw [thick] (-4,0) to (-2.4,0);
\draw [thick] (-4.2, -.2) to (-3.8, .2);
\draw [thick] (-4.2, .2) to (-3.8, -.2);
\draw [thick] (0,0) to (-2,0);
\draw [line width=2.5pt] (-2.2,0.2) to (-2.2,1);
\draw (1,0) node[label=right:{$\frac{dm}{dt}$}] {};
\draw [thick] (0,0) to (1,0);
\draw [thick] (-2.2,0) circle [radius=0.2];
\draw [thick] (1.2,0) circle [radius=0.2];
\end{tikzpicture}
\label{graphic-B}
\\
%\item 
%and the operator $-b\{\frac{dm}{dt}\}$ is represented by
-b\Big\{\frac{dm}{dt}\Big\}&={\sf Get}_1:=
\begin{tikzpicture}[baseline={([yshift=-1.2ex]current bounding
      box.center)},scale=0.4] 
\draw [thick] (0,0) to (0,2);
\draw [thick] (-0.2, 1.8) to (0.2, 2.2);
\draw [thick] (0.2, 1.8) to (-0.2, 2.2);
\draw [thick] (0,0) to (-2,0);
\draw (-2,0) node[label=left:{$\frac{dm}{dt}$}] {};
\draw [thick] (0,0) to (2,0);
\draw [thick] (-2.2,0) circle [radius=0.2];
\draw [thick] (2.2,0) circle [radius=0.2];
\end{tikzpicture}
\label{graphic-b}
\end{align}
%\end{enumerate}
\end{Corollary}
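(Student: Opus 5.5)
The plan is to use the TCFT of Theorem~\ref{thm:tcft}, applied to the $\bbC[[t]]$-linear algebra $\cA$, together with the standard fact that the TCFT action is built from the $A_\infty$ structure maps. A $t$-derivative of the structure maps is inserted at exactly one black vertex. The main point is that turning a black vertex into a white vertex labeled by a Hochschild cochain $\varphi$ means this: at that vertex one evaluates $\varphi$ instead of $m$ on the incoming half-edges, read in cyclic order. One first makes this precise, and then checks the two identities by direct comparison of the graph evaluation with the formulas~\eqref{eq:B11} and~\eqref{eq:b11}.

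\textbf{Setting up the action.} Following~\cite{KonSoi,WahWes}, I would recall how $\rho^A$ evaluates a black-and-white graph $G$ on a Hochschild chain $a_0|a_1\ldots a_r$ fed into a cycle.
\begin{itemize}
\item The entries $a_1,\ldots,a_r$ are distributed, in order, on the sectors of the cycle.
\item $a_0$ sits at the starting leaf.
\item Each black vertex of valence $k+1$ is evaluated by $m_k$, its output contracted using the cyclic pairing.
\item Each white vertex collects, starting from its starting half-edge, the chain $x_0|x_1\ldots x_s$ of elements it sees.
\item When a sector receives no entry, the unit $\bone_A$ is inserted.
\end{itemize}
Extend this by letting a white vertex marked by $\varphi$ (a univalent-or-higher vertex with label $\frac{dm}{dt}$) be evaluated like a black vertex, using $\varphi$ in place of $m$. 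This is consistent because $\varphi=\frac{dm}{dt}$ is the first-order variation of $m$: differentiating $\rho^{\cA}(G)$ in $t$ replaces one black vertex by $\varphi$.

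\textbf{Checking ${\sf Get}_0$.} The graph ${\sf Get}_0$ has the following data:
\begin{itemize}
\item one cycle, with starting leaf attached to the output white vertex;
\item the output white vertex has its starting half-edge the thick leaf;
\item a bivalent chain whose other end is the vertex labeled $\frac{dm}{dt}$.
\end{itemize}
Evaluating it, the thick leaf forces output $x_0=\bone_A$. The remaining inputs wrap around the cycle, a cyclic rotation $a_{j+1}\ldots a_n a_0\ldots a_j$, with a consecutive block $a_{i+1}\ldots$ absorbed by the $\varphi$-vertex. This reproduces each term of~\eqref{eq:B11}, and the Koszul signs match those of $\star_2$. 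This is the same mechanism by which $\rho^A(B)$ gives Connes' operator.

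\textbf{Checking ${\sf Get}_1$.} In ${\sf Get}_1$, the trivalent black vertex is evaluated by $m$ and contains $a_0$, since it carries the starting leaf. The $\varphi$-vertex absorbs an interior block. The output white vertex receives $m(\ldots\varphi(\ldots)\ldots a_0\ldots)|a_{i+1}\ldots a_j$. This is~\eqref{eq:b11} up to an overall sign.

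\textbf{Expected obstacle.} The main obstacle is the sign bookkeeping: the orientation conventions of~\cite{WahWes} must be matched with Sheridan's Koszul conventions to get the minus sign in $-b\{\frac{dm}{dt}\}$. I would fix this by comparing the case $\varphi=m$. There the graph ${\sf Get}_1$ degenerates into a boundary term computing $b$, and its orientation relative to $\partial$ determines the sign.
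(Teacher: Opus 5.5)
Your route is the one the paper intends. The paper offers no argument beyond attributing the corollary to the TCFT of Theorem~\ref{thm:tcft}, and your term-by-term matching of configurations in ${\sf Get}_0$ and ${\sf Get}_1$ with the terms of~\eqref{eq:B11} and~\eqref{eq:b11} is correct. The labelled white vertex is an honest output of $\rho^{\cA}$ contracted with the functional $\zeta$ of~\eqref{eq-KS-connection}. Since $\zeta(x_0|x_1\ldots x_k)=\pm\langle\frac{dm_k}{dt}(x_1,\ldots,x_k),x_0\rangle$, this is exactly your rule ``evaluate like a black vertex with $m$ replaced by $\frac{dm}{dt}$'', so no variational justification is needed.

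Your recollection that ``when a sector receives no entry, the unit $\bone_A$ is inserted'' is wrong. Empty sectors contribute nothing; $\bone_A$ appears only on a leaf that is the starting half-edge of a white vertex, such as the thick leaf of ${\sf Get}_0$ or of $B$. Taken literally, your rule would feed units into the trivalent vertex of ${\sf Get}_1$, and strict unitality would then destroy the match with~\eqref{eq:b11}. Your actual computations use the correct rule, so only the recollection needs fixing.

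The real gap is the sign, which is the only non-formal content of the statement, and your proposed calibration fails. Setting $\varphi=m$ makes ${\sf Get}_1$ compute $-b\{m\}$, not $b$, and these are different operators. For an associative algebra, $b\{m\}(a_0|a_1a_2)=\pm(a_1a_2)a_0$ is a chain of length zero, while $b(a_0|a_1a_2)$ has components of length one. The univalent labelled vertex never becomes a black vertex, so nothing ``degenerates into a boundary term computing $b$''.

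To fix the signs, calibrate against evaluations whose signs are already settled in~\cite{WahWes}. ${\sf Get}_0$ is the circle graph $B$ with one extra edge to the labelled vertex, and $\rho^A(B)$ is Connes' operator. ${\sf Get}_1$ is the $T$-graph~\eqref{graph:T} with one output contracted against $\zeta$. Since $\rho^A(T)$ is the linear dual of the cup product~\cite[Proposition 6.9]{WahWes}, this contraction is the chain-level cap product with $\frac{dm}{dt}$. The sign question then reduces to comparing the cup-product conventions of~\cite{WahWes} with Sheridan's. Alternatively, push the canonical orientations (all vertices have odd valence) through the sign rules of~\cite{WahWes} directly. Either way, check the resulting relative sign against Getzler's theorem that his connection commutes with $b+uB$, via the computation of $(\partial+uB)({\sf Get}_0+u^{-1}{\sf Get}_1)$ displayed after the corollary.
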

In both graphs, the cyclic orderings of the vertices are taken clockwise. 
Getzler proves that the connection operator commutes with the cyclic differential $b+uB$, 
and thus descends to the periodic cyclic homology $HP_*(\cA)$. This can also be proved using the above graphical presentation by computing
\[(\partial +u\cdot B)\left({\sf Get}_0+ u^{-1} {\sf Get}_1\right)= \begin{tikzpicture}[baseline={([yshift=-0.5ex]current bounding
      box.center)},scale=0.4] 
\draw [thick] (-4,0) to (-2.4,0);
\draw [thick] (-4.2, -.2) to (-3.8, .2);
\draw [thick] (-4.2, .2) to (-3.8, -.2);
\draw [thick] (0,0) to (-2,0);
\draw [line width=2.5pt] (-3.2,0) to (-2.4,0);
\draw (1,0) node[label=right:{$\frac{dm}{dt}$}] {};
\draw [thick] (0,0) to (1,0);
\draw [thick] (-2.2,0) circle [radius=0.2];
\draw [thick] (1.2,0) circle [radius=0.2];
\end{tikzpicture}
+ \begin{tikzpicture}[baseline={([yshift=-2ex]current bounding
      box.center)},scale=0.4] 
\draw [thick] (-4,0) to (-2.4,0);
\draw [thick] (-4.2, -.2) to (-3.8, .2);
\draw [thick] (-4.2, .2) to (-3.8, -.2);
\draw [thick] (0,0) to (-2,0);
\draw [line width=2.5pt] (-2,0) to (-1,0);
\draw (-2,0) node[label=above:{$\frac{dm}{dt}$}] {};
\draw [thick] (0,0) to (1,0);
\draw [thick] (-2.2,0) circle [radius=0.2];
\draw [thick] (1.2,0) circle [radius=0.2];
\end{tikzpicture}\]
The right hand side is precisely the Lie action $[\frac{d}{dt}, b]$, which proves that 
$$[ b+uB,  \nabla_{\partial_t}]=0.$$ 
For our purpose, we shall need to use a symmetric version of Getzler's connection formula. %Graphically, it is defined by
Following the graphical presentation in Corollary~\ref{cor-getzler-connection}, we define
\begin{equation}
\label{eq:sym-getzler}
\nabla_{\partial_t}^{\sf Get} := \partial_t - \frac{1}{2} \left(
  \begin{tikzpicture}[baseline={([yshift=-0.5ex]current bounding
      box.center)},scale=0.3] 
\draw [thick] (-4,0) to (-2.4,0);
\draw [thick] (-4.2, -.2) to (-3.8, .2);
\draw [thick] (-4.2, .2) to (-3.8, -.2);
\draw [thick] (0,0) to (-2,0);
\draw [line width=2.5pt] (-2.2,0.2) to (-2.2,1);
\draw (1,0) node[label=right:{$\frac{dm}{dt}$}] {};
\draw [thick] (0,0) to (1,0);
\draw [thick] (-2.2,0) circle [radius=0.2];
\draw [thick] (1.2,0) circle [radius=0.2];
\end{tikzpicture} 
+\begin{tikzpicture}[baseline={([yshift=-0.5ex]current bounding
      box.center)},scale=0.3] 
\draw [thick] (-4,0) to (-2.4,0);
\draw [thick] (-4.2, -.2) to (-3.8, .2);
\draw [thick] (-4.2, .2) to (-3.8, -.2);
\draw [thick] (0,0) to (-2,0);
\draw [line width=2.5pt] (-2.2,-0.2) to (-2.2,-1);
\draw (1,0) node[label=right:{$\frac{dm}{dt}$}] {};
\draw [thick] (0,0) to (1,0);
\draw [thick] (-2.2,0) circle [radius=0.2];
\draw [thick] (1.2,0) circle [radius=0.2];
\end{tikzpicture} \right)
- \frac{u^{-1}}{2} 
\bigg(\begin{tikzpicture}[baseline={([yshift=1.2ex]current bounding
      box.center)},scale=0.3] 
\draw [thick] (0,0) to (0,2);
\draw [thick] (-0.2, 1.8) to (0.2, 2.2);
\draw [thick] (0.2, 1.8) to (-0.2, 2.2);
\draw [thick] (0,0) to (-2,0);
\draw (-2.2,0) node[label=left:{$\frac{dm}{dt}$}] {};
\draw [thick] (0,0) to (2,0);
\draw [thick] (-2.2,0) circle [radius=0.2];
\draw [thick] (2.2,0) circle [radius=0.2];
\end{tikzpicture}+ \begin{tikzpicture}[baseline={([yshift=1.2ex]current bounding
      box.center)},scale=0.3] 
\draw [thick] (0,0) to (0,2);
\draw [thick] (-0.2, 1.8) to (0.2, 2.2);
\draw [thick] (0.2, 1.8) to (-0.2, 2.2);
\draw [thick] (0,0) to (-2,0);
\draw (5,0) node[label=left:{$\frac{dm}{dt}$}] {};
\draw [thick] (0,0) to (2,0);
\draw [thick] (-2.2,0) circle [radius=0.2];
\draw [thick] (2.2,0) circle [radius=0.2];
\end{tikzpicture}\bigg)
\end{equation}
The notation $\nabla_{\partial_t}^{\sf Get}$ is chosen to distinguish it from the original one $\nabla_{\partial_t}^{\sf GMM}$. 
One can check the following identity of the difference between the two connections:
\[\nabla^{\sf GMM}_{\partial_t} - \nabla_{\partial_t}^{\sf Get} = (\partial +u B) \left( \frac{u^{-1}}{2}\cdot \begin{tikzpicture}[baseline={([yshift=-0.5ex]current bounding
      box.center)},scale=0.3] 
\draw [thick] (-4,0) to (-2.4,0);
\draw [thick] (-4.2, -.2) to (-3.8, .2);
\draw [thick] (-4.2, .2) to (-3.8, -.2);
\draw [thick] (0,0) to (-2,0);
\draw (1,0) node[label=right:{$\frac{dm}{dt}$}] {};
\draw [thick] (0,0) to (1,0);
\draw [thick] (-2.2,0) circle [radius=0.2];
\draw [thick] (1.2,0) circle [radius=0.2];
\end{tikzpicture} \right)\]
As a consequence, the two connections are equal in homology.
\begin{Lemma}\label{lem:symmetric-getzler}
The Getzler connection $\nabla^{\sf GMM}_{\partial_t}$ and its symmetric version $\nabla_{\partial_t}^{\sf Get}$ are homotopic operators. Hence, they induce the same connection on the periodic cyclic homology $HP_*(\cA)$.
\end{Lemma}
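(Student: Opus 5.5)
The plan is to show that the difference $\nabla^{\sf GMM}_{\partial_t}-\nabla^{\sf Get}_{\partial_t}$ is a commutator $[b+uB,h]$ for an explicit operator $h$. Once that holds, both connections induce the same operator on $HP_*(\cA)$: a commutator with the differential kills cycles modulo boundaries. The candidate homotopy is the one displayed just before the lemma,
\[ h:=\frac{u^{-1}}{2}\,\rho^{\cA}(\Gamma_{\frac{dm}{dt}}), \]
where $\Gamma_{\frac{dm}{dt}}$ is the graph with one input cycle, one output white vertex of valence two, and a second white vertex, valence one, decorated by $\frac{dm}{dt}$. Since $\partial_t$ appears in both connections, it suffices to compare the graphical parts using Corollary~\ref{cor-getzler-connection}.

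The computation goes term by term in the equivariant boundary $\eth=\partial+uB$.
\begin{enumerate}
\item First compute $\partial\Gamma_{\frac{dm}{dt}}$. The only vertex that can be blown up is the bivalent white output vertex, and it can be expanded in two ways: the new black trivalent vertex carries the starting half-edge either on one side or on the other. These are exactly the two $u^{-1}$-graphs in~\eqref{eq:sym-getzler}, i.e.\ ${\sf Get}_1$ and its mirror. Their difference with the appropriate orientations should produce $u^{-1}\bigl({\sf Get}_1-\tfrac12({\sf Get}_1+{\sf Get}_1')\bigr)$.
\item Next compute $uB$ applied to $u^{-1}\Gamma_{\frac{dm}{dt}}$, that is, sewing with the circle graph $B$ at the output (and at the input). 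This yields graphs in which the framing at the output white vertex rotates over all positions. Using~\eqref{twist-getzler1} and~\eqref{getzler1-twist} by analogy, this gives ${\sf Get}_0$ together with its mirror, and the combination matches $B\{\frac{dm}{dt}\}-\tfrac12(\text{sum of the two }{\sf Get}_0\text{ graphs})$.
\end{enumerate}
The factor $\tfrac12$ should come out because the graph with two valence-one white vertices has a $\mathbb{Z}/2$ symmetry.

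Finally I would apply $\rho^{\cA}$, which is a chain map compatible with sewing (Theorem~\ref{thm:tcft}). The graph identity then becomes the operator identity $\nabla^{\sf GMM}_{\partial_t}-\nabla^{\sf Get}_{\partial_t}=[b+uB,h]$. One subtlety needs checking here: $\frac{dm}{dt}$ is a Hochschild cocycle. Any term in which the boundary hits the $\frac{dm}{dt}$ vertex therefore contributes $[m,\frac{dm}{dt}]=0$ and drops out. This is the content of ``$\rho$ is a chain map for decorated graphs''.

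I expect the main obstacle to be the orientation and sign bookkeeping in the first two steps. The key points are that the two mirror graphs enter with the same sign, and that the $uB$ term of $h$ produces exactly the non-symmetric ${\sf Get}_0$ with coefficient $1-\tfrac12$. I would settle this by evaluating both sides on short Hochschild chains $a_0|a_1$ with Sheridan's sign conventions, and then fixing the orientation of $\Gamma_{\frac{dm}{dt}}$ accordingly.
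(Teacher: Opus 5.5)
Your strategy is the paper's. Its proof consists exactly of the identity $\nabla^{\sf GMM}_{\partial_t}-\nabla^{\sf Get}_{\partial_t}=(\partial+uB)\bigl(\frac{u^{-1}}{2}\Gamma\bigr)$. Your computation of $\partial\Gamma$ is correct: blowing up the bivalent output vertex puts the new valence-one white vertex into one of the two corners of a new trivalent black vertex.

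\textbf{The gap: the framing of the output vertex.} As written, your $h$ is not determined, and the missing choice decides whether the chain-level identity holds. The bivalent output vertex of $\Gamma$ needs a starting half-edge. It must be its half of the edge running to the $\frac{dm}{dt}$-vertex; with $\frac{dm}{dt}$ in place of $Q$, this is the framing of $T_2$ in \eqref{getzler0-boundary}, not that of $T_1$.
\begin{itemize}
\item With this framing, $\rho(\Gamma)(a_0|a_1\cdots a_n)=\sum\pm\frac{dm}{dt}(a_{i+1},\ldots,a_j)|a_{j+1}\cdots a_na_0\cdots a_i$. Hence $\rho(\Gamma)\circ B=0$ in the reduced complex, because $\bone_A$ lands in a non-initial slot. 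Then $[B,\rho(\Gamma)]=B\circ\rho(\Gamma)$ is the sum over the two corners where the new starting leaf can sit, which is exactly ${\sf Get}_0$ and its mirror.
\item With the other framing, $\rho(\Gamma)$ is the part of the Lie derivative $L_{dm/dt}$ that does not touch $a_0$. The input-side term $\rho(\Gamma)\circ B$ then produces terms in which $\frac{dm}{dt}$ swallows $a_0$; already on a length-zero chain one gets $a_0\mapsto\pm\bone_A|\frac{dm_1}{dt}(a_0)$. Neither ${\sf Get}_0$ nor its mirror produces such terms, and nothing cancels them.
\end{itemize}
Your step 2 treats the sewing ``at the output (and at the input)'' as one computation. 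You need to fix the framing explicitly and verify that the input-side sewing vanishes.

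\textbf{Two smaller corrections.}
\begin{itemize}
\item The two T-graphs must enter $\partial\Gamma$ with opposite signs, relative to the orientations with which they are added in $\nabla^{\sf Get}$. Your step 1 correctly requires this; your closing remark that they ``enter with the same sign'' is wrong. With the canonical orientation (all vertices of the T-graphs have odd valence), the two blow-ups differ by a transposition of half-edges at the new trivalent vertex, which gives the relative sign. It cannot be otherwise: ${\sf Get}_1+{\sf Get}_1'$ induces $\pm2\,\KS(\partial_t)\cap-$ on $HH_\bullet$, which is not $b$-exact in general.
\item The $\frac12$ in $h$ does not come from a $\mathbb{Z}/2$-automorphism; the decorated T-graph has none. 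It is there because $\nabla^{\sf GMM}-\nabla^{\sf Get}$ is half of (graph minus mirror), while $\partial\Gamma$ produces graph minus mirror with coefficient one.
\end{itemize}
With these fixes your argument coincides with the paper's.
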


\begin{remark}
%Algebraically, 
One can verify that the symmetric Getzler connection $\nabla_{\partial_t}^{\sf Get}$ is the average of the original Getzler's connection $\nabla^{\sf GMM}_{\partial_t}$ with its conjugate connection $r^{-1}\circ \nabla_{\partial_t}\circ r$ where $r: C_\bullet(\cA) \ra C_\bullet(\cA^{\sf op})$ is the canonical isomorphism 
$$r(a_0|a_1\ldots a_s) := (-1)^\dagger a_0|a_s\ldots a_1, \quad  \text{with} \quad \dagger = \sum_{1\leq i< j\leq s} (|a_i|+1)(|a_j|+1).$$
\end{remark}

%Throughout the rest of the section we shall use $\nabla^{\sf Get}$ to denote the symmetric Getzler connection as defined in Equation~\eqref{eq:sym-getzler}. 

\subsection{The divisor equation}\label{subsec:divisor}
Continuing with the notation of the previous subsection, now let us consider a family of cyclic $A_\infty$-algebra structures on $\cA=A\otimes_\bbC\bbC[[t]]$. We shall assume that the cyclic structure is independent of $t$, i.e., the cyclic structure is obtained from an inner product $\langle-,-\rangle$ on $A$. This condition may be easily achieved in the geometric context in which we are interested (as in Section~\ref{subsec:cei-family}). This assumption implies that
\begin{equation}\label{eq:flat-cyclic-metric}
\frac{d}{dt} \langle m_k(a_1,\ldots,a_k),a_0\rangle = \left\langle \frac{dm_k}{dt}(a_1,\ldots,a_k),a_0\right\rangle.
\end{equation}
In order to write down the divisor equation in our context, we need to introduce some notation. Let $s$ be a splitting of the nc-Hodge filtration of $\cA$, linear over the base ring $\bbC[[t]]$. 
This induces an isomorphism (by $u$-linear extension) denoted by
\[\widetilde{s}: HH_\bullet(\cA)((u)) \ra HP_*(\cA).\]
In addition, it induces a connection $\nabla^{s}$ on $HH_\bullet(\cA)((u))$ using the following commutative diagram. 
\[\begin{CD}
    HH_\bullet(\cA)((u)) @>\nabla^{s}>> HH_\bullet(\cA)((u))\\
        @V\widetilde{s}VV   @V\widetilde{s}VV\\
            HP_*(\cA) @>\nabla^{\sf Get}>> HP_*(\cA) 
\end{CD}\]
By the construction in~\eqref{eq:sym-getzler} and the definition of the Kodaira-Spencer class in~\eqref{eq-KS}, we have
\[ \nabla^{\sf Get}_{\partial_t}(\alpha) 
=-\KS(\partial_t)\cap \alpha \cdot u^{-1} + \mbox{regular part in $u$-variable.}\]
As a result, the connection  $\nabla^{s}$ is also of this form and therefore it does not restrict to the subspace $HH_\bullet(\cA)[[u]]$. However, using the canonical decomposition $$HH_\bullet(\cA)((u))\cong u^{-1} HH_\bullet(\cA)[u^{-1}]\oplus HH_\bullet(\cA)[[u]],$$ we may define a connection on $HH_\bullet(\cA)[[u]]$ by setting
\begin{equation}\label{eq:conn-truncated}
\nabla^{s,+}_{\partial_t}(\alpha\cdot u^k):= \pi^+\big(\nabla_{\partial_t}^{s}(\alpha\cdot u^k)\big) = \nabla_{\partial_t}^{s}(\alpha\cdot u^k)+ \delta_k^0 \KS(\partial_t)\cap \alpha \cdot u^{-1},
\end{equation}
where $\pi^+: HH_\bullet(\cA)((u))\to HH_\bullet(\cA)[[u]]$ is the canonical projection map. Since in our explicit formula of CEI~\eqref{eq:explicit-evaluation}, the variable $\psi$ corresponds to $-u$, thus we define a connection on $HH_\bullet(\cA)((\psi))$ by the following commutative diagram.
\[\begin{CD}
    HH_\bullet(\cA)((u)) @>\nabla^{s}>> HH_\bullet(\cA)((u))\\
        @V\cong VV   @V\cong VV\\
           HH_\bullet(\cA)((\psi)) @>\nabla^{\sf s}>> HH_\bullet(\cA)((\psi))
\end{CD}\]
The vertical isomorphisms are both given by $u\mapsto -\psi$. Similarly, we also obtain a connection on $HH_\bullet(\cA)[[\psi]]$ by setting
\begin{equation}
\label{connection-HH}
    \nabla^{s,+}_{\partial_t}(\alpha\cdot \psi^k):= \nabla_{\partial_t}^{s}(\alpha\cdot \psi^k)- \delta_k^0 \KS(\partial_t)\cap \alpha \cdot \psi^{-1}.
\end{equation}
With these preparations, we may state the following
\begin{Theorem}\label{thm:divisor}
In the setup above, we have
\begin{align}\label{eq:divisor}
\begin{split}
& 
\langle \nabla^{s,+}_{\partial_t}(-[\Omega]\psi),\alpha_1\psi^{k_1},\ldots,\alpha_n\psi^{k_n}\rangle_{g, n+1}^{\cA,\Omega,s}
\\=
& 
\partial_t\langle \alpha_1\psi^{k_1},\ldots,\alpha_n\psi^{k_n}\rangle_{g, n}^{\cA,\Omega,s}- \sum_{j=1}^{n} \langle \alpha_1\psi^{k_1},\ldots,\nabla^{s,+}_{\partial_t}(\alpha_j\psi^{k_j}),\ldots,\alpha_n\psi^{k_n}\rangle_{g, n}^{\cA,\Omega,s}
\end{split}
\end{align}
\end{Theorem}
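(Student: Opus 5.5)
We propose to follow the same template as the proofs of the dilaton and string equations, now combining the forget-framing recursion of Theorem~\ref{thm:mc-recursion} with the $t$-derivative of the $A_\infty$ structure. The guiding observation is that both Getzler graphs enter the connection~\eqref{eq:sym-getzler}: ${\sf Get}_0^\sym$ and ${\sf Get}_1^\sym$ appear there with the unframed vertex $Q$ decorated by $\frac{dm}{dt}$ rather than contracted against $\omega_A$.

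\textbf{Step 1: the new TCFT extension.} Define an extension
\[
\rho^{A,\tw,\KS}:\hg\ltimes\widehat{\mathfrak m}\to\hh_\cA[\epsilon],\qquad (\alpha,\beta)\mapsto \rho^{A,\tw}(\alpha)+\epsilon\,C^Q_{dm/dt}\big(\rho^{A,\tw}(\beta)\big),
\]
where $C^Q_{dm/dt}$ means the following: at the unframed white vertex $Q$, instead of producing a Hochschild chain, insert the cochain $\frac{dm}{dt}$ into the vertex via the cap/$B\{\cdot\}$ operations, in the sense of Corollary~\ref{cor-getzler-connection}. Because $Q$ is unframed, this insertion is compatible with the circle-equivariant structure.

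One must check that this map is a DGLA morphism. It is a DGLA morphism up to a correction: the $t$-derivative of $\rho^{A,\tw}$ itself, since $\partial_t$ of the TCFT action equals the action with $\frac{dm}{dt}$ inserted at a black vertex, which is $\mathcal T^{\sf v}$. We therefore expect to use instead the morphism $\alpha\mapsto\rho(\alpha)+\epsilon\,\partial_t\rho(\alpha)$ paired with $\mc_1$, and the morphism above paired with $\mc_2$. Identity~\eqref{eq:flat-cyclic-metric} ensures that the Mukai pairing is $t$-flat, so that $\iota$ and $\Delta$ commute with $\partial_t$.

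\textbf{Step 2: matching each side to a Maurer-Cartan element.}

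\emph{Right-hand side, via $\mc_2$.} Evaluate the pushforward of $\mc_2$ as in Lemma~\ref{lem:string-evaluation}. The terms $G^{\sf in}$ and $G^{\sf out}$ attach $u^{-1}{\sf Get}_1^\sym$, and $H^{\sf in}$ and $H^{\sf out}$ attach ${\sf Get}_0^\sym$, at every input and output. Together they assemble to $\nabla^{\sf Get}_{\partial_t}-\partial_t$ acting on every leg of $\hbA_{g,k,l}$. The term $\mathcal T^{\sf v}$ gives $\partial_t\hbA_{g,k,l}$, and $\mathcal S$ contributes a homotopy term. After pushing through $\cK$ and using the analogues of the commutator identities of Lemma~\ref{lem:string-leaf-cont} for $\nabla^{\sf Get}$ against $S$, $R$, $F$, $H$, the $\epsilon$-component becomes
\[
\partial_t\lbA_{g,1,n-1}\;-\;\text{(the induced connection $\nabla^{s}$ applied to each leg)}.
\]
The $\nabla^s$ appears because the splitting $s$ conjugates $\nabla^{\sf Get}$ into $\nabla^s$; the $t$-dependence of $S$ and $R$ produces exactly this conjugation. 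The exceptional term $f(\hcV_{0,1,2})$ contributes the singular part $\mp\KS\cap\alpha\,u^{-1}$. This is what converts $\nabla^s$ into the truncated $\nabla^{s,+}$ on the legs, as in~\eqref{connection-HH}.

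\emph{Left-hand side, via $\mc_1$.} Evaluate $f(\hcV)$ with $Q$ decorated. By Lemma~\ref{lem:symmetric-getzler} and Shklyarov's duality~\eqref{sm-CY-structure}, decorating $Q$ corresponds to pairing against $\nabla^{s,+}_{\partial_t}(-[\Omega]\psi)$. This parallels Lemma~\ref{lem:string-lhs-mc}, with $\omega_A$ replaced by the functional dual to that class.

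\textbf{Step 3: conclude.} Theorem~\ref{thm:mc-recursion} gives the gauge equivalence of $\mc_1$ and $\mc_2$. Since $\hh^{\sf TRIV}$ has zero bracket, the two pushforwards define equal classes in $b$-homology. Evaluating both against $\bigodot\alpha_i(-u)^{k_i}$ then yields~\eqref{eq:divisor}.

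\textbf{Main obstacle.} The key step, and the one we expect to be hardest, is the higher-genus identification of the $\mc_2$ side: showing that $\nabla^{\sf Get}$ flows through the graph sum for $\cK$ edge by edge, up to $b$-exact terms. As in the genus-one string example, we expect exact cancellation only in homology. We would first handle genus zero by the leaf-and-edge flow argument of Proposition~\ref{prop:string-genus-zero}. For higher genus we would defer to the pseudo-isotopy description of $\cK$ used in Section~\ref{app:proof-string}, which replaces the explicit edge bookkeeping by a homotopy argument.
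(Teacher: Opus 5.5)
The skeleton of your proposal matches the paper's. Decorating the unframed vertex $Q$ with $\frac{dm}{dt}$ is exactly the paper's contraction $C^Q_\zeta$, with $\zeta(a_0|a_1\ldots a_k)=\pm\langle\frac{dm_k}{dt}(a_1,\ldots,a_k),a_0\rangle$. The paper then pushes both $\mc_1$ and $\mc_2$ forward, invokes Theorem~\ref{thm:mc-recursion}, treats genus zero by the leaf/edge flow, and treats higher genus by the isotopy argument.

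However, Step~1 contains errors that would break Step~3 if you followed them. The decorated map $\rho^{\cA,\tw,\zeta}$ is already an honest DGLA morphism, for three reasons. First, $\zeta$ is a cyclically invariant $b$-cocycle, since $\frac{dm}{dt}$ is a Hochschild cocycle and the pairing is $t$-independent. Second, $\zeta$ vanishes on $u^{-k}$ for $k\geq 1$. Third, $Q$ never enters $\iota$, $\Delta$ or the brackets. The term $\partial_t\hbcA$ is not a correction to the morphism property. It appears only when you evaluate this map on $\mc_2$, via $\mathcal{T}^{\sf v}$ and \eqref{eq:flat-cyclic-metric} (Lemma~\ref{lem:divisor-evaluation}).

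Pairing $\mc_1$ instead with $\alpha\mapsto\rho(\alpha)+\epsilon\,\partial_t\rho(\alpha)$ is not viable. That map is not defined on $\widehat{\mathfrak m}$. It is not a chain map, because $b$ and the Mukai pairing inside $\iota$ and $\Delta$ depend on $t$. Most importantly, gauge equivalence of $\mc_1$ and $\mc_2$ only compares push-forwards along one and the same morphism. Your Step~2 does use the decorated map for $\mc_1$; keep that version. Also, \eqref{eq:flat-cyclic-metric} expresses flatness of the cyclic pairing on $A$, not of the chain-level Mukai pairing, which depends on $m(t)$. So $\iota$ and $\Delta$ do not commute with $\partial_t$.

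Two substantive ingredients are also missing.

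\emph{The left-hand side.} It rests on Lemma~\ref{lem:functional-zeta}, $\zeta(s(\alpha))=\langle\alpha,\nabla^{s,+}_{\partial_t}([\Omega]u)\rangle_\Muk$. This does not follow from Lemma~\ref{lem:symmetric-getzler} and \eqref{sm-CY-structure}. The paper uses flatness of the higher residue pairing under the Getzler--Gauss--Manin connection \cite[Corollary 5.39]{She}, together with the Lagrangian property of $s$, to move the connection onto $\Omega$. It then identifies $-\omega_\cA\circ b\{\frac{dm}{dt}\}$ with $\zeta$ using unitality and cyclicity; the $B\{\frac{dm}{dt}\}$-term vanishes because it produces chains beginning with $\bone$. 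The same flatness is what turns $\partial_t$ of the right-hand side into the pairing with $[\nabla^{s,\pm}_{\partial_t},\lbA_{g,1,n-1}]$.

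\emph{Higher genus.} The divisor case is not a verbatim copy of the string case. There $\gamma_{0,2,0}=0$, but here $\eta_{0,2,0}=\rho^{\cA,\tw,\zeta}(f\hcV_{0,2,1})\neq 0$. Moreover, $\mathcal S$ contributes $u^{-1}\eta_{0,2,0}\circ_j\hbcA_{g,k-1,l+1}$, which cannot be discarded as a ``homotopy term''.

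The key observation you need is this: the $\epsilon$-part of $\rho^{\cA,\tw,\zeta}_*\mc_2$ is the image of $\hbcA$ under $\exp\big((\eta,\nabla^{\sf Get,\pm}_{\partial_t}+u^{-1}\eta_{0,2,0})\epsilon\big)$, where $\eta=\eta_{0,1,1}+\eta_{0,2,0}$. This pair is closed in ${\sf CE}_*(\hh_\cA)$ only because the $\eta_{0,2,0}$-terms absorb the failure of $\nabla^{\sf Get,\pm}_{\partial_t}$ to commute with $\iota$ and $\Delta$. That absorption is the algebraic counterpart of \eqref{commutator-eth-S} and \eqref{commutator-T-Delta}, together with $[b+uB,\eta_{0,2,0}]+\iota\eta_{0,1,1}=0$.

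Only with this element in hand can the isotopy argument run: the middle arrow built from $\ad^F(\eta)+\iota^{\delta,\eta}$, a closed $f(t)+g(t)dt$, and the bottom square via Lemma~\ref{lem:divisor-leaf-cont}. Genus zero is unaffected. With a single incoming leaf, every vertex of a tree has exactly one input, so neither $\eta_{0,2,0}$ nor the $\mathcal S$-terms can occur there.
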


\begin{remark}
In the case where $k_1=\cdots=k_n=0$, the above equation simplifies to
\[ \langle \nabla^{s,+}_{\partial_t}(-[\Omega]\psi),\alpha_1,\ldots,\alpha_n\rangle_{g, n+1}^{\cA,\Omega,s}=\partial_t\langle \alpha_1,\ldots,\alpha_n\rangle_{g, n}^{\cA,\Omega,s}- \sum_{j=1}^{n} \langle \alpha_1,\ldots,\nabla^{s,+}_{\partial_t}(\alpha_j),\ldots,\alpha_n\rangle_{g, n}^{\cA,\Omega,s}.\]
Furthermore, if $\Omega$ and all $\alpha_j$'s are flat with respect to $\nabla^{s,+}_{\partial_t}$ in~\eqref{connection-HH}, that is, $\nabla^{s,+}_{\partial_t}([\Omega])=\nabla^{s,+}_{\partial_t}(\alpha_j)=0,$
then the equation above reduces to 
\[ \langle -\KS(\partial_t)\cap[\Omega], \alpha_1,\ldots,\alpha_n\rangle_{g, n+1}^{\cA,\Omega,s}=\partial_t\langle \alpha_1,\ldots,\alpha_n\rangle_{g,n}^{\cA,\Omega,s}.\]
This equation resembles the divisor equation in Gromov-Witten theory. In this way, Equation~\eqref{eq:divisor} may be considered the divisor property in a possibly non-flat basis.
\end{remark}

\subsubsection{The strategy of the proof}
The proof of Theorem~\ref{thm:divisor} is parallel to that of Theorem~\ref{thm:string}. Indeed, since $\cA$ is a family of cyclic $A_\infty$-structures, we define a linear functional $\zeta: L^\cA_-\to \bbC[[t]]$ %using $\frac{dm}{dt}$ 
by 
\begin{equation}\label{eq-KS-connection} 
\zeta(a_0|a_1\ldots a_k u^{-n}):= \delta_0^n(-1)^{|a_0|'\sum_{r=1}^k |a_r|'} \Big\langle \frac{dm_k}{dt}(a_1,\ldots,a_k),a_0\Big\rangle. 
\end{equation}
Contraction with this linear functional yields a map $C_\zeta: \sym^{l+1} L^\cA_- \to \sym^l L^\cA_-$.  By post-composition, it further induces a map still denoted by 
\[ C_{\zeta}: {\sf Hom}^c\big( \sym^k (L^\cA_+[1]), \sym^{l+1} L^\cA_-\big) \to {\sf Hom}^c\big( \sym^k (L^\cA_+[1]), \sym^{l} L^\cA_-\big). \]
Denote by $\mathbb{C}[\epsilon]$ the ring of dual numbers, i.e., $\epsilon^2=0$. 
Recall that the semi-direct DGLA $\widehat{\mathfrak{g}} \ltimes \widehat{\mathfrak{m}}$ is as in Theorem~\ref{thm:mc-recursion}.
Now, we may extend the DGLA map $\rho^{\cA,\tw}:\hg \ra \hh_\cA$ in~\eqref{eq:rho-tw} to a DGLA map $\rho^{\cA,\tw,\zeta} : \widehat{\mathfrak{g}} \ltimes \widehat{\mathfrak{m}} \ra \widehat{\mathfrak{h}}_\cA [\epsilon]$ by setting %such that for $\alpha\in \hg$ and $\beta \in \widehat{\mathfrak{m}}$,
\begin{equation}
\label{zeta-map}
    \rho^{\cA,\tw,\zeta} \big((\alpha,\beta)\big):= \rho^{\cA,\tw}(\alpha)+C_{\zeta}^Q\big(\rho^{\cA,\tw}(\beta)\big)\epsilon
\end{equation}
for $\alpha\in \hg$ and $\beta \in \widehat{\mathfrak{m}}$,
where the operator $C_{\zeta}^Q$ is defined by applying the functional $\zeta$ to the Hochschild chain at the distinguished white vertex $Q$ in $\beta\in \widehat{\mathfrak{m}}$.  Post-composing with the trivialization $L_\infty$ morphism $\cK$ in Equation~\eqref{eq:Triv-map}, we obtain an $L_\infty$ morphism
\[ \cK\circ \rho^{\cA,\tw,\zeta}:\widehat{\mathfrak{g}}\ltimes \widehat{\mathfrak{m}} \ra \hh_\cA^{\sf TRIV}[\epsilon].\]
Then, the proof of Theorem~\ref{thm:divisor} consists of two main steps:
\begin{enumerate}
    \item Express the left hand side of Equation~\eqref{eq:divisor} using $[\big(\cK_*\rho^{\cA,\tw,\zeta}_*\mc_1\big)_{g,1,n-1}^\epsilon]$.
    \item Express the right hand side of Equation~\eqref{eq:divisor} using $[\big(\cK_*\rho^{\cA,\tw,\zeta}_*\mc_2\big)_{g,1,n-1}^\epsilon]$.
\end{enumerate}
Here $\mc_1$ and $\mc_2$ are as in Theorem~\ref{thm:mc-recursion}. Theorem~\ref{thm:divisor} would then follow from Theorem~\ref{thm:mc-recursion} proving that the two push-forward Maurer-Cartan elements are gauge equivalent. 

\subsubsection{The left hand side}
Let us begin with the left hand side identification. Consider the following diagram:
\begin{equation}
   \begin{tikzcd}
   H_\bullet(L^\cA)[u^{-1}] \arrow[d,"s"]\arrow{dr}[name=U]{\langle -,\nabla^{s,+}_{\partial_t} ([\Omega] u)\rangle_\Muk} &[5em] \\
   H_\bullet(L^\cA_-)  \arrow[r,"\zeta"]& \bbC [[t]].
\end{tikzcd}
\end{equation} 
\iffalse
We need to introduce the so-called residue pairing
\[ HP_\bullet(\cA)\otimes HP_\bullet(\cA) \to \bbC[[t]],\]
defined using the higher residue pairing in~\eqref{higher-residue-pairing} as follows.  First, we extend the higher residue pairing to the full periodic cyclic homology by inverting the $u$-variable. Then we set the residue pairing to be the coefficient of $u^0$ in the higher residue pairing, explicitly given by
\[ \langle x u^k , y u^l\rangle_{\res}:= (-1)^k\delta^{k+l}_0\langle x, y\rangle_{\sf Muk}. \]
\fi

\begin{Lemma}\label{lem:functional-zeta}
%In cohomology, the map $\zeta$ is represented by residue pairing with $\nabla^{\sf Get}_{\partial_t}(\Omega\cdot u)$, i.e., 
The diagram above is commutative, i.e. for any $\alpha\in  H_\bullet(L^\cA)[u^{-1}]$, we have
\begin{equation}
\label{zeta-connection}
\zeta\big(s(\alpha)\big)=\left\langle \alpha, \nabla_{\partial_t}^{s,+}([\Omega] u) \right\rangle_\Muk.
\end{equation}
\end{Lemma}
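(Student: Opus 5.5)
We want to prove $\zeta(s(\alpha)) = \langle \alpha,\nabla^{s,+}_{\partial_t}([\Omega]u)\rangle_\Muk$ for $\alpha\in H_\bullet(L^\cA)[u^{-1}]$. The plan is to reduce it to a single chain-level identity: $\zeta$ equals minus the $t$-derivative of $\omega_\cA$ modulo the Getzler correction. After that, the identity follows from the defining relation $D\Omega=\omega_\cA$ in~\eqref{sm-CY-structure}, the flatness of the pairing, and the construction of $\nabla^{s,+}$ via $\widetilde s$.

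First, since $\zeta$ kills negative $u$-powers, only the $u^0$-component of $s(\alpha)$ contributes. By $u$-linearity of $s$, we may assume $\alpha=x u^{-j}$ with $x\in H_\bullet(L^\cA)$. Note that $s(xu^{-j})=u^{-j}s(x)$, and its $u^0$-component is the coefficient $S_j x$.

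Second, the key step is to show that $\zeta$ is $(b+uB)$-closed and, as a functional on $L^\cA_-$, is represented through the Mukai pairing by $-\nabla^{\sf Get}_{\partial_t}(\Omega u)$ up to the regular/singular truncation. The route is as follows. We differentiate the identity $\omega_\cA(\beta)=\langle \beta,\Omega\rangle_\Muk$ in $t$. The cyclic pairing is $t$-independent by~\eqref{eq:flat-cyclic-metric}, and $\omega_\cA$ only uses the unit and the inner product, so $\partial_t\omega_\cA=0$ on $t$-constant chains. The Mukai pairing is built from the graph $M$ via $\rho^\cA$, so its $t$-derivative inserts $\frac{dm}{dt}$ at the black vertices of $M$. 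Graphically this is the Getzler graphs of Corollary~\ref{cor-getzler-connection} sewn to $M$. The term $-u^{-1}{\sf Get}_1$ acting against $\Omega u$, paired with a $u^0$ chain, produces exactly the contraction $\langle \frac{dm}{dt}(a_1,\ldots),a_0\rangle$ with the Koszul sign of~\eqref{eq-KS-connection}, i.e. $\zeta$. This gives a formula of the form
\[
\zeta(\beta)=\big\langle \beta,\ \nabla^{\sf Get}_{\partial_t}(\Omega u)\big\rangle_\Muk+\langle \partial_t\beta,\Omega u\rangle\text{-type terms},
\]
and the derivative terms are absent for the pairing against fixed chains. We will use the symmetric connection~\eqref{eq:sym-getzler}, which matches the symmetric Mukai graph, and invoke Lemma~\ref{lem:symmetric-getzler} to pass to homology.

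Third, we transport to $HH_\bullet(\cA)((u))$ via $\widetilde s$. By definition, $\widetilde s\,\nabla^{s}=\nabla^{\sf Get}\widetilde s$. The Lagrangian condition S2, extended to the higher residue pairing, turns $\langle s(\alpha),\nabla^{\sf Get}(\widetilde s([\Omega]u))\rangle$ into $\langle \alpha,\nabla^{s}([\Omega]u)\rangle_\Muk$; here $s([\Omega])=\Omega$ by unitality. Finally, pairing with $\alpha\in H_\bullet(L^\cA)[u^{-1}]$ only sees the $u^{\ge0}$ part, which replaces $\nabla^s$ by $\nabla^{s,+}$, i.e. by the projection $\pi^+$ of~\eqref{eq:conn-truncated}.

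The main obstacle is the second step. We must check carefully that differentiating the Mukai pairing (a TCFT operation) yields exactly the Getzler graphs with the right signs and factors $\frac12$, and that the ${\sf Get}_0$ (i.e. $B\{\cdot\}$) terms contribute nothing after truncation. We would first test the computation on chains of length zero and one.
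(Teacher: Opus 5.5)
Your Steps 1 and 3 are fine and agree with the paper: the Lagrangian property of $s$, the relation $\widetilde s\,\nabla^{s}=\nabla^{\sf Get}\widetilde s$, and $s([\Omega])=\Omega$ are exactly what the paper uses. The gap is Step 2, which carries all the content, and the mechanism you sketch there does not work.

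\emph{The differentiation argument.} The identity $\omega_\cA=\langle -,\Omega\rangle_\Muk$ of \eqref{sm-CY-structure} holds only in homology, i.e.\ up to a $t$-dependent homotopy term $h\circ(b+uB)$. Since $b$ depends on $t$, a $t$-constant chain is not a cycle for all $t$. Differentiating therefore leaves terms such as $h\circ\partial_t(b+uB)$, so it is not true that ``the derivative terms are absent''.

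\emph{The pairing against $\Omega$.} Write $\nabla_{\partial_t}(\Omega u)=u\partial_t\Omega-uB\{\frac{dm}{dt}\}\Omega-b\{\frac{dm}{dt}\}\Omega$. In $\langle\beta,\nabla_{\partial_t}(\Omega u)\rangle$, every piece except $-\langle\beta_0,b\{\frac{dm}{dt}\}\Omega_0\rangle_\Muk$ pairs with the negative-$u$ components of $\beta$. These pieces are not removed by any truncation. They also cannot be evaluated, because neither the chain $\Omega$ nor its $t$-dependence is explicit. Even the surviving piece becomes $\zeta(\beta)$ only after you move $b\{\frac{dm}{dt}\}$ into the other slot. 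That requires a homotopy for the self-adjointness of the cap product (the $\eta_{0,2,0}$-type graph), which is missing from your sketch.

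\emph{The graphical claim.} The statement that $\partial_t\rho^\cA(M)$ equals ``Getzler graphs sewn to $M$'' holds only up to such homotopies. It is precisely the compatibility statement you would need to prove.

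The missing idea is to keep the connection off $\Omega$. Use Sheridan's flatness of the higher residue pairing under the Getzler--Gauss--Manin connection, together with Lemma~\ref{lem:symmetric-getzler} to pass between the two connections. This rewrites $\langle s(\alpha),u\nabla_{\partial_t}\Omega\rangle_\hres$ in terms of $\partial_t\langle s(\alpha),u\Omega\rangle_\hres$ and $\langle\nabla_{\partial_t}s(\alpha),u\Omega\rangle_\hres$. Now $\Omega$ enters only through $\langle-,\Omega\rangle_\Muk=\omega_\cA$, which is explicit and $t$-independent.

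\begin{itemize}
\item The $u\partial_t$ contributions cancel, leaving $-\omega_\cA\big(uB\{\frac{dm}{dt}\}(\alpha)+b\{\frac{dm}{dt}\}(\alpha)\big)$.
\item The first term vanishes because $B\{\cdot\}$ outputs chains $\bone_A|\cdots$ of positive length. This is a structural vanishing, not a truncation effect.
\item The second term equals $\zeta(\alpha)$ by strict unitality and cyclicity \eqref{eq:cyclic-structure}, since only $m_2\big(\frac{dm_k}{dt}(a_1,\ldots,a_k),a_0\big)$ survives.
\end{itemize}

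Your Step 3 then finishes the argument. A purely graphical rederivation, as you propose, would amount to reproving Sheridan's result together with all its homotopies, and your sketch does not do that.
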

\begin{proof}
According to~\cite[Corollary 5.39]{She}, the higher residue pairing in~\eqref{higher-residue-pairing} is covariantly constant with respect to the Getzler connection. From this, we deduce that
\begin{align*}
\left\langle s(\alpha), u\nabla^{\sf Get}_{\partial_t}\Omega \right\rangle_\hres & =- u\partial_t\langle s(\alpha), \Omega\rangle_\hres +\left\langle u\nabla^{\sf Get}_{\partial_t} s(\alpha),\Omega\right\rangle_\hres.
\end{align*}
By Definition~\eqref{def:nc-splitting}, the splitting map $s$ intertwines the higher residue pairing with the Mukai pairing, hence the equation above implies that
\begin{align*}
\left\langle \alpha, \nabla_{\partial_t}^{s,+}([\Omega] u) \right\rangle_\Muk & = \mbox{the coefficient of $u^0$ in\;\;} \left\langle s(\alpha), u\nabla^{\sf Get}_{\partial_t}\Omega \right\rangle_\hres\\
 & =-\partial_t\langle u\alpha, [\Omega]\rangle_\Muk +\left\langle \nabla_{\partial_t}^{s,+} \alpha,[\Omega]\right\rangle_\Muk\\
 &= \langle -u\partial_t\alpha, [\Omega]\rangle_\Muk + \Big\langle u \partial_t \alpha -uB\Big\{\frac{dm}{dt}\Big\}(\alpha)- b\Big\{\frac{dm}{dt}\Big\}(\alpha), [\Omega]\Big\rangle_\Muk 
 \\
 &= -\omega_\cA\Big(uB\Big\{\frac{dm}{dt}\Big\}(\alpha)+ b\Big\{\frac{dm}{dt}\Big\}(\alpha)\Big).
\end{align*}
 For the third equality above, we first use Equation~\eqref{sm-CY-structure} to obtain 
$\langle u\alpha, [\Omega]\rangle_\Muk =\omega_\cA(u\alpha).$
Then, by Equation~\eqref{eq:cy-form}, $\omega_\cA(u\alpha)$ vanishes unless $\alpha=a_0 u^{-1}$ for some $a_0$. If so, since the cyclic pairing $\langle-,-\rangle:\cA\otimes\cA \to \bbC[[t]]$ is obtained from the cyclic pairing on $A$ by scalar extension, we have
\[\partial_t\langle u\alpha, [\Omega]\rangle_\Muk
=\partial_t\big(\omega_\cA(u\alpha)\big) =
\partial_t\langle \bone_A,a_0\rangle=\langle \bone_A,\partial_t a_0\rangle=\omega_\cA\big(\partial_t(u\alpha)\big) = \langle u\partial_t\alpha, [\Omega]\rangle_\Muk. \]
%\[ \partial_t\langle u\alpha, \Omega\rangle_\res = \partial_t\big( \omega_\cA(u\alpha)\big) = \omega_\cA\big(\partial_t(u\alpha)\big) = \langle u\partial_t\alpha, \Omega\rangle_\res. \]
%Here the second equality is due to Equation~\eqref{eq:cy-form} defining the linear functional $\omega_\cA$. 

By Formula~\eqref{eq:B11}, $B\big\{\frac{dm}{dt}\big\}(\alpha)$ is of the form $\bone_A|\cdots$.
Applying $\omega_\cA$, we have 
$\omega_\cA\left(uB\{\frac{dm}{dt}\}(\alpha)\right)=0$.

Finally, for $\alpha=a_0|a_1\ldots a_k u^{-n}$, by the formula of $b\{\frac{dm}{dt}\}$ in Equation~\eqref{eq:b11}, 
Equation~\eqref{eq:cy-form},
Equation~\eqref{eq:unitality}, and Equation~\eqref{eq:cyclic-structure}, we have %may compute as
\begin{align*}
%\left\langle\alpha, u\nabla^{\sf Get}_{\partial_t}\Omega \right\rangle_\res  = 
-\omega_\cA\Big(b\big\{\frac{dm}{dt}\big\}(\alpha)\Big)
    & = -\delta_0^{n}(-1)^{|a_0|'\sum_{r=1}^k |a_r|'} \Big\langle \bone_A, m_2\Big(\frac{dm_k}{dt}(a_1,\ldots,a_k),a_0\Big)\Big\rangle\\
    & = \delta_0^{n}(-1)^{|a_0|'\sum_{r=1}^k |a_r|'} \Big\langle \frac{dm_k}{dt}(a_1,\ldots,a_k),a_0\Big\rangle.
    %\\&= \zeta(\alpha).
\end{align*}
This is $\zeta(\alpha)$ defined in Equation~\eqref{eq-KS-connection} and the proof is completed.
%Here, the last equality uses the cyclic structure (see Equation~\eqref{eq:cyclic-structure}) on $\cA$.
\end{proof}

%Using the lemma above, we obtain the following
%may compute the left hand side of the divisor equation using Equation~\eqref{eq:cei-formula-main} and 

\begin{Lemma}\label{lem:left-hand-side-genus-zero}
The left side of the divisor equation~\eqref{eq:divisor} is given by 
%using Equation~\eqref{eq:cei-formula-main}
%The following identity holds:
\begin{align*}
         %\langle \nabla^{s,+}_{\partial_t}(\Omega\psi),\alpha_1\psi^{k_1},\ldots,\alpha_n\psi^{k_n}\rangle_{g, n}^{\cA,\Omega,s}=  
        \bigg\langle [\big(\cK_*\rho^{\cA,\tw,\zeta}_*\mc_1\big)_{g,1,n-1}^\epsilon](\alpha_n(-u)^{k_n}),
        %\alpha_1(-u)^{k_1}\cdots\alpha_{n-1}(-u)^{k_{n-1}}
        \bigodot\limits_{i=1}^{n-1}\alpha_i(-u)^{k_i}
        \bigg\rangle_\Muk.
    \end{align*}
\end{Lemma}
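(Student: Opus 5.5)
The plan is to copy the argument of Lemma~\ref{lem:string-lhs-mc}, with the functional $\omega_\cA$ replaced by $\zeta$ and Equation~\eqref{sm-CY-structure} replaced by Lemma~\ref{lem:functional-zeta}.

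First I compute the pushed-forward Maurer-Cartan element. Since $\mc_1=(\hcV,f(\hcV))$, the definition~\eqref{zeta-map} gives
\[\rho^{\cA,\tw,\zeta}_*\mc_1=\hbcA+C^Q_\zeta\big(\rho^{\cA,\tw}(f\hcV)\big)\epsilon .\]
Next I would show that $C^Q_\zeta\circ\rho^{\cA,\tw}\circ f=C_\zeta\circ\rho^{\cA,\tw}$, where $C_\zeta$ contracts at any output. The map $f$ sums over white vertices $j$ with $b_j=0$, forgets the framing there, and labels the vertex $Q$. Under $\rho^{\cA,\tw}$, forgetting the starting half-edge at $Q$ means summing over all cyclic rotations of the Hochschild chain at that output. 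So it suffices to check that $\zeta$ is invariant under cyclic rotation of $a_0|a_1\ldots a_k$ and vanishes on chains of nonzero $u^{-1}$-power. The second property holds by definition. The first follows from the cyclic symmetry~\eqref{eq:cyclic-structure}, applied to $\frac{dm_k}{dt}$, which is itself cyclic by the flatness assumption~\eqref{eq:flat-cyclic-metric}.

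Some care is needed with the reduced complex. Rotating a unit into position $0$, or the vanishing of $f_j$ when the starting half-edge is a leaf, should match the fact that $\zeta$ kills terms where $\frac{dm_k}{dt}$ has a unit input; this holds by strict unitality, since $\frac{dm}{dt}$ of a strictly unital family is a reduced cochain. I also expect the combinatorial factor from summing over rotations to be absorbed exactly as in Lemma~\ref{lem:F_Q-cd}. \textbf{This is the main obstacle}: checking these two points, namely the combinatorial factor and the sign conventions.

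Given this identification, the $\epsilon$-component of $\cK_*$ is a sum over partially directed graphs $\GG\in\Gamma((g,1,n-1))$ with one distinguished vertex carrying $C_\zeta\hbcA$. Adding an outgoing leaf $l_P$ at that vertex identifies this sum with
\[C_\zeta\Big(\sum_{\GG'\in\Gamma((g,1,n))}\tfrac{\wt(\GG')}{|\Aut(\GG')|}\rho^{\cA,\tw}_{\GG'}\Big),\]
using that $\wt$ is independent of leaves. One subtlety: the leaf $l_P$ must carry the outgoing-leaf operator $R$. Since $R$ is applied at the output in the CEI formula~\eqref{eq:cei-formula-main}, contracting with $\zeta$ after $R$ amounts to evaluating $\zeta$ on $R$ of the output. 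On homology, $\zeta\circ s$, where $s$ is realised at chain level by $S=R^{-1}$, is identified by Lemma~\ref{lem:functional-zeta} with pairing against $\nabla^{s,+}_{\partial_t}([\Omega]u)$. Hence $\zeta$ applied to the $R$-corrected output equals the Mukai pairing with $\nabla^{s,+}_{\partial_t}([\Omega]u)$ inserted as an extra input.

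Finally, $u=-\psi$ turns $[\Omega]u$ into $-[\Omega]\psi$, and the connections~\eqref{eq:conn-truncated} and~\eqref{connection-HH} are matched by this same substitution. Equation~\eqref{eq:explicit-evaluation} then identifies the result with $\langle \nabla^{s,+}_{\partial_t}(-[\Omega]\psi),\alpha_1\psi^{k_1},\ldots\rangle_{g,n+1}$. Only homology classes matter throughout, since the statement is in terms of $b$-homology.
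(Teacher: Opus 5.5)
Your outline is the paper's argument, run in the opposite order. The paper starts from the left side and writes it via \eqref{eq:cei-formula-main} as a sum over $\Gamma((g,1,n))$ with $\nabla^{s,+}_{\partial_t}([\Omega]u)$ inserted. It then converts that insertion into $C_\zeta$ by Lemma~\ref{lem:functional-zeta}, and defers to Lemma~\ref{lem:string-lhs-mc} for $\rho^{\cA,\tw,\zeta}_*\mc_1=\hbcA+C_\zeta(\hbcA)\epsilon$ and for the leaf-adding bijection. You check that $\zeta$ is cyclically invariant and vanishes on positive $u^{-1}$-powers and on chains $\bone_\cA|a_1\ldots a_k$. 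That check is exactly what makes $C^Q_\zeta\circ\rho^{\cA,\tw}\circ f=C_\zeta\circ\rho^{\cA,\tw}$ true, a point the paper leaves implicit.

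However, the step you flag as the main obstacle resolves differently from what you suggest. The output at an unframed vertex $Q$ is not the sum over all cyclic rotations of the Hochschild chain. It is the chain read off from an arbitrary choice of starting half-edge, determined only up to rotation, and $C^Q_\zeta$ evaluates $\zeta$ on it. This is well defined precisely because of the invariance you verified. Under your ``sum over rotations'' reading you would pick up a factor equal to the valence of $Q$, and nothing absorbs it. Lemma~\ref{lem:F_Q-cd} only concerns ${\sf val}(Q)=1$, where no rotation occurs. The same reading would also contradict Lemma~\ref{lem:divisor-evaluation}, where $C^Q_\zeta$ at a former black vertex replaces $m_k$ by $\frac{dm_k}{dt}$ exactly once. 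With the correct reading the identity holds on the nose, with no factor.

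You should also make the $R$-bookkeeping at $l_P$ precise. Read literally, you equate $\zeta(Ry)$ with the Mukai pairing, which drops the $s$ in Lemma~\ref{lem:functional-zeta}. In the $\epsilon$-component, $\zeta$ is applied to the raw output $y$ of the distinguished vertex, and no $R$ sits on $l_P$. In the CEI formula, $l_P$ carries $R$ and is then paired with the insertion. Lemma~\ref{lem:functional-zeta} applied to $\alpha=[Ry]$ gives $\langle Ry,\nabla^{s,+}_{\partial_t}([\Omega]u)\rangle_\Muk=\zeta(SRy)=\zeta(y)$, which is the required match. The paper's notation $C_\zeta(\rho^{A,\tw}_\GG)$ carries the same ambiguity.
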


\begin{proof}
By Equation~\eqref{eq:cei-formula-main} and Equation~\eqref{zeta-connection}, we have
\begin{align*}
& \langle \nabla^{s,+}_{\partial_t}(-\Omega\psi),\alpha_1\psi^{k_1},\ldots,\alpha_n\psi^{k_n}\rangle_{g, n}^{\cA,\Omega,s}\\
%= &  \left\langle [\lbA_{g,1,n}]\big(\alpha_n (-u)^{k_n}\big),\nabla^{s,+}_{\partial_t}(-\Omega u)
%\cdot\alpha_1(-u)^{k_1}\cdots\alpha_{n-1}(-u)^{k_{n-1}}
%\bigodot\limits_{i=1}^{n-1}\alpha_i(-u)^{k_i}
%\right\rangle_{\Muk}\\
= &\sum_{\GG\in
    \Gamma((g,1,n))} \frac{\wt(\GG)}{|\Aut(\GG)|}  
    \bigg\langle \rho^{A,\tw}_\GG(\alpha_n(-u)^{k_n}),\nabla^{s,+}_{\partial_t}([\Omega] u)
    %\cdot\alpha_1(-u)^{k_1}\cdots\alpha_{n-1}(-u)^{k_{n-1}} 
    \odot\bigodot\limits_{i=1}^{n-1}\alpha_i(-u)^{k_i}
    \bigg\rangle_{\Muk}\\
   = &\sum_{\GG\in
    \Gamma((g,1,n))} \frac{\wt(\GG)}{|\Aut(\GG)|} \bigg\langle C_\zeta \left(\rho^{A,\tw}_\GG(\alpha_n(-u)^{k_n})\right),
    %\alpha_1(-u)^{k_1}\cdots\alpha_{n-1}(-u)^{k_{n-1}} 
    \bigodot\limits_{i=1}^{n-1}\alpha_i(-u)^{k_i}
    \bigg\rangle_{\Muk}
    %\\= & \left\langle [\big(\cK_*\rho^{\cA,\tw,\zeta}_*\mc_1\big)_{g,1,n-1}^\epsilon](\alpha_n(-u)^{k_n}),    \bigodot\limits_{i=1}^{n-1}\alpha_i(-u)^{k_i}\right\rangle_\Muk.
\end{align*}
From the definition of $\cK_*\rho^{\cA,\tw,\zeta}_*\mc_1$ in~\eqref{zeta-map}, the rest of the proof is similar to that of Lemma~\ref{lem:string-lhs-mc}.
\end{proof}

\subsubsection{The right hand side} 
%Our next goal is to identify the right hand side of Equation~\eqref{eq:divisor} with $\big(\cK_*\rho^{\cA,\tw,\zeta}_*\mc_2\big)_{g,1,n-1}^\epsilon$ in the push-forward Maurer-Cartan element. 
%As in the case of string equation, this identification is much more involved. 

%Now we consider the right hand side of the divisor equation~\eqref{eq:divisor}.
We define
\begin{equation}
\label{nabla-commutator}
[\nabla^{s,\pm}_{\partial_t},
\lbA_{g,1,n-1}
]:=
\nabla^{s,-}_{\partial_t}
\lbA_{g,1,n-1}-
\lbA_{g,1,n-1}\nabla^{s,+}_{\partial_t}
.\end{equation}
\begin{Lemma}
The right hand side of the divisor equation~\eqref{eq:divisor} is given by
$$
\bigg\langle 
[\nabla^{s,\pm}_{\partial_t},
\lbA_{g,1,n-1}
](\alpha_n(-u)^{k_n}),
    %\alpha_1(-u)^{k_1}\cdots\alpha_{n-1}(-u)^{k_{n-1}}
    \bigodot\limits_{i=1}^{n-1}\alpha_i(-u)^{k_i}\bigg\rangle_\Muk.
    $$
\end{Lemma}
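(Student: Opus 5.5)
The plan is to mirror the proof of the analogous lemma for the string equation: rewrite each term on the right hand side of \eqref{eq:divisor} through the explicit formula \eqref{eq:explicit-evaluation}, and then move every connection onto $\lbA_{g,1,n-1}$. The Mukai pairing on $\sym^{n-1}$ identifies $H_\bullet(L^\cA)[u^{-1}]$ with the dual of $H_\bullet(L^\cA)[u]$. I would therefore first define $\nabla^{s,-}_{\partial_t}$ on $H_\bullet(L^\cA)[u^{-1}]$ (extended to $\sym^{n-1}$ by Leibniz) as the adjoint of $-\nabla^{s,+}_{\partial_t}$ with respect to $\langle-,-\rangle_\Muk$, with $\partial_t$ as the correction term:
\[
\partial_t\langle x, y\rangle_\Muk=\langle \nabla^{s,-}_{\partial_t}x, y\rangle_\Muk+\langle x,\nabla^{s,+}_{\partial_t}y\rangle_\Muk .
\]
Concretely, $\nabla^{s,-}$ is the component of $\nabla^s$ landing in $u^{-1}H_\bullet[u^{-1}]$ after the sign change $u\mapsto -u$. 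This is the natural reading of the notation in \eqref{nabla-commutator}. To check the identity, use that $\langle-,-\rangle_\hres$ is flat for $\nabla^{\sf Get}$ (\cite[Corollary 5.39]{She}, as in Lemma~\ref{lem:functional-zeta}), that $s$ intertwines $\langle-,-\rangle_\hres$ with the Mukai pairing (condition S2), and that the generalized Mukai pairing \eqref{eq:generalized-Mukai} pairs $u^{-l}$ only with $(-u)^{l}$. Taking the coefficient of $u^0$ in the flatness identity then gives the displayed relation. The truncation terms $\delta_k^0\KS\cap\alpha\,u^{-1}$ in \eqref{connection-HH} are exactly the pieces that pair to zero, since no negative power survives on the $[u]$ side.

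Next I would expand $\partial_t$ of the $n$-point invariant using \eqref{eq:explicit-evaluation}:
\[
\partial_t\Big\langle \lbA_{g,1,n-1}(\alpha_n(-u)^{k_n}),\textstyle\bigodot_{i<n}\alpha_i(-u)^{k_i}\Big\rangle_\Muk .
\]
Here I distribute $\partial_t$ on the Hochschild-homology representatives, writing it as the sum of three pieces: a term where the connection hits the input $\alpha_n$, a term where $\nabla^{s,-}$ hits the output of $\lbA_{g,1,n-1}$, and terms where $\nabla^{s,+}$ hits each $\alpha_i$ with $i<n$.

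I then apply the adjunction identity to the output slot, with $x=\lbA_{g,1,n-1}(\alpha_n(-u)^{k_n})$. This converts the middle piece into the term $\langle x,\nabla^{s,+}(\bigodot_{i<n}\alpha_i(-u)^{k_i})\rangle$, and that term cancels exactly against $\sum_{j<n}\langle\ldots,\nabla^{s,+}_{\partial_t}(\alpha_j\psi^{k_j}),\ldots\rangle$. The remaining $j=n$ summand is $\langle\lbA_{g,1,n-1}\nabla^{s,+}_{\partial_t}(\alpha_n(-u)^{k_n}),\ldots\rangle$. Combining what is left gives precisely $\langle[\nabla^{s,\pm}_{\partial_t},\lbA_{g,1,n-1}](\alpha_n(-u)^{k_n}),\bigodot\alpha_i(-u)^{k_i}\rangle_\Muk$. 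Throughout I would translate $\psi$ to $-u$ via the vertical isomorphisms defining $\nabla^s$ on $HH_\bullet((\psi))$.

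The main obstacle is the first step: the adjunction identity together with the bookkeeping of signs and truncations. Here $(-1)^k$ factors from \eqref{higher-residue-pairing}, the sign flip $u\mapsto-\psi$, and the $\delta_k^0$ correction all interact. I would first verify the identity for monomials $x=\beta u^{-l}$, $y=\alpha u^{k}$, by expanding $\langle s(\beta)u^{-l}, s(\alpha)u^k\rangle_\hres$ and reading off the $u^0$ coefficient. I would then check that the $u^{-1}$-part of $\nabla^s y$, which is removed in $\nabla^{s,+}$, contributes nothing because it pairs only with $u^{1}$-terms on the $x$ side, and these are absent.
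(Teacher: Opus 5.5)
Your overall route is the paper's: rewrite the $n$-point function via \eqref{eq:explicit-evaluation}, then use the compatibility of the connection with the pairing. The paper simply quotes this compatibility from Sheridan, while your first paragraph justifies it (flatness of $\langle-,-\rangle_\hres$, condition S2, reading off the $u^0$ coefficient, and the truncation terms pairing to zero). That part is fine, up to one slip. $\nabla^{s,-}_{\partial_t}$ must be the projection of $\nabla^{s}_{\partial_t}$ onto all non-positive powers $H_\bullet(L^\cA)[u^{-1}]$, not onto $u^{-1}H_\bullet(L^\cA)[u^{-1}]$, because the $u^0$ part of $\nabla^s x$ pairs with the $u^0$ part of $y$. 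Your adjoint characterization forces the correct choice anyway.

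The real problem is the bookkeeping in your third and fourth paragraphs. Set $x=\lbA_{g,1,n-1}(\alpha_n(-u)^{k_n})$ and $y=\bigodot_{i<n}\alpha_i(-u)^{k_i}$. The compatibility identity is itself the complete expansion:
\[
\partial_t\langle x,y\rangle_\Muk=\langle\nabla^{s,-}_{\partial_t}x,y\rangle_\Muk+\langle x,\nabla^{s,+}_{\partial_t}y\rangle_\Muk .
\]
The variation of the input $\alpha_n$ is already contained in the first term, since
\[
\nabla^{s,-}_{\partial_t}x=[\nabla^{s,\pm}_{\partial_t},\lbA_{g,1,n-1}](\alpha_n(-u)^{k_n})+\lbA_{g,1,n-1}\nabla^{s,+}_{\partial_t}(\alpha_n(-u)^{k_n}).
\]
Your three-piece sum therefore fails in one of two ways:
\begin{itemize}
\item If your middle piece means $\langle\nabla^{s,-}_{\partial_t}x,y\rangle_\Muk$, then adding a separate ``input'' piece double-counts. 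After the final subtraction you would be left with $\langle\nabla^{s,-}_{\partial_t}x,y\rangle_\Muk$, not the commutator.
\item If your middle piece means the commutator term, then the three-piece formula is just the compatibility identity combined with \eqref{nabla-commutator}, and the proof is already finished at that point.
\end{itemize}
In neither case does the adjunction identity ``convert'' the middle piece into $\langle x,\nabla^{s,+}_{\partial_t}y\rangle_\Muk$; it is what produces the two-term expansion in the first place.

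The correct cancellation, which is the paper's, runs as follows. By the Leibniz rule, $\langle x,\nabla^{s,+}_{\partial_t}y\rangle_\Muk$ is exactly the sum of the $j<n$ summands on the right of \eqref{eq:divisor}, so it cancels them. Subtracting the remaining $j=n$ summand $\langle\lbA_{g,1,n-1}\nabla^{s,+}_{\partial_t}(\alpha_n(-u)^{k_n}),y\rangle_\Muk$ from $\langle\nabla^{s,-}_{\partial_t}x,y\rangle_\Muk$ then gives the commutator, by the definition \eqref{nabla-commutator}.
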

\begin{proof}
Using Equation~\eqref{eq:explicit-evaluation} and the compatibility between the connection and the pairing~\cite[Corollary 5.39]{She}, we have
\begin{align*}
     &\partial_t
     \langle \alpha_1\psi^{k_1},\ldots,\alpha_n\psi^{k_n}\rangle_{g, n}^{\cA,\Omega,s} \\
     =& \partial_t \bigg\langle \lbA_{g,1,n-1}(\alpha_n(-u)^{k_n}), %\alpha_1(-u)^{k_1}\cdots\alpha_{n-1}(-u)^{k_{n-1}}
    \bigodot\limits_{i=1}^{n-1}\alpha_i(-u)^{k_i}
    \bigg\rangle_\Muk\\
     =& \bigg\langle \nabla^{s,-}_{\partial_t}\Big(\lbA_{g,1,n-1}(\alpha_n(-u)^{k_n})\Big), %\alpha_1(-u)^{k_1}\cdots\alpha_{n-1}(-u)^{k_{n-1}}
    \bigodot\limits_{i=1}^{n-1}\alpha_i(-u)^{k_i}
    \bigg\rangle_\Muk+\bigg\langle \lbA_{g,1,n-1}(\alpha_n(-u)^{k_n}), %\alpha_1(-u)^{k_1}\cdots\alpha_{n-1}(-u)^{k_{n-1}}
\nabla^{s,+}_{\partial_t}\bigg(\bigodot\limits_{i=1}^{n-1}\alpha_i(-u)^{k_i}\bigg)
    \bigg\rangle_\Muk\\
    %\\&+\sum_{j=1}^{n-1} \left\langle \lbA_{g,1,n-1}(\alpha_nu^{k_n}),\alpha_1(-u)^{k_1}\cdots\nabla^{s,+}_{\partial_t}(\alpha_j(-u)^{k_j})\cdots\alpha_{n-1}(-u)^{k_{n-1}}\right\rangle_\Muk\\
     =& \bigg\langle \nabla^{s,-}_{\partial_t}\Big(\lbA_{g,1,n-1}(\alpha_n(-u)^{k_n})\Big), %\alpha_1(-u)^{k_1}\cdots\alpha_{n-1}(-u)^{k_{n-1}}
    \bigodot\limits_{i=1}^{n-1}\alpha_i(-u)^{k_i}
    \bigg\rangle_\Muk 
     + \sum_{j=1}^{n-1} \langle \alpha_1\psi^{k_1},\ldots,\nabla^{s,+}_{\partial_t}(\alpha_j\psi^{k_j}),\ldots,\alpha_n\psi^{k_n}\rangle_{g}^{\cA,\Omega,s}.
\end{align*}
Thus, the right hand side of Equation~\eqref{eq:divisor} is given by
\begin{align*}
    &\bigg\langle \nabla^{s,-}_{\partial_t}\Big(\lbA_{g,1,n-1}(\alpha_n(-u)^{k_n})\Big), %\alpha_1(-u)^{k_1}\cdots\alpha_{n-1}(-u)^{k_{n-1}}
    \bigodot\limits_{i=1}^{n-1}\alpha_i(-u)^{k_i}
    \bigg\rangle_\Muk 
    %\hspace{2cm} 
    - \langle \alpha_1\psi^{k_1},\ldots,\alpha_{n-1}\psi^{k_{n-1}},\nabla^{s,+}_{\partial_t}(\alpha_n\psi^{k_n})\rangle_g^{\cA,\Omega,s}\\
    = & \bigg\langle \nabla^{s,-}_{\partial_t}\Big(\lbA_{g,1,n-1}(\alpha_n(-u)^{k_n})\Big),
    %\alpha_1(-u)^{k_1}\cdots\alpha_{n-1}(-u)^{k_{n-1}}
    \bigodot\limits_{i=1}^{n-1}\alpha_i(-u)^{k_i}\bigg\rangle_\Muk 
    %\hspace{2cm} 
    - \bigg\langle \big(\lbA_{g,1,n-1}
    \nabla^{s,+}_{\partial_t}\big(\alpha_n(-u)^{k_n}\big), %\alpha_1(-u)^{k_1}\cdots\alpha_{n-1}(-u)^{k_{n-1}}
    \bigodot\limits_{i=1}^{n-1}\alpha_i(-u)^{k_i}\bigg\rangle_\Muk
    %\\=&\left\langle [\nabla^{s,\pm}_{\partial_t},\lbA_{g,1,n-1}](\alpha_n(-u)^{k_n}),\bigodot\limits_{i=1}^{n-1}\alpha_i(-u)^{k_i}\right\rangle_\Muk
\end{align*}
Now, the result follows from the definition~\eqref{nabla-commutator}.
\end{proof}

Thus, in order to prove the divisor equation~\eqref{eq:divisor}, 
it remains to prove that
\begin{equation}\label{eq:identify-rhs} [\nabla^{s,\pm}_{\partial_t},
%\big(\cK_*\rho^{\cA,\tw}_*\hcV\big)_{g,1,n-1}
\lbA_{g,1,n-1}] 
= [\big(\cK_*\rho^{\cA,\tw,\zeta}_*\mc_2\big)_{g,1,n-1}^\epsilon]\in H_\bullet\left( {\sf Hom}^\cont \big( L^A_+[1] , \sym^{n-1} L^A_- \big), b\right).
\end{equation} 
%with both sides considered as homology classes.

\subsection{Calculations of push-forward}
%We need to compute  $\rho^{\cA,\tw,\zeta}_*\mc_2$ explicitly. 
Let us denote by $\nabla_{\partial_t}^{\sf Get}: L^\cA((u)) \ra L^\cA((u))$ the chain-level symmetric Getzler connection that acts on the periodic cyclic chain complex of $A$. Its formula is in Equation~\eqref{eq:sym-getzler}. By construction, it restricts to $L^\cA_-$ which we denote by
\[\nabla^{{\sf Get},-}_{\partial_t}: L^\cA_- \ra L^\cA_-.\] 
Although it does not restrict to $L^\cA_+$, we can define an operator $\nabla^{{\sf Get},+}_{\partial_t}: L^\cA_+ \ra L^\cA_+$ by setting
\[\nabla^{{\sf Get},+}_{\partial_t}(\alpha\cdot u^k):= 
    \pi^+\big(\nabla^{{\sf Get}}_{\partial_t}(\alpha\cdot u^k)\big),\]
where $\pi^+: L^\cA((u))\to L^\cA[[u]]$ is the canonical projection map.
Let us denote by 
\begin{equation}
\label{eta-definition}
\eta_{g,k,l}:=\big(\rho^{\cA,\tw,\zeta}_*\mc_2\big)_{g,k,l}^\epsilon \in {\sf Hom}^\cont \big( \sym^k (L^\cA_+[1]) , \sym^l (L^\cA_{-}) \big).
\end{equation}

\iffalse
The component $\big(\rho^{\cA,\tw,\zeta}_*\mc_2\big)_{g,k,l}^\epsilon$ is given by the following list.
\begin{enumerate}
\item For %the exceptional component
$(g,k,l)=(0,1,1)$, we denote this map by $\eta_{0,1,1}: L^\cA_+[1] \ra L^\cA_-$. It is explicitly given by
\item For %the exceptional component 
$(g,k,l)=(0,2,0)$, we denote this map by $\eta_{0,2,0}\in {\sf Hom}^\cont \big( \sym^2 (L^\cA_+[1]) , \mathbb{C}[[t]] \big)$ explicitly given by
\end{enumerate}
\fi

\begin{Lemma}\label{lem:divisor-evaluation}
For stable terms, we have
\begin{align}
    \rho^{\cA,\tw,\zeta}\big( \mathcal{T} \widehat{\mathcal{V}}_{g,k,l} \big)  &= [\nabla_{\partial_t}^{\sf Get,\pm}, \hbcA_{g,k,l}] := \nabla_{\partial_t}^{{\sf Get},-}\hbcA_{g,k,l} - \hbcA_{g,k,l}\nabla_{\partial_t}^{{\sf Get},+},\\
    \rho^{\cA,\tw,\zeta}\big( \mathcal{S} \widehat{\mathcal{V}}_{g,k-1,l+1} \big) &= \sum_{j=1}^{l+1} u_j^{-1}\cdot \eta_{0,2,0}\circ_j \hbcA_{g,k-1,l+1}.
\end{align}
For the exceptional terms, we have 
\begin{align}
%\begin{dcases}
  \eta_{0,1,1} &= \frac{1}{2} 
	\bigg( \begin{tikzpicture}[baseline={([yshift=-1.2ex]current bounding
			box.center)},scale=0.3] 
		\draw [thick] (0,0) to (0,2);
		\draw [thick] (-0.2, 1.8) to (0.2, 2.2);
		\draw [thick] (0.2, 1.8) to (-0.2, 2.2);
		\draw [thick] (0,0) to (-2,0);
		\draw (-2,0) node[label=left:{$\frac{dm}{dt}$}] {};
		\draw [thick] (0,0) to (2,0);
		\draw [thick] (-2.2,0) circle [radius=0.2];
		\draw [thick] (2.2,0) circle [radius=0.2];
	\end{tikzpicture}+ \begin{tikzpicture}[baseline={([yshift=-1.2ex]current bounding
			box.center)},scale=0.3] 
		\draw [thick] (0,0) to (0,2);
		\draw [thick] (-0.2, 1.8) to (0.2, 2.2);
		\draw [thick] (0.2, 1.8) to (-0.2, 2.2);
		\draw [thick] (0,0) to (-2,0);
		\draw (5,0) node[label=left:{$\frac{dm}{dt}$}] {};
		\draw [thick] (0,0) to (2,0);
		\draw [thick] (-2.2,0) circle [radius=0.2];
		\draw [thick] (2.2,0) circle [radius=0.2];
	\end{tikzpicture}\bigg),\\
\eta_{0,2,0}&= \rho^{\cA,\tw}
\left(\frac{1}{4} \begin{tikzpicture}[baseline={([yshift=.5ex]current bounding box.center)},scale=0.3]
\draw (2,0) node[cross=2pt,label=above:{}] {};
\draw (6.2,0) node[cross=2pt,label=above:{}] {};
\draw [thick] (6.2,0) to (7.2,0);
\draw [thick] (2.1,0) to (3.2,0);
\draw [thick] (5.2,0) + (-85:2) arc(-85:265:2);
\draw [thick] (5.2,-2) circle (.2);
\draw (5.2,-2) node[label=above:{${\scriptstyle \frac{dm}{dt}}$}] {};
\end{tikzpicture}
+\frac{1}{4} \begin{tikzpicture}[baseline={([yshift=-.5ex]current bounding box.center)},scale=0.3]
\draw (2,0) node[cross=2pt,label=above:{}] {};
\draw (6.2,0) node[cross=2pt,label=above:{}] {};
\draw [thick] (6.2,0) to (7.2,0);
\draw [thick] (2.1,0) to (3.2,0);
\draw [thick] (5.2,0) + (85:2) arc(85:-265:2);
\draw [thick] (5.2,2) circle (.2);
\draw (5.2,2.2) node[label=below:{${\scriptstyle \frac{dm}{dt}}$}] {};
\end{tikzpicture}
+\frac{1}{2}\begin{tikzpicture}[baseline={([yshift=.4ex]current bounding box.center)},scale=0.3]
\draw (2,0) node[cross=2pt,label=above:{}] {};
\draw (6.2,0) node[cross=2pt,label=above:{}] {};
\draw [thick] (6.2,0) to (7.2,0);
\draw [thick] (2.1,0) to (3.2,0);
\draw [thick] (5.2,0) circle [radius=2];
\draw (1.8,-1.6) circle (.2);
\draw [thick] (3.2,0) to (1.9,-1.4);
\draw (1.3,-2.2) node[label=right:{${\scriptstyle \frac{dm}{dt}}$}] {};
\end{tikzpicture}
+\frac{1}{2}\begin{tikzpicture}[baseline={([yshift=-.4ex]current bounding box.center)},scale=0.3]
\draw (2,0) node[cross=2pt,label=above:{}] {};
\draw (6.2,0) node[cross=2pt,label=above:{}] {};
\draw [thick] (6.2,0) to (7.2,0);
\draw [thick] (2.1,0) to (3.2,0);
\draw [thick] (5.2,0) circle [radius=2];
\draw (1.8,1.6) circle (.2);
\draw [thick] (3.2,0) to (1.9,1.4);
\draw (1.3,2.2) node[label=right:{${\scriptstyle \frac{dm}{dt}}$}] {};
\end{tikzpicture}\right).
%\end{dcases}
\end{align}
\end{Lemma}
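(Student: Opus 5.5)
The plan is to reduce everything to the TCFT action of the Getzler graphs and of $f(\hcV_{0,2,1})$, with the distinguished white vertex $Q$ contracted against $\zeta$. The basic tool is an analogue of Lemma~\ref{lem:F_Q-cd}. For a graph $\Gamma$ in $C^{\sf comb}_\bullet(M^{\sf fr}_{g,k,l+Q},\underline{\sgn})_{\sf hS}$, the operator $C^Q_\zeta\circ\rho^{\cA,\tw}(\Gamma)$ equals the TCFT operation of the underlying unframed graph in which the unframed white vertex $Q$ is replaced by an insertion of the Hochschild cochain $\frac{dm}{dt}$. By the cyclic identity \eqref{eq:cyclic-structure} and \eqref{eq:flat-cyclic-metric}, pairing the Hochschild chain output at $Q$ with $\zeta$ gives $\langle \frac{dm_k}{dt}(a_1,\ldots,a_k),a_0\rangle$. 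This is the same as plugging $\frac{dm}{dt}$ into the $\rho$-vertex in place of the white vertex $Q$, with all rotations of the starting point summed because $Q$ is unframed. This matches the labelling convention of Corollary~\ref{cor-getzler-connection}. I would verify this by a local computation at $Q$, treating the case where $Q$ has valence $\ge 1$. The orientation signs should be controlled by comparing with Corollary~\ref{cor-getzler-connection}, exactly as $\cF'$ was compared with $\omega_\cA$ in Lemma~\ref{lem:F_Q-cd}.

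\textbf{Exceptional terms.} Given this dictionary, these terms are immediate. We have $\eta_{0,1,1}=C^Q_\zeta\rho^{\cA,\tw}(f\hcV_{0,1,2})=C^Q_\zeta\rho^{\cA,\tw}({\sf Get}_1^\sym)$, which by \eqref{graph:getzler2} is the displayed average of the two trivalent graphs with $\frac{dm}{dt}$ at either white vertex. Likewise $\eta_{0,2,0}=C^Q_\zeta\rho^{\cA,\tw}(f\hcV_{0,2,1})$, which is read off from \eqref{eq:W-021} with $Q$ replaced by $\frac{dm}{dt}$.

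\textbf{Stable terms.} Here I would use the decomposition \eqref{construction-T}, $\mathcal{T}=H^{\sf in}-H^{\sf out}+G^{\sf in}+G^{\sf out}+\mathcal{T}^{\sf v}$, and $\rho$-compatibility with sewing.
\begin{itemize}
\item $H^{\sf out}$ and $G^{\sf out}$ sew ${\sf Get}_0^\sym$ and $u_j^{-1}{\sf Get}_1^\sym$ onto the outputs. Under $C^Q_\zeta$ these become, by \eqref{graphic-B}, \eqref{graphic-b} and \eqref{eq:sym-getzler}, post-composition with the non-$\partial_t$ part of $\nabla^{{\sf Get},-}_{\partial_t}$ on $\sym^l L^\cA_-$, extended as a derivation. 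The factor $u_j^{-1}$ produces the $u^{-1}$ term, and the sign of $H^{\sf out}$ matches the minus sign in front of the ${\sf Get}_0$ term.
\item $H^{\sf in}$ and $G^{\sf in}$ give pre-composition on the inputs. The $w_j^{-1}$ factor, together with the sign $(-1)^b$ in \eqref{eq:action-map}, yields the truncation $\pi^+$, that is, $\nabla^{{\sf Get},+}_{\partial_t}$, with the overall minus sign of the commutator.
\item The remaining piece $C^Q_\zeta\rho(\mathcal{T}^{\sf v}\hcV_{g,k,l})$ inserts $\frac{dm}{dt}$ at each black vertex. Since the $\rho$-vertex is built from the $m_k$'s and the pairing is $t$-independent by \eqref{eq:flat-cyclic-metric}, this is exactly $\partial_t\hbcA_{g,k,l}$ by the Leibniz rule.
\end{itemize}
Summing the three bullets gives $[\nabla^{{\sf Get},\pm}_{\partial_t},\hbcA_{g,k,l}]$.

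For $\mathcal{S}$, the definition \eqref{construction-S} sews $f(\hcV_{0,2,1})$ onto the $j$-th output with weight $u_j^{-1}$. Contracting $Q$ turns $f(\hcV_{0,2,1})$ into $\eta_{0,2,0}$, giving the second formula. One subtlety must be checked: the circle parameter $u_j^{-1}$ is carried by the remaining cycle of $f(\hcV_{0,2,1})$. This needs to be interpreted via \eqref{eq:action-map} as the $u_j^{-1}$-multiplication appearing in the statement.

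\textbf{Main obstacle.} I expect the hard part to be the sign and normalization bookkeeping. This includes matching the $\tfrac12$-symmetrizations, the clockwise cyclic orderings, the $(-1)^b$ in \eqref{eq:action-map}, and the shift in $L_+[1]$, so that the input side produces precisely $-\hbcA\nabla^{{\sf Get},+}$, including the truncation. It also requires showing that $\mathcal{T}^{\sf v}$ reproduces $\partial_t$ with no leftover terms from white vertices of $\Gamma$. I would first check everything on $\hcV_{0,1,3}$ using its explicit formula, and then argue locally vertex by vertex.
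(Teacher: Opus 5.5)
Your proposal is correct and follows the paper's own argument: you split $\mathcal{T}$ from \eqref{construction-T} into its input, output and black-vertex parts, match the first two with the symmetrized Getzler operators via Corollary~\ref{cor-getzler-connection} (with the $w_j^{-1}$-sign in \eqref{eq:action-map} producing the truncation $\nabla^{{\sf Get},+}_{\partial_t}$), match $\mathcal{T}^{\sf v}$ with $\partial_t$ using \eqref{eq:flat-cyclic-metric}, and read off $\mathcal{S}$ and the exceptional terms from \eqref{construction-S}, \eqref{graph:getzler2} and \eqref{eq:W-021}. One adjustment to your dictionary: at the unframed vertex $Q$ you should evaluate $\zeta$ on a single cyclic representative of the output chain rather than sum over rotations of the starting half-edge; this is well defined because $\zeta$ is cyclically invariant, and it is forced by the compatibility $C^Q_\zeta\rho^{\cA,\tw}(f\Gamma)=C_\zeta\rho^{\cA,\tw}(\Gamma)$ used for the left-hand side, whereas summing would weight each $\mathcal{T}^{\sf v}$-term by the valence of the vertex and spoil the match with $\partial_t\hbcA_{g,k,l}$.
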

\begin{proof}
For the stable component, recall by construction, the operator $\mathcal{T}=H^{\sf in} -H^{\sf out}+G^{\sf in}+G^{\sf out}+ \mathcal{T}^{\sf v}$ in Equation~\eqref{construction-T} consists of three parts: inputs, outputs and at black vertices. The parts at the inputs and outputs are exactly the operator from Getzler's connection formula~\eqref{eq:sym-getzler}, using Corollary~\ref{cor-getzler-connection}. The commutator $[\partial_t,-]$ corresponds to the part of $\mathcal{T}$ that switches a black vertex to a white vertex, since, by construction, this operation replaces $m(t)$ by $\frac{dm}{dt}$. Here we have also used the flatness of cyclic pairing, i.e., Equation~\eqref{eq:flat-cyclic-metric}.  The formula of $\rho^{\cA,\tw,\zeta}\big( \mathcal{S} \widehat{\mathcal{V}}_{g,k-1,l+1} \big)$ follows directly from its definition of $\mathcal{S}$~\eqref{incoming-S} and Equation~\eqref{eq:W-021}.

The exceptional terms follow from the explicit formulas~\eqref{graph:getzler1}, \eqref{graph:getzler2}, and~\eqref{eq:W-021} respectively. 
\end{proof}

\begin{remark}
The operator $\eta_{0,1,1}$ is exactly the pole part of the symmetric Getzler connection. The other exceptional term $\eta_{0,2,0}$ %\in {\sf Hom} \big( \sym^2 (L^\cA_+[1]) , \mathbb{C}[[t]] \big)$
also has an algebraic interpretation using% its defining identity
~\eqref{eq:defining-w021}: it bounds the failure of the operator $\eta_{0,1,1}$ being a self-adjoint operator with respect to the chain-level Mukai pairing. This follows from the Maurer-Cartan equation $\eth (f\hcV_{0,2,1})+\iota(f\hcV_{0,1,2})=0$, which implies that 
$$[b+uB,\eta_{0,2,0}] + \iota \eta_{0,1,1} =0.$$
This homotopy formula should be compared with a similar one in~\cite[Lemma 5.38]{She}.
\end{remark}

\subsection{Genus zero divisor equation}
\label{subsec:divisor-genus-zero}
Now, we give a proof of the divisor equation in genus zero. By the above discussions, it remains to prove that Equation~\eqref{eq:identify-rhs} holds when $g=0$. In this case, our strategy is to use the explicit formula of CEI~\eqref{eq:cei-formula-main}. The proof will be in complete parallel to that of Proposition~\ref{prop:string-genus-zero}.  We need to introduce some notation for the chain-level operators corresponding to $\nabla^{s}_{\partial_t}$ and $\nabla^{s,+}_{\partial_t}$. Indeed, let 
$S: L^\cA \to L^\cA[[u]]$
be a chain-level lift of the given splitting map $s:H_\bullet(L^\cA) \to H_\bullet(L^\cA[[u]])$. Denote by
\[ \widetilde{S}: \big(L^\cA((u)),b\big) \ra \big(L^\cA((u)),b+uB\big)\]
the induced isomorphism obtained by extending $S$ by $u$-linearity to the periodic chains. Its inverse map is denoted by $\widetilde{R}$. Then we obtain a pull-back connection denoted by
\[\nabla^{S}_{\partial_t}: \big(L^\cA((u)),b\big) \ra \big(L^\cA((u)),b\big)\]
defined by $\nabla^{S}_{\partial_t}:=   \widetilde{R}\circ \nabla_{\partial_t}^{\sf Get} \circ \widetilde{S}$. We may also define its projection onto the negative and positive $u$-parts which are denoted by
\begin{align*}
    \nabla^{S,+}_{\partial_t}& : L^\cA[[u]] \ra L^\cA[[u]],\\
     \nabla^{S,-}_{\partial_t}& : L^\cA[u^{-1}] \ra L^\cA[u^{-1}].
\end{align*}
The connection $\nabla^{S,+}_{\partial_t}$ is the chain-level lift of the connection $\nabla^{s,+}_{\partial_t}$ defined in Equation~\eqref{eq:conn-truncated}.

With these preparations, we proceed to prove the divisor equation in genus zero. The proof is almost identical to that of Proposition~\ref{prop:string-genus-zero}, with the following role switches:
\begin{align*} 
& M_{u^{-1}}  \leftrightarrow \nabla^{S,\pm}_{\partial_t} \mbox{\;\;\; at leaves (with} \pm \mbox{depends on in/out)}, \\
&M_{u^{-1}}  \leftrightarrow \nabla^{{\sf Get},\pm}  \mbox{\;\;\; at edges,}\\
&\gamma_{g,k,l}  \leftrightarrow \eta_{g,k,l} \mbox{\;\;\; at a vertex}.
\end{align*}

% we can compute the commutators.
\begin{Lemma}\label{lem:divisor-leaf-cont}
Similarly to Lemma~\ref{lem:string-leaf-cont}, we have the following commutators.
\begin{equation}
\label{commutators-divisor-eq}
    \begin{dcases}
      \nabla_{\partial_t}^{\sf Get,+}\circ S - S\circ \nabla_{\partial_t}^{S,+}=F\circ \eta_{0,1,1} \circ S, & \text{at an incoming leaf;}\\
      \nabla_{\partial_t}^{S,-} \circ R - R\circ \nabla^{\sf Get,-}_{\partial_t}=R\circ \eta_{0,1,1} \circ F, & \text{at an outgoing leaf};\\
       \nabla_{\partial_t}^{{\sf Get},+}\circ F-F\circ \nabla_{\partial_t}^{\sf Get,-} = F\circ \eta_{0,1,1} \circ F, & \text{at an internal directed edge}.
    \end{dcases}
\end{equation}
%\begin{itemize}
%\item[(a.)] At an incoming leaf, we have
%\[ \nabla_{\partial_t}^{\sf Get,+}\circ S - S\circ \nabla_{\partial_t}^{S,+}=F\circ \eta_{0,1,1} \circ S.\]
%\item[(b.)] At an outgoing leaf, we have
%\[ \nabla_{\partial_t}^{S,-} \circ R - R\circ \nabla^{\sf Get,-}_{\partial_t}=R\circ \eta_{0,1,1} \circ F. \]
%\item[(c.)] At an internal directed edge, we have
%\[ \nabla_{\partial_t}^{{\sf Get},+}\circ F-F\circ \nabla_{\partial_t}^{\sf Get,-} = F\circ \eta_{0,1,1} \circ F. \]
%\end{itemize}
\end{Lemma}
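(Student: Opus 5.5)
The plan is a direct chain-level computation, in parallel with the proof of Lemma~\ref{lem:string-leaf-cont}. We use the definitions $\nabla^{S}_{\partial_t}=\widetilde{R}\circ\nabla^{\sf Get}_{\partial_t}\circ\widetilde{S}$ and $S\widetilde{R}=\id$, together with the explicit formula (\ref{eq:homotopy-F}) for $F$. First write the symmetric Getzler connection as $\nabla^{\sf Get}_{\partial_t}=\partial_t+N_0+u^{-1}N_{-1}$. Here $N_0=-\frac12({\sf Get}_0\text{-graphs})$ has no pole, and $N_{-1}$ is the pole part. By Lemma~\ref{lem:divisor-evaluation}, $\rho^{\cA,\tw,\zeta}$ of the exceptional term gives $\eta_{0,1,1}=-N_{-1}$ with appropriate sign conventions; in other words, $\eta_{0,1,1}$ is exactly the residue of $\nabla^{\sf Get}$. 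The first step is to record how the projections $\pi^\pm$ interact with this decomposition. On $L^\cA_+$, the only discrepancy between $\nabla^{\sf Get}$ and $\nabla^{{\sf Get},+}$ is the term $u^{-1}N_{-1}x_0$ coming from the $u^0$-coefficient $x_0$. On $L^\cA_-$, $\nabla^{\sf Get}$ preserves $u^{-1}$-polynomials modulo $u L_+$.

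For the incoming leaf, evaluate both sides on $x\cdot u^i$. For $i\ge1$ no pole appears, so $\nabla^{{\sf Get},+}S=S\nabla^{S,+}$ holds by the definition of $\nabla^{S}$; both sides vanish, since $\eta_{0,1,1}$ kills elements with positive $u$-power in the same way $\gamma_{0,1,1}$ does. For $i=0$, we have $\nabla^{\sf Get}\widetilde S x = \widetilde S\nabla^{S}x$ exactly. Apply $\pi^+$ to this identity. The left side becomes $\nabla^{{\sf Get},+}Sx$ plus the pole term $u^{-1}N_{-1}x$, which $\pi^+$ removes. The right side becomes $\pi^+\widetilde S(\nabla^{S,+}x + u^{-1}(\cdots))$, and its $u^{-1}$-part is $\widetilde S$ applied to the pole of $\nabla^{S}x$, namely $u^{-1}\widetilde R$-corrected $N_{-1}x$. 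Expanding $\pi^+\widetilde S u^{-1}\widetilde R$ with the series $S=\sum S_lu^l$ and $R=\sum R_ku^k$ gives exactly $-\sum_{j\ge0}u^j\sum_l S_lR_{j+1-l}$, which is $F$ by (\ref{eq:homotopy-F}). The discrepancy is therefore $F\circ\eta_{0,1,1}\circ S$, where the extra $S$ enters because $\eta_{0,1,1}$ only sees the $u^0$ part of $Sx$, which is $x$. This is the same telescoping identity $\sum_l S_lR_{m-l}=\delta_{m,0}$ used in Lemma~\ref{lem:string-leaf-cont}.

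The outgoing-leaf and internal-edge identities follow by the same bookkeeping. For $R$ on $L^\cA_-$, compare $\nabla^{S,-}R$ with $R\nabla^{{\sf Get},-}$. The truncation $L((u))/uL_+$ loses the terms $\widetilde R(u^{-1}N_{-1}\cdot)$ that land in nonnegative powers and are then fed back through $R$. Reorganized, these terms give $R\circ\eta_{0,1,1}\circ F$. For $F$, we write $F=\pi^+\circ\widetilde S u^{-1}\widetilde R$-type expressions. Commuting $\nabla^{\sf Get}$ through then yields two boundary terms, one for each projection, and these combine into $F\eta_{0,1,1}F$.

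I expect the main obstacle to be the $\partial_t$-part. The identities require $\partial_t$ to commute with $S$, $R$, and $F$ up to the stated terms. They hold only because $\nabla^S$ is defined by conjugation, which absorbs $\partial_t S$ into $\nabla^{S}$. So I would carry out the computation with the full operator $\nabla^{\sf Get}$ and never split off $\partial_t$, using only the exact conjugation identity together with the projection formulas. Sign checks, namely the shift in $L^\cA_+[1]$ and the sign of $\eta_{0,1,1}$ versus $N_{-1}$, are routine but must match the conventions of Lemma~\ref{lem:divisor-evaluation}.
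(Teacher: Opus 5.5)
Your incoming-leaf argument is the same as the paper's, and the paper writes out only that case. As written, though, several steps are off.

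\textbf{Fixes to the incoming leaf.}
\begin{itemize}
\item The operator $\pi^+\widetilde S\,u^{-1}\widetilde R$ vanishes identically on $u^0$-elements. Indeed, $\widetilde S$ and $\widetilde R$ are $u$-linear mutual inverses, so $\widetilde S u^{-1}\widetilde R=u^{-1}$.
\item The pole of $\nabla^S_{\partial_t}x$ is exactly $u^{-1}N_{-1}x$, with no $\widetilde R$-correction, because $S_0=R_0=\id$.
\item The correct computation is $\nabla^{{\sf Get},+}Sx-S\nabla^{S,+}x=\pi^+\widetilde S(1-\pi^+)\nabla^Sx=\pi^+\widetilde S(u^{-1}N_{-1}x)=\sum_{j\ge0}S_{j+1}N_{-1}x\,u^j$. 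This equals $F(N_{-1}x)$ by the \emph{truncated} identity $\sum_{l=0}^{j}S_lR_{j+1-l}=-S_{j+1}$.
\item In general \eqref{eq:homotopy-F} reads $F=-u^{-1}\widetilde S\,\pi^{\ge1}\widetilde R$ on $L^\cA_-$, where $\pi^{\ge1}$ keeps the powers $u^{\ge1}$. This inner truncation is essential.
\item The result is $F\eta_{0,1,1}S$ only if $\eta_{0,1,1}=N_{-1}$, the $u^{-1}$-coefficient of $\nabla^{\sf Get}_{\partial_t}$. This is the convention the paper's proof actually uses: it writes $\nabla^{{\sf Get},+}S(\alpha)=\nabla^{\sf Get}S(\alpha)-u^{-1}\eta_{0,1,1}(\alpha)$. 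With your stated $\eta_{0,1,1}=-N_{-1}$ you would get the opposite sign.
\end{itemize}

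\textbf{Outgoing leaf.} With the correct $F$ this case is fine. For $y\in L^\cA_-$, put $w=\widetilde Ry$, let $w_1$ be its $u^1$-coefficient, and let $\pi_0$ denote the $u^0$-coefficient. Then $\nabla^{S,-}Ry-R\nabla^{{\sf Get},-}y=-\pi^{\le0}\nabla^S\pi^{\ge1}w=-N_{-1}w_1=N_{-1}\pi_0F(y)$, which is the required $R\eta_{0,1,1}F(y)$.

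\textbf{The gap: the internal edge.} Here your ``two boundary terms'' do not account for everything. Pushing $\nabla^{\sf Get}_{\partial_t}$ through $F=-u^{-1}\widetilde S\pi^{\ge1}\widetilde R$ produces the commutator of $\nabla^S=\widetilde R\nabla^{\sf Get}_{\partial_t}\widetilde S$ with $\pi^{\ge1}$. That commutator involves all of $\nabla^S$, not only its $u^{-1}$-term:
\[\nabla^{{\sf Get},+}Fy-F\nabla^{{\sf Get},-}y=F\eta_{0,1,1}F(y)+u^{-1}\widetilde S\,\pi^{\ge1}\nabla^S\pi^{\le0}\widetilde R\,y .\]
The last term comes from positive powers of $u$ in $\nabla^S$, for instance from $\widetilde R[\partial_t,\widetilde S]$, and it need not vanish.

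\textbf{Counterexample.} Take a constant family, so $\frac{dm}{dt}=0$, $\nabla^{\sf Get}_{\partial_t}=\partial_t$ and $\eta_{0,1,1}=0$. The third identity then says $[\partial_t,F]=0$.
\begin{itemize}
\item Let $S(t)=S^{(0)}\circ(1+u\,s(t)E)$, with $S^{(0)}$ a fixed chain-level splitting.
\item Let $E$ be a $b$-closed lift of a self-adjoint square-zero endomorphism of Hochschild homology, e.g.\ $\mathcal{R}_{1,0}+\mathcal{R}_{1,0}^*$ as in Section~\ref{sec:hae}. This gives a legitimate $t$-dependent splitting.
\item On $u^0$-inputs, the $u^0$-part of $[\partial_t,F]$ is $\partial_tS_1=s'(t)E\neq0$.
\end{itemize}

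\textbf{What is needed.} The edge identity requires an extra hypothesis, for example that $\nabla^S_{\partial_t}$ contains no positive powers of $u$. At least in homology, this holds for $s^{\sf BT}$ along holomorphic directions, where only $D$ and $C$ appear; that is the case used in Section~\ref{sec:hae}. The paper's ``the rest two can be proved similarly'' passes over the same point. So your plan establishes the two leaf identities, but not the edge identity as stated.
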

\begin{proof}
The proof is a straight-forward computation using the definitions of $S$, $R$, and $F$ in Equations~\eqref{eq:S-operator}~\eqref{eq:R-operator}~\eqref{eq:homotopy-F}. 
Let us prove the first identity here, and the rest two can be proved similarly. 

First,  observe that for $\alpha u^k$ with $k\geq 1$, we have
\[\big(\nabla_{\partial_t}^{\sf Get,+}\circ S - S\circ \nabla_{\partial_t}^{S,+}\big)(\alpha u^k)
=\nabla_{\partial_t}^{\sf Get}\big( S (\alpha u^k)\big)- S\big(\nabla_{\partial_t}^{S}(\alpha u^k)\big)
=(S\circ\nabla_{\partial_t}^{S}-\nabla_{\partial_t}^{{\sf Get}}\circ S)(\alpha u^k)
=0.\]
%since when applied to such types of elements $\nabla_{\partial_t}^{\sf Get,+}=\nabla_{\partial_t}^{\sf Get}$ and $\nabla_{\partial_t}^{S,+}=\nabla_{\partial_t}^{S}$. But by construction, we have $S\nabla_{\partial_t}^{S}=\nabla_{\partial_t}^{{\sf Get}} S$. 
Meanwhile, by Lemma~\ref{lem:divisor-evaluation}, $\eta_{0,1,1}$ vanishes on $S(\alpha u^k)$ for $k\geq 1$, we also have
\[\big(F\circ \eta_{0,1,1} \circ S\big)(\alpha u^k) = 0.\]

Now, let us focus on the case $k=0$. Indeed, we have
\begin{align*}
S\big(\nabla_{\partial_t}^{S,+} (\alpha)\big) & = S\big(\nabla_{\partial_t}^{S} (\alpha)\big) - S\big(u^{-1}\eta_{0,1,1}(\alpha)\big),\\
\nabla_{\partial_t}^{{\sf Get},+} S(\alpha) &=  \nabla_{\partial_t}^{{\sf Get}} S(\alpha)- u^{-1}\eta_{0,1,1}(\alpha).
\end{align*}
Taking the difference and noting that $S\nabla_{\partial_t}^{S}=\nabla_{\partial_t}^{{\sf Get}} S$, we obtain 
\begin{align*}
\big( \nabla_{\partial_t}^{\sf Get,+}\circ S - S\circ \nabla_{\partial_t}^{S,+}\big)(\alpha)  
&=-u^{-1}\eta_{0,1,1}(\alpha)+S\big(u^{-1}\eta_{0,1,1}(\alpha)\big)\\
&= \sum_{k\geq 0} S_{k+1}(\eta_{0,1,1}(\alpha)) u^k\\
&= -\sum_{k\geq 0} \bigg(\sum_{l=0}^k S_l R_{k+1-l}\bigg)\eta_{0,1,1}(\alpha) u^k\\
&= F\big(\eta_{0,1,1}(\alpha)\big) \\
&=\big(F\circ \eta_{0,1,1} \circ S\big)(\alpha).
%=\sum_{k\geq 0} S_{k+1}(\eta_{0,1,1}(\alpha)) u^k.
\end{align*}    
So, the first identity holds.
%This proves part $(a.)$. Parts $(b.)$ and $(c.)$ are similarly derived.
\end{proof}

\begin{Proposition}
The divisor equation~\eqref{eq:divisor} holds when $g=0$.
\end{Proposition}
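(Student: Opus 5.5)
Following the strategy already set up, it suffices to establish Equation~\eqref{eq:identify-rhs} for $g=0$; the two preceding lemmas identify the left and right sides of \eqref{eq:divisor} with the two sides of that identity. In genus zero all weights are $1$ and every partially directed graph in $\Gamma((0,1,n-1))$ is a directed tree with no undirected edges. The only edge operators are therefore $F$ on internal directed edges, together with $S$ at the incoming leaf and $R$ at outgoing leaves. We will in fact prove the identity at the chain level, as in Proposition~\ref{prop:string-genus-zero}, so no homotopy is required.

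Fix a tree $\GG$ and start from $-\rho^{\cA,\tw}_\GG\nabla^{S,+}_{\partial_t}$. We push the operator down the tree from the root to the leaves.
\begin{itemize}
\item At the incoming leaf, the first identity of Lemma~\ref{lem:divisor-leaf-cont} replaces $S\nabla^{S,+}$ by $\nabla^{{\sf Get},+}S$, at the cost of a term $F\circ\eta_{0,1,1}\circ S$. This term is a new bivalent vertex decorated by $\eta_{0,1,1}$ on the incoming leaf.
\item At a vertex $v$, we pass $\nabla^{\sf Get}$ through $\hbcA_{0,k(v),l(v)}$. By Lemma~\ref{lem:divisor-evaluation}, the commutator $[\nabla^{{\sf Get},\pm}_{\partial_t},\hbcA_{0,1,l}]$ equals $\rho^{\cA,\tw,\zeta}(\mathcal{T}\hcV_{0,1,l})$. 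In genus zero with $k(v)=1$, the $\mathcal{S}$-term would require $\hcV_{0,0,l+1}$, which does not exist, so the commutator is exactly $\eta_{0,1,l}$. We also use that $\partial_t$ satisfies the Leibniz rule over the $\mathbb{C}[[t]]$-linear compositions, which is where the flatness \eqref{eq:flat-cyclic-metric} enters.
\item On each internal directed edge, the third identity of Lemma~\ref{lem:divisor-leaf-cont} produces a term $F\eta_{0,1,1}F$, again a bivalent $\eta_{0,1,1}$-vertex.
\item At the outgoing leaves, the second identity converts $R\nabla^{{\sf Get},-}$ into $\nabla^{S,-}R$. Since $\nabla^{S,-}$ acts on $\sym^{n-1}$ as a derivation, the accumulated terms sum to $-\nabla^{S,-}_{\partial_t}\rho^{\cA,\tw}_\GG$.
\end{itemize}

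Collecting terms, $[\nabla^{s,\pm}_{\partial_t},\lbA_{0,1,n-1}]$ at the chain level (with $S$-lifts) becomes a sum over pairs $(\GG,v)$. In each summand one vertex, either an original vertex or an inserted bivalent vertex on a leaf or edge, carries an $\eta$ in place of $\hbcA$. We then compare with the $\epsilon$-part of $\cK_*\rho^{\cA,\tw,\zeta}_*\mc_2$ computed through \eqref{eq:trivialization-map-K}. As in the proofs of Lemma~\ref{lem:string-lhs-mc} and Proposition~\ref{prop:lhs-dilaton}, this $\epsilon$-part is the sum over genus-zero trees of $(0,1,n-1)$ with exactly one vertex decorated by $\eta_{0,k,l}$. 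The exceptional vertex $\eta_{0,2,0}$ cannot occur, since a $(0,2,0)$ vertex has two inputs and a genus zero tree with a single incoming leaf forces $k(v)=1$ at every vertex. The $\eta_{0,1,1}$ vertices are bivalent, so they correspond exactly to the inserted bivalent vertices on leaves and edges.

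The main obstacle is the automorphism and symmetry-factor bookkeeping. In the example \eqref{eq:example-string-013-i}, $\tfrac{1}{3!}$ turns into $\tfrac{1}{2!}$ when a bivalent vertex is inserted on one of three symmetric outgoing legs. We must check that inserting an $\eta_{0,1,1}$ vertex into each edge or leaf of $\GG$, summed over $\GG$, reproduces every tree with a marked bivalent vertex with weight $1/|\Aut|$. The cleanest approach is an orbit–stabilizer count over labeled trees, exactly as in the string case. The remaining risk is signs: the Koszul signs in Lemma~\ref{lem:divisor-leaf-cont} and in the derivation extension must match those in $\cK$. We will track them as in Lemma~\ref{lem:string-leaf-cont}, viewing $\nabla$ as an even operator, so that no extra signs arise beyond those already present in the string-equation argument.
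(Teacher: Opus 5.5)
Your proposal is correct and follows the paper's proof: reduce to the chain-level version of \eqref{eq:identify-rhs} at $g=0$, flow the connection down each tree using Lemma~\ref{lem:divisor-leaf-cont} at leaves and edges and Lemma~\ref{lem:divisor-evaluation} at vertices, and match the resulting terms with the $\epsilon$-part of $\cK_*\rho^{\cA,\tw,\zeta}_*\mc_2$. Your extra remarks fill in details the paper leaves implicit: a genus-zero tree with a single input forces $k(v)=1$ at every vertex, so no undirected edges, no $\eta_{0,2,0}$ vertices and no $\mathcal{S}$-terms occur; and the $1/|\Aut|$ factors are accounted for by an orbit--stabilizer count.
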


\begin{proof}
According to the previous discussions, it suffices to prove the chain level equality of Equation~\eqref{eq:identify-rhs} at genus zero. That is,  %we need to prove the following equation
\[ [\nabla^{S,\pm}_{\partial_t},%(\mathcal{K}_*\rho_*^{A,\tw}\hcV)_{0,1,n-1}
\lbA_{0,1,n-1}]= (\cK_*\rho^{A,\tw,\zeta}_*\mc_2)_{0,1,n-1}^\epsilon.\] 

The Lemma~\ref{lem:divisor-leaf-cont} puts us in the same situation as in the proof of Proposition~\ref{prop:string-genus-zero}. Indeed, let us fix a partially directed graph $\GG\in \Gamma((g=0,1,n-1))_{m}$. Consider its commutator with the connection $\nabla_{\partial_t}^{S,\pm}$. We think of the operator $\nabla_{\partial_t}^{S,+}$ as flowing along $\GG$ from top to bottom. As it passes the input leaf contribution $S$, using the first identity in~\eqref{commutators-divisor-eq} we see that it produces a binary vertex at the input leaf labeled by the operator $\eta_{0,1,1}$. After passing the input leaf, the operator becomes $\nabla_{\partial_t}^{\sf Get,+}$. As we pass the operator $\nabla_{\partial_t}^{\sf Get,+}$ at a vertex $v$, we obtain the commutator $\eta_{0,1,l(v)}$. Then, the operator becomes $\nabla_{\partial_t}^{\sf Get,-}$ at an output of $v$. In this case, using the third identity in~\eqref{commutators-divisor-eq}, we obtain a binary vertex at the edge, again labeled by the operator $\eta_{0,1,1}$. Continuing in this way, we keep passing the connection operator down until it finally is at one of the outgoing leaves of the tree $\GG$. Then, using the second identity in~\eqref{commutators-divisor-eq}, we obtain a binary vertex at the outgoing leaf labeled by the operator $\eta_{0,1,1}$, and the connection operator becomes $\nabla_{\partial_t}^{S,-}$. 

On the right hand side we read off the coefficient in the component $(\cK_*\rho^{A,\tw,\zeta}_*\mc_2)^\epsilon_{0,1,n-1}$. By the construction of $\mathcal{K}$ in Equation~\eqref{eq:trivialization-map-K} and that $\epsilon^2=0$, the right hand side is given by a sum over partially directed graphs such that exactly one of the vertex is labeled by the contribution using $\eta_{0,1,1}$. This is precisely the same as what we previously obtained when computing the commutator $[\nabla^{S,\pm}_{\partial_t},%(\mathcal{K}_*\rho_*^{A,\tw}\hcV)_{0,1,n-1}
\lbA_{g,1,n-1}
]$ using Lemma~\ref{lem:divisor-leaf-cont}.
\end{proof}

The proof of Theorem~\ref{thm:divisor} in the case of arbitrary genus is included in Section~\ref{app:proof-string}.

\section{The holomorphic anomaly equations}\label{sec:hae}

In this section we prove the holomorphic anomaly equations in Theorem~\ref{thm:main}. We shall continue to work with notations in Section~\ref{para:intro-1.1} and Section~\ref{sec-intro-1.2}. The main idea of the proof is to compare the complex conjugate splitting with a holomorphic splitting, and then use the compatibility of CEI with the Givental group action.  We refer to~\cite{Giv,Pan} for basics of the Givental group action.

\subsection{Family version CEI}
\label{subsec:cei-family} 
The construction of CEI in~\cite{CalTu1} is done over a field (of characteristic zero). A crucial observation used in this paper is that the same construction also works for a cyclic $A_\infty$-algebra $\cA$ over a base commutative ring $T$ (again of characteristic zero) with the following additional assumptions on $\cA$:
\begin{itemize}
\item[(1.)] The $A_\infty$-algebra $\cA$ is a projective $T$-module of finite rank.
\item[(2.)] The Hochschild homology (relative to $T$) $HH_\bullet(\cA/T)$ is a projective $T$-module of finite rank.
\end{itemize}
More generally, we may also work with a non-affine base space $M$ and assuming that both $\cA$ and its Hochschild homology $HH_\bullet(\cA)$ (relative to $\cO_M$) are locally free of finite rank over $\cO_M$. The well-definedness of CEI over a base ring, under the above assumptions, is particularly clear from the explicit formula of CEI~\eqref{eq:cei-formula-main}. Indeed, by projectivity of $\cA$ and $HH_\bullet(\cA)$, we still have the existence of a chain-level splittings $S$ and $R=S^{-1}$ used in that formula.

\begin{Lemma}
Let $p:\mathfrak{X}\ra M$ be a family of smooth projective Calabi-Yau $3$-folds over a smooth base $M={\sf Spec\;} T$. Then there exists a cyclic $A_\infty$-algebra $\cA$ linear over $T$ satisfying $(1.)$, $(2.)$ above. Furthermore, for any point $b\in M$ the specialization $\cA_b$ is $A_\infty$-Morita equivalent to $D_{dg}^b({\sf Coh}(\mathfrak{X}_b))$.
\end{Lemma}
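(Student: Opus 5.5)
The plan is to realize $D^b_{dg}(\mathsf{Coh}(\mathfrak{X}))$ relative to $M$ by a relative compact generator, take its endomorphism algebra, and then minimalize and make it cyclic over $T$. First I would choose a relatively ample line bundle $\mathcal{O}(1)$ on $\mathfrak{X}/M$. After shrinking $M$ if needed (it is affine, so global sections behave well), I would set $E=\bigoplus_{i=0}^{3}\mathcal{O}(i)$. By Orlov/Bondal--Van den Bergh type results, the restriction $E_b$ to each fiber is a classical generator of $D^b(\mathsf{Coh}(\mathfrak{X}_b))$, since $\dim \mathfrak{X}_b=3$. I would then form the relative derived endomorphism dg algebra $\mathbb{R}\mathrm{Hom}_{\mathfrak{X}}(E,E)$, computed for example by a relative \v{C}ech model over $T$. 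By cohomology and base change, its cohomology $R p_*\mathcal{H}om(E,E)$ is a bounded complex with locally free cohomology sheaves $R^q p_*\mathcal{O}(j-i)$. Locally freeness should follow from flatness and constancy of $h^q$ along the family, and any degree where this fails can be discarded by twisting to high enough powers; for $j-i$ in a bounded range, Kodaira vanishing and Serre duality on Calabi--Yau fibers control everything. Shrinking $M$ Zariski-locally if necessary, the cohomology groups are therefore projective $T$-modules.

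Second, I would apply homological perturbation over $T$. Since the cohomology is projective, the complex splits over $T$ (it is a bounded complex of projectives, after replacing it by a projective resolution model), so we get a deformation retract onto the cohomology. This produces a minimal $A_\infty$-structure $\cA$ on a finite-rank projective $T$-module, giving (1.). Strict unitality can be arranged by the standard unital perturbation argument. For base change, the formation of the retract commutes with $-\otimes_T k(b)$ because everything is projective, so $\cA_b$ is quasi-isomorphic to $\mathbb{R}\mathrm{End}(E_b)$. By Morita theory, this is Morita equivalent to $D^b_{dg}(\mathsf{Coh}(\mathfrak{X}_b))$.

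Third, I would handle (2.) and cyclicity. Relative Hochschild homology of $\cA$ agrees with that of the family category, which by HKR is $\bigoplus R^q p_*\Omega^p_{\mathfrak{X}/M}$, and each of these is locally free by Deligne's degeneration and constancy of Hodge numbers. Base change for Hochschild chains then follows from projectivity. For the cyclic structure, I would take the relative CY structure $\Omega\in HC^-_3$ coming from a nowhere vanishing section of $\mathbb{L}=p_*\omega_{\mathfrak{X}/M}$, which is trivial after shrinking. I would then run the Kontsevich--Soibelman formal Darboux argument together with its unital version from \cite{AmoTu2} over $T$. The obstacle I expect is exactly this step: the Darboux theorem is proved over a field, and I would need to check that its inductive gauge construction only inverts the nondegenerate pairing and uses splittings of projective modules. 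That should hold because the Mukai/Serre pairing is perfect fiberwise, hence perfect over $T$.
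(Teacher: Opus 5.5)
Your proposal is essentially the paper's argument: twists of a relatively ample line bundle as a relative generator, a \v{C}ech model of its endomorphism algebra transferred to the locally free cohomology over $T$, Morita invariance plus HKR for (2.), and the Kontsevich--Soibelman/Amorim--Tu Darboux theorem for cyclicity. The one real difference is the generator: you use Orlov's $\bigoplus_{i=0}^{3}\cO(i)$, which requires $\cO(1)$ to be relatively very ample (as the projective embedding provides), whereas the paper restricts Beilinson's generator $\bigoplus_{i=0}^{N}\cO(i)$ from $\mathbb{P}^N_T$. Also, your shrinking of $M$ to trivialize $\mathbb{L}$ is genuinely needed in general, not just a convenience, and the paper's proof leaves this implicit: a $T$-valued cyclic pairing together with relative Serre duality gives bimodule equivalences $\cA\simeq\cA^\vee[-3]\simeq\cA\otimes_T\mathbb{L}$, and these force $\mathbb{L}\cong\cO_M$ because $HH^0(\cA)=T$.
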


\begin{proof}
Since $p$ is a projective family, it is uniformaly polarized, i.e., $\mathfrak{X}\hookrightarrow \mathbb{P}_T^N$ for some fixed $N$. Denote by $\mathbb{E}=\cO_\mathfrak{X}\oplus\cdots\oplus \cO_\mathfrak{X}(N)$ the vector bundle on $\mathfrak{X}$ obtained by the restriction from $P_T^N$ of the Beilinson's generator. We may consider a \u Cech resolution of $\sEnd_{\mathfrak{X}/M}(\mathbb{E})$ denoted by $C^*\big(\cU,\sEnd_{\mathfrak{X}/M}(\mathbb{E})\big)$. Denote its cohomology by $\cA$. We claim that $\cA$ satisfies all the desired properties. The locally freeness of $\cA$ is a direct compution of coherent cohomology groups $\Ext^*(\cO_\mathfrak{X}(i),\cO_\mathfrak{X}(j))$. The locally freeness of its Hochschild homology follows from Keller~\cite{Kel} and the Comparison result by Hochschild-Konstant-Rosenberg~\cite{HKR}. Finally, the existence of a cyclic structure is by Kontsevich-Soibelman~\cite{KonSoi,Cho}, and~\cite{AmoTu2} for the unital version.
\end{proof}

\begin{remark}
In the case of the quintic family and mirror quintic family, explicit formulas of the cyclic $A_\infty$-algebra $\cA$ can be derived~\cite{Tu,Tu2} using Kontsevich's deformation quantization formula~\cite{Kon2}.
\end{remark}

\subsection{Comparison of Hodge structures} Our next goal is to understand the nc splitting data in Definition~\ref{def:nc-splitting} in more geometric terms. This requires a comparison of nc-Hodge structures with the usual geometric Hodge structures. 
Let $p: \mathfrak{X} \ra M$ be a smooth and projective family of Calabi-Yau $3$-folds, with a smooth base space $M$. Following Griffiths~\cite{Gri}, we obtain the following data:
\begin{itemize}
\item The Hodge bundle $\mathbb{H}:= R^3p_*\underline{\mathbb{C}} \otimes \cO_M$.
\item The Hodge filtration $\mathbb{L}= F^3\mathbb{H}\subset F^2\mathbb{H}\subset F^1\mathbb{H} \subset F^0\mathbb{H}=\mathbb{H}$.
\item The Gauss-Manin connection $\nabla^{\sf GM}$.
\item A rational lattice $R^3p_*\underline{\mathbb{Q}}$ in $\mathbb{H}$.
\item The intersection pairing $Q(-,-)$ on $\mathbb{H}$.
\end{itemize}
These data satisfy certain compatibility conditions, such as Griffiths transversality, flatness of $Q(-,-)$ and so on, axiomized as variation of Hodge structures~\cite{Gri}.

On the other hand, there exists an entirely parallel construction in the world of non-commutative geometry, following the works~\cite{Bar,KonSoi,KKP,Shk,Get,Bla} and~\cite{Sai1,Sai2}. We obtain a similar set of data:
\begin{itemize}
\item The nc-Hodge bundle $\mathbb{H}^{nc}:= HP_{\sf odd}(\mathfrak{X}/M)$.
\item The nc-Hodge filtration $F^\bullet\mathbb{H}^{nc}=HC^-_\bullet(\mathfrak{X}/M)$.
\item The Getzler connection $\nabla^{\sf Get}$.
\item The Blanc-Toen lattice given by the image of the topological Chern character %map 
${\sf ch}^{\sf Top}\otimes \mathbb{Q}\subset \mathbb{H}^{nc}$. %HP_{\sf odd}(\mathfrak{X}/M)
\item The residue pairing $Q^{nc}$ on $\mathbb{H}^{nc}.$% $HP_{\sf odd}(\mathfrak{X}/M)$.
\end{itemize}

The following comparison result is proved in~\cite{Tu3}.
\begin{Theorem}\label{conj:comparison}
There exists an isomorphism $\tilde{J}: \mathbb{H}^{nc} \ra \mathbb{H}$ that intertwines all the above structures.
\end{Theorem}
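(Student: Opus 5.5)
We would build $\tilde{J}$ as a composite of known comparison isomorphisms, each matching one of the five structures, and then check that the composite intertwines them all. Since everything is compatible with base change and $M$ is smooth, it suffices to work relative to $\cO_M$. For the underlying bundles we use the Hochschild--Kostant--Rosenberg isomorphism in its twisted form (twisting by $\sqrt{\mathrm{td}}$, or using Kontsevich's duflo-type correction), $HH_\bullet(\mathfrak{X}/M)\cong\bigoplus_{q-p=\bullet}R^qp_*\Omega^p_{\mathfrak{X}/M}$. This extends to periodic cyclic homology as $HP_\bullet(\mathfrak{X}/M)\cong \bigoplus_{k}R^{\bullet+2k}p_*\Omega^\bullet_{\mathfrak{X}/M}\otimes u^{k}$, following Weibel and Keller. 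Restricting to odd degree and using $\pi_1=0$ and the Calabi--Yau condition, the odd part of de Rham cohomology of each fiber is $H^3$ (together with $H^1=H^5=0$). We then rescale the $u$-graded pieces by the appropriate powers of $u$ so that the $u$-adic filtration $HC^-$ maps onto $F^\bullet\mathbb{H}$. This uses degeneration of the Hodge-to-de Rham spectral sequence, which we take from Deligne--Illusie or Kaledin in the nc form.

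The compatibility of connections is the next step. Getzler's connection $\nabla^{\sf Get}$ on $HP$ corresponds to the Gauss--Manin connection under HKR. Rather than checking this by hand, we invoke the noncommutative Katz--Oda description: both connections arise as the connecting map of the same short exact sequence, $0\to\Omega^1_M\otimes(-)\to(\text{absolute})\to(\text{relative})\to0$, applied to cyclic complexes and to de Rham complexes respectively, and HKR is natural for this sequence. Lemma~\ref{lem:symmetric-getzler} lets us use either form of Getzler's connection.

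For the pairings, we must show that the residue pairing $Q^{nc}$ (the $u^0$ coefficient of the higher residue pairing built from the Mukai pairing \eqref{eq:mukai-pairing}) agrees with the intersection pairing $Q$, up to a universal normalization. Here we use the Hirzebruch--Riemann--Roch type theorem for the Mukai pairing (Markarian, Ramadoss, Shklyarov): under the $\sqrt{\mathrm{td}}$-twisted HKR map, $\langle-,-\rangle_\Muk$ becomes $\int_X (\cdot)^\vee\wedge(\cdot)$. On $H^3$ the Todd correction acts trivially in the relevant degree, and the sign twist $(\cdot)^\vee$ together with the powers of $u$ produces exactly $Q$ with a fixed constant factor $(2\pi i)^{\pm3}$ or a similar normalization, which we absorb into $\tilde{J}$.

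The main obstacle is the lattice. We must show that the topological Chern character of the Blanc--To\"en topological K-theory $K^{\sf top}_1(\mathfrak{X}_b)$ maps, under $\tilde{J}$, onto $H^3(\mathfrak{X}_b,\mathbb{Q})$, rationally and with the correct normalization. We would first use Blanc's comparison identifying $K^{\sf top}$ of $D^b_{dg}(\mathsf{Coh})$ with Atiyah--Hirzebruch complex K-theory of $X^{an}$. We then reduce to the classical fact that the topological Chern character is a rational isomorphism $K^1\otimes\mathbb{Q}\cong H^{odd}(X,\mathbb{Q})$, compatible with the de Rham Chern character via HKR. Matching the Todd twist and the powers of $2\pi i$ across these identifications is where we expect the real work to lie. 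We expect to need to choose the normalization of $\tilde{J}$ so that the lattice condition and the pairing condition hold simultaneously.
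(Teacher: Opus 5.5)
Your pairing step has a genuine gap; it is not a normalization issue.

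The HRR theorem of Markarian, Ramadoss and Shklyarov concerns the Mukai pairing on $HH_\bullet$, that is, the pairing induced on $\mathrm{gr}_F\mathbb{H}=\bigoplus_p H^{p,3-p}$. Through your filtered identification it determines $Q^{nc}$ only on $F^p\times F^q$ with $p+q\ge 3$. There $Q^{nc}$ is either zero or the leading coefficient of $\langle-,-\rangle_{\sf hres}$ on negative cyclic lifts.

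For $x\in F^p$, $y\in F^q$ with $p+q<3$, the value $Q^{nc}(x,y)$ is a higher $u$-coefficient of $\langle x',y'\rangle_{\sf hres}$, where $x'=\sum_i x_iu^i$ and $y'=\sum_j y_ju^j$ are the corresponding negative cyclic classes. Concretely, it is a signed sum of chain-level Mukai pairings $\langle x_i,y_j\rangle_{\Muk}$ with $i+j>0$, and the chains $x_i, y_j$ appearing there are not $b$-closed. No homology-level statement about $HH_\bullet$ sees these terms.

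Two pairings with the same graded pairing can still differ on $F^1\times F^1$, $F^0\times F^2$ and $F^0\times F^0$. That is exactly the information that decides whether $H^{1,2}\oplus H^{0,3}$ is Lagrangian for $Q^{nc}$, which is what the construction of $s^{\sf BT}$ via Lemma~\ref{lem:splitting-linear-alg} relies on. So your claim that ``the sign twist together with the powers of $u$ produces exactly $Q$'' is unsupported.

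What you need is an HRR statement at the negative cyclic or periodic level. One route:
\begin{itemize}
\item write the canonical pairing on $HP$ via K\"unneth and the periodic Chern character of the diagonal;
\item use that HKR is compatible with K\"unneth and sends the nc Chern character to the de Rham one;
\item apply GRR to obtain $\int_X \mathrm{td}_X\, x^\vee\wedge y$, where the Todd factor is invisible on $H^3\otimes H^3$;
\item check that this canonical pairing agrees with the TCFT-defined $\langle-,-\rangle_{\sf hres}$ used in the paper.
\end{itemize}

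Flatness alone does not close the gap. The difference of the two flat pairings kills $F^3$, hence every iterated Gauss--Manin derivative of $\Omega$. But these derivatives need not span $\mathbb{H}$, and the theorem is asserted for arbitrary families, including $M$ a point.

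Conversely, the lattice is less of an obstacle than you expect. Granted Blanc's comparison, the Todd twist is trivial on $H^3$, and any $2\pi i$ discrepancy is a single scalar on $H^3$. Such a scalar leaves the conjugate subspaces $\overline{F^p}$, and hence $s^{\sf BT}$, unchanged.

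For the record, the paper gives no argument of its own for this theorem and defers to~\cite{Tu3}.
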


Using the comparison theorem above, we may understand the non-commutative splitting data in Definition~\ref{def:nc-splitting} in terms of more classical geometric data. 
\begin{Lemma}\label{lem:splitting-linear-alg}
Fix a base point $b\in M$. Then a splitting of the nc-Hodge filtration of $\cA_b$ in the sense of Definition~\ref{def:nc-splitting} is equivalent to the following data:
    \begin{enumerate}
        \item a choice of Lagrangian complement of $F^2HP_{\sf odd}(\cA_b)\subset HP_{\sf odd}(\cA_b)$,
        \item a choice of linear complement of $F^3HP_{\sf odd}(\cA_b) \subset  F^2HP_{\sf odd}(\cA_b)$. 
    \end{enumerate}
\end{Lemma}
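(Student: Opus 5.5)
Fix the base point $b$ and write $V:=HP_{\sf odd}(\cA_b)$, with the nc-Hodge filtration $F^3V\subset F^2V\subset F^1V\subset F^0V=V$ and the residue/higher residue pairing. The plan is to unwind Definition~\ref{def:nc-splitting} into linear algebra on $V$, and then compare that data with the pair (Lagrangian complement, linear complement) in the statement.

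The first step is the reformulation. A splitting $s:H_\bullet(L^{\cA_b})\to H_\bullet(L^{\cA_b}_+)$ extends $u$-linearly to an isomorphism $HH_\bullet((u))\cong HP_\bullet$, and its image
\[
P:=u^{-1}\,s\big(HH_\bullet\big)[u^{-1}]
\]
is a subspace complementary to $HC^-_\bullet$. This is the standard dictionary: splittings correspond to opposite filtrations. Condition S2, that $s$ intertwines the higher residue pairing with the Mukai pairing, should be exactly the statement that $P$ is isotropic for the higher residue pairing, equivalently that $P$ is Lagrangian after taking the $u^0$ coefficient. Under the comparison Theorem~\ref{conj:comparison} together with the $\Z/2$-periodicity, this becomes a filtration $W_\bullet$ on $V$ opposite to $F^\bullet$ and compatible with $Q$, with $W_p$ orthogonal to $W_{q}$ for complementary indices.

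The second step is to read the grading off. For a Calabi--Yau $3$-fold the Hochschild homology is $HH_{-3}\oplus HH_{-1}\oplus HH_1\oplus HH_3$, so in odd degrees the opposite filtration is determined by three nested pieces $W_0\subset W_1\subset W_2$, where $W_p$ is complementary to $F^{3-p}$. Isotropy of $W_\bullet$ for $Q$ gives $W_1=W_1^{\perp}$, so $W_1$ is a Lagrangian complement of $F^2V$; this is datum (1). It also gives $W_0=W_2^{\perp}$. So once $W_1$ is fixed, only $W_2$ remains to be chosen, subject to $W_1\subset W_2$ and $W_2\oplus F^3V=V$. Such a $W_2$ is the same as a complement of $F^3V$ inside $F^2V$: send $W_2$ to $W_2\cap F^2V$, and conversely send a complement $C$ to $W_2:=W_1\oplus C$. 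This is datum (2). Finally $W_0=W_2^\perp$ is automatically complementary to $F^1V$, by nondegeneracy of $Q$ and the identity $F^1V=(F^3V)^\perp$.

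The third step is to check that the assignment $s\mapsto (W_1,\,W_2\cap F^2V)$ is inverse to the construction just given, i.e. that a filtration opposite to $F^\bullet$ and isotropic for $Q$ recovers a unique $s$. For this I would invoke Lemma~\ref{lem:splitting-linear-alg}'s context results, namely Kaledin's degeneration and Shklyarov's nondegeneracy, to identify $HH_\bullet$ with $\mathrm{Gr}_F V$.

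The main obstacle is the first step: showing that S2, which involves the full $u$-dependent higher residue pairing, is equivalent to the isotropy of a single filtration for $Q$. I would handle this by writing $\langle s(x),s(y)\rangle_{\sf hres}$ order by order in $u$ and using the symmetry $u\mapsto -u$. Each higher coefficient should then vanish precisely because the corresponding pieces are orthogonal.
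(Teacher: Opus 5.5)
On the odd part, your argument follows the paper's route. The paper appeals to the Rees correspondence and reads off the Lagrangian complement as $s\big(HH_{-1}(\cA_b)\oplus HH_{-3}(\cA_b)\big)$, which is your $W_1$. It reads off the complement of $F^3$ in $F^2$ as $s\big(HH_1(\cA_b)\big)$, which is your $W_2\cap F^2V$. Your linear algebra with $W_1=W_1^\perp$, $W_0=W_2^\perp$ and $W_2=W_1\oplus C$ is a correct, more detailed version of this. One slip: the rule you state first, ``$W_p$ complementary to $F^{3-p}$'', is mis-indexed. What you actually use, correctly, is $W_p\oplus F^{p+1}V=V$.

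The gap is the even part. A splitting in the sense of Definition~\ref{def:nc-splitting} is a map on all of $H_\bullet(L^{\cA_b})$, but the data (1)--(2) only see $HP_{\sf odd}(\cA_b)$. So you must show that the even-degree component of $s$ is forced. Instead you restrict to $V=HP_{\sf odd}$ from the start, and your assertion that the Hochschild homology is $HH_{-3}\oplus HH_{-1}\oplus HH_1\oplus HH_3$ is false:
\begin{itemize}
\item $HH_0$ contains $\bigoplus_p H^{p,p}\neq 0$;
\item $HH_{\pm 2}$ is built by HKR from $H^{2,0}$, $H^{0,2}$, $H^{3,1}\cong H^{0,1}$ and $H^{1,3}$. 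It vanishes only because $h^{0,1}=h^{0,2}=0$, which uses $\pi_1(\mathfrak{X}_b)=0$ (and then $h^{0,2}=h^{0,1}$ by Serre duality with trivial canonical bundle).
\end{itemize}
This is the first step of the paper's proof, and it is the only place the hypothesis $\pi_1=0$ enters; your argument never uses that hypothesis.

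To close the gap, argue as follows. Once $HH_{\sf even}$ is concentrated in degree $0$, any two splittings differ on it by $\id+\sum_{k\geq 1}\mathcal{R}_k u^k$ with $\mathcal{R}_k$ raising degree by $2k$. Such an operator is the identity on a space concentrated in one degree, so the even part splits uniquely, and S2 holds there automatically. Existence comes from degeneration, as the paper remarks after Definition~\ref{def:nc-splitting}. If instead $HH_{\pm 2}\neq 0$, the even part would carry extra splitting data and the claimed equivalence would fail. Adding this step completes your proof.
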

\begin{proof}
By the Calabi-Yau condition and our assumption that $\pi_1(\mathfrak{X}_b)=0$, the even part of Hochschild homology is concentrated in degree zero. This implies that the even part splits canonically.  

For the odd part, this is a version of Rees correspondence~\cite[Lemma 2.7]{GPS}. Given a splitting $s: HH_\bullet(\cA_b) \ra HC_\bullet^-(\cA_b)$ as in Definition~\ref{def:nc-splitting}, the image $s\big(HH_{-1}(\cA_b)\bigoplus HH_{-3}(\cA_b)\big)$ is a Lagrangian complement in $(1)$. And the complement in $(2)$ is given by $s\big( HH_1(\cA_b)\big)$.
\end{proof}

The Lemma~\ref{lem:splitting-linear-alg} shows that the splittings of the nc-Hodge filtration of $\cA_b$ can be described in terms of nc-Hodge filtration on the periodic cyclic homology of $\cA_b$. Hence, by Theorem~\ref{conj:comparison}, %it makes sense to make the following definition: 
we may consider 
a splitting of the classical Hodge filtration of $\mathfrak{X}_b$ given by the following data:
\begin{enumerate}
        \item a choice of Lagrangian complement of $F^2H^3(\mathfrak{X}_b)\subset H^3(\mathfrak{X}_b)$,
        \item a choice of a linear complement of $F^3H^3(\mathfrak{X}_b) \subset  F^2H^3(\mathfrak{X}_b)$. 
\end{enumerate}
The following lemma then easily follows from Theorem~\ref{conj:comparison}. 

\begin{Lemma}\label{lem:splitting-comparison}
There exists a canonical bijection between the set of splittings of nc-Hodge filtration of $\cA_b$ and the set of splittings of the classical Hodge filtration of $\mathfrak{X}_b$.
\end{Lemma}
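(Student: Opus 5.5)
The plan is to compose Lemma~\ref{lem:splitting-linear-alg} with the comparison isomorphism $\tilde J$ of Theorem~\ref{conj:comparison}. By that lemma, a splitting $s$ of the nc-Hodge filtration of $\cA_b$ is equivalent to two pieces of linear-algebra data on $HP_{\sf odd}(\cA_b)$:
\begin{itemize}
\item a Lagrangian complement of $F^2$ with respect to the residue pairing;
\item a linear complement of $F^3$ inside $F^2$.
\end{itemize}
So it suffices to show that $\tilde J_b$ maps these data bijectively onto the corresponding classical data on $H^3(\mathfrak{X}_b)$.

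First, I specialize Theorem~\ref{conj:comparison} at the point $b$. This gives an isomorphism
\[\tilde J_b:HP_{\sf odd}(\cA_b)\to H^3(\mathfrak{X}_b,\bbC).\]
Two compatibilities are needed:
\begin{itemize}
\item $\tilde J_b$ carries $F^p$ of the nc-Hodge filtration to the classical $F^pH^3$, for $p=2,3$;
\item $\tilde J_b$ intertwines the residue pairing $Q^{nc}$ with the intersection pairing $Q$.
\end{itemize}
Both are contained in ``intertwines all the above structures'', restricted to a fiber. I will also use that, by Morita invariance, $HP$ and $HC^-$ of $\cA_b$ agree with those of $D^b_{dg}({\sf Coh}(\mathfrak{X}_b))$; this holds by the preceding lemma on family models.

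Second, I transport the data. Because $\tilde J_b$ is a linear isomorphism preserving the two filtration steps, it sends linear complements of $F^3\subset F^2$ bijectively to linear complements of $F^3H^3\subset F^2H^3$. Because it is an isometry of the pairings, a subspace $W$ is Lagrangian for $Q^{nc}$ if and only if $\tilde J_b(W)$ is Lagrangian for $Q$. Complementarity to $F^2$ is likewise preserved. The inverse map $\tilde J_b^{-1}$ gives the inverse bijection, so the data correspond one to one.

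Finally, the even part plays no role. As noted in the proof of Lemma~\ref{lem:splitting-linear-alg}, the even part splits canonically because $\pi_1=0$, so on both sides it contributes no choice.

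The main obstacle is making ``canonical'' precise. The bijection must not depend on auxiliary choices: not on the model $\cA_b$, and not on a normalization of $\tilde J$, for instance powers of $2\pi i$ or Todd-class twists that could rescale the pairing. A pure rescaling of the pairing on each degree preserves Lagrangianity, so it is harmless. To handle the dependence on $\cA_b$, I would first show that for two Morita-equivalent models the induced maps on $HP$ commute with the $\tilde J$'s; this follows from the functoriality of $\tilde J$ built into~\cite{Tu3}, which I would cite. That is the one point I would check carefully rather than treat as formal.
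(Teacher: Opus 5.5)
Your proposal is correct and matches the paper's argument. The paper likewise identifies $HP_{\sf odd}(\cA_b)\cong H^3(\mathfrak{X}_b)$, using the Calabi--Yau condition and $\pi_1(\mathfrak{X}_b)=0$ to rule out odd cohomology outside $H^3$ (a point you absorb into the target of $\tilde J_b$), and then transports the data of Lemma~\ref{lem:splitting-linear-alg} through Theorem~\ref{conj:comparison}; your extra discussion of independence from the model $\cA_b$ is more careful than anything the paper records.
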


\begin{proof}
By the Calabi-Yau condition and the assumption that $\pi_1(\mathfrak{X}_b)=0$, we have 
$$HP_{\sf odd}(\cA_b)\cong H^3(\mathfrak{X}_b).$$ 
The rest follows from the comparison theorem~\ref{conj:comparison}.
\end{proof}

To this end, observe that using the rational structure $H^3(\mathfrak{X}_b,\mathbb{Q})$, we have a canonically defined splitting of the classical Hodge filtration by setting the Lagrangian complement by $\overline{F^2H^3(\mathfrak{X}_b)}$, the complex conjugate of $F^2H^3(\mathfrak{X}_b)$, and the linear complement by $F^2H^3(\mathfrak{X}_b)\cap \overline{F^1H^3(\mathfrak{X}_b)}$. Of course, this is simply the classical $(p,q)$-decomposition
\[ H^3(\mathfrak{X}_b) \cong H^{3,0}(\mathfrak{X}_b)\oplus H^{2,1}(\mathfrak{X}_b)\oplus H^{1,2}(\mathfrak{X}_b)\oplus H^{0,3}(\mathfrak{X}_b).\]
The corresponding nc-splitting under the bijection of Lemma~\ref{lem:splitting-comparison} will be called the {\em Blanc-Toen} splitting of the nc-Hodge filtration, and will be denoted by $s^{\sf BT}$.

\subsection{Givental's quantization formula}
\subsubsection{The Givental group} 
Consider a splitting of the nc-Hodge filtration $$s: HH_\bullet(\cA_b)[3] \ra HC_\bullet^-(\cA_b)[3].$$ 
%be a splitting of the nc-Hodge filtration. 
In our setup, the even part of Hochschild splits canonically. It suffices to restrict $s$ to the odd part.
%\[ s: \bigoplus_{i=0}^3 HH_{3-2i}(\cA_b)[3] \ra HC^-_*(\cA_b)[3].\]
To give a splitting map $s$ is equivalent to give a $\mathbb{C}[[u]]$-linear isomorphism (obtained by extending $s$ linearly in $u$)
\[ \widetilde{s}: \bigoplus_{i=0}^3 HH_{3-2i}(\cA_b)[3][[u]] \ra HC^-_{\bullet}(\cA_b)[3].\]
Let $s'$ be another splitting of the nc-Hodge filtration. Then we obtain an automorphism
\[ \widetilde{s}^{-1} \widetilde{s'}: \bigoplus_{i=0}^3 HH_{3-2i}(\cA_b)[3][[u]] \ra \bigoplus_{i=0}^3 HH_{3-2i}(\cA_b)[3][[u]].\]
By Definition~\ref{def:nc-splitting}, a splitting preserves the $\Z$-grading, which implies that the automorphism above restricts to the subspace of polynomials in $u$ (instead of power series). Furthermore, the Lagrangian condition in Definition~\ref{def:nc-splitting} implies certain compatibility of the automorphism with respect to the Mukai pairing $\langle-,-\rangle_\Muk$. This discussion leads us to introduce the so-called homogeneous Givental group~\cite{Giv} associated with the vector space $HH_{\sf odd}(\cA_b)[3]$ endowed with the symmetric pairing $\langle-,-\rangle_{\Muk}$:
\begin{equation}\label{eq:giv-group} {\sf Giv}:= \{\mathcal{R}(u)\in \End\big( HH_{\sf odd}(\cA_b)[3]\big)[u] \mid \mathcal{R}^*(-u)\mathcal{R}(u)=\id.\}\end{equation}
Here, the adjoint $L^*$ of an operator $L\in \End\big( HH_{\sf odd}(\cA_b)[3]\big)$ is computed using the Mukai pairing. Our previous discussion shows that the set of splittings of the nc-Hodge filtration of $\cA_b$ is a torsor over the Givental group ${\sf Giv}$. In fact, from the previous discussion, this torsor has a canonical base point given by the splitting $s^{\sf BT}$.

%In the end of this section, 
Now, we give a more explicit description of the elements in ${\sf Giv}$. Indeed, we may write an element $\mathcal{R}(u)\in {\sf Giv}$ in the decomposition $\bigoplus_{i=0}^3 HH_{3-2i}(\cA_b)[3]$  denoting its components by 
\iffalse
it takes the form
\[ \mathcal{R}(u) = \begin{bmatrix}
    \id & \mathcal{R}_{1,0}\cdot u & \mathcal{R}_{2,0}\cdot u^2 & \mathcal{R}_{3,0}\cdot u^3 \\
    0  & \id & \mathcal{R}_{2,1} \cdot u & \mathcal{R}_{3,1}\cdot u^2\\
    0 & 0 & \id & \mathcal{R}_{3,2}\cdot u\\
    0 & 0 & 0 & \id
\end{bmatrix}\]
\fi
 $$\mathcal{R}_{i,j}\cdot u^{i-j}:HH_{3-2i}(\cA_b)[3]\to HH_{3-2j}(\cA_b)[3]\quad \text{for} \quad i> j.$$ 
Then equation $\mathcal{R}^*(-u)\mathcal{R}(u)=\id$ implies that $$\mathcal{R}_{3,2}=\mathcal{R}_{1,0}^*, \quad \mathcal{R}_{3,1}=(\mathcal{R}_{2,1}^*\mathcal{R}_{1,0}^*- \mathcal{R}_{2,0}^*), \quad \mathcal{R}_{2,1}^*=\mathcal{R}_{2,1}.$$ 
Thus, we may write $\mathcal{R}(u)$ as 
\begin{align}\label{eq:R(u)} 
\mathcal{R}(u) =& \begin{bmatrix}
    \id & \mathcal{R}_{1,0}\cdot u & \mathcal{R}_{2,0}\cdot u^2 & \mathcal{R}_{3,0}\cdot u^3 \\
    0  & \id & \mathcal{R}_{2,1} \cdot u & (\mathcal{R}_{2,1}^*\mathcal{R}_{1,0}^*- \mathcal{R}_{2,0}^*)\cdot u^2\\
    0 & 0 & \id & \mathcal{R}_{1,0}^*\cdot u\\
    0 & 0 & 0 & \id
\end{bmatrix}\\
=&
\begin{bmatrix}
    \id & 0 & \mathcal{R}_{2,0}\cdot u^2 & (\mathcal{R}_{3,0}^*-\mathcal{R}_{1,0}\mathcal{R}_{2,0}^*)\cdot u^3 \\
    0  & \id & \mathcal{R}_{2,1}\cdot u & - \mathcal{R}_{2,0}^*\cdot u^2\\
    0 & 0 & \id & 0\\
    0 & 0 & 0 & \id
\end{bmatrix} \cdot \begin{bmatrix}
    \id & \mathcal{R}_{1,0}\cdot u & 0 & 0 \\
    0  & \id & 0 & 0\\
    0 & 0 & \id & \mathcal{R}_{1,0}^*\cdot u\\
    0 & 0 & 0 & \id
\end{bmatrix}\nonumber
\end{align}
\iffalse
Such a matrix may be factored as
\[ \mathcal{R}(u) = \begin{bmatrix}
    \id & 0 & \mathcal{R}_{2,0}\cdot u^2 & (\mathcal{R}_{3,0}^*-\mathcal{R}_{1,0}\mathcal{R}_{2,0}^*)\cdot u^3 \\
    0  & \id & \mathcal{R}_{2,1}\cdot u & - \mathcal{R}_{2,0}^*\cdot u^2\\
    0 & 0 & \id & 0\\
    0 & 0 & 0 & \id
\end{bmatrix} \cdot \begin{bmatrix}
    \id & \mathcal{R}_{1,0}\cdot u & 0 & 0 \\
    0  & \id & 0 & 0\\
    0 & 0 & \id & \mathcal{R}_{1,0}^*\cdot u\\
    0 & 0 & 0 & \id
\end{bmatrix} \]
\fi
Observe that this decomposition is compatible with the linear algebra data $(1)$ and $(2)$ in Lemma~\ref{lem:splitting-linear-alg} in the sense that the left matrix factor changes the Lagrangian complement, while the right matrix factor changes the linear complement. 
\iffalse
\begin{remark}
%{\bf A remark on notation.} 
In the following subsections, we shall implicitly identify an  element $\mathcal{R}(u)$ in Equation~\eqref{eq:R(u)} in the Givental's group with the corresponding
upper triangular matrix 
\[ \mathcal{R}:= \begin{bmatrix}
    \id & \mathcal{R}_{1,0} & \mathcal{R}_{2,0} & \mathcal{R}_{3,0} \\
    0  & \id & \mathcal{R}_{2,1}  & (\mathcal{R}_{2,1}^*\mathcal{R}_{1,0}^*- \mathcal{R}_{2,0}^*)\\
    0 & 0 & \id & \mathcal{R}_{1,0}^*\\
    0 & 0 & 0 & \id
\end{bmatrix}.\]
\end{remark}
\fi

\subsubsection{Givental group action on CEI} 
Consider a collection of functionals $$F^{X,\Omega,s}:= \left\{ F_{g,n}^{X,\Omega,s}: \sym^n \big(HH_\bullet(X)[3][[u]]\big) \to \mathbb{C} \mid (g,n) \text{ is stable}, n\geq 1\right\}$$ where each functional 
%For each stable pair $(g,n)$, the CEI defines a functional %denoted by
$F_{g,n}^{X,\Omega,s}$ is defined by the CEI. 
%Denote by $F^{X,\Omega,s}:= \big(F_{g,n}^{X,\Omega,s}\big)_{2g-2+n>0}$ these functionals. 
Let $\mathcal{R}(u)\in \Giv$ be an element in the Givental group of $HH_\bullet(X)[3]$. Following~\cite{Giv} and~\cite[Section 1.2]{Pan}, we define a new collection of functionals %denoted by 
\[\left\{ \big(\mathcal{R} F^{X,\Omega,s}\big)_{g,n}:= \sum_{G\in \Gamma((g,n))} \frac{1}{|\Aut(G)|}\prod_v {\sf Cont}(v)\prod_e {\sf Cont}(e) \prod_l {\sf Cont}(l)\right\},\]
where
\begin{enumerate}
    \item the vertex contribution is ${\sf Cont}(v):= F^{X,\Omega,s}_{g(v),n(v)}$ with $g(v)$ and $n(v)$ denote the genus and the number of half-edges and leaves at the vertex;
    \item the leg contribution is by the operator ${\sf Cont}(l):=\mathcal{R}(u)$;
    \item the edge contribution is defined as follows. Let $\{e_i\}$ be a basis vector of $HH_\bullet(X)[3]$. In this basis, the Mukai pairing is given by $M_{ij}:=\langle e_i,e_j\rangle_\Muk$. Denote by $M^{ij}$ its inverse pairing in the dual basis $\{e_i^\vee\}$. 
    \begin{equation}\label{eq:giv-propagater} {\sf Cont}(e):= \sum_{i,j} \frac{M^{ij}- \sum_{k,l} \mathcal{R}_k^i(u')M^{kl}\mathcal{R}_l^j(u'')}{u'+u''} e_i\otimes e_j \in HH_\bullet(X)[3][[u']]\otimes HH_\bullet(X)[3][[u'']].\end{equation}
\end{enumerate}

As mentioned in the introduction, the compatibility of CEI with Givental's action is proved in~\cite{CalTu1} with strictly postive number of insertions, i.e., for stable pair $(g,n)$ such that $n\geq 1$, we have
\begin{equation}\label{eq:R-action-gn}
    F_{g,n}^{X,\Omega,s\mathcal{R}^{-1}}= \big(\mathcal{R} F^{X,\Omega,s}\big)_{g,n}.
\end{equation}

Following Costello~\cite{Cos2}, we force the dilaton equation~\eqref{eq:dilaton} for the case of $n=0$ to define CEIs with no insertions $\langle \emptyset \rangle_{g,0}^{X,\Omega,s}$ for $g \geq 2$ by 
\begin{equation}\label{eq:cei-no-insertion}
    F_g^{X,\Omega,s}:= \langle \emptyset \rangle_{g,0}^{X,\Omega,s}
    :=\frac{1}{2g-2} \langle [\Omega]\psi \rangle_{g,1}^{X,\Omega,s}, \quad \forall g\geq2.
\end{equation}
We may extend the compatibility of CEIs (including those with no insertions) with Givental's action.
\begin{Lemma}
Let $s'= s \mathcal{R}$ for some element $R\in {\sf Giv}$ in the Givental group~\eqref{eq:giv-group}. We have %Then for each $g\geq 2$, we have
\[ F_g^{X,\Omega,s}= \big(\mathcal{R} F^{X,\Omega,s'}\big)_g=\sum_{G\in \Gamma((g,0))} \prod_{v\in V_G} F_{g(v),n(v)}^{X,\Omega,s'} \prod_{e\in E_G} {\sf Cont}(e), \quad \forall g\geq2.\]
Here, $\Gamma((g,0))$ are stable genus $g$ graphs with no leaves and ${\sf Cont}(e)$ stands for Givental's propagator.
\end{Lemma}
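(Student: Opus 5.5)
The idea is to reduce the $n=0$ statement to the already established $n\geq 1$ compatibility~\eqref{eq:R-action-gn}, using the definition~\eqref{eq:cei-no-insertion} and the dilaton equation (Theorem~\ref{thm:dilaton}) on both sides. By definition,
\[ F_g^{X,\Omega,s} = \frac{1}{2g-2}\langle [\Omega]\psi\rangle_{g,1}^{X,\Omega,s} = \frac{1}{2g-2} F_{g,1}^{X,\Omega,s}\big([\Omega](-u)\big). \]
Since $s=s'\mathcal{R}^{-1}$ in the convention of~\eqref{eq:R-action-gn}, the $n=1$ compatibility rewrites the right-hand side as $\frac{1}{2g-2}\big(\mathcal{R}F^{X,\Omega,s'}\big)_{g,1}([\Omega]\psi)$. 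This is a sum over stable graphs $G'\in\Gamma((g,1))$ with one leg carrying $\mathcal{R}(u)$ applied to $[\Omega]\psi$, vertex contributions $F^{X,\Omega,s'}_{g(v),n(v)}$, and Givental propagators on edges.

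The first step is to compute the leg contribution $\mathcal{R}(u)([\Omega](-u))$. By the upper triangular form~\eqref{eq:R(u)}, $[\Omega]\in HH_{3}$ sits in the top-Hodge block (the $i=0$ summand), so $\mathcal{R}(u)[\Omega]=[\Omega]$. Hence the leg insertion is still $[\Omega]\psi$. This uses that both $s$ and $s'$ are unital ($\Omega\in{\sf Im}$), which holds because $\Omega$ spans $F^3$ and every splitting fixes it. I would check this carefully against the convention of which side $\mathcal{R}$ acts on.

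Next, I regroup the graph sum by forgetting the leg. Each $G'\in\Gamma((g,1))$ is obtained from a graph $G$ with no leaves (possibly unstable after removal) by attaching the leg at a vertex $v$. If $G$ is stable at $v$, the dilaton equation for $s'$ replaces $F^{s'}_{g(v),n(v)+1}([\Omega]\psi,\ldots)$ by $(2g(v)-2+n(v))F^{s'}_{g(v),n(v)}(\ldots)$. Here edges contribute insertions of the form $e_i\psi^k$, which are allowed because Theorem~\ref{thm:dilaton} holds for arbitrary descendant insertions. When $(g(v),n(v))=(g,0)$, the vertex value is $(2g-2)F^{s'}_g$ by the definition~\eqref{eq:cei-no-insertion}. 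Summing over the choice of $v$ gives $\sum_v(2g(v)-2+n(v))=2g-2$, which cancels the prefactor. The automorphism factors work out because marking one vertex by the leg converts $1/|\Aut(G)|$ into a sum over orbits of vertices.

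I expect the main obstacle to be the unstable vertices: genus-$0$ vertices with $n(v)=2$, and $(g(v),n(v))=(1,0)$ cannot occur after stability. Attaching the leg at such a $(0,2)$ vertex produces a $(0,3)$ vertex that is stable, but its reduced graph $G$ is not a stable graph. I would first try to show that these contributions vanish. The dilaton equation at $(0,2)$ gives a factor $2\cdot0-2+2=0$ formally, so I would extend $F^{s'}_{0,2}$ by zero and argue via the dilaton equation in genus zero, with $n=2$, that $\langle[\Omega]\psi,a,b\rangle_{0,3}=0$. This is exactly the weight $0$ in the sum, so the graphs with bivalent genus-zero vertices drop out. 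That leaves precisely $\sum_{G\in\Gamma((g,0))}\frac{1}{|\Aut G|}\prod_v F^{s'}_{g(v),n(v)}\prod_e{\sf Cont}(e)$, as claimed.
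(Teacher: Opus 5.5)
Your proposal is correct and is essentially the paper's proof: apply the $n=1$ Givental compatibility to $\frac{1}{2g-2}\langle[\Omega]\psi\rangle_{g,1}$, note the leg insertion stays $[\Omega]\psi$, remove the leg, apply the dilaton equation at the marked vertex, and use $\sum_v(2g(v)-2+n(v))=2g-2$, discarding the graphs whose leg sits on a $(0,3)$ vertex exactly as the paper does. For that last vanishing, do not invoke Theorem~\ref{thm:dilaton} at the unstable level $(g,n)=(0,2)$, which it does not cover; argue directly instead: $\hcV_{0,1,2}$ carries no circle parameters, so $\lbA_{0,1,2}$ kills inputs of positive $u$-power and produces only outputs of $u$-degree zero, hence $\langle[\Omega]\psi,a\psi^{k},b\psi^{l}\rangle_{0,3}=0$ for all descendant insertions.
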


\begin{proof}
Since we know the compatibility with positive number of insertions, we have
\begin{align*}
F_g^{X,\Omega,s} = \frac{1}{2g-2} \langle [\Omega]\psi \rangle_{g,1}^{X,\Omega,s}
 = \frac{1}{2g-2} \sum_{G'\in \Gamma((g,1))} \prod_{v\in V_{G'}} F_{g(v),n(v)}^{X,\Omega,s'} \prod_{e\in E_{G'}} {\sf Cont}(e) \prod_l {\sf Cont}(l)
\end{align*}
Since the unique leg of $G'$ has insertion $\Omega\psi$, its attached vertex cannot be of type $(0,3)$, which implies that $G'$ with its unique leg removed is a stable graph in $\Gamma((g,0))$. In contrast, fix a stable graph $G\in \Gamma((g,0))$, we may add a leg to any of its vertices to produce a stable graph in $\Gamma((g,1))$. By this observation and the Dilaton equation~\eqref{eq:dilaton}, we may simply the above summation to
\begin{align*}
&\frac{1}{2g-2} \sum_{G\in \Gamma((g,0))} \Big(\sum_{v\in V_G} (2g(v)-2+n(v))\Big)\prod_{v\in V_{G}} F_{g(v),n(v)}^{X,\Omega,s'} \prod_{e\in E_{G}} {\sf Cont}(e)\\
& = \frac{1}{2g-2} \cdot (2g-2) \sum_{G\in \Gamma((g,0))} \prod_{v\in V_{G}} F_{g(v),n(v)}^{X,\Omega,s'} \prod_{e\in E_{G}} {\sf Cont}_{R}(e)\\
& = \sum_{G\in \Gamma((g,0))} \prod_{v\in V_{G}} F_{g(v),n(v)}^{X,\Omega,s'} \prod_{e\in E_{G}} {\sf Cont}(e).
\end{align*}
This completes the proof.
%We are done.
\end{proof}

Analogously to the ancestor potential in Gromov-Witten theory, we define a generating function 
\[\mathfrak{D}^{X,\Omega,s}:= \exp\left(\sum_{g,n} F^{X,\Omega,s}_{g,n}  \cdot \hbar^{g-1} \right) \in \widehat{\sym} \big(HH_\bullet(X)[3][[u]]\big)^\vee((\hbar)).\]
Following~\cite{Giv}, let $\widehat{\mathcal{R}}$ be the quantization operator of $\mathcal{R}$.
%$$\mathcal{R}\mapsto \widehat{\mathcal{R}}\in \End\left( \widehat{\sym} \big(HH_\bullet(X)[3][[u]]\big)^\vee((\hbar))\right),$$
Equation~\eqref{eq:R-action-gn} may be succinctly written as
\begin{equation}\label{eq:R-action-potential} \mathfrak{D}^{X,\Omega,s\mathcal{R}^{-1}}= \widehat{\mathcal{R}}\mathfrak{D}^{X,\Omega,s}
\in \widehat{\sym} \big(HH_\bullet(X)[3][[u]]\big)^\vee((\hbar)).\end{equation}
Note that we shall not need the precise formula for the association $\mathcal{R}\mapsto \widehat{\mathcal{R}}$. For us, it is only a placeholder to package the data from Equation~\eqref{eq:R-action-gn}.

\subsection{The holomorphic anomaly equation}
The idea to prove Theorem~\ref{thm:main} is by comparing CEI in the Blanc-Toen splitting with any holomorphic splitting. We shall continue to work within the setup of Section~\ref{subsec:cei-family} and notation therein. By the comparison result in Theorem~\ref{conj:comparison}, we shall also implicitly identify the nc-Hodge structure with the classical Hodge structure. 

Let $p:\mathfrak{X}\ra M$ be a family of smooth projective Calabi-Yau $3$-folds over a smooth base $M$. We have the canonical splitting $s^{\sf BT}$ which is a $C^\infty$ splitting over $M$. Let us choose any holomorphic splitting $s^{\sf hol}$ over $M$, which exists after possible making $M$ smaller. Since any two splittings are related by an element in the homogeneous Givental group, or equivalently, an upper-triangular symplectic matrix, see~\eqref{eq:giv-group}, we have 
\begin{equation}\label{eq:gauge-trans} 
s^{\sf hol}=s^{\sf BT} \mathfrak{A}
\end{equation}
for some upper-triangular symplectic matrix $\mathfrak{A}$. Since $s^{\sf BT}$ is only a smooth splitting over the base, the matrix $\mathfrak{A}$ is also a smooth section over $M$ with values in the Givental group.  Let $\mathfrak{b}^{\sf hol}$ denote a holomorphic basis in the image of $s^{\sf hol}$. This determines a $C^\infty$-basis $\mathfrak{b}^{\sf BT}$ by the identity
$\mathfrak{b}^{\sf BT} \mathfrak{A} = \mathfrak{b}^{\sf hol}.$ 
Note that in this way of writing down the transition matrix, we view the basis as horizontal vectors. 

\begin{Lemma}
The matrix $\mathfrak{A}$ satisfies the identity   
\begin{equation}\label{C-bar-operator}
\db \mathfrak{A}\cdot  \mathfrak{A}^{-1}= -\overline{C} \quad \in \quad \Omega_M^{0,1}\left(\bigoplus_{p=0}^2\Hom(\mathbb{H}^{p,3-p}, \mathbb{H}^{p+1,2-p})\right)
\end{equation}
where $\overline{C}$ is the component of the Gauss-Manin connection as defined in Equation~\eqref{eq:gm-components}.
\end{Lemma}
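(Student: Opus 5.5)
The plan is to differentiate the defining relation $\mathfrak{b}^{\sf BT}\mathfrak{A}=\mathfrak{b}^{\sf hol}$ in the antiholomorphic direction and read off $\db\mathfrak{A}$ from how the Gauss--Manin connection acts on each basis. Concretely, I will work with the flat structure $\nabla^{\sf GM}$ on $\mathbb{H}$ (identified with $\mathbb{H}^{nc}$ by Theorem~\ref{conj:comparison}, so that the Getzler connection and the Gauss--Manin connection agree). Its $(0,1)$-part is $\db$ on the holomorphic bundle $\mathbb{H}$. Since $\mathfrak{b}^{\sf hol}$ is a holomorphic frame adapted to the holomorphic Hodge filtration, $\nabla^{\sf GM,(0,1)}\mathfrak{b}^{\sf hol}=0$.

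Next I apply $\nabla^{\sf GM,(0,1)}$ to $\mathfrak{b}^{\sf BT}\mathfrak{A}$, using the Leibniz rule:
\[0=\nabla^{\sf GM,(0,1)}(\mathfrak{b}^{\sf BT})\,\mathfrak{A}+\mathfrak{b}^{\sf BT}\,\db\mathfrak{A}.\]
Since $\mathfrak{b}^{\sf BT}$ is a frame adapted to the Hodge decomposition~\eqref{eq:hodge-decomposition}, the decomposition~\eqref{eq:gm-components} gives $\nabla^{\sf GM,(0,1)}\mathfrak{b}^{\sf BT}=\mathfrak{b}^{\sf BT}(\overline{D}+\overline{C})$, where both are written as matrices in this frame. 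Multiplying on the right by $\mathfrak{A}^{-1}$ yields
\[\db\mathfrak{A}\cdot\mathfrak{A}^{-1}=-(\overline{D}+\overline{C}).\]

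The remaining step is to show $\overline{D}=0$ in the frame $\mathfrak{b}^{\sf BT}$, and to check that $\overline{C}$ has the stated components. Two facts suffice. First, $\mathfrak{b}^{\sf BT}$ may be chosen so that $\mathfrak{A}$ is upper triangular with identity diagonal blocks, because both $s^{\sf BT}$ and $s^{\sf hol}$ split the same holomorphic filtration. Its graded pieces $\mathbb{H}^{p,3-p}\cong F^p/F^{p+1}$ are therefore holomorphic, and $\mathfrak{b}^{\sf BT}$ restricted to each graded piece coincides with the holomorphic frame. Consequently $\overline{D}$, which is the induced $\db$ on $F^p/F^{p+1}$, vanishes in this frame. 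Second, $\overline{C}$ lowers the holomorphic degree $p$ by one (the arrow $\mathcal{H}^{p,q}\to\mathcal{H}^{p-1,q+1}$ in the picture). In the paper's horizontal-vector convention, where $\mathcal{R}_{i,j}$ maps index $i$ to $j$ with $i>j$, this is an upper-triangular block. Matching against the index convention gives the stated target $\Hom(\mathbb{H}^{p,3-p},\mathbb{H}^{p+1,2-p})$ read through the basis transpose.

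I expect the main obstacle to be bookkeeping rather than substance: reconciling the row-vector convention, the transpose between endomorphisms and their matrices, and the direction conventions for $\overline{C}$ in~\eqref{eq:gm-components}, so that the sign comes out as $-\overline{C}$ and the components land in the stated Hom spaces. I would fix these conventions by checking the simplest block, $\mathbb{H}^{3,0}$ versus $\mathbb{H}^{2,1}$. There $\db\mathfrak{A}_{1,0}$ must equal minus the $(0,1)$-second fundamental form of $F^3\subset F^2$ relative to the complement $\overline{F^1}\cap F^2$. This is exactly the relevant component of $\overline{C}$, and the remaining blocks then follow by Griffiths transversality.
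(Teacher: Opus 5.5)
Your core argument is the paper's own. You apply $\nabla^{0,1}$ to $\mathfrak{b}^{\sf BT}\mathfrak{A}=\mathfrak{b}^{\sf hol}$, use that $\mathfrak{b}^{\sf hol}$ is holomorphic to get $\db\mathfrak{A}\cdot\mathfrak{A}^{-1}=-(\overline{D}+\overline{C})$, and then kill $\overline{D}$ because $\mathfrak{A}$ is block-unipotent upper triangular. Your justification of $\overline{D}=0$ is a cleaner version of the paper's one-line remark: the two frames agree modulo $F^{p+1}$, and $\overline{D}$ is the induced $\db$ on the holomorphic graded pieces $F^p/F^{p+1}$.

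However, your last paragraph contains a real error, not just bookkeeping: $\overline{C}$ does not lower $p$.
\begin{itemize}
\item Since $F^p$ is a holomorphic subbundle, $\nabla^{0,1}$ preserves $F^p$.
\item Since $\nabla^{\sf GM}$ is real, conjugating Griffiths transversality gives $\nabla^{0,1}\overline{F^q}\subset\overline{F^{q-1}}\otimes\Omega^{0,1}_M$.
\item Hence $\nabla^{0,1}\mathcal{H}^{p,q}\subset\mathcal{H}^{p,q}\oplus\mathcal{H}^{p+1,q-1}$, so $\overline{C}\colon\mathcal{H}^{p,q}\to\mathcal{H}^{p+1,q-1}$ raises $p$.
\end{itemize}
In the paper's picture the arrows labelled $\overline{C}$ point toward $\mathcal{H}^{3,0}$; it is $C$ that lowers $p$. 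This is literally the target $\Hom(\mathbb{H}^{p,3-p},\mathbb{H}^{p+1,2-p})$ in the statement. In the row-vector convention $\nabla\mathfrak{b}=\mathfrak{b}\,\Gamma$, with blocks ordered by decreasing $p$, such an operator is strictly block upper triangular, exactly like $\db\mathfrak{A}\cdot\mathfrak{A}^{-1}$. This also agrees with your own remark that the blocks $\mathcal{R}_{i,j}$, $i>j$, are upper triangular.

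No ``basis transpose'' is available or needed. Once you write $\nabla^{0,1}\mathfrak{b}^{\sf BT}=\mathfrak{b}^{\sf BT}(\overline{D}+\overline{C})$, the matrix convention is fixed. If $\overline{C}$ really lowered $p$, the identity could only hold with both sides zero. Your own proof of $\overline{D}=0$ already uses that $\overline{C}$ lands in $F^{p+1}$, since that is what identifies $\overline{D}$ with the induced operator on $F^p/F^{p+1}$.

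Your proposed sanity check is also misphrased. The $(0,1)$-second fundamental form of the holomorphic subbundle $F^3\subset F^2$ vanishes identically. The relevant object is the $(0,1)$-second fundamental form of the $C^\infty$ complement $\mathcal{H}^{2,1}=F^2\cap\overline{F^1}$ inside $F^2$, with values in $F^2/\mathcal{H}^{2,1}\cong\mathcal{H}^{3,0}$, which is $\overline{C}|_{\mathcal{H}^{2,1}}$.
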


\begin{proof}
Observe that the complex structure on the bundle $\mathbb{H}$ is defined using the $(0,1)$-part of the Gauss-Manin connection $\nabla^{0,1}$. Since the basis $\mathfrak{b}^{\sf hol}$ is by definition a holomorphic basis of $\mathbb{H}$, in this basis the connection $\nabla^{0,1}$ acts by zero. Now, if we change basis according to $\mathfrak{b}^{\sf BT}  = \mathfrak{b}^{\sf hol}\mathfrak{A}^{-1} $, the same connection acts by
$-\db \mathfrak{A}\cdot  \mathfrak{A}^{-1}.$
On the other hand, we know that the connection $\nabla^{0,1}$ decomposes as $D^{0,1}+\overline{C}$ in the Hodge decomposition. Since the matrix $\mathfrak{A}$ is strictly upper-triangular, it implies that $D^{0,1}\mathfrak{b}^{\sf BT}=0$. We conclude that $\db \mathfrak{A}\cdot  \mathfrak{A}^{-1}= -\overline{C}$ as desired.
\end{proof}

%

%For each $(g,n)$ as above, we define a generating function
%\begin{equation}\label{eq:g-n-function}
%E^{X,\Omega,s}_{g,n}:= \sum_{n_0+n_1+\cdots+n_\mu=n} \langle \underbrace{f_0 \psi,\ldots, f_0 \psi }_{n_0\text{-copies}}, \underbrace{f_1, \ldots, f_1}_{n_1 \text{-copies}},\ldots,\underbrace{f_\mu, \ldots, f_{\mu}}_{n_\mu \text{-copies}}\rangle_g^{X,\Omega,s} \prod_{j=0}^\mu \frac{e_j^{n_j}}{n_j!}.
%\end{equation}
%Equivalently, $E^{X,\Omega,s}_{g,n}$ is the restriction of the function $F^{X,\Omega,s}_{g,n}$ on the subspace of $HH_\bullet(\mathfrak{X}/M)[3][[\psi]]$ spanned by $f_0\psi, f_1,\ldots,f_\mu$. Furthermore, we define a total potential function by
%\[\c\mathfrak{D}^{X,\Omega,s}:= 
%\exp\left(\sum_{g,n} E^{X,\Omega,s}_{g,n}  \cdot \hbar^{g-1} \right).\]

\begin{Proposition}
Let $\widehat{-\overline{C}}$ be the Givental quantization operator of %the infinitesimal symplectic transformation 
$-\overline{C}$.
We have
\begin{equation}\label{eq:db-potential}
    \db \mathfrak{D}^{\mathfrak{X}/M,\Omega, s^{\sf BT}}=
    \widehat{-\overline{C}}\mathfrak{D}^{\mathfrak{X}/M, \Omega,s^{\sf BT}}.\end{equation}
\end{Proposition}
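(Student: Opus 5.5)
The approach is to compare the Blanc--To\"en potential with a holomorphic one via Givental's action, and then differentiate in the antiholomorphic directions. By \eqref{eq:gauge-trans} we have $s^{\sf hol}=s^{\sf BT}\mathfrak{A}$, so $s^{\sf BT}=s^{\sf hol}\mathfrak{A}^{-1}$. Applying \eqref{eq:R-action-potential} with $\mathcal{R}=\mathfrak{A}$ and base splitting $s^{\sf hol}$ gives
\[
\mathfrak{D}^{\mathfrak{X}/M,\Omega,s^{\sf BT}}=\widehat{\mathfrak{A}}\,\mathfrak{D}^{\mathfrak{X}/M,\Omega,s^{\sf hol}}.
\]
This identity holds componentwise for all stable $(g,n)$ with $n\geq 1$ by \eqref{eq:R-action-gn}. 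It also holds for $n=0$, $g\ge 2$, by the preceding Lemma extending the compatibility through the dilaton equation (Theorem~\ref{thm:dilaton}, applicable since $\Omega$ lies in the image of both splittings, as $F^3$ is not changed).

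The first main step is to show that $\mathfrak{D}^{\mathfrak{X}/M,\Omega,s^{\sf hol}}$ is holomorphic over $M$, i.e.\ $\db\mathfrak{D}^{s^{\sf hol}}=0$. I would argue this from the family construction of \S\ref{subsec:cei-family}. The cyclic $A_\infty$-algebra $\cA$ is defined over the holomorphic (algebraic) base ring $T$. The string vertices are universal and base independent. The chain-level lift $S$ of a holomorphic splitting can be chosen $T$-linear, and the same holds for $R$, $F$, $H$, $\delta$. Therefore every term in the graph sum \eqref{eq:cei-formula-main} is holomorphic, and the $n=0$ terms, defined via \eqref{eq:cei-no-insertion}, are holomorphic too. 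One must also check that insertions are measured in a holomorphic frame; I would restrict the potential to a holomorphic frame of $HH_\bullet(\mathfrak{X}/M)$, or equivalently regard $\mathfrak{D}$ as a holomorphic section of the relevant symmetric-power bundle.

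The second step is to differentiate. We get $\db\mathfrak{D}^{s^{\sf BT}}=(\db\widehat{\mathfrak{A}})\,\mathfrak{D}^{s^{\sf hol}}=(\db\widehat{\mathfrak{A}})\,\widehat{\mathfrak{A}}^{-1}\mathfrak{D}^{s^{\sf BT}}$. Since quantization is a (projective) group homomorphism from $\Giv$ whose differential is the quantization of Lie algebra elements, we have $(\db\widehat{\mathfrak{A}})\widehat{\mathfrak{A}}^{-1}=\widehat{\db\mathfrak{A}\cdot\mathfrak{A}^{-1}}$. By \eqref{C-bar-operator} this equals $\widehat{-\overline{C}}$, which yields \eqref{eq:db-potential}. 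Rather than relying on the unwritten quantization formalism, I would verify this concretely from the graph-sum description of the action. Differentiating the leg contributions $\mathfrak{A}(u)$ produces insertions of $\db\mathfrak{A}\,\mathfrak{A}^{-1}$ at legs, and these are the linear ``$\partial_q$'' part of $\widehat{-\overline{C}}$. Differentiating the propagator \eqref{eq:giv-propagater} produces $-(\db\mathfrak{A}\mathfrak{A}^{-1})$ bivector terms, which split a graph along an edge. These give the quadratic $\hbar\,\partial\partial$ part, and the cut may separate or not separate the graph, corresponding to the two terms of BCOV. There is also a possible central cocycle term, which must be checked to vanish or to be absorbed.

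I expect the main obstacle to be the holomorphicity in the first step. It requires producing chain-level data $S,R,\delta$ and a cyclic minimal model varying holomorphically, and confirming that no choice in $\cK$ (e.g.\ the homotopy $\delta$) introduces antiholomorphic dependence. Here uniqueness up to gauge, together with the fact that CEI depend only on $s$ in homology, should reduce the question to the choice of $S$. A secondary subtlety is the infinitesimal quantization identity, together with the bookkeeping of the homogeneous, upper-triangular form of $\overline{C}$. The nilpotency of $\overline{C}$ means that only finitely many $u$-powers occur, and it also ensures the $\db$ of $u$-independent legs vanishes appropriately.
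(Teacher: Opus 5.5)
Your proposal is correct and follows the paper's own proof: write $\mathfrak{D}^{\mathfrak{X}/M,\Omega,s^{\sf BT}}=\widehat{\mathfrak{A}}\,\mathfrak{D}^{\mathfrak{X}/M,\Omega,s^{\sf hol}}$ using the Givental compatibility (extended to $n=0$ by the dilaton lemma), use that $\mathfrak{D}^{\mathfrak{X}/M,\Omega,s^{\sf hol}}$ is holomorphic, and conclude via $(\db\widehat{\mathfrak{A}})\,\widehat{\mathfrak{A}^{-1}}=\widehat{\db\mathfrak{A}\cdot\mathfrak{A}^{-1}}$ together with $\db\mathfrak{A}\cdot\mathfrak{A}^{-1}=-\overline{C}$. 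Your additional justifications, for the holomorphicity of the $s^{\sf hol}$ potential and for the infinitesimal quantization identity, fill in steps the paper takes for granted (its ``$+0$'' and its use of the homomorphism property of quantization).
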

\begin{proof}
Using Equation~\eqref{eq:R-action-potential} and Equation~\eqref{eq:gauge-trans} we obtain
\[ \mathfrak{D}^{\mathfrak{X}/M,\Omega, s^{\sf BT}} = \widehat{\mathfrak{A}} \;\mathfrak{D}^{\mathfrak{X}/M,\Omega,s^{\sf hol}}.\]
Differentiating by $\db$ yields 
\begin{align*}
%\label{eq:hae-1st}
 \db \mathfrak{D}^{\mathfrak{X}/M,\Omega, s^{\sf BT}} 
 &=\db \left(\widehat{\mathfrak{A}} \;\mathfrak{D}^{\mathfrak{X}/M,\Omega,s^{\sf hol}}\right)\\
 &= (\db\widehat{\mathfrak{A}}) \;\mathfrak{D}^{\mathfrak{X}/M,\Omega,s^{\sf hol}}+0\\
 &= (\db\widehat{\mathfrak{A}}) \widehat{\mathfrak{A}^{-1}} \mathfrak{D}^{\mathfrak{X}/M,\Omega, s^{\sf BT}} \\
&= \left(\db \mathfrak{A}\cdot \mathfrak{A}^{-1}\right)^{\widehat{}}\mathfrak{D}^{\mathfrak{X}/M,\Omega, s^{\sf BT}}\\
 &= \widehat{-\overline{C}}\mathfrak{D}^{\mathfrak{X}/M, \Omega,s^{\sf BT}}.
\end{align*}
Here, %we use Equation~\eqref{C-bar-operator} and 
$\left(\db \mathfrak{A}\cdot \mathfrak{A}^{-1}\right)^{\widehat{}}$ is the quantization of the operator $\db \mathfrak{A}\cdot \mathfrak{A}^{-1}$ and %the last equality follows from 
we use Equation~\eqref{C-bar-operator}.
\end{proof}

\subsubsection{Proof of the holomorphic anomaly equation for genus $g\geq 2$} With the preparations above, in this subsection we prove the holomorphic anomaly equation as stated in Theorem~\ref{thm:main}. We begin by writing Equation~\eqref{eq:db-potential} in the case without insertions, i.e., in components $(g,0)$ for $g\geq 2$. Indeed, let us choose local coordinates $t_1,\ldots,t_\mu$ on $M$. Denote by $f_0:=\Omega$ in the line bundle $\cH^{3,0}$. By the miniversality condition, the Kodaira-Spencer map 
\[\KS: T_M \stackrel{\cong}{\longrightarrow} R^1p_*T_{\mathfrak{X}/M}\]
is an isomorphism. This induces an isomorphism 
$R^1p_*T_{\mathfrak{X}/M}\cong \mathbb{H}^{2,1}$
defined by contraction with the volume form $\Omega$, i.e., $\partial_i \mapsto \KS(\partial_i)\lrcorner\Omega$. Using this isomorphism, we obtain a local holomorphic frame
$$f_1:=\KS(\partial_1)\lrcorner\Omega,\quad \ldots,\quad f_\mu:=\KS(\partial_\mu)\lrcorner\Omega$$ of the smooth vector bundle $\cH^{2,1}$. Since $\cH^{1,2}\cong \overline{\cH^{2,1}}$, we obtain a frame 
$\overline{f_1},\ldots,\overline{f_\mu}$
for the bundle $\cH^{1,2}$ by complex-conjugation. Similarly, denote by $\overline{f_0}=\overline{\Omega}$ in the line bundle $\cH^{0,3}$. By the Hodge-Riemann relation, we may scale the intersection pairing $Q$ in the middle cohomology by
\[g_{0\overline{0}}:= \sqrt{-1}Q(f_0,\overline{f_0}),\;\;\; g_{i\overline{j}}:= -\sqrt{-1}Q(f_i,\overline{f_j}),\]
to obtain a Hermitian form $g$. In the following, we use the metric tensor $g_{i\overline{j}}$ and its inverse $g^{i\overline{j}}$ to raise or lower the indices. 
For each $1\leq i\leq \mu$, the infinitesimal symplectic transformation $-\overline{C}_{\overline{i}}$ acts by
\begin{align}\label{eq:C-formula}
%\begin{split}
-\overline{C}_{\overline{i}} (f_j)  =  -G_{j\overline{i}} f_0, \quad 
-\overline{C}_{\overline{i}}(\overline{f_j})  = - \overline{C}^k_{\overline{i},\overline{j}} f_k, \quad 
-\overline{C}_{\overline{i}}(\overline{f_0})=-\overline{f_i}.
%\end{split}
\end{align}
One can show that $G_{j\overline{i}}=g^{0\overline{0}}  g_{j\overline{i}}$, see, for example~\cite[Section 2.2]{KZ}. The Hermitian form $G_{j\overline{i}}$ is known as the {\em Weil-Peterson metric} on $M$.

\begin{Proposition}
For each $g\geq 2$, we have
\begin{equation}\label{HAE-no-insertions} \db_{\overline{i}} F^{\mathfrak{X}/M,\Omega,s^{\sf BT}}_g = \frac{1}{2}\sum_{1\leq j,k\leq \mu} \overline{C}_{\overline{i}}^{j,k} \bigg( \langle f_j,f_k\rangle_{g-1,2}^{\mathfrak{X}/M,\Omega,s^{\sf BT}} + \sum_{r=1}^{g-1}  \langle f_j \rangle_{r,1}^{\mathfrak{X}/M,\Omega,s^{\sf BT}}\langle f_k \rangle_{g-r,1}^{\mathfrak{X}/M,\Omega,s^{\sf BT}}\bigg).
\end{equation}
\end{Proposition}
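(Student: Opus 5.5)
The plan is to extract the $(g,0)$ component, i.e. the $\hbar^{g-1}$ coefficient of the constant term, from Equation~\eqref{eq:db-potential}. This requires an explicit description of the quantized operator $\widehat{-\overline{C}}$ acting on $\mathfrak{D}=\exp(\sum F_{g,n}\hbar^{g-1})$.

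First, I would recall Givental's quantization of an infinitesimal symplectic transformation $a(u)=\sum_k a_k u^k$ on $HH_{\sf odd}(\mathfrak{X}_b)[3]((u))$ with the symplectic form $\langle f(-u),g(u)\rangle_\Muk$. The quantization $\widehat{a}$ is a quadratic differential operator with three parts:
\begin{itemize}
\item a first-order part that changes the arguments (a "leg" term, corresponding to ${\sf Cont}(l)$ infinitesimally);
\item a second-order part $\frac{\hbar}{2}\sum (\ldots)\partial\partial$, corresponding to the propagator~\eqref{eq:giv-propagater} differentiated at $\mathcal{R}=\id$;
\item a multiplication part.
\end{itemize}
Here $-\overline{C}$ is $u$-independent as a matrix, but its image in ${\sf Giv}$ is $-\overline{C}\,u$, since it raises Hodge degree by one step. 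I would check this against the matrix form~\eqref{eq:R(u)}: $\overline{C}$ fills the $\mathcal{R}_{i+1,i}$ slots with a single power of $u$. For $a=a_1u$, the multiplication term in the quantization involves $\langle \cdot, a_1\cdot\rangle$ on the $u^0$ coefficients only. I expect it either vanishes or is absorbed, so I would verify that it contributes nothing to the $(g,0)$ components for $g\ge2$.

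Next I would differentiate the Givental graph sum $F_g=(\mathcal{R}F')_g$ from the previous Lemma along a path $\mathcal{R}$ through $\id$, applying Leibniz to edge and leg contributions. With no legs in $\Gamma((g,0))$, only edge terms survive. The derivative of ${\sf Cont}(e)$ at $\mathcal{R}=\id$ in the direction $a(u)=a_1u$ is
\[-\frac{a_1u'M^{-1}+M^{-1}a_1^{*}u''}{u'+u''}.\]
Using $a_1^*=-a_1$ modulo the Lagrangian relation $\mathcal{R}^*(-u)\mathcal{R}(u)=\id$, this becomes a $u$-independent bivector $a_1M^{-1}$ inserted at the $\psi^0$ level of both half-edges. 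Cutting this edge gives either a separating graph, which yields the product $\sum_r F_{r,1}F_{g-r,1}$, or a non-separating graph, which yields $F_{g-1,2}$. The factor $\frac12$ comes from the symmetric bivector and the automorphism counting. This is the same computation as the standard one for $\partial\mathfrak{D}=\widehat{a}\mathfrak{D}$ in Gromov--Witten theory. So far the argument is formal and only uses~\eqref{eq:R-action-potential} for a one-parameter family in ${\sf Giv}$, together with the $\db$-derivative identity~\eqref{C-bar-operator}.

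Finally, I would compute the bivector $\overline{C}_{\overline i}M^{-1}$ in the frame $f_0,f_j,\overline{f_j},\overline{f_0}$, using~\eqref{eq:C-formula}. The Mukai pairing pairs $f_j$ with $\overline{f_k}$ and $f_0$ with $\overline{f_0}$. Only the component $\overline{C}_{\overline i}:\mathcal{H}^{1,2}\to\mathcal{H}^{2,1}$ gives a bivector in $f_j\otimes f_k$, namely $\overline{C}^{jk}_{\overline i}$, with indices raised by $g^{j\overline k}$.

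The main obstacle is to show that the other two components, $\mathcal{H}^{2,1}\to\mathcal{H}^{3,0}$ and $\mathcal{H}^{0,3}\to\mathcal{H}^{1,2}$, give terms with insertions $f_0$ and $f_j$ (or $\overline{f}$) that cancel or vanish. By the symmetry of the bivector, these two components pair into a single term $G_{j\overline i}\,f_0\otimes f^j$. Its insertion $f_0=[\Omega]$ at $\psi^0$ invokes the string equation (Theorem~\ref{thm:string}). With every other insertion at $\psi^0$, that term is zero. For $n=1$ ($\langle[\Omega],\cdot\rangle_{g,2}$ at $\psi^0$) the string equation reduces the insertion to $k_j-1<0$ and so kills it. For the separating case $\langle [\Omega]\rangle_{r,1}$, I would express it via the dilaton equation as $\frac{1}{2r-2}\langle[\Omega],[\Omega]\psi\rangle_{r,2}$ and then apply the string equation to reduce it to $\frac{1}{2r-2}\langle[\Omega]\rangle_{r,1}$. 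This gives $(2r-3)\langle[\Omega]\rangle_{r,1}=0$. I expect the bookkeeping of signs and $u$ versus $\psi=-u$ conventions in this step to be the delicate part.
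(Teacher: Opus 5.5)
Your overall route is the paper's. You extract the $(g,0)$ component of $\db\mathfrak{D}=\widehat{-\overline{C}}\mathfrak{D}$, where only one-edge graphs contribute. The differentiated propagator is the $u$-independent bivector $\overline{C}^{jk}_{\overline i}f_j\otimes f_k+g^{0\overline 0}(\Omega\otimes\overline{f_i}+\overline{f_i}\otimes\Omega)$. The non-separating extra term $\langle[\Omega],\overline{f_i}\rangle_{g-1,2}$ is then killed by the string equation, as you say. One small slip: infinitesimally, $\mathcal{R}^*(-u)\mathcal{R}(u)=\id$ gives $a_1^*=a_1$, not $a_1^*=-a_1$. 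It is self-adjointness that makes the numerator of the propagator proportional to $u'+u''$; with $a_1^*=-a_1$ you would get $u'-u''$ and no cancellation.

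The genuine gap is the separating extra term $\langle[\Omega]\rangle_{r,1}\langle\overline{f_i}\rangle_{g-r,1}$. Your dilaton-plus-string manipulation uses the wrong dilaton factor. With one remaining insertion, the dilaton equation gives $\langle[\Omega]\psi,[\Omega]\rangle_{r,2}=(2r-1)\langle[\Omega]\rangle_{r,1}$, not $(2r-2)\langle[\Omega]\rangle_{r,1}$. The string equation gives $\langle[\Omega],[\Omega]\psi\rangle_{r,2}=\langle[\Omega]\rangle_{r,1}$. Comparing the two yields only $(2r-2)\langle[\Omega]\rangle_{r,1}=0$, which is vacuous at $r=1$. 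The term $\langle[\Omega]\rangle_{1,1}\langle\overline{f_i}\rangle_{g-1,1}$ occurs for every $g\geq 2$, so string and dilaton alone cannot finish the argument.

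The missing ingredient is the dimension (degree) property of CEI. For a Calabi--Yau threefold, a one-point invariant with insertion at $\psi^0$ can be nonzero only if the insertion has the degree of $\mathcal{H}^{2,1}$, which is also the degree of $[\Omega]\psi$. The paper applies this to the other factor: $\langle\overline{f_i}\rangle_{g-r,1}=0$. This kills every separating extra term at once, uniformly in $r$, and makes the dilaton/string manipulation unnecessary there. The same degree count would also give $\langle[\Omega]\rangle_{r,1}=0$ directly.
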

\begin{proof}
Observe from Equation~\eqref{eq:C-formula}, the operator $-\overline{C}_{\overline{i}}$ is of the form
\[-\overline{C}_{\overline{i}}=\begin{bmatrix}
    0 & * & 0 & 0 \\
    0 & 0 & * & 0\\
    0 & 0 & 0 & *\\
    0 & 0 & 0 & 0
\end{bmatrix}\]
in the block decomposition $\cH^{3,0}\oplus \cH^{2,1} \oplus \cH^{1,2}\oplus \cH^{0,3}$. Thus, identifying it with an element in the Lie algebra of the Givental group, we obtain $-\overline{C}_{\overline{i}} \cdot u$. Then computing with Equation~\eqref{eq:giv-propagater} yields the infinitesimal edge contribution given by
\[ \overline{C}^j_{\overline{i},\overline{l}}g^{k\overline{l}} f_j\otimes f_k + g^{0\overline{0}}\Omega\otimes \overline{f_i} + g^{0\overline{0}}\overline{f_i}\otimes \Omega. \]
Thus, writing down Equation~\eqref{eq:db-potential} in the component $(g,0)$ yields
\begin{align*}
    \db_{\overline{i}} F^{\mathfrak{X}/M,\Omega,s^{\sf BT}}_g = & \frac{1}{2} \sum_{1\leq j,k\leq \mu} \overline{C}_{\overline{i}}^{j,k} \bigg( \langle f_j,f_k\rangle_{g-1,2}^{\mathfrak{X}/M,\Omega,s^{\sf BT}} + \sum_{r=1}^{g-1}  \langle f_j \rangle_{r,1}^{\mathfrak{X}/M,\Omega,s^{\sf BT}}\langle f_k \rangle_{g-r,1}^{\mathfrak{X}/M,\Omega,s^{\sf BT}}\bigg) \\
    & + g^{0\overline{0}}\bigg( \langle \Omega,\overline{f_i}\rangle_{g-1,2}^{\mathfrak{X}/M,\Omega,s^{\sf BT}} +  \sum_{r=1}^{g-1}  \langle \Omega\rangle_{r,1}^{\mathfrak{X}/M,\Omega,s^{\sf BT}}\langle \overline{f_i} \rangle_{g-r,1}^{\mathfrak{X}/M,\Omega,s^{\sf BT}}\bigg).
\end{align*}
In the second line of this equation, the term $\langle \Omega,\overline{f_i}\rangle_{g-1,2}^{\mathfrak{X}/M,\Omega,s^{\sf BT}}$ vanishes by the string equation~\eqref{eq:string-equation}, while the term $\langle \overline{f_i} \rangle_{g-r,1}^{\mathfrak{X}/M,\Omega,s^{\sf BT}}$ vanishes by the dimension property.
\end{proof}

%Finally, to prove our main Theorem~\ref{thm:main}, we need to 
Now we express the right hand side of Equation~\eqref{HAE-no-insertions} by covariant derivatives. 
We continue to use the notation from Equation~\eqref{eq:gm-components}. In particular, we have a $(1,0)$-type connection $D$ on the vacuum line bundle $\mathbb{L}$. In local coordinates $t_1,\ldots,t_\mu$ on $M$, 
there exist local $C^\infty$-functions $a_i$ on $M$, such that
\begin{equation}
\label{i-component-of-D}
D_i \Omega = a_i \Omega.
\end{equation}
By the comparison Theorem~\ref{conj:comparison}, there are two terms in $\nabla_{i}^{s^{\sf BT},+}(\Omega\psi)$: the diagonal component given by $a_i\Omega \psi$, and a Kodaira-Spencer term given by $-\KS(\partial_i)\lrcorner\Omega=-f_i$, i.e. 
\begin{equation}\label{comparison-proof-main}
\nabla_{i}^{s^{\sf BT},+}(\Omega\psi)= -f_i + a_i\Omega \psi.
\end{equation}
%it acts on section $f_0=\Omega$ by $$D_i f_0 = a_i \Omega,$$
%for some local $C^\infty$-function $a_i$ on $M$.
The following result is a consequence of the dilaton equation and the divisor equation.
\begin{Lemma}
We have 
\begin{equation}\label{derivative-Fg}\partial_i F_g^{\mathfrak{X}/M,\Omega,s^{\sf BT}}
=\langle f_i \rangle_{g,1}^{\mathfrak{X}/M,\Omega,s^{\sf BT}}-\langle a_i\Omega \psi\rangle_{g,1}^{\mathfrak{X}/M,\Omega,s^{\sf BT}}.
\end{equation}
\end{Lemma}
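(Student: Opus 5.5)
The natural route is to differentiate the definition $F_g^{\mathfrak{X}/M,\Omega,s^{\sf BT}}=\frac{1}{2g-2}\langle[\Omega]\psi\rangle_{g,1}$ and then convert the derivative into CEI with one extra insertion. Doing this directly would require differentiating the insertion $\Omega\psi$ itself. It is cleaner to apply the divisor equation (Theorem~\ref{thm:divisor}) with $n=0$, in the form obtained by forcing the dilaton equation. Concretely, the divisor equation in Theorem~\ref{thm:divisor} with $n=0$ insertions should read
\[\langle \nabla^{s,+}_{\partial_i}(-[\Omega]\psi)\rangle_{g,1}=\partial_i\langle\emptyset\rangle_{g,0}=\partial_i F_g .\]
This case is not literally covered, because $n\geq 1$ there. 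So I first reduce to $n=1$.

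Step one is to apply Theorem~\ref{thm:divisor} with $n=1$ and $\alpha_1\psi^{k_1}=[\Omega]\psi$:
\[\langle \nabla^{s,+}_{\partial_i}(-[\Omega]\psi),[\Omega]\psi\rangle_{g,2}=\partial_i\langle[\Omega]\psi\rangle_{g,1}-\langle\nabla^{s,+}_{\partial_i}([\Omega]\psi)\rangle_{g,1}.\]
By the dilaton equation (Theorem~\ref{thm:dilaton}, which applies because $s^{\sf BT}$ is unital), the left-hand side equals $(2g-1)\langle\nabla^{s,+}_{\partial_i}(-[\Omega]\psi)\rangle_{g,1}$. Rearranging gives
\[(2g-2)\,\langle\nabla^{s,+}_{\partial_i}(-[\Omega]\psi)\rangle_{g,1}=\partial_i\langle[\Omega]\psi\rangle_{g,1}=(2g-2)\,\partial_iF_g ,\]
which is exactly the forced $n=0$ divisor equation.

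Step two is to substitute Equation~\eqref{comparison-proof-main}, namely $\nabla_i^{s^{\sf BT},+}(\Omega\psi)=-f_i+a_i\Omega\psi$. Using linearity of the invariants, this gives
\[\partial_iF_g=\langle f_i\rangle_{g,1}-\langle a_i\Omega\psi\rangle_{g,1},\]
which is the claim.

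The main subtlety is bookkeeping rather than anything conceptual. In step one, the left slot of Theorem~\ref{thm:divisor} involves the projection $\pi^+$ and the sign convention $u\mapsto-\psi$, so I need to check that the Kodaira–Spencer correction term in \eqref{connection-HH} lands correctly. I also need to confirm that the dilaton equation applies to the insertion $\nabla^{s,+}(-[\Omega]\psi)$, which it does because that equation holds for arbitrary insertions. Finally, I must verify that the family version of CEI over $M$ is compatible with $\partial_i$, which holds by Section~\ref{subsec:cei-family}. I expect the sign matching in \eqref{comparison-proof-main} via Theorem~\ref{conj:comparison} to be where errors could creep in; I would take that identity as given, as the paper does.
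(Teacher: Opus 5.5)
Your proposal is correct and follows essentially the same route as the paper. You apply the divisor equation with the single insertion $[\Omega]\psi$, use the dilaton equation to rewrite its left-hand side as $(2g-1)\langle\nabla^{s,+}_{\partial_i}(-[\Omega]\psi)\rangle_{g,1}$, solve to get $\partial_i\langle[\Omega]\psi\rangle_{g,1}=(2g-2)\langle\nabla^{s,+}_{\partial_i}(-[\Omega]\psi)\rangle_{g,1}$, then divide by $2g-2$ and substitute \eqref{comparison-proof-main}; your subscript $g,2$ on the two-point invariant is the correct one, where the paper's displayed computation writes $g,1$.
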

\begin{proof}
By the dilaton equation~\eqref{eq:dilaton} and the divisor equation~\eqref{eq:divisor}, we have 
\begin{align*}
\langle \nabla_{i}^{s^{\sf BT},+}(-\Omega\psi), \Omega\psi \rangle_{g,1}^{\mathfrak{X}/M,\Omega,s}
&=(2g-2+1)\cdot \langle\nabla_{i}^{s^{\sf BT},+}(-\Omega\psi)\rangle_{g,1}^{\mathfrak{X}/M,\Omega,s},\\
\langle \nabla_{i}^{s^{\sf BT},+}(-\Omega\psi), \Omega\psi \rangle_{g,1}^{\mathfrak{X}/M,\Omega,s}
&=\partial_i \langle \Omega\psi \rangle_{g,1}^{\mathfrak{X}/M,\Omega,s}-\langle \nabla_{i}^{s^{\sf BT},+}(\Omega\psi)\rangle_{g,1}^{\mathfrak{X}/M,\Omega,s}.
\end{align*}
Combining these two equations, we obtain  
$$%\partial_i F_g^{\mathfrak{X}/M,\Omega,s^{\sf BT}}:= 
\partial_i \left\langle \Omega\psi \right\rangle_{g,1}^{\mathfrak{X}/M,\Omega,s}
=(2-2g)\langle \nabla_{i}^{s^{\sf BT},+}(\Omega\psi)\rangle_{g,1}^{\mathfrak{X}/M,\Omega,s}.$$
Now the result follows from the definition of $F_g^{\mathfrak{X}/M,\Omega,s^{\sf BT}}$ in~\eqref{eq:cei-no-insertion} and Equation~\eqref{comparison-proof-main}.
\end{proof}
Recall from Section~\ref{para:intro-1.1} that for each $g\geq 2$, we have a $C^\infty$-section
$$F_g^{\mathfrak{X}/M}:=F_g^{\mathfrak{X}/M,\Omega,s^{\sf BT}}\cdot \Omega^{2g-2}\in C^\infty(\mathbb{L}^{2g-2}).$$ 
\begin{Lemma}\label{lem:covariant-derivatives}
For each $1\leq i,j \leq \mu$, we have
\begin{equation*}
D_iF_g^{\mathfrak{X}/M} = \langle f_i \rangle_{g,1}^{\mathfrak{X}/M,\Omega,s^{\sf BT}}  \Omega^{2g-2} \quad \text{and}\quad
D_jD_i F^{\mathfrak{X}/M}_g =\langle f_i, f_j\rangle_{g,2}^{\mathfrak{X}/M,\Omega,s^{\sf BT}} \Omega^{2g-2}.
\end{equation*}
\end{Lemma}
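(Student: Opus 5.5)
The plan is to compute both covariant derivatives directly from the definition $F_g^{\mathfrak{X}/M}=F_g\,\Omega^{2g-2}$, where $F_g:=F_g^{\mathfrak{X}/M,\Omega,s^{\sf BT}}$. We then convert the derivatives of CEI into CEI with inserted $f_i$'s, using the previous lemma for the first derivative and the divisor equation (Theorem~\ref{thm:divisor}) for the second.

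For the first identity, the Leibniz rule and \eqref{i-component-of-D} give
\[
D_iF_g^{\mathfrak{X}/M}=\big(\partial_iF_g+(2g-2)a_iF_g\big)\Omega^{2g-2}.
\]
Substituting \eqref{derivative-Fg}, we need $\langle a_i\Omega\psi\rangle_{g,1}=(2g-2)a_iF_g$. This follows from $\mathbb{C}$-linearity of CEI in the insertion together with the definition \eqref{eq:cei-no-insertion}, $\langle\Omega\psi\rangle_{g,1}=(2g-2)F_g$. The first claim then follows.

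For the second identity, write $F_{g,1,i}:=\langle f_i\rangle_{g,1}$, so that $D_iF_g^{\mathfrak{X}/M}=F_{g,1,i}\,\Omega^{2g-2}$. This is a section of $\Omega_M\otimes\mathbb{L}^{2g-2}$, and the $\Omega_M$-factor is identified with $(\mathbb{H}^{2,1})^\vee\otimes\mathbb{H}^{3,0}$ via \eqref{eq:intro-iso}. The induced connection $D$ therefore acts on the index $i$ through the $D$-connection on $\mathbb{H}^{2,1}$. Concretely, we expect
\[
D_jD_iF_g^{\mathfrak{X}/M}=\Big(\partial_jF_{g,1,i}+(2g-2)a_jF_{g,1,i}-\langle (D_jf_i)\rangle_{g,1}\Big)\Omega^{2g-2}.
\]
Here $D_jf_i$ is the $\mathbb{H}^{2,1}$-component of $\nabla^{\sf GM}_jf_i$. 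The $a_j$-type corrections must be tracked carefully, since $f_i$ itself contains a factor of $\Omega$ and so contributes a further $+a_j$ twist; getting this bookkeeping right is one potential source of error.

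Next we apply the divisor equation \eqref{eq:divisor} with $n=1$, $\alpha_1=f_i$, $k_1=0$:
\[
\langle\nabla^{s,+}_j(-\Omega\psi),f_i\rangle_{g,2}=\partial_j\langle f_i\rangle_{g,1}-\langle\nabla^{s,+}_jf_i\rangle_{g,1}.
\]
By \eqref{comparison-proof-main}, the left side equals $\langle f_j,f_i\rangle_{g,2}-a_j\langle\Omega\psi,f_i\rangle_{g,2}$. The dilaton equation then gives $\langle\Omega\psi,f_i\rangle_{g,2}=(2g-1)\langle f_i\rangle_{g,1}$.

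The remaining task is to identify $\nabla^{s^{\sf BT},+}_jf_i$ via Theorem~\ref{conj:comparison}. It has a diagonal part $D_jf_i$, a Kodaira--Spencer part $-\KS(\partial_j)\cap f_i$ landing in $\mathbb{H}^{1,2}$ with coefficient $\psi^{-1}$ (this is removed by the truncation in \eqref{connection-HH}), and a $\psi$-term coming from $\overline{C}$-type components, which has values in $\mathbb{H}^{3,0}$ times $\psi$. Because the splitting $s^{\sf BT}$ is the Hodge decomposition, $\nabla^{s,+}_jf_i$ should equal $D_jf_i$ plus a possible multiple of $\Omega\psi$; any such multiple is evaluated again by the dilaton equation. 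Combining everything, the terms $(2g-1)a_j$ and $(2g-2)a_j$ must cancel against the $a_j$-factor coming from $D_jf_i$. I expect this step to be the main obstacle: checking precisely which $\psi$-components $\nabla^{s^{\sf BT},+}_jf_i$ has, and verifying that all $a_j$-terms cancel to leave exactly $\langle f_i,f_j\rangle_{g,2}$. The string equation (Theorem~\ref{thm:string}) may be needed to kill any stray $\langle\Omega,\cdot\rangle$ insertions.
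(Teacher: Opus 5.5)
Your first identity is exactly the paper's argument: the Leibniz rule, \eqref{derivative-Fg}, and $\langle a_i\Omega\psi\rangle_{g,1}=(2g-2)a_iF_g$. For the second identity the paper only says ``similarly,'' and your skeleton is the right one: the induced connection on $\Omega_M$, the divisor equation with $\alpha_1=f_i$, and the dilaton equation for the $a_j\Omega\psi$ term. But you stop at the step that decides the result. Both guesses you make there would break the identity, because the cancellation has no slack.

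Write $c_i=\langle f_i\rangle_{g,1}$. Since \eqref{eq:intro-iso} sends $dt_i\mapsto f_i^\vee\otimes\Omega$, the section $D_iF_g^{\mathfrak{X}/M}dt_i$ is the functional $\langle-\rangle_{g,1}|_{\mathbb{H}^{2,1}}$ tensored with $\Omega^{2g-1}$, not $\Omega^{2g-2}$. That extra factor is the $+a_j$ twist you suspected, and it is genuinely needed. Taking $D_jf_i$ to be the full $\mathbb{H}^{2,1}$-component of $\nabla^{\sf GM}_jf_i$, as you define it, one gets
\[
D_jD_iF_g^{\mathfrak{X}/M}=\big(\partial_jc_i-\langle D_jf_i\rangle_{g,1}+(2g-1)a_jc_i\big)\Omega^{2g-2}.
\]
Divisor plus dilaton give $\partial_jc_i=\langle f_j,f_i\rangle_{g,2}-(2g-1)a_jc_i+\langle\nabla^{s^{\sf BT},+}_jf_i\rangle_{g,1}$. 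So the lemma is equivalent to $\langle\nabla^{s^{\sf BT},+}_jf_i-D_jf_i\rangle_{g,1}=0$.
\begin{itemize}
\item The $\langle D_jf_i\rangle$ terms cancel wholesale, so they cannot absorb an $a_j$-mismatch. With your coefficient $2g-2$, a residual $-a_jc_i$ would survive.
\item Any component $c\,\Omega\psi$ of $\nabla^{s^{\sf BT},+}_jf_i$, which you speculated might come from $\overline{C}$, would leave an uncancelled $(2g-2)c\,F_g$ by the dilaton equation.
\end{itemize}

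The missing fact is that no such component exists. In a holomorphic direction, $\nabla^{\sf GM}_{\partial_j}$ maps $\mathbb{H}^{p,q}$ into $\mathbb{H}^{p,q}\oplus\mathbb{H}^{p-1,q+1}$ only. Griffiths transversality gives $F^p\to F^{p-1}$. Also, $F^q$ is a holomorphic subbundle of a flat bundle with real structure, so $\nabla^{0,1}$ preserves $F^q$, and conjugating shows that $\nabla_{\partial_j}$ preserves $\overline{F^q}$. Thus the $(1,0)$-part of \eqref{eq:gm-components} is $D+C$; $\overline{C}$ enters only in the $\db$-directions. Transporting through $s^{\sf BT}$ exactly as in \eqref{comparison-proof-main}, $\nabla^{s^{\sf BT}}_jf_i$ equals $D_jf_i$ plus the Kodaira--Spencer term $\KS(\partial_j)\cap f_i\cdot\psi^{-1}\in\mathbb{H}^{1,2}\psi^{-1}$, which the truncation \eqref{connection-HH} removes. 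Hence $\nabla^{s^{\sf BT},+}_jf_i=D_jf_i$ on the nose. All terms then cancel, giving $D_jD_iF_g^{\mathfrak{X}/M}=\langle f_i,f_j\rangle_{g,2}\Omega^{2g-2}$, and the string equation is not needed.
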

\begin{proof}
For the first equation, using Equation~\eqref{i-component-of-D} and~\eqref{derivative-Fg}, we have 
\begin{align*}
D_iF_g^{\mathfrak{X}/M} 
&=\big(\partial_i F_{g}^{\mathfrak{X}/M,\Omega,s^{\sf BT}}\big)\cdot \Omega^{2g-2}+ F_g^{\mathfrak{X}/M,\Omega,s^{\sf BT}}D_i(\Omega^{2g-2})\\
&=\langle f_i \rangle_{g,1}^{\mathfrak{X}/M,\Omega,s^{\sf BT}}-\langle a_i\Omega \psi\rangle_{g,1}^{\mathfrak{X}/M,\Omega,s^{\sf BT}}+
{1\over 2g-2}\left\langle \Omega\psi \right\rangle_{g,1}^{\mathfrak{X}/M,\Omega,s}  (2g-2)a_i \Omega^{2g-2}\\
&= \langle f_i\rangle_{g,1}^{\mathfrak{X}/M,\Omega,s^{\sf BT}} \Omega^{2g-2}.
\end{align*}
The second equation is proved in a similar way.
\end{proof}

We define 
\begin{align}\label{eq:tensors}
\begin{dcases}
DF^{\mathfrak{X}/M}_r := D_j F_r^{\mathfrak{X}/M} dt_j, 
&
DDF^{\mathfrak{X}/M}_{g-1}:= D_jD_k F^{\mathfrak{X}/M}_{g-1} dt_j\otimes dt_k;\\
\overline{C}_{\overline{i}}:= \overline{C}_{\overline{i}}^{j,k} \partial_j\otimes \partial_k\otimes\Omega^2, &
    \overline{C} :=\overline{C}_{\overline{i}} d\overline{t_i}\in \Omega_M^{0,1}(T_M^{\otimes 2}\otimes \bbL^{\otimes 2}).
\end{dcases}
\end{align}

\begin{Theorem}
For each $g\geq 2$, the holomorphic anomaly equation~\eqref{eq:intro-hae-identity} holds.
\end{Theorem}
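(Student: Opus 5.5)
The plan is to combine Equation~\eqref{HAE-no-insertions} with Lemma~\ref{lem:covariant-derivatives}, after multiplying through by $\Omega^{2g-2}$ and keeping track of how the line-bundle factors and index contractions are packaged in~\eqref{eq:tensors}.

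First, I will compute $\db_{\overline{i}} F_g^{\mathfrak{X}/M}$. Since $\Omega$ is a holomorphic section of $\mathbb{L}=\mathbb{H}^{3,0}$, we have $\db(\Omega^{2g-2})=0$. Hence
\[\db_{\overline{i}}F_g^{\mathfrak{X}/M}=\big(\db_{\overline{i}}F_g^{\mathfrak{X}/M,\Omega,s^{\sf BT}}\big)\Omega^{2g-2}.\]
I then substitute~\eqref{HAE-no-insertions} into this expression.

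Second, I will rewrite each term on the right-hand side using Lemma~\ref{lem:covariant-derivatives}.
\begin{itemize}
\item For the separating terms, write $\Omega^{2g-2}=\Omega^{2r-2}\,\Omega^{2(g-r)-2}\,\Omega^2$. This gives
\[\langle f_j\rangle_{r,1}\langle f_k\rangle_{g-r,1}\,\Omega^{2g-2}=D_jF_r^{\mathfrak{X}/M}\, D_kF_{g-r}^{\mathfrak{X}/M}\,\Omega^{2}.\]
\item For the nonseparating term, $\Omega^{2g-2}=\Omega^{2(g-1)-2}\Omega^2$, so
\[\langle f_j,f_k\rangle_{g-1,2}\,\Omega^{2g-2}=D_kD_jF_{g-1}^{\mathfrak{X}/M}\,\Omega^2.\]
\item In both cases the leftover $\Omega^2$ is exactly the $\mathbb{L}^{\otimes 2}$ factor built into $\overline{C}_{\overline{i}}=\overline{C}^{j,k}_{\overline{i}}\,\partial_j\otimes\partial_k\otimes\Omega^2$.
\end{itemize}
The contraction $\diamond$ then pairs $\partial_j\otimes\partial_k$ with $dt_j\otimes dt_k$ in $DF_r\otimes DF_{g-r}$ and in $DDF_{g-1}$. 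Summing over $\overline{i}$ with $d\overline{t_i}$ produces~\eqref{eq:intro-hae-identity}.

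Third, I will handle the genus-one boundary terms $r=1$ and $r=g-1$, where $F_1$ appears. For these, $DF_1^{\mathfrak{X}/M}$ is defined in the introduction as $F_{1,1}^{\mathfrak{X}/M}$. Under the isomorphism~\eqref{eq:intro-iso}, $\langle f_j\rangle_{1,1}$ gives exactly the $dt_j$ component, so these terms match once the $\mathbb{L}$-weight bookkeeping ($\Omega^{0}$ for genus one) is done consistently. I will also check that $g-1=1$ (that is, $g=2$) needs $D_kD_jF_1$ to be interpreted as $\langle f_j,f_k\rangle_{1,2}$. This is covered by extending the definition in the same way, or alternatively by the second part of Lemma~\ref{lem:covariant-derivatives} applied with $F_{1,1}$ via the divisor equation.

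I expect the main obstacle to be the second identity of Lemma~\ref{lem:covariant-derivatives}, $D_jD_iF_g=\langle f_i,f_j\rangle_{g,2}\Omega^{2g-2}$, when $D$ is the induced connection on $\Omega_M$. Differentiating $\langle f_i\rangle_{g,1}$ with the divisor equation produces $\langle f_i,f_j\rangle_{g,2}$ together with the terms $\langle\nabla^{s^{\sf BT},+}_j f_i\rangle_{g,1}$ and $\langle a_j\Omega\psi, f_i\rangle$. Two things must be checked:
\begin{itemize}
\item the diagonal $(2,1)$-component of $\nabla_j f_i$ matches the connection $D$ on $\Omega_M\cong(\mathbb{H}^{2,1})^\vee\otimes\mathbb{H}^{3,0}$, and so is absorbed into the covariant derivative;
\item the off-diagonal component $C_j f_i\in\mathbb{H}^{1,2}$ contributes nothing, by the dimension (degree) property as in the previous proof, while the $\Omega\psi$ terms cancel against $D_j(\Omega^{2g-2})$ by the dilaton equation.
\end{itemize}
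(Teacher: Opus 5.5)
Your proposal is correct and is essentially the paper's proof: multiply \eqref{HAE-no-insertions} by $\Omega^{2g-2}$, rewrite each correlator with Lemma~\ref{lem:covariant-derivatives}, and absorb the leftover $\Omega^2$ into $\overline{C}_{\overline{i}}$ as in \eqref{eq:tensors}. Your extra verification of the lemma's second identity, which the paper only says is ``proved in a similar way'', is sound up to two small refinements: the term $C_jf_i$ never actually appears, because it is the $\psi^{-1}$ pole part that $\nabla^{s,+}$ discards; and the dilaton equation produces the $a_j$-terms with weight $2g-1$, which cancel against $D_j(\Omega^{2g-2})$ together with the extra $\mathbb{H}^{3,0}$ factor in $\Omega_M\cong(\mathbb{H}^{2,1})^\vee\otimes\mathbb{H}^{3,0}$.
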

\begin{proof}
It remains to put everything together. From Equation~\eqref{HAE-no-insertions} and Lemma~\ref{lem:covariant-derivatives} we have
\begin{align*}
\db_{\overline{i}} F^{\mathfrak{X}/M}_g & = \frac{1}{2}\sum_{1\leq j,k\leq \mu} \overline{C}_{\overline{i}}^{j,k} \Big( \langle f_j,f_k\rangle_{g-1,2}^{\mathfrak{X}/M,\Omega,s^{\sf BT}} + \sum_{r=1}^{g-1}  \langle f_j \rangle_{r,1}^{\mathfrak{X}/M,\Omega,s^{\sf BT}}\langle f_k \rangle_{g-r,1}^{\mathfrak{X}/M,\Omega,s^{\sf BT}}\Big)\cdot \Omega^{2g-2}\\
& = \frac{1}{2} \sum_{1\leq j,k\leq \mu}\overline{C}_{\overline{i}}^{j,k} \Big( D_jD_kF_{g-1}^{\mathfrak{X}/M} + \sum_{r=1}^{g-1}  D_j F_r^{\mathfrak{X}/M}D_kF_{g-r}^{\mathfrak{X}/M}\Big)\cdot \Omega^{2}\\
& = \frac{1}{2} \overline{C}_{\overline{i}} \diamond \Big(DDF^{\mathfrak{X}/M}_{g-1}+\sum_{r=1}^{g-1} D F_r^{\mathfrak{X}/M}DF_{g-r}^{\mathfrak{X}/M}\Big).
\end{align*}
Here we use the notations in Equation~\eqref{eq:tensors} in the last equality and the diamond product $\diamond$ is natural contraction between dual tensors between $T_M$ and $\Omega_M$ (following the notations used in~\cite[Equation 4.2]{Liu}). 
%This implies the coordinate-free form of the holomorphic anomaly equation
%\[\db F^{\mathfrak{X}/M}_g = \frac{1}{2} \overline{C} \diamond \Big(DDF^{\mathfrak{X}/M}_{g-1}+\sum_{r=1}^{g-1} D F_r^{\mathfrak{X}/M}DF_{g-r}^{\mathfrak{X}/M}\Big).\]
 %This finishes the proof of Equation~\eqref{eq:intro-hae-identity}.
The coordinate-free form of the %holomorphic anomaly equation
HAE~\eqref{eq:intro-hae-identity} follows from the last equation in~\eqref{eq:tensors}.
\end{proof}

\subsubsection{Genus one HAE}
%When $(g, n)=(1,1)$, 
We also have a genus one holomorphic anomaly equation as predicted in~\cite{BCOV93}.
\begin{Theorem}\label{thm:genus-one-hae}
Let $\chi$ be the Euler characteristic of a fiber in the family $\mathfrak{X}\to M$. %When $g=1$, 
We have
\begin{equation}\label{eq:genus-one-hae}
\overline{\partial}_{\overline{i}}\Big( F_{1,1}^{\mathfrak{X}/M,\Omega,s^{\sf BT}}(f_j)\Big) 
=\left(-\frac{\chi}{24}+1\right) G_{j\overline{i}} +
 \frac{1}{2}\sum_{1\leq k,l\leq \mu} \overline{C}^{kl}_{\overline{i}}C_{jkl}.
 \end{equation}
\end{Theorem}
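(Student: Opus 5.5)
The plan is to run the genus-$g\geq2$ argument in the $(1,1)$ component. First, write the $\bar\partial$-equation~\eqref{eq:db-potential} in the component $(g,n)=(1,1)$ and evaluate on $f_j$. The Givental quantization of the infinitesimal element $-\overline{C}_{\overline{i}}u$ acts on the generating function by two kinds of terms. The first is a leg term: $-\overline{C}_{\overline{i}}u$ acts on the insertion. The second is the infinitesimal edge terms computed in the proof of~\eqref{HAE-no-insertions}, namely $\overline{C}^k_{\overline{i},\overline{l}}g^{m\overline{l}}f_k\otimes f_m+g^{0\overline{0}}(\Omega\otimes\overline{f_i}+\overline{f_i}\otimes\Omega)$.

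Since $F_{1,1}$ is the lowest-order genus-one term, the edge can only be glued in two ways.

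\textbf{Self-loop case.} An edge glued as a self-loop on a $(0,3)$ vertex, giving
\[
\tfrac12\sum_{k,m}\overline{C}^{km}_{\overline{i}}\langle f_j,f_k,f_m\rangle_{0,3}
+g^{0\overline{0}}\langle f_j,\Omega,\overline{f_i}\rangle_{0,3}.
\]

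\textbf{Separating case.} An edge separating off a $(0,2)$ vertex. This is unstable, so it does not occur.

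Next, identify $\langle f_j,f_k,f_m\rangle_{0,3}$ with the Yukawa coupling $C_{jkm}$. I would do this by the genus-zero divisor equation (Theorem~\ref{thm:divisor} at $g=0$) combined with the fact that $\langle\,,\rangle_{0,3}$ at $\psi^0$ is the Mukai pairing of the cap product $\KS(\partial_k)\cap f_j$ with $f_m$. Alternatively, I would compute directly from $\widehat\beta_{0,1,2}$, which is the dual of the cup product, as noted after~\eqref{graph:T}. Griffiths transversality then gives the classical $C_{jkm}$.

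For the second term, the string equation gives $\langle\Omega,f_j,\overline{f_i}\rangle_{0,3}=\langle f_j,\overline{f_i}\rangle_{\Muk}$. Up to the normalization of $g$, this equals $g^{0\overline0}g_{j\overline i}=G_{j\overline i}$, which accounts for the ``$+1$''.

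The leg term is $\langle -\overline{C}_{\overline{i}}(f_j)u\rangle_{1,1}=G_{j\overline{i}}\langle\Omega\psi\rangle_{1,1}$, up to sign, using~\eqref{eq:C-formula}. By the dilaton equation in the unstable $(1,0)$ direction this cannot be reduced. Instead I would compute $\langle\Omega\psi\rangle_{1,1}$ directly from~\eqref{eq:lbA-110}: the two graphs give the genus-one vertex contribution plus $\tfrac12$ times a loop. Via Proposition~\ref{prop:lhs-dilaton}, this should reduce to the supertrace $\frac1{24}$ times the Euler characteristic $\chi=\mathrm{str}(\mathrm{id}_{HH})$. Concretely, I would use the known value $\widehat{\mathcal V}_{1,1,0}$ paired with $\omega^{[1]}$, with $\frac1{24}=\int_{\overline M_{1,1}}\psi$, and the loop contributing a trace of $H$ that cancels appropriately.

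The main obstacle is this last step: establishing $\langle\Omega\psi\rangle_{1,1}=\chi/24$. This is the genus-one dilaton, which Theorem~\ref{thm:dilaton} does not cover. Only the Euler characteristic coefficient is genuinely new. I would try to prove it by combining the dilaton argument of Proposition~\ref{prop:forgetful} at $(g,k,l)=(1,1,0)$ with an explicit computation of $\mathcal{F}^u_P\widehat{\mathcal U}_{1,1,1}$ in terms of the genus-one self-sewing $\Delta$ of $\widehat{\mathcal V}_{0,1,2}$. The result would then be read as $\tfrac1{24}\mathrm{str}$ via the Mukai-pairing trace. Finally, sign bookkeeping for $\psi=-u$ and for the orientation of $\overline{C}$ should combine everything into~\eqref{eq:genus-one-hae}.
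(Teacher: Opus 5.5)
Your decomposition of the $(1,1)$ component of \eqref{eq:db-potential} is exactly the paper's: a leg term plus a self-loop on a $(0,3)$ vertex, with no separating edges. The leg term also has no sign ambiguity. By \eqref{eq:C-formula}, $-\overline{C}_{\overline{i}}(f_j)\,u=-G_{j\overline{i}}\,\Omega u=G_{j\overline{i}}\,\Omega\psi$, so it is exactly $+G_{j\overline{i}}\langle\Omega\psi\rangle_{1,1}$.

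However, two of your steps do not work as justified. First, you cannot get $\langle\Omega,f_j,\overline{f_i}\rangle_{0,3}=\langle f_j,\overline{f_i}\rangle_{\Muk}$ from Theorem~\ref{thm:string}. Its right-hand side would be a $(0,2)$ invariant, which is unstable and undefined. Even formally, with all $\psi$-exponents zero, that right-hand side vanishes, which would kill the ``$+1$''. Likewise, the genus-zero divisor equation does not reach $(0,3)$. Both three-point functions must be computed directly from $\widehat{\mathcal V}_{0,1,2}$ together with the comparison theorem; this $T$-graph is dual to the cup product, and $\Omega$ acts on it as the unit. That is what the paper's appeal to the comparison theorem amounts to. Your ``alternative'' route for $C_{jkl}$ is the right mechanism for both terms.

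The genuine gap is the evaluation of $\langle\Omega\psi\rangle_{1,1}$, which carries the entire $\chi$-term. This is not a dilaton-type statement: the dilaton factor at $(g,n)=(1,0)$ is $2g-2+n=0$. Running Proposition~\ref{prop:forgetful} at $(1,1,0)$ with Lemma~\ref{lem:Fu-cd} only reproduces Theorem~\ref{thm:dilaton} at $(1,1)$. With $\alpha=\Omega$ and one $\psi$, that is the tautology $\langle\Omega\psi\rangle_{1,1}=\langle\Omega\psi,\Omega\psi\rangle_{1,2}$, which can never produce $\chi$.

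What is needed, and what the paper does, is a direct evaluation of $\overline{\beta}^{A,s}_{1,1,0}$ from \eqref{eq:lbA-110} on $S(\Omega(-u))$. This uses the explicit genus-one string vertex $\widehat{\mathcal V}_{1,1,0}$: $\frac{1}{24}$ times a graph carrying the inverse circle parameter of its cycle, minus $\frac14$ times a graph with no circle parameter. Since $S(\Omega(-u))$ has no $u^0$-component, \eqref{eq:action-map} kills two graphs outright. These are the loop graph, whose vertex is $\widehat{\beta}^A_{0,1,2}$, and the $-\frac14$ graph. So there is no trace of $H$ to cancel.

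Only the $\frac{1}{24}$ graph survives, evaluated on the $u^1$-coefficient. Its value is the supertrace of the identity on the shifted space $HH_\bullet[3]$. Because the shift is odd, this supertrace is $-\chi$, not $\chi$. Hence $\langle\Omega\psi\rangle_{1,1}=-\chi/24$. With your value $+\chi/24$ and the leg term being exactly $+G_{j\overline{i}}\langle\Omega\psi\rangle_{1,1}$, you would obtain $+\frac{\chi}{24}G_{j\overline{i}}$. No later sign bookkeeping can repair that.
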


\begin{proof}
Writing down Equation~\eqref{eq:db-potential} in the component $(g=1,n=1)$ with insertion $f_j$ yields
\begin{equation}\label{eq:hae-110}
\overline{\partial}_{\overline{i}}\Big( F_{1,1}^{\mathfrak{X}/M,\Omega,s^{\sf BT}}(f_j)\Big) = \begin{tikzpicture}[baseline={(current bounding
box.center)},scale=0.6] \draw [thick,directed] (3.4,4) to (3.4,2);
\node at (3.4,2) {$\bullet$}; \node at (3.4,1.5) {\tiny $g=1$};\end{tikzpicture}
+\frac{1}{2}\begin{tikzpicture}[baseline={(current bounding
box.center)},scale=0.6] \draw [thick,directed] (3.4,4) to (3.4,2);
\node at (3.4,2) {$\bullet$}; \draw [thick] (3.4,1.5) circle
[radius=.5]; 
\end{tikzpicture}
\end{equation}
where 
\begin{enumerate}
    \item the vertex contribution is given by $F_{g(v),n(v)}^{\mathfrak{X}/M,\Omega,s^{\sf BT}}$,
    \item the edge contribution is given by the tensor 
    $ \overline{C}^{kl}_{\overline{i}} f_k\otimes f_l + g^{0\overline{0}}\Omega\otimes \overline{f_i} + g^{0\overline{0}}\overline{f_i}\otimes \Omega, $
    \item the leaf contribution is given by $-\overline{C}_{\overline{i}}(f_j) \cdot u$ in the first graph; and by $f_j$ in the second graph.
\end{enumerate}
Computing these contributions using Equation~\eqref{eq:C-formula} yields
\begin{align}\label{eq:expansion-hae-11}
    \overline{\partial}_{\overline{i}}\Big( F_{1,1}^{\mathfrak{X}/M,\Omega,s^{\sf BT}}(f_j)\Big) & = G_{j\overline{i}} \langle \Omega\psi\rangle_{1,1}^{\mathfrak{X}/M,\Omega,s^{\sf BT}} + \frac{1}{2}\overline{C}^{kl}_{\overline{i}} \langle f_j,f_k,f_l\rangle_{0,3}^{\mathfrak{X}/M,\Omega,s^{\sf BT}} + g^{0\overline{0}}  \langle f_j,\Omega,\overline{f_i}\rangle_{0,3}^{\mathfrak{X}/M,\Omega,s^{\sf BT}}.
\end{align}
Using the formula of the string vertex
\[ \hcV_{1,1,0}=\frac{1}{24}\;\;\; \begin{tikzpicture}[baseline={([yshift=-.4ex]current bounding box.center)},scale=0.3]
\draw [thick] (0,2) circle [radius=2];
\draw [thick] (-2,2) to (-0.6,2);
\draw [thick] (-0.8,2.2) to (-0.4,1.8);
\draw [thick] (-0.8, 1.8) to (-0.4, 2.2);
\draw [thick] (-1.4142, 0.5858) to [out=40, in=140] (1, 0.5);
\draw [thick] (1.25, 0.3) to [out=-45, in=225] (2, 0.2);
\draw [thick] (2,0.2) to [out=45, in=-50] (1.732, 1);
\node at (0.7, 2.2) {$\scriptstyle{u^{-1}}$};
\end{tikzpicture}
\;\;-\;\;\frac{1}{4}\;\;\; 
\begin{tikzpicture}[baseline={([yshift=-.4ex]current bounding box.center)},scale=0.3]
\draw [thick] (0,2) circle [radius=2];
\draw [thick] (0,0) to (0,1.4);
\draw [thick] (-0.2, 1.2) to (0.2, 1.6);
\draw [thick] (-0.2, 1.6) to (0.2, 1.2);
\draw [thick] (0,0) to [out=80, in=180] (0.5, 1);
\draw [thick] (0.5,1) to [out=0, in=100] (0.9, 0.4);
\draw [thick] (0,0) to [out=-80, in=180] (0.5, -1);
\draw [thick] (0.5, -1) to [out=0, in=-100] (0.9, 0);
\end{tikzpicture}\]
in~\cite[Section 8]{CalTu2}, 
we may evaluate
\[ \langle \Omega\psi\rangle_1^{\mathfrak{X}/M,\Omega,s^{\sf BT}} = -\frac{\chi}{24}.\]
Note that the negative sign here is due to the fact that we use the super-trace of identity on the $[3]$-shifted Hochschild homology. For the three point functions, the comparison theorem~\ref{conj:comparison} yields
\[ \langle f_j,f_k,f_l\rangle_{0,3}^{\mathfrak{X}/M,\Omega,s^{\sf BT}}= C_{jkl},\;\;\;  \langle f_j,\Omega,\overline{f_i}\rangle_{0,3}^{\mathfrak{X}/M,\Omega,s^{\sf BT}}= g_{j\overline{i}}.\]
Putting these into Equation~\eqref{eq:expansion-hae-11} yields
\[ \overline{\partial}_{\overline{i}}\Big( F_{1,1}^{\mathfrak{X}/M,\Omega,s^{\sf BT}}(f_j)\Big) = 
-\frac{\chi}{24} G_{j\overline{i}}  + \frac{1}{2}\overline{C}^{kl}_{\overline{i}} C_{jkl} + g^{0\overline{0}}  g_{j\overline{i}}.\]
The desired equation follows from noting that $G_{j\overline{i}}=g^{0\overline{0}}  g_{j\overline{i}}$.
\end{proof}

\subsubsection{HAE for the mirror quintic family} 
Consider the famous example of the mirror quintic family 
$$\mathfrak{X}\to M=\mathbb{C}-\{1,\zeta,\zeta^2,\zeta^3,\zeta^4\mid \zeta=\exp(2\pi \sqrt{-1}/5)\},$$
where the fiber $X_t$ over $t\in M$ is given by the resolution of the global quotient orbifold
\[ [\big(W:=x_0^5+x_1^5+x_2^5+x_3^5+x_4^5 - 5t  x_0x_1x_2x_3x_4=0\big)/(\Z/5\Z)^3].\]
For this family, $\mu=1$, the Euler characteristic $\chi= -200$, and 
there exist holomorphic top forms 
$$\Omega:= 5t \frac{x_4dx_0dx_1dx_2}{\partial W/ \partial x_3}.$$
Thus, for $f:=\KS(\partial_t)\lrcorner\Omega$, Equation~\eqref{eq:genus-one-hae} gives the following:
\begin{equation}\label{eq:genus-one-quintic}
\overline{\partial}_{\overline{t}}\Big( F_{1,1}^{\mathfrak{X}/M,\Omega,s^{\sf BT}}(f)\Big) =\frac{28}{3}G_{t\overline{t}}+
 \frac{1}{2}\overline{C}_{\overline{t}\overline{t}\overline{t}}C_{ttt}g^{t\overline{t}}g^{t\overline{t}}.
 \end{equation}
Furthermore, it is known that in this case we have the following 
\[C_{ttt}= \left(\frac{2\pi \sqrt{-1}}{5}\right)^3 \frac{5t^2}{1-t^5}.\]
%and $\overline{C}_{\overline{t}\overline{t}\overline{t}}$ is its complex-conjugate. 
The metric tensors $G_{t\overline{t}}$ and $g^{t\overline{t}}$ are not explicitly known. 
%Equation~\eqref{eq:genus-one-quintic} is known to be solved by 
%$$\mathcal{F}_{1,1}:= \frac{d}{dt}\bigg( \frac{31}{3}K - \frac{1}{2} \ln{G_{t\overline{t}}}\bigg).$$ 
%It would be interesting to explore if this particular solution $\mathcal{F}_{1,1}$ has any geometric meaning. 
The HAE~\eqref{eq:genus-one-quintic} is solved in \cite{BCOV93} and the solution determines the genus one CEI $F_{1,1}^{\mathfrak{X}/M,\Omega,s^{\sf BT}}(f)$ up to a holomorphic function on $M$, i.e., for some  $a(t)\in \Gamma(\mathcal{O}_M)$, 
\[ F_{1,1}^{\mathfrak{X}/M,\Omega,s^{\sf BT}}(f) 
= \frac{d}{dt}\bigg( \frac{31}{3}K - \frac{1}{2} \ln{G_{t\overline{t}}}\bigg) + a(t),\]
where $K$ is the K\"ahler potential of the metric $G_{t\overline{t}}$ given by
$$K:=-\ln{ \left(\sqrt{-1} \int_{\mathfrak{X}/M} \Omega\wedge\overline{\Omega}\right)}.$$
%Recall that $$F_g:=F_g^{\mathfrak{X}/M,\Omega,s^{\sf BT}}\cdot \Omega^{2g-2}\in C^\infty(\mathbb{L}^{2g-2}),$$ 

Meanwhile, for each $g\geq 2$, we have the holomorphic anomaly equation
\[ \db_{\overline{t}} F_g^{\mathfrak{X}/M} = \frac{1}{2} \overline{C}_{\overline{t}\overline{t}\overline{t}}g^{t\overline{t}}g^{t\overline{t}} \Big( D_t D_t F_{g-1} + \sum_{r=1}^{g-1}  D_t F_r \cdot D_t F_{g-r}\Big)\cdot \Omega^{2}.\]
We refer to \cite{BCOV} for a discussion of solving the holomorphic anomaly equation.

%\newpage
\section{Proofs of string equation and divisor equation in higher genus%Theorem~\ref{thm:string} and Theorem~\ref{thm:divisor}
}
\label{app:proof-string}

Since the proofs of the string equation and the divisor equation are similar, we put both in the current section. We begin by describing an algebraic framework.

\subsection{The algebraic setup} 
\label{sec-alg-setup}
We denote the Chevalley-Eilenberg (cochain) complex of a DGLA  $\mathfrak{h}$ by
\[{\sf CE}_*(\mathfrak{h}):=\Hom\big(\sym^{\bullet}(\mathfrak{h}[1]),\mathfrak{h}[1]\big)=\bigg(\mathfrak{h}[1]\to \Hom(\mathfrak{h}[1],\mathfrak{h}[1]) \to \Hom(\sym^2(\mathfrak{h}[1]),\mathfrak{h}[1])\to\cdots\bigg).
\]
Recall that the Chevalley-Eilenberg differential is constructed as follows. By the (co)Leibniz rule, the space $\Hom\big(\sym^\bullet(\mathfrak{h}[1]),\mathfrak{h}[1]\big)$ can be identified with the space ${\sf Coder}\big(\sym^\bullet(\mathfrak{h}[1])\big)$ of coderivations of the symmetric coalgebra. Explicitly, this identification is given by 
\begin{itemize}
    \item Given an element $\phi=\prod_{k\geq 0} \phi_k$ with $\phi_k\in \Hom(\sym^k(\mathfrak{h}[1]),\mathfrak{h}[1])$, we obtain a coderivation $\widetilde{\phi}: \sym^\bullet(\mathfrak{h}[1]) \to \sym^\bullet(\mathfrak{h}[1])$ by setting 
    \begin{equation}\label{eq:id-1} \widetilde{\phi} (\alpha_1\odot\cdots\odot\alpha_n)= \sum_{k=0}^n\sum_{\sigma\in {\sf Sh(k,n-k)}} (-1)^@ \phi_k(\alpha_{\sigma(1)}\odot\cdots\odot \alpha_{\sigma(k)})\odot\alpha_{\sigma(k+1)}\odot\cdots\odot \alpha_{\sigma(n)}. \end{equation}
    The notation ${\sf Sh(k,n-k)}$ stands for the group of $(k,n-k)$-shuffles.
    \item In the reverse direction, given a coderivation $\Phi\in {\sf Coder}\big(\sym^\bullet(\mathfrak{h}[1])\big)$, we simply post-compose with the canonical projection map to yield an element in ${\sf CE}_*(\mathfrak{h})$ given by 
    \begin{equation}\label{eq:id-2}\sym^\bullet(\mathfrak{h}[1]) \stackrel{\Phi}{\longrightarrow} \sym^\bullet(\mathfrak{h}[1]) \twoheadrightarrow \mathfrak{h}[1].\end{equation}
     
\end{itemize}
Through this identification, we obtain a Lie bracket structure on ${\sf CE}_*(\mathfrak{h})$ induced from the commutator Lie bracket on the space of coderivations. For example, the Lie bracket of $[\phi_1,\phi_0]$ lies inside $\Hom\big(\sym^{0}(\mathfrak{h}[1]),\mathfrak{h}[1]\big)$ and is equal to $\phi_1(\phi_0)$. Here $\phi_0(\phi_1)=0$ because according to Equation~\eqref{eq:id-1}, $\widetilde{\phi_1}$ acts trivially on $\sym^{0}(\mathfrak{h}[1])$. Similarly, the Lie bracket of $[\phi_2,\phi_0]$ lies inside $\Hom\big(\sym^{1}(\mathfrak{h}[1]),\mathfrak{h}[1]\big)$ and is equal to the map $\phi_2(\phi_0,-)$.

Furthermore, the differential $l_1: \mathfrak{h}[1]\to \mathfrak{h}[1]$ and the Lie bracket $l_2: \sym^2\mathfrak{h}[1]\to \mathfrak{h}[1]$ can be viewed as coderivations of $\sym^\bullet(\mathfrak{h}[1])$. Moreover, one can show that the structure maps $l_1$ and $l_2$ define a DGLA structure on $\mathfrak{h}$ if and only if 
\[ [l_1+l_2,l_1+l_2]=0,\]
where $[-,-]$ is the commutator in the space of coderivations. 
For simplicity, we will %shall also 
call the element $l_1+l_2\in {\sf CE}_*(\mathfrak{h})$ a DGLA structure on $\mathfrak{h}$. The Chevalley-Eilenberg differential $d_{\sf CE}(\mathfrak{h})$ is given by
\begin{equation}
\label{CE-differential}
d_{\sf CE}(\mathfrak{h})(-)=[l_1+l_2, -].
\end{equation}
%where the bracket on the right hand side is the Lie bracket on the space of coderivations. 
We refer the details of this construction to~\cite{CE}.
\begin{Lemma}\label{lem:ce-element}
Let $\epsilon$ be an even element such that $\epsilon^2=0$. 
Then the even element $f \in {\sf CE}_*(\mathfrak{h})$ is closed if and only if 
$\exp(f\epsilon)=\id+f\epsilon\in {\rm End}(\mathfrak{h}[\epsilon])$
is an $L_\infty$ isomorphism (linear over $\mathbb{C}[\epsilon]$).
%Then to give an even closed element $f \in {\sf CE}_*(\mathfrak{h})$ is equivalent to give an $L_\infty$ isomorphism (linear over $\mathbb{C}[\epsilon]$) of the form:
\end{Lemma}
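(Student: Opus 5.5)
The plan is to unwind both conditions into equations on the components of $f$ and show they coincide. Write $f=\prod_{k\geq 0}f_k$ with $f_k\in\Hom(\sym^k(\mathfrak{h}[1]),\mathfrak{h}[1])$, and let $\widetilde{f}$ be the associated coderivation from Equation~\eqref{eq:id-1}. An $L_\infty$ morphism $\mathfrak{h}[\epsilon]\to\mathfrak{h}[\epsilon]$, linear over $\mathbb{C}[\epsilon]$, is the same as a $\mathbb{C}[\epsilon]$-linear coalgebra map $\Psi:\sym^\bullet(\mathfrak{h}[1])[\epsilon]\to \sym^\bullet(\mathfrak{h}[1])[\epsilon]$ intertwining the codifferential $Q:=\widetilde{l_1+l_2}$. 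I will read $\exp(f\epsilon)$ as the coalgebra map $\exp(\epsilon\widetilde{f})=\id+\epsilon\widetilde{f}$; its Taylor components are $\id+\epsilon f_1$ in arity one and $\epsilon f_k$ in arity $k\neq 1$. If $f_0\neq0$ this is a curved morphism, which is allowed since curvature terms are killed by $\epsilon$.

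The first step is to check that $\id+\epsilon\widetilde{f}$ is a coalgebra morphism. Write $\Delta$ for the coproduct. Since $\epsilon^2=0$,
\[
\Delta(\id+\epsilon\widetilde f)=\big((\id+\epsilon\widetilde f)\otimes(\id+\epsilon\widetilde f)\big)\Delta
\]
reduces to $\Delta\widetilde f=(\widetilde f\otimes\id+\id\otimes\widetilde f)\Delta$. This is exactly the coderivation property of $\widetilde f$, which holds by construction because $f$ is even, so no Koszul sign intervenes with $\epsilon$. Conversely, every $\mathbb{C}[\epsilon]$-linear coalgebra map reducing to $\id$ mod $\epsilon$ has this form. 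The map is invertible, with inverse $\id-\epsilon\widetilde f$, so it is an isomorphism.

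The second step is the chain-map condition $Q(\id+\epsilon\widetilde f)=(\id+\epsilon\widetilde f)Q$. Expanding, this is equivalent to
\[
\epsilon\,(Q\widetilde f-\widetilde f Q)=\epsilon\,[Q,\widetilde f]=0 .
\]
Here $\epsilon$ is even, and I use that $Q$ commutes with multiplication by $\epsilon$ because everything is $\mathbb{C}[\epsilon]$-linear. Since $\mathfrak{h}[\epsilon]=\mathfrak{h}\oplus\epsilon\mathfrak{h}$ and multiplication by $\epsilon$ is injective from the first summand to the second, this holds iff $[Q,\widetilde f]=0$ as a coderivation of $\sym^\bullet(\mathfrak{h}[1])$.

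The final step is to identify $[Q,\widetilde f]$ with $d_{\sf CE}(\mathfrak{h})(f)=[l_1+l_2,f]$ from Equation~\eqref{CE-differential}. By definition, the bracket on ${\sf CE}_*(\mathfrak{h})$ is transported from the commutator of coderivations through the identifications \eqref{eq:id-1}--\eqref{eq:id-2}, and these are mutually inverse bijections. A coderivation vanishes iff its projection to $\mathfrak{h}[1]$ vanishes. Hence $[Q,\widetilde f]=0$ iff $d_{\sf CE}f=0$, which is the claim.

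The only delicate point is the sign and parity bookkeeping: evenness of $f$ ensures the commutator is a plain commutator rather than a graded anticommutator, and that $\epsilon$ passes through freely. The arity-zero component $f_0$ has to be allowed in the notion of $L_\infty$ morphism; I expect this bookkeeping to be the main thing to state carefully, not a genuine obstacle.
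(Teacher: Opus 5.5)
Your proposal is correct and follows essentially the same route as the paper: you identify the Taylor components of $\exp(f\epsilon)$ as $f_i$ in the $\epsilon$-component, and you reduce the $L_\infty$-morphism identity to $[l_1+l_2,f]=0$. Your version simply makes explicit, through the coderivation formalism of Equations~\eqref{eq:id-1}--\eqref{eq:id-2}, what the paper's two-line sketch leaves as ``one can verify.'' That includes the coalgebra-map check modulo $\epsilon^2$ and the treatment of the curvature component $f_0$.
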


\begin{proof}
Indeed, for $f=f_0+f_1+\cdots$ with each $f_i\in \Hom(\sym^i(\mathfrak{h}[1]),\mathfrak{h}[1])$, the $i$-component map of $\exp(f\epsilon)$ is given by the map $f_i$ in the $\epsilon$-component. 
Under this correspondence, one can verify that the $L_\infty$ morphism identity is equivalent to that of $f$ being closed in the complex ${\sf CE}_*(\mathfrak{h})$. 
\end{proof}

Sometimes, we also write the $L_\infty$ isomorphism $\exp(f\epsilon)$ as an $L_\infty$ morphism 
$\exp(f\epsilon): \mathfrak{h} \to \mathfrak{h}[\epsilon]$
by extending this morphism $\epsilon$-linearly one obtains the previous $L_\infty$ isomorphism.

Now, let us consider an isotopic family of DGLA structures on $\mathfrak{h}$, i.e., a DGLA structure on the tensor product $\mathfrak{h}\otimes\Omega^\bullet_{[0,1]}$ where $\Omega^\bullet_{[0,1]}$ is the algebraic de Rham complex on the standard $1$-simplex. Denote its DGLA structure by $\mu(t)+\nu(t)dt$, with $\mu(t)+\nu(t)dt\in {\sf CE}_*(\mathfrak{h}\otimes \Omega^\bullet_{[0,1]})$. 
Restriction of this family to a point $t\in [0,1]$ yields a DGLA denoted by $\mathfrak{h}(t)$. Under suitable finiteness condition, we may ``integrate" the DGLS structure $\mu(t)+\nu(t)dt$ to obtain a family of $L_\infty$ isomorphisms
\[\{K(t): \mathfrak{h}(t) \ra \mathfrak{h}(0) \mid t\in [0,1]\},\] characterized by %the differential equation
the initial value problem
\begin{equation}
\label{eq-initial-value-K}\begin{dcases}
\frac{d}{dt} K(t) = K(t) \circ \nu(t),\\
K(0)=\id.
\end{dcases}
\end{equation}

%with initial condition $K(0)=\id$. 
%We denote by $K:=K(1)$ and consider the following. 
\begin{Lemma}\label{lem:algebraic-setup}
Let $f(t)+g(t)dt$ be an even closed element in the Chevalley-Eilenberg complex of $\mathfrak{h}\otimes\Omega_{[0,1]}$. 
Then the following diagram is commutative up to homotopy
\[ \begin{CD}
\mathfrak{h}(1) @>\exp(f(1)\epsilon)>> \mathfrak{h}(1)[\epsilon]\\
@V K(1) VV      @V K(1) VV\\
\mathfrak{h}(0) @> \exp(f(0)\epsilon) >> \mathfrak{h}(0)[\epsilon]
\end{CD}\]
\end{Lemma}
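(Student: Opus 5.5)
We prove Lemma~\ref{lem:algebraic-setup} by producing an explicit homotopy, namely a family of closed elements interpolating between the two compositions. Set
\[
\Phi(t):=K(t)\circ \exp\big(f(t)\epsilon\big)\circ K(t)^{-1}\colon \mathfrak{h}(0)\to \mathfrak{h}(0)[\epsilon],
\]
a composition of $L_\infty$ morphisms ($\epsilon$-linearly extended). Then $\Phi(0)=\exp(f(0)\epsilon)$ and $\Phi(1)=K(1)\exp(f(1)\epsilon)K(1)^{-1}$. Commutativity up to homotopy of the square is exactly the statement that $\Phi(0)\simeq\Phi(1)$. Since $\epsilon^2=0$, we can write $\Phi(t)=\exp(\phi(t)\epsilon)$ with $\phi(t)=K(t)_*f(t)$ the transported cochain. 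By Lemma~\ref{lem:ce-element} applied to the transported structure, $\phi(t)$ is a closed even element of ${\sf CE}_*(\mathfrak{h}(0))$ for each $t$. Thus it suffices to show that $\phi(1)-\phi(0)$ is $d_{\sf CE}(\mathfrak{h}(0))$-exact.

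The key step is to differentiate $\phi(t)$ in $t$. Closedness of $f(t)+g(t)dt$ in ${\sf CE}_*(\mathfrak{h}\otimes\Omega_{[0,1]})$ with respect to $[\mu(t)+\nu(t)dt,-]$ (including the de Rham part $dt\,\partial_t$) splits into two components:
\begin{itemize}
\item the $dt^0$ part $[\mu(t),f(t)]=0$;
\item the $dt$ part $\partial_t f(t)=[\nu(t),f(t)]\pm[\mu(t),g(t)]$.
\end{itemize}
Using \eqref{eq-initial-value-K}, which gives $\partial_t K(t)=K(t)\nu(t)$ and hence $\partial_t K(t)^{-1}=-\nu(t)K(t)^{-1}$, we obtain
\[
\partial_t\phi(t)=K(t)_*\big(\partial_t f(t)-[\nu(t),f(t)]\big)=\pm K(t)_*[\mu(t),g(t)].
\]
Because $K(t)$ intertwines $\mu(t)$ with $\mu(0)$, this equals $\pm\, d_{\sf CE}(\mathfrak{h}(0))\big(K(t)_*g(t)\big)$. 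Integrating over $[0,1]$ then gives
\[
\phi(1)-\phi(0)=\pm\, d_{\sf CE}\Big(\int_0^1 K(t)_*g(t)\,dt\Big),
\]
which is the required homotopy. Equivalently, $\exp\big((\phi(t)+\psi(t)dt)\epsilon\big)$ with $\psi=K_*g$ defines an $L_\infty$ morphism into $\mathfrak{h}(0)\otimes\Omega_{[0,1]}[\epsilon]$ restricting to $\Phi(0)$ and $\Phi(1)$.

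The main obstacle is making the conjugation $K(t)_*$ precise at the level of coderivations. Concretely, $K(t)$ corresponds to a coalgebra automorphism of $\sym^\bullet(\mathfrak{h}[1])$, and transport of a coderivation $\tilde f$ is $\widetilde{K}\tilde f\widetilde{K}^{-1}$. Bracket compatibility then follows because conjugation by coalgebra automorphisms preserves commutators of coderivations. In addition, the identity $\widetilde{K}(t)\,\widetilde{\mu(t)}\,\widetilde{K}(t)^{-1}=\widetilde{\mu(0)}$ must itself be derived from \eqref{eq-initial-value-K}, using the $dt$-component of the structure equation $[\mu+\nu dt,\mu+\nu dt]=0$, namely $\partial_t\mu=[\nu,\mu]$. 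Once this is in place, the remaining steps are formal manipulations with the sign conventions of \eqref{CE-differential}. The integral $\int_0^1$ makes sense under the same finiteness hypotheses that guarantee $K(t)$ exists.
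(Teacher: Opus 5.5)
Your proposal is correct and is essentially the paper's own argument: conjugate $\exp(f(t)\epsilon)$ by $K(t)$, differentiate using $\partial_t K(t)=K(t)\nu(t)$ and the $dt$-component of closedness to get $\partial_t\bigl(K(t)f(t)K(t)^{-1}\bigr)=[\mu(0),K(t)g(t)K(t)^{-1}]$, and conclude that $K(t)\exp(f(t)\epsilon)K(t)^{-1}+K(t)g(t)\epsilon K(t)^{-1}dt$ is an isotopy from $\exp(f(0)\epsilon)$ to $K(1)\exp(f(1)\epsilon)K(1)^{-1}$. There is one harmless sign slip: with $\partial_t K=K\nu$ one has $\partial_t(KxK^{-1})=K(\partial_t x+[\nu,x])K^{-1}$, so the compatible conventions are $\partial_t f=-[\nu,f]\pm[\mu,g]$ and $\partial_t\mu=-[\nu,\mu]$ (as in the paper), but your two opposite sign choices cancel and the conclusion is unaffected.
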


\begin{proof}
Consider the family of $L_\infty$ morphisms
\[ K(t)\circ \exp(f(t)\epsilon) \circ K(t)^{-1} : \mathfrak{h}(0) \ra \mathfrak{h}(0)[\epsilon].\]
Differentiating yields
\begin{align*}
 \frac{d}{dt} \big( K(t) \exp(f(t)\epsilon) K(t)^{-1} \big) 
= & K(t) [ \nu(t), \exp(f(t)\epsilon) ] K(t)^{-1} + K(t) \frac{d}{dt}\exp( f(t)\epsilon ) K(t)^{-1}\\
= & K(t) \Big( ([\nu(t),f(t)] + \frac{d}{dt}f(t)) \epsilon \Big) K(t)^{-1}\\
= & K(t) \big( [\mu(t), g(t)] \epsilon\big) K(t)^{-1} 
\;\;\; (\mbox{By closedness of } f(t)+g(t)dt)\\
= & [\mu(0), K(t)g(t)\epsilon K(t)^{-1}]
\end{align*}
This calculation shows that
\[K(t) \exp(f(t)\epsilon) K(t)^{-1} + K(t)g(t)\epsilon K(t)^{-1} dt\]
is an isotopic family of $L_\infty$ morphisms from $\mathfrak{h}(0)$ to $\mathfrak{h}(0)[\epsilon]$. In other words, it is an $L_\infty$ morphism from $\mathfrak{h}(0)$ to $\mathfrak{h}(0)[\epsilon] \otimes \Omega_{[0,1]}$. This implies that we have
\[ K(1) \exp(f(1)\epsilon) K(1)^{-1} \cong \exp(f(0)\epsilon).\]
This implies that $K(1)\exp(f(1)\epsilon)  \cong \exp(f(0)\epsilon) K(1)$, as desired.
\end{proof}

\subsection{Construction of $\cK$}\label{sec-K-construction} 
We recall the construction of the $L_\infty$ isomorphism $\mathcal{K}: \hh_A \to \hh_A^{\sf TRIV}$ in Equation~\eqref{eq:trivialization-map-K}. 
The DGLA $\hh_A^\triv$ has the same underlying graded vector space as $\hh_A$, but is endowed with the differential $b+uB+\iota$ and the zero Lie bracket. Recall that $\hh_A^{\sf TRIV}$ is endowed with the differential $b+\iota$ and the zero Lie bracket. 

Let $S$ be a splitting map defined in~\eqref{eq:S-operator} and $R=S^{-1}$ be its inverse as in~\eqref{eq:R-operator}.
Following~\cite[Section 8.4]{CalTu1}, the map $\mathcal{K}$ is a composition of two $L_\infty$ morphisms:
\[ \hh_A \stackrel{K}{\longrightarrow}\hh_A^\triv \stackrel{\mathfrak{S}}{\longrightarrow} \hh_A^{\sf TRIV},\]
where the second $L_\infty$ morphism $\mathfrak{S}: \hh_A^\triv \to \hh_A^{\sf TRIV}$ is constructed by the chain map defined by
\[ \mathfrak{S}(\alpha):= R^{\otimes l} \circ \alpha \circ S^{\otimes k}\in {\sf Hom}^\cont \big( \sym^k (L^A_+[1]) , \sym^l (L^A_-) \big)\subset \hh_A^{\sf TRIV},\]
for any element $\alpha\in {\sf Hom}^\cont \big(\sym^k (L^A_+[1]) , \sym^l (L^A_-) \big)\subset \hh_A^\triv$. That is, we apply the splitting map $S$ at inputs of $\alpha$ and the inverse $R=S^{-1}$ at outputs of $\alpha$.
Now, the $L_\infty$ isomorphism 
$$K:=K(1): \hh_A \ra \hh_A^{\sf triv}$$ is obtained from the family $\{K(t)\}$ as in Equation~\eqref{eq-initial-value-K} by integrating an isotopic family of DGLAs on $\hh_A$, i.e., a DGLA structure on %the tensor product 
$\hh_A\otimes \Omega_{[0,1]}^\bullet$ that has its differential and Lie bracket defined by:
\begin{align}\label{eq:family-dgla}
\begin{split}
\bullet\;\;\; & b+uB+\hbar t \Delta + \iota + d_{DR} + \hbar \Delta^H dt\\
\bullet\;\;\; & \sum t^r\{-,-\}_r\hbar^{r-1} + \sum t^{r-1}\left(\{-,-\}^F_r+\{-,-\}^\delta_r\right)\hbar^{r-1}dt
%\bullet\;\;\; & \sum t^r\{-,-\}_r\hbar^{r-1} + \sum t^{r-1}\{-,-\}^F_r\hbar^{r-1}dt+\sum t^{r-1}\{-,-\}^\delta_r\hbar^{r-1}dt
\end{split}
\end{align}
%The following result is proved in \cite[Section 8.4]{CalTu1}.
%\begin{Proposition}  
%The structure in Equation~\eqref{eq:family-dgla} forms a DGLA structure on %the tensor product 
%$\hh_A\otimes\Omega_{[0,1]}^\bullet$.
%\end{Proposition}
We refer to~\cite[Section 8.4]{CalTu1} for the proof that Equation~\eqref{eq:family-dgla} forms a DGLA structure on the tensor product $\hh_A\otimes\Omega_{[0,1]}^\bullet$.
%It is also easy to check the following statement from the definitions. 
Here, $d_{DR}$ is the de Rham differential of $\Omega^\bullet_{[0,1]}$, the operators $b+uB$, $\iota$, $\Delta$ and $\{-,-\}_r$ were introduced in Section~\ref{subsec:tcft}. Throughout this section, we shall use the notation
\[ \{-,-\}_\hbar(t) := \sum t^r\{-,-\}_r\hbar^{r-1}. \]
Thus, $\{-,-\}_\hbar(t=1)=\{-,-\}_\hbar$ is the bracket defined in Equation~\eqref{eq:def-bracket-hbar}. 
%Equations~\eqref{eq:homotopy-H}, ~\eqref{eq:homotopy-F}, ~\eqref{eq:homotopy-delta}.
Recall that the operators $H$, $F$ and $\delta$ were defined in Section~\ref{sec-explicit-cei}. 
We now construct $\Delta^H$, $\{-,-\}_r^F$, and $\{-,-\}_r^\delta$ as follows.
%The operators $\Delta^H$, $\{-,-\}_r^F$, and $\{-,-\}_r^\delta$ are defined by the constructions as follows.
%we obtain the desired operations $\Delta^H$, $\{-,-\}_r^F$, and $\{-,-\}_r^\delta$. 
\begin{enumerate}
\item Given a linear map $D: L^A_-\otimes L_-^A\to \mathbb{C}$, denote by
\begin{equation}\label{eq:defi-homotopy-D}
\Delta^D: \hh_A \ra \hh_A
\end{equation}
the map that contracts two outputs by symmetrization $D^\sym$. Observe that for $D=\rho^A(\mathbb{M})$ from Section~\ref{subsec:tcft}, we get the operator $\Delta^D=\Delta$.
\item Given a linear map $E: L^A_- \to L^A_+[1]$ and an integer $r\geq 1$, we define as in Equation~\eqref{eq:def-r-bracket-algebraic}:
\[\alpha \overset{E}{\underset{[r]}{\circ}} \beta :=  \frac{(-1)^{|\beta|(k'-r)}}{(r-1)!}
%\sum_{\substack{I\subset \{1,\ldots,l''\}\\J\subset\{1,\ldots,k'\}\\|I|=|J|=r}} 
\sum_{\substack{I\subset \{1,\ldots,l''\}\\|I|=r}} 
\sum_{\substack{J\subset\{1,\ldots,k'\}\\|J|=r}}
    \pi_{J}(\alpha) \prescript{}{J}\circ (E\otimes B^{\otimes r-1})\circ_I \; \pi_I(\beta).\]
Using these maps, we define a map denoted by $\{-,-\}_\hbar^E(t): \sym^2(\hh_A[1]) \ra \hh_A[1]$ by
\begin{equation}\label{eq:defi-homotopy-E}
\{\alpha,\beta\}^E_\hbar(t) := \sum t^{r-1} (-1)^{|\alpha|} \big( \alpha\overset{E}{\underset{[r]}{\circ}}\beta - (-1)^{|\alpha||\beta|}\beta\overset{E}{\underset{[r]}{\circ}}\alpha\big)\hbar^{r-1}.
\end{equation}
It is helpful to think of the map $\alpha \overset{E}{\underset{[r]}{\circ}} \beta$ as in the following graph where one edge is labeled by $E$ and $r-1$ edges are labeled by $\Theta$.
\[\begin{tikzpicture}[baseline={(current
bounding box.center)},scale=1.2] 
\node[draw,circle,thick] (A) at (0,0) {$\beta$};
\node[draw,circle,thick] (B) at (3,0) {$\alpha$};
\draw[->,thick] (A) to (1,1);
\draw[->,thick] (A) to (1,-1);
\draw[->,thick] (2,1) to (B);
\draw[->,thick] (2,-1) to (B);
\node at (4,0) {$\vdots$};
\node at (-1,0) {$\vdots$};
\draw[->,thick] (A) to node[fill=white,inner sep=2pt,midway] {$\Theta$} (B);
\draw[->,thick] (A) to[out=30,in=150] node[fill=white,inner sep=2pt,midway] {$E$} (B);
\node at (1.5,-0.25) {$\vdots$};
\draw[->,thick] (A) to[out=-40,in=-140] node[fill=white,inner sep=2pt,midway] {$\Theta$} (B);
\draw[->,thick] (-1,.5) to (A);
\draw[->,thick] (-1,-.5) to (A);
\draw[->,thick] (B) to (4,.5);
\draw[->,thick] (B) to (4,-.5);
\end{tikzpicture}
\]
\end{enumerate}

For later use, given two linear maps $E, E': L^A_- \to L^A_+[1]$, we also introduce a map  $\{-,-\}_r^{E,E'}(t): \sym^2(\hh_A[1]) \ra \hh_A[1]$,
generalizing Equation~\eqref{eq:defi-homotopy-E}. We define
\begin{align}\label{eq:defi-homotopy-E-F}
\begin{split}
    \alpha \overset{E,E'}{\underset{[r]}{\circ}} \beta &:=  \frac{(-1)^{|\beta|(k'-r)}}{(r-2)!}\sum_{\substack{I\subset \{1,\ldots,l''\}\\|I|=r}} 
    \sum_{\substack{J\subset\{1,\ldots,k'\}\\|J|=r}}
     \pi_{J}(\alpha) \prescript{}{J}\circ (E\otimes E'\otimes B^{\otimes r-2})\circ_I \;  \pi_{I}(\beta),\\
    \{\alpha,\beta\}^{E,E'}_\hbar(t) &:=  \sum_{r\geq 2} t^{r-2} (-1)^{|\alpha|} \big( \alpha\overset{E,E'}{\underset{[r]}{\circ}}\beta - (-1)^{|\alpha||\beta|}\beta\overset{E,E'}{\underset{[r]}{\circ}}\alpha\big)\hbar^{r-1}.
\end{split}
\end{align}
This operation can be illustrated in the following graph, where two edges are labeled by $E, E'$ and $r-2$ edges are labeled by $\Theta$.
\[\begin{tikzpicture}[baseline={(current
bounding box.center)},scale=1.2] 
\node[draw,circle,thick] (A) at (0,0) {$\beta$};
\node[draw,circle,thick] (B) at (3,0) {$\alpha$};
\draw[->,thick] (A) to (1,1);
\draw[->,thick] (A) to (1,-1);
\draw[->,thick] (2,1) to (B);
\draw[->,thick] (2,-1) to (B);
\draw[->,thick] (A) to node[fill=white,inner sep=2pt,midway] {$\Theta$} (B);
\draw[->,thick] (A) to[out=40,in=140] node[fill=white,inner sep=2pt,midway] {$E$} (B);
\draw[->,thick] (A) to[out=20,in=160] node[fill=white,inner sep=2pt,midway] {$E'$} (B);
\node at (1.5,-0.25) {$\vdots$};
\draw[->,thick] (A) to[out=-40,in=-140] node[fill=white,inner sep=2pt,midway] {$\Theta$} (B);
\draw[->,thick] (-1,.5) to (A);
\draw[->,thick] (-1,-.5) to (A);
\draw[->,thick] (B) to (4,.5);
\draw[->,thick] (B) to (4,-.5);
\node at (4,0) {$\vdots$};
\node at (-1,0) {$\vdots$};
\end{tikzpicture}\]
From the construction, we get a Koszul sign in the following
\begin{equation}\label{eq:sign-E-E'}
  \{-,-\}_\hbar^{E,E'}(t) = (-1)^{|E||E'|}\{-,-\}_\hbar^{E',E}(t).  
\end{equation}
We collect some commutator relations of these new operators with previous operators.
\begin{Lemma}
For any linear maps $D: L_-^A\otimes L_-^A\to \bbC$, and $E, E': L_-^A\to L_+^A[1]$, we have
\begin{align}
[\Delta,\Delta^{D}]&=[\iota,\Delta^{D}]=0. \label{eq:commutator-Delta}\\
[b+uB,\Delta^{D}]&=\Delta^{[b+uB,D]}. 
\label{eq:commutator-b+uB-Delta}\\
[b+uB,\{-,-\}_\hbar^{E}(t)]&=\{-,-\}_\hbar^{[b+uB,E]}(t). \label{eq:commutator-b+uB-bracket}\\
[b+uB,\{-,-\}_\hbar^{E,E'}(t)] &=\{-,-\}_\hbar^{[b+uB,E],E'}(t)+\{-,-\}_\hbar^{E,[b+uB,E']}(t).
\label{eq:commutator-b+uB-bracket-EE}
\end{align}
\end{Lemma}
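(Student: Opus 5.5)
The plan is to verify each of the four identities directly from the definitions, reading the operators as compositions along graphs. Throughout, the key structural fact is that $b+uB$ acts on ${\sf Hom}^\cont(\sym^k(L^A_+[1]),\sym^l(L^A_-))$ by commutator. Extended to symmetric powers, it is a (co)derivation, so it distributes over every sewing operation by the Leibniz rule.

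For \eqref{eq:commutator-Delta}, both $\Delta$ and $\Delta^D$ contract pairs of outputs by fixed symmetric bilinear forms, namely $\rho^A(\mathbb{M})$ and $D^\sym$. Contractions on disjoint pairs of output slots commute: each summand in either order contracts the same two disjoint pairs, and both forms are even, so no sign appears. Similarly, $\iota$ only adds an input and contracts it, through the Mukai pairing $C_{y}$, against one existing output. Summed over all outputs, $\iota\Delta^D$ and $\Delta^D\iota$ therefore enumerate the same configurations, namely one output contracted with the new input and a disjoint pair contracted by $D^\sym$. This mirrors the argument used for \eqref{commutator-iota-S}.

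For \eqref{eq:commutator-b+uB-Delta}, I would write $(b+uB)$ acting on $\alpha$ as $\eth_{\rm out}\circ\alpha-(-1)^{|\alpha|}\alpha\circ\eth_{\rm in}$. Applying $\Delta^D$ after this gives $D^\sym\circ(\eth\otimes 1+1\otimes\eth)$ on the two contracted outputs, plus terms where $\eth$ hits uncontracted outputs or inputs. The latter cancel against the corresponding terms in $(b+uB)\Delta^D\alpha$. What remains is $-(-1)^{|D|}D^\sym\circ\eth_{L^A_-\otimes L^A_-}$, and together with $\eth_{\mathbb{C}}=0$ this is exactly the contraction by $[b+uB,D]^\sym$. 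Here I use that symmetrization commutes with $[b+uB,-]$ because $\eth$ on $L^A_-\otimes L^A_-$ commutes with the swap.

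For \eqref{eq:commutator-b+uB-bracket}, I expand $\alpha\overset{E}{\underset{[r]}{\circ}}\beta$ as a composite of $\pi_I(\beta)$, then $E\otimes B^{\otimes r-1}$, then $\pi_J(\alpha)$. Commuting $b+uB$ through it gives three kinds of terms: those hitting $\alpha$ or $\beta$, which are absorbed as usual into the Leibniz behaviour of the bracket acting on $d\alpha,d\beta$; those producing $[b+uB,\Theta]$; and those producing $[b+uB,E]$. The second kind should vanish, since $\Theta$ is the twisted-sewing operator, which commutes with $\eth$. This is the same fact that makes $\{-,-\}_r$ compatible with the differential in $\hh_A$, recalled from \cite{CalTu1}, and I would check it together with the projections $\pi_I,\pi_J$. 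The surviving term is $\{-,-\}^{[b+uB,E]}_\hbar(t)$. Identity \eqref{eq:commutator-b+uB-bracket-EE} follows by the same computation with two labelled edges, producing one term for each of $E$ and $E'$. The Koszul sign in front of the $E'$ term is handled consistently with \eqref{eq:sign-E-E'}.

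The main obstacle I expect is the sign bookkeeping in the last two identities, specifically making sure that the prefactors $(-1)^{|\beta|(k'-r)}$, $(-1)^{|\alpha|}$ and the shift in $L^A_+[1]$ combine to give exactly $\{-,-\}^{[b+uB,E]}$ with no stray sign. I would fix this by first checking the case $r=1$, where $E$ is the only edge, against \eqref{eq:theta-appear}.
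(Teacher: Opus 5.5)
Your proposal is correct and is essentially the paper's argument: \eqref{eq:commutator-Delta} holds because the contractions act on disjoint outputs, and the other three identities follow from the Leibniz rule for $[b+uB,-]$ through compositions, with the $\Theta$-edges being closed. One small correction: the contracting forms need not be even (for example $\rho^A(\mathbb{M})=\langle B-,-\rangle_\Muk$ is odd, and $D$ may be odd), but reordering the two contractions produces exactly the Koszul sign built into the graded commutator, so your conclusion is unaffected.
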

\begin{proof}
Equation~\eqref{eq:commutator-Delta} holds because the operators $\Delta^D$, $\Delta$, and $\iota$ could only be applied to different outputs and therefore commute. The proof of other identities follows from the fact that $[b+uB,-]$ satisfies the Leibniz rule with respect to compositions.
\end{proof}

\subsection{Proof of the string equation}\label{subsec:proof-string}
We continue to use the notation in Section~\ref{para:string-genus-zero}. Our goal is to prove the identity~\eqref{string-main-identity} between $b$-homology classes, i.e.,
\[[M_{u^{-1}},\lbA_{g,1,n-1}]=  [(\cK_*\rho^{A,\tw,\omega_A}_*\mc_2)_{g,1,n-1}^\epsilon]\in H_\bullet\big( \Hom^c(L_+^A[1],\sym^{n-1}L_-^A)\big).\]

%\subsubsection{Setting up the proof}
Consider the pair $(\gamma_{0,1,1}, M_{u^{-1}})$ as an element of the Chevalley-Eilenberg complex ${\sf CE}_*(\hh_A)$, with 
\begin{itemize}
\item $\gamma_{0,1,1}\in \Hom^c(L_+^A[1], L_-^A) \subset \hh_A[1]$, 
\item $M_{u^{-1}}\in \Hom(\hh_A[1], \hh_A[1])$, 
\item and all other components in $\Hom(\sym^{i\geq2}(\hh_A[1]), \hh_A[1])$ vanish. 
\end{itemize}
The closedness of $(\gamma_{0,1,1}, M_{u^{-1}})$ follows from Equation~\eqref{eq:pair-string}.
Using Lemma~\ref{lem:ce-element}, we obtain a (curved) $L_\infty$ isomorphism
\[\exp\big( (\gamma_{0,1,1},M_{u^{-1}})\epsilon \big): \hh_A \ra \hh_A[\epsilon].\]
Similarly, recall that DGLA $\hh_A^{\sf TRIV}$ has the same underlying graded vector space as $\hh_A$, but is endowed with the differential $b+\iota$ and zero Lie bracket, we may also view the same pair $(\gamma_{0,1,1}, M_{u^{-1}})$ inside the Chevalley-Eilenberg complex of the trivialized DGLA $\hh_A^{\sf TRIV}$, and obtain a curved morphism of DGLAs:
\[\exp\big( (\gamma_{0,1,1},M_{u^{-1}})\epsilon \big): \hh_A^{\sf TRIV} \ra \hh^{\sf TRIV}_A[\epsilon].\]

From the constructions above, we obtain the following diagram of $L_\infty$ morphisms between DGLA's:
\begin{equation}\label{diagram:right-string}
\begin{CD}
\hh_A @>\exp\big( (\gamma_{0,1,1},M_{u^{-1}})\epsilon \big)>> \hh_A[\epsilon]\\
@V\cK VV   @V\cK VV\\
\hh_A^{\sf TRIV} @>\exp\big( (\gamma_{0,1,1},M_{u^{-1}})\epsilon \big)>> \hh_A^{\sf TRIV}[\epsilon]
\end{CD}
\end{equation}
The key observation is that, under the push-forward by the lower left composition, the Maurer-Cartan element $\hbA$ is mapped to the left hand side of Equation~\eqref{string-main-identity}. While the push-forward of $\hbA$ by the upper right composition yields the right hand side of Equation~\eqref{string-main-identity}. Our strategy to prove Equation~\eqref{string-main-identity} then consists of three steps.
\begin{enumerate}
\item We divide Diagram~\eqref{diagram:right-string} into two squares by constructing a mid-horizontal DGLA morphism 
\begin{equation}\label{diagram:factor-diag-string}
\begin{tikzcd}
  \hh_A \arrow[d,"K"] \arrow{r}[name=U]{\exp\left( (\gamma_{0,1,1},M_{u^{-1}})\epsilon \right)}  &[10em] \hh_A[\epsilon] \arrow[d,"K"] \\
\hh_A^\triv \arrow[r,dashed,"\cJ"] \arrow[d,"\mathfrak{S}"] & \hh_A^\triv[\epsilon] \arrow[d,"\mathfrak{S}"]\\
\hh_A^{\sf TRIV} \arrow{r}[name=U]{\exp\left( (\gamma_{0,1,1},M_{u^{-1}})\epsilon \right)} & \hh_A^{\sf TRIV}[\epsilon].
\end{tikzcd}  
\end{equation}  
\item We prove that the top square in Diagram~\eqref{diagram:factor-diag-string} is commutative up to homotopy.
\item We prove that the bottom square in Diagram~\eqref{diagram:factor-diag-string} is commutative, after taking cohomology in the component $\big(\hh_A\big)_{g,1,n-1}^\epsilon$. More precisely, 
note that 
$$\big(\exp\left( (\gamma_{0,1,1},M_{u^{-1}})\epsilon \right)\big)_{g,1,n-1}^\epsilon= M_{u^{-1}},$$ 
it induces the following commutative diagram at this component. 
\begin{equation}\label{eq:diagram:bottom-string}
  \begin{tikzcd}
H_\bullet\big( \Hom^\cont(L_+^A[1], \sym^{n-1} L_-^A), b+uB\big) \arrow[r,dashed,"(\cJ)_{g,1,n-1}^\epsilon"] \arrow[d,"\mathfrak{S}"] &[3em] H_\bullet\big( \Hom^\cont(L_+^A[1], \sym^{n-1} L_-^A), b+uB\big) \arrow[d,"\mathfrak{S}"]\\
H_\bullet\big( \Hom^\cont(L_+^A[1], \sym^{n-1} L_-^A), b\big) \arrow{r}[name=U]{M_{u^{-1}}} & H_\bullet\big( \Hom^\cont(L_+^A[1], \sym^{n-1} L_-^A), b\big).
\end{tikzcd}    
\end{equation}
\end{enumerate}

We will first achieve these three steps in the case where the operator $H$ in Equation~\eqref{eq:homotopy-H} is symmetric, and then in the general case by modifications of the symmetric case.

\subsubsection{The symmetric case} 
We assume that the operator $H$ in Equation~\eqref{eq:homotopy-H} is symmetric, that is, 
\begin{equation}
\label{symmetric-assumption}
H=H^{\sym}.
\end{equation}
This condition is equivalent to the operator $S$ satisfies the Lagrangian condition $(S2.)$ as Definition~\ref{def:nc-splitting} at the chain-level. 
The main advantage of assuming the equation~\eqref{symmetric-assumption} is that by the definition of the homotopy operator $\delta$ in~\eqref{eq:homotopy-delta}, we can set $\delta=0$. This simplifies the discussion. 
%the proof and makes the exposition a little easier. 

%\begin{Proposition}\label{prop:string-right}
%Assume the homotopy operator $H$ in Equation~\eqref{eq:homotopy-H} is symmetric. 
%The Diagram~\eqref{diagram:right-string} is commutative up to homotopy if $H=H^{\sym}.$
%Equation~\eqref{symmetric-assumption} holds.
%the homotopy operator $H$ in Equation~\eqref{eq:homotopy-H} is symmetric.
%\end{Proposition}

\medskip
{{\bf Step $(1)$.}} We construct a mid-horizontal map $\cJ$ in Diagram~\eqref{diagram:factor-diag-string}, which takes the form of 
\begin{equation}
\label{map-middle-string}
\cJ:=\exp\big( (\gamma_{0,1,1},M_{u^{-1}}+\ad^F(\gamma_{0,1,1}))\epsilon\big): \hh_A^{\sf triv} \ra \hh_A^{\sf triv}[\epsilon].
\end{equation}
%obtained by exponentiating a closed element in the complex ${\sf CE}_*(\hh_A^{\sf triv})$. 
Here $F$ is the homotopy operator in Equation~\eqref{eq:homotopy-F}
and $\ad^F(\gamma_{0,1,1})\in \Hom(\hh_A,\hh_A)$ is defined by
\begin{equation}\label{eq:defi-ad-F-operator} 
\ad^F(\gamma_{0,1,1})(\alpha):= \sum_{j=1}^l(\gamma_{0,1,1}\circ F) \circ_j \alpha 
-\sum_{i=1}^k\alpha \prescript{}{i}{\circ} (F\circ \gamma_{0,1,1}).
\end{equation}
%for $\alpha \in {\sf Hom}^\cont \big( \sym^k (L^A_+[1]) , \sym^l (L^A_-) \big)$.

In terms of partially directed graphs, terms $(\gamma_{0,1,1}\circ F) \circ_j \alpha$ can be depicted as

\[\begin{tikzpicture}[baseline={(current bounding
   	box.center)},scale=0.7] 
    \draw [thick,directed] (1.4,4) to (3.4,2);
    \draw [thick,directed] (5.4,4) to (3.4,2);
    \node at (3.4,3) {$\cdots\cdots$};
    \node at (3.4,2) {$\bullet$};
    \node at (3,2) {$\alpha$};
    \draw[thick,directed] (3.4,2) to (1.4,0);
    \draw[thick,directed] (3.4,2) to (5.4,0);
    \draw[thick,directed] (3.4,2) to (3.4,.5);
    \node at (3.7,1.2) {$F$};
    \node at (3.4,.5) {$\bullet$};
    \node at (4.1,.5) {$\gamma_{0,1,1}$};
    \node at (3.4,-.8) {$j$-th};
    \draw[thick,directed] (3.4,.5) to (3.4,-.5);
    \node at (2.5,0) {$\cdots$};
    \node at (4.5,0) {$\cdots$};
    \end{tikzpicture}\]
    
%By Equation~\eqref{eq:homotopy-F}, 

%Let $\ad(\gamma_{0,1,1})$ be the adjoint operator defined by
%\begin{equation}
%\ad(\gamma_{0,1,1})(\alpha):=\{\gamma_{0,1,1},\alpha\}_{\hbar}.%=\{\gamma_{0,1,1},\alpha\}_{1}.
%\end{equation}

%The commutator relation in Equation~\eqref{eq-string-adjoint} explains 

Here, the notation $\ad^F(\gamma_{0,1,1})$ is used because the operator is a homotopy operator that bounds the adjoint action of $\gamma_{0,1,1}$ on $\hh_A$ as explained below. 
\begin{Lemma} 
We have commutator relations 
\begin{align}
\label{eq-string-adjoint}    
 [b+uB, \ad^F(\gamma_{0,1,1})]
 &=\ad(\gamma_{0,1,1}):=\{\gamma_{0,1,1},-\}_{\hbar}.\\
\label{iota-commute-adjoint-gamma}
[\iota,\ad^F(\gamma_{0,1,1})]
&=0.\\
\label{commute-Delta-adjoint}
[\Delta,\ad^F(\gamma_{0,1,1})]
&=-2\Delta^{H\gamma_{0,1,1}\Theta}.
 \end{align}
\end{Lemma}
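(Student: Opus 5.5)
The plan is to verify each of the three identities by treating $\ad^F(\gamma_{0,1,1})$ as a derivation-type operator. It inserts the composite $\gamma_{0,1,1}\circ F$ at an output leg, or $F\circ\gamma_{0,1,1}$ at an input leg, of an element $\alpha\in {\sf Hom}^\cont\big(\sym^k(L^A_+[1]),\sym^l(L^A_-)\big)$. Each operator $b+uB$, $\iota$, $\Delta$ is then commuted past this insertion leg by leg, keeping track only of the correction terms that arise.

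For \eqref{eq-string-adjoint}, I would use that $[b+uB,-]$ satisfies the Leibniz rule with respect to composition. Commuting $b+uB$ through $(\gamma_{0,1,1}\circ F)\circ_j\alpha$ then produces only the term $\gamma_{0,1,1}\circ[b+uB,F]$; the terms involving $[b+uB,\alpha]$ cancel in the commutator, and $[b+uB,\gamma_{0,1,1}]=0$ by \eqref{eq:gamma-commutator}. By \eqref{eq:theta-appear}, $[b+uB,F]=\Theta$, so the term becomes $(\gamma_{0,1,1}\circ\Theta)\circ_j\alpha$. By the definition \eqref{eq:Theta} of $\Theta$, this is exactly twisted sewing of the cycle of $\gamma_{0,1,1}$ (through the circle operator $B$, at circle-power zero) with the $j$-th output of $\alpha$. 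The input terms give $\alpha\,{}_i\circ(\Theta\circ\gamma_{0,1,1})$ in the same way. Comparing with the definition \eqref{eq:def-bracket-hbar-algebraic}, these two families are exactly the two halves of $\{\gamma_{0,1,1},\alpha\}_1$. Since $\gamma_{0,1,1}$ has a single input and a single output, only $r=1$ contributes. What remains is to check the signs: the relative minus sign in \eqref{eq:defi-ad-F-operator} should match the prefactor $(-1)^{|\alpha|}$ and the graded antisymmetry of the bracket, using that $\gamma_{0,1,1}$ and $F$ are odd in the relevant shifted degrees.

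For \eqref{iota-commute-adjoint-gamma}, the argument splits by terms. The output-leg terms of $\ad^F$ commute with $\iota$ because $\iota$ contracts a different output against a newly created input. The input-leg terms need more care, because $\iota$ creates a new input and $\ad^F$ then also acts on that new input. I would show that this extra term, $C_y(\cdots)$ with $y$ replaced by $F\gamma_{0,1,1}y$, cancels against the output term in which $\gamma_{0,1,1}F$ is applied to the contracted output. The identity needed is
\[
\langle F\gamma_{0,1,1}y,x\rangle_\Muk=\langle y,\gamma_{0,1,1}Fx\rangle_\Muk \quad\text{up to sign}.
\]
I would prove it either directly from \eqref{eq:homotopy-F} and the explicit form of $\gamma_{0,1,1}$ in Lemma~\ref{lem:string-evaluation}, or via \eqref{eq:iota-F-H} together with the symmetry assumption \eqref{symmetric-assumption}.

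For \eqref{commute-Delta-adjoint}, $\Delta$ contracts two outputs $i<j$ with the form $\langle B x_0,y_0\rangle_\Muk$. The commutator therefore leaves terms in which $\gamma_{0,1,1}F$ sits on one of the two contracted outputs, giving $\Delta(\gamma_{0,1,1}Fx,y)$ and $\Delta(x,\gamma_{0,1,1}Fy)$. I would rewrite $\Delta$ using \eqref{eq:delta-appear}, or directly as the pairing with $B=\Theta$ on the $u^0$ part. Then, by \eqref{eq:iota-F-H} and symmetry of $H$, each term becomes $H(\gamma_{0,1,1}\Theta\,\cdot,\cdot)$, and together they give $-2\Delta^{H\gamma_{0,1,1}\Theta}$ after symmetrization.

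I expect the sign and factor bookkeeping in this last identity, and the input-leg cancellation in \eqref{iota-commute-adjoint-gamma}, to be the main obstacles. There I would first test the identities on the lowest components, $\alpha\in{\sf Hom}(L^A_+[1],\sym^2L^A_-)$, before trusting the general formula.
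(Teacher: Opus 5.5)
Your proposal is correct and follows essentially the paper's argument. The Leibniz rule with $[b+uB,\gamma_{0,1,1}]=0$ and $[b+uB,F]=\Theta$ identifies the first commutator with $\{\gamma_{0,1,1},-\}_1$, and higher $r$ vanish because $\gamma_{0,1,1}$ has one input and one output. The two surviving $\iota$-terms cancel by the adjointness identity $\langle \gamma_{0,1,1}F(x),y\rangle_\Muk=\langle x,F\gamma_{0,1,1}(y)\rangle_\Muk$. The two $\Delta$-terms are matched with $H(\gamma_{0,1,1}\Theta(\cdot),\cdot)$ using the symmetry of $H$; your use of \eqref{eq:iota-F-H} there is a slightly cleaner packaging of the paper's explicit computation \eqref{eq:two-Deltas}.

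The one point to make explicit is that the second and third identities genuinely need the standing assumption $H=H^{\sym}$. Even your ``direct'' route to the adjointness identity reduces to the chain-level Lagrangian condition on $S$, namely that $R_{j+1}$ is, up to sign, the Mukai adjoint of $S_{j+1}$. This is why the paper states the lemma only in the symmetric case, and why it adds $\iota^{\delta,\gamma_{0,1,1}}$ in the general case.
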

\begin{proof}
Using $[b+uB,\gamma_{0,1,1}]=0$ in Equation~\eqref{eq:gamma-commutator}, $[b+uB, F]=\Theta$ in Equation~\eqref{eq:theta-appear}, 
we get 
\[[b+uB, \gamma_{0,1,1}\circ F]=-\gamma_{0,1,1}\circ[b+uB,F]=-\gamma_{0,1,1}\circ\Theta,\]
\[[F\circ\gamma_{0,1,1}, b+uB]=-[F, b+uB]\circ\gamma_{0,1,1}=-\Theta\circ\gamma_{0,1,1}.\]
By the definition of $\ad^F(\gamma_{0,1,1})$ in Equation~\eqref{eq:defi-ad-F-operator}, we have 
\begin{align*}
\begin{split}
 [b+uB, \ad^F(\gamma_{0,1,1})](\alpha) 
%= & \sum_{j=1}^l([b+uB, \gamma_{0,1,1}\circ F]) \circ_j \alpha - \sum_{i=1}^k\alpha \prescript{}{i}{\circ} ([F\circ \gamma_{0,1,1},b+uB])\\
&= \sum_{j=1}^l([b+uB, \gamma_{0,1,1}\circ F]) \circ_j \alpha-(-1)^{|\alpha|}\sum_{i=1}^k\alpha \prescript{}{i}{\circ} ([F\circ \gamma_{0,1,1},b+uB])\\
&=  (-1)\Big(\sum_{j=1}^l(\gamma_{0,1,1}\circ \Theta) \circ_j \alpha - (-1)^{|\alpha|}\sum_{i=1}^k\alpha \prescript{}{i}{\circ} (\Theta\circ \gamma_{0,1,1})\Big)\\
&=\{\gamma_{0,1,1},\alpha\}_1\\
&=\{\gamma_{0,1,1},\alpha\}_\hbar.
\end{split}
\end{align*}
Note that the sign $(-1)$ matches the sign in Equation~\eqref{eq:def-bracket-hbar-algebraic}.
The last equality follows from the fact that $\{\gamma_{0,1,1},\alpha\}_r=0$ for all $r\geq 2$, because $\gamma_{0,1,1}$ has only one output.

Next, for the commutator $[\iota,\ad^F(\gamma_{0,1,1})](\alpha),$ there is obvious cancellation except for two possibly non-vanishing terms illustrated in the following pictures:
\[\begin{tikzpicture}[baseline={(current bounding
   	box.center)},scale=0.8] 
    \draw [thick,directed] (2.4,4) to (3.4,2);
    \draw [thick,directed] (4.4,4) to (3.4,2);
    \node at (3.4,3) {$\cdots$};
    \node at (3.4,2) {$\bullet$};
    \node at (3,2) {$\alpha$};
    \draw[thick,directed] (3.4,2) to (1.9,.5);
    \draw[thick,directed] (3.4,2) to (4.9,.5);
    \draw[thick,directed] (3.4,2) to (3,.5);
    \node at (3.9,.7) {$\cdots$};
    \node at (2.7,.7) {$\cdots$};
    \draw [thick] (1, 0.5) to [out=-80, in=260] (3, 0.5);
    \draw [thick,directed] (1,4) to (1,2);
    \draw [thick,directed] (1,2) to (1,0.5);
    \node at (1,2) {$\bullet$};
    \node at (0.3,2) {$\gamma_{0,1,1}$};
    \node at (0.5,1) {$F$};
    \node at (2,-.5) {$\langle-,-\rangle_\Muk$};
    \end{tikzpicture}\;\;\mbox{and}\;\;\;\;\;\;\begin{tikzpicture}[baseline={(current bounding
   	box.center)},scale=0.8] 
    \draw [thick,directed] (2.4,4) to (3.4,2);
    \draw [thick,directed] (4.4,4) to (3.4,2);
    \node at (3.4,3) {$\cdots$};
    \node at (3.4,2) {$\bullet$};
    \node at (3,2) {$\alpha$};
    \draw[thick,directed] (3.4,2) to (1.9,.5);
    \draw[thick,directed] (3.4,2) to (4.9,.5);
    \draw[thick,directed] (3.4,2) to (3,.5);
    \node at (3.9,.7) {$\cdots$};
    \node at (2.7,.7) {$\cdots$};
    \draw [thick] (1, 0.5) to [out=-80, in=260] (3, 0.5);
    \draw [thick,directed] (1,4) to (1,.5);
    \node at (3,.5) {$\bullet$};
    \node at (3.5,0) {$\gamma_{0,1,1}$};
    \node at (3.5,1.2) {$F$};
    \node at (2,-.5) {$\langle-,-\rangle_\Muk$};
    \end{tikzpicture}.\]
Since $\gamma_{0,1,1}$ is self-adjoint and $F$ is symplectic at the chain level, for any $x\in L^A_-$ and $y\in L^A_+$, we have 
\begin{align*}
    \langle \gamma_{0,1,1}F(x), y\rangle_{\Muk}
    =\langle F(x), \gamma_{0,1,1}(y)\rangle_{\Muk}
    = \langle x, F\gamma_{0,1,1}(y)\rangle_{\Muk}.
\end{align*}
Thus, the two terms in the above pictures cancel and we obtain $[\iota,\ad^F(\gamma_{0,1,1})]=0.$

Finally, the possibly non-zero contributions in the commutator $[\Delta,\ad^F(\gamma_{0,1,1})](\alpha)$ are illustrated in the following picture and its symmetric one (depending on the position where $\ad^F(\gamma_{0,1,1})$ is applied):
\[\begin{tikzpicture}[baseline={(current bounding
   	box.center)},scale=0.8] 
    \draw [thick,directed] (2.4,4) to (3.4,2);
    \draw [thick,directed] (4.4,4) to (3.4,2);
    \node at (3.4,3) {$\cdots$};
    \node at (3.4,2) {$\bullet$};
    \node at (3,2) {$\alpha$};
    \draw[thick,directed] (3.4,2) to (1,.5);
    \draw[thick,directed] (3.4,2) to (6,.5);
    \draw[thick,directed] (3.4,2) to (3,.5);
    \draw[thick,directed] (3.4,2) to (4,0.5);
    \node at (2.3,.7) {$\cdots$};
        \node at (3.5,.7) {$\cdots$};
        \node at (4.7,.7) {$\cdots$};
    \draw [thick] (3, 0.5) to [out=-90, in=270] (4, 0.5);
    \node at (3,.5) {$\bullet$};
    \node at (2.3,.3) {$\gamma_{0,1,1}$};
    \node at (2.8,1.2) {$F$};
    \node at (3.5,-0.2) {$\Delta$};
    \end{tikzpicture} \;\;\;\mbox{and}\;\;\; \begin{tikzpicture}[baseline={(current bounding
   	box.center)},scale=0.8] 
    \draw [thick,directed] (2.4,4) to (3.4,2);
    \draw [thick,directed] (4.4,4) to (3.4,2);
    \node at (3.4,3) {$\cdots$};
    \node at (3.4,2) {$\bullet$};
    \node at (3,2) {$\alpha$};
    \draw[thick,directed] (3.4,2) to (1,.5);
    \draw[thick,directed] (3.4,2) to (6,.5);
    \draw[thick,directed] (3.4,2) to (3,.5);
    \draw[thick,directed] (3.4,2) to (4,0.5);
    \node at (2.3,.7) {$\cdots$};
        \node at (3.5,.7) {$\cdots$};
        \node at (4.7,.7) {$\cdots$};
    \draw [thick] (3, 0.5) to [out=-90, in=270] (4, 0.5);
    \node at (4,.5) {$\bullet$};
    \node at (4.6,.3) {$\gamma_{0,1,1}$};
    \node at (4,1.2) {$F$};
    \node at (3.5,-0.2) {$\Delta$};
    \end{tikzpicture}.\]
For example, in the graph on the right hand side, recall that $\pi:L_-^A\to L^A$ is the map setting the circle parameter $u^{-1}=0$, we contract with the bilinear form $x\otimes y\in L_-^A\otimes L_-^A \to \bbC$ given by $$\Delta(x,\gamma_{0,1,1} F(y))=\langle B\pi(x),\pi\gamma_{0,1,1} F(y)\rangle_\Muk$$   We need to compare this with the right hand side of Equation~\eqref{commute-Delta-adjoint}, which can be depicted as follows.
\[\begin{tikzpicture}[baseline={(current bounding
   	box.center)},scale=0.8] 
    \draw [thick,directed] (2.4,4) to (3.4,2);
    \draw [thick,directed] (4.4,4) to (3.4,2);
    \node at (3.4,3) {$\cdots$};
    \node at (3.4,2) {$\bullet$};
    \node at (3,2) {$\alpha$};
    \draw[thick,directed] (3.4,2) to (1,.5);
    \draw[thick,directed] (3.4,2) to (6,.5);
    \draw[thick,directed] (3.4,2) to (3,.5);
    \draw[thick,directed] (3.4,2) to (4,0.5);
    \node at (2.3,.7) {$\cdots$};
        \node at (3.5,.7) {$\cdots$};
        \node at (4.7,.7) {$\cdots$};
    \draw [thick] (3, 0.5) to [out=-90, in=270] (4, 0.5);
    \node at (3,.5) {$\bullet$};
    \node at (2.3,.3) {$\gamma_{0,1,1}$};
    \node at (2.8,1.2) {$\Theta$};
    \node at (3.5,-0.2) {$H$};
    \end{tikzpicture}\]
Thus, matching the two diagrams is equivalent to matching the bilinear linear form used. %This is a computation. 
It suffices to check for $x=x_0\in L^A$ and $y=y_ju^{-j}\in L^A_-$ for any $j$. We have 
\begin{align}\label{eq:two-Deltas}
\begin{split}
    H\big(\gamma_{0,1,1}\Theta (x), y_ju^{-j}\big) & = H(Bx_0, y_ju^{-j})\\
    &= (-1)^{|y_j|(|x_0|+1)} H(y_ju^{-j},Bx_0)\\
    &= (-1)^{|y_j|(|x_0|+1)} \langle R_{j+1} y_j, Bx_0\rangle_\Muk\\
    &= \langle Bx_0, R_{j+1}(y_j)\rangle_\Muk\\
    &= -\Delta\big(x_0,\gamma_{0,1,1} F(y_j u^{-j})\big) 
    \end{split}
\end{align}
There is a factor $2$ in Equation~\eqref{commute-Delta-adjoint} since there are two graphs in the commutator as shown above.
\end{proof}

\medskip
To complete Step $(1)$, we need to show that the map in~\eqref{map-middle-string}
%$\exp\big( (\gamma_{0,1,1},M_{u^{-1}}+\ad^F(\gamma_{0,1,1}))\epsilon\big): \hh_A^{\sf triv} \ra \hh_A^{\sf triv}[\epsilon]$ 
is a DGLA morphism. By Lemma~\ref{lem:ce-element}, it suffices to show the following.
\begin{Lemma}\label{lem:middel-arrow}
The element $\big(\gamma_{0,1,1},M_{u^{-1}}+\ad^F(\gamma_{0,1,1})\big)\in {\sf CE}_*(\hh_A^{\sf triv})$ is closed. 
%The element $\big(\gamma_{0,1,1},M_{u^{-1}}+\ad^F(\gamma_{0,1,1})\big)\in {\sf CE}_*(\hh_A^{\sf triv})$ is a closed element. 
\end{Lemma}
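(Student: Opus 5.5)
The plan is to unwind closedness into component equations in ${\sf CE}_*(\hh_A^{\sf triv})$. The DGLA $\hh_A^{\sf triv}$ has differential $l_1 = b+uB+\iota$ and zero bracket, so by \eqref{CE-differential} the Chevalley--Eilenberg differential is just $[l_1,-]$ on coderivations. The element $\phi=(\phi_0,\phi_1)$, with $\phi_0=\gamma_{0,1,1}$ and $\phi_1 = M_{u^{-1}}+\ad^F(\gamma_{0,1,1})$, has no components of arity $\geq 2$. Closedness therefore amounts to two identities:
\begin{align*}
&\text{arity }0:\quad l_1(\gamma_{0,1,1}) = (b+uB+\iota)(\gamma_{0,1,1}) = 0,\\
&\text{arity }1:\quad [\,b+uB+\iota,\ M_{u^{-1}}+\ad^F(\gamma_{0,1,1})\,] = 0.
\end{align*}
Since $l_2=0$, no arity $2$ condition arises, and $\phi_1$ has no self-bracket contribution in the closedness equation.

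For arity $0$, I would invoke \eqref{eq:gamma-commutator}, which gives $[b+uB,\gamma_{0,1,1}]=0$, i.e.\ $\gamma_{0,1,1}$ is $(b+uB)$-closed as an element of $\Hom^c(L_+^A[1],L_-^A)$. I would then check directly from \eqref{eq-iota-tcft} and \eqref{exceptional-constraction} that $\iota(\gamma_{0,1,1})$, a bilinear form $(x,y)\mapsto \langle x, \gamma_{0,1,1}(y)\rangle_\Muk \pm \langle y,\gamma_{0,1,1}(x)\rangle_\Muk$, vanishes. It is the antisymmetrization of a symmetric pairing (the Mukai pairing restricted to $u^0$ terms), so the two terms cancel with the Koszul signs coming from the shift $L_+^A[1]$. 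Equivalently, this is the statement $\iota\, \rho^{A,\tw}(\cF'{\sf Get}_1^\sym)=0$, which follows from the Maurer--Cartan equation for $\mc_2$ in the $(0,2,0)$ component together with $\gamma_{0,2,0}=0$ from Lemma~\ref{lem:string-evaluation}.

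For arity $1$, I would split into the $b+uB$ part and the $\iota$ part. The $\iota$ part vanishes by \eqref{eq:iota-u} and \eqref{iota-commute-adjoint-gamma}. For the $b+uB$ part, \eqref{eq:pair-string} gives $[M_{u^{-1}},b+uB]=\{\gamma_{0,1,1},-\}_\hbar$, i.e.\ $[b+uB,M_{u^{-1}}] = -\ad(\gamma_{0,1,1})$. Meanwhile \eqref{eq-string-adjoint} gives $[b+uB,\ad^F(\gamma_{0,1,1})]=\ad(\gamma_{0,1,1})$, so the two contributions cancel. This is exactly why $\ad^F(\gamma_{0,1,1})$ was introduced: in $\hh_A$ the bracket with $\gamma_{0,1,1}$ closes up the pair $(\gamma_{0,1,1},M_{u^{-1}})$, but in $\hh_A^{\sf triv}$ the bracket is zero and the homotopy term must absorb it. Note that $\Delta$ plays no role here, because the differential of $\hh_A^{\sf triv}$ omits $\hbar\Delta$, so \eqref{commute-Delta-adjoint} is not needed.

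The main obstacle I expect is sign bookkeeping. First, one has to check that the sign conventions relating commutators of coderivations to the operator commutators in \eqref{eq:pair-string} and \eqref{eq-string-adjoint} agree, so that the two $\ad(\gamma_{0,1,1})$ terms genuinely cancel rather than add. Second, the arity $0$ cancellation in $\iota(\gamma_{0,1,1})$ depends on the Koszul signs in \eqref{eq-iota-tcft}. I would settle both by evaluating on a test element $\alpha$ with explicit degrees, using the sign convention already fixed in the proof of \eqref{eq-string-adjoint}.
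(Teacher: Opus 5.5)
Your proposal is correct and follows the paper's proof essentially verbatim. In arity $0$ you use \eqref{eq:gamma-commutator} and the symmetry of the Mukai pairing to kill $\iota(\gamma_{0,1,1})$, and in arity $1$ you combine \eqref{eq:pair-string}, \eqref{eq:iota-u}, \eqref{eq-string-adjoint} and \eqref{iota-commute-adjoint-gamma} to get $-\ad(\gamma_{0,1,1})+0+\ad(\gamma_{0,1,1})+0=0$. Note only that \eqref{iota-commute-adjoint-gamma} is proved under the standing assumption $H=H^{\sym}$ of that subsection; in the general case the paper must add the correction term $\iota^{\delta,\gamma_{0,1,1}}$, so your argument, like the lemma, applies only to the symmetric case.
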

\begin{proof}
Since the DGLA $\hh_A^\triv$ is endowed with the trivial Lie bracket, we want to show that the element is closed under $[b+uB+\iota, -]$. 

First, let us prove that the curvature term $\gamma_{0,1,1}$ is closed in $\hh_A^{\sf triv}$,
i.e., $[b+uB+\iota, \gamma_{0,1,1}]=0.$
Recall that the bracket $[-,-]$ is the commutator in the space of coderivations.
According to the discussion in Section~\ref{sec-alg-setup}, $[b+uB, \gamma_{0,1,1}]=(b+uB)(\gamma_{0,1,1})\in \hh_A^{\sf triv}[1]$. This vanishes because for $x\in L_{+}^A[1]$, we have 
$$(b+uB)(\gamma_{0,1,1})(x)=(b+uB)(\gamma_{0,1,1}(x))+\gamma_{0,1,1}(-(b+uB)(x))=0.$$
Here, the sign $(-1)$ in front of $b+uB$ is caused by the shift in $L_{+}^A[1]$ and the last equality follows from Equation~\eqref{eq:gamma-commutator}, while the bracket $[-,-]$ there is just the commutator as defined in Equation~\eqref{commutator-mu}.

Next, since $[\iota,\gamma_{0,1,1}]=\iota(\gamma_{0,1,1})=0$ because by Equation~\eqref{eq-iota-tcft}, we have
\begin{align}\label{eq:iota-gamma}
%\begin{split}
%[\iota,\gamma_{0,1,1}] \Big(\sum_{i\geq 0} x_i u^i, \sum_{j\geq 0} y_j u^j\Big)&= 
\iota(\gamma_{0,1,1})\Big(\sum_{i\geq0} x_i u^i, \sum_{j\geq 0} y_j u^j\Big)%\\&
=\langle x_0,y_0\rangle_\Muk + (-1)^{|x_0|'|y_0|'}\langle y_0,x_0\rangle_\Muk%\\&
=0.
%\end{split}
\end{align}
Here we use the fact that the Mukai pairing is symmetric on $L^A$, hence it is anti-symmetric on $L^A[1]$.

%It remains to prove that $M_{u^{-1}}+\ad^F(\gamma_{0,1,1})$ is a chain map, i.e., $$[b+uB+\iota, M_{u^{-1}}+\ad^F(\gamma_{0,1,1})]=0.$$
Finally, by Equation~\eqref{eq:pair-string}, ~\eqref{eq:iota-u}, ~\eqref{eq-string-adjoint}, and~\eqref{iota-commute-adjoint-gamma}, we have
\[[b+uB+\iota,M_{u^{-1}}+\ad^F(\gamma_{0,1,1})]= -\ad(\gamma_{0,1,1}) +0+ \ad(\gamma_{0,1,1})+0=0.\]
%We have $[\iota,M_{u^{-1}}]=0$ by Equation~\eqref{eq:iota-u} and $[\iota,\ad^F(\gamma_{0,1,1})]=0$ by Equation~\eqref{iota-commute-adjoint-gamma}. 
This shows that $M_{u^{-1}}+\ad^F(\gamma_{0,1,1})$ is a chain map and it completes the proof.
\end{proof}

\medskip
{{\bf Step $(2)$.}} To prove the commutativity of the top square in Diagram~\eqref{diagram:factor-diag-string}, we will make use of the family of DGLA's in Equation~\eqref{eq:family-dgla} and Lemma~\ref{lem:algebraic-setup}. However, in order to apply Lemma~\ref{lem:algebraic-setup}, we need to extend the two closed elements 
\begin{align*}
(\gamma_{0,1,1},M_{u^{-1}})  \in {\sf CE}_*(\hh_A) \quad \text{and} \quad
\big(\gamma_{0,1,1},M_{u^{-1}}+\ad^F(\gamma_{0,1,1})\big)  \in {\sf CE}_*(\hh_A^{\sf triv})
\end{align*}
to a closed element $f(t)+g(t)dt\in {\sf CE}_*\big( \hh_A\otimes \Omega_{[0,1]} \big)$. 
%Once we have done this, the desired commutativity of the top square in Diagram~\eqref{diagram:factor-diag-string} would follow from Lemma~\ref{lem:algebraic-setup}.

\medskip
We begin with the element 
$f(t) \in {\sf CE}_*\big( \hh_A[t] \big)$, which is required to satisfy the following conditions:
\begin{align*}
f(0) = \big(\gamma_{0,1,1},M_{u^{-1}}+\ad^F(\gamma_{0,1,1})\big)\quad\text{and}\quad
f(1) = (\gamma_{0,1,1},M_{u^{-1}}).
\end{align*}
Furthermore, $f(t)$ should also be closed under the CE-differential. Recall that the differential and the Lie bracket of on $\hh_A[t]$ are given by 
\begin{align}\label{eq:family-dgla-mu}
\mu_1(t) := b+uB+\iota+\hbar t \Delta \quad \text{and} \quad
\mu_2(t) := \{-,-\}_\hbar(t).
%\sum t^r\{-,-\}_r\hbar^{r-1}.
\end{align}
The CE differential is given by the commutator with $\mu_1(t)+\mu_2(t)$, in components it is of the form
\begin{align*}
[\mu_1(t),-]: & \Hom\big( \sym^i(\hh_A[1]),\hh_A[1] \big) \ra \Hom\big( \sym^i(\hh_A[1]),\hh_A[1] \big),\\
[\mu_2(t),-]: & \Hom\big( \sym^i(\hh_A[1]),\hh_A[1] \big) \ra \Hom\big( \sym^{i+1}(\hh_A[1]),\hh_A[1] \big).
\end{align*}

%\begin{Construction}

Recall that the operators $\Delta^{H\gamma_{0,1,1}F}$ and $\{-,-\}_\hbar^{F\gamma_{0,1,1}F}(t)$ are as defined in Equation~\eqref{eq:defi-homotopy-D} and Equation~\eqref{eq:defi-homotopy-E}, respectively. In the case of $\Delta^{H\gamma_{0,1,1}F}$, we have slightly abused the notation $H\gamma_{0,1,1}F$ to denote the linear map $L^A_-\otimes L_-^A\to \mathbb{C}$ defined by 
$$x\otimes y \mapsto H(\gamma_{0,1,1}F(x),y).$$
Using the defining properties of $H$ and $F$ that $[b+uB,H]=\rho^A(\mathbb{M})$ and $[b+uB,F]=\Theta$, we have
\[[b+uB,H\gamma_{0,1,1}F]= \rho^A(\mathbb{M})\gamma_{0,1,1}F-H\gamma_{0,1,1}\Theta.\]
Note that the negative sign is due to the fact that $\gamma_{0,1,1}$ is odd. Then, using Equation~\eqref{eq:commutator-b+uB-Delta}, we obtain
\begin{equation}\label{eq:boundary-HgammaF} 
[b+uB, \Delta^{H\gamma_{0,1,1}F}] = \Delta^{\rho^A(\mathbb{M})\gamma_{0,1,1}F}-\Delta^{H\gamma_{0,1,1}\Theta}= -2\Delta^{H\gamma_{0,1,1}\Theta}.
\end{equation}
Here, in the second equality, we have used $\Delta^{\rho^A(\mathbb{M})\gamma_{0,1,1}F}=-\Delta^{H\gamma_{0,1,1}\Theta}$ proved in Equation~\eqref{eq:two-Deltas}.

%\medskip
We denote by $f(t)_n$ the component of $f(t)$ in $\Hom\big( \sym^n(\hh_A[1]),\hh_A[1] \big)$. 
\begin{Lemma}
\label{closed-f-string}
The element $f(t)$ defined by 
%We have a closed element $f(t)\in {\sf CE}( \hh_A[t])$ defined by 
\begin{equation}
\label{string-symmetric-f}
f(t)_n:=
\begin{dcases}
\gamma_{0,1,1}, & \text{if} \quad n=0;\\
M_{u^{-1}} + (1-t)\ad^F(\gamma_{0,1,1}) + t(t-1)\hbar\Delta^{H\gamma_{0,1,1}F}, &\text{if} \quad n=1;\\
t(t-1) \{-,-\}_\hbar^{F\gamma_{0,1,1}F}(t), &\text{if} \quad n=2;\\
0, &\text{if} \quad n\geq 3.
\end{dcases}
\end{equation}
is a closed element.
%Equation~\eqref{string-symmetric-f} is closed. 
That is, 
$$[\mu_1(t)+\mu_2(t), f(t)_0+f(t)_1+f(t)_2]=0.$$
    %with respect to the differential $d_{\sf CE}( \hh_A[t] \big)$.
    %with respect to the CE-differential.
\end{Lemma}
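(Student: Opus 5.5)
We expand the bracket $[\mu_1(t)+\mu_2(t), f(t)_0+f(t)_1+f(t)_2]$ by arity in ${\sf CE}_*(\hh_A[t])$ and check that each component vanishes. Since $[\mu_1,-]$ preserves arity and $[\mu_2,-]$ raises it by one, the conditions to verify are:
\begin{itemize}
\item[(0)] $[\mu_1(t),f_0]=0$;
\item[(1)] $[\mu_1(t),f_1]+[\mu_2(t),f_0]=0$;
\item[(2)] $[\mu_1(t),f_2]+[\mu_2(t),f_1]=0$;
\item[(3)] $[\mu_2(t),f_2]=0$.
\end{itemize}
Condition (0) is already done. Indeed, $(b+uB)(\gamma_{0,1,1})=0$ by \eqref{eq:gamma-commutator}, $\iota(\gamma_{0,1,1})=0$ by \eqref{eq:iota-gamma}, and $\Delta(\gamma_{0,1,1})=0$ because $\gamma_{0,1,1}$ has a single output.

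For (1), recall that $[\mu_2(t),f_0]=\{\gamma_{0,1,1},-\}_\hbar(t)=t\,\ad(\gamma_{0,1,1})$, since only $r=1$ contributes. We expand $[\mu_1(t),f_1]$ term by term.
\begin{itemize}
\item \eqref{eq:pair-string} and \eqref{eq:iota-u}, \eqref{eq:Delta-Mu} give $[b+uB+\iota+\hbar t\Delta, M_{u^{-1}}]=-\ad(\gamma_{0,1,1})$.
\item \eqref{eq-string-adjoint}, \eqref{iota-commute-adjoint-gamma} and \eqref{commute-Delta-adjoint} give
\[
[\mu_1(t),(1-t)\ad^F]=(1-t)\ad(\gamma_{0,1,1})-2(1-t)t\hbar\,\Delta^{H\gamma_{0,1,1}\Theta}.
\]
\item \eqref{eq:boundary-HgammaF} together with \eqref{eq:commutator-Delta} gives
\[
[\mu_1(t),t(t-1)\hbar\Delta^{H\gamma F}]=-2t(t-1)\hbar\,\Delta^{H\gamma_{0,1,1}\Theta}.
\]
\end{itemize}
Summing with $t\,\ad(\gamma_{0,1,1})$, the $\ad$ terms cancel ($-1+(1-t)+t=0$). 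The $\Delta^{H\gamma\Theta}$ terms cancel as well ($-2(1-t)t-2t(t-1)=0$). The sign conventions in the paper are tuned so that this works, and I would track them using the sign in \eqref{eq:two-Deltas}.

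Step (2) is the main obstacle. We must compute $[\mu_2(t),f_1]$, namely the failure of $M_{u^{-1}}+(1-t)\ad^F+t(t-1)\hbar\Delta^{H\gamma F}$ to be a derivation of $\{-,-\}_\hbar(t)$. We treat the three pieces of $f_1$ in turn.
\begin{itemize}
\item \emph{$M_{u^{-1}}$.} The $r$-th bracket sews with $B$'s after applying $\pi$, so the commutator $[M_{u^{-1}},\{-,-\}_r]$ produces only boundary terms in which one sewn edge carries $u^{-1}$-degree zero on the output side. I expect these to equal $-\{-,-\}^{\gamma_{0,1,1}\Theta\text{-type}}$, i.e.\ brackets in which one $B$-edge is replaced by $\gamma_{0,1,1}\Theta$ (or $\Theta\gamma_{0,1,1}$).
\item \emph{$\ad^F$.} Its failure to be a derivation appears only when $F\gamma_{0,1,1}$ or $\gamma_{0,1,1}F$ lands on an edge being sewn. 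This produces $\{-,-\}^{F\gamma_{0,1,1}\Theta}$-type terms with the appropriate powers of $t$.
\item \emph{$\Delta^{H\gamma F}$.} Contracting one output of $\alpha$ with one of $\beta$ produces, via \eqref{eq:iota-F-H}, a bracket with an $F\gamma F$ edge weighted by $t^{r-1}\cdot t(t-1)$.
\end{itemize}
Against these, $[\mu_1(t),t(t-1)\{-,-\}^{F\gamma F}_\hbar(t)]$ is computed as follows:
\begin{itemize}
\item \eqref{eq:commutator-b+uB-bracket} together with $[b+uB,F\gamma F]=\Theta\gamma F+F\gamma\Theta$ (up to sign) gives terms matching the $\ad^F$-defects.
\item The $\iota$-part produces $H$-contractions through \eqref{eq:iota-F-H}, matching the $\Delta^{H\gamma F}$-defects.
\item The $\hbar t\Delta$-part produces brackets of the $\{-,-\}^{F\gamma F,\Theta}$ type. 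By \eqref{eq:defi-homotopy-E-F} these combine with the rest.
\end{itemize}
The bookkeeping of $t$-powers, specifically $(1-t)t^{r-1}$ versus $t(t-1)\cdot(r-1)t^{r-2}$, and of the combinatorial factors $1/(r-1)!$ and $1/(r-2)!$, is where I expect the real work to lie. I would organize it graph by graph: for a fixed pair of vertices joined by $r$ edges, list the possible edge labels $\{B,\Theta,\gamma\Theta,F\gamma F\}$ and check that each labeled configuration appears with total coefficient zero. Finally, (3) should hold because both $f_2$ and $\mu_2$ use a single $F\gamma F$ edge and a Jacobi-type argument applies. The only potentially nonzero terms would have two $F\gamma F$ edges, and these cancel by the graded symmetry \eqref{eq:sign-E-E'} applied to the odd map $F\gamma_{0,1,1}F$.
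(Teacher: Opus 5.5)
\textbf{Overall.} Your arity decomposition and your checks of (0) and (1), including the cancellation of the $\Delta^{H\gamma_{0,1,1}\Theta}$ terms, are exactly the paper's. The problems are in (2) and (3).

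\textbf{Step (2).} You expect $[M_{u^{-1}},\{-,-\}_r]$ to produce $F$-free edges built from $\gamma_{0,1,1}$ and $\Theta$. In fact $M_{u^{-1}}$ is a derivation of every $\{-,-\}_r$, so $[\{-,-\}_\hbar(t),M_{u^{-1}}]=0$. On a sewn output, $M_{u^{-1}}$ lands in $u^{-1}L^A[u^{-1}]$, which the projection $\pi_I$ kills. A sewn input receives $\Theta(y)\in L^A\cdot u^0$, which $M_{u^{-1}}$ kills; in particular $[M_{u^{-1}},\Theta]=0$. The $\gamma_{0,1,1}\Theta$ edges you have in mind come from $[M_{u^{-1}},uB]$, i.e.\ from the arity-one identity \eqref{eq:pair-string}, which you already used in (1).

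This is not cosmetic. Every term of $[\mu_1(t),f(t)_2]$, and every term of $[\mu_2(t),(1-t)\ad^F(\gamma_{0,1,1})+t(t-1)\hbar\Delta^{H\gamma_{0,1,1}F}]$, carries an $F$. So nonzero $F$-free defects from $M_{u^{-1}}$ would have nothing to cancel against, and your list of cancellations indeed leaves them unmatched.

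With that term gone, the bookkeeping is the paper's:
\begin{itemize}
\item The $\ad^F$-defects carry edges $\Theta\gamma_{0,1,1}F$ or $F\gamma_{0,1,1}\Theta$ with weight $(1-t)t^r\cdot r\cdot\frac{1}{r!}=(1-t)\frac{t^r}{(r-1)!}$. They cancel $[b+uB,f(t)_2]$, whose weight is $t(t-1)\frac{t^{r-1}}{(r-1)!}$.
\item The $\hbar t\Delta$ cross-contractions are output--output contractions, not $\{-,-\}^{F\gamma F,\Theta}$-brackets. They cancel the $\iota$-terms whose new input is fed by a $\Theta$-edge, since $\iota(x)(\Theta y)=\Delta(x,y)$.
\item The $\iota$-terms fed by the $F\gamma_{0,1,1}F$-edge become $H\gamma_{0,1,1}F$-contractions by \eqref{eq:iota-F-H}. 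They cancel $t(t-1)[\{-,-\}_\hbar(t),\hbar\Delta^{H\gamma_{0,1,1}F}]$.
\end{itemize}
No factor $(r-1)t^{r-2}$ occurs in this lemma; that is the $d/dt$ bookkeeping of the $dt$-component in the next lemma.

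\textbf{Step (3).} Your argument misidentifies the terms. $\mu_2(t)$ contains no $F\gamma_{0,1,1}F$-edge, so every term of $[\mu_2(t),f(t)_2]$ is a three-vertex graph with exactly one such edge. Graphs with two such edges would come only from $[f(t)_2,f(t)_2]$, which does not enter closedness, so \eqref{eq:sign-E-E'} proves nothing here.

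What is needed is a Jacobi identity. Take a parameter $s$ with $s^2=0$, of the parity making $t\Theta+sF\gamma_{0,1,1}F$ homogeneous. The bracket built from this edge operator equals $\{-,-\}_\hbar(t)+s\{-,-\}^{F\gamma_{0,1,1}F}_\hbar(t)$. It satisfies Jacobi by the same graph count as $\{-,-\}_\hbar$: each acyclic three-vertex configuration arises from exactly two nested brackets, with opposite signs, for any edge operator. The order-$s$ part of that Jacobi identity is precisely $[\mu_2(t),\{-,-\}^{F\gamma_{0,1,1}F}_\hbar(t)]=0$.
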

\begin{proof}
We need to show that the contribution of $[\mu_1(t)+\mu_2(t), f(t)_0+f(t)_1+f(t)_2]$ in each component $\Hom\big(\sym^{i}(\hh_A[1]),\hh_A[1]\big)$ for $i=0,1,2,3$ vanishes.

\begin{enumerate}
\item 
First, for $i=0$, the contribution is given by the term $[\mu_1(t),f(t)_0]$. 
According to the correspondence recalled in Equations~\eqref{eq:id-1} and ~\eqref{eq:id-2}, we have
$$[\mu_1(t),f(t)_0]= \big( b+uB+\iota + t\Delta\big) (\gamma_{0,1,1}).$$
%Using Equations~\eqref{eq:gamma-commutator} and~\eqref{eq:iota-gamma}, 
By Lemma~\ref{lem:middel-arrow}, we have $(b+uB+\iota)(\gamma_{0,1,1})=0.$ 
It remains to verify $\Delta(\gamma_{0,1,1})=0$. But this is because $\gamma_{0,1,1}$ only has one output while $\Delta$ needs at least two outputs.

\item
Second, for $i=1$, the contributions is given by $[\mu_1(t),f(t)_1]+[\mu_2(t),f(t)_0]$.
%there are two terms in the component $\Hom(\hh_A[t][1], \hh_A[t][1])$. 
%there are two contributions: $[\mu_1(t),f(t)_1]$ and $[\mu_2(t),f(t)_0]$. 
%We compute the first commutator as 
For the first term, 
\begin{align*}
     %& [\mu_1(t),M_{u^{-1}}+ (1-t)\ad^F(\gamma_{0,1,1}) +t(t-1)\hbar\Delta^{H\gamma_{0,1,1}F}] \\
    &[\mu_1(t),f(t)_1]\\
    = & [b+uB,M_{u^{-1}}+ (1-t)\ad^F(\gamma_{0,1,1})+t(t-1)\hbar\Delta^{H\gamma_{0,1,1}F}]+ [\hbar t\Delta, (1-t)\ad^F(\gamma_{0,1,1})]\\
    = & -\ad(\gamma_{0,1,1})+(1-t)\ad(\gamma_{0,1,1}) + t(t-1)\hbar \left([b+uB,\Delta^{H\gamma_{0,1,1}F}]- [\Delta,\ad^F(\gamma_{0,1,1})]\right)\\
    = & -t\cdot \ad(\gamma_{0,1,1}).
\end{align*}
Note that for the first equality, we use 
$[\iota,M_{u^{-1}}]=0$ in Equation~\eqref{eq:iota-u}, $[\iota,\ad^F(\gamma_{0,1,1})]=0$ in~\eqref{iota-commute-adjoint-gamma},
$[\Delta, \Delta^{H\gamma_{0,1,1}F}]=[\iota, \Delta^{H\gamma_{0,1,1}F}]=0$ in~\eqref{eq:commutator-Delta} and $[\Delta, M_{u^{-1}}]=0$ in~\eqref{eq:Delta-Mu}. 
The second equality follows from Equation~\eqref{eq:pair-string} and~\eqref{eq-string-adjoint}.
The third equality follows from Equation~\eqref{eq:boundary-HgammaF} and~\eqref{commute-Delta-adjoint}. 

For the second term, following the identifications in Equations~\eqref{eq:id-1} and~\eqref{eq:id-2}, we have 
\[[\mu_2(t),f(t)_0]=\{-,-\}_\hbar(t)(\gamma_{0,1,1})
=\{\gamma_{0,1,1},-\}_\hbar(t) = t \cdot \ad(\gamma_{0,1,1}). \]
Thus, we have $[\mu_1(t),f(t)_1]+[\mu_2(t),f(t)_0]=0$.
%Note that in the last equality, we use the fact that $\gamma_{0,1,1}$ only has one input or one output, which implies that $\{\gamma_{0,1,1},-\}_\hbar(t)=-t\{\gamma_{0,1,1},-\}_1$.

\item 
Third, for $i=2$, the contributions come from $[\mu_1(t),f(t)_2]$ and $[\mu_2(t),f(t)_1]$. We have the cancellation of the following classification into three types.
%We classify the contributions into three types.
\begin{enumerate}
\item We have $[\{-,-\}_\hbar(t),M_{u^{-1}}]=0.$ This is due to the fact that in the construction of $\{-,-\}_\hbar(t)$ in Equation~\eqref{eq:def-r-bracket-algebraic}, the sewing with an output that has a negative power of $u$ vanishes.
\item Contributions from $[\{-,-\}_\hbar(t), (1-t)\ad^F(\gamma_{0,1,1})]$ and $[b+uB, t(t-1) \{-,-\}_{\hbar}^{F\gamma_{0,1,1}F}(t)]$ cancel. Using Equation~\eqref{eq:commutator-b+uB-bracket} and $[b+uB, F]=\Theta$ in Equation~\eqref{eq:theta-appear}, we have 
\begin{align}
\label{commutator-buB-bracket}
\begin{split}
&[b+uB, t(t-1)\{-,-\}_{\hbar}^{F\gamma_{0,1,1}F}(t)] \\
=&t(t-1)\{-,-\}_{\hbar}^{[b+uB, F\gamma_{0,1,1}F]}(t)\\ 
=&t(t-1)\Big(\{-,-\}_{\hbar}^{\Theta\gamma_{0,1,1}F}(t)+\{-,-\}_{\hbar}^{F\gamma_{0,1,1}\Theta}(t)\Big)\\
=&-[\{-,-\}_\hbar(t), (1-t)\ad^F(\gamma_{0,1,1})].
\end{split}
\end{align}
%Then we observe that this indeed agrees with $[\{-,-\}_\hbar(t), \ad^F(\gamma_{0,1,1})]$. 
Both commutators are illustrated in the following graphs. 
\[\begin{tikzpicture}[baseline={(current
bounding box.center)},scale=1.2] 
\node[draw,circle,thick] (A) at (0,0) {$\beta$};
\node[draw,circle,thick] (B) at (3,0) {$\alpha$};
\draw[->,thick] (A) to (1,1);
\draw[->,thick] (A) to (1,-1);
\draw[->,thick] (2,1) to (B);
\draw[->,thick] (2,-1) to (B);
\node at (4,0) {$\vdots$};
\node at (-1,0) {$\vdots$};
\draw[->,thick] (A) to node[fill=white,inner sep=2pt,midway] {$\Theta$} (B);
\draw[->,thick] (A) to[out=30,in=150] node[fill=white,inner sep=2pt,midway] {$\Theta\gamma_{0,1,1}F$} (B);
\node at (1.5,-0.25) {$\vdots$};
\draw[->,thick] (A) to[out=-40,in=-140] node[fill=white,inner sep=2pt,midway] {$\Theta$} (B);
\draw[->,thick] (-1,.5) to (A);
\draw[->,thick] (-1,-.5) to (A);
\draw[->,thick] (B) to (4,.5);
\draw[->,thick] (B) to (4,-.5);
\end{tikzpicture} \;\;\;\mbox{and}\;\;\; \begin{tikzpicture}[baseline={(current
bounding box.center)},scale=1.2] 
\node[draw,circle,thick] (A) at (0,0) {$\beta$};
\node[draw,circle,thick] (B) at (3,0) {$\alpha$};
\draw[->,thick] (A) to (1,1);
\draw[->,thick] (A) to (1,-1);
\draw[->,thick] (2,1) to (B);
\draw[->,thick] (2,-1) to (B);
\node at (4,0) {$\vdots$};
\node at (-1,0) {$\vdots$};
\draw[->,thick] (A) to node[fill=white,inner sep=2pt,midway] {$\Theta$} (B);
\draw[->,thick] (A) to[out=30,in=150] node[fill=white,inner sep=2pt,midway] {$F\gamma_{0,1,1}\Theta$} (B);
\node at (1.5,-0.25) {$\vdots$};
\draw[->,thick] (A) to[out=-40,in=-140] node[fill=white,inner sep=2pt,midway] {$\Theta$} (B);
\draw[->,thick] (-1,.5) to (A);
\draw[->,thick] (-1,-.5) to (A);
\draw[->,thick] (B) to (4,.5);
\draw[->,thick] (B) to (4,-.5);
\end{tikzpicture}
\]
\item We can show that 
\begin{equation}
\label{eq-iota-delta-cancel}
[\{-,-\}_\hbar(t), \hbar\Delta^{H\gamma_{0,1,1}F}]+[\iota+\hbar\Delta, \{-,-\}_{\hbar}^{F\gamma_{0,1,1}F}(t)]=0.
\end{equation}
In particular, by the definition of $\iota$ in Equation~\eqref{eq-iota-tcft} and $\Delta$ in Equation~\eqref{eq-delta-tcft}, we have  
$$\iota(x)(\Theta (y))=(-1)^{|y||x|}C_{\Theta(y)}(x)=(-1)^{|y||x|}\Delta(y,x)=\Delta(x,y).$$
Following \cite[Theorem 2.20]{CCT}, this equality
implies that $\iota+\hbar\Delta$ satisfies the Leibniz property against the Lie bracket $\{-,-\}_{\hbar}$. Similarly, we can show that $[\iota+\hbar\Delta, \{-,-\}_{\hbar}^{F\gamma_{0,1,1}F}(t)]$ only consists of nontrivial contributions when we apply $F\gamma_{0,1,1}F$ to an output of one element in $\hh_A[1]$ and then contract it with an output from the element in the other $\hh_A[1]$ using the operator $\iota$ as shown in the following graph
\[\begin{tikzpicture}[baseline={(current
bounding box.center)},scale=1.2] 
\node[draw,circle,thick] (A) at (0,0) {$\beta$};
\node[draw,circle,thick] (B) at (3,0) {$\alpha$};
\node[draw,circle,thick] (C) at (3.5,-.6) {$\iota$};
\draw[->,thick] (A) to (1,1);
\draw[->,thick] (2,1) to (B);
\node at (4,0) {$\vdots$};
\node at (-1,0) {$\vdots$};
\draw[->,thick] (A) to node[fill=white,inner sep=2pt,midway] {$\Theta$} (B);
\draw[->,thick] (A) to[out=30,in=150] node[fill=white,inner sep=2pt,midway] {$\Theta$} (B);
\node at (1.5,-0.25) {$\vdots$};
\draw[->,thick] (A) to[out=-40,in=-140] node[fill=white,inner sep=2pt,midway] {$\Theta$} (B);
\draw[->,thick] (A) to[out=-70,in=-150] node[fill=white,inner sep=2pt,midway] {$F\gamma_{0,1,1}F$} (C);
\draw[->,thick] (B) to[out=-50,in=125] node {} (C);
\draw[->,thick] (-1,.5) to (A);
\draw[->,thick] (-1,-.5) to (A);
\draw[->,thick] (B) to (4,.5);
\draw[->,thick] (B) to (4,-.5);
\end{tikzpicture}\]
By Equation~\eqref{eq:iota-F-H}, we have $$\iota(x)\big(F\gamma_{0,1,1}F(y)\big)=H\big(\gamma_{0,1,1}F(y), x\big), \quad \forall \quad x,y\in L_{-}^A.$$
This equation implies Equation~\eqref{eq-iota-delta-cancel} as the contributions of $[\Delta^{H\gamma_{0,1,1}F}, \{-,-\}_\hbar(t)]$ can be illustrated in the following graphs.
\[\begin{tikzpicture}[baseline={(current
bounding box.center)},scale=1.2] 
\node[draw,circle,thick] (A) at (0,0) {$\beta$};
\node[draw,circle,thick] (B) at (3,0) {$\alpha$};
\node[draw] (C) at (1.5,-1.3) {$H\gamma_{0,1,1}F$};
\draw[->,thick] (A) to (1,1);
\draw[->,thick] (2,1) to (B);
\node at (4,0) {$\vdots$};
\node at (-1,0) {$\vdots$};
\draw[->,thick] (A) to node[fill=white,inner sep=2pt,midway] {$\Theta$} (B);
\draw[->,thick] (A) to[out=30,in=150] node[fill=white,inner sep=2pt,midway] {$\Theta$} (B);
\node at (1.5,-0.25) {$\vdots$};
\draw[->,thick] (A) to[out=-40,in=-140] node[fill=white,inner sep=2pt,midway] {$\Theta$} (B);
\draw[->,thick] (A) to[out=-60,in=160] node {} (C);
\draw[->,thick] (B) to[out=-110,in=20] node {} (C);
\draw[->,thick] (-1,.5) to (A);
\draw[->,thick] (-1,-.5) to (A);
\draw[->,thick] (B) to (4,.5);
\draw[->,thick] (B) to (4,-.5);
\end{tikzpicture}\]

%This equality identifies the contribution of two graphs above and the cancellation follows.
\end{enumerate}

\iffalse
There are two nonzero terms in computing $[\mu_2(t),f(t)_1]$ given by
\begin{align*}
[\mu_2(t),(1-t)\ad^F(\gamma_{0,1,1})] & =(1-t)\cdot [\mu_2(t),\ad^F(\gamma_{0,1,1})], \\
[\mu_2(t),t(t-1)\hbar\Delta^{H\gamma_{0,1,1}F}] &= t(t-1)\hbar[\mu_2(t),\Delta^{H\gamma_{0,1,1}F}]
\end{align*}
The other commutator $[\mu_1(t),f(t)_2]$ also contains two nonzero terms given by
\begin{align*}
    [b+uB,f(t)_2] &= t(t-1)\big(\{-,-\}^{\Theta\gamma_{0,1,1}F}_\hbar + \{-,-\}^{F\gamma_{0,1,1}\Theta}_\hbar\big)= (t-1) [\mu_2(t),\ad^F(\gamma_{0,1,1})],\\
    [\iota,f(t)_2] &= -t(t-1)\hbar[\mu_2(t),\Delta^{H\gamma_{0,1,1}F}].
\end{align*} 
We see these two terms indeed cancel with the other two terms.
\fi

\item Finally, for $i=3$, we can also directly verify that we have $[\mu_2(t),f(t)_2]=0$. This is a version of the Jacobi property. The verification is similar to that in~\cite[Theorem 2.20]{CCT}.
\end{enumerate}

Now, the proof is completed.
\end{proof}

%\end{Construction} 

%\medskip
%For any linear maps $D: L_-^A\otimes L_-^A\to \bbC$ and $E: L_-^A\to L_+^A[1]$, let us denote by
%\begin{equation}\label{eq:def-bracket-E-D} \{-,-\}_\hbar^{E,D}(t):=[ \hbar\Delta^D, \{-,-\}_\hbar^E(t)]. \end{equation}

We now complete Step (2) by proving the following lemma.
\begin{Lemma}
Using the notation in Equation~\eqref{eq:defi-homotopy-E-F}, we define 
\begin{equation}
\label{string-symmetric-g}
g(t):=g(t)_2 := -t(t-1)\cdot \{-,-\}_\hbar^{F\gamma_{0,1,1}F,F}(t).
\end{equation}
The element $f(t)+g(t)dt\in {\sf CE}\big( \hh_A\otimes\Omega_{[0,1]}[1]\big)$ defined by~\eqref{string-symmetric-f} and~\eqref{string-symmetric-g} is closed.
%with respect to the differential $d_{\sf CE}\big( \hh_A\otimes\Omega_{[0,1]}\big)$. 
%with respect to the CE-differential of ${\sf CE}_*\big(\hh_A\otimes\Omega_{[0,1]}\big)$.
\end{Lemma}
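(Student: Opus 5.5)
We need to show closedness of $f(t)+g(t)dt$ in ${\sf CE}_*(\hh_A\otimes\Omega_{[0,1]})$ with respect to the family structure \eqref{eq:family-dgla}. Write the total structure as $\mu(t)+\nu(t)dt$ with $\mu=\mu_1(t)+\mu_2(t)$ from \eqref{eq:family-dgla-mu} and
\[
\nu(t)=\hbar\Delta^H+\sum_r t^{r-1}\big(\{-,-\}^F_r+\{-,-\}^\delta_r\big)\hbar^{r-1}.
\]
Under the symmetric assumption \eqref{symmetric-assumption} we take $\delta=0$, so $\nu(t)=\hbar\Delta^H+\{-,-\}^F_\hbar(t)$, and the de Rham part is $d_{DR}$. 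The commutator $[\mu+\nu dt+d_{DR},\,f+g\,dt]$ splits into a $dt$-free part, which is $[\mu(t),f(t)]=0$ by Lemma~\ref{closed-f-string}, and a $dt$-part. So the statement reduces to the single identity
\[
\frac{d}{dt}f(t)+[\nu(t),f(t)]-[\mu(t),g(t)]=0
\]
(with signs fixed by the parity of $dt$). I would verify it componentwise in $\Hom(\sym^i(\hh_A[1]),\hh_A[1])$ for $i=0,1,2,3,4$, just as in the proof of Lemma~\ref{closed-f-string}.

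\textbf{Low components.} For $i=0$: $\frac{d}{dt}f_0=0$ and $[\nu,f_0]=\hbar\Delta^H(\gamma_{0,1,1})=0$, since $\gamma_{0,1,1}$ has one output and $g$ has no constant term.

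For $i=1$, the terms are
\[
\tfrac{d}{dt}f_1=-\ad^F(\gamma_{0,1,1})+(2t-1)\hbar\Delta^{H\gamma_{0,1,1}F},
\]
together with $[\hbar\Delta^H,f_1]$ and $[\{-,-\}^F_\hbar(t),f_0]=\{\gamma_{0,1,1},-\}^F_\hbar(t)$. Since $\gamma_{0,1,1}$ has a single output, only $r=1$ survives in the last term, and it equals $\ad^F(\gamma_{0,1,1})$; this cancels the first term of $\tfrac{d}{dt}f_1$. For $[\hbar\Delta^H,M_{u^{-1}}]$, I would derive from the explicit formula \eqref{eq:homotopy-H} (equivalently from \eqref{lem:edge-cancel-H}) that it equals $-\hbar\Delta^{H\gamma_{0,1,1}F}$ up to the symmetrization factor. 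For $[\hbar\Delta^H,\ad^F(\gamma_{0,1,1})]$, the same mechanism as \eqref{eq:two-Deltas} gives a multiple of $\Delta^{H\gamma_{0,1,1}F}$. The coefficients $-1$, $2t-1$ and $(1-t)\cdot(\pm2)$ should then sum to zero.

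\textbf{Higher components.} For $i=2$ the pieces are:
\begin{itemize}
\item $\tfrac{d}{dt}f_2=(2t-1)\{-,-\}^{F\gamma_{0,1,1}F}_\hbar(t)+t(t-1)\partial_t\{\cdots\}$;
\item $[\{-,-\}^F_\hbar(t),f_1]$, in which $M_{u^{-1}}$ hits the $F$-edge via \eqref{commutator-F} and produces $\{-,-\}^{F\gamma_{0,1,1}F}$, while $\ad^F$ adds $F\gamma_{0,1,1}$ at the end of an $F$-edge, giving again $\{-,-\}^{F\gamma_{0,1,1}F}$ plus two-$F$-edge terms;
\item $[\hbar\Delta^H,f_2]$;
\item $-[\mu_1,g]$, computed by \eqref{eq:commutator-b+uB-bracket-EE}. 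Its $b+uB$ part is
\[
-t(t-1)\big(\{\}^{\Theta\gamma F+F\gamma\Theta,\,F}+\{\}^{F\gamma F,\,\Theta}\big).
\]
The $\Theta$-edge pieces are absorbed into lower-$r$ brackets through the $(r-2)!$ normalisation; this is exactly what turns $\{\}^{E,\Theta}$ into $\partial_t$ of $\{\}^E(t)$ and accounts for $t(t-1)\partial_t\{\cdots\}$. The $\iota+\hbar\Delta$ part is handled by \eqref{eq:iota-F-H} as in \eqref{eq-iota-delta-cancel}.
\end{itemize}
For $i=3$ and $i=4$ the terms are $[\{-,-\}^F_\hbar,f_2]$, $[\mu_2,g]$ and $[\{-,-\}_\hbar,f_2]$-type compositions. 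These should cancel by a Jacobi-type argument as in \cite[Theorem 2.20]{CCT}, using the Koszul symmetry \eqref{eq:sign-E-E'}.

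\textbf{Main obstacle.} I expect the hardest step to be the $i=2$ bookkeeping. The difficulty is matching the $t$-polynomial coefficients, $(2t-1)$ against $t(t-1)\partial_t$ against $(1-t)$ from $\ad^F$, while simultaneously tracking the factorials $1/(r-1)!$ and $1/(r-2)!$ that relate one-$E$ and two-$E$ brackets. My first move is to fix a single $r$, list every graph with exactly one $\gamma_{0,1,1}$ insertion and $r$ connecting edges, classify each by how many edges carry $F$ versus $\Theta$, and check the coefficient of each class separately.
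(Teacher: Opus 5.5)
Your proposal follows the paper's proof essentially step for step. You use the same reduction to the $dt$-component and the same cancellations in components $i=0,1$, where $[\hbar\Delta^H,(1-t)\ad^F(\gamma_{0,1,1})]$ contributes $2(1-t)\hbar\Delta^{H\gamma_{0,1,1}F}$. For $i=2$ you use the identity $\frac{d}{dt}\{-,-\}^{F\gamma_{0,1,1}F}_\hbar(t)=\{-,-\}^{\Theta,F\gamma_{0,1,1}F}_\hbar(t)$ together with \eqref{eq:iota-F-H}, and you close with a Jacobi-type argument for the top component.

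One small slip: a commutator of two arity-two coderivations has arity three, so the top component is $i=3$ and there is no $i=4$ piece. Also, $[\{-,-\}_\hbar(t),f(t)_2]$ belongs to the $dt$-free part already handled by Lemma~\ref{closed-f-string}.
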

\begin{proof}
We need to show that the new contribution in $\Hom\big(\sym^{i}(\hh_A\otimes\Omega_{[0,1]}[1]),\hh_A\otimes\Omega_{[0,1]}[1]\big)$ vanishes for each $i=0,1,2,3$.
\begin{enumerate}
\item
First, for $i=0$, %in the component $\hh_A\otimes\Omega_{[0,1]}[1]$, 
we have $d_{DR}(\gamma_{0,1,1})=0$ and $\Delta^H(\gamma_{0,1,1})=0.$
This implies that
$$(b+uB+\hbar t \Delta + \iota + d_{DR} + \hbar \Delta^H dt)f(t)_0=0.$$

\item
Second, for $i=1$, %consider the component $\Hom\big( \hh_A\otimes\Omega_{[0,1]}[1],\hh_A\otimes\Omega_{[0,1]}[1]\big)$. 
the contribution of $d_{DR}$ is given by 
\begin{align*}
[d_{DR},f(t)_1] (-)%&= d_{DR}\big(f(t)_1(-)\big)-f(t)_1 ( d_{DR}(-))\\
= d_{DR}(f(t)_1)(-)%\\
= \left(0-\ad^F(\gamma_{0,1,1})dt + (2t-1)\Delta^{H\gamma_{0,1,1}F} dt\right)(-).
\end{align*}
For $\Delta^H$, using $[\Delta^H, \Delta^{H\gamma_{0,1,1}F}]=0$ and Equation~\eqref{lem:edge-cancel-H} for the term $[\Delta^{H}, M_{u^{-1}}]$, we have
\begin{align*}
    [\Delta^H, f(t)_1]=-\Delta^{H\gamma_{0,1,1}F}+(1-t)2\Delta^{H\gamma_{0,1,1}F}+0
    = (1-2t)\Delta^{H\gamma_{0,1,1}F}.
\end{align*}
For $\{-,-\}_\hbar^{F}(t)$, %we use the fact that $\{\gamma_{0,1,1},-\}_r^{F}=0$ if $r\geq2$ to get  
\begin{align*}
[\{-,-\}_\hbar^{F}(t),f(t)_0]=\{\gamma_{0,1,1},-\}_\hbar^{F}(t)=\{\gamma_{0,1,1},-\}_1^{F}(t)= \ad^F(\gamma_{0,1,1}).
\end{align*}
Putting all these formulas together, we get the vanishing for $i=1$.

\iffalse
For this, we need to compute the following commutator relations:
\begin{align}\label{eq:g_1=0}
\begin{split}
    [d_{DR},f(t)_1] & = d_{DR}\big(f(t)_1\big)= -\ad^F(\gamma_{0,1,1})dt + (2t-1)\Delta^{H\gamma_{0,1,1}F} dt,\\
    [\Delta^{H}dt,(1-t)\ad^F(\gamma_{0,1,1})] & = 2(1-t)\Delta^{H\gamma_{0,1,1}F} dt,\\
    [\Delta^{H}dt,M_{u^{-1}}] &=-\Delta^{H\gamma_{0,1,1}F} dt,\\
    [\{-,-\}_\hbar^{F}(t)dt,f(t)_0] & = \ad^F(\gamma_{0,1,1})dt,\\
    \end{split}
\end{align}
\fi

%consider the component $\Hom\big( \sym^2(\hh_A\otimes\Omega_{[0,1]}[1]),\hh_A\otimes\Omega_{[0,1]}[1]\big)$.

\item
Next, for $i=2$, using $(dt)^2=0$, the total contribution of the new terms is given by 
%commutators from $f(t)$ and the $dt$-component of the tensor product DGLA:
\begin{equation}\label{eq:g-2}
[d_{DR},f(t)_2]+ [\hbar\Delta^Hdt,f(t)_2]+[\{-,-\}_\hbar^{F}(t)dt,f(t)_1]+[b+uB+\iota+\hbar t\Delta, g(t)_2\ dt].
\end{equation}
\begin{enumerate}
\item
The first term in~\eqref{eq:g-2} is given by 
\begin{equation}
\label{eq:g(t)-1}
[d_{DR},f(t)_2]=(2t-1)dt\cdot \{-,-\}_\hbar^{F\gamma_{0,1,1}F}(t)+t(t-1)\big(\frac{d}{dt}\{-,-\}_\hbar^{F\gamma_{0,1,1}F}(t)\big)dt.
\end{equation}

\item 
The second term in~\eqref{eq:g-2} is 
\begin{equation}
\label{eq:g(t)-2}
[\hbar\Delta^Hdt,f(t)_2]=[\hbar\Delta^Hdt, 
t(t-1) \{-,-\}_\hbar^{F\gamma_{0,1,1}F}(t)].
\end{equation}

\iffalse
\[[\hbar\Delta^Hdt,f(t)_2]=t(t-1)dt\cdot \{-,-\}_\hbar^{F\gamma_{0,1,1}F,H}(t).\]
This is the defining equation of the right hand side, see Equation~\eqref{eq:def-bracket-E-D}.
\fi

\item 
The third term in~\eqref{eq:g-2} has three parts.
%recall the formula of $f(t)_1$ in~\eqref{string-symmetric-f}, 
%$f(t)_1=M_{u^{-1}} + (1-t)\ad^F(\gamma_{0,1,1}) + t(t-1)\Delta^{H\gamma_{0,1,1}F}$. 
By Lemma~\ref{lem:string-leaf-cont}, we have
\begin{equation}
\label{eq:g(t)-3}[\{-,-\}_\hbar^{F}(t)dt,M_{u^{-1}}] = -\{-,-\}_\hbar^{F\gamma_{0,1,1}F}(t)dt.
\end{equation}
The second part $[\{-,-\}_\hbar^{F}(t)dt,(1-t)\ad^F(\gamma_{0,1,1})]$ is given by
\begin{align}
\label{eq:g(t)-4}
\begin{split}
&[\{-,-\}_\hbar^{F}(t)dt,(1-t)\ad^F(\gamma_{0,1,1})]\\
=&2(1-t)\cdot \{-,-\}_\hbar^{F\gamma_{0,1,1}F}(t)dt
+(1-t)\cdot t^1\Big(\{-,-\}_\hbar^{F,F\gamma_{0,1,1}\Theta+\Theta\gamma_{0,1,1}F}(t)\Big) dt.
\end{split}
\end{align}
Here, the factor $t^1$ is a consequence that one of the copies of $\Theta$ in each $\{-,-\}_r^{F}$ is composed to the operator $F\gamma_{0,1,1}$ or $\gamma_{0,1,1}F$.  
The third part is just
\begin{equation}
\label{eq:g(t)-5}
[\{-,-\}_\hbar^{F}(t)dt,t(t-1)\hbar\Delta^{H\gamma_{0,1,1}F}].
\end{equation}

%\[[\{-,-\}_\hbar^{F}(t)dt,t(t-1)\hbar\Delta^{H\gamma_{0,1,1}F}]=t(t-1)dt\cdot \left((-1)\{ -,-\}_\hbar^{F,H\gamma_{0,1,1}F}(t)\right).\]

%Putting all of this together, we can simply write
%\begin{align*}\label{eq:four-terms}
%\begin{split}
% &[d_{DR},f(t)_2]+ [\hbar\Delta^H dt,f(t)_2]+[\{-,-\}_\hbar^{F}(t)dt,f(t)_1]\\ =&t(t-1)dt\cdot \left( \frac{d}{dt}\big(\{-,-\}_\hbar^{F\gamma_{0,1,1}F}(t)\big)+\{-,-\}_\hbar^{F\gamma_{0,1,1}F,H}(t)+(-1)\{-,-\}_\hbar^{F,F\gamma_{0,1,1}\Theta+\Theta\gamma_{0,1,1}F+H\gamma_{0,1,1}F}(t)
% -\{ -,-\}_\hbar^{F,H\gamma_{0,1,1}F}(t)\right).
% \end{split}
%\end{align*}

\item 
For the last term in~\eqref{eq:g-2}, we can first apply Equation~\eqref{eq:commutator-b+uB-bracket-EE} to get
\begin{equation}
\label{eq:g(t)-6}
[b+uB,g(t)_2]= -t(t-1)\cdot \left( \{-,-\}_\hbar^{\Theta,F\gamma_{0,1,1}F}(t)+\{-,-\}_\hbar^{F,F\gamma_{0,1,1}\Theta+\Theta\gamma_{0,1,1}F}(t)\right).
%\\&= -t(t-1)\cdot \big( \frac{d}{dt}(\{-,-\}_\hbar^{F\gamma_{0,1,1}F}(t))+\{-,-\}_\hbar^{F,F\gamma_{0,1,1}\Theta+\Theta\gamma_{0,1,1}F}(t)\big).
\end{equation}
Similarly to part(3)(c) of the proof of Lemma~\ref{closed-f-string}, we can show that $[\iota+\hbar t\Delta, \{-,-\}_{\hbar}^{E,E'}(t)]$ consists of nontrivial contributions when we apply $E$ or $E'$ to an output of one element in $\hh_A[1]$ and then contract it with an output of the element in the other $\hh_A[1]$ using the operator $\iota$. Similar to Equation~\eqref{eq-iota-delta-cancel}, we have 
\begin{align}
\label{eq:g(t)-7}
\begin{split}[\iota+\hbar t\Delta,g(t)_2dt] 
&= -t(t-1)\cdot [\iota+\hbar t\Delta,\{ -,-\}_\hbar^{F,F\gamma_{0,1,1}F}(t)]dt\\
&= -t(t-1)dt \cdot \Big([\hbar\Delta^H, \{-,-\}_\hbar^{F\gamma_{0,1,1}F}(t)]+(-1)
[\hbar\Delta^{H\gamma_{0,1,1}F}, \{-,-\}_\hbar^{F}(t)]\Big).
    %&= t(1-t)\cdot (-1)[\{ -,-\}_\hbar^{F,F\gamma_{0,1,1}F}(t),\iota+\hbar t\Delta]dt\\
    %&= t(1-t)dt \cdot \Big((-1)\{ -,-\}_\hbar^{F,H\gamma_{0,1,1}F}(t)+\{-,-\}_\hbar^{F\gamma_{0,1,1}F,H}(t)\Big).
\end{split}
\end{align}
\end{enumerate}
Here $(-1)=(-1)^{|F\gamma_{0,1,1}F|\cdot |F|}$ is the Koszul sign of Equation~\eqref{eq:sign-E-E'}. We combine all the seven formulas~\eqref{eq:g(t)-1}-\eqref{eq:g(t)-7} together and the cancellation for the case $i=2$ follows from the equality 
$$\frac{d}{dt}\big(\{-,-\}_\hbar^{F\gamma_{0,1,1}F}(t)\big)=\{-,-\}_\hbar^{\Theta,F\gamma_{0,1,1}F}(t).$$

\item
%By the previous discussion, it remains to prove that 
%\[ [\mu_2(t), g(t)_2]+[\{-,-\}_\hbar^F(t), f(t)_2]=0 \quad \in \quad \Hom\big( \sym^3(\hh_A\otimes\Omega_{[0,1]}[1]),\hh_A\otimes\Omega_{[0,1]}[1]\big)\]

Finally, for $i=3$, the cancellation is again a version of the Jacobi property that can be verified similarly to~\cite[Theorem 2.20]{CCT}.
\end{enumerate}

Now, the proof is completed.
\end{proof}

{{\bf Step $(3)$.}} To prove the commutativity of Diagram~\eqref{eq:diagram:bottom-string}, we need to verify that
\[ \mathfrak{S}\circ \big(M_{u^{-1}}+\ad^F(\gamma_{0,1,1})\big) = M_{u^{-1}}\circ \mathfrak{S},\]
as linear maps $H_\bullet\big( \Hom^\cont(L_+^A[1], \sym^{n-1} L_-^A), b+uB\big) \to H_\bullet\big( \Hom^\cont(L_+^A[1], \sym^{n-1} L_-^A), b\big)$. Here we have used the fact that $(\cJ)^\epsilon_{g,1,n-1}= \big(M_{u^{-1}}+\ad^F(\gamma_{0,1,1})\big)$. Using Lemma~\ref{lem:string-leaf-cont}, we may deduce the equation above as follows:
\begin{align*}
    [M_{u^{-1}}, \mathfrak{S}] (\alpha) & = \sum_{j=1}^{n-1}(R\circ\gamma_{0,1,1}\circ F) \circ_j \alpha - \alpha \circ (F\circ \gamma_{0,1,1}\circ S)\\
    &= \mathfrak{S}\Big(\sum_{j=1}^l(\gamma_{0,1,1}\circ F) \circ_j \alpha - \sum_{i=1}^k\alpha \prescript{}{i}{\circ} (F\circ \gamma_{0,1,1})\Big)\\
    &= \mathfrak{S}\big( \ad^F(\gamma_{0,1,1})(\alpha)\big).
\end{align*}

Putting the three steps above together, the desired identity
\[[M_{u^{-1}},\lbA_{g,1,n-1}]=  [(\cK_*\rho^{A,\tw,\omega_A}_*\mc_2)_{g,1,n-1}^\epsilon]\in H_\bullet\big( \Hom^c(L_+^A[1],\sym^{n-1}L_-^A)\big)\] 
from Equation~\eqref{string-main-identity} can be easily deduced:
\begin{align*}
    [M_{u^{-1}},\lbA_{g,1,n-1}] & = M_{u^{-1}} \mathfrak{S} [(K_*\hbA)_{g,1,n-1}]\\
    & = \mathfrak{S} \big(M_{u^{-1}}+\ad^F(\gamma_{0,1,1})\big) [(K_*\hbA)_{g,1,n-1}]\\
    & = \mathfrak{S}[\big(\cJ_*K_*\hbA\big)_{g,1,n-1}^\epsilon]\\
    & = \mathfrak{S}[\Big(K_*\exp\left( (\gamma_{0,1,1},M_{u^{-1}})\epsilon \right)_* \hbA\Big)_{g,1,n-1}^\epsilon]\\
    & = [(\cK_*\rho^{A,\tw,\omega_A}_*\mc_2)_{g,1,n-1}^\epsilon].
\end{align*}

%\newpage
\subsubsection{The general case.} 
\label{sec:string-general}
Without the symmetric assumption $H=H^\sym$ in~\eqref{symmetric-assumption}, we need to add a nontrivial homotopy operator $\delta$ for the discussion. Parallel to the three steps of the previous subsection, we make appropriate modifications in the general case.

\medskip
{{\bf Step $(1)$.}} We need to replace the mid-horizontal map~\eqref{map-middle-string} in Diagram~\eqref{diagram:factor-diag-string}. 
%In the symmetric case, the map in.
%this map is given by exponentiation of the following element (see Lemma~\ref{lem:middel-arrow})
%\[ \big( \gamma_{0,1,1}, M_{u^{-1}} + \ad^{F}(\gamma_{0,1,1})\big) \in {\sf CE}_*(\hh_A^{\sf triv}).\]
In the general case, this map is not a DGLA morphism as the element $\big( \gamma_{0,1,1}, M_{u^{-1}} + \ad^{F}(\gamma_{0,1,1})\big)\in {\sf CE}_*(\hh_A^{\sf triv})$ is not closed.  
In fact, the commutator $[\iota,\ad^F(\gamma_{0,1,1})]$ is no longer zero. To kill this commutator, we need another operator 
$\iota^{\delta,\gamma_{0,1,1}}: \hh_A^{\sf triv} \to \hh_A^{\sf triv}.$
Let $C^\delta_{\gamma_{0,1,1}(X_j)}: \sym^l (L^A_-) \to \sym^{l-1} (L^A_-) $ be the contraction with the linear functional 
\[2\delta\big(\gamma_{0,1,1}(X_j),-\big): L^A_- \to \bbC.\] 
This operator $C^\delta_{\gamma_{0,1,1}(X_j)}$ can be depicted as
\[\begin{tikzpicture}[baseline={(current bounding
   	box.center)},scale=0.8] 
    \draw [thick,directed] (2.4,4) to (3.4,2);
    \draw [thick,directed] (4.4,4) to (3.4,2);
    \node at (3.4,3) {$\cdots$};
    \node at (3.4,2) {$\bullet$};
    \node at (3,2) {$\alpha$};
    \draw[thick,directed] (3.4,2) to (1.9,.5);
    \draw[thick,directed] (3.4,2) to (4.9,.5);
    \draw[thick,directed] (3.4,2) to (3,.5);
    \node at (3.9,.7) {$\cdots$};
    \node at (2.7,.7) {$\cdots$};
    \draw [thick] (1, 0.5) to [out=-80, in=260] (3, 0.5);
    \draw [thick,directed] (1,4) to (1,2);
    \draw [thick,directed] (1,2) to (1,0.5);
    \node at (1,2) {$\bullet$};
    \node at (0,2) {$\gamma_{0,1,1}$};
    \node at (2,-.5) {$2\delta$};
    \end{tikzpicture}.\]
For $\alpha \in {\sf Hom}^\cont \big( \sym^k (L^A_+[1]), \sym^l (L^A_-) \big)$ and $X_1\odot\cdots\odot X_{k+1}\in \sym^{k+1} (L^A_+[1])$, we define 
%, we define $ \iota^{\delta,\gamma_{0,1,1}}(\alpha)\in {\sf Hom}^\cont \big( \sym^{k+1} (L^A_+[1]) , \sym^{l-1} (L^A_-) \big) $ given by
\begin{align*}
    \iota^{\delta,\gamma_{0,1,1}}(\alpha)(X_1\odot\cdots\odot X_{k+1}):= \sum_{j=1}^{k+1} (-1)^@ C^\delta_{\gamma_{0,1,1}(X_j)} \big( \alpha (X_1\odot\cdots\widehat{X_j}\cdots\odot X_{k+1})\big),
\end{align*}

By the defining identity~\eqref{eq:homotopy-delta} of $\delta$, as operators from $L^A_-\otimes L^A_- \to \bbC$, we have 
\[[b+uB,2\delta]= 2H - 2H^\sym = H- H\circ \tau_{12},\]
where $\tau_{12}(X\otimes Y)= (-1)^{|X||Y|}Y\otimes X$. The equation above implies 
\[[b+uB,\iota^{\delta,\gamma_{0,1,1}}] = -[\iota,\ad^F(\gamma_{0,1,1})].\] 
Thus, in the general case, we define the mid-horizontal DGLA morphism in Diagram~\eqref{diagram:factor-diag-string} by 
%by exponentiation of the following element (see Lemma~\ref{lem:middel-arrow})
\begin{equation}\label{eq:cJ}
\cJ:=\exp\Big(\big( \gamma_{0,1,1}, M_{u^{-1}} + \ad^{F}(\gamma_{0,1,1}) +\iota^{\delta,\gamma_{0,1,1}}\big)\epsilon\Big).\end{equation}
%\in {\sf CE}^*(\hh_A^{\sf triv}).

\medskip
{{\bf Step $(2)$.}} Similar to the construction of the operator $\{-,-\}^{E,E'}(t)$ in Equation~\eqref{eq:defi-homotopy-E-F}, for any linear maps $D: L_-^A\otimes L_-^A\to \bbC$ and $E: L_-^A\to L_+^A[1]$, let us denote by
\begin{equation}\label{eq:def-bracket-E-D} 
\{-,-\}_\hbar^{E,D}(t):=[ \hbar\Delta^D, \{-,-\}_\hbar^E(t)].
\end{equation}
Then we may construct $f(t)+g(t)dt$ as in the following to prove top square in Diagram~\eqref{diagram:factor-diag-string} is still commutative up to homotopy.
\begin{align*}
\label{string-nonsymmetric-f}
f(t)_n=&
\begin{dcases}
\gamma_{0,1,1}, & \text{if}\quad n=0;\\
M_{u^{-1}} + (1-t)\big(\ad^{F}(\gamma_{0,1,1})+\iota^{\delta,\gamma_{0,1,1}}\big) 
+ t(t-1)
\big(\Delta^{H^\sym\gamma_{0,1,1}F}+\Delta^{\delta\gamma_{0,1,1}\Theta}\big), & \text{if}\quad n=1;\\
t(t-1)\big(\{-,-\}_\hbar^{F\gamma_{0,1,1}F}(t)+\{-,-\}_\hbar^{\delta\gamma_{0,1,1}F}(t)\big), & \text{if}\quad n=2;\\
0, & \text{if}\quad n\geq3,
\end{dcases}\\
g(t)_n=
&\begin{dcases}
 -t(t-1)\Delta^{\delta\gamma_{0,1,1}F}, &  \text{if}\quad n=1; \\
-t(t-1)\cdot \{-,-\}_\hbar^{(F+\delta)\gamma_{0,1,1}F,F+\delta}(t), & \text{if}\quad n=2;\\
0, & \text{otherwise}.
\end{dcases}
\end{align*}
%\begin{align*}
%    f(t)_0 &:= \gamma_{0,1,1},\\
%    f(t)_1 &:= M_{u^{-1}} + (1-t)\big(\ad^{F}(\gamma_{0,1,1})+\iota^{\delta,\gamma_{0,1,1}}\big) + t(t-1)\Delta^{H^\sym\gamma_{0,1,1}F}+t(t-1)\Delta^{\delta\gamma_{0,1,1}\Theta},\\
%    f(t)_2 &:= t(t-1)\cdot \{-,-\}_\hbar^{F\gamma_{0,1,1}F}+t(t-1)\cdot \{-,-\}_\hbar^{\delta\gamma_{0,1,1}F}.
%\end{align*}
%And for the $dt$-component we set
%\begin{align*}
%    g(t)_1 := -t(t-1)\Delta^{\delta\gamma_{0,1,1}F} \quad \text{and}\quad
%    g(t)_2 := -t(t-1)\cdot \{-,-\}_\hbar^{(F+\delta)\gamma_{0,1,1}F,F+\delta}.
%\end{align*}
%The proof of its closedness is 

We can prove that $f(t)+g(t)dt$ is closed along the same line as in the symmetric case, with more tedious commutativity checks. 
We shall omit the details. 

\medskip
{{\bf Step $(3)$.}} We only need to observe that $(\cJ)^\epsilon_{g,1,n-1}= M_{u^{-1}} + \ad^{F}(\gamma_{0,1,1})$ still holds for the element $\cJ$ defined in~\eqref{eq:cJ}.
This is because $\iota^{\delta,\gamma_{0,1,1}}$ only contributes to the part $(g,k,l)$ with $k\geq 2$. 

%$$\cJ=\exp\left(\big( \gamma_{0,1,1}, M_{u^{-1}} + \ad^{F}(\gamma_{0,1,1}) +\iota^{\delta,\gamma_{0,1,1}}\big)\epsilon\right),$$ 

\medskip
Given the three steps, the deduction of the main Equation~\eqref{string-main-identity} is the same as the symmetric case.

%This finishes the proof of the string equation.

\subsection{Proof of the divisor equation}

%Moreover, we set  $$\eta:=\eta_{0,1,1}+\eta_{0,2,0}.$$

%We continue to use the notation in Section~\ref{subsec:divisor-genus-zero}. 
%Following Lemma~\ref{lem:divisor-evaluation}, denote the maps therein by 
\iffalse
\begin{align}\label{eq:cont-divisor-v_0}
   \begin{split}
\left\{ \begin{aligned} 
  \eta_{g,k,l} & := [\nabla_{\partial_t}^{\sf Get,\pm}, \hbcA_{g,k,l}]+ \sum_{j=1}^{l+1} u_j^{-1}\cdot \eta_{0,2,0}\circ_j \hbcA_{g,k-1,l+1},  \mbox{\;\;\;\;for stable $(g,k,l)$,}\\
    \eta_{0,1,1} & := \hbcA_{0,1,2}\lrcorner (\KS(\partial_t)),\\
    \eta_{0,2,0} &:= \hbcA_{0,2,1}\lrcorner (\KS(\partial_t)),\\
    \eta &:= \eta_{0,1,1}+\eta_{0,2,0}.
\end{aligned} \right.
   \end{split}
\end{align}
\fi

Recall the definition of $\eta_{g,k,l}$ in~\eqref{eta-definition} and the formulas in  Lemma~\ref{lem:divisor-evaluation}. 
For simplicity, we shall simply use $\nabla_{\partial_t}^{\sf Get,\pm}$ to denote the commutator operator that acts on an element in $\hh_\cA$. Also, we use $u^{-1}\eta_{0,2,0}$ to denote the operator on $\hh_\cA$ that acts by
\[ \phi \mapsto \sum_{j=1}^{l+1} u_j^{-1}\cdot \eta_{0,2,0}\circ_j \phi,\]
for an element $\phi\in \hh_\cA$ with $l+1$ outputs. Then we may verify that the element 
\[ \big( \eta:=\eta_{0,1,1}+\eta_{0,2,0}, \nabla_{\partial_t}^{\sf Get,\pm} + u^{-1}\eta_{0,2,0}\big) \in {\sf CE}^*(\hh_A) \]
is closed, and hence yields the morphism of DGLA's in the top row of the following diagram ~\eqref{diagram:right-divisor}. 
%$$\exp\big( (\eta,\nabla_{\partial_t}^{\sf Get,\pm} + u^{-1} \eta_{0,2,0})\epsilon \big): \hh_\cA \ra \hh_\cA[\epsilon].$$
In the same way, we also have the morphism of DGLA's in the bottom row of the diagram 
~\eqref{diagram:right-divisor}. 
\begin{equation}\label{diagram:right-divisor}
\begin{CD}
\hh_\cA @>\exp\big( (\eta,\nabla_{\partial_t}^{\sf Get,\pm} + u^{-1} \eta_{0,2,0})\epsilon \big)>> \hh_\cA[\epsilon]\\
@V\cK VV   @V\cK VV\\
\hh_\cA^{\sf TRIV} @>\exp\big( (\eta,\nabla_{\partial_t}^{\sf S,\pm} + u^{-1} \eta_{0,2,0})\epsilon \big)>> \hh_\cA^{\sf TRIV}[\epsilon]
\end{CD}
\end{equation}
%\[ \exp\big( (\eta,\nabla_{\partial_t}^{\sf S,\pm} + u^{-1} \eta_{0,2,0})\epsilon \big): \hh^{\sf TRIV}_\cA \ra \hh^{\sf TRIV}_\cA[\epsilon].\]
To prove the divisor equation, following the discussions in Section~\ref{subsec:divisor}, it remains to prove the following equality
\begin{equation}
[\nabla^{s,\pm}_{\partial_t},\lbA_{g,1,n-1}] 
= [\big(\cK_*\rho^{\cA,\tw,\zeta}_*\mc_2\big)_{g,1,n-1}^\epsilon]\in H_\bullet\left( {\sf Hom}^\cont \big( L^A_+[1] , \sym^{n-1} L^A_- \big), b\right)
\end{equation} 
from Equation~\eqref{eq:identify-rhs}. The proof is in parallel to that of Equation~\eqref{string-main-identity} in Section~\ref{subsec:proof-string}, with the following replacements:
\begin{align*}
    \gamma_{0,1,1} & \longleftrightarrow  \eta,\\
    M_{u^{-1}} & \longleftrightarrow \begin{cases}
        \nabla_{\partial_t}^{\sf Get,\pm} + u^{-1} \eta_{0,2,0}, {\;\; \mbox{as an operator on $\hh_\cA$,}}\\
        \nabla_{\partial_t}^{{\sf S},\pm} + u^{-1} \eta_{0,2,0}, {\;\; \mbox{as an operator on $\hh_\cA^{\sf TRIV}$.}}
    \end{cases} 
\end{align*}
We only sketch the proof here which have three parallel steps as in Section~\ref{subsec:proof-string}.

\medskip
\noindent {{\bf Step $(1)$.}} In the first step, we split the square in~\eqref{diagram:right-divisor} into two squares:
\iffalse
\begin{equation}
\begin{CD}
\hh_\cA @>\exp\left( (\eta,\nabla_{\partial_t}^{\sf Get,\pm} + u^{-1} \eta_{0,2,0})\epsilon \right)>> \hh_\cA[\epsilon]\\
@V K VV   @V K VV\\
\hh_\cA^{\sf triv} @>\cI >> \hh_\cA^{\sf triv}[\epsilon]\\
@V \mathfrak{S} VV   @V\mathfrak{S}VV\\
\hh_\cA^{\sf TRIV} @>\exp\left( \big(\eta,\nabla_{\partial_t}^{\sf S,\pm} + u^{-1} \eta_{0,2,0}\big)\epsilon \right)>> \hh_\cA^{\sf TRIV}[\epsilon]
\end{CD}
\end{equation}
\fi
\begin{equation}\label{diagram:right-divisor-split}
\begin{tikzcd}
  \hh_\cA \arrow[d,"K"] \arrow{r}[name=U]{\exp\left( (\eta,\nabla_{\partial_t}^{\sf Get,\pm} + u^{-1} \eta_{0,2,0})\epsilon \right)}  &[10em] \hh_\cA[\epsilon] \arrow[d,"K"] \\
\hh_\cA^\triv \arrow[r,dashed,"\cI"] \arrow[d,"\mathfrak{S}"] & \hh_\cA^\triv[\epsilon] \arrow[d,"\mathfrak{S}"]\\
\hh_\cA^{\sf TRIV} \arrow{r}[name=U]{\exp\left( \big(\eta,\nabla_{\partial_t}^{\sf S,\pm} + u^{-1} \eta_{0,2,0}\big)\epsilon \right)} & \hh_\cA^{\sf TRIV}[\epsilon].
\end{tikzcd}  
\end{equation} 
with the middle horizontal arrow given by
\[ \cI:= \exp\Big( \big(\eta,\nabla_{\partial_t}^{\sf Get,\pm} + u^{-1} \eta_{0,2,0}+\ad^{F}(\eta)+\iota^{\delta,\eta}\big)\epsilon \Big).\]
Observe that the construction of $\cI$ is similar to that of $\cJ$ in Equation~\eqref{eq:cJ}, under the replacements mentioned above.

{{\bf Step $(2)$.}}
Again we use the Lemma~\ref{lem:algebraic-setup} to prove the homotopy commutativity of the top square in~\eqref{diagram:right-divisor-split}. 
%Thus, we need to write down a closed element $f(t)+g(t)dt \in {\sf CE}_*\big( \hh_\cA \otimes \Omega_{[0,1]}\big)$ such that it satisfies 
For the following boundary values:
\begin{align*}
    f(0)= \big( \eta,\nabla_{\partial_t}^{{\sf Get},\pm} + u^{-1} \eta_{0,2,0}+\ad^{F}(\eta)+\iota^{\delta,\eta}\big)\quad \text{and} \quad 
    f(1)= \big(\eta,\nabla_{\partial_t}^{\sf Get,\pm} + u^{-1} \eta_{0,2,0}\big),
\end{align*}
Similarly as in the construction of Section~\ref{sec:string-general}, we define $f(t)$ and $g(t)$ explicitly as follows:
\begin{align*}
\label{divisor-nonsymmetric-f}
f(t)_n=
&\begin{dcases}
 \eta, & \text{if}\quad n=0;\\
\nabla_{\partial_t}^{{\sf Get},\pm} + u^{-1} \eta_{0,2,0}+ (1-t)\big(\ad^{F}(\eta)+\iota^{\delta,\eta}\big)+t(t-1)\big(\Delta^{H^\sym \eta_{0,1,1} F} +\Delta^{\delta\eta_{0,1,1}\Theta}\big), & \text{if}\quad n=1;\\
t(t-1)\big(\{-,-\}^{F\eta F}_\hbar(t)+\{-,-\}^{\delta\eta F}_\hbar(t)\big), & \text{if}\quad n=2;\\
0, & \text{if}\quad n\geq3.
\end{dcases}\\
g(t)_n=
&\begin{dcases}
-t(t-1)\Delta^{\delta \eta F}, &\text{if}\quad n=1;\\
-t(t-1)\cdot \{-,-\}^{(F+\delta)\eta F,F+\delta}_\hbar(t), & \text{if}\quad n=2;\\
0, & \text{otherwise}.
\end{dcases}
\end{align*}
%$$g(t)_1  = -t(t-1)\Delta^{\delta \eta F} \quad \text{and} \quad g(t)_2 = -t(t-1)\cdot \{-,-\}^{(F+\delta)\eta F,F+\delta}_\hbar.$$
Similarly to the previous section, one can verify that $f(t)+g(t)dt\in {\sf CE}_*\big( \hh_\cA \otimes \Omega_{[0,1]}\big)$ is closed. 

\medskip
\noindent {{\bf Step $(3)$.}} 
In the third step, we prove the partial commutativity of the lower square in~\eqref{diagram:right-divisor-split}, after taking cohomology. More precisely, we prove that the following diagram is commutative.
\begin{equation}\label{eq:diagram:bottom-divisor}
\begin{tikzcd}
H_\bullet\big( \Hom^\cont(L_+^A[1], \sym^{n-1} L_-^A), b+uB\big) \arrow[r,dashed,"(\cI)_{g,1,n-1}^\epsilon"] \arrow[d,"\mathfrak{S}"] &[3em] H_\bullet\big( \Hom^\cont(L_+^A[1], \sym^{n-1} L_-^A), b+uB\big) \arrow[d,"\mathfrak{S}"]\\
H_\bullet\big( \Hom^\cont(L_+^A[1], \sym^{n-1} L_-^A), b\big) \arrow{r}[name=U]{\nabla_{\partial_t}^{\sf S,\pm}} & H_\bullet\big( \Hom^\cont(L_+^A[1], \sym^{n-1} L_-^A), b\big).
\end{tikzcd}    
\end{equation}
The proof of this commutativity follows from Lemma~\ref{lem:divisor-leaf-cont}. 

\medskip
With the three steps above ready, the main identity~\eqref{eq:identify-rhs} can be easily deduced:
\begin{align*}
    [\nabla_{\partial_t}^{s,\pm},\lbA_{g,1,n-1}] & = \nabla_{\partial_t}^{s,\pm} \mathfrak{S} [(K_*\hbA)_{g,1,n-1}]\\
    & = \mathfrak{S} \big(\nabla_{\partial_t}^{{\sf Get},\pm}+\ad^F(\eta)\big) [(K_*\hbA)_{g,1,n-1}]\\
    & = \mathfrak{S}[\big(\cI_*K_*\hbA\big)_{g,1,n-1}^\epsilon]\\
    & = \mathfrak{S}[\Big(K_*\exp\big( (\eta,\nabla_{\partial_t}^{\sf Get,\pm} + u^{-1} \eta_{0,2,0})\epsilon \big)_* \hbA\Big)_{g,1,n-1}^\epsilon]\\
    & = [\big(\cK_*\rho^{\cA,\tw,\zeta}_*\mc_2\big)_{g,1,n-1}^\epsilon].
\end{align*}
This completes the proof. \qed

\end{document}